\numberwithin{equation}{section}
\newtheorem{Theorem}{Theorem}[section]
\newtheorem*{Theorem*}{Theorem}
\newtheorem{Corollary}[Theorem]{Corollary}
\newtheorem{Lemma}[Theorem]{Lemma}
\newtheorem{Proposition}[Theorem]{Proposition}
 { \theoremstyle{definition}

\newtheorem{Remark}[Theorem]{Remark} }
\newcommand{\bk}{\mathbf{k}}
\newcommand{\bl}{\mathbf{l}}
\newcommand{\bm}{\mathbf{m}}
\newcommand{\bn}{\mathbf{n}}
\newcommand{\bs}{\mathbf{s}}
\newcommand{\BZ}{\mathbb{Z}}
\newcommand{\BR}{\mathbb{R}}
\newcommand{\BC}{\mathbb{C}}
\newcommand{\BH}{\mathbb{H}}
\newcommand{\BO}{\mathbb{O}}
\newcommand{\cB}{\mathcal{B}}
\newcommand{\cF}{\mathcal{F}}
\newcommand{\cH}{\mathcal{H}}
\newcommand{\cO}{\mathcal{O}}
\newcommand{\cP}{\mathcal{P}}
\newcommand{\cU}{\mathcal{U}}
\newcommand{\fa}{\mathfrak{a}}
\newcommand{\fg}{\mathfrak{g}}
\newcommand{\fh}{\mathfrak{h}}
\newcommand{\fk}{\mathfrak{k}}
\newcommand{\fl}{\mathfrak{l}}
\newcommand{\fm}{\mathfrak{m}}
\newcommand{\fn}{\mathfrak{n}}
\newcommand{\fp}{\mathfrak{p}}
\newcommand{\fz}{\mathfrak{z}}
\newcommand{\itinv}{{-\mathit{1}}}
\newcommand{\rj}{\mathrm{j}}
\newcommand{\rK}{\mathrm{K}}
\newcommand{\tr}{\operatorname{tr}}
\newcommand{\eqspace}{\mathrel{\phantom{=}}}
\newcommand{\ds}{\displaystyle}
\newcommand{\Hom}{\operatorname{Hom}}
\newcommand{\End}{\operatorname{End}}
\newcommand{\Det}{\operatorname{Det}}
\newcommand{\Tr}{\operatorname{Tr}}
\newcommand{\Sym}{\operatorname{Sym}}
\newcommand{\Skew}{\operatorname{Skew}}
\newcommand{\Herm}{\operatorname{Herm}}
\newcommand{\Pf}{\operatorname{Pf}}
\newcommand{\rank}{\operatorname{rank}}
\newcommand{\hboxtimes}{\mathbin{\hat{\boxtimes}}}
\newcommand{\hsum}{\sideset{}{^\oplus}\sum}
\renewcommand{\Re}{\operatorname{Re}}
\renewcommand{\det}{\operatorname{det}}
\newcommand{\Proj}{\operatorname{Proj}}
\newcommand{\op}{\mathrm{op}}
\begin{document}
\allowdisplaybreaks

\newcommand{\arXivNumber}{2105.13976}

\renewcommand{\PaperNumber}{033}

\FirstPageHeading

\ShortArticleName{Computation of Weighted Bergman Inner Products on Bounded Symmetric Domains}

\ArticleName{Computation of Weighted Bergman Inner Products\\ on Bounded Symmetric Domains and Restriction\\ to Subgroups}

\Author{Ryosuke NAKAHAMA}

\AuthorNameForHeading{R.~Nakahama}

\Address{Institute of Mathematics for Industry, Kyushu University,\\ 744 Motooka, Nishi-ku Fukuoka 819-0395, Japan}
\Email{\href{mailto:nakahama.ryosuke.252@m.kyushu-u.ac.jp}{nakahama.ryosuke.252@m.kyushu-u.ac.jp}}

\ArticleDates{Received June 14, 2021, in final form April 06, 2022; Published online May 03, 2022}

\Abstract{Let $(G,G_1)$ be a symmetric pair of holomorphic type, and we consider a pair of Hermitian symmetric spaces $D_1=G_1/K_1\subset D=G/K$, realized as bounded symmetric domains in complex vector spaces $\mathfrak{p}^+_1\subset\mathfrak{p}^+$ respectively. Then the universal covering group~$\widetilde{G}$ of~$G$ acts unitarily on the weighted Bergman space $\mathcal{H}_\lambda(D)\subset\mathcal{O}(D)$ on~$D$. Its restriction to the subgroup~$\widetilde{G}_1$ decomposes discretely and multiplicity-freely, and its branching law is given explicitly by Hua--Kostant--Schmid--Kobayashi's formula in terms of the $K_1$-decomposition of the space $\mathcal{P}(\mathfrak{p}^+_2)$ of polynomials on the orthogonal complement $\mathfrak{p}^+_2$ of $\mathfrak{p}^+_1$ in~$\mathfrak{p}^+$. The object of this article is to compute explicitly the inner product $\big\langle f(x_2),{\rm e}^{(x|\overline{z})_{\mathfrak{p}^+}}\big\rangle_\lambda$ for $f(x_2)\in\mathcal{P}(\mathfrak{p}^+_2)$, $x=(x_1,x_2)$, $z\in\mathfrak{p}^+=\mathfrak{p}^+_1\oplus\mathfrak{p}^+_2$. For example, when $\mathfrak{p}^+$, $\mathfrak{p}^+_2$ are of tube type and $f(x_2)=\det(x_2)^k$, we compute this inner product explicitly by introducing a multivariate generalization of Gauss' hypergeometric polynomials ${}_2F_1$. Also, as an application, we construct explicitly $\widetilde{G}_1$-intertwining operators (symmetry breaking operators) $\mathcal{H}_\lambda(D)|_{\widetilde{G}_1}\to\mathcal{H}_\mu(D_1)$ from holomorphic discrete series representations of $\widetilde{G}$ to those of $\widetilde{G}_1$, which are unique up to constant multiple for sufficiently large~$\lambda$.}

\Keywords{weighted Bergman spaces; holomorphic discrete series representations; branching laws; intertwining operators; symmetry breaking operators; highest weight modules}

\Classification{22E45; 43A85; 17C30; 33C67}

\tableofcontents

\section{Introduction}

The purpose of this article is to compute explicitly the weighted Bergman inner product of polynomials on some subspace
of a bounded symmetric domain, and study the decomposition of the restriction of a holomorphic discrete series representation
to some subgroup in detail.

Let $\fp^+$ be a finite-dimensional complex vector space, $D\subset\fp^+$ be a Hermitian symmetric space realized as a bounded symmetric domain
centered at the origin, $G$ be the indentity component of the biholomorphism group of $D$,
and $K\subset G$ be the isotropy subgroup at the origin,
so that $D\simeq G/K$ holds. Let $\widetilde{G}$ and $\widetilde{K}$ denote the universal covering groups of $G$ and $K$ respectively.
We~consider an irreducible unitary representation $(\tau,V)$ of $\widetilde{K}$, and consider the homogeneous vector bundle
$\widetilde{G}\times_{\widetilde{K}} V\to \widetilde{G}/\widetilde{K}\simeq D$.
Then this bundle is trivializable, and the space of holomorphic sections is isomorphic to $\cO(D,V)$,
the space of $V$-valued holomorphic functions on~$D$, on which $\widetilde{G}$ acts by
\[
(\hat{\tau}(g)f)(x):=\tau\big(\kappa\big(g^{-1},x\big)\big)^{-1}f\big(g^{-1}x\big), \qquad
g\in\widetilde{G},\quad x\in D,\quad f\in\cO(D,V),
\]
by using a function $\kappa\colon \widetilde{G}\times D\to \widetilde{K}^\BC$ satisfying the cocycle condition.
Next we consider a function $B\colon D\times\overline{D}\to \widetilde{K}^\BC$ satisfying
\[
B(gx,\overline{gy})=\kappa(g,x)B(x,\overline{y})\kappa(g,y)^*, \qquad
g\in\widetilde{G},\quad x,y\in D.
\]
Then the function $\tau(B(x,\overline{y}))\in\cO\big(D\times\overline{D},\End_\BC(V)\big)\simeq\cO\big(D\times\overline{D},V\otimes\overline{V}\big)$
is invariant under the diagonal action of~$\widetilde{G}$, and if this is positive definite, then there exists a Hilbert space
$\cH_\tau(D,V)\subset\cO(D,V)$ with the reproducing kernel $\tau(B(x,\overline{y}))$, on which $\widetilde{G}$ acts unitarily.
We~call the representation $(\hat{\tau},\cH_\tau(D,V))$ a \textit{unitary highest weight representation} of $\widetilde{G}$.
Such representations are classified by \cite{EHW, J}. Moreover, if the inner product is given by the converging integral
\begin{gather}\label{intro_inner_prod}
\langle f,g\rangle_{\hat{\tau}}:=C_\tau\int_D \big(\tau\big(B(x,\overline{x})^{-1}\big)f(x),g(x)\big)_\tau\Det_{\fp^+}(B(x,\overline{x}))^{-1}{\rm d}x
\end{gather}
(a \textit{weighted Bergman inner product}), then $(\hat{\tau},\cH_\tau(D,V))$ is called a \textit{holomorphic discrete series representation}.
In this paper, when $(\tau,V)$ is written as $(\tau,V)=\big(\chi^{-\lambda}\otimes\tau_0,V\big)$ by using a~fixed character $\chi$
and a fixed representation $(\tau_0,V)$ of $\widetilde{K}$, we also write $\cH_\tau(D,V)=:\cH_\lambda(D,V)$, $\cO(D,V)=:\cO_\lambda(D,V)$.
In addition, if $V=\BC$ then we write $\cH_\lambda(D,\BC)=:\cH_\lambda(D)$ and call it of \textit{scalar type}.
If $\lambda$ is sufficiently large, then $\cH_\lambda(D,V)$ is holomorphic discrete, and the $\widetilde{K}$-finite part
$\cH_\lambda(D,V)_{\widetilde{K}}$ coincides with the space of all polynomials $\cO(D,V)_{\widetilde{K}}=\cP(\fp^+,V)$.
On the other hand, for smaller $\lambda$, $\cO_\lambda(D,V)_{\widetilde{K}}=\cP(\fp^+,V)$ may be reducible as a $\big(\fg,\widetilde{K}\big)$-module,
and $\cH_\lambda(D,V)_{\widetilde{K}}$ may be smaller than $\cP(\fp^+,V)$ even if it exists.
By computing the inner product~(\ref{intro_inner_prod}) for large $\lambda$ and observing the poles of its meromorphic continuation,
we can get some information on submodules of $\cO_\lambda(D,V)_{\widetilde{K}}$ (see, e.g.,~\cite{FK0,N1}).

For example, we consider the bounded symmetric domain $D$ of type $I\!I\!I_r$,
\[
D:=\{x\in\Sym(r,\BC)\mid I-xx^* \text{ is positive definite}\}.
\]
Then the universal covering group $\widetilde{\operatorname{Sp}}(r,\BR)$ of
\[
\operatorname{Sp}(r,\BR):=\left\{ g\in {\rm GL}(2r,\BC)\ \middle|\
{}^t\hspace{-1pt}g \begin{pmatrix} 0&I\\ -I&0 \end{pmatrix}g=\begin{pmatrix} 0&I\\ -I&0 \end{pmatrix}\!, \;
g\begin{pmatrix} 0&I\\ I&0 \end{pmatrix}=\begin{pmatrix} 0&I\\ I&0 \end{pmatrix}\overline{g} \right\}
\]
acts on $\cO(D)=\cO_\lambda(D)$ by
\[
\left(\tau_\lambda\left( \begin{pmatrix} a&b\\c&d \end{pmatrix}^{-1}\right)f\right)(x)
=\det(cx+d)^{-\lambda}f\big((ax+b)(cx+d)^{-1}\big).
\]
We~note that $\det(cx+d)^{-\lambda}$ is not well-defined on $\operatorname{Sp}(r,\BR)\times D$ if $\lambda\notin\BZ$,
but is well-defined on the universal covering space $\widetilde{\operatorname{Sp}}(r,\BR)\times D$.
If $\lambda>r$, then this preserves the weighted Bergman inner product
\[
\langle f,g\rangle_\lambda:=C_\lambda\int_D f(x)\overline{g(x)}\det(I-xx^*)^{\lambda-(r+1)}{\rm d}x, \]
and the corresponding Hilbert space $\cH_\lambda(D)\subset\cO(D)$, with the reproducing kernel $\det(I-xy^*)^{-\lambda}$,
gives a holomorphic discrete series representation of scalar type of $\widetilde{\operatorname{Sp}}(r,\BR)$.

Next we consider a connected symmetric subgroup $G_1\subset G$.
Without loss of generality we may assume $K_1:=G_1\cap K$ is a maximal compact subgroup of $G_1$.
Then $D_1:=G_1.0\simeq G_1/K_1$ is either a complex submanifold or a totally real submanifold of $D\simeq G/K$.
$(G,G_1)$ is called a symmetric pair of \textit{holomorphic type} for the former case,
and of \textit{anti-holomorphic type} for the latter case (see \cite[Section 3.4]{Kmf1}).
From now on we assume $(G,G_1)$ is a symmetric pair of holomorphic type.
We~take a complex subspace $\fp^+_1\subset \fp^+$ such that $D_1=D\cap\fp^+_1$ holds,
and let $\fp^+_2\subset\fp^+$ be the orthogonal complement of $\fp^+_1$ with respect to a suitable inner product of $\fp^+$.
Then for any holomorphic discrete series representation $\cH_\tau(D,V)$ of $\widetilde{G}$,
the restriction to the subgroup $\widetilde{G}_1$ decomposes into a Hilbert direct sum of irreducible representations of $\widetilde{G}_1$,
and each subrepresentation is generated by a $\widetilde{K}_1^\BC$-submodule of $\cP(\fp^+_2)\otimes \big(V|_{\widetilde{K}_1^\BC}\big)$,
where $\cP(\fp^+_2)$ denotes the space of polynomials on $\fp^+_2$.
That is, if $\cP(\fp^+_2)\otimes \big(V|_{\widetilde{K}_1^\BC}\big)$ is decomposed under $\widetilde{K}_1^\BC$ as
\[
\cP(\fp^+_2)\otimes \big(V|_{\widetilde{K}_1^\BC}\big)\simeq \bigoplus_j m(\tau,\rho_j)(\rho_j,W_j), \qquad m(\tau,\rho_j)\in\BZ_{\ge 0},
\]
then $\cH_\tau(D,V)$ is decomposed under $\widetilde{G}_1$ abstractly as
\[
\cH_\tau(D,V)|_{\widetilde{G}_1}\simeq \hsum_j m(\tau,\rho_j)\cH_{\rho_j}(D_1,W_j)
\]
(see Kobayashi \cite[Lemma 8.8]{Kmf1}, \cite[Section 8]{Kmf0-1}. For earlier results, see also \cite{JV,Ma}).
Especially, if $(\tau,V)=\big(\chi^{-\lambda},\BC\big)$ is 1-dimensional,
then since $\cP(\fp^+_2)$ decomposes multiplicity-freely (i.e., $m(\tau,\rho_j)\le 1$ holds),
$\cH_\lambda(D)|_{\widetilde{G}_1}$ also decomposes multiplicity-freely.
For example, if $\fp^+_2$ is simple, we write the decomposition of $\cP(\fp^+_2)$ under $K_1$ as
\[
\cP(\fp^+_2)=\bigoplus_{\bk\in\BZ_{++}^{r_2}}\cP_\bk(\fp^+_2),
\]
where $\BZ_{++}^{r_2}:=\{\bk=(k_1,\dots,k_{r_2})\in\BZ^{r_2}\mid k_1\ge\cdots\ge k_{r_2}\ge 0\}$.
We~fix characters $\chi$ and $\chi_1$ of~$\widetilde{K}^\BC$ and $\widetilde{K}^\BC_1$ respectively,
and let $\chi|_{\widetilde{K}^\BC_1}=\chi_1^{\varepsilon_1}$, where $\varepsilon_1\in\{1,2\}$.
Then the holomorphic discrete series representation $\cH_\lambda(D)$ of $\widetilde{G}$
is decomposed multiplicity-freely under $\widetilde{G}_1$ as
\begin{equation}\label{intro_decomp}
\cH_\lambda(D)|_{\widetilde{G}_1}\simeq\hsum_{\bk\in\BZ_{++}^{r_2}}\cH_{\varepsilon_1\lambda}(D_1,\cP_\bk(\fp^+_2))
\end{equation}
(see Kobayashi \cite[Theorem 8.3]{Kmf1}).
We~note that if $\cH_\lambda(D)$ is not holomorphic discrete and $\cH_\lambda(D)_{\widetilde{K}}\subsetneq \cP(\fp^+)$ holds,
then (\ref{intro_decomp}) does not hold in general.

In the following we consider holomorphic discrete series representations of scalar type $\cH_\lambda(D)$ of $\widetilde{G}$.
In order to understand the above decomposition (\ref{intro_decomp}) concretely, we want to compute explicitly the inner product
$\langle f,g\rangle_{\hat{\tau}}=\langle f,g\rangle_\lambda$ of $f\in\cP_\bk(\fp^+_2)$ and $g\in\cP(\fp^+)$.
To do this, since ${\rm e}^{\left(x\middle|\frac{\partial}{\partial z}\right)_{\fp^+}}g(z)\bigr|_{z=0}=g(x)$ holds
for any polynomial $g\in\cP(\fp^+)$, it is enough to compute when $g(x)={\rm e}^{(x|\overline{z})_{\fp^+}}$.
Therefore, in this article we aim to compute the inner product
\begin{equation}\label{intro_aim}
\big\langle f(x_2),{\rm e}^{(x|\overline{z})_{\fp^+}}\big\rangle_{\lambda,x}, \qquad f\in\cP_\bk(\fp^+_2),\quad x=(x_1,x_2),\quad z\in\fp^+.
\end{equation}
This inner product is given by an explicit integral for sufficiently large $\lambda$,
and once we compute this explicitly, then we can consider the meromorphic continuation for all $\lambda\in\BC$.
By observing the poles and their orders of the inner product~(\ref{intro_aim}),
we can determine which $\big(\fg,\widetilde{K}\big)$-submodule of $\cO_\lambda(D)_{\widetilde{K}}$ contains the $\big(\fg_1,\widetilde{K}_1\big)$-submodule
generated by $\cP_\bk(\fp^+_2)$.

Next we consider $\widetilde{G}_1$-intertwining operators
\begin{align*}
&\cF^\downarrow_{\lambda,W}\colon\quad \cH_\lambda(D)|_{\widetilde{G}_1}\longrightarrow \cH_{\varepsilon_1\lambda}(D_1,W)
\qquad\text{or}\qquad\cO_\lambda(D)|_{\widetilde{G}_1}\longrightarrow \cO_{\varepsilon_1\lambda}(D_1,W), \\
&\cF^\uparrow_{\lambda,W}\colon\quad \cH_{\varepsilon_1\lambda}(D_1,W)\longrightarrow \cH_\lambda(D)|_{\widetilde{G}_1}
\qquad\text{or}\qquad\cO_{\varepsilon_1\lambda}(D_1,W)\longrightarrow \cO_\lambda(D)|_{\widetilde{G}_1}.
\end{align*}
$\cF^\downarrow_{\lambda,W}$ is called a \textit{symmetry breaking operator},
and $\cF^\uparrow_{\lambda,W}$ is called a \textit{holographic operator}, according to the terminology introduced in \cite{KP1, KP2} and \cite{KP3} respectively.
By the multiplicity-freeness \cite[Theorem A]{Kmf1}, such intertwining operators $\cF^\downarrow_{\lambda,W}$,
$\cF^\uparrow_{\lambda,W}$ between unitary highest weight representations of scalar type are unique up to scalar multiple.
Hence we want to construct these operators explicitly.
Such problem is proposed by T.~Kobayashi from the viewpoint of the representation theory (see~\cite{K1}),
and studied from various viewpoints. For example, symmetry breaking operators given by differential operators are studied
in the context of automorphic forms, conformal geometry and representation theory by, e.g., \cite{BCK, Cl, C, IKO, Ju, Kfmeth, KKP, KOSS, KP1, KP2, OR, P, PZ, R}.
Especially, in \cite{Kfmeth, KP1} it is proved that the symbols of differential symmetry breaking operators are characterized as
polynomial solutions of certain systems of partial differential equations in general settings (F-method),
and symmetry breaking operators between the spaces of holomorphic sections for homogeneous vector bundles on $D$ and $D_1$ are always given by differential operators.
Also, holographic operators for holomorphic discrete series representations are stu\-died by, e.g., \cite{KP3, N2},
and integral intertwining operators for principal series or complementary series representations are studied by, e.g., \cite{KS1, KS2, MO}.
In this article we construct differential symmetry breaking operators $\cF^\downarrow_{\lambda,W}$
for holomorphic discrete series representations by using the author's previous result \cite[Theorem 3.10]{N2}, which claims that
the differential operator
\[
\cF^\downarrow_{\lambda,W}\colon\quad \cH_\lambda(D)|_{\widetilde{G}_1}\longrightarrow \cH_{\varepsilon_1\lambda}(D_1,W), \qquad
f(x)=f(x_1,x_2)\mapsto F^\downarrow_{\lambda,W}\bigg(\frac{\partial}{\partial x}\bigg)f(x)\bigg|_{x_2=0}
\]
defined by using the polynomial $F^\downarrow_{\lambda,W}(z)\in\cP(\fp^-)\otimes W$,
\begin{equation}\label{intro_SBO}
F^\downarrow_{\lambda,W}(z):=\big\langle {\rm e}^{(x|z)_{\fp^+}},\rK(x_2)\big\rangle_{\lambda,x}, \qquad
\rK(x_2)\in \big(\cP(\fp^+_2)\otimes\overline{W}\big)^{K_1}
\end{equation}
becomes a symmetry breaking operator. Moreover, by considering the meromorphic continuation with respect to $\lambda$,
this gives a symmetry breaking operator also for non-discrete highest weight representations.
That is, the construction of differential symmetry breaking operators is reduced to the
computation of the inner products (\ref{intro_aim}), and conversely,
we can use differential equations (F-method) to compute the inner products (\ref{intro_aim}) up to constant multiple.
However, for non-unitary case we do not know a priori whether the symmetry breaking operators are unique or not,
and for uniqueness we need further study as in \cite{KP1, KP2}.
In fact, for tensor product case the space of symmetry breaking operators sometimes becomes 2-dimensional, even for scalar case (see \cite[Section 9]{KP2}).

When $(G,G_1)$ is of the form $(G_1\times G_1,\Delta G_1)$, the computation of the inner product (\ref{intro_aim})
and the construction of symmetry breaking operators are studied in \cite[Section 5.2]{N2}, \cite{PZ} (see also, e.g., \cite{BCK, Cl, KP2, KP3, OR, P}).
In the following we consider the case $G$ is simple, and for a while we assume $\fp^+$, $\fp^+_2$ are of tube type
(i.e., the corresponding Lie groups $G$ and $G_2:=(G^{\sigma\vartheta})_0$ are of tube type, where $\sigma$, $\vartheta$ are suitable involutions on $G$
satisfying $G_1=(G^\sigma)_0$, $K=G^\vartheta$),
so that $\fp^+=\fn^{+\BC}$, $\fp^+_2=\fn^{+\BC}_2$ have complex Jordan algebra structures.
Let $\det_{\fn^+_2}(x_2)$ be the determinant polynomial on $\fp^+_2=\fn^{+\BC}_2$,
so that $\BC\det_{\fn^+_2}(x_2)^k\subset\cP(\fp^+_2)$ gives a 1-dimensional $K_1$-submodule.
In this article, when $\fp^+_2=\fp^+_{11}\oplus\fp^+_{22}$ is a direct sum of
two simple Jordan subalgebras, that is, when $(G,G_1)$ is one of
\begin{gather*}
({\rm SO}_0(2,d+2),{\rm SO}_0(2,d)\times {\rm SO}(2)), \qquad
(\operatorname{Sp}(r,\BR),{\rm U}(r',r'')),
\\
({\rm U}(r,r),{\rm U}(r',r'')\times {\rm U}(r'',r')), \qquad
({\rm SO}^*(4r),{\rm U}(2r',2r'')), \qquad
(E_{7(-25)},{\rm U}(1)\times E_{6(-14)}),
\end{gather*}
for $f(x_2)=\det_{\fn^+_{11}}(x_{11})^kf_2(x_{22})\in\cP_{(k,\dots,k)}(\fp^+_{11})\boxtimes\cP_\bl(\fp^+_{22})$,
we compute the inner product
\[
\big\langle \det_{\fn^+_{11}}(x_{11})^kf_2(x_{22}),{\rm e}^{(x|\overline{z})_{\fp^+}}\big\rangle_{\lambda,x},
\]
and compute the poles and their orders with respect to the parameter $\lambda$ (Theorem~\ref{thm_nonsimple} and~Corollary~\ref{cor_pole_nonsimple}).
Especially if $f(x_2)=\det_{\fn^+_{11}}(x_{11})^k\det_{\fn^+_{22}}(x_{22})^l$,
then the result is given by using a~multivariate generalization of Gauss' hypergeometric polynomials ${}_2F_1$,
which coincides with a~special case of Heckman--Opdam's hypergeometric polynomials of type $BC$
(Corollary~\ref{cor_scalar_nonsimple} and~Theorem~\ref{explicit_nonsimple}).
Similarly, when $\fp^+_2$ is a simple Jordan algebra, that is, when $(G,G_1)$ is one of
\begin{alignat*}{3}
&({\rm SO}_0(2,n),{\rm SO}_0(2,n')\times {\rm SO}(n'')), \qquad &&(\operatorname{Sp}(2r',\BR),\operatorname{Sp}(r',\BR)\times \operatorname{Sp}(r',\BR)),&
\\
&({\rm SO}^*(4s'),{\rm SO}^*(2s')\times {\rm SO}^*(2s')), \qquad&&
({\rm SU}(r,r),{\rm SO}^*(2r)),&
\\
&({\rm SU}(2s,2s),\operatorname{Sp}(2s,\BR)), \qquad&& (E_{7(-25)},{\rm SU}(2,6)),&
\end{alignat*}
$n''\ne 2$, for $f(x_2)\in\cP_{(k+l,k,\dots,k)}(\fp^+_2)$ (for the 1st, 3rd, 4th and 6th cases) or
for $f(x_2)\in\cP_{(k+1,\dots,k+1,k,\dots,k)}(\fp^+_2)$ (for the 2nd and 5th cases), we compute the inner product
\[
\big\langle f(x_2),{\rm e}^{(x|\overline{z})_{\fp^+}}\big\rangle_{\lambda,x},
\]
and compute the poles and their orders (Theorem \ref{thm_simple} and~Corollary~\ref{cor_pole_simple}).
Especially if $l=0$ and $f(x_2)=\det_{\fn^+_2}(x_2)^k\in\cP_{(k,\dots,k)}(\fp^+_2)$,
then the result is again given by multivariate ${}_2F_1$, and if $\rank_\BR G=2$ with general $l$, then the result is given by ordinary ${}_2F_1$
(Corollary~\ref{cor_scalar_simple} and~Theo\-rem \ref{explicit_simple}).
The above two lists exhaust all symmetric pairs of holomorphic type such that~$G$ is simple and $\fp^+$, $\fp^+_2$ are of tube type
(for the classification without tube type assumption see \cite[Table 3.4.1]{Kmf1}).
Moreover, by the $\widetilde{K}_1$-equivariance, the results on poles are generalized for~$\fp^+$, $\fp^+_2$ of non-tube type,
and for $f(x_2)\in\cP_{(k,\dots,k,0,\dots,0)}(\fp^+_{11})\boxtimes\cP_\bl(\fp^+_{22})$, $\cP_{(k+l,k,\dots,k,0,\dots,0)}(\fp^+_2)$
or $\cP_{(k+1,\dots,k+1,k,\dots,k,0,\dots,0)}(\fp^+_2)$ (Corollaries \ref{cor_pole_nonsimple}, \ref{cor_pole_simple}).
These results are applied to construct symmetry breaking operators explicitly.
When $(G,G_1)=({\rm SO}_0(2,n),{\rm SO}_0(2,n-1))$, the resulting operators coincide with the holomorphic version of Juhl's operators
(see \cite{Ju}, \cite[Section 6]{KP2}),
and when $(G,G_1)=(\operatorname{Sp}(r,\BR),\operatorname{Sp}(r',\BR)\times \operatorname{Sp}(r'',\BR))$ $(r=r'+r'')$, the resulting operators for scalar type representations
coincide with the ones in \cite[Section 7]{KP2} (for $r'=1$ case) and \cite{IKO} (for $r'=r''$ case) (see also Remark \ref{rem_previous}).

This paper is organized as follows. In Section~\ref{section_prelim1},
we review Jordan triple systems, Jordan algebras and holomorphic discrete series representations.
In Section~\ref{section_prelim2}, we fix the explicit rea\-li\-za\-tion of Jordan triple systems.
In Section~\ref{section_key}, we rewrite the inner product (\ref{intro_aim}) in a differential expression
by using the inverse Laplace transform on Jordan algebras (Theorem \ref{key_identity}). This formula is regarded as an analogue of
the Rodrigues formulas for Jacobi polynomials (for such formulas for symmetry breaking operators in tensor product case, see also \cite{Cl}).
In Section~\ref{section_nonsimple}, we compute (\ref{intro_aim}) when $\fp^+_2=\fp^+_{11}\oplus\fp^+_{22}$ is a direct sum of two
Jordan triple systems, and in Section~\ref{section_simple}, we compute (\ref{intro_aim}) when $\fp^+_2$ is a simple Jordan triple system.
In Section~\ref{section_submod}, we apply the results on poles to determining which $(\fg,\widetilde{K})$-submodule of
$\cO_\lambda(D)_{\widetilde{K}}$ contains the $(\fg_1,\widetilde{K}_1)$-submodule generated by $\cP_\bk(\fp^+_2)$ for singular $\lambda$.
In Section~\ref{section_SBO}, we construct the symmetry breaking operators explicitly by using (\ref{intro_SBO}),
which are unique up to scalar multiple, and for convenience we also write the author's previous results \cite{N2} on holographic operators.\vspace{-1mm}

\section{Preliminaries 1: General theory}\label{section_prelim1}

In this section we review Jordan triple systems, Jordan algebras and holomorphic discrete series representations.
For detail see, e.g., \cite[Parts III and V]{FKKLR}, \cite{FK,L,Sat}.\vspace{-1mm}

\subsection{Hermitian positive Jordan triple systems}

Let $(\fp^+,\fp^-,\{\cdot,\cdot,\cdot\},\overline{\cdot})$ be a \textit{Hermitian positive Jordan triple system}, that is,
$\fp^\pm$ be finite-dimen\-sional vector spaces over $\BC$, with a non-degenerate pairing
$(\cdot|\cdot)_{\fp^\pm}\colon\fp^\pm\times\fp^\mp\to\BC$,
$\{\cdot,\cdot,\cdot\}\colon\fp^\pm\times\fp^\mp\times\fp^\pm\to\fp^\pm$ be a $\BC$-trilinear map satisfying\vspace{-1mm}
\begin{gather*}
\{x,y,z\}=\{z,y,x\},
\\
\{u,v,\{x,y,z\}\}=\{\{u,v,x\},y,z\}-\{x,\{v,u,y\},z\}+\{x,y,\{u,v,z\}\},
\\
(\{u,v,x\}|y)_{\fp^\pm}=(x|\{v,u,y\})_{\fp^\pm}
\end{gather*}
for any $u,x,z\in\fp^\pm$, $v,y\in\fp^\mp$, and $\overline{\cdot}\colon\fp^\pm\to\fp^\mp$ be a $\BC$-antilinear involutive isomorphism such that
$(x|\overline{x})_{\fp^+}\ge 0$ holds for any $x\in\fp^+$. We~define $D\colon\fp^\pm\times\fp^\mp\to\End_\BC(\fp^\pm)$,
$Q\colon\fp^\pm\times\fp^\pm\to\Hom_\BC(\fp^\mp,\fp^\pm)$ by
\[
D(x,y)z=Q(x,z)y=\{x,y,z\},
\]
and we write $Q(x):=\frac{1}{2}Q(x,x)$. Also let $B\colon\fp^\pm\times\fp^\mp\to\End_\BC(\fp^\pm)$ be the \textit{Bergman operator} given by
\begin{equation*}
B(x,y):=I_{\fp^\pm}-D(x,y)+Q(x)Q(y),
\end{equation*}
let $h=h_{\fp^\pm}\colon \fp^\pm\times\fp^\mp\to\BC$ be the \textit{generic norm}, which is a polynomial on $\fp^\pm\times\fp^\mp$,
and write $B(x)\!:=B(x,\overline{x})$, $h(x)\!:=h(x,\overline{x})$. Then the \textit{bounded symmetric domain} $D=D_{\fp^+}\!\subset\!\fp^+$ is given~by
\begin{align}
D&=(\text{connected component of }\{x\in \fp^+\mid B(x)\text{ is positive definite}\}\text{ which contains }0) \notag \\
&=(\text{connected component of }\{x\in \fp^+\mid h(x)>0\}\text{ which contains }0), \label{BSD}
\end{align}
and its \textit{Bergman--Shilov boundary} $\Sigma=\Sigma_{\fp^+}\subset\fp^+$ is given by
\[
\Sigma=\{x\in \fp^+\mid B(x)=0\}.
\]
For $(x,y)\in\fp^\pm\times\fp^\mp$, we say $(x,y)$ is \textit{quasi-invertible}
if $B(x,y)\in\End_\BC(\fp^\pm)$ is invertible, and then the \textit{quasi-inverse} $x^y\in\fp^\pm$ is given by
\[
x^y:=B(x,y)^{-1}(x-Q(x)y)\in\fp^\pm.
\]

An element $e\in\fp^+$ is called a \textit{tripotent} if it satisfies $\{e,\overline{e},e\}=2e$.
Then $D(e,\overline{e})\in\End_\BC(\fp^+)$ has eigenvalues 0, 1, 2. For a tripotent $e\in\fp^+$ we write
\begin{gather}
\fp^+(e)_j=\fp^+(\overline{e})_j:=\{x\in\fp^+\mid D(e,\overline{e})x=jx\}, \notag
\\
\fp^-(\overline{e})_j=\fp^-(e)_j:=\{x\in\fp^-\mid D(\overline{e},e)x=jx\}, \qquad j\in\{0,1,2\}, \label{Peirce}
\end{gather}
so that $\fp^\pm=\fp^\pm(e)_2\oplus\fp^\pm(e)_1\oplus\fp^\pm(e)_0$ holds (\textit{Peirce decomposition}). Then we have
\begin{gather*}
\{\fp^+(e)_j,\fp^-(e)_k,\fp^+(e)_l\}\subset \fp^+(e)_{j-k+l}, \qquad j,k,l\in\{0,1,2\},
\\
\{\fp^+(e)_2,\fp^-(e)_0,\fp^+\}=\{\fp^+(e)_0,\fp^-(e)_2,\fp^+\}=0,
\end{gather*}
where for the first formula we regard the right hand side as zero if $j-k+l\notin\{0,1,2\}$.
Especially~$\fp^+(e)_j$ $(j=0,1,2)$ are Jordan triple subsystems of $\fp^+$.
A non-zero tripotent $e\in\fp^+$ is called \textit{primitive} when $\fp^+(e)_2=\BC e$, and is called \textit{maximal} when $\fp^+(e)_0=\{0\}$.
Then the set of all maximal tripotents in $\fp^+$ coincides with the Bergman--Shilov boundary $\Sigma$.
We~call $\fp^+$ is of \textit{tube type} if $\fp^+(e)_2=\fp^+$ holds for some (or equivalently any) maximal tripotent $e$.
Especially~$\fp^+(e)_2$ is always a Jordan triple system of tube type.
Throughout the paper we assume that the pairing $(\cdot|\cdot)_{\fp^\pm}\colon\fp^\pm\times\fp^\mp\to\BC$ is normalized such that
$(e|\overline{e})_{\fp^+}=(\overline{e}|e)_{\fp^-}=1$ holds for any primitive tripotent $e\in\fp^+$.

A maximal set of primitive tripotents $\{e_1,\dots,e_r\}\subset\fp^+$ satisfying $D(e_i,\overline{e}_j)=0$ $(i\ne j)$ is called
a \textit{Jordan frame} of $\fp^+$. Then $r$ is called the \textit{rank} of the Jordan triple system $\fp^+$, and $e=\sum_{j=1}^re_j$ becomes
a maximal tripotent. When $\fp^+$ is simple, we define integers $d$, $b$, $p$, $n$ by
\begin{gather}
d:=\dim(\fp^+(e_i)_1\cap\fp^+(e_j)_1), \quad i\ne j,\qquad b:=\dim(\fp^+(e_j)_1\cap\fp^+(e)_1), \notag
\\
p:=2+d(r-1)+b,\qquad n:=\dim\fp^+=r+\frac{d}{2}r(r-1)+br. \label{str_const}
\end{gather}
We~note that if $r=1$, then $d$ is not determined uniquely, and any number is allowed.
Then $h(x,y)$ and $B(x,y)$, $(x|y)_{\fp^+}$ and $D(x,y)$ are related as
\[
h(x,y)^p=\Det_{\fp^+}B(x,y),\qquad
p(x|y)_{\fp^+}=\Tr_{\fp^+}D(x,y), \qquad
x\in\fp^+,\quad y\in\fp^-.
\]

\subsection{Jordan algebras}

In this section we fix a tripotent $e\in\fp^+$, and consider $\fp^\pm(e)_2\subset\fp^\pm$ as in (\ref{Peirce}). Then
\[
Q(\overline{e})\colon\ \fp^+(e)_2\longrightarrow\fp^-(e)_2,\qquad Q(e)\colon\ \fp^-(e)_2\longrightarrow\fp^+(e)_2
\]
are bijective and mutually inverse, and $\fp^+(e)_2$ has a Jordan algebra structure with the product
\begin{equation*}
x\cdot y:=\frac{1}{2}\{x,\overline{e},y\},
\end{equation*}
that is,
\[
x\cdot y=y\cdot x,\qquad x^{\mathit{2}}\cdot(x\cdot y)=x\cdot\big(x^{\mathit{2}}\cdot y\big)
\]
holds for any $x,y\in\fp^+(e)_2$, where $x^{\mathit{2}}:=x\cdot x$. The unit element is $e$, and $\fp^+(e)_2$ has the Euclidean real form
\begin{equation*}
\fn^+:=\{x\in\fp^+(e)_2\mid Q(e)\overline{x}=x\}\subset \fp^+(e)_2.
\end{equation*}
Let $(\cdot|\cdot)_{\fn^+}$ be the symmetric bilinear form on $\fp^+(e)_2=\fn^{+\BC}$ given by
\[
(x|y)_{\fn^+}:=(x|Q(\overline{e})y)_{\fp^+}=(y|Q(\overline{e})x)_{\fp^+}, \qquad x,y\in\fp^+(e)_2.
\]
Then this is positive definite on $\fn^+$. Also let $\tr_{\fn^+}(x):=(x|e)_{\fn^+}$,
let $\det_{\fn^+}(x)$ be the \textit{determinant polynomial} on $\fp^+(e)_2=\fn^{+\BC}$,
and let $\det_{\fn^-}(y)=\det_{\fn^+}(Q(e)y)$ for $y\in\fp^-(e)_2$.

Next, let $L\colon\fp^+(e)_2\to\End_\BC(\fp^+(e)_2)$ be the multiplication operator, that is,
\[
L(x)y:=x\cdot y=\frac{1}{2}D(x,\overline{e})y,
\]
and let $P\colon\fp^+(e)_2\to\End_\BC(\fp^+(e)_2)$, $P, D_{\fn^+}, B_{\fn^+}\colon\fp^+(e)_2\times\fp^+(e)_2\to\End_\BC(\fp^+(e)_2)$
and $h_{\fn^+}\colon\allowbreak\fp^+(e)_2\times\fp^+(e)_2\to\BC$ be the maps given by
\begin{gather*}
P(x):=2L(x)^2-L\big(x^{\mathit{2}}\big)=Q(x)Q(\overline{e}),
\\
P(x,y):=2(L(x)L(y)+L(y)L(x)-L(x\cdot y))=Q(x,y)Q(\overline{e}),
\\
D_{\fn^+}(x,y):=2(L(x\cdot y)+L(x)L(y)-L(y)L(x))=D(x,Q(\overline{e})y),
\\
B_{\fn^+}(x,y):=I_{\fp^+(e)_2}-D_{\fn^+}(x,y)+P(x)P(y)=B(x,Q(\overline{e})y),
\\
h_{\fn^+}(x,y):= h(x,Q(\overline{e})y).
\end{gather*}
Then for any $x,y,z,w\in\fp^+(e)_2$ we have
\begin{gather*}
P(P(x)y)=P(x)P(y)P(x), \qquad
P(B_{\fn^+}(x,y)z)=B_{\fn^+}(x,y)P(z)B_{\fn^+}(y,x),
\\
(P(x,z)y|w)_{\fn^+}=(P(y,w)x|z)_{\fn^+},
\end{gather*}
and if $\fp^+(e)_2$ is simple, then the determinant polynomial $\det_{\fn^+}(x)$ and $P(x)$ are related as
\[
\det_{\fn^+}(x)^{\frac{2n'}{r'}}=\det_{\fn^+}(x)^{2+d(r'-1)}=\Det_{\fp^+(e)_2}(P(x)), \qquad x\in\fp^+(e)_2,
\]
where $r':=\rank\fp^+(e)_2$, $n':=\dim\fp^+(e)_2=r'+\frac{d}{2}r'(r'-1)$.
The following subset $\Omega\subset\fn^+$,
\begin{align*}
\Omega&=(\text{connected component of }\{x\in \fn^+\mid P(x)\text{ is positive definite}\}\text{ which contains }e) \\
&=(\text{connected component of }\{x\in \fn^+\mid \det_{\fn^+}(x)>0\}\text{ which contains }e)
\end{align*}
is called the \textit{symmetric cone}, and $(x|y)_{\fn^+}>0$ holds for any $x,y\in\Omega$.

We~say $x\in\fp^+(e)_2$ is \textit{invertible} if $P(x)$ is invertible, and then the \textit{inverse} $x^\itinv\in\fp^+(e)_2$ is given by
\[
x^\itinv:=P(x)^{-1}x\in\fp^+(e)_2.
\]
Then $P(x)^{-1}=P(x^\itinv)$ holds. Also, for $x\in\fp^\pm(e)_2$, if $Q(x)|_{\fp^\mp(e)_2}$ is invertible, then we set
\[
{}^t\hspace{-1pt}x^\itinv:=\big(Q(x)|_{\fp^\mp(e)_2}\big)^{-1}x\in\fp^\mp(e)_2.
\]
If $\fp^+$ is of tube type and if $e$ is a maximal tripotent so that $\fp^+=\fp^+(e)_2=\fn^{+\BC}$ holds, then
${}^t\hspace{-1pt}x^\itinv\in\fp^\mp$ does not depend on the choice of $e$,
and ${}^t\hspace{-1pt}x^\itinv=Q(e)x^\itinv$, $Q(x)^{-1}=Q\big({}^t\hspace{-1pt}x^\itinv\big)$ hold for $x\in\fp^+$.
Also, for $x,y\in\fp^+=\fn^{+\BC}$, if $y$ is invertible then we have
{\samepage\begin{gather}
P(x+y)=B_{\fn^+}\big({-}x,y^\itinv\big)P(y)=B\big({-}x,{}^t\hspace{-1pt}y^\itinv\big)P(y), \notag
\\
\det_{\fn^+}(x+y)=h_{\fn^+}\big({-}x,y^\itinv\big)\det_{\fn^+}(y)=h\big({-}x,{}^t\hspace{-1pt}y^\itinv\big)\det_{\fn^+}(y). \label{det_h}
\end{gather}}

Next we assume $\fp^+$ is of tube type, take two tripotents $e',e''\in\fp^+$ such that $D(e',\overline{e''})=0$ and $e=e'+e''$ is maximal,
and regard $\fp^+$ as a Jordan algebra by using $e$. Let
\[
\fp^+_{11}:=\fp^+(e')_2=\fp^+(e'')_0, \qquad
\fp^+_{12}:=\fp^+(e')_1=\fp^+(e'')_1, \qquad
\fp^+_{22}:=\fp^+(e')_0=\fp^+(e'')_2,
\]
so that $\fp^+_{11}$, $\fp^+_{22}$ are Jordan subalgebras. We~use the same notation $x^\itinv$ to express the inverses in $\fp^+_{jj}$.
Then for $x_{ij},y_{ij}\in\fp^+_{ij}$ we have
\begin{alignat*}{3}
& P(x_{11}+x_{22})y_{11}=P(x_{11})y_{11}\in\fp^+_{11}, \qquad && P(x_{12})y_{11}\in\fp^+_{22},&
\\
& P(x_{11}+x_{22})y_{12}=P(x_{11},x_{22})y_{12}\in\fp^+_{12}, \qquad && P(x_{12})y_{12}\in\fp^+_{12},&
\\
& P(x_{11}+x_{22})y_{22}=P(x_{22})y_{22}\in\fp^+_{22}, \qquad && P(x_{12})y_{22}\in\fp^+_{11}.&
\end{alignat*}
Moreover we have the following.

\begin{Lemma}\label{lemma_Jordansub}
For $x_{jj},y_{jj}\in\fp^+_{jj}$, $z_{12}\in\fp^+_{12}$, we have
\begin{enumerate}\itemsep=0pt
\item[$1.$] $P(x_{11}+x_{22})z_{12}=P(x_{11},x_{22})z_{12}=4L(x_{11})L(x_{22})z_{12}=4L(x_{22})L(x_{11})z_{12}$.
\item[$2.$] $D_{\fn^+}(x_{11},y_{11})z_{12}=4L(x_{11})L(y_{11})z_{12}$, $D_{\fn^+}(x_{22},y_{22})z_{12}=4L(x_{22})L(y_{22})z_{12}$.
\item[$3.$] $4L(x_{11})L\big(x_{11}^\itinv\big)z_{12}=z_{12}$, $4L(x_{22})L\big(x_{22}^\itinv\big)z_{12}=z_{12}$.
\end{enumerate}
\end{Lemma}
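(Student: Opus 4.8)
The plan is to carry out all three assertions inside the Jordan algebra $\fp^+=\fp^+(e)_2$ with unit $e=e'+e''$, reducing everything to the Peirce calculus for the orthogonal idempotents $e',e''$ together with the linearized Jordan identity. The first thing I would record is that $\fp^+_{11},\fp^+_{12},\fp^+_{22}$ are exactly the joint Peirce eigenspaces of the multiplication operators $L(e'),L(e'')$. Indeed, since $e'\in\fp^+_{11}=\fp^+(e'')_0$ and $\overline{e''}\in\fp^-(e'')_2$, the vanishing $\{\fp^+(e'')_0,\fp^-(e'')_2,\fp^+\}=0$ gives $\{e',\overline{e''},\cdot\}=0$, so $L(e')=\tfrac12 D(e',\overline{e'})$ on $\fp^+$; hence $L(e')$ acts by $1,\tfrac12,0$ on $\fp^+_{11},\fp^+_{12},\fp^+_{22}$, and symmetrically $L(e'')$ acts by $0,\tfrac12,1$. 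In particular $e'\cdot x_{11}=x_{11}$, $e'\cdot z_{12}=\tfrac12 z_{12}$, $e'\cdot x_{22}=0$, and likewise for $e''$.

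The engine is the fully polarized Jordan identity: linearizing $L(x^{\mathit{2}})L(x)=L(x)L(x^{\mathit{2}})$ gives, writing $[A,B]:=AB-BA$,
\[
[L(x),L(y\cdot z)]+[L(y),L(z\cdot x)]+[L(z),L(x\cdot y)]=0 \qquad (x,y,z\in\fp^+).
\]
The key step is the identity
\[
L(x_{11}\cdot y_{11})z_{12}=\big(L(x_{11})L(y_{11})+L(y_{11})L(x_{11})\big)z_{12}, \qquad x_{11},y_{11}\in\fp^+_{11},\quad z_{12}\in\fp^+_{12}.
\]
To prove it I would apply the polarized identity with $(x,y,z)=(x_{11},y_{11},z_{12})$ and evaluate the resulting operator at $e'$. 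Using the Peirce eigenvalues above (so that $L(\cdot)e'$ and $L(e')$ produce the factors $1$ and $\tfrac12$), the three commutators collapse to $-\tfrac12 L(x_{11})L(y_{11})z_{12}$, $-\tfrac12 L(y_{11})L(x_{11})z_{12}$ and $+\tfrac12 L(x_{11}\cdot y_{11})z_{12}$, whose sum being zero is exactly the identity. Granting it, Part~2 is immediate: since $D_{\fn^+}(x_{11},y_{11})=2(L(x_{11}\cdot y_{11})+L(x_{11})L(y_{11})-L(y_{11})L(x_{11}))$, substitution gives $D_{\fn^+}(x_{11},y_{11})z_{12}=4L(x_{11})L(y_{11})z_{12}$; the $\fp^+_{22}$-statement follows by exchanging $e'\leftrightarrow e''$.

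For Part~1 I would first note $x_{11}\cdot x_{22}=0$ (the standard Peirce relation $\fp^+_{11}\cdot\fp^+_{22}=0$, i.e.\ the product of the $1$- and $0$-eigenspaces of $L(e')$), which kills the $L(x_{11}\cdot x_{22})$ term and reduces $P(x_{11},x_{22})z_{12}$ to $2(L(x_{11})L(x_{22})+L(x_{22})L(x_{11}))z_{12}$. Then I would prove $L(x_{11})$ and $L(x_{22})$ commute by applying the polarized identity with $(x,y,z)=(x_{11},x_{22},e')$: using $x_{22}\cdot e'=0$, $e'\cdot x_{11}=x_{11}$ and $x_{11}\cdot x_{22}=0$, two of the three commutators vanish and the remaining one yields $[L(x_{22}),L(x_{11})]=0$ on all of $\fp^+$. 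Combining, $P(x_{11},x_{22})z_{12}=4L(x_{11})L(x_{22})z_{12}=4L(x_{22})L(x_{11})z_{12}$; the leftmost equality $P(x_{11}+x_{22})z_{12}=P(x_{11},x_{22})z_{12}$ is already among the relations recorded just before the lemma. Finally, Part~3 follows from Part~2 by taking $y_{11}=x_{11}^\itinv$: since $x_{11}^\itinv$ lies in the commutative, power-associative subalgebra generated by $x_{11}$, the operators $L(x_{11})$ and $L\big(x_{11}^\itinv\big)$ commute on $\fp^+$, so $D_{\fn^+}\big(x_{11},x_{11}^\itinv\big)=2L\big(x_{11}\cdot x_{11}^\itinv\big)=2L(e')$, and applying this to $z_{12}$ with $L(e')z_{12}=\tfrac12 z_{12}$ gives $4L(x_{11})L\big(x_{11}^\itinv\big)z_{12}=z_{12}$ (symmetrically for index~$2$). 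I expect the only delicate point to be the displayed key identity — specifically getting the Peirce eigenvalue factors and the signs right when evaluating the polarized identity at $e'$ — after which Parts~1--3 are routine bookkeeping.
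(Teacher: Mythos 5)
Your proof is correct, but it takes a genuinely different route from the paper's. The paper stays in the Jordan triple/pair formalism: for parts~1 and~2 it writes $P(x_{11},x_{22})=Q(x_{11},x_{22})Q(\overline{e'},\overline{e''})$ and $D_{\fn^+}(x_{11},y_{11})=D(x_{11},Q(\overline{e'})y_{11})$ and expands these via the cited identities (J1.2) and (J2.1$'$) of \cite{FKKLR}, after which the unwanted terms die by the Peirce vanishing rules $\{\fp^+(e')_2,\fp^-(e')_0,\fp^+\}=\{\fp^+(e')_0,\fp^-(e')_2,\fp^+\}=0$; for part~3 it bootstraps from part~1 by writing $4L(x_{11})L\big(x_{11}^\itinv\big)z_{12}=P(x_{11}+e'')P\big((x_{11}+e'')^\itinv\big)z_{12}$. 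You instead work entirely inside the Jordan algebra: you derive the $L(e')$, $L(e'')$ eigenvalues $(1,\tfrac12,0)$, prove the classical Peirce identities $L(x_{11}\cdot y_{11})=L(x_{11})L(y_{11})+L(y_{11})L(x_{11})$ on $\fp^+_{12}$ and $[L(x_{11}),L(x_{22})]=0$ from the linearized Jordan identity evaluated at $e'$ (I checked the eigenvalue bookkeeping in your three commutator terms, $-\tfrac12$, $-\tfrac12$, $+\tfrac12$; it is right), and get part~3 from part~2 with $y_{11}=x_{11}^\itinv$ using $x_{11}^\itinv\in\BC[x_{11}]$ so that $L(x_{11})$ and $L\big(x_{11}^\itinv\big)$ commute. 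What the paper's approach buys is brevity, since the heavy lifting is outsourced to the quoted $QD$-identities; what yours buys is self-containedness — everything reduces to the two axioms of a Jordan algebra stated in the paper plus elementary Peirce theory, at the cost of re-deriving two standard Peirce lemmas. The one point worth spelling out if you write this up is the operator commutativity $[L(x_{11}),L\big(x_{11}^\itinv\big)]=0$, which rests on $x_{11}^\itinv$ lying in the associative commutative subalgebra $\BC[x_{11},e']$ and on $[L(x^m),L(x^n)]=0$; the paper itself invokes the associativity of $\BC[x_1]$ in the proof of Proposition~2.4, so this is consistent with its toolkit.
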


\begin{proof}
(1) By the definition of $P$, $L$ and \cite[Part~V, Proposition~I.2.1(J1.2)]{FKKLR} we have
\begin{gather*}
P(x_{11},x_{22})z_{12} =Q(x_{11},x_{22})Q(\overline{e})z_{12}\!=Q(x_{11},x_{22})Q(\overline{e'},\overline{e''})z_{12}\\
\qquad{} =-D(\{x_{11},\overline{e'},x_{22}\},\overline{e''})z_{12}
 +D(x_{11},\overline{e'})D(x_{22},\overline{e''})z_{12}
+D(x_{22},\overline{e'})D(x_{11},\overline{e''})z_{12} \\
\qquad{} = D(x_{11},\overline{e'})D(x_{22},\overline{e''})z_{12}=D(x_{11},\overline{e})D(x_{22},\overline{e})z_{12}=4L(x_{11})L(x_{22})z_{12}.
\end{gather*}
The last equality is also proved similarly.

(2) Similarly, by \cite[Part V, Proposition I.2.1(J2.1$'$)]{FKKLR} we have
\begin{align*}
D_{\fn^+}(x_{11},y_{11})z_{12}&=D(x_{11},Q(\overline{e})y_{11})z_{12}=D(x_{11},Q(\overline{e'})y_{11})z_{12} \\
&=D(x_{11},\overline{e'})D(y_{11},\overline{e'})z_{12}-Q(x_{11},y_{11})Q(\overline{e'})z_{12} \\
&=D(x_{11},\overline{e})D(y_{11},\overline{e})z_{12}=4L(x_{11})L(y_{11})z_{12}.
\end{align*}
The 2nd formula is also proved similarly.

(3) By (1) we have
\begin{align*}
4L(x_{11})L\big(x_{11}^\itinv\big)z_{12}&=16L(x_{11})L(e'')L\big(x_{11}^\itinv\big)L(e'')z_{12}=P(x_{11}+e'')P\big(x_{11}^\itinv+e''\big)z_{12} \\
&=P(x_{11}+e'')P\big((x_{11}+e'')^\itinv\big)z_{12}=z_{12}.
\end{align*}
The 2nd formula is also proved similarly.
\end{proof}

\subsection{Symmetric subalgebras of Jordan algebras}

In this section we consider a $\BC$-linear involution $\sigma$ on a simple Hermitian positive Jordan triple system $\fp^\pm$, i.e., a Jordan triple system automorphism $\sigma\colon\fp^\pm\to\fp^\pm$ of order~2
which commutes with the $\BC$-antilinear map $\bar{\cdot}\colon\fp^\pm\to\fp^\mp$. We~write
\begin{align*}
\fp^\pm_1&:=\big(\fp^\pm\big)^\sigma=\big\{x\in\fp^\pm\mid \sigma(x)=x\big\}, \\
\fp^\pm_2&:=\big(\fp^\pm\big)^{-\sigma}=\big\{x\in\fp^\pm\mid \sigma(x)=-x\big\}.
\end{align*}
First we prove the following.
\begin{Proposition}
Assume $\fp^+$ is simple, and $\fp^+$, $\fp^+_2$ are of tube type.
\begin{enumerate}\itemsep=0pt
\item[$1.$] A maximal tripotent $e$ of $\fp^+_2$ is also a maximal tripotent of $\fp^+$.
\item[$2.$] If $\fp^+_2$ is not simple, then there exists a tripotent $e'\in\fp^+$ such that $\fp^+_2=\fp^+(e')_2\oplus\fp^+(e')_0$,
$\fp^+_1=\fp^+(e')_1$ hold.
\end{enumerate}
\end{Proposition}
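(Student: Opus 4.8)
The plan is to study the Peirce decomposition of a maximal tripotent of $\fp^+_2$ and to use systematically the $\BZ/2$-grading induced by $\sigma$. Since $\sigma$ commutes with $\overline{\cdot}$ and preserves the triple product, the splitting $\fp^\pm=\fp^\pm_1\oplus\fp^\pm_2$ (with $\fp^\pm_2=\overline{\fp^\mp_2}$ in degree $1$) is a $\BZ/2$-grading: a triple product of factors of degrees $\delta_1,\delta_2,\delta_3$ lands in degree $\delta_1+\delta_2+\delta_3\bmod 2$. In particular $\fp^+_2$ is a Jordan triple subsystem, so a maximal tripotent $e$ of $\fp^+_2$ is a tripotent of $\fp^+$, and since $\sigma(e)=-e$ the operator $D(e,\overline e)$ is $\sigma$-invariant. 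Hence $\sigma$ preserves each Peirce space and $\fp^+(e)_j=\fp^+_1(e)_j\oplus\fp^+_2(e)_j$ with $\fp^+_i(e)_j:=\fp^+_i\cap\fp^+(e)_j$.

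For statement~1, tube type of $\fp^+_2$ and maximality of $e$ in $\fp^+_2$ give $\fp^+_2=\fp^+_2(e)_2$, hence $\fp^+_2\subseteq\fp^+(e)_2$ and $\fp^+_2(e)_1=\fp^+_2(e)_0=0$; in particular $\fp^+(e)_0\subseteq\fp^+_1$ and $\fp^+(e)_1=\fp^+_1(e)_1\subseteq\fp^+_1$. I would then record the Peirce identities
\[
\{\fp^+(e)_2,\fp^-(e)_2,\fp^+(e)_0\}=0,\qquad \{\fp^+(e)_0,\fp^-(e)_0,\fp^+(e)_2\}=0
\]
(the first follows from $a=\tfrac12\{e,\overline e,a\}$ for $a\in\fp^+(e)_2$ and the main identity, each resulting term carrying a factor of type $\{\fp^+(e)_0,\fp^-(e)_2,\cdot\}=0$; the second is symmetric). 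Together with the standard relations $\{\fp^+(e)_2,\fp^-(e)_0,\fp^+\}=\{\fp^+(e)_0,\fp^-(e)_2,\fp^+\}=0$, these show that $\fp^+(e)_2\oplus\fp^+(e)_0$ is an orthogonal direct product of subsystems, so that $\fp^+(e)_0$ can couple to the remainder of $\fp^+$ only through the Peirce-one space $\fp^+(e)_1$. It therefore suffices to prove $\fp^+(e)_0=0$, and since $\fp^+$ is of tube type this is equivalent to $\fp^+(e)_1=0$, i.e.\ to $\fp^+=\fp^+(e)_2$.

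The crux is exactly this vanishing, equivalently that $e$ has full rank $r=\rank\fp^+$. My plan is to argue by simplicity: the ideal of $\fp^+$ generated by $\fp^+(e)_0$ is $\sigma$-stable (as $\fp^+(e)_0\subseteq\fp^+_1$ is $\sigma$-fixed), hence is $0$ or all of $\fp^+$. The grading blocks the most direct leakage of this ideal into $\fp^+_2$ — for example $\{\fp^+(e)_1,\fp^-_2,\fp^+(e)_1\}\subseteq\fp^+_2(e)_0=0$ and $\{\fp^+_2,\fp^-_1,\fp^+(e)_0\}=0$ — so one expects the ideal to avoid $\fp^+_2$ and thus to be proper, forcing $\fp^+(e)_0=0$. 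The main obstacle I anticipate is the residual coupling through the $\sigma$-fixed Peirce-two part: products such as $\{\fp^+_1(e)_2,\fp^-_2,\fp^+_1(e)_2\}$ can a~priori reach $\fp^+_2$, and ruling this out is precisely the rank identity $\varepsilon_1\rank\fp^+_2=\rank\fp^+$. I would try to close this gap intrinsically from the graded structure, and otherwise confirm $\fp^+(e)_0=0$ directly on the finite list of pairs using the explicit Jordan algebras of Section~\ref{section_prelim2}.

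For statement~2 I would use statement~1 as input. Writing $\fp^+_2=\fp^+_{11}\oplus\fp^+_{22}$ as a sum of simple ideals, let $e',e''$ be their units and $e=e'+e''$; by statement~1 this $e$ is a maximal tripotent of $\fp^+$, so $\fp^+=\fp^+(e)_2$. The tripotents $e',e''$ are orthogonal in $\fp^+_2$, hence in $\fp^+$, and the joint Peirce decomposition together with maximality of $e$ yields $\fp^+=\fp^+(e')_2\oplus\fp^+(e')_1\oplus\fp^+(e')_0$ with $\fp^+(e')_0=\fp^+(e'')_2$. Passing to $\sigma$-eigenspaces and using $\fp^+_2(e')_2=\fp^+_{11}$, $\fp^+_2(e')_0=\fp^+_{22}$, $\fp^+_2(e')_1=0$, the desired equalities $\fp^+_2=\fp^+(e')_2\oplus\fp^+(e')_0$ and $\fp^+_1=\fp^+(e')_1$ are equivalent to $\fp^+_1(e')_2=\fp^+_1(e')_0=0$. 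This is the same fullness statement as before, now applied inside the tube subsystems $\fp^+(e')_2$ and $\fp^+(e'')_2$ with the restricted involution, and I would settle it by the identical grading-plus-simplicity mechanism (or again by inspection of the explicit models); the main obstacle is thus the same as in statement~1.
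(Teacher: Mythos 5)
Your setup — the $\BZ/2$-grading induced by $\sigma$, the observation that $\fp^+_2\subseteq\fp^+(e)_2$ and $\fp^+(e)_0\oplus\fp^+(e)_1\subseteq\fp^+_1$, and the reduction of statement~1 to the vanishing $\fp^+(e)_0=0$ (equivalently $\fp^+(e)_1=0$) — matches the paper. But the crux is exactly the step you leave open, and your proposed route does not close it: the ideal generated by $\fp^+(e)_0$ contains $\{\fp^+(e)_0,\fp^-(e)_1,\fp^+(e)_2\}\subseteq\fp^+(e)_1$ and then $\{\fp^+(e)_1,\fp^-(e)_1,\fp^+(e)_2\}\subseteq\fp^+(e)_2\supseteq\fp^+_2$, so the grading alone does not keep the ideal away from $\fp^+_2$, as you yourself note. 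Falling back to checking the classification is not a proof. The paper's argument is a different, self-contained trick: extend $e$ to a maximal tripotent $\tilde e=e+e'$ of $\fp^+$ with $e'\in(\fp^+(e)_0)^\sigma$, regard $\fp^+$ as a Jordan algebra with unit $\tilde e$, and compute the square $x^{\mathit 2}=\tfrac12\{x,\overline e+\overline{e'},x\}$ of $x\in\fp^+(e)_1=(\fp^+(e)_1)^\sigma$ in two ways. By the grading the components land in $(\fp^+(e)_0)^{-\sigma}\oplus(\fp^+(e)_2)^\sigma$, and $(\fp^+(e)_0)^{-\sigma}=0$, so $x^{\mathit 2}\in\fp^+(e)_2$; but decomposing $\fp^+(e)_1$ by joint Peirce spaces of Jordan frames of $\fp^+(e)_2$ and $\fp^+(e)_0$ and using \cite[Proposition~IV.1.4]{FK}, $x^{\mathit 2}=\tfrac12(x|x)_{\fn^+}(e_i+e_j')$ has a nonzero $\fp^+(e)_0$-component unless $x=0$. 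Hence $\fp^+(e)_1=0$, then $\fp^+(e)_0$ is an ideal and simplicity kills it. You need some argument of this kind; without it statement~1 is not proved.

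Statement~2 has a second, independent gap: your reduction to ``the same fullness statement applied inside $\fp^+(e')_2$'' is not valid. What is needed is $(\fp^+(e')_2)^\sigma=0$, i.e., that the \emph{entire} Peirce-two space of $e'$ equals $\fp^+_{11}$; applying statement~1 to the subsystem $\fp^+(e')_2$ with the restricted involution only tells you that $e'$ is maximal there, which is automatic and says nothing about the $\sigma$-fixed part (compare statement~1 itself, where $\fp^+=\fp^+(e)_2$ but $\fp^+_1\ne 0$). The paper's proof of this step is again different: for $x_2\in(\fp^+(e')_2)^\sigma$ one shows $L(x_2)y_1=\tfrac12\{x_2,\overline e,y_1\}\in(\fp^+(e')_1)^{-\sigma}=\{0\}$ for all $y_1\in\fp^+(e')_1$, hence $(P(y_1,w_1)z_0|x_2)_{\fn^+}=4(L(z_0)L(x_2)y_1|w_1)_{\fn^+}=0$, and since $P(\fp^+(e')_1,\fp^+(e')_1)\fp^+(e')_0=\fp^+(e')_2$ this forces $x_2=0$; the case of $(\fp^+(e')_0)^\sigma$ is symmetric. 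You would need to supply this (or an equivalent) argument rather than cite statement~1.
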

\begin{proof}
(1) Let $e\in\fp^+_2=(\fp^+)^{-\sigma}$ be a maximal tripotent so that $\fp^+_2=(\fp^+(e)_2)^{-\sigma}$ holds,
and take a tripotent $e'\in\fp^+(e)_0=(\fp^+(e)_0)^\sigma$ such that $\tilde{e}=e+e'$ is a maximal tripotent of $\fp^+$.
We~regard $\fp^+$ as a Jordan algebra by $\tilde{e}$. Then for $x\in\fp^+(e)_1=(\fp^+(e)_1)^\sigma$ we have
\[
x^{\mathit{2}}=\frac{1}{2}\{x,\overline{e}+\overline{e'},x\}\in (\fp^+(e)_0)^{-\sigma}\oplus(\fp^+(e)_2)^\sigma
=(\fp^+(e)_2)^\sigma\subset\fp^+(e)_2.
\]
On the other hand, let $\{e_1,\dots,e_{r'}\}\subset\fp^+(e)_2$, $\{e_1',\dots,e_{r''}'\}\subset\fp^+(e)_0$ be Jordan frames.
Then we have
\[
\fp^+(e)_1=\bigoplus_{\substack{1\le i\le r'\\ 1\le j\le r''}}\fp^+(e_i)_1\cap\fp^+(e_j')_1,
\]
and for $x\in\fp^+(e_i)_1\cap\fp^+(e_j')_1\subset\fp^+(e)_1$, by \cite[Proposition IV.1.4]{FK} we have
\[
x^{\mathit{2}}=\frac{1}{2}(x|x)_{\fn^+}(e_i+e_j')\in\fp^+(e)_2\oplus\fp^+(e)_0.
\]
Therefore we must have $\fp^+(e)_1=\{0\}$, so that $\fp^+=\fp^+(e)_2\oplus\fp^+(e)_0$ holds.
Hence $\fp^+(e)_0\subset\fp^+$ is an ideal, and since $\fp^+$ is assumed to be simple, we have $\fp^+(e)_0=\{0\}$,
and therefore $e\in\fp^+$ is a maximal tripotent.

(2) Let $\fp^+_{11}$, $\fp^+_{22}\subset\fp^+_2=(\fp^+)^{-\sigma}$ be non-trivial ideals such that $\fp^+_2=\fp^+_{11}\oplus\fp^+_{22}$ holds,
and let $e'\in\fp^+_{11}$, $e''\in\fp^+_{22}$ be maximal tripotents of $\fp^+_{11}$, $\fp^+_{22}$ respectively.
Then $e=e'+e''$ is a maximal tripotent of $\fp^+_2$ and $\fp^+$. We~regard $\fp^+$ as a Jordan algebra by $e$.
Since $\fp^+_{11}$, $\fp^+_{22}$ are also of tube type, we have
\begin{gather*}
\fp^+_{11}=(\fp^+(e')_2)^{-\sigma}=(\fp^+(e'')_0)^{-\sigma}, \\
\fp^+_{22}=(\fp^+(e')_0)^{-\sigma}=(\fp^+(e'')_2)^{-\sigma}, \\
(\fp^+(e')_1)^{-\sigma}=(\fp^+(e'')_1)^{-\sigma}=\{0\}.
\end{gather*}
Let $x_2\in(\fp^+(e')_2)^\sigma$. Then for $y_1\in\fp^+(e')_1=(\fp^+(e')_1)^\sigma$ we have
\[
L(x_2)y_1=\frac{1}{2}\{x_2,\overline{e},y_1\}\in (\fp^+(e')_1)^{-\sigma}=\{0\}.
\]
Especially, for any $y_1,w_1\in\fp^+(e')_1$, $z_0\in\fp^+(e')_0$, by Lemma \ref{lemma_Jordansub}(1) we have
\[
(P(y_1,w_1)z_0|x_2)_{\fn^+}=(P(z_0,x_2)y_1|w_1)_{\fn^+}=4(L(z_0)L(x_2)y_1|w_1)_{\fn^+}=0,
\]
and since $P(\fp^+(e')_1,\fp^+(e')_1)\fp^+(e')_0=\fp^+(e')_2$ holds, we get $x_2=0$, and $(\fp^+(e')_2)^\sigma=\{0\}$.
Similarly we have $(\fp^+(e')_0)^\sigma=\{0\}$. Therefore we get $\fp^+_{11}=\fp^+(e')_2$, $\fp^+_{22}=\fp^+(e')_0$ and
$\fp^+_1=(\fp^+)^\sigma=\fp^+(e')_1$.
\end{proof}

\begin{Corollary}
For a simple Hermitian positive Jordan triple system $\fp^\pm$ and an involution $\sigma$ on $\fp^\pm$,
$\fp^+_2=(\fp^+)^{-\sigma}$ is a direct sum of at most two simple Jordan triple subsystems.
\end{Corollary}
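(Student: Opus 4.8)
The plan is to reduce the general statement to the tube-type situation already settled by the Proposition, using the tube part (the Peirce-$2$ space of a maximal tripotent) as the reduction device. First I would record two structural inputs. Since $\sigma$ is a Jordan triple automorphism, for $x,z\in\fp^+_2$ and $y\in\fp^-_2=(\fp^-)^{-\sigma}$ one has $\sigma\{x,y,z\}=\{\sigma x,\sigma y,\sigma z\}=-\{x,y,z\}$, so $\{x,y,z\}\in\fp^+_2$; hence $\fp^+_2=(\fp^+)^{-\sigma}$ is a Jordan triple subsystem, it decomposes into simple ideals, and the task is to bound their number by $2$. Second, for a simple system $\fp^+$ and any tripotent $c$, both Peirce spaces $\fp^+(c)_2$ and $\fp^+(c)_0$ are again simple (or zero), since a Jordan frame inside them extends to a frame of $\fp^+$ and they inherit the common off-diagonal Peirce dimension $d$ of the simple system $\fp^+$.

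Next I would settle the tube-type case. Assume $\fp^+$ and $\fp^+_2$ are of tube type. If $\fp^+_2$ is simple there is nothing to prove; otherwise the Proposition produces a tripotent $e'\in\fp^+$ with $\fp^+_2=\fp^+(e')_2\oplus\fp^+(e')_0$. By the structural fact above each summand is simple (or zero), so $\fp^+_2$ is a direct sum of at most two simple subsystems.

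Finally I would remove the tube-type hypothesis by passing to tube parts. Choose a maximal tripotent $e$ of $\fp^+_2$; then $e\in(\fp^+)^{-\sigma}$ gives $\sigma e=-e$, so $\sigma$ commutes with $D(e,\overline e)$ and preserves each Peirce space $\fp^+(e)_j$. The space $\fp^+(e)_2$ is simple (by the fact above with $c=e$) and of tube type, and its $(-\sigma)$-eigenspace $W:=(\fp^+(e)_2)^{-\sigma}=\fp^+_2\cap\fp^+(e)_2=(\fp^+_2)(e)_2$ is precisely the tube part of $\fp^+_2$, hence of tube type with maximal tripotent $e$. Applying the tube-type case to the pair $(\fp^+(e)_2,\sigma)$ shows that $W$ is a sum of at most two simple subsystems. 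It remains to note that the number of simple factors is unchanged on passing to the tube part: writing $\fp^+_2=\bigoplus_i S_i$ with $e=\sum_i e_i$ and $e_i$ maximal in $S_i$, one has $W=\bigoplus_i (S_i)(e_i)_2$ with each $(S_i)(e_i)_2$ simple and nonzero. Thus $\fp^+_2$ has the same number of simple factors as $W$, namely at most two.

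I expect the main obstacle to lie in the two structural inputs rather than in the logical skeleton: verifying that the Peirce-$2$ and Peirce-$0$ spaces of a simple system are themselves simple, and that the number of simple factors is preserved under passage to the tube part. Both reduce to bookkeeping with the structure constant $d$ and the classification of simple Hermitian positive Jordan triple systems, and once they are in place the reduction to the Proposition is immediate.
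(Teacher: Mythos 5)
Your proposal is correct and follows essentially the same route as the paper: the tube-type case is settled by the preceding Proposition together with the simplicity of the Peirce spaces $\fp^+(e')_2$ and $\fp^+(e')_0$, and the general case is reduced to it by passing to the Peirce-$2$ space of a maximal tripotent $e$ of $\fp^+_2$ and observing that $(\fp^+(e)_2)^{-\sigma}=\bigoplus_j\fp^+_{2(j)}(e)_2$ has the same number of (nonzero, simple) factors as $\fp^+_2$. The only difference is that you spell out the two structural inputs the paper uses tacitly, which does not change the argument.
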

\begin{proof}
If $\fp^+$, $\fp^+_2$ are of tube type, then since $\fp^+(e')_2$ and $\fp^+(e')_0$ are always simple for any tripotent $e'\in\fp^+$,
this follows from the previous proposition. If $\fp^+$, $\fp^+_2$ are general,
let $\fp^+_2=\bigoplus_{j=1}^k\fp^+_{2(j)}$ be the simple decomposition, and take maximal tripotents $e_j\in\fp^+_{2(j)}$.
Then $e=\sum_{j=1}^k e_j$ is a maximal tripotent of $\fp^+_2$, and we have
$(\fp^+(e)_2)^{-\sigma}=\fp^+_2(e)_2=\bigoplus_{j=1}^k \fp^+_{2(j)}(e)_2$. Therefore by the result for tube type case, we have $k\le 2$.
\end{proof}

In the rest of this section, we assume $\fp^+$, $\fp^+_2$ are of tube type, fix a maximal tripotent $e\in\fp^+_2\subset\fp^+$,
and regard $\fp^+$ as a Jordan algebra by $e$. If $\fp^+_2$ is not simple, we also write $\fp^+_2=\fp^+_{11}\oplus\fp^+_{22}$,
$\fp^+_1=\fp^+_{12}$, and take maximal tripotents $e'\in\fp^+_{11}$, $e''\in\fp^+_{22}$ such that $e=e'+e''$.
Let $\det_{\fn^+}(x)$, $\det_{\fn^+_{jj}}(x_{jj})$ be the determinant polynomials on the Jordan algebras $\fp^+$, $\fp^+_{jj}$,
and express the inverse on these by the same symbol $x^\itinv$, $x_{jj}^\itinv$.

\begin{Proposition}\label{prop_Jordansub}\quad
\begin{enumerate}\itemsep=0pt
\item[$1.$] For $x=x_1+x_2\in\fp^+=\fp^+_1\oplus\fp^+_2$, assume $x_2$ is invertible.
Then $x$ is invertible if and only if $x_2-P(x_1)x_2^\itinv$ is invertible, and we have
\begin{gather*}
x^\itinv=-P\big(x_2-P(x_1)x_2^\itinv\big)^{-1}(x_1-P(x_1)P(x_2)^{-1}x_1)+\big(x_2-P(x_1)x_2^\itinv\big)^\itinv,
\\
\det_{\fn^+}(x)^2=\det_{\fn^+}\big(x_2-P(x_1)x_2^\itinv\big)\det_{\fn^+}(x_2)
=h_{\fp^+}\big(Q(x_1){}^t\hspace{-1pt}x_2^\itinv,{}^t\hspace{-1pt}x_2^\itinv\big)\det_{\fn^+}(x_2)^2
\\ \hphantom{\det_{\fn^+}(x)^2}
{}=h_{\fp^+}\big(x_1,Q(x_2)^{-1}x_1\big)\det_{\fn^+}(x_2)^2.
\end{gather*}

\item[$2.$] For $x=x_{11}+x_{12}+x_{22}\in\fp^+=\fp^+_{11}\oplus\fp^+_{12}\oplus\fp^+_{22}$, assume $x_{11}$ and~$x_{22}$ are invertible
in $\fp^+_{11}$ and~$\fp^+_{22}$ respectively. Then $x$ is invertible if and only if
$x_{11}-P(x_{12})x_{22}^\itinv$ and $x_{22}-P(x_{12})x_{11}^\itinv$ are invertible in $\fp^+_{11}$ and $\fp^+_{22}$ respectively, and we have
\begin{gather*}
x^\itinv\!=\!\big(x_{11}\!-\!P(x_{12})x_{22}^\itinv\big)^\itinv\!
\!-\!P\big(\big(x_{11}\!-\!P(x_{12})x_{22}^\itinv\big)^\itinv\!\!,x_{22}^\itinv\big)x_{12}
\!+\!\big(x_{22}\!-\!P(x_{12})x_{11}^\itinv\big)^\itinv
\\ \hphantom{x^\itinv}
{}\!=\!\big(x_{11}\!-\!P(x_{12})x_{22}^\itinv\big)^\itinv\!
\!-\!P\big(\big(x_{22}\!-\!P(x_{12})x_{11}^\itinv\big)^\itinv\!\!,x_{11}^\itinv\big)x_{12} \!+\!\big(x_{22}\!-\!P(x_{12})x_{11}^\itinv\big)^\itinv\!,
\\
\det_{\fn^+}(x)=\det_{\fn^+_{11}}\big(x_{11}-P(x_{12})x_{22}^\itinv\big)\det_{\fn^+_{22}}(x_{22})
\\ \hphantom{\det_{\fn^+}(x)}
{}=h_{\fp^+_{11}}\big(Q(x_{12}){}^t\hspace{-1pt}x_{22}^\itinv,{}^t\hspace{-1pt}x_{11}^\itinv\big)
\det_{\fn^+_{11}}(x_{11})\det_{\fn^+_{22}}(x_{22})
\\ \hphantom{\det_{\fn^+}(x)}
{}=\det_{\fn^+_{22}}\big(x_{22}-P(x_{12})x_{11}^\itinv\big)\det_{\fn^+_{11}}(x_{11})
\\ \hphantom{\det_{\fn^+}(x)
}=h_{\fp^+_{22}}\big(Q(x_{12}){}^t\hspace{-1pt}x_{11}^\itinv,{}^t\hspace{-1pt}x_{22}^\itinv\big)
\det_{\fn^+_{11}}(x_{11})\det_{\fn^+_{22}}(x_{22}).
\end{gather*}
\end{enumerate}
\end{Proposition}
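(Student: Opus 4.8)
Both statements are the Jordan-algebra analogues of the block (Schur-complement) formulas for the inverse and determinant of a $2\times 2$ block matrix, and the plan is to prove them in that spirit: derive the determinant identities from~(\ref{det_h}), then read off (or directly verify) the inverse formulas. The structural input I would record first is the behaviour of the quadratic representation under the two gradings. In Part~1, writing $x=x_1+x_2$ with $x_1\in\fp^+_1=(\fp^+)^\sigma$, $x_2\in\fp^+_2=(\fp^+)^{-\sigma}$, the operators $P(x_1)$ and $P(x_2)$ preserve the splitting $\fp^+=\fp^+_1\oplus\fp^+_2$ while $P(x_1,x_2)$ interchanges the two summands; this follows from the sign rule for $\{\cdot,\cdot,\cdot\}$ under $\sigma$ together with the fact that $Q(\overline e)$ preserves the grading. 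I would also note that $-\sigma$ is a Jordan-algebra automorphism fixing $e$ (indeed $-\sigma(u\cdot v)=(-\sigma u)\cdot(-\sigma v)$ and $-\sigma(e)=e$), so that $\det_{\fn^+}$ is even in $x_1$, i.e.\ $\det_{\fn^+}(-x_1+x_2)=\det_{\fn^+}(x_1+x_2)$. In Part~2 the analogous bookkeeping is supplied by the Peirce relations and by Lemma~\ref{lemma_Jordansub}, which is exactly what controls the mixed products $P(x_{12})x_{jj}^\itinv$ landing in the opposite block.

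For the determinant in Part~1 I would combine evenness with~(\ref{det_h}): $\det_{\fn^+}(x)^2=\det_{\fn^+}(x_1+x_2)\det_{\fn^+}(-x_1+x_2)$, and applying~(\ref{det_h}) to each factor reduces the claim to the single operator identity $B_{\fn^+}(x_1,c)B_{\fn^+}(-x_1,c)=B_{\fn^+}(P(x_1)c,c)$ with $c=x_2^\itinv$. Taking $\Det_{\fp^+}$ of this and using the operator form $P(x_2-P(x_1)x_2^\itinv)=B_{\fn^+}(P(x_1)x_2^\itinv,x_2^\itinv)P(x_2)$ of~(\ref{det_h}) together with $\Det_{\fp^+}P(y)=\det_{\fn^+}(y)^{2n/r}$ then yields $\det_{\fn^+}(x)^2=\det_{\fn^+}(x_2-P(x_1)x_2^\itinv)\det_{\fn^+}(x_2)$. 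The remaining two expressions for the determinant are rewritings: $\det_{\fn^+}(x_2-P(x_1)x_2^\itinv)=h_{\fp^+}(\cdots)\det_{\fn^+}(x_2)$ via~(\ref{det_h}), followed by the definitions of ${}^t x_2^\itinv$, the relation $Q(x_2)^{-1}=Q({}^t x_2^\itinv)$, and the symmetry of $h$. For Part~2 I would run the same argument inside the Peirce decomposition: (\ref{det_h}) gives $\det_{\fn^+}(x)=h_{\fn^+}(-x_{12},(x_{11}+x_{22})^\itinv)\det_{\fn^+}(x_{11}+x_{22})$, and Lemma~\ref{lemma_Jordansub} together with $\det_{\fn^+}(x_{11}+x_{22})=\det_{\fn^+_{11}}(x_{11})\det_{\fn^+_{22}}(x_{22})$ and $P(x_{12})(x_{11}^\itinv+x_{22}^\itinv)\in\fp^+_{22}\oplus\fp^+_{11}$ collapses this to the stated Schur-complement product; the two displayed forms correspond to eliminating the $(1,1)$- or the $(2,2)$-block first, and their equality is the symmetry of the Schur complement.

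For the inverse formulas I would verify the closed expressions directly through $P(x)x^\itinv=x$, which is equivalent to $x^\itinv=P(x)^{-1}x$. Writing the claimed right-hand side as $w_1+w_2$ with $w_1\in\fp^+_1$ and $w_2=s^\itinv\in\fp^+_2$, where $s=x_2-P(x_1)x_2^\itinv$ is the Schur complement, I would expand $P(x)=P(x_1)+P(x_1,x_2)+P(x_2)$ and compute the $\fp^+_1$- and $\fp^+_2$-components of $P(x)(w_1+w_2)$ using the grading above. The grade-preserving pieces $P(x_1)+P(x_2)$ act within each block and the grade-reversing piece $P(x_1,x_2)$ feeds $w_2$ into $\fp^+_1$ and $w_1$ into $\fp^+_2$; the point is that these contributions telescope to $x_1$ and $x_2$ respectively once one substitutes $P(s)s^\itinv=s$, the fundamental formula $P(P(a)b)=P(a)P(b)P(a)$, and $P(s)^{-1}=P(s^\itinv)$. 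For Part~2 the same computation is carried out with $P(x_1,x_2)$ replaced by the Peirce mixed terms governed by Lemma~\ref{lemma_Jordansub}, and the two forms of $x^\itinv$ again correspond to the two orders of elimination.

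I expect the main obstacle to be twofold. First, the operator identity $B_{\fn^+}(x_1,c)B_{\fn^+}(-x_1,c)=B_{\fn^+}(P(x_1)c,c)$ underlying the determinant computation: it is transparent in the matrix model $B_{\fn^+}(x,y)z=(I-xy)z(I-yx)$ but in general must be extracted from the fundamental formula and the $D$--$P$ identities of~\cite[Part~V]{FKKLR}, and getting the cross-term $[P(x_1)P(c),D_{\fn^+}(x_1,c)]$ to match $-D_{\fn^+}(P(x_1)c,c)+2P(x_1)P(c)$ is the delicate point. Second, in the inverse computation the cancellation of the grade-reversing cross-terms is where all the Jordan identities are actually used; alternatively one can bypass the direct verification by differentiating the determinant identity and using $\nabla\log\det_{\fn^+}(x)=x^\itinv$ to read off both components of $x^\itinv$, trading the algebraic cancellation for a short calculus argument.
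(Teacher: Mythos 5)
Your overall strategy is viable and diverges from the paper's in several places, but as written it has two concrete gaps. First, the ``if and only if'' invertibility claims are not delivered by your plan. Verifying $P(x)w=x$ for a candidate $w$ presupposes that $P(x)$ is invertible rather than proving it, and in a Jordan algebra $P(x)w=x$ alone does not certify invertibility (one needs, e.g., $P(x)w=x$ together with $P(x)w^{\mathit{2}}=e$, or an a priori argument that $P(x)$ is invertible). The paper gets the equivalence for free from its reduction: writing $z_1=P\big(\sqrt{x_2}\big)^{-1}x_1$ it factors $x_2-P(x_1)x_2^\itinv=P\big(\sqrt{x_2}\big)\big((e-z_1)\cdot(e+z_1)\big)$, which is invertible iff $z_1+e=P\big(\sqrt{x_2}\big)^{-1}(x_1+x_2)$ is, i.e., iff $x$ is. Your direct component-wise verification of $P(x)(w_1+w_2)=x$ (or the $\nabla\log\det$ shortcut) would still need this factorization, or an equivalent argument, to close the equivalence in both parts.

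Second, in Part~2 the step ``(\ref{det_h}) gives $\det_{\fn^+}(x)=h_{\fn^+}\big({-}x_{12},(x_{11}+x_{22})^\itinv\big)\det_{\fn^+}(x_{11}+x_{22})$ and Lemma~\ref{lemma_Jordansub} collapses this'' is not a proof: evaluating the generic norm $h_{\fn^+}\big({-}x_{12},x_{11}^\itinv+x_{22}^\itinv\big)$ on a Peirce-mixed pair is exactly the nontrivial content, since $B_{\fn^+}\big({-}x_{12},x_{11}^\itinv+x_{22}^\itinv\big)$ is not triangular and its determinant does not reduce to the Lemma. The paper sidesteps this with a unipotent trick: $B_{\fn^+}\big(x_{12},x_{11}^\itinv\big)$ is strictly triangular plus the identity with respect to the Peirce grading, hence has $\Det_{\fp^+}$ equal to $1$, and it carries $x$ to $x_{11}+x_{22}-P(x_{12})x_{11}^\itinv$; combined with $\Det_{\fp^+}P(x)=\det_{\fn^+}(x)^{2n/r}$ and the covariance $P(lx)=lP(x)l^\top$ this gives the Schur-complement factorization directly. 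You would need to either adopt this device or genuinely compute the mixed generic norm. By contrast, your Part~1 determinant argument is sound and essentially the paper's: the identity $h\big({-}x_1,{}^t\hspace{-1pt}x_2^\itinv\big)h\big(x_1,{}^t\hspace{-1pt}x_2^\itinv\big)=h\big(Q(x_1){}^t\hspace{-1pt}x_2^\itinv,{}^t\hspace{-1pt}x_2^\itinv\big)$ that you propose to extract from an operator identity for $B_{\fn^+}$ is available as a citation ((J4.2) of \cite[Part~V]{FKKLR}), which is how the paper proceeds, so there is no need to reprove it.
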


\begin{proof}
(1) For the 1st formula, first we assume $x_2=e$. Then since $\BC[x_1]\subset\fp^+$ is commutative and associative,
$e-P(x_1)e^\itinv=e-x_1^{\mathit{2}}=(e-x_1)\cdot(e+x_1)=-(x_1+e)\cdot\sigma(x_1+e)$ holds,
and this is invertible if and only if $x_1+e$ is invertible. Easily we have
\begin{align*}
(\text{RHS})&=-P\big(e-P(x_1)e^\itinv\big)^{-1}(x_1-P(x_1)x_1)+\big(e-P(x_1)e^\itinv\big)^\itinv
\\
&=-\big(e-x_1^\mathit{2}\big)^{\mathit{-2}}\cdot\big(x_1-x_1^{\mathit{3}}\big)+\big(e-x_1^{\mathit{2}}\big)^\itinv=(x_1+e)^\itinv
=(\text{LHS}).
\end{align*}
Next we consider general invertible $x_2$. Then there exists $\sqrt{x_2}\in\fp^+_2$ satisfying $(\sqrt{x_2})^{\mathit{2}}=x_2$ (not unique).
Let $z_1:=P\big(\sqrt{x_2}\big)^{-1}x_1$. Then we have
\begin{align*}
x_2-P(x_1)x_2^\itinv&=P\big(\sqrt{x_2}\big)e-P\big(P(\sqrt{x_2})z_1\big)P\big(\sqrt{x_2}\big)^{-1}e
=P\big(\sqrt{x_2}\big)e-P\big(\sqrt{x_2}\big)P(z_1)e
\\
&=P\big(\sqrt{x_2}\big)\big(e-z_1^{\mathit{2}}\big)=P\big(\sqrt{x_2}\big)\big((e-z_1)\cdot(e+z_1)\big),
\end{align*}
and this is invertible if and only if $z_1+e=P\big(\sqrt{x_2}\big)^{-1}(x_1+x_2)$ is invertible, or equivalently $x_1+x_2=x$ is invertible. Moreover we have
\begin{align*}
(\text{RHS}) ={}&-P\big(P\big(\sqrt{x_2}\big)e-P\big(P\big(\sqrt{x_2}\big)z_1\big)P\big(\sqrt{x_2}\big)^{-1}e\big)^{-1}
\\
&\times\big(P\big(\sqrt{x_2}\big)z_1-P\big(P\big(\sqrt{x_2}\big)z_1\big)P\big(\sqrt{x_2}\big)^{-1} z_1\big)
\\
&+\big(P\big(\sqrt{x_2}\big)e-P\big(P\big(\sqrt{x_2}\big)z_1\big)P\big(\sqrt{x_2}\big)^{-1}e\big)^\itinv
\\
={}&-P\big(P\big(\sqrt{x_2}\big)e-P\big(\sqrt{x_2}\big)P(z_1)e\big)^{-1}\big(P\big(\sqrt{x_2}\big)z_1-P\big(\sqrt{x_2}\big)P(z_1)z_1\big) \\
& +\big(P\big(\sqrt{x_2}\big)e-P\big(\sqrt{x_2}\big)P(z_1)e\big)^\itinv
\\
={}&-P\big(\sqrt{x_2}\big)^{-1}P(e-P(z_1)e)^{-1}(z_1-P(z_1)z_1)+P\big(\sqrt{x_2}\big)^{-1}(e-P(z_1)e)^\itinv
\\
={}&P\big(\sqrt{x_2}\big)^{-1}(z_1+e)^\itinv=\big(P\big(\sqrt{x_2}\big)z_1+P\big(\sqrt{x_2}\big)e\big)^\itinv=
(x_1+x_2)^\itinv=(\text{LHS}).
\end{align*}
This proves the 1st formula.
Next we consider the 2nd formula. Since $-\sigma$ fixes $e\in\fp^+_2$, $-\sigma$~is an automorphism of the Jordan algebra,
and therefore preserves the determinant, that is, $\det_{\fn^+}(x_1+x_2)=\det_{\fn^+}(-\sigma(x_1+x_2))=\det_{\fn^+}(-x_1+x_2)$ holds.
Then by (\ref{det_h}) and \cite[Part V, Proposition I.5.1(J4.2)]{FKKLR}, we have
\begin{align*}
\det_{\fn^+}(x_1+x_2)^2&=\det_{\fn^+}(x_1+x_2)\det_{\fn^+}(-x_1+x_2)
\\
&=h_{\fp^+}\big({-}x_1,{}^t\hspace{-1pt}x_2^\itinv\big)h_{\fp^+}\big(x_1,{}^t\hspace{-1pt}x_2^\itinv\big)\det_{\fn^+}(x_2)^2
\\
&=h_{\fp^+}\big(Q(x_1){}^t\hspace{-1pt}x_2^\itinv,{}^t\hspace{-1pt}x_2^\itinv\big)\det_{\fn^+}(x_2)^2
\\
&=\det_{\fn^+}\big(x_2-Q(x_1){}^t\hspace{-1pt}x_2^\itinv\big)\det_{\fn^+}(x_2)
\\
&=\det_{\fn^+}\big(x_2-P(x_1)x_2^\itinv\big)\det_{\fn^+}(x_2),
\end{align*}
and by \cite[Part V, Proposition I.5.1(J4.2), (J4.2$'$)]{FKKLR}, we have
\begin{align*}
h_{\fp^+}\big(Q(x_1){}^t\hspace{-1pt}x_2^\itinv,{}^t\hspace{-1pt}x_2^\itinv\big)
=h_{\fp^+}\big(x_1,Q\big({}^t\hspace{-1pt}x_2^\itinv\big)x_1\big)
=h_{\fp^+}\big(x_1,Q(x_2)^{-1}x_1\big).
\end{align*}

(2) For the 1st formulas, since
\[
x_{11}-P(x_{12})x_{22}^\itinv\in\fp^+_{11}, \qquad
x_{22}-P(x_{12})x_{11}^\itinv\in\fp^+_{22},
\]
by (1) with $x_{12}=x_1$, $x_{11}+x_{22}=x_2$ and by Lemma \ref{lemma_Jordansub}(1) we have
\begin{align*}
x^\itinv={}&-P\big(x_{11}+x_{22}-P(x_{12})\big(x_{11}^\itinv+x_{22}^\itinv\big)\big)^{-1}
\big(x_{12}-P(x_{12})P(x_{11}+x_{22})^{-1} x_{12}\big)
\\
& +\big(x_{11}+x_{22}-P(x_{12})\big(x_{11}^\itinv+x_{22}^\itinv\big)\big)^\itinv
\\
={}&-4L\big((x_{11}\!-P(x_{12})x_{22}^\itinv)^\itinv\big)
L\big((x_{22}\!-P(x_{12})x_{11}^\itinv)^\itinv\big)
\\
& {} \times \big(x_{12}\!-P(x_{12})P\big(x_{11}^\itinv,x_{22}^\itinv\big)x_{12}\big)+\big(x_{11}-P(x_{12})x_{22}^\itinv\big)^\itinv+\big(x_{22}-P(x_{12})x_{11}^\itinv\big)^\itinv.
\end{align*}
Then by \cite[Part V, (J1*) after Remark I.2.1]{FKKLR} and Lemma \ref{lemma_Jordansub}(2), (3) we have
\begin{align*}
x_{12}-P(x_{12})P\big(x_{11}^\itinv,x_{22}^\itinv\big)x_{12}
&=x_{12}-D_{\fn^+}\big(P(x_{12})x_{11}^\itinv,x_{22}^\itinv\big)x_{12}
\\
&=4L(x_{22})L\big(x_{22}^\itinv\big)x_{12}-4L\big(P(x_{12})x_{11}^\itinv\big)L\big(x_{22}^\itinv\big)x_{12}
\\
&=4L(x_{22}-P(x_{12})x_{11}^\itinv)L\big(x_{22}^\itinv\big)x_{12},
\end{align*}
and hence
\begin{gather*}
4L\big(\big(x_{11}-P(x_{12})x_{22}^\itinv\big)^\itinv\big)L\big(\big(x_{22}-P(x_{12})x_{11}^\itinv\big)^\itinv\big)
\big(x_{12}-P(x_{12})P\big(x_{11}^\itinv,x_{22}^\itinv\big)x_{12}\big)
\\ \qquad
=4L\big(\big(x_{11}-P(x_{12})x_{22}^\itinv\big)^\itinv\big)L\big(x_{22}^\itinv\big)x_{12}
=P\big(\big(x_{11}-P(x_{12})x_{22}^\itinv\big)^\itinv,x_{22}^\itinv\big)x_{12}.
\end{gather*}
This proves the 1st equality. The 2nd equality is also proved similarly.
Next we prove the 2nd formulas. To do this, we consider $B_{\fn^+}\big(x_{12},x_{11}^\itinv\big)\in\End_\BC(\fp^+)$. Then this satisfies
\begin{alignat*}{3}
&\big(B_{\fn^+}(x_{12},x_{11}^\itinv)-I\big)y_{11}=-D_{\fn^+}\big(x_{12},x_{11}^\itinv\big)y_{11}
+P(x_{12})P\big(x_{11}^\itinv\big)y_{11}\!\in\!\fp^+_{12}\oplus\fp^+_{22},\quad && y_{11}\in\fp^+_{11},&
\\
&\big(B_{\fn^+}\big(x_{12},x_{11}^\itinv\big)-I\big)y_{12}=-D_{\fn^+}\big(x_{12},x_{11}^\itinv\big)y_{12}\in\fp^+_{22} ,\qquad&& y_{12}\in\fp^+_{12},&
\\
&\big(B_{\fn^+}\big(x_{12},x_{11}^\itinv\big)-I\big)y_{22}=0,\qquad&& y_{22}\in\fp^+_{22}.&
\end{alignat*}
Thus $B_{\fn^+}\big(x_{12},x_{11}^\itinv\big)$ is strictly lower-triangular if we choose a basis suitably,
and hence $\Det_{\fp^+}\allowbreak\big(B_{\fn^+}\big(x_{12},x_{11}^\itinv\big)\big)=1$ holds. Similarly, $B_{\fn^+}(x_{11}^\itinv,x_{12})$ is strictly upper-triangular,
and hence $\Det_{\fp^+}\big(B_{\fn^+}\big(x_{11}^\itinv,x_{12}\big)\big)=1$ holds.
Now since we have
\begin{gather*}
B_{\fn^+}\big(x_{12},x_{11}^\itinv\big)(x_{11}+x_{12}+x_{22})
\\ \qquad
{}=\big(I-D_{\fn^+}\big(x_{12},x_{11}^\itinv\big)+P(x_{12})P\big(x_{11}^\itinv\big)\big)(x_{11}+x_{12}+x_{22})
\\ \qquad
{}=x_{11}+x_{12}+x_{22}-D_{\fn^+}\big(x_{12},x_{11}^\itinv\big)x_{11}-D_{\fn^+}\big(x_{12},x_{11}^\itinv\big)x_{12}
+P(x_{12})P\big(x_{11}^\itinv\big)x_{11}
\\ \qquad
{}=x_{11}+x_{12}+x_{22}-D_{\fn^+}\big(x_{11},x_{11}^\itinv\big)x_{12}-2P(x_{12})x_{11}^\itinv+P(x_{12})x_{11}^\itinv \\ \qquad
{}=x_{11}+x_{12}+x_{22}-x_{12}-P(x_{12})x_{11}^\itinv=x_{11}+x_{22}-P(x_{12})x_{11}^\itinv,
\end{gather*}
we get
\begin{align*}
\det_{\fn^+}(x)^{\frac{2n}{r}}&=\Det_{\fp^+}(P(x))
=\Det_{\fp^+}\big(P\big(B_{\fn^+}\big(x_{12},x_{11}^\itinv\big)^{-1} \big(x_{11}+x_{22}-P(x_{12})x_{11}^\itinv\big)\big)\big)
\\
&=\Det_{\fp^+}\big(B_{\fn^+}\big(x_{12},x_{11}^\itinv\big)^{-1} P\big(x_{11}+x_{22}-P(x_{12})x_{11}^\itinv\big)B_{\fn^+}\big(x_{11}^\itinv,x_{12}\big)^{-1}\big)
\\
&=\Det_{\fp^+}\big(P\big(x_{11}+x_{22}-P(x_{12})x_{11}^\itinv\big)\big)
\\
&=\det_{\fn^+}\big(x_{11}+x_{22}-P(x_{12})x_{11}^\itinv\big)^{\frac{2n}{r}},
\end{align*}
and since $x_{11}\in\fp^+_{11}$, $x_{22}-P(x_{12})x_{11}^\itinv\in\fp^+_{22}$ hold, by (\ref{det_h}),
\begin{align*}
\det_{\fn^+}(x)&=\det_{\fn^+}\big(x_{11}+x_{22}-P(x_{12})x_{11}^\itinv\big)=\det_{\fn^+_{11}}(x_{11})\det_{\fn^+_{22}}\big(x_{22}-P(x_{12})x_{11}^\itinv\big) \\
&=\det_{\fn^+_{11}}(x_{11})\det_{\fn^+_{22}}(x_{22})h_{\fp^+_{22}}\big(P(x_{12}) x_{11}^\itinv,{}^t\hspace{-1pt}x_{22}^\itinv\big)
\\
&=\det_{\fn^+_{11}}(x_{11})\det_{\fn^+_{22}}(x_{22})h_{\fp^+_{22}}\big(Q(x_{12}){}^t\hspace{-1pt} x_{11}^\itinv,{}^t\hspace{-1pt}x_{22}^\itinv\big)
\end{align*}
holds. The other equalities are proved similarly by interchanging $\fp^+_{11}$ and $\fp^+_{22}$.
\end{proof}

\subsection{Structure groups and the Kantor--Koecher--Tits construction}\label{section_KKT}

Let $\fp^\pm$ be a Hermitian positive Jordan triple system. For $l\in\End_\BC(\fp^+)$,
let $\overline{l},{}^t\hspace{-1pt}l\in\End_\BC(\fp^-)$, $l^*\in\End_\BC(\fp^+)$ be the elements
satisfying $\overline{l}\overline{x}=\overline{lx}$,
$(lx|\overline{y})_{\fp^+}=(x|{}^t\hspace{-1pt}l\overline{y})_{\fp^+}=(x|\overline{l^*y})_{\fp^+}$
for any $x,y\in\fp^+$.
Then the \textit{structure group} $\operatorname{Str}(\fp^+)$ is defined as
\[
\operatorname{Str}(\fp^+):=\big\{l\in GL_\BC(\fp^+)\mid \big\{lx,{}^t\hspace{-1pt}l^{-1}y,lz\big\}=l\{x,y,z\}, \ x,z\in\fp^+,\, y\in\fp^-\big\}.
\]
Then for $x\in\fp^+$, $y\in\fp^-$, $l\in\operatorname{Str}(\fp^+)$ we have
\begin{gather*}
D(lx,y)=lD\big(x,{}^t\hspace{-1pt}ly\big)l^{-1},\qquad Q(lx)=lQ(x){}^t\hspace{-1pt}l,
\\
B(lx,y)=lB\big(x,{}^t\hspace{-1pt}ly\big)l^{-1},\qquad h(lx,y)=h\big(x,{}^t\hspace{-1pt}ly\big).
\end{gather*}
Also, for any $x\in\fp^+$, $y\in\fp^-$, $B(x,y)\in\operatorname{Str}(\fp^+)$ holds if $B(x,y)$ is invertible.
Let $\mathfrak{str}(\fp^+)=\fk^\BC$ denote the Lie algebra of $\operatorname{Str}(\fp^+)$.
Then $D(\fp^+,\fp^-)=\mathfrak{str}(\fp^+)$ and $I_{\fp^+}\in\mathfrak{str}(\fp^+)$ hold.

Next we construct a Lie algebra $\fg$ from $\fp^+$ via the \textit{Kantor--Koecher--Tits construction}. As a~vec\-tor space let
\[
\fg^\BC:=\fp^+\oplus\fk^\BC\oplus\fp^-,
\]
and give the Lie algebra structure by
\[
[(x,k,y),(z,l,w)]:=\big(kz-lx,[k,l]+D(x,w)-D(z,y),-{}^t\hspace{-1pt}kw+{}^t\hspace{-1pt}ly\big).
\]
Then $I_{\fp^+}\in\fk^\BC=\mathfrak{str}(\fp^+)$ acts by $+1$, $0$ and $-1$ on each summand respectively.
Let $(\cdot|\cdot)_{\fg^\BC}\colon\fg^\BC\times\fg^\BC\to\BC$ be the $\fg^\BC$-invariant bilinear form normalized such that
$(x|y)_{\fg^\BC}=(x|y)_{\fp^+}$ holds for any $x\in\fp^+$, $y\in\fp^-$.
Next let $\hat{\vartheta}$ be the involution on $\fg^\BC$ given by
\[
\hat{\vartheta}(x,k,y):=(\overline{y},-k^*,\overline{x}),
\]
and let $\fg:=\big(\fg^\BC\big)^{\hat{\vartheta}}$, $\fk:=\big(\fk^\BC\big)^{\hat{\vartheta}}$.
We~fix a connected complex Lie group $G^\BC$ with the Lie algebra~$\fg^\BC$,
and let $G,K^\BC,K,P^+,P^-\subset G^\BC$ be the connected closed subgroups corresponding to the Lie subalgebras
$\fg,\fk^\BC,\fk,\fp^+,\fp^-\subset\fg^\BC$ respectively.
Then we have a covering map ${\rm Ad}|_{\fp^+}\colon K^\BC\to \operatorname{Str}(\fp^+)_0\subset\End_\BC(\fp^+)$.
For $l\in K^\BC$, $x\in\fp^+$, we abbreviate ${\rm Ad}(l)x$ as $lx$.

Next we fix a maximal tripotent $e\in\fp^+$, let $\fp^\pm(e)_2\subset\fp^\pm$ be as in (\ref{Peirce}),
take the corresponding Euclidean real form $\fn^+\subset\fp^+(e)_2$, and let $\fn^-:=\overline{\fn^+}=Q(\overline{e})\fn^+\subset\fp^-(e)_2$.
Also, let
\begin{gather*}
\fk^\BC(e)_2:=D(\fp^+(e)_2,\fp^-(e)_2)=[\fp^+(e)_2,\fp^-(e)_2]\subset\fk^\BC,
\\
\fl:=D(\fn^+,\fn^-)=[\fn^+,\fn^-]=\big\{l\in\fk^\BC(e)_2\mid l=Q(e)\overline{l}Q(\overline{e})\big\}\subset\fk^\BC.
\end{gather*}
Then these become subalgebras, and
\[
{}^c\hspace{-1pt}\fg:=\fn^+\oplus\fl\oplus\fn^-\subset\fg^\BC
\]
also becomes a subalgebra. Let $K^\BC(e)_2$, ${}^c\hspace{-1pt}G\subset G^\BC$ be the connected closed subgroups corresponding to
$\fk^\BC(e)_2$, ${}^c\hspace{-1pt}\fg$ respectively,
and let $L:=K^\BC\cap{}^c\hspace{-1pt}G$, $K_L:=L\cap K$, $\fk_\fl:=\fl\cap \fk$.
Then $L$ acts transitively on the symmetric cone $\Omega\subset\fn^+$, and $K_L$ acts on $\fn^+$ as Jordan algebra automorphisms.
For $l\in\End_\BC(\fp^+(e)_2)$, let $l^\top:=Q(e){}^t\hspace{-1pt}lQ(\overline{e})\in\End_\BC(\fp^+(e)_2)$
so that $(lx|y)_{\fn^+}=(x|l^\top y)_{\fn^+}$ holds for $x,y\in\fp^+(e)_2$.
Then for $x,y\in\fp^+(e)_2$, $l\in K^\BC(e)_2$ we have
\begin{alignat*}{3}
&D_{\fn^+}(lx,y)=lD_{\fn^+}\big(x,l^\top y\big)l^{-1},\qquad&& P(lx)=lP(x)l^\top,&
\\
&B_{\fn^+}(lx,y)=lB_{\fn^+}\big(x,l^\top y\big)l^{-1},\qquad && \det_{\fn^+}(lx)=\det_{\fn^+}(le)\det_{\fn^+}(x).&
\end{alignat*}

\subsection{Root space decomposition}\label{section_root}

In this section we assume $\fp^\pm$ is simple, or equivalently,
the corresponding Lie algebra $\fg$ via the Kantor--Kocher--Tits construction is simple.
Let $r:=\rank\fp^+=\rank_\BR\fg$, and fix a~frame $\{e_1,\dots,e_r\}\subset\fp^+$.
Then $e:=e_1+\cdots+e_r$ is a maximal tripotent, and we consider the corresponding subalgebras $\fn^\pm\subset\fp^\pm(e)_2\subset\fp^\pm$,
$\fl\subset\fk^\BC(e)_2\subset\fk^\BC$ as in the previous section.
Next let $h_j:=D(e_j,\overline{e_j})=[e_j,\overline{e_j}]\in\fl\subset\fk^\BC$. Then
\[
\fa_\fl:=\bigoplus_{j=1}^r\BR h_j\subset\fl\subset\fk^\BC
\]
is a maximal split abelian subalgebra of $\fl$. We~take a Cartan subalgebra $\fh\subset\fk$ containing $\sqrt{-1}\fa_\fl$.
Then simultaneously $\fh^\BC$ becomes a Cartan subalgebra of $\fg^\BC$.
We~define the linear forms $\gamma_i\in\big(\fh^\BC\big)^\vee$ by $\gamma_i(h_j)=2\delta_{ij}$ $(i=1,\dots,r)$ and $\gamma_i|_{(\fa_\fl^\BC)^\bot}=0$,
and we take a positive root system $\Delta_{\fg^\BC}^+=\Delta^+\big(\fg^\BC,\fh^\BC\big)\subset\Delta_{\fg^\BC}=\Delta\big(\fg^\BC,\fh^\BC\big)$ such that
the restriction of $\alpha\in\Delta_{\fg^\BC}^+$ to $\fa_\fl^\BC$ sits in
\begin{gather*}
\big\{\gamma_i|_{\fa_\fl^\BC} \mid 1\le i\le r\big\}
\cup\bigg\{\frac{\gamma_i\pm\gamma_j}{2}\bigg|_{\fa_\fl^\BC}\, \bigg|\, 1\le i<j\le r\bigg\}
\cup\bigg\{\frac{\gamma_i}{2}\bigg|_{\fa_\fl^\BC}\, \bigg|\, 1\le i\le r\bigg\}\cup\{0\},
\end{gather*}
and set $\Delta_{\fk^\BC}=\Delta\big(\fk^\BC,\fh^\BC\big)$,
$\Delta_{\fk^\BC}^+=\Delta^+\big(\fk^\BC,\fh^\BC\big)=\Delta_{\fg^\BC}^+\cap\Delta_{\fk^\BC}$.
Next, let
\begin{alignat*}{3}
& \fp^\pm_{ij}:=\big\{x\in\fp^\pm \mid [h_l,x]=\pm(\delta_{il}+\delta_{jl})x,\, l=1,\dots,r\big\},\qquad&&
1\le i\le j\le r,&
\\
& \fp^\pm_{0j}:=\big\{x\in\fp^\pm \mid [h_l,x]=\pm\delta_{jl}x,\, l=1,\dots,r\big\} ,\qquad && 1\le j\le r,&
\\
& \fk^\BC_{ij}:=\big\{x\in\fk^\BC \mid [h_l,x]=(\delta_{il}-\delta_{jl})x,\, l=1,\dots,r\big\}, \qquad &&
1\le i,j\le r,\; i\ne j,&
\\
& \fk^\BC_{i0}:=\big\{x\in\fk^\BC \mid [h_l,x]=\delta_{il}x,\, l=1,\dots,r\big\},\qquad&& 1\le i\le r,&
\\
& \fk^\BC_{0j}:=\big\{x\in\fk^\BC \mid [h_l,x]=-\delta_{jl}x,\, l=1,\dots,r\big\},\qquad && 1\le j\le r,&
\\
& \fm^\BC:=\big\{x\in\fk^\BC \mid [h_l,x]=0, \; (h_l|x)_{\fg^\BC}=0,\, l=1,\dots,r\big\},\qquad &&&
\\
& \fl_{ij}:=\fk^\BC_{ij}\cap \fl,\qquad && 1\le i,j\le r,\quad i\ne j,&
\\
& \fm_\fl:=\fm^\BC\cap \fl,&&&
\end{alignat*}
so that
\begin{gather*}
\fp^\pm=\bigoplus_{\substack{0\le i\le j\le r\\ (i,j)\ne (0,0)}}\fp^\pm_{ij}, \qquad
\fk^\BC=\fa_\fl^\BC\oplus\fm^\BC\oplus\bigoplus_{\substack{0\le i,j\le r\\ i\ne j}}\fk^\BC_{ij}, \qquad
\fl=\fa_\fl\oplus\fm_\fl\oplus\bigoplus_{\substack{1\le i,j\le r\\ i\ne j}}\fl_{ij}
\end{gather*}
hold. We~set
\[
\fn_\fl:=\bigoplus_{1\le i<j\le r} \fl_{ij}, \qquad \fn_\fl^\top:=\bigoplus_{1\le i<j\le r} \fl_{ji},
\]
let $A_L,N_L,N_L^\top\subset L$ be the connected closed subgroups corresponding to the Lie subalgebras $\fa_\fl$, $\fn_\fl$, $\fn_\fl^\top$
respectively, and let
\[
M_L:=\{k\in K_L\mid {\rm Ad}(k)h_l=0, \, l=1,\dots,r\},
\]
so that $L=K_LA_LN_L=K_LA_LN_L^\top$ holds and $M_LA_LN_L, M_LA_LN_L^\top\subset L$ are minimal parabolic subgroups.
Then $A_LN_L, A_LN_L^\top\subset L$ acts on $\Omega$ simply transitively.

\subsection{Polynomials and hypergeometric series on Jordan triple systems}

For a Hermitian positive Jordan triple system $\fp^\pm$, let $\cP(\fp^\pm)$ be the space of holomorphic polynomials on $\cP(\fp^\pm)$.
Then $K^\BC$ acts on $\cP(\fp^\pm)$ by
\begin{alignat*}{5}
& ({\rm Ad}|_{\fp^+}(l))^\vee f(x)=f\big(l^{-1}x\big),\qquad && l\in K^\BC,\qquad && f\in \cP(\fp^+),\qquad && x\in\fp^+,&\\
& ({\rm Ad}|_{\fp^-}(l))^\vee f(y)=f\big({}^t\hspace{-1pt}ly\big),\qquad && l\in K^\BC,\qquad && f\in \cP(\fp^-),\qquad && y\in\fp^-.&
\end{alignat*}
We~assume $\fp^+$ is simple, fix a frame $\{e_1,\dots,e_r\}\subset\fp^+$,
and consider tripotents $e^k:=\sum_{i=1}^k e_i$ for $k=1,2,\dots,r$. Then the subalgebra
\[ \fp^+\big(e^k\big)_2=\bigoplus_{1\le i\le j\le k}\fp^+_{ij} \]
is of tube type and has a Jordan algebra structure of rank $k$.
Let $\Delta_k(x)$ be the determinant polynomial on $\fp^+(e^k)_2$, and extend to a polynomial on $\fp^+$.
Using this, for $\bm\in\BC^r$ we define the function $\Delta_\bm(x)$ on the symmetric cone $\Omega\subset\fn^+$ by
\[ \Delta_\bm(x):=\Delta_1(x)^{m_1-m_2}\Delta_2(x)^{m_2-m_3}\cdots\Delta_{r-1}(x)^{m_{r-1}-m_r}\Delta_r(x)^{m_r}, \qquad x\in\Omega.
\]
Then for $m\in M_L$, $a={\rm e}^{t_1h_1+\cdots+t_rh_r}\in A_L$, $n\in N_L$ and for $x\in\Omega$, we have
\[
\Delta_\bm\big((man)^\top x\big)=\Delta_\bm\big((man)^\top e\big)\Delta_\bm(x)={\rm e}^{2t_1m_1+\cdots+2t_rm_r}\Delta_\bm(x). \]
Especially, if
\[
\bm\in\BZ_{++}^r:=\big\{\bm=(m_1,\dots,m_r)\in\BZ^r\mid m_1\ge\cdots\ge m_r\ge 0\big\},
\]
then $\Delta_\bm(x)$ is extended to a polynomial on $\fp^+$. Let
\begin{gather*}
\cP_\bm(\fp^+):=\operatorname{span}_\BC\big\{\Delta_\bm\big(l^{-1}x\big)\mid l\in K^\BC\big\}\subset\cP(\fp^+),
\\
\cP_\bm(\fp^-):=\big\{\overline{f(\overline{y})}\mid f(x)\in\cP_\bm(\fp^+)\big\}\subset\cP(\fp^-).
\end{gather*}
Then we have the following.

\begin{Theorem}[Hua--Kostant--Schmid {\cite[Part III, Theorem V.2.1]{FKKLR}}]
Under the $K^\BC$-action, $\cP(\fp^\pm)$ is decomposed as
\[
\cP\big(\fp^\pm\big)=\bigoplus_{\bm\in\BZ^{r}_{++}}\cP_\bm\big(\fp^\pm\big).
\]
Each $\cP_\bm(\fp^\pm)$ is irreducible, $\cP_\bm(\fp^+)$ has the lowest weight $-(m_1\gamma_1+\cdots+m_r\gamma_r)$,
and $\cP_\bm(\fp^-)$ has the highest weight $m_1\gamma_1+\cdots+m_r\gamma_r$.
\end{Theorem}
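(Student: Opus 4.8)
\textit{Proof proposal.}
The plan is to run the standard lowest-weight analysis for the reductive group $K^\BC$ acting on $\cP(\fp^+)$. First I would record that the contragredient $K^\BC$-action preserves the grading of $\cP(\fp^+)$ by polynomial degree, each homogeneous component being finite-dimensional; since $\fk^\BC$ is reductive, $\cP(\fp^+)$ is a completely reducible $\fk^\BC$-module, and the multiplicity of an irreducible constituent with lowest weight $\lambda$ equals the dimension of the space of weight-$\lambda$ vectors annihilated by all negative root vectors of $\fk^\BC$. Thus the whole theorem reduces to one assertion: the lowest weight vectors in $\cP(\fp^+)$ are spanned exactly by $\{\Delta_\bm\mid \bm\in\BZ^r_{++}\}$, and these occur with pairwise distinct weights.

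Next I would verify that $\Delta_\bm$ is a lowest weight vector of weight $-(m_1\gamma_1+\cdots+m_r\gamma_r)$. The $\fh^\BC$-weight is read off from the relative-invariance identity $\Delta_\bm((man)^\top x)=e^{2t_1m_1+\cdots+2t_rm_r}\Delta_\bm(x)$: specializing to $a=e^{t_1h_1+\cdots+t_rh_r}\in A_L$ and differentiating the contragredient action $\Delta_\bm\mapsto\Delta_\bm(a^{-1}\,\cdot\,)$ gives $h_l\cdot\Delta_\bm=-2m_l\Delta_\bm$, which together with $\gamma_i(h_j)=2\delta_{ij}$ pins the weight to $-(m_1\gamma_1+\cdots+m_r\gamma_r)$. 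The extremality (annihilation by the lowering root vectors of $\fk^\BC$) follows from the same identity: the $M_L$- and $N_L$-relative-invariance dispose of the root spaces $\fk^\BC_{ij}$ coming from $\fl$, while the remaining negative root spaces (those involving the Peirce index $0$) annihilate $\Delta_\bm$ for weight reasons, since $\Delta_\bm$ depends only on the tube part $\fp^+(e^r)_2$.

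The crux is the converse: every lowest weight vector is a scalar multiple of some $\Delta_\bm$. Such a vector is, up to its torus character, a relative invariant for a Borel subgroup of $K^\BC$, hence in particular $A_LN_L$-invariant up to an $A_L$-character. Restricting it to the symmetric cone $\Omega\subset\fn^+$ and using that $A_LN_L$ acts on $\Omega$ simply transitively, I would argue that the function is completely determined by its $A_L$-character; polynomiality, together with the $N_L$-invariance governing its leading behaviour along the frame $e_1,\dots,e_r$, then forces the character to be $\sum_j 2m_j\gamma_j$ with $m_1\ge\cdots\ge m_r\ge 0$ and the function to equal a scalar times $\Delta_1^{m_1-m_2}\cdots\Delta_r^{m_r}=\Delta_\bm$. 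Hence the lowest weight vectors are precisely $\bigoplus_\bm\BC\Delta_\bm$, the weights $-(m_1\gamma_1+\cdots+m_r\gamma_r)$ are distinct as $\bm$ ranges over $\BZ^r_{++}$, and each occurs with multiplicity one. By the first paragraph this yields simultaneously that each $\cP_\bm(\fp^+)=\operatorname{span}_\BC\{\Delta_\bm(l^{-1}x)\mid l\in K^\BC\}$ is irreducible with lowest weight $-(m_1\gamma_1+\cdots+m_r\gamma_r)$, that distinct $\bm$ give non-isomorphic summands, and that they exhaust $\cP(\fp^+)$.

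The assertion for $\cP(\fp^-)$ is then formal: the antilinear bijection $f(x)\mapsto\overline{f(\overline{y})}$ defining $\cP_\bm(\fp^-)$ intertwines the $K^\BC$-actions on $\cP(\fp^+)$ and $\cP(\fp^-)$ while negating $\fh^\BC$-weights, so it sends the lowest weight $-(m_1\gamma_1+\cdots+m_r\gamma_r)$ to the highest weight $m_1\gamma_1+\cdots+m_r\gamma_r$ and carries the whole decomposition over. The main obstacle is the classification of relative invariants in the third paragraph: identifying the weight and the extremality of $\Delta_\bm$ is routine bookkeeping, but showing that there are no other lowest weight vectors—equivalently, the multiplicity-freeness of $\cP(\fp^+)$ under $K^\BC$—is the substantive point, and it rests on the prehomogeneous structure of $\fp^+$, concretely the simple transitivity of $A_LN_L$ on $\Omega$, which reduces a relative invariant to its torus character and thence to a product of principal minors.
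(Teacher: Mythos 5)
The paper does not prove this theorem at all: it is quoted verbatim from \cite[Part III, Theorem V.2.1]{FKKLR}, so there is no in-paper argument to compare against. What you have written is a reconstruction of the standard literature proof (Faraut--Kor\'anyi, Chapter~XI, and FKKLR): reduce to classifying lowest weight vectors by complete reducibility, identify them with the conical polynomials $\Delta_\bm$, and read off irreducibility, multiplicity-freeness and exhaustion from the distinctness of the weights $-(m_1\gamma_1+\cdots+m_r\gamma_r)$. The weight computation for $\Delta_\bm$, the passage to $\cP(\fp^-)$ via $f(x)\mapsto\overline{f(\overline{y})}$, and the overall logic are correct, and for $\fp^+$ of tube type the outline is essentially complete.

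For non-tube type there is a genuine gap at exactly the step you flag as the crux. The cone $\Omega$ lies in $\fn^+\subset\fp^+(e)_2$, so its Zariski closure in $\fp^+$ is only $\fp^+(e)_2$, not $\fp^+$; simple transitivity of $A_LN_L$ on $\Omega$ therefore determines a relative invariant only on $\fp^+(e)_2$, and a nonzero lowest weight vector could a priori vanish identically there (its restriction to $\Omega$ would then carry no information). To close this you need the stronger statement that the full Borel subgroup $B^-\subset K^\BC$ has a Zariski-open orbit through $e$ in all of $\fp^+$ (i.e., $\fp^+$ is $K^\BC$-spherical); this uses the root spaces $\fk^\BC_{ji}$ and $\fk^\BC_{0j}$ beyond $\fl$, concretely the identities $D(v,\overline{e_i})e_i=v$ for $v\in\fp^+_{ij}$ and $v\in\fp^+_{0j}$, which give $\mathfrak{b}^-\cdot e=\fp^+$. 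Once that is in place, a $B^-$-relative invariant is determined by its value at $e$, which cannot vanish, and your classification and multiplicity-one conclusion go through. A related imprecision: the assertion that the negative root vectors in $\fk^\BC_{0j}$ (and in $\fm^\BC$, $\fk^\BC_{ji}$ outside $\fl$) kill $\Delta_\bm$ ``for weight reasons'' is not literally a weight argument, since the weight $-\sum_i m_i\gamma_i-\tfrac12\gamma_j$ does occur in $\cP(\fp^+)$ whenever $\fp^+_{0j}\ne 0$; the correct justification goes through the Peirce decomposition (these operators move $\fp^+(e)_2$ out of itself or act on it through the structure algebra of the subalgebra $\fp^+(e)_2$, against which $\Delta_\bm$ is already known to be extremal). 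Both points are fillable, but they are precisely where the non-tube structure enters and should be made explicit.
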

In addition, if $\fp^+$ is of tube type, then for
\[
\bm\in\BZ_+^r:=\big\{\bm=(m_1,\dots,m_r)\in\BZ^r\mid m_1\ge\cdots\ge m_r\big\},
\]
let
\[ \cP_\bm\big(\fp^\pm\big):=\cP_{(m_1-m_r,m_2-m_r,\dots,m_{r-1}-m_r,0)}\big(\fp^\pm\big)\det_{\fn^\pm}(x)^{m_r}\subset\cP\big(\fp^\pm\big)\big[\det_{\fn^\pm}(x)^{-1}\big]. \]
Then for $\bm\in\BZ_+^r$ we have
\[
\big\{ f\big({}^t\hspace{-1pt}x^\itinv\big)\mid f\in\cP_\bm\big(\fp^\pm\big)\big\}=\cP_{-\bm^\vee}\big(\fp^\mp\big),
\]
where $\bm^\vee:=(m_r,\dots,m_1)$.

Next, for $\lambda\in\BC$, $\bs\in\BC^r$, $\bm\in(\BZ_{\ge 0})^r$, $d\in\BC$, let
\begin{gather}
(\lambda+\bs)_{\bm,d}=\prod_{j=1}^r\bigg(\lambda+s_j-\frac{d}{2}(j-1)\bigg)_{m_j},
\qquad
(\lambda)_{\bm,d}=\prod_{j=1}^r\bigg(\lambda-\frac{d}{2}(j-1)\bigg)_{m_j}, \label{Pochhammer}
\end{gather}
where $(\lambda)_m=\lambda(\lambda+1)\cdots(\lambda+m-1)$, and let $\dim\cP_\bm(\fp^+)=:d_\bm^{\fp^+}$.
Using these, for $\bm\in\BZ_{++}^r$, let $\Phi_\bm^{\fn^+}(x), \tilde{\Phi}_\bm^{\fn^+}(x)\in\cP_\bm(\fp^+)^{K_L}$ be the polynomials given by
\begin{equation}\label{Phi^n}
\Phi_\bm^{\fn^+}(x):=\int_{K_L}\Delta_\bm(kx)\,{\rm d}k, \qquad
\tilde{\Phi}_\bm^{\fn^+}(x):=\frac{d_\bm^{\fp^+}}{\left(\frac{n}{r}\right)_{\bm,d}}\Phi_\bm^{\fn^+}(x),
\end{equation}
and for $(t_1,\dots,t_r)\in\BC^r$, define the symmetric polynomials $\tilde{\Phi}^d_\bm(t_1,\dots,t_r)$ by
\begin{equation}
\tilde{\Phi}^d_\bm(t_1,\dots,t_r):=\tilde{\Phi}^{\fn^+}_\bm(t_1e_1+\cdots+t_re_r). \label{Phi^d}
\end{equation}
This does not depend on the choice of Jordan frames $\{e_j\}\subset\fp^+$.
Moreover, this does not depend on $r$, that is, if we construct $\tilde{\Phi}^d_\bm(t_1,\dots,t_r)$, $\tilde{\Phi}^d_\bm(t_1,\dots,t_{r-1})$
from two simple Jordan triple systems of rank $r$ and $r-1$ with common $d$, then we have
\[ \tilde{\Phi}^d_\bm(t_1,\dots,t_{r-1},0)=\tilde{\Phi}^d_\bm(t_1,\dots,t_{r-1}). \]
Next we define the polynomials $\tilde{\Phi}_\bm^{\fp^\pm}(x,y)\in\cP_\bm\big(\fp^\pm\big)\otimes \cP_\bm\big(\fp^\mp\big)$ by
\begin{equation}\label{Phi^p}
\tilde{\Phi}_\bm^{\fp^\pm}(x,y):=\tilde{\Phi}^d_\bm(t_1,\dots,t_r),
\end{equation}
by using the roots $t_1,\dots,t_r$ of $t^rh_{\fp^\pm}\big(t^{-1}x,y\big)\in\BC[t]$. Then these satisfy
\begin{alignat*}{4}
& \tilde{\Phi}_\bm^{\fp^+}(x,y)=\tilde{\Phi}_\bm^{\fp^-}(y,x),\qquad && x\in\fp^+,\qquad && y\in\fp^-,&
\\
& \tilde{\Phi}_\bm^{\fp^+}(lx,y)=\tilde{\Phi}_\bm^{\fp^+}\big(x,{}^t\hspace{-1pt}ly\big),\qquad&&
x\in\fp^+,\quad && y\in\fp^-,\quad l\in K^\BC,&
\\
& \tilde{\Phi}_\bm^{\fp^+}(x,\overline{e})=\tilde{\Phi}_\bm^{\fn^+}(x),\qquad && x\in\fp^+. && &
\end{alignat*}
For these polynomials we have
\begin{alignat}{3}
& {\rm e}^{\tr_{\fn^+}(x)}=\sum_{\bm\in\BZ_{++}^r}\tilde{\Phi}_\bm^{\fn^+}(x),\qquad&&
x\in\fp^+, & \label{expand_exptr}
\\
& {\rm e}^{(x|y)_{\fp^+}}=\sum_{\bm\in\BZ_{++}^r}\tilde{\Phi}_\bm^{\fp^+}(x,y),\qquad &&
x\in\fp^+,\quad y\in\fp^-, &\notag
\\
& {\rm e}^{t_1+\cdots+t_r}=\sum_{\bm\in\BZ_{++}^r}\tilde{\Phi}_\bm^d(t_1,\dots,t_r),\qquad&&
t_j\in\BC, &\label{expand_expsum}
\\
& \det_{\fn^+}(e-x)^{-\lambda}=\sum_{\bm\in\BZ_{++}^r}(\lambda)_{\bm,d}\tilde{\Phi}_\bm^{\fn^+}(x),\qquad &&
x\in D\subset\fp^+, & \notag
\\
& h_{\fp^+}(x,y)^{-\lambda}=\sum_{\bm\in\BZ_{++}^r}(\lambda)_{\bm,d}\tilde{\Phi}_\bm^{\fp^+}(x,y),\qquad &&
x\in D\subset\fp^+,\quad y\in\overline{D}\subset\fp^-,& \label{expand_h}
\\
& \prod_{j=1}^r(1-t_j)^{-\lambda} =\sum_{\bm\in\BZ_{++}^r}(\lambda)_{\bm,d}\tilde{\Phi}_\bm^d(t_1,\dots,t_r),\qquad && t_j\in\BC,\quad |t_j|<1&
\label{expand_binom}
\end{alignat}
(see \cite[Proposition XII.1.3]{FK}, \cite[Part III, Theorem~V.3.10]{FKKLR}). Using these, for $\alpha,\beta,\gamma\in\BC$
with $\gamma\notin\bigcup_{j=1}^r\left(\frac{d}{2}(j-1)-\BZ_{\ge 0}\right)$, let
\begin{gather}
{}_0F_1^{\fn^+}(\gamma;x):=\sum_{\bm\in\BZ_{++}^r}\frac{1}{(\gamma)_{\bm,d}}\tilde{\Phi}^{\fn^+}_\bm(x), \notag
\\
{}_0F_1^{\fp^\pm}(\gamma;x,y):=\sum_{\bm\in\BZ_{++}^r}
\frac{1}{(\gamma)_{\bm,d}}\tilde{\Phi}^{\fp^\pm}_\bm(x,y), \label{0F1^p}
\\
{}_0F_1^d(\gamma;t_1,\dots,t_r):=\sum_{\bm\in\BZ_{++}^r}
\frac{1}{(\gamma)_{\bm,d}}\tilde{\Phi}^d_\bm(t_1,\dots,t_r), \notag
\\
{}_2F_1^{\fn^+}\left(\begin{matrix}\alpha,\beta\\ \gamma \end{matrix};x\right)
:=\sum_{\bm\in\BZ_{++}^r}\frac{(\alpha)_{\bm,d}(\beta)_{\bm,d}}{(\gamma)_{\bm,d}}\tilde{\Phi}^{\fn^+}_\bm(x), \notag
\\
{}_2F_1^{\fp^\pm}\left(\begin{matrix}\alpha,\beta\\ \gamma \end{matrix};x,y\right)
:=\sum_{\bm\in\BZ_{++}^r}\frac{(\alpha)_{\bm,d}(\beta)_{\bm,d}}{(\gamma)_{\bm,d}}\tilde{\Phi}^{\fp^\pm}_\bm(x,y), \label{2F1^p}
\\
{}_2F_1^d\left(\begin{matrix}\alpha,\beta\\ \gamma \end{matrix};t_1,\dots,t_r\right)
:=\sum_{\bm\in\BZ_{++}^r}\frac{(\alpha)_{\bm,d}(\beta)_{\bm,d}}{(\gamma)_{\bm,d}}\tilde{\Phi}^d_\bm(t_1,\dots,t_r). \notag
\end{gather}
${}_0F_1$ converges for all $x\in\fp^+$, $y\in\fp^-$ or $t_j\in\BC$,
and ${}_2F_1$ converges if $x\in D\subset\fp^+$, $y\in \overline{D}\subset\fp^-$ or $|t_j|<1$.
Especially if $\alpha=-k\in-\BZ_{\ge 0}$,
then ${}_2F_1^d\left( \begin{smallmatrix}-k,\beta\\\gamma \end{smallmatrix};t_1,\dots,t_r\right)$
becomes a polynomial on $\BC^r$ of degree $kr$, and is well-defined for
$\gamma\notin\bigcup_{j=1}^r\big(\frac{d}{2}(j-1)-\{0,1,\dots,k\}\big)$.
In this paper we use the same notation ${}_2F_1^d\left( \begin{smallmatrix}-k,\beta\\\gamma \end{smallmatrix};t_1,\dots,t_r\right)$
for this polynomial even if $\gamma$ is a negative (half) integer except for the above set,
and similar for ${}_2F_1^{\fn^+}$, ${}_2F_1^{\fp^\pm}$.
By \cite[Proposition XV.3.4]{FK}, for $t_j\in\BC$, $|t_j|<1$ we have
\begin{align}
{}_2F_1^d\left(\begin{matrix}\alpha,\beta\\\gamma \end{matrix};t_1,\dots,t_r\right)
&=\prod_{j=1}^r(1-t_j)^{-\alpha}{}_2F_1^d\left( \begin{matrix}\alpha,\gamma-\beta\\\gamma \end{matrix};
\frac{t_1}{t_1-1},\dots,\frac{t_r}{t_r-1}\right)
\label{Kummer1} \\
&=\prod_{j=1}^r(1-t_j)^{\gamma-\alpha-\beta}{}_2F_1^d\left( \begin{matrix}\gamma-\alpha,\gamma-\beta\\\gamma \end{matrix};t_1,\dots,t_r\right)\!.
\label{Kummer2}
\end{align}
By this formula we can show that
\[
{}_2F_1^{\fp^+}\left( \begin{matrix}\alpha,\beta\\\gamma \end{matrix};x,y\right)
=h(x,y)^{\gamma-\alpha-\beta}{}_2F_1^{\fp^+}\left( \begin{matrix}\gamma-\alpha,\gamma-\beta\\\gamma \end{matrix};x,y\right)
\]
holds for $x\in D\subset\fp^+$, $y\in\overline{D}\subset\fp^-$.
Next, for $k\in\BZ_{\ge 0}$, let
\[
\underline{k}_r:=(\underbrace{k,\dots,k}_r)\in\BZ_{++}^r.
\]
Then the following formulas hold.

\begin{Proposition}\label{prop_hypergeom}
We~assume $\fp^+=\fn^{+\BC}$ is of tube type.
\begin{enumerate}\itemsep=0pt
\item[$1.$] For $\alpha=-k\in -\BZ_{\ge 0}$ we have
\begin{align*}
{}_2F_1^{\fn^+}\left(\begin{matrix}-k,\beta \\ \gamma\end{matrix};x\right)
&=\frac{(\beta)_{\underline{k}_r,d}}{(\gamma)_{\underline{k}_r,d}}\det_{\fn^+}(-x)^k
{}_2F_1^{\fn^+}\left(\begin{matrix}-k,-k-\gamma+\frac{n}{r} \\ -k-\beta+\frac{n}{r}\end{matrix};x^\itinv\right) \\
&=\frac{(\beta)_{\underline{k}_r,d}}{(\gamma)_{\underline{k}_r,d}}\det_{\fn^+}(e-x)^k
{}_2F_1^{\fn^+}\left(\begin{matrix}-k,\gamma-\beta \\ -k-\beta+\frac{n}{r}\end{matrix};(e-x)^\itinv\right) \\
&=\frac{(\gamma-\beta)_{\underline{k}_r,d}}{(\gamma)_{\underline{k}_r,d}}
{}_2F_1^{\fn^+}\left(\begin{matrix}-k,\beta \\ -k+\beta-\gamma+\frac{n}{r}\end{matrix};e-x\right)\!.
\end{align*}

\item[$2.$] Let $k\in\BZ_{>0}$. Then we have
\[
\lim_{\gamma\to \frac{n}{r}-k}(\gamma)_{\underline{k}_r,d}\,{}_2F_1^{\fn^+}
\left(\begin{matrix}\alpha,\beta\\\gamma\end{matrix};x\right)
=\frac{(\alpha)_{\underline{k}_r,d}(\beta)_{\underline{k}_r,d}}{\left(\frac{n}{r}\right)_{\underline{k}_r,d}}
\det_{\fn^+}(x)^k{}_2F_1^{\fn^+}\left(\begin{matrix}\alpha+k,\beta+k\\\frac{n}{r}+k\end{matrix};x\right)\!. \]
\end{enumerate}
\end{Proposition}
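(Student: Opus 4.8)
The plan is to reduce everything to identities for the scalar symmetric functions ${}_2F_1^d(\,\cdot\,;t_1,\dots,t_r)$ in the eigenvalues. Since each $\tilde\Phi_\bm^{\fn^+}$ is $K_L$-invariant by (\ref{Phi^n}), we have $\tilde\Phi_\bm^{\fn^+}(x)=\tilde\Phi_\bm^d(t_1,\dots,t_r)$ whenever $x=t_1e_1+\cdots+t_re_r$, while $x^\itinv$ and $e-x$ then have eigenvalues $t_j^{-1}$ and $1-t_j$, and $\det_{\fn^+}(x)=\prod_jt_j$. As all functions occurring in the statement are $K_L$-invariant (hence determined by the eigenvalues) and the diagonal elements are Zariski dense in $\fp^+$, it suffices to establish each identity for these symmetric functions.

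Two elementary observations drive Part~1. The first is a reflection identity for the Pochhammer symbols (\ref{Pochhammer}): reindexing $j\mapsto r+1-j$ and the inner index $i\mapsto k-1-i$ gives
\[
\big(\tfrac{n}{r}-k-a\big)_{\underline{k}_r,d}=(-1)^{rk}(a)_{\underline{k}_r,d},\qquad a\in\BC .
\]
The second, which I take as the base case, is the reversal (inversion) formula
\[
{}_2F_1^{\fn^+}\left(\begin{matrix}-k,\beta\\\gamma\end{matrix};x\right)
=\frac{(\beta)_{\underline{k}_r,d}}{(\gamma)_{\underline{k}_r,d}}\det_{\fn^+}(-x)^k\,
{}_2F_1^{\fn^+}\left(\begin{matrix}-k,-k-\gamma+\frac{n}{r}\\-k-\beta+\frac{n}{r}\end{matrix};x^\itinv\right).
\]
To prove it I would reverse the terminating sum over $\bm\in\BZ_{++}^r$, $m_1\le k$, via the complementation $\bm\mapsto\underline{k}_r-\bm^\vee$. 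The geometric input is the tube-type inversion duality $\{f({}^t\hspace{-1pt}x^\itinv)\mid f\in\cP_\bm(\fp^+)\}=\cP_{-\bm^\vee}(\fp^-)$; since $\det_{\fn^+}$ is $K_L$-invariant it factors out of the average (\ref{Phi^n}), upgrading this to a relation $\det_{\fn^+}(x)^k\tilde\Phi_\bm^{\fn^+}(x^\itinv)=c_{\bm,k}\tilde\Phi_{\underline{k}_r-\bm^\vee}^{\fn^+}(x)$ with $c_{\bm,k}$ an explicit ratio of $\big(\frac{n}{r}\big)$-Pochhammer symbols and dimensions $d_\bullet^{\fp^+}$; the dimensions cancel because multiplication by $\det_{\fn^+}^k$ is a $K^\BC$-isomorphism $\cP_{\bm'}(\fp^+)\cong\cP_{\underline{k}_r+\bm'}(\fp^+)$. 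Matching the coefficient of each $\tilde\Phi_\bm^{\fn^+}$ then reduces the base case to multivariate analogues of $(a)_{k-m}=(-1)^m(a)_k/(1-a-k)_m$, proved by the same reindexing. This Pochhammer bookkeeping is the main obstacle.

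Granting the base case, the other two formulas of Part~1 follow formally. Applying the Pfaff transformation (\ref{Kummer1}) with $\alpha=-k$ to the right-hand ${}_2F_1^{\fn^+}$ sends its argument $x^\itinv$ (eigenvalues $t_j^{-1}$) to $(e-x)^\itinv$ (eigenvalues $1/(1-t_j)$), turns $-k-\gamma+\frac{n}{r}$ into $\gamma-\beta$, and combines the determinantal factors via $\det_{\fn^+}(-x)^k\prod_j(1-t_j^{-1})^k=\det_{\fn^+}(e-x)^k$; this gives the second formula. Applying the base case once more to the resulting ${}_2F_1^{\fn^+}(\cdots;(e-x)^\itinv)$ inverts the argument back to $e-x$, cancels the determinantal factors against $\det_{\fn^+}(-(e-x)^\itinv)^k$, and---after simplifying the prefactor with the reflection identity, whose $(-1)^{rk}$ is exactly what is needed---gives the third formula.

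For Part~2 I would extract the pole directly. At $\gamma=\frac{n}{r}-k$ the prefactor $(\gamma)_{\underline{k}_r,d}$ has a simple zero, while for generic $d$ the denominator $(\gamma)_{\bm,d}$ of the $\bm$-th term vanishes to first order exactly when $m_r\ge k$, i.e.\ when $\bm=\underline{k}_r+\bm'$ with $\bm'\in\BZ_{++}^r$, and is nonzero otherwise; hence after multiplication by $(\gamma)_{\underline{k}_r,d}$ only these terms survive the limit, the general case following by continuity in $d$. For them the factorization $(a)_{\underline{k}_r+\bm',d}=(a)_{\underline{k}_r,d}(a+k)_{\bm',d}$ gives $\lim (\gamma)_{\underline{k}_r,d}/(\gamma)_{\underline{k}_r+\bm',d}=1/\big(\frac{n}{r}\big)_{\bm',d}$, and from $\Phi_{\underline{k}_r+\bm'}^{\fn^+}=\det_{\fn^+}^k\Phi_{\bm'}^{\fn^+}$ (again $K_L$-invariance of $\det_{\fn^+}$) together with (\ref{Phi^n}) one obtains $\tilde\Phi_{\underline{k}_r+\bm'}^{\fn^+}=(d_{\underline{k}_r+\bm'}^{\fp^+}/d_{\bm'}^{\fp^+})\big(\frac{n}{r}\big)_{\bm',d}\big(\frac{n}{r}\big)_{\underline{k}_r+\bm',d}^{-1}\det_{\fn^+}^k\tilde\Phi_{\bm'}^{\fn^+}$, whose dimension ratio is again $1$. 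Assembling the surviving terms, the $\bm'$-independent factors collect into $\frac{(\alpha)_{\underline{k}_r,d}(\beta)_{\underline{k}_r,d}}{(\frac{n}{r})_{\underline{k}_r,d}}\det_{\fn^+}(x)^k$ and the $\bm'$-dependent ones into ${}_2F_1^{\fn^+}(\alpha+k,\beta+k;\frac{n}{r}+k;x)$, as claimed. The point needing care is the term-by-term passage to the limit, justified by uniform convergence of the series on $D$ for $\gamma$ near $\frac{n}{r}-k$.
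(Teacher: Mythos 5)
Your proposal is correct and follows essentially the same route as the paper: the first identity via the complementation $\bm\mapsto\underline{k}_r-\bm^\vee$ of the terminating sum together with $\Phi_{\underline{k}_r-\bm^\vee}^{\fn^+}(x)=\det_{\fn^+}(x)^k\Phi_\bm^{\fn^+}\big(x^\itinv\big)$, $d_{\underline{k}_r-\bm^\vee}^{\fp^+}=d_\bm^{\fp^+}$ and the Pochhammer reflection; the second via (\ref{Kummer1}) with $t_j\mapsto t_j^{-1}$; and part~2 by isolating the $m_r\ge k$ terms through the factorization $(\gamma)_{\bm,d}=(\gamma)_{\underline{k}_r,d}(\gamma+k)_{\bm-\underline{k}_r,d}$ and shifting $\bm\mapsto\bm+\underline{k}_r$. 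The only cosmetic difference is that you obtain the third identity of part~1 by re-applying the inversion to the second, whereas the paper reads it off the first identity plus $\big({-}k-\beta+\tfrac{n}{r}\big)_{\underline{k}_r,d}=(-1)^{kr}(\beta)_{\underline{k}_r,d}$; these are equivalent.
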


\begin{proof}
(1) Since $(-k)_{\bm,d}=0$ holds for $m_j>k$, we have
\[ {}_2F_1^{\fn^+}\left(\begin{matrix}-k,\beta \\ \gamma\end{matrix};x\right)
=\sum_{\substack{\bm\in\BZ_{++}^r\\ m_j\le k}}
\frac{(-k)_{\underline{k}_r-\bm^\vee,d}(\beta)_{\underline{k}_r-\bm^\vee,d}}{(\gamma)_{\underline{k}_r-\bm^\vee,d}}
\frac{d_{\underline{k}_r-\bm^\vee}^{\fp^+}}{\left(\frac{n}{r}\right)_{\underline{k}_r-\bm^\vee,d}}\Phi_{\underline{k}_r-\bm^\vee}^{\fn^+}(x), \]
and by
\begin{gather*}
\Phi_{\underline{k}_r-\bm^\vee}^{\fn^+}(x)=\det_{\fn^+}(x)^k\Phi_\bm^{\fn^+}\big(x^\itinv\big), \qquad
d_{\underline{k}_r-\bm^\vee}^{\fp^+}=d_\bm^{\fp^+},
\\
(\alpha)_{\underline{k}_r-\bm^\vee,d}=\frac{(\alpha)_{\underline{k}_r,d}}{(\alpha+k-\bm^\vee)_{\bm^\vee,d}}
=\frac{(\alpha)_{\underline{k}_r,d}}{(-1)^{|\bm|}\big({-}\alpha-k+\frac{n}{r}\big)_{\bm,d}},
\\
(-k)_{\underline{k}_r,d}=(-1)^{kr}\bigg(\frac{n}{r}\bigg)_{\underline{k}_r,d},
\end{gather*}
we get
\begin{gather*}
{}_2F_1^{\fn^+}\left(\begin{matrix}-k,\beta \\ \gamma\end{matrix};x\right)
\\ \qquad
{}=\!\sum_{\substack{\bm\in\BZ_{++}^r\\ m_j\le k}}\!
\frac{(-k)_{\underline{k}_r,d}}{\left(\frac{n}{r}\right)_{\bm,d}}
\frac{(\beta)_{\underline{k}_r,d}}{\left(-\beta-k+\frac{n}{r}\right)_{\bm,d}}
\frac{\left(-\gamma-k+\frac{n}{r}\right)_{\bm,d}}{(\gamma)_{\underline{k}_r,d}}
\frac{(-k)_{\bm,d}}{\left(\frac{n}{r}\right)_{\underline{k}_r,d}}
d_\bm^{\fp^+}\det_{\fn^+}(x)^k\Phi_\bm^{\fn^+}\big(x^\itinv\big)
\\ \qquad
{}=\frac{(\beta)_{\underline{k}_r,d}}{(\gamma)_{\underline{k}_r,d}}(-1)^{kr}
\det_{\fn^+}(x)^k\sum_{\substack{\bm\in\BZ_{++}^r\\ m_j\le k}}
\frac{(-k)_{\bm,d}\left(-\gamma-k+\frac{n}{r}\right)_{\bm,d}}{\left(-\beta-k+\frac{n}{r}\right)_{\bm,d}}
\frac{d_\bm^{\fp^+}}{\left(\frac{n}{r}\right)_{\bm,d}}\Phi_\bm^{\fn^+}\big(x^\itinv\big)
\\ \qquad
{}=\frac{(\beta)_{\underline{k}_r,d}}{(\gamma)_{\underline{k}_r,d}}\det_{\fn^+}(-x)^k
{}_2F_1^{\fn^+}\left(\begin{matrix}-k,-k-\gamma+\frac{n}{r} \\ -k-\beta+\frac{n}{r}\end{matrix};x^\itinv\right)\!.
\end{gather*}
This proves the 1st equality. The 2nd equality follows from (\ref{Kummer1})
with $t_j\mapsto \frac{1}{t_j}$, $\frac{t_j}{t_j-1}\mapsto \frac{1}{1-t_j}$.
The 3rd equality follows from the 1st equality and $\left(-k-\beta+\frac{n}{r}\right)_{\underline{k}_r,d}=(-1)^{kr}(\beta)_{\underline{k}_r,d}$.

(2) We~have
\begin{gather*}
(\gamma)_{\underline{k}_r,d}\,{}_2F_1^{\fn^+}\left(\begin{matrix}\alpha,\beta\\\gamma\end{matrix};x\right) \\ \qquad
{}=(\gamma)_{\underline{k}_r,d}\sum_{\substack{\bm\in\BZ_{++}^r \\ m_r\le k-1}}
\frac{(\alpha)_{\bm,d}(\beta)_{\bm,d}}{(\gamma)_{\bm,d}}
\frac{d_\bm^{\fp^+}}{\left(\frac{n}{r}\right)_{\bm,d}}\Phi_\bm^{\fn^+}(x)
+\sum_{\substack{\bm\in\BZ_{++}^r \\ m_r\ge k}}\frac{(\alpha)_{\bm,d}(\beta)_{\bm,d}}{(\gamma+k)_{\bm-\underline{k}_r,d}}
\frac{d_\bm^{\fp^+}}{\left(\frac{n}{r}\right)_{\bm,d}}\Phi_\bm^{\fn^+}(x),
\end{gather*}
and when we take the limit $\gamma\to\frac{n}{r}-k$, the 1st term vanishes.
Therefore by changing $\bm$ to $\bm+\underline{k}_r$, we get
\begin{gather*}
\lim_{\gamma\to \frac{n}{r}-k}(\gamma)_{\underline{k}_r,d}\,{}_2F_1^{\fn^+}\left(\begin{matrix}\alpha,\beta\\\gamma\end{matrix};x\right)
=\sum_{\bm\in\BZ_{++}^r}\frac{(\alpha)_{\bm+\underline{k}_r,d}(\beta)_{\bm+\underline{k}_r,d}\,d_{\bm+\underline{k}_r}^{\fp^+}}
{\left(\frac{n}{r}\right)_{\bm,d}\left(\frac{n}{r}\right)_{\bm+\underline{k}_r,d}}\Phi_{\bm+\underline{k}_r}^{\fn^+}(x) \\ \qquad
{}=\frac{(\alpha)_{\underline{k}_r,d}(\beta)_{\underline{k}_r,d}}{\left(\frac{n}{r}\right)_{\underline{k}_r,d}}
\sum_{\bm\in\BZ_{++}^r}\frac{(\alpha+k)_{\bm,d}(\beta+k)_{\bm,d}\,d_\bm^{\fp^+}}
{\left(\frac{n}{r}\right)_{\bm,d}\left(\frac{n}{r}+k\right)_{\bm,d}}\Phi_{\bm}^{\fn^+}(x)\det_{\fn^+}(x)^k \\ \qquad
{}=\frac{(\alpha)_{\underline{k}_r,d}(\beta)_{\underline{k}_r,d}}{\left(\frac{n}{r}\right)_{\underline{k}_r,d}}
\det_{\fn^+}(x)^k{}_2F_1^{\fn^+}\left(\begin{matrix}\alpha+k,\beta+k\\\frac{n}{r}+k\end{matrix};x\right)\!. \tag*{\qed}
\end{gather*}
\renewcommand{\qed}{}
\end{proof}

\begin{Remark}
When $r=1$, (1) follows from the more general formula \cite[equations~2.9(33), (34)]{E},
\begin{align*}
{}_2F_1\left(\begin{matrix}\alpha,\beta\\\gamma\end{matrix};t\right)
={}&\frac{\Gamma(\beta-\alpha)\Gamma(\gamma)}{\Gamma(\beta)\Gamma(\gamma-\alpha)}(-t)^{-\alpha}
{}_2F_1\left(\begin{matrix}\alpha,\alpha-\gamma+1\\\alpha-\beta+1\end{matrix};\frac{1}{t}\right)
\\
&+\frac{\Gamma(\alpha-\beta)\Gamma(\gamma)}{\Gamma(\alpha)\Gamma(\gamma-\beta)}(-t)^{-\beta}
{}_2F_1\left(\begin{matrix}\beta,\beta-\gamma+1\\\beta-\alpha+1\end{matrix};\frac{1}{t}\right)
\\
={}&\frac{\Gamma(\gamma)\Gamma(\gamma-\alpha-\beta)}{\Gamma(\gamma-\alpha)\Gamma(\gamma-\beta)}
{}_2F_1\left(\begin{matrix}\alpha,\beta\\\alpha+\beta-\gamma+1\end{matrix};1-t\right)
\\
&+\frac{\Gamma(\gamma)\Gamma(\alpha+\beta-\gamma)} {\Gamma(\alpha)\Gamma(\beta)}(1-t)^{\gamma-\alpha-\beta}
{}_2F_1\left(\begin{matrix}\gamma-\alpha,\gamma-\beta\\\gamma-\alpha-\beta+1\end{matrix};1-t\right)\!.
\end{align*}
\end{Remark}
\begin{Remark}\label{rem_Heckman-Opdam}
In this paper, $\tilde{\Phi}_\bm^d(t_1,\dots,t_r)$, ${}_0F_1^d(\gamma;t_1,\dots,t_r)$ and
${}_2F_1^d\left(\begin{smallmatrix}\alpha,\beta\\\gamma\end{smallmatrix};t_1,\dots,t_r\right)$ are defined only when
\[
\begin{cases}
d=1,2,4,& r\in\BZ_{>0},
\\
d=8,&r=1,2,3,
\\
d\in\BZ_{>0},\quad d\ne 1,2,4,8, &r=1,2,
\end{cases}
\]
but these can be defined for general $d\in\BC$, $r\in\BZ_{>0}$ by using the Jack polynomials,
and (\ref{expand_expsum}), (\ref{expand_binom}), (\ref{Kummer1}) and (\ref{Kummer2}) hold for general $(d,r)$ (see Yan \cite{Ya}).
This ${}_2F_1^d$ coincides with a special case of Heckman--Opdam's multivariate hypergeometric functions of type $BC_r$
under some change of variables, that is, for $\alpha\in\BC$, $\bk=(k_s,k_l,k_m)\in\BC^3$ and for $s_1,\dots,s_r\in\BC/2\pi\sqrt{-1}\BZ$, we have
\begin{gather}
{}_2F_1^{2k_m}\left( \begin{matrix}\alpha,-\alpha+k_s+2k_l+k_m(r-1)\\k_s+k_l+k_m(r-1)+\frac{1}{2}\end{matrix};
-\sinh^2\frac{s_1}{2},\dots,-\sinh^2\frac{s_r}{2}\right) \notag
\\ \qquad
{}=F_{BC_r}(\underline{\alpha}_r-\rho(\bk),\bk;s_1,\dots,s_r), \label{Heckman-Opdam}
\end{gather}
where
\begin{align*}
\rho(\bk)&=\frac{1}{2}\Bigg(k_s\sum_{j=1}^r\epsilon_j+k_l\sum_{j=1}^r2\epsilon_j
+k_m\sum_{1\le i<j\le r}((\epsilon_i-\epsilon_j)+(\epsilon_i+\epsilon_j))\Bigg)
\\
&=\underline{\frac{k_s}{2}+k_l}_r+k_m(r-1,r-2,\dots,1,0)\in\BC^r
\end{align*}
(see Beerends--Opdam \cite{BO}). Here $\{\epsilon_j\}\subset\BC^r$ denotes the standard basis.
\end{Remark}

\subsection{Holomorphic discrete series representations}

From a Hermitian positive Jordan triple system $\fp^+$, we construct a Lie group $G$ by the Kantor--Koecher--Tits construction,
and let $K\subset G$ be the maximal compact subgroup, as in Section~\ref{section_KKT}.
Then the quotient space $G/K$ is diffeomorphic to the bounded symmetric domain $D$ in (\ref{BSD}) via the Borel embedding (Harish-Chandra realization):
\[
\xymatrix{ G/K \ar[r] \ar@{-->}[d]^{\mbox{\rotatebox{90}{$\sim$}}} & G^\BC/K^\BC P^- \\
 D \ar@{^{(}->}[r] & \fp^+. \ar[u]_{\exp} }
 \]
For $g\in G^\BC$, $x\in \fp^+$, if $g\exp(x)\in P^+K^\BC P^-$ holds, then we write
\[ g\exp(x)=\exp(\pi^+(g,x))\kappa(g,x)\exp(\pi^-(g,x)), \]
where $\pi^+(g,x)\in\fp^+$, $\kappa(g,x)\in K^\BC$, and $\pi^-(g,x)\in\fp^-$.
Then since $\pi^+(g,x)\in D$ holds for $(g,x)\in G\times D$, $\pi^+$ gives the action of $G$ on $D$. From now on we abbreviate $\pi^+(g,x)=:gx$.
Also, since the Bergman operator $B(x,\overline{y})$ is invertible for $x,y\in D$, $B(x,\overline{y})\in\operatorname{Str}(\fp^+)$ holds
on $D\times\overline{D}$.
Moreover, since $D$ is simply connected, we can lift $B\colon D\times\overline{D}\to\operatorname{Str}(\fp^+)$ to the map
$B\colon D\times\overline{D}\to K^\BC$ via the local isomorphism ${\rm Ad}|_{\fp^+}\colon K^\BC\to\operatorname{Str}(\fp^+)$. Then
\[
B(gx,\overline{gy})=\kappa(g,x)B(x,\overline{y})\kappa(g,y)^*,\qquad g\in G,\quad x,y\in D
\]
holds.

{\sloppy
Now let $(\tau,V)$ be an irreducible holomorphic representation of the universal covering group~$\widetilde{K}^\BC$ of $K^\BC$.
Then the universal covering group $\widetilde{G}$ of $G$ acts on the space $\cO(D,V)=\cO_\tau(D,V)$ of
$V$-valued holomorphic functions on $D$ by
\[
(\hat{\tau}(g)f)(x):=\tau(\kappa(g^{-1},x))^{-1}f(g^{-1}x), \qquad
g\in\widetilde{G},\quad x\in D,\quad f\in\cO(D,V),
\]}
and its differential action is given by
\[
({\rm d}\hat{\tau}(z,k,w)f)(x)={\rm d}\tau(k-D(x,w))f(x)+\frac{\rm d}{{\rm d}t}\biggr|_{t=0}f(x-t(z+kx-Q(x)w))
\]
for $z\in\fp^+$, $k\in\fk^\BC$, $w\in\fp^-$.
Next we lift the Bergman operator to $B\colon D\times\overline{D}\to\widetilde{K}^\BC$,
and we consider the function $\tau(B(x,\overline{y}))\in\cO(D\times\overline{D},\End_\BC(V))$.
Then this is invariant under the diagonal action of $\widetilde{G}$.
Hence if $\tau(B(x,\overline{y}))$ is positive definite, that is,
$\sum_{j,k=1}^N(\tau(B(x_j,\overline{x_k}))v_j,v_k)_\tau\ge 0$ holds for any $\{x_j\}_{j=1}^N\subset D$ and $\{v_j\}_{j=1}^N\subset V$,
then there exists a unique Hilbert subspace $\cH_\tau(D,V)\subset\cO_\tau(D,V)$ with the reproducing kernel $\tau(B(x,\overline{y}))$,
on which $\widetilde{G}$ acts unitarily via~$\hat{\tau}$.
The representation $(\hat{\tau},\cH_\tau(D,V))$ is called a \textit{unitary highest weight representation}.
The representations $(\tau,V)$ of $\widetilde{K}^\BC$ such that the unitary highest weight representations
$\cH_\tau(D,V)\subset\cO(D,V)$ exist are classified by Enright--Howe--Wallach \cite{EHW} and Jakobsen \cite{J}.
Especially, if its inner product is given by the converging integral
\[ \langle f,g\rangle_{\hat{\tau}}:=C_\tau\int_D \big(\tau\big(B(x)^{-1}\big)f(x),g(x)\big)_{\tau}\Det_{\fp^+}(B(x))^{-1}{\rm d}x \]
(a \textit{weighted Bergman inner product}), then $(\hat{\tau},\cH_\tau(D,V))$ is called a \textit{holomorphic discrete series representation}.
Here we determine the constant $C_\tau$ such that $\Vert v\Vert_{\hat{\tau}}=|v|_\tau$ holds for all constant functions $v$.

Next, let $\chi\colon \widetilde{K}^\BC\to\BC^\times$ be the character of $\widetilde{K}^\BC$ normalized such that
\begin{equation}\label{char}
{\rm d}\chi([x,y])=(x|y)_{\fp^+}, \qquad x\in\fp^+,\quad y\in\fp^-,
\end{equation}
so that $h(x,y)=\chi(B(x,y))$ holds.
If $(\tau,V)=\big(\chi^{-\lambda}\otimes \tau_0,V\big)$ for some representation $(\tau_0,V)$ of~$K^\BC$,
then we write $\cH_\tau(D,V)=\cH_\lambda(D,V)\subset\cO(D,V)=\cO_\lambda(D,V)$. Then the reproducing kernel is given by
$h(x,\overline{y})^{-\lambda}\tau_0(B(x,\overline{y}))$, and the inner product is given by
\[ \langle f,g\rangle_{\lambda,\tau_0}:=C_{\lambda,\tau_0}\int_D \big(\tau\big(B(x)^{-1}\big)f(x),g(x)\big)_{\tau_0}h(x)^\lambda
\Det_{\fp^+}(B(x))^{-1}{\rm d}x \]
if it converges. Especially if $(\tau_0,V)$ is trivial, we also write $\cH_\tau(D,V)=\cH_\lambda(D)\subset\cO(D)=\cO_\lambda(D)$.

From now on we assume $\fp^+$ is simple and $(\tau_0,V)$ is trivial. Then $\cH_\lambda(D)$ is holomorphic discrete if $\lambda>p-1$,
and the inner product is given by
\begin{gather*}
\langle f,g\rangle_\lambda=\langle f,g\rangle_{\lambda,\fp^+}
:=C_\lambda\int_D f(x)\overline{g(x)}h(x)^{\lambda-p}{\rm d}x, \\
C_\lambda:=\frac{\prod_{j=1}^r\Gamma\left(\lambda-\frac{d}{2}(j-1)\right)}
{\pi^n\prod_{j=1}^r\Gamma\left(\lambda-\frac{n}{r}-\frac{d}{2}(j-1)\right)}.
\end{gather*}
Next we consider another inner product on $\cP(\fp^+)$, called the \textit{Fischer inner product}
(see, e.g., \cite[Section XI.1]{FK}), defined by
\[ \langle f,g\rangle_F=\langle f,g\rangle_{F,\fp^+}:=\frac{1}{\pi^n}\int_{\fp^+}f(x)\overline{g(x)}{\rm e}^{-(x|\overline{x})_{\fp^+}}{\rm d}x
=\overline{g\left(\overline{\frac{\partial}{\partial x}}\right)}f(x)\biggr|_{x=0}. \]
Then we have the following.
\begin{Theorem}[Faraut--Kor\'anyi \cite{FK0}, {\cite[Part III, Corollary V.3.9]{FKKLR}}]\label{FKThm}
For $f\in\cP_\bm(\fp^+)$, $g\in\cP(\fp^+)$, we have
\[ \langle f,g\rangle_\lambda=\frac{1}{(\lambda)_{\bm,d}}\langle f,g\rangle_F. \]
\end{Theorem}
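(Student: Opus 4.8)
The plan is to exploit that both $\langle\cdot,\cdot\rangle_\lambda$ and $\langle\cdot,\cdot\rangle_F$ are $\widetilde{K}$-invariant Hermitian inner products on $\cP(\fp^+)$, and then to pin down the proportionality constant on each isotypic component by comparing reproducing kernels. First I would record that the $\widetilde{G}$-representation on $\cH_\lambda(D)$ is unitary, so its restriction to $\widetilde{K}$ preserves $\langle\cdot,\cdot\rangle_\lambda$; likewise the Gaussian weight ${\rm e}^{-(x|\overline{x})_{\fp^+}}$ is $K$-invariant, so $\langle\cdot,\cdot\rangle_F$ is $\widetilde{K}$-invariant. By the Hua--Kostant--Schmid decomposition $\cP(\fp^+)=\bigoplus_{\bm\in\BZ_{++}^r}\cP_\bm(\fp^+)$ into mutually inequivalent irreducible modules, Schur's lemma forces distinct components to be orthogonal for both forms and forces each form to be a positive scalar multiple of the other on every $\cP_\bm(\fp^+)$. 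Hence there are constants $c_\bm(\lambda)>0$ with $\langle f,g\rangle_\lambda=c_\bm(\lambda)\langle f,g\rangle_F$ for $f\in\cP_\bm(\fp^+)$, $g\in\cP(\fp^+)$, and it remains only to show $c_\bm(\lambda)=(\lambda)_{\bm,d}^{-1}$.

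To identify $c_\bm(\lambda)$ I would compare the two reproducing kernels restricted to the finite-dimensional space $\cP_\bm(\fp^+)$. The Fischer product is the Bargmann--Fock product, whose reproducing kernel on $\cP(\fp^+)$ is ${\rm e}^{(x|\overline{y})_{\fp^+}}$ (immediate from the differential-operator expression for $\langle\cdot,\cdot\rangle_F$); since the decomposition into the non-isomorphic pieces $\cP_\bm(\fp^+)$ is orthogonal, the kernel splits as $\sum_{\bm}\tilde{\Phi}_\bm^{\fp^+}(x,\overline{y})$, so $\tilde{\Phi}_\bm^{\fp^+}(x,\overline{y})$ is precisely the Fischer reproducing kernel of $\cP_\bm(\fp^+)$. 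Scaling an inner product by $c_\bm(\lambda)$ scales the associated reproducing kernel by $c_\bm(\lambda)^{-1}$, so the reproducing kernel of $\cP_\bm(\fp^+)$ for $\langle\cdot,\cdot\rangle_\lambda$ equals $c_\bm(\lambda)^{-1}\tilde{\Phi}_\bm^{\fp^+}(x,\overline{y})$. Summing over $\bm$ exhibits the reproducing kernel of $\cH_\lambda(D)$ as $\sum_{\bm}c_\bm(\lambda)^{-1}\tilde{\Phi}_\bm^{\fp^+}(x,\overline{y})$, which must coincide with the known reproducing kernel $h(x,\overline{y})^{-\lambda}$.

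Finally I would invoke the expansion (\ref{expand_h}), $h(x,\overline{y})^{-\lambda}=\sum_{\bm}(\lambda)_{\bm,d}\tilde{\Phi}_\bm^{\fp^+}(x,\overline{y})$, and match coefficients of the linearly independent kernels $\tilde{\Phi}_\bm^{\fp^+}$ to conclude $c_\bm(\lambda)^{-1}=(\lambda)_{\bm,d}$, i.e., $\langle f,g\rangle_\lambda=(\lambda)_{\bm,d}^{-1}\langle f,g\rangle_F$. I would run the argument for $\lambda>p-1$, where the weighted Bergman integral converges and $\langle\cdot,\cdot\rangle_\lambda$ is genuinely positive definite so that the reproducing-kernel comparison is legitimate; the resulting rational factor $(\lambda)_{\bm,d}^{-1}$ then furnishes the meromorphic continuation to all $\lambda$. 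The one point needing care — the main obstacle in this route — is justifying that the Fischer kernel of $\cP_\bm(\fp^+)$ is exactly the $\bm$-component $\tilde{\Phi}_\bm^{\fp^+}$ of ${\rm e}^{(x|\overline{y})_{\fp^+}}$ and that the two kernel expansions may be compared termwise; this rests on the orthogonality of the summands $\cP_\bm(\fp^+)$ and on the linear independence of the $\tilde{\Phi}_\bm^{\fp^+}(x,\overline{y})$ supplied by the multiplicity-free decomposition.
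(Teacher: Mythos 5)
Your argument is correct. Note, however, that the paper does not prove this statement at all: it is quoted as a classical theorem of Faraut--Kor\'anyi with references to \cite{FK0} and \cite[Part~III, Corollary~V.3.9]{FKKLR}, so there is no in-paper proof to compare against. Your route --- $\widetilde{K}$-invariance of both forms, Schur's lemma on the multiplicity-free Hua--Kostant--Schmid decomposition to reduce to a constant $c_\bm(\lambda)$ on each $\cP_\bm(\fp^+)$, and identification of that constant by matching the componentwise expansions of the two reproducing kernels ${\rm e}^{(x|\overline{y})_{\fp^+}}=\sum_\bm\tilde{\Phi}_\bm^{\fp^+}$ and $h(x,\overline{y})^{-\lambda}=\sum_\bm(\lambda)_{\bm,d}\tilde{\Phi}_\bm^{\fp^+}$ --- is the standard derivation and is sound, provided one grants the binomial expansion (\ref{expand_h}), which the paper itself imports from \cite[Proposition~XII.1.3]{FK} independently of the norm formula, so there is no circularity. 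The points you flag as needing care (orthogonality of the $\cP_\bm$ for both forms, hence that the $\bm$-component of the full kernel is the reproducing kernel of $\cP_\bm$, and linear independence of the $\tilde{\Phi}_\bm^{\fp^+}$ as elements of the distinct spaces $\cP_\bm(\fp^+)\otimes\cP_\bm(\fp^-)$) are exactly the right ones, and both follow from the multiplicity-freeness; working first in the range $\lambda>p-1$ and then continuing meromorphically is also the correct way to handle general $\lambda$.
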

Here $(\lambda)_{\bm,d}$ is as in (\ref{Pochhammer}).
Since the reproducing kernel on $\cP_\bm(\fp^+)$ with respect to $\langle\cdot,\cdot\rangle_F$ is given by ${\rm e}^{(x|\overline{y})_{\fp^+}}$,
the following holds.
\begin{Corollary}\label{FKCor}
For $f\in\cP_\bm(\fp^+)$, $y\in\fp^+$, we have
\[
\big\langle f,{\rm e}^{(\cdot|\overline{y})_{\fp^+}}\big\rangle_\lambda=\frac{1}{(\lambda)_{\bm,d}}f(y).
\]
\end{Corollary}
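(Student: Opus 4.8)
The plan is to reduce the statement to Theorem~\ref{FKThm} combined with the reproducing property of the Fischer inner product; the one thing that makes this clean is that $f$ lies in a single $\widetilde{K}$-type $\cP_\bm(\fp^+)$, so only one isotypic component of the exponential kernel can contribute.

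First I would expand the kernel into its $K$-isotypic pieces. By the expansion ${\rm e}^{(x|\overline{y})_{\fp^+}}=\sum_{\bn\in\BZ_{++}^r}\tilde{\Phi}_\bn^{\fp^+}(x,\overline{y})$ recorded right after \eqref{expand_exptr}, together with $\tilde{\Phi}_\bn^{\fp^+}\in\cP_\bn(\fp^+)\otimes\cP_\bn(\fp^-)$ from \eqref{Phi^p}, each summand $\tilde{\Phi}_\bn^{\fp^+}(\cdot,\overline{y})$ is, as a function of its first argument, a genuine polynomial lying in $\cP_\bn(\fp^+)$. Thus ${\rm e}^{(\cdot|\overline{y})_{\fp^+}}$ is the convergent sum of its projections onto the mutually inequivalent $\widetilde{K}$-types $\cP_\bn(\fp^+)$, and $\tilde{\Phi}_\bm^{\fp^+}(\cdot,\overline{y})$ is precisely its $\cP_\bm(\fp^+)$-component.

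Next I would invoke that both $\langle\cdot,\cdot\rangle_\lambda$ and $\langle\cdot,\cdot\rangle_F$ make distinct types orthogonal: the $\widetilde{G}$-action on $\cH_\lambda(D)$ is unitary, so $\widetilde{K}$ preserves $\langle\cdot,\cdot\rangle_\lambda$, and the Fischer form is likewise $\widetilde{K}$-invariant, whence $\cP_\bm(\fp^+)$ and $\cP_\bn(\fp^+)$ are orthogonal for $\bm\ne\bn$ under both forms. Since $f\in\cP_\bm(\fp^+)$, only the $\bn=\bm$ term survives in the expansion. Now $\tilde{\Phi}_\bm^{\fp^+}(\cdot,\overline{y})\in\cP_\bm(\fp^+)\subset\cP(\fp^+)$ is an honest polynomial, so Theorem~\ref{FKThm} applies verbatim and, followed by the Fischer reproducing property stated just above the corollary, gives
\[
\big\langle f,{\rm e}^{(\cdot|\overline{y})_{\fp^+}}\big\rangle_\lambda
=\big\langle f,\tilde{\Phi}_\bm^{\fp^+}(\cdot,\overline{y})\big\rangle_\lambda
=\frac{1}{(\lambda)_{\bm,d}}\big\langle f,\tilde{\Phi}_\bm^{\fp^+}(\cdot,\overline{y})\big\rangle_F
=\frac{1}{(\lambda)_{\bm,d}}\big\langle f,{\rm e}^{(\cdot|\overline{y})_{\fp^+}}\big\rangle_F
=\frac{1}{(\lambda)_{\bm,d}}f(y).
\]

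The one point needing care is the legitimacy of working componentwise with the infinite series, i.e.\ interchanging the (integral) inner product with the sum $\sum_\bn\tilde{\Phi}_\bn^{\fp^+}(\cdot,\overline{y})$. I would handle this by truncating to partial sums $g_N=\sum_{|\bn|\le N}\tilde{\Phi}_\bn^{\fp^+}(\cdot,\overline{y})$, which are polynomials satisfying $\langle f,g_N\rangle_\lambda=\frac{1}{(\lambda)_{\bm,d}}f(y)$ for all $N\ge|\bm|$ by Theorem~\ref{FKThm} and orthogonality, and then passing to the limit: for $\lambda>p-1$ the defining integral converges on the bounded domain $D$ with integrable weight $h(x)^{\lambda-p}$, and $g_N\to {\rm e}^{(\cdot|\overline{y})_{\fp^+}}$ uniformly on the compact set $\overline{D}$, so the constant value $\frac{1}{(\lambda)_{\bm,d}}f(y)$ persists; the general case then follows by meromorphic continuation in $\lambda$. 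More economically, since $\langle f,\cdot\rangle_\lambda$ depends only on the $\cP_\bm(\fp^+)$-component of its second argument by $\widetilde{K}$-invariance, the replacement of ${\rm e}^{(\cdot|\overline{y})_{\fp^+}}$ by $\tilde{\Phi}_\bm^{\fp^+}(\cdot,\overline{y})$ is exact and no analytic limit is required.
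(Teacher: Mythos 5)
Your argument is correct and is essentially the paper's own: the paper deduces the corollary in one line from Theorem~\ref{FKThm} together with the observation that ${\rm e}^{(x|\overline{y})_{\fp^+}}$ is the reproducing kernel of $\cP_\bm(\fp^+)$ for the Fischer inner product, which is exactly the reduction you carry out. The only difference is that you spell out the justification for applying Theorem~\ref{FKThm} to the non-polynomial kernel (via the $\widetilde K$-type expansion and orthogonality), a point the paper leaves implicit; this is a harmless and valid elaboration, not a different route.
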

Now let $e\in\fp^+$ be a tripotent, and let $\rank(\fp^+(e)_2)=:r'$.
For $x\in\fp^+$, let $x'\in\fp^+(e)_2$ be the orthogonal projection,
and regard $\cP(\fp^+(e)_2)\subset\cP(\fp^+)$ via the projection.
Then for $\bm=(m_1,\dots,m_{r'})\in\BZ_{++}^{r'}$ we have $\cP_\bm(\fp^+(e)_2)\subset\cP_{(m_1,\dots,m_{r'},0,\dots,0)}(\fp^+)$ since
they have the common lowest weight vector. Since $\fp^+$ and $\fp^+(e)_2$ have the common number $d$,
by the above formula we get the following.
\begin{Corollary}\label{inner_sub}
Let $e\in\fp^+$ be a tripotent. For $f\in\cP(\fp^+(e)_2)$, $y\in\fp^+$, we have
\[
\big\langle f(x),{\rm e}^{(x|\overline{y})_{\fp^+}}\big\rangle_{\lambda,x,\fp^+}
=\big\langle f(x'),{\rm e}^{(x'|\overline{y'})_{\fp^+(e)_2}}\big\rangle_{\lambda,x',\fp^+(e)_2}.
\]
\end{Corollary}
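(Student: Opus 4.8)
The plan is to deduce the identity directly from Corollary~\ref{FKCor}, applied once over $\fp^+$ and once over the Jordan triple subsystem $\fp^+(e)_2$, and then to match the resulting scalar factors. Both sides of the asserted equality are linear in $f$, so by the Hua--Kostant--Schmid decomposition $\cP(\fp^+(e)_2)=\bigoplus_{\bm\in\BZ_{++}^{r'}}\cP_\bm(\fp^+(e)_2)$ it suffices to treat a single component $f\in\cP_\bm(\fp^+(e)_2)$ with $\bm=(m_1,\dots,m_{r'})$. As observed immediately before the statement, such an $f$ lies in $\cP_{\tilde\bm}(\fp^+)$, where $\tilde\bm:=(m_1,\dots,m_{r'},0,\dots,0)\in\BZ_{++}^r$, since the two spaces share a common lowest weight vector.

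First I would apply Corollary~\ref{FKCor} over $\fp^+$ to obtain
\[
\big\langle f(x),{\rm e}^{(x|\overline{y})_{\fp^+}}\big\rangle_{\lambda,x,\fp^+}=\frac{1}{(\lambda)_{\tilde\bm,d}}f(y),
\]
and then apply the same corollary over $\fp^+(e)_2$, regarded as a Hermitian positive Jordan triple system of rank $r'$ with the same structure constant $d$, to obtain
\[
\big\langle f(x'),{\rm e}^{(x'|\overline{y'})_{\fp^+(e)_2}}\big\rangle_{\lambda,x',\fp^+(e)_2}=\frac{1}{(\lambda)_{\bm,d}}f(y').
\]
Because $f$ is viewed inside $\cP(\fp^+)$ through the orthogonal projection onto $\fp^+(e)_2$, evaluation at $y$ agrees with evaluation of the restricted polynomial at $y'$, i.e.\ $f(y)=f(y')$.

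It then remains to compare the two Pochhammer factors. By definition~(\ref{Pochhammer}),
\[
(\lambda)_{\tilde\bm,d}=\prod_{j=1}^{r}\bigg(\lambda-\frac{d}{2}(j-1)\bigg)_{\tilde m_j}=\prod_{j=1}^{r'}\bigg(\lambda-\frac{d}{2}(j-1)\bigg)_{m_j}=(\lambda)_{\bm,d},
\]
where the middle step uses $\tilde m_j=m_j$ for $j\le r'$ together with $(\,\cdot\,)_0=1$ for $j>r'$, and---crucially---that $\fp^+$ and $\fp^+(e)_2$ carry the same value of $d$, so the truncated product over $\fp^+$ coincides with the full product over $\fp^+(e)_2$. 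Hence both inner products equal $f(y')/(\lambda)_{\bm,d}$, and summing over $\bm$ recovers the identity for arbitrary $f\in\cP(\fp^+(e)_2)$. The argument is essentially bookkeeping once Corollary~\ref{FKCor} is available; the only step needing genuine care is this matching of Pochhammer symbols, which is exactly where the common structure constant $d$ of $\fp^+$ and $\fp^+(e)_2$ enters.
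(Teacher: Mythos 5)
Your proof is correct and follows essentially the same route as the paper, which derives the corollary from Corollary~\ref{FKCor} via the inclusion $\cP_\bm(\fp^+(e)_2)\subset\cP_{(m_1,\dots,m_{r'},0,\dots,0)}(\fp^+)$ and the observation that $\fp^+$ and $\fp^+(e)_2$ share the same structure constant $d$, so that the two Pochhammer factors $(\lambda)_{\tilde\bm,d}$ and $(\lambda)_{\bm,d}$ agree. The only point the paper makes that you should keep explicit is that the inclusion holds because the two spaces have a common lowest weight vector, which you do cite; otherwise the bookkeeping is identical.
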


Also, by Theorem \ref{FKThm} or (\ref{expand_h}),
the reproducing kernel $h(x,\overline{y})^{-\lambda}$ is positive definite if and only if
\[
\lambda\in\bigg\{0, \frac{d}{2},d,\dots,\frac{d}{2}(r-1)\bigg\}\cup\bigg(\frac{d}{2}(r-1),\infty\bigg),
\]
and for these $\lambda$ there exists a unitary subrepresentation $\cH_\lambda(D)\subset\cO_\lambda(D)$.
Especially, if $\lambda>\frac{d}{2}(r-1)$, then the space of $\widetilde{K}$-finite vectors coincides with the space of all polynomials.
\[
\cH_\lambda(D)_{\widetilde{K}}=\cP(\fp^+) \qquad \text{if}\quad \lambda>\frac{d}{2}(r-1).
\]

\subsection{Restriction to symmetric subalgebras}

Let $G\subset G^\BC$ be connected simple Lie groups constructed from a simple Hermitian positive Jordan triple system $\fp^\pm$
via the Kantor--Koecher--Tits construction (Section~\ref{section_KKT}), so that $\fg^\BC=\fp^+\oplus\fk^\BC\oplus\fp^-$ holds.
We~consider a $\BC$-linear involution $\sigma$ on $\fp^\pm$, and extend to an involution on
$\fk^\BC=\mathfrak{str}(\fp^+)\subset\End(\fp^+)$ by $\sigma(l):=\sigma l\sigma$. Using this, we set
\begin{gather*}
\fp^\pm_1:=\big(\fp^\pm\big)^\sigma=\big\{x\in\fp^\pm\mid \sigma(x)=x\big\}, \\
\fp^\pm_2:=\big(\fp^\pm\big)^{-\sigma}=\big\{x\in\fp^\pm\mid \sigma(x)=-x\big\}, \\
\fk^\BC_1:=\big(\fk^\BC\big)^\sigma=\big\{l\in\fk^\BC\mid \sigma(l)=l\big\}, \\
\fk_1:=\fk^\sigma=\fk^\BC_1\cap\fk, \\
\fg^\BC_1:=\big(\fg^\BC\big)^\sigma=\fp^+_1\oplus\fk^\BC_1\oplus\fp^-_1, \\
\fg_1:=\fg^\sigma=\fg^\BC_1\cap\fg,
\end{gather*}
and let $G_1, G_1^\BC, K_1, K^\BC_1\subset G^\BC$ be the connected closed subgroups corresponding to
$\fg_1, \fg_1^\BC, \allowbreak \fk_1, \fk^\BC_1\subset \fg^\BC$ respectively.
Such $(G,G_1)$ is called a symmetric pair of holomorphic type (see \cite[Section~3.4]{Kmf1}).
Also let $\widetilde{G}_1\subset\widetilde{G}$ and $\widetilde{K}_1^\BC\subset\widetilde{K}^\BC$ be the connected closed subgroups
of the universal covering groups of~$G$ and~$K^\BC$ corresponding to $\fg_1\subset\fg$ and $\fk_1^\BC\subset\fk^\BC$ respectively,
and let $D\subset \fp^+$, $D_1\subset \fp^+_1$ be the corresponding bounded symmetric domains, so that
$D\simeq G/K\simeq \widetilde{G}/\widetilde{K}$, $D_1\simeq G_1/K_1\simeq \widetilde{G}_1/\widetilde{K}_1$ hold.

Next, we consider a finite-dimensional irreducible representations $(\tau,V)$ of $\widetilde{K}^\BC$,
and let $\cH_\tau(D,V)\subset\cO_\tau(D,V)$ be the corresponding unitary highest weight representations of $\widetilde{G}$ if it exists.
Then the restriction $\cH_\tau(D,V)|_{\widetilde{G}_1}$ decomposes discretely and the multiplicity is uniformly bounded
if $(G,G_1)$ is a symmetric pair of holomorphic type (see \cite[Theorem B]{Kmf1}).
Moreover, if~$\cH_\tau(D,V)_{\widetilde{K}}=\cP(\fp^+,V)$ holds,
then since $\fp^+_1\subset\fg^\BC$ acts on $\cH_\tau(D,V)_{\widetilde{K}}$ by differential operators of constant coefficients
along $\fp^+_1\subset\fp^+$, the set of $\fp^+_1$-null vectors is
\[
\big(\cH_\tau(D,V)_{\widetilde{K}}\big)^{\fp^+_1}=\cP\big(\fp^+_2,V\big),
\]
and every irreducible $\widetilde{G}_1$-submodule of $\cH_\tau(D,V)|_{\widetilde{G}_1}$ is generated by
some irreducible $\widetilde{K}_1^\BC$-submodule of $\cP(\fp^+_2,V)$. That is, if $\cP(\fp^+_2,V)$ is decomposed under $\widetilde{K}_1^\BC$ as
\[
\cP(\fp^+_2,V|_{\widetilde{K}_1^\BC})\simeq \bigoplus_j m(\tau,\rho_j)(\rho_j,W_j), \qquad m(\tau,\rho_j)\in\BZ_{\ge 0},
\]
then $\cH_\tau(D,V)$ is decomposed under $\widetilde{G}_1$ abstractly as
\begin{equation}\label{HKKS}
\cH_\tau(D,V)|_{\widetilde{G}_1}\simeq \hsum_j m(\tau,\rho_j)\cH_{\rho_j}(D_1,W_j)
\end{equation}
(see Kobayashi \cite[Lemma 8.8]{Kmf1}, \cite[Section 8]{Kmf0-1}. For earlier results, see also \cite{JV,Ma}).
By this decomposition, there exist $\widetilde{G}_1$-intertwining operators
\begin{gather*}
\cF_{\tau\rho_j}^\downarrow\colon\ \cH_\tau(D,V)|_{\widetilde{G}_1}\longrightarrow\cH_{\rho_j}(D_1,W_j), \\
\cF_{\tau\rho_j}^\uparrow\colon\ \cH_{\rho_j}(D_1,W_j)\longrightarrow\cH_\tau(D,V)|_{\widetilde{G}_1}.
\end{gather*}
$\cF_{\tau\rho_j}^\downarrow$ is called a symmetry breaking operator, and $\cF_{\tau\rho_j}^\uparrow$ is called a holographic operator,
according to the terminology introduced in \cite{KP1, KP2} and \cite{KP3} respectively.
By the Riesz representation theorem, these operators are always given by taking the inner products with some kernel functions,
and by \cite[Section 3]{N2} these are rewritten in differential expressions as follows.
\begin{Theorem}[{\cite[Theorem 3.10]{N2}}]\label{thm_intertwining}
Let $(\tau,V)$ be a $\widetilde{K}^\BC$-module, and $(\rho,W)$ be a $\widetilde{K}_1^\BC$-module
which appears in $\cP(\fp^+_2)\otimes V|_{\widetilde{K}_1^\BC}$.
Let $\rK(x_2)\in \cP(\fp^-_2,\Hom_\BC(V,W))$ be an operator-valued polynomial satisfying
\[
\rK\big({}^t\hspace{-1pt}kx_2\big)=\rho(k)^{-1}\rK(x_2)\tau(k), \qquad
x_2\in\fp^-_2,\quad k\in\widetilde{K}_1^\BC.
\]
\begin{enumerate}\itemsep=0pt
\item[$1.$] Assume $\cH_\tau(D,V)_{\widetilde{K}}=\cP(\fp^+,V)$.
We~define the operator-valued polynomial $F_{\tau\rho}^\downarrow(z)=F_{\tau\rho}^\downarrow(z_1,z_2)\in\cP(\fp^-,\Hom_\BC(V,W))$ by
\[
F_{\tau\rho}^\downarrow(z)=F_{\tau\rho}^\downarrow(z_1,z_2)
:=\big\langle {\rm e}^{(x|z)_{\fp^+}}I_V,\rK(\overline{x_2})^*\big\rangle_{\hat{\tau},\fp^+,x}
=\big\langle \rK(x_2),{\rm e}^{(x|\overline{z})_{\fp^-}}I_{\overline{V}}\big\rangle_{\hat{\overline{\tau}},\fp^-,x}.
\]
Then the linear map
\begin{gather*}
\cF_{\tau\rho}^\downarrow\colon\quad \cH_\tau(D,V)\longrightarrow\cH_\rho(D_1,W), \qquad
(\cF_{\tau\rho}^\downarrow f)(x_1)
=F_{\tau\rho}^\downarrow\bigg(\frac{\partial}{\partial x}\bigg)\bigg|_{x_2=0}f(x)
\end{gather*}
intertwines the $\widetilde{G}_1$-action.
\item[$2.$] Assume $(G,G_1)$ is symmetric, and also assume ``\,$\cH_\tau(D,V)_{\widetilde{K}}=\cP(\fp^+,V)$ holds'' or \linebreak ``\,$\cH_\rho(D_1,W)$ is
a holomorphic discrete series representation''. We~define the operator-valued function
$F_{\tau\rho}^\uparrow(x_2;w_1)\in\cO((D\cap\fp^+_2)\times\fp^-_1,\Hom(W,V))$ by
\[
F_{\tau\rho}^\uparrow(x_2;w_1):=\big\langle {\rm e}^{(y_1|w_1)_{\fp^+_1}}I_W,
\rK\big((\overline{x_2})^{Q(y_1)\overline{x_2}}\big)
\tau(B(y_1,\overline{x_2}))\big\rangle_{\hat{\rho},\fp^+_1,y_1}.
\]
Then the linear map
\begin{gather*}
\cF_{\tau\rho}^\uparrow\colon\quad \cH_\rho(D_1,W)_{\widetilde{K}_1}\longrightarrow \cO_\tau(D,V)_{\widetilde{K}}, \qquad
(\cF_{\tau\rho}^\uparrow f)(x)=F_{\tau\rho}^\uparrow\bigg(x_2;\frac{\partial}{\partial x_1}\bigg)f(x_1)
\end{gather*}
intertwines the $(\fg_1,\widetilde{K}_1)$-action.
\end{enumerate}
\end{Theorem}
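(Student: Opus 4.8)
Since $\widetilde G_1$ is connected, it is enough to prove that $\cF^\downarrow_{\tau\rho}$ intertwines the infinitesimal action of the complexified Lie algebra $\fg^\BC_1=\fp^+_1\oplus\fk^\BC_1\oplus\fp^-_1$ on the dense space of $\widetilde K_1$-finite (i.e.\ polynomial) vectors, and then to extend to $\widetilde G_1$ by continuity. The plan is to organize this verification according to the three summands of $\fg^\BC_1$, using throughout the explicit differential action
\[
({\rm d}\hat\tau(z,k,w)f)(x)={\rm d}\tau(k-D(x,w))f(x)+\frac{\rm d}{{\rm d}t}\Big|_{t=0}f(x-t(z+kx-Q(x)w))
\]
recalled above and the defining $\widetilde K_1^\BC$-equivariance $\rK({}^t\hspace{-1pt}kx_2)=\rho(k)^{-1}\rK(x_2)\tau(k)$ of the kernel.

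As a preliminary step I would check that the two expressions for $F^\downarrow_{\tau\rho}(z)$ agree and define an element of $\cP(\fp^-,\Hom_\BC(V,W))$: the conjugate-linear isomorphism $\cH_\tau(D,V)\simeq\cH_{\overline\tau}(\overline D,\overline V)$ interchanges the $\fp^+$- and $\fp^-$-pictures and carries ${\rm e}^{(x|z)_{\fp^+}}I_V$ to ${\rm e}^{(x|\overline z)_{\fp^-}}I_{\overline V}$ and $\rK(\overline{x_2})^*$ to $\rK(x_2)$, which identifies the two inner products. The $\widetilde K_1$-equivariance of $\cF^\downarrow_{\tau\rho}$ and the $\fp^+_1$-intertwining are then the routine parts. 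For $k\in\widetilde K_1$ one combines the cocycle identity $B(gx,\overline{gy})=\kappa(g,x)B(x,\overline y)\kappa(g,y)^*$ with the equivariance of $\rK$. For $z\in\fp^+_1$ the action ${\rm d}\hat\tau(z,0,0)$ is, on the polynomial model $\cP(\fp^+,V)$, minus the constant-coefficient directional derivative $\partial_z$ along $\fp^+_1$; since this differentiates only in the $x_1$-variables it commutes both with the constant-coefficient operator $F^\downarrow_{\tau\rho}(\partial/\partial x)$ and with the restriction $x_2=0$, giving the $\fp^+_1$-intertwining immediately.

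The substantive step, and the step I expect to be the main obstacle, is the $\fp^-_1$-intertwining, where the quadratic term $Q(x)w$ and the zeroth-order term ${\rm d}\tau(-D(x,w))$ must be matched against the raising action on the target $\cH_\rho(D_1,W)$. Rather than differentiate the symbol directly, I would use the inner-product expression $F^\downarrow_{\tau\rho}(z)=\langle {\rm e}^{(x|z)_{\fp^+}}I_V,\rK(\overline{x_2})^*\rangle_{\hat\tau}$ together with the $\widetilde G$-invariance of $\langle\cdot,\cdot\rangle_{\hat\tau}$: applying a raising operator $w\in\fp^-_1$ to the reproducing-type kernel ${\rm e}^{(x|z)_{\fp^+}}I_V$ inside the inner product and transporting it onto $\rK(\overline{x_2})^*$ by invariance, the equivariance of $\rK$ converts the resulting Jordan expressions (assembled from $D$, $Q$ and the Bergman operator $B$) into exactly the $\fp^-_1$-action on the image. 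The delicate point is bookkeeping on the Peirce decomposition $\fp^+=\fp^+_1\oplus\fp^+_2$: the identities relating $Q(x)$, $D(x,w)$ and $B(x,\overline y)$ must be used to separate the $\fp^+_1$- and $\fp^+_2$-dependence so that the hidden $\partial/\partial x_2$ in $F^\downarrow_{\tau\rho}(\partial/\partial x)$ and the final restriction $x_2=0$ fall in the right places; once this is arranged the equivariance relation for $\rK$ closes the computation.

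For part~2 I would realize $\cF^\uparrow_{\tau\rho}$ as the (formal) adjoint of $\cF^\downarrow_{\tau\rho}$: with respect to the Hilbert inner products when $\cH_\rho(D_1,W)$ is holomorphic discrete, and, in the remaining case $\cH_\tau(D,V)_{\widetilde K}=\cP(\fp^+,V)$, with respect to the Fischer pairings on the polynomial models $\cP(\fp^+_1,W)\supset\cH_\rho(D_1,W)_{\widetilde K_1}$ and $\cP(\fp^+,V)=\cO_\tau(D,V)_{\widetilde K}$, so that the statement remains meaningful in the non-unitary range. Transposing the intertwining relation just proved for $\cF^\downarrow_{\tau\rho}$ on the finite-dimensional isotypic pieces yields the $(\fg_1,\widetilde K_1)$-intertwining of $\cF^\uparrow_{\tau\rho}$. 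The explicit kernel $\rK\big((\overline{x_2})^{Q(y_1)\overline{x_2}}\big)\tau(B(y_1,\overline{x_2}))$ is precisely what the transpose of $\rK$ produces once the $\widetilde G_1$-translation moving the base point is expressed through the quasi-inverse and the Bergman operator, so the same Jordan identities needed in the hard step of part~1 reappear and complete the verification.
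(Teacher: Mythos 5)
First, note that this paper does not prove Theorem~\ref{thm_intertwining} at all: it is imported verbatim from the author's earlier work \cite[Theorem~3.10]{N2}, so there is no in-paper proof to match your argument against. What the paper does supply is the surrounding machinery --- the Riesz-representation remark preceding the theorem, the F-method (Theorem~\ref{thm_Fmethod}), and the computation \eqref{diff_eq_Bessel} --- and your sketch has to be measured against those.

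Your outline of part~1 has the right skeleton (the $\widetilde{K}_1$-equivariance and the $\fp^+_1$-intertwining are indeed routine), but the step you yourself flag as ``the main obstacle'', the $\fp^-_1$-intertwining, is where all the content lies and you do not carry it out. Your idea of moving a raising operator $w_1\in\fp^-_1$ across the invariant pairing onto $\rK(\overline{x_2})^*$, where it becomes a derivative in the $\fp^+_1$-direction and kills a function of $x_2$ alone, is precisely the computation \eqref{diff_eq_Bessel}; but that computation only proves $\bigl(\cB_\tau^{\fp^-}(w_1)\otimes I_W\bigr)F^\downarrow_{\tau\rho}=0$, i.e., that the \emph{symbol} solves the F-method equation. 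To convert this into the statement that the \emph{operator} $F^\downarrow_{\tau\rho}(\partial/\partial x)\bigr|_{x_2=0}$ intertwines the $\fp^-_1$-action you still need the equivalence asserted in Theorem~\ref{thm_Fmethod} (and \cite[Theorem~5.13]{KP1} to pass from $\cO$ to $\cH$); the ``Peirce bookkeeping'' you defer is exactly the proof of that equivalence, so you must either invoke it explicitly or actually perform it. For part~2 the adjoint idea is sound in the holomorphic discrete range, but in the remaining case you propose to transpose with respect to the Fischer pairing, which is only $\widetilde{K}$-invariant, not $\fg_1$-invariant, so the $(\fg_1,\widetilde{K}_1)$-intertwining of the transpose does not follow from it; moreover the specific kernel $\rK\bigl((\overline{x_2})^{Q(y_1)\overline{x_2}}\bigr)\tau(B(y_1,\overline{x_2}))$, with its quasi-inverse and Bergman operator, has to be produced by an actual computation with the reproducing kernel $\tau(B(x,\overline{y}))$ rather than asserted to ``reappear''. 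As written, both substantive steps are programmes rather than proofs.
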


Here we identify $\Hom_\BC(V,W)\simeq\overline{V}\otimes W\simeq \Hom_\BC\big(\overline{W},\overline{V}\big)$ via
the $\widetilde{K}$- and $\widetilde{K}_1$-invariant inner products on $V$ and $W$ respectively,
and we normalize $\frac{\partial}{\partial x}$ and $\frac{\partial}{\partial x_1}$ with respect to the bilinear forms
$(\cdot|\cdot)_{\fp^+}\colon\fp^+\times\fp^-\to\BC$ and $(\cdot|\cdot)_{\fp^+_1}\colon\fp^+_1\times\fp^-_1\to\BC$ respectively.
We~note that since $F^\downarrow_{\tau\rho}(z)$ is always a polynomial, $\cF^\downarrow_{\tau\rho}$ is a finite-order differential operator,
and hence extends to the continuous map $\cF_{\tau\rho}^\downarrow\colon \cO_\tau(D,V)\to\cO_\rho(D_1,W)$.
On the other hand, since $F^\uparrow_{\tau\rho}(x_2;w_1)$ is not a~poly\-no\-mial
and $\cF^\uparrow_{\tau\rho}$ is an infinite-order differential operator in general,
this is originally defined on the space of $\widetilde{K}_1$-finite vectors.
However, if $\cH_\rho(D_1,W)$ is holomorphic discrete, then $\big(\cF^\uparrow_{\tau\rho}f\big)(x)$ converges uniformly on every compact set
in some subset $D'\subset D$, and analytically continued for all $x\in D$,
and therefore $\cF^\uparrow_{\tau\rho}$ extends to the continuous map $\cF_{\tau\rho}^\uparrow\colon \cO_\rho(D_1,W)\to \cO_\tau(D,V)$
(see \cite[Theorems 3.12]{N2}).

The symmetry breaking operators are also characterized in terms of polynomial solutions of certain differential equations.
That is, any operator in $\Hom_{\widetilde{G}_1}(\cO_\tau(D,V),\cO_\rho(D_1,W))$ is given by a differential operator
(localness theorem, \cite[Theorem 5.3]{KP1}), and its symbol satisfies some differential equation.
For $w\in\fp^\pm$, let $\cB_\tau^{\fp^\pm}(w)$ be the differential operators given by
\begin{gather}
\cB_\tau^{\fp^+}(w)\colon\ \cP(\fp^+,V)\longrightarrow \cP(\fp^+,V), \notag
\\
\bigl(\cB_\tau^{\fp^+}(w)f\bigr)(z):=\sum_{\alpha,\beta}\frac{1}{2}(z|Q(e_\alpha^-,e_\beta^-)w)_{\fp^+}
\frac{\partial^2f}{\partial z_\alpha^+\partial z_\beta^+}(z)
-{\rm d}\tau(D(w,e_\alpha^-))\sum_{\alpha}\frac{\partial f}{\partial z_\alpha^+}(z),
\label{Bessel_op}
\\
\cB_\tau^{\fp^-}(w)\colon\ \cP(\fp^-,\Hom_\BC(V,\BC))\longrightarrow \cP(\fp^-,\Hom_\BC(V,\BC)), \notag
\\
\bigl(\cB_\tau^{\fp^-}(w)f\bigr)(z):=\sum_{\alpha,\beta}\frac{1}{2}(Q(e_\alpha^+,e_\beta^+)w|z)_{\fp^+}
\frac{\partial^2f}{\partial z_\alpha^-\partial z_\beta^-}(z)
-\sum_{\alpha}\frac{\partial f}{\partial z_\alpha^-}(z){\rm d}\tau(D(e_\alpha^+,w)), \notag
\end{gather}
where $\{e_\alpha^\mp\}\subset\fp^\mp$ is a basis of $\fp^\mp$, and $\{z_\alpha^\pm\}$ is the coordinate of $\fp^\pm$ for the dual basis
$\{e_\alpha^\pm\}\subset\fp^\pm$ with respect to $(\cdot|\cdot)_{\fp^\mp}\colon \fp^\mp\times\fp^\pm\to\BC$, so that
\begin{gather*}
(\cB_\tau^{\fp^-}(w))_z {\rm e}^{(x|z)_{\fp^+}}I_V=\bigl((Q(x)w|z)_{\fp^+}I_V-{\rm d}\tau(D(x,w))\bigr){\rm e}^{(x|z)_{\fp^+}}
=({\rm d}\hat{\tau}(0,0,w))_x {\rm e}^{(x|z)_{\fp^+}}I_V,
\\
\bigl(\cB_\tau^{\fp^-}(w)(f(\overline{\cdot})^*)\bigr)(z)
=\bigl((\cB_\tau^{\fp^+}(\overline{w})f)(\overline{z})\bigr)^*,\qquad
w,z\in\fp^-,\quad x\in\fp^+,\quad f\in\cP(\fp^+,V)
\end{gather*}
hold, and $\cB_\tau^{\fp^-}(w)$ coincides with the algebraic Fourier transform of the $L^2$-dual operator of ${\rm d}\hat{\tau}(0,0,w)$.
These are generalizations of the Bessel operator $\cB_\nu$ in \cite{D} or \cite[Section XV.2]{FK}.
\begin{Theorem}[{F-method, \cite[Theorem 3.1]{KP2}}]\label{thm_Fmethod}
Let $(\tau,V)$ and $(\rho,W)$ be $\widetilde{K}^\BC$- and $\widetilde{K}_1^\BC$-modules respectively.
Then we have the isomorphisms
\begin{gather*}
\Hom_{\widetilde{G}_1}(\cO_\tau(D,V),\cO_\rho(D_1,W))
\simeq \Hom_{\fk^\BC_1\oplus\fp^-_1}\big(W^\vee,\operatorname{ind}_{\fk^\BC\oplus\fp^-}^{\fg^\BC}\big(V^\vee\big)\big) \\ \qquad
{}\simeq \left\{ F(z)\in\cP(\fp^-,\Hom_\BC(V,W))\ \middle| \begin{array}{cl}
F({}^t\hspace{-1pt}kz)=\rho(k)^{-1}F(z)\tau(k) & \big(z\in\fp^-,\; k\in\widetilde{K}_1^\BC\big), \\
\bigl(\cB_\tau^{\fp^-}(w_1)\otimes I_W\bigr)F(z)=0 & (w_1\in \fp^-_1) \end{array} \right\}\!.
\end{gather*}
\end{Theorem}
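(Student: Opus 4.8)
The proof proceeds through two isomorphisms. The plan is to first establish the duality
\[
\Hom_{\widetilde{G}_1}(\cO_\tau(D,V),\cO_\rho(D_1,W))
\simeq \Hom_{\fk^\BC_1\oplus\fp^-_1}\big(W^\vee,\operatorname{ind}_{\fk^\BC\oplus\fp^-}^{\fg^\BC}\big(V^\vee\big)\big),
\]
and then identify the right-hand Hom-space of $(\fk^\BC_1\oplus\fp^-_1)$-homomorphisms with the explicit space of polynomial solutions to the Bessel-type equations. For the first step, I would dualize: by the localness theorem \cite[Theorem~5.3]{KP1} every symmetry breaking operator $\cO_\tau(D,V)\to\cO_\rho(D_1,W)$ is a differential operator, and such operators are in duality (via the algebraic Fourier transform and restriction of the $\widetilde{G}_1$-action to the fiber) with $(\fg_1,\widetilde{K}_1)$-homomorphisms from the generalized Verma module $\operatorname{ind}_{\fk^\BC_1\oplus\fp^-_1}^{\fg^\BC_1}(W^\vee)$ into $\operatorname{ind}_{\fk^\BC\oplus\fp^-}^{\fg^\BC}(V^\vee)$ restricted to $\fg^\BC_1$. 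By Frobenius reciprocity for the induction $\operatorname{ind}_{\fk^\BC_1\oplus\fp^-_1}^{\fg^\BC_1}$, such homomorphisms are determined by the image of the highest weight space $W^\vee$, which must land in the $(\fk^\BC_1\oplus\fp^-_1)$-invariants, giving the first displayed isomorphism.

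For the second step, I would realize the induced module $\operatorname{ind}_{\fk^\BC\oplus\fp^-}^{\fg^\BC}(V^\vee)$ concretely on $\cP(\fp^-,\Hom_\BC(V,W))$ via the PBW identification $\cU(\fg^\BC)\otimes_{\cU(\fk^\BC\oplus\fp^-)}V^\vee\simeq\cU(\fp^+)\otimes V^\vee\simeq\cP(\fp^-)\otimes V^\vee$. Under this identification the $\widetilde{K}_1^\BC$-action becomes the twisted action $F(z)\mapsto\rho(k)^{-1}F({}^t\hspace{-1pt}k\,\cdot\,)\tau(k)$, so the $\fk^\BC_1$-equivariance condition is exactly the first constraint $F({}^t\hspace{-1pt}kz)=\rho(k)^{-1}F(z)\tau(k)$. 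The $\fp^-_1$-equivariance (the remaining generators of $\fk^\BC_1\oplus\fp^-_1$ acting on the highest weight vector) translates, under the Fourier transform, into the vanishing $\bigl(\cB_\tau^{\fp^-}(w_1)\otimes I_W\bigr)F(z)=0$ for all $w_1\in\fp^-_1$. Here the key computation is that the operator $\cB_\tau^{\fp^-}(w)$ was defined precisely so that it realizes the algebraic Fourier transform of the $L^2$-dual of ${\rm d}\hat\tau(0,0,w)$, which is the action of $w\in\fp^-$ on $\operatorname{ind}_{\fk^\BC\oplus\fp^-}^{\fg^\BC}(V^\vee)$; this is already recorded in the displayed identity $(\cB_\tau^{\fp^-}(w))_z {\rm e}^{(x|z)_{\fp^+}}I_V=({\rm d}\hat{\tau}(0,0,w))_x {\rm e}^{(x|z)_{\fp^+}}I_V$ preceding the theorem.

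The main obstacle will be handling the duality and the infinite-dimensionality carefully in the first step: the induced module $\operatorname{ind}_{\fk^\BC\oplus\fp^-}^{\fg^\BC}(V^\vee)$ is the algebraic dual picture of $\cO_\tau(D,V)$, and one must verify that the contravariant pairing between differential symmetry breaking operators and $(\fg_1,\widetilde{K}_1)$-homomorphisms of generalized Verma modules is a genuine bijection rather than merely an injection. This rests on the localness theorem to guarantee that every intertwining operator is realized by a polynomial symbol (so nothing is lost in passing to the finite-order/algebraic side), together with the fact that the $\widetilde{G}_1$-equivariance of a differential operator is equivalent to the $\fg^\BC_1$-equivariance of its symbol after Fourier transform. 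Once both equivariance conditions are correctly matched to the two displayed constraints, the identification of the solution space is a direct unwinding of the induced-module structure, and I would cite \cite[Section~3]{N2} for the differential-operator realization and \cite[Theorem~3.1]{KP2} for the precise form of the Bessel operators.
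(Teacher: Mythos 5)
This theorem is not proved in the paper at all: it is quoted verbatim from Kobayashi--Pevzner \cite[Theorem~3.1]{KP2} (with the duality and localness theorems of \cite{KP1} behind it), so there is no in-paper proof to compare against. Your outline is nevertheless the standard F-method argument and is essentially correct: the first isomorphism is the duality theorem (continuous $\widetilde{G}_1$-intertwiners are differential operators by localness, and differential intertwiners correspond bijectively to $\fg^\BC_1$-homomorphisms $\operatorname{ind}_{\fk^\BC_1\oplus\fp^-_1}^{\fg^\BC_1}(W^\vee)\to\operatorname{ind}_{\fk^\BC\oplus\fp^-}^{\fg^\BC}(V^\vee)$, hence by the universal property of induction to $\Hom_{\fk^\BC_1\oplus\fp^-_1}(W^\vee,\operatorname{ind}_{\fk^\BC\oplus\fp^-}^{\fg^\BC}(V^\vee))$), and the second is the PBW identification $\operatorname{ind}_{\fk^\BC\oplus\fp^-}^{\fg^\BC}(V^\vee)\simeq\cU(\fp^+)\otimes V^\vee\simeq\cP(\fp^-)\otimes V^\vee$ under which $\fk^\BC_1$-equivariance becomes the twisted covariance of $F$ and the vanishing of the $\fp^-_1$-action becomes the Bessel equation, exactly as encoded in the identity $(\cB_\tau^{\fp^-}(w))_z\,{\rm e}^{(x|z)_{\fp^+}}I_V=({\rm d}\hat{\tau}(0,0,w))_x\,{\rm e}^{(x|z)_{\fp^+}}I_V$. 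Two small cautions: the phrase ``land in the $(\fk^\BC_1\oplus\fp^-_1)$-invariants'' should read ``define a $(\fk^\BC_1\oplus\fp^-_1)$-homomorphism from $W^\vee$'' (the image is killed by $\fp^-_1$ but transforms by $\rho^\vee$ under $\fk^\BC_1$, it is not invariant), and the bijectivity of the duality pairing that you flag as the main obstacle is precisely the content of \cite[Theorem~2.12]{KP1}/\cite[Theorem 3.1]{KP2}, so in a self-contained write-up you would either cite it or reprove it; as an outline of the cited proof your plan is sound.
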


Here, the isomorphism between the left and right hand sides is given by taking the symbol
of the differential operator. Moreover, if $\cH_\tau(D,V)\ne\{0\}$, then it induces the operator in
$\Hom_{\widetilde{G}_1}(\cH_\tau(D,V),\cH_\rho(D_1,W))$ (see \cite[Theorem 5.13]{KP1}).
Now, as in \cite[Remark 3.11]{N2} we have
\begin{align}
\bigl(\bigl(\cB_\tau^{\fp^-}(w_1)\otimes I_W\bigr)F_{\tau\rho}^\downarrow\bigr)(z)
&=\bigl(\big(\cB_\tau^{\fp^-}(w_1)\big)_z\otimes I_W\bigr)\big\langle {\rm e}^{(x|z)_{\fp^+}}I_V,\rK(\overline{x_2})^*\big\rangle_{\hat{\tau},\fp^+,x}\notag
\\
&=\big\langle ({\rm d}\hat{\tau}(0,0,w_1))_x {\rm e}^{(x|z)_{\fp^+}}I_V,\rK(\overline{x_2})^*\big\rangle_{\hat{\tau},\fp^+,x} \notag
\\
&=-\big\langle {\rm e}^{(x|z)_{\fp^+}}I_V,({\rm d}\hat{\tau}(\overline{w_1},0,0))_x\rK(\overline{x_2})^*\big\rangle_{\hat{\tau},\fp^+,x}
=0, \label{diff_eq_Bessel}
\end{align}
and since
\begin{align*}
\dim\Hom_{\widetilde{G}_1}(\cO_\tau(D,V),\cO_\rho(D_1,W))&=\dim\Hom_{\widetilde{G}_1}(\cH_\tau(D,V),\cH_\rho(D_1,W)) \\
&=\dim\Hom_{\widetilde{K}_1^\BC}\big(\cP(\fp^+_2)\otimes V,W\big)
\\
&=\dim\,\bigl(\cP(\fp^-_2)\otimes\Hom_\BC(V,W)\bigr)^{\widetilde{K}_1^\BC}
\end{align*}
holds if $\cH_\tau(D,V)_{\widetilde{K}_1}=\cP(\fp^+,V)$ by (\ref{HKKS}), these two approaches coincide for holomorphic discrete range.
On the other hand, we do not know a priori whether the analytic continuation of $\cF_{\tau\rho}^\downarrow$ exhausts all of symmetry breaking
operators or not, and we need further study on the differential equation (\ref{diff_eq_Bessel}) for non-holomorphic discrete range.

Now we assume $\fp^+$ is simple and $(\tau,V)=\big(\chi^{-\lambda},\BC\big)$ with $\lambda>\frac{d}{2}(r-1)$.
Then for any involution $\sigma$ on $\fp^+$, $\fp^+_2=(\fp^+)^{-\sigma}$ is a direct sum of at most two simple Jordan triple subsystems.
Let $\varepsilon_1\in\{1,2\}$ be the number such that
\[
(x_1|y_1)_{\fp^+}=\varepsilon_1(x_1|y_1)_{\fp^+_1}, \qquad
x_1\in\fp^+_1,\quad y_1\in\fp^-_1.
\]
If $\fp^+_2$ is simple of rank $r_2$, then according to the decomposition
\[ \cP(\fp^+_2)=\bigoplus_{\bk\in\BZ_{++}^{r_2}}\cP_\bk\big(\fp^+_2\big) \]
under $K_1^\BC$, $\cH_\lambda(D)$ is abstractly decomposed under $\widetilde{G}_1$ as
\[ \cH_\lambda(D)|_{\widetilde{G}_1}\simeq\hsum_{\bk\in\BZ_{++}^{r_2}}\cH_{\varepsilon_1\lambda}\big(D_1,\cP_\bk\big(\fp^+_2\big)\big). \]
Similarly, if $\fp^+_2=\fp^+_{11}\oplus\fp^+_{22}$ is a direct sum of two simple subsystems,
let $\rank\fp^+_{11}=r'$, $\rank\fp^+_{22}\allowbreak=r''$, and write $\fp^+_1=\fp^+_{12}$, $D_1=D_{12}$.
Then according to the decomposition
\[ \cP(\fp^+_2)=\bigoplus_{\bk\in\BZ_{++}^{r'}}\bigoplus_{\bl\in\BZ_{++}^{r''}}\cP_\bk\big(\fp^+_{11}\big)\boxtimes\cP_\bl\big(\fp^+_{22}\big) \]
under $K_1^\BC$, $\cH_\lambda(D)$ is abstractly decomposed under $\widetilde{G}_1$ as
\[
\cH_\lambda(D)|_{\widetilde{G}_1}\simeq\hsum_{\bk\in\BZ_{++}^{r'}}\hsum_{\bl\in\BZ_{++}^{r''}}
\cH_{\varepsilon_1\lambda}\big(D_{12},\cP_\bk\big(\fp^+_{11}\big)\boxtimes\cP_\bl\big(\fp^+_{22}\big)\big)
\]
(see Kobayashi \cite[Theorem 8.3]{Kmf1}). Since these decompose multiplicity-freely,
the $\widetilde{G}_1$-inter\-twi\-ning operators are unique up to scalar multiple for $\lambda>\frac{d}{2}(r-1)$.

The object of this article is to compute the inner product $\langle f,g\rangle_\lambda$ for $f\in\cP_\bk\big(\fp^+_2\big)$
or $f\in\cP_\bk\big(\fp^+_{11}\big)\boxtimes\cP_\bl\big(\fp^+_{22}\big)$ and $g\in\cP\big(\fp^+\big)$. Then since
\begin{equation}\label{inner_Fubini}
\langle f(x),g(x)\rangle_{\lambda,x}
=\big\langle f(x),\big\langle g(z),{\rm e}^{(z|\overline{x})_{\fp^+}}\big\rangle_{F,z}\big\rangle_{\lambda,x}
=\big\langle\big\langle f(x),{\rm e}^{(x|\overline{z})_{\fp^+}}\big\rangle_{\lambda,x},g(z)\big\rangle_{F,z}
\end{equation}
holds, it suffices to compute $\big\langle f,{\rm e}^{(\cdot|\overline{z})_{\fp^+}}\big\rangle_\lambda$.
As in (\ref{diff_eq_Bessel}), this becomes a solution of $\cB_{\chi^{-\lambda}}^{\fp^+}(w_1)$ for $w_1\in\fp^+_1$.
This result is applied for determining which $\big(\fg,\widetilde{K}\big)$-submodule of $\cO_\lambda(D)_{\widetilde{K}}$ contains the
$\big(\fg_1,\widetilde{K}_1\big)$-submodule generated by $\cP_\bk(\fp^+_2)$ for singular $\lambda$,
and for explicit construction of the intertwining operator (symmetry breaking operator)
$\cH_\lambda(D)\to\cH_{\varepsilon_1\lambda}\big(D_1,\cP_\bk\big(\fp^+_2\big)\big)$.
However, it seems difficult to compute this for general $\bk\in\BZ_{++}^{r_2}$ or $(\bk,\bl)\in\BZ_{++}^{r'}\times\BZ_{++}^{r''}$.
In this paper, we assume $\fp^+$, $\fp^+_2$ are of tube type,
and compute $\big\langle f,{\rm e}^{(\cdot|\overline{z})_{\fp^+}}\big\rangle_\lambda$ for
$f\in\cP_{(k+l,\underline{k}_{r_2-1})}\big(\fp^+_2\big)$ or $f\in\cP_{(\underline{k+1}_l,\underline{k}_{r_2-l})}\big(\fp^+_2\big)$ when $\fp^+_2$ is simple,
and for $f\in\cP_{\underline{k}_{r'}}\big(\fp^+_{11}\big)\boxtimes\cP_\bl\big(\fp^+_{22}\big)$ when $\fp^+_2=\fp^+_{11}\oplus\fp^+_{22}$.
Also, without the tube type assumption, we compute the poles of the meromorphic continuation of
$\big\langle f,{\rm e}^{(\cdot|\overline{z})_{\fp^+}}\big\rangle_\lambda$ with respect to $\lambda\in\BC$ for wider cases.

\section{Preliminaries 2: Explicit realization}\label{section_prelim2}

A simple Hermitian positive Jordan triple system $\fp^\pm$ is isomorphic to one of the following:
\begin{alignat*}{3}
\fp^\pm&=\BC^n, \qquad&&\Sym(r,\BC), \qquad&&M(q,s;\BC), \\ &\eqspace\Skew(s,\BC), \qquad&&\Herm(3,\BO)^\BC, \qquad&&M(1,2;\BO)^\BC.
\end{alignat*}
Here, $\Sym(r,\BC)$ and $\Skew(s,\BC)$ denote the spaces of symmetric and skew-symmetric matrices over $\BC$ respectively,
and $\Herm(3,\BO)$ denotes the space of $3\times 3$ Hermitian matrices over the octonions $\BO$.
In this section we fix the Jordan triple system structures on $\fp^\pm$ and the realization of the representation spaces $\cH_\lambda(D)$.

\subsection[The case: $\fp^\pm=\Sym(r,\BC)$, $M(q,s;\BC)$, $\Skew(s,\BC)$]
{The case: $\boldsymbol{\fp^\pm=\Sym(r,\BC)}$, $\boldsymbol{M(q,s;\BC)}$, $\boldsymbol{\Skew(s,\BC)}$}

In this section we consider $\fp^\pm=\Sym(r,\BC)$, $M(q,s;\BC)$, $\Skew(s,\BC)$, with the Jordan triple system structure
\[
\{\cdot,\cdot,\cdot\}\colon\quad \fp^\pm\times\fp^\mp\times\fp^\pm\longrightarrow \fp^\pm, \qquad
\{x,y,z\}:=x{}^t\hspace{-1pt}yz+z{}^t\hspace{-1pt}yx,
\]
and the antilinear map $\overline{\cdot}\colon \fp^\pm\to\fp^\mp$ taking the complex conjugate on each component.
Especially, for $x\in\fp^\pm$, the map $Q(x)$ is given by
\[
Q(x)\colon\quad \fp^\mp\longrightarrow \fp^\pm, \qquad Q(x)y=x{}^t\hspace{-1pt}yx.
\]
The numbers $(n,r,d,b,p)$ are given by
\[
(n,r,d,b,p)=\begin{cases}
\left(\frac{1}{2}r(r+1), r, 1, 0, r+1\right)\!, & \fp^\pm=\Sym(r,\BC),
\\
(qs, \min\{q,s\}, 2, |q-s|, q+s), & \fp^\pm=M(q,s;\BC),
\\
\left(\frac{1}{2}s(s-1), \frac{s}{2}, 4, 0, 2(s-1)\right)\!,
& \fp^\pm=\Skew(s,\BC),\quad s\colon \text{even},
\\
\left(\frac{1}{2}s(s-1), \left\lfloor\frac{s}{2}\right\rfloor, 4, 2, 2(s-1)\right)\!, & \fp^\pm=\Skew(s,\BC),\quad s\colon \text{odd}, \end{cases}
\]
the generic norm $h\colon \fp^+\times\fp^-\to \BC$ is given by
\[
h(x,y)=\begin{cases}
\det\big(I-x{}^t\hspace{-1pt}y\big), & \fp^\pm=\Sym(r,\BC),\ M(q,s;\BC),
\\
\det\big(I-x{}^t\hspace{-1pt}y\big)^{1/2}, & \fp^\pm=\Skew(s,\BC),
\end{cases}
\]
the bilinear form $(\cdot|\cdot)_{\fp^+}\colon \fp^+\times\fp^-\to \BC$ is given by
\[
(x|y)_{\fp^+}=\begin{cases}
\tr\big(x{}^t\hspace{-1pt}y\big), & \fp^\pm=\Sym(r,\BC),\ M(q,s;\BC),
\\
\frac{1}{2}\tr\big(x{}^t\hspace{-1pt}y\big), & \fp^\pm=\Skew(s,\BC),
\end{cases}
\]
and the bounded symmetric domain $D\subset\fp^+$ is given by
\[
D=\{x\in\fp^+\mid I-xx^*\text{ is positive definite}\}.
\]
If $\fp^\pm=\Sym(r,\BC)$, $M(r,\BC)$ or $\Skew(2r,\BC)$, then $\fp^\pm$ is of tube type.
In this case, $e\in\fp^+$ is a~maximal tripotent if and only if $ee^*=I$, and for a~fixed maximal tripotent $e\in\fp^+$,
$\fp^+$ has a~Jordan algebra structure
\[
x\cdot y=\frac{1}{2}(xe^*y+ye^*x), \qquad x,y\in\fp^+,
\]
with the Euclidean real form
\[
\fn^+=\{x\in\fp^+\mid ex^*e=x\},
\]
and the determinant polynomials
\[
\det_{\fn^+}(x)=\begin{cases}
\det(x)\overline{\det(e)},
\\
\Pf(x)\overline{\Pf(e)}, \end{cases} \qquad
\det_{\fn^-}(y)=\begin{cases} \det(y)\det(e), & \fp^\pm=\Sym(r,\BC),\ M(r,\BC),
\\
\Pf(y)\Pf(e), & \fp^\pm=\Skew(2r,\BC).
\end{cases}
\]
For $x\in\fp^+$, the inverse $x^\itinv=P(x)^{-1}x\in\fp^+$ and ${}^t\hspace{-1pt}x^\itinv=Q(x)^{-1}x\in\fp^-$ are given by
\[
x^\itinv=ex^{-1}e, \qquad {}^t\hspace{-1pt}x^\itinv={}^t\hspace{-1pt}x^{-1}.
\]

Next we consider the Kantor--Koecher--Tits Lie group $G$ and the subgroup $K^\BC\subset G^\BC$,
\[
(G,K^\BC)= \begin{cases}
(\operatorname{Sp}(r,\BR),{\rm GL}(r,\BC)), & \fp^\pm=\Sym(r,\BC),
\\
({\rm SU}(q,s),{\rm S}({\rm GL}(q,\BC)\times {\rm GL}(s,\BC))), & \fp^\pm=M(q,s;\BC),
\\
({\rm SO}^*(2s),{\rm GL}(s,\BC)), & \fp^\pm=\Skew(s,\BC),
\end{cases}
\]
where the action ${\rm Ad}|_{\fp^+}$ of $K^\BC$ on $\fp^+$ is given by
\begin{alignat*}{3}
& k.x=kx{}^t\hspace{-1pt}k,\qquad && k\in K^\BC={\rm GL}(r,\BC), \quad x\in\fp^+=\Sym(r,\BC),&
\\
& (k,l).x=kxl^{-1}, \qquad && (k,l)\in K^\BC={\rm S}({\rm GL}(q,\BC)\times {\rm GL}(s,\BC)), \quad x\in\fp^+=M(q,s;\BC),&
\\
& k.x=kx{}^t\hspace{-1pt}k,\qquad && k\in K^\BC={\rm GL}(s,\BC), \quad x\in\fp^+=\Skew(s,\BC).&
\end{alignat*}
Then the Bergman operator $B\colon \fp^+\times\fp^-\supset D\times \overline{D}\to K^\BC$ is given by
\begin{alignat*}{3}
& B(x,y)=I-x{}^t\hspace{-1pt}y, \qquad &&
\fp^\pm=\Sym(r,\BC),\ \Skew(s,\BC),&
\\
& B(x,y)=\big(I-x{}^t\hspace{-1pt}y, \big(I-{}^t\hspace{-1pt}yx\big)^{-1}\big), \qquad &&
\fp^\pm=M(q,s;\BC),&
\end{alignat*}
and for $\lambda\in\BC$, the character $\chi^{-\lambda}$ of the universal covering group
$\widetilde{K}^\BC$ normalized as in (\ref{char}) is given by
\begin{alignat*}{3}
& \chi^{-\lambda}(k)=\det(k)^{-\lambda},\qquad &&
\fp^\pm=\Sym(r,\BC),&
\\
& \chi^{-\lambda}(k,l)=\det(k)^{-\lambda}=\det(l)^\lambda,\qquad &&
\fp^\pm=M(q,s;\BC),&
\\
& \chi^{-\lambda}(k)=\det(k)^{-\lambda/2},\qquad &&
\fp^\pm=\Skew(s,\BC).&
\end{alignat*}
When $\fp^\pm=M(q,s;\BC)$ we also consider
\[
\big(G,K^\BC\big)=({\rm U}(q,s),{\rm GL}(q,\BC)\times {\rm GL}(s,\BC))
\]
instead of $({\rm SU}(q,s),{\rm S}({\rm GL}(q,\BC)\times {\rm GL}(s,\BC)))$, and for $\lambda_1,\lambda_2\in\BC$,
let $\chi^{-\lambda_1,-\lambda_2}$ be the character of $\widetilde{K}^\BC$ given by
\begin{equation}\label{char_U(q,s)}
\chi^{-\lambda_1,-\lambda_2}(k,l):=\det(k)^{-\lambda_1}\det(l)^{\lambda_2}, \qquad
k\in {\rm GL}(q,\BC),\qquad l\in {\rm GL}(s,\BC).
\end{equation}
Then $\chi^{-\lambda_1,-\lambda_2}|_{{\rm S}({\rm GL}(q,\BC)\times {\rm GL}(s,\BC))^\sim}=\chi^{-\lambda_1-\lambda_2}$ holds.

Next, let $(\tau,V)$ be an irreducible representation of $\widetilde{K}^\BC$,
and consider the Hilbert subspace $\cH_\tau(D,V)\subset\cO(D,V)=\cO_\tau(D,V)$ with the reproducing kernel $\tau(B(x,\overline{y}))$
if it exists.
Then $\widetilde{G}$ acts unitarily on $\cH_\tau(D,V)$. If it is a holomorphic discrete series representation, then the inner product is given by
\begin{gather*}
\langle f,g\rangle_{\hat{\tau}} = \begin{cases}
\displaystyle C_\tau\!\int_D \!\big(\tau\big((I-xx^*)^{-1}\big)f(x),g(x)\big)_\tau \det(I-xx^*)^{-(r+1)}{\rm d}x, \quad\ \fp^\pm=\Sym(r,\BC),\!\!
\vspace{1mm}\\
\displaystyle C_\tau\!\int_D \!\big(\tau\big((I-xx^*)^{-1},I-x^*x\big)f(x),g(x)\big)_\tau \det(I-xx^*)^{-(q+s)}{\rm d}x,
\\
\hspace{99.7mm} \fp^\pm=M(q,s;\BC),\!\!
\\
\displaystyle C_\tau\!\int_D \!\big(\tau\big((I-xx^*)^{-1}\big)f(x),g(x)\big)_\tau \det(I-xx^*)^{-(s-1)}{\rm d}x,\hspace{5.5mm} \fp^\pm=\Skew(s,\BC).\!\!
\end{cases}\!\!\!\!\!\!\!\!\!
\end{gather*}
When $(\tau,V)$ is of the form $(\tau,V)=\big(\chi^{-\lambda}\otimes\tau_0,V\big)$ or $\big(\chi^{-\lambda_1,-\lambda_2}\otimes\tau_0,V\big)$
for a fixed representation $(\tau_0,V)$ of $K^\BC$, we write $\cO_\tau(D,V)=\cO_\lambda(D,V)$ or $\cO_{\lambda_1+\lambda_2}(D,V)$,
$\cH_\tau(D,V)=\cH_\lambda(D,V)$ or~$\cH_{\lambda_1+\lambda_2}(D,V)$.
Similarly, when $(\tau,V)=\big(\chi^{-\lambda},\BC\big)$ or $\big(\chi^{-\lambda_1,-\lambda_2},\BC\big)$, we write
$(\hat{\tau},\cO_\tau(D,V))=(\tau_\lambda,\cO_\lambda(D))$ or $(\tau_{\lambda_1,\lambda_2},\cO_{\lambda_1+\lambda_2}(D))$,
$(\hat{\tau},\cH_\tau(D,V))=(\tau_\lambda,\cH_\lambda(D))$ or $(\tau_{\lambda_1,\lambda_2},\cH_{\lambda_1+\lambda_2}(D))$,
and write $\langle \cdot,\cdot\rangle_{\hat{\tau}}=\langle \cdot,\cdot\rangle_{\lambda}$.
Then $\cH_\lambda(D)$ is holomorphic discrete if $\lambda>p-1$.

Next we fix notations of irreducible representations of $\widetilde{\rm GL}(s,\BC)$.
We~take a basis $\{\epsilon_j\}_{j=1}^s\subset\fh^{\BC\vee}$ of the dual space of a Cartan subalgebra
$\fh^\BC\subset\mathfrak{gl}(s,\BC)$ such that the positive root system of $\mathfrak{gl}(s,\BC)$ is given by
$\{ \epsilon_i-\epsilon_j\mid 1\le i<j\le s\}$. For $\bm=(m_1,\dots,m_s)\in\BC^s$ such that $m_j-m_{j+1}\in\BZ_{\ge 0}$,
let $V_\bm^{(s)}$ be the irreducible representation of $\widetilde{\rm GL}(s,\BC)$ with the highest weight $m_1\epsilon_1+\cdots+m_s\epsilon_s$, and
let $V_\bm^{(s)\vee}$ be the irreducible representation of $\widetilde{\rm GL}(s,\BC)$ with the lowest weight $-m_1\epsilon_1-\cdots-m_s\epsilon_s$.
Especially, if $\bm\in\BZ_{+}^s$ then $V_\bm^{(s)}$ and $V_\bm^{(s)\vee}$ are reduced to the representations of ${\rm GL}(s,\BC)$.
Using these notations, each $K^\BC$-subrepresentation of $\cP(\fp^+)=\bigoplus_{\bm\in\BZ_{++}^r}\cP_\bm(\fp^+)$ is given by
\begin{alignat*}{3}
& \cP_\bm({\rm Sym}(r,\BC))\simeq V_{(2m_1,2m_2,\dots,2m_r)}^{(r)\vee}=:V_{2\bm}^{(r)\vee}, \qquad && \bm\in\BZ_{++}^r,&
\\
& \cP_\bm(M(q,s;\BC))\simeq V_{\bm}^{(q)\vee}\boxtimes V_{(m_1,\dots,m_q,0,\dots,0)}^{(s)}=:V_{\bm}^{(q)\vee}\boxtimes V_{\bm}^{(s)},\qquad&&
q\le s,\quad \bm\in\BZ_{++}^q,&
 \\
& \cP_\bm(M(q,s;\BC))\simeq V_{(m_1,\dots,m_s,0,\dots,0)}^{(q)\vee}\boxtimes V_{\bm}^{(s)}=:V_{\bm}^{(q)\vee}\boxtimes V_{\bm}^{(s)},\qquad&&
q\ge s,\quad \bm\in\BZ_{++}^s,&
 \\
&\cP_\bm({\rm Skew}(s,\BC))\simeq V_{(m_1,m_1,m_2,m_2,\dots,m_{\lfloor s/2\rfloor},m_{\lfloor s/2\rfloor}(,0))}^{(s)\vee}=:V_{\bm^2}^{(s)\vee},\qquad&&
\bm\in\BZ_{++}^{\lfloor s/2\rfloor},&
\end{alignat*}
and the characters $\chi^{-\lambda}$, $\chi^{-\lambda_1,-\lambda_2}$ of $\widetilde{K}^\BC$ are given by
\begin{alignat*}{3}
& \chi^{-\lambda}=V_{(\lambda,\dots,\lambda)}^{(r)\vee}=V_{\underline{\lambda}_r}^{(r)\vee},\qquad && G=\operatorname{Sp}(r,\BR),&
\\
& \chi^{-\lambda_1,-\lambda_2}=V_{(\lambda_1,\dots,\lambda_1)}^{(q)\vee}\boxtimes V_{(\lambda_2,\dots,\lambda_2)}^{(s)}
=V_{\underline{\lambda_1}_q}^{(q)\vee}\boxtimes V_{\underline{\lambda_2}_s}^{(s)},\qquad&&
G={\rm U}(q,s),
\\
&\chi^{-\lambda}=V_{\left(\frac{\lambda}{2},\dots,\frac{\lambda}{2}\right)}^{(s)\vee}
=V_{\underline{\lambda/2}_s}^{(s)\vee},\qquad && G={\rm SO}^*(2s).&
\end{alignat*}

Next, for $d=1,2,4$, $\bm\in\BZ_{++}^r$, we define a polynomial $\tilde{\Phi}_\bm^{(d)}(x)$ on $M(r,\BC)$ by
\begin{equation}\label{Phi^(d)}
\tilde{\Phi}_\bm^{(d)}(x):=\tilde{\Phi}_\bm^d(t_1,\dots,t_r),
\end{equation}
where $t_1,\dots,t_r$ are the eigenvalues of $x$, and $\tilde{\Phi}_\bm^d(t_1,\dots,t_r)$ is as in (\ref{Phi^d}).
Then for $x,y\in M(q,s;\BC)$, since the non-zero eigenvalues of $x{}^t\hspace{-1pt}y$, $y{}^t\hspace{-1pt}x$, ${}^t\hspace{-1pt}xy$ and
${}^t\hspace{-1pt}yx$ coincide, we have
\[
\tilde{\Phi}_\bm^{(d)}\big(x{}^t\hspace{-1pt}y\big)=\tilde{\Phi}_\bm^{(d)}\big(y{}^t\hspace{-1pt}x\big)
=\tilde{\Phi}_\bm^{(d)}\big({}^t\hspace{-1pt}xy\big)=\tilde{\Phi}_\bm^{(d)}\big({}^t\hspace{-1pt}yx\big).
\]
Also, by (\ref{expand_expsum}), (\ref{expand_binom}), we have
\begin{alignat*}{3}
& {\rm e}^{\tr(x)}=\sum_{\bm\in\BZ_{++}^r}\tilde{\Phi}_\bm^{(d)}(x),\qquad && x\in M(r,\BC),&
\\
& \det(I-x)^{-\lambda}=\sum_{\bm\in\BZ_{++}^r}(\lambda)_{\bm,d}\tilde{\Phi}_\bm^{(d)}(x),\qquad&&
x\in M(r,\BC),\quad |x|_{\op}<1,&
\end{alignat*}
where $|x|_{\op}$ is the operator norm on $M(r,\BC)$. Similarly, let
\[
M'(s,\BC):=\{x\in M(s,\BC)\mid \text{all non-zero eigenvalues of }x\text{ have even multiplicities}\},
\]
and for $d=1,2,4$, $\bm\in\BZ_{++}^{\lfloor s/2\rfloor}$, we define a function $\tilde{\Phi}_\bm^{[d]}(x)$ on $M'(s,\BC)$ by
\begin{equation}\label{Phi^[d]}
\tilde{\Phi}_\bm^{[d]}(x):=\tilde{\Phi}_\bm^d(t_1,\dots,t_{\lfloor s/2\rfloor}),
\end{equation}
where $t_1$, $t_1$, $t_2$, $t_2$, $\dots$, $t_{\lfloor s/2\rfloor}$, $t_{\lfloor s/2\rfloor}(,0)$ are the eigenvalues of $x$.
Then we have
\begin{alignat*}{3}
& {\rm e}^{\frac{1}{2}\tr(x)}=\sum_{\bm\in\BZ_{++}^{\lfloor s/2\rfloor}}\tilde{\Phi}_\bm^{[d]}(x),\qquad &&
x\in M'(s,\BC),&
\\
& \det(I-x)^{-\lambda/2}=\sum_{\bm\in\BZ_{++}^{\lfloor s/2\rfloor}}(\lambda)_{\bm,d}\tilde{\Phi}_\bm^{[d]}(x),\qquad&&
x\in M'(s,\BC),\quad |x|_{\op}<1.&
\end{alignat*}
Using these, $\tilde{\Phi}_\bm^{\fp^+}(x,y)\in\cP_\bm(\fp^+)\otimes\cP_\bm(\fp^-)$ is given by
\[
\tilde{\Phi}_\bm^{\fp^+}(x,y)=
\begin{cases}
\tilde{\Phi}_\bm^{(1)}\big(x{}^t\hspace{-1pt}y\big),& \fp^\pm=\Sym(r,\BC), \\
\tilde{\Phi}_\bm^{(2)}\big(x{}^t\hspace{-1pt}y\big),& \fp^\pm=M(q,s;\BC), \\
\tilde{\Phi}_\bm^{[4]}\big(x{}^t\hspace{-1pt}y\big),& \fp^\pm=\Skew(s,\BC).
\end{cases}
\]
Also, for $\alpha,\beta,\gamma\in\BC$ with $\gamma\notin\bigcup_{j=1}^r\left(\frac{d}{2}(j-1)-\BZ_{\ge 0}\right)$ we define
\begin{alignat}{3}
& {}_0F_1^{(d)}(\gamma;x):=\sum_{\bm\in\BZ_{++}^r}\frac{1}{(\gamma)_{\bm,d}}\tilde{\Phi}_\bm^{(d)}(x),\qquad && x\in M(r,\BC), &\label{0F1^(d)}
\\
& {}_0F_1^{[d]}(\gamma;x):=\sum_{\bm\in\BZ_{++}^{\lfloor s/2\rfloor}}\frac{1}{(\gamma)_{\bm,d}}\tilde{\Phi}_\bm^{[d]}(x),\qquad && x\in M'(s,\BC),&
\label{0F1^[d]}
\\
& {}_2F_1^{(d)}\left( \begin{matrix}\alpha,\beta\\\gamma \end{matrix};x\right)
:=\sum_{\bm\in\BZ_{++}^r}\frac{(\alpha)_{\bm,d}(\beta)_{\bm,d}}{(\gamma)_{\bm,d}} \tilde{\Phi}_\bm^{(d)}(x),\qquad && x\in M(r,\BC),\quad |x|_\op<1,&
\label{2F1^(d)}
\\
&{}_2F_1^{[d]}\left(\begin{matrix}\alpha,\beta\\\gamma \end{matrix};x\right)
:=\sum_{\bm\in\BZ_{++}^{\lfloor s/2\rfloor}}\frac{(\alpha)_{\bm,d}(\beta)_{\bm,d}}{(\gamma)_{\bm,d}}\tilde{\Phi}_\bm^{[d]}(x),\qquad&&
x\in M'(s,\BC),\quad |x|_\op<1. & \label{2F1^[d]}
\end{alignat}

\subsection[The case: $\fp^\pm=\BC^n$]{The case: $\boldsymbol{\fp^\pm=\BC^n}$}

In this section we consider $\fp^\pm=\BC^n$ with $n\ge 3$.
For $x={}^t\hspace{-1pt}(x_1,\dots,x_n)$, $y={}^t\hspace{-1pt}(y_1,\dots,y_n)\in\BC^n$,
let $q(x):=x_1^2+\cdots+x_n^2$, $q(x,y):=x_1y_1+\cdots+x_ny_n$. We~consider the Jordan triple system structure
\[
Q(x)\colon\quad \fp^\mp\longrightarrow\fp^\pm, \qquad Q(x)y:=2q(x,y)x-q(x)y,
\]
and the antilinear map $\overline{\cdot}\colon\fp^\pm\to\fp^\mp$ taking the complex conjugate on each component.
Then we have $(n,r,d,b,p)=(n,2,n-2,0,n)$. The generic norm $h\colon \fp^+\times\fp^-\to\BC$ is given by
\[ h(x,y)=1-2q(x,y)+q(x)q(y), \]
the bilinear form $(\cdot|\cdot)_{\fp^+}\colon\fp^+\times\fp^-\to\BC$ is given by
\[ (x|y)_{\fp^+}=2q(x,y), \]
the Bergman operator $B\colon \fp^+\times\fp^-\to\End_\BC(\fp^+)\simeq M(n,\BC)$ is given by
\[ B(x,y)=h(x,y)I-2(1-q(x,y))\big(x{}^t\hspace{-1pt}y-y{}^t\hspace{-1pt}x\big)+\big(x{}^t\hspace{-1pt}y-y{}^t\hspace{-1pt}x\big)^2, \]
and the bounded symmetric domain $D\subset\fp^+$ is given by
\[
D=\big\{x\in\fp^+\mid 1-2q(x,\overline{x})+|q(x)|^2>0,\, |q(x)|<1\big\}.
\]
$\fp^+=\BC^n$ is of tube type, and $e\in\fp^+$ is a maximal tripotent if and only if $e=tu$ holds for some $t\in\BC$, $|t|=1$ and
$u\in \BR^n\subset\BC^n$, $q(u)=1$. For a fixed maximal tripotent $e=tu\in\fp^+$, a Jordan algebra structure is defined on $\fp^+$
such that $e$ becomes a unit element, with the Euclidean real form $\fn^+\subset\fp^+$ given by
\[
\fn^+=t\big(\BR u+\sqrt{-1}(\BR u)^\bot\big)\subset\BC^n,
\]
where $(\BR u)^\bot\subset\BR^n$ is the orthgonal complement of $\BR u$ in $\BR^n$ with respect to $q(\cdot,\cdot)$.
The corresponding determinant polynomials $\det_{\fn^\pm}(x)$ on $\fp^\pm$ are given by
\[
\det_{\fn^+}(x)=q(x)\overline{q(e)}, \qquad \det_{\fn^-}(x)=q(x)q(e),
\]
and for $x\in\fp^+$, the inverse $x^\itinv=P(x)^{-1}x\in\fp^+$ and ${}^t\hspace{-1pt}x^\itinv=Q(x)^{-1}x\in\fp^-$ are given by
\[
x^\itinv=\frac{q(e)}{q(x)}\bigg(2\frac{q(e,x)}{q(e)}e-x\bigg),\qquad {}^t\hspace{-1pt}x^\itinv=\frac{1}{q(x)}x.
\]

Next we consider the Kantor--Koecher--Tits Lie group $G$ and the subgroup $K^\BC\subset G^\BC$,
\[
(G,K^\BC)=({\rm SO}_0(2,n),{\rm SO}(2,\BC)\times {\rm SO}(n,\BC)),
\]
where $K^\BC$ acts on $\fp^+=\BC^n$ via the map ${\rm Ad}|_{\fp^+}\colon K^\BC\to \operatorname{Str}(\fp^+)\subset \End_\BC(\fp^+)\simeq M(n,\BC)$,
\[
\left( \begin{pmatrix} \cos\theta & \sin\theta \\ -\sin\theta & \cos\theta \end{pmatrix}\!,l\right)\mapsto {\rm e}^{\sqrt{-1}\theta}l.
\]
We~lift the Bergman operator $B\colon \fp^+\times\fp^-\supset D\times\overline{D}\to\operatorname{Str}(\fp^+)\subset\End_\BC(\fp^+)$ to the map
$B\colon D\times\overline{D}\to K^\BC$ or $B\colon D\times\overline{D}\to \widetilde{K}^\BC$ through the above covering map.
Also, for $\lambda\in\BC$, the character $\chi^{-\lambda}$ of $\widetilde{K}^\BC$ normalized as in (\ref{char}) is given by
\[
\chi^{-\lambda} \left( \begin{pmatrix} \cos\theta & \sin\theta \\ -\sin\theta & \cos\theta \end{pmatrix}\!,l\right)
={\rm e}^{-\sqrt{-1}\lambda\theta}.
\]

Next, let $(\tau,V)$ be an irreducible representation of $\widetilde{K}^\BC$, and consider the Hilbert subspace
$\cH_\tau(D,V)\subset\cO(D,V)=\cO_\tau(D,V)$ with the reproducing kernel $\tau(B(x,\overline{y}))$ if it exists.
Then $\widetilde{G}$ acts unitarily on $\cH_\tau(D,V)$. If it is a holomorphic discrete series representation,
then the inner product is given by
\[ \langle f,g\rangle_{\hat{\tau}}=C_\tau\int_D \big(\tau\big(B(x,\overline{x})^{-1}\big)f(x),g(x)\big)_\tau
(1-2q(x,\overline{x})+|q(x)|^2)^{-n}{\rm d}x, \]
where the Lebesgue measure ${\rm d}x$ on $\fp^+=\BC^n$ is normalized with respect to $(x|\overline{y})_{\fp^+}=2q(x,\overline{y})$.
When $(\tau,V)$ is of the form $(\tau,V)=\big(\chi^{-\lambda}\otimes\tau_0,V\big)$ for a fixed representation $(\tau_0,V)$ of $K^\BC$,
we write $\cO_\tau(D,V)=\cO_\lambda(D,V)$, $\cH_\tau(D,V)=\cH_\lambda(D,V)$. Similarly, when $(\tau,V)=\big(\chi^{-\lambda},\BC\big)$,
we write $(\hat{\tau},\cO_\tau(D,V))=(\tau_\lambda,\cO_\lambda(D))$, $(\hat{\tau},\cH_\tau(D,V))=(\tau_\lambda,\cH_\lambda(D))$,
and write $\langle\cdot,\cdot\rangle_{\hat{\tau}}=\langle\cdot,\cdot\rangle_\lambda$.
Then $\cH_\lambda(D)$ is holomorphic discrete if $\lambda>n-1$, and then the inner product is given by
\[ \langle f,g\rangle_\lambda=C_\lambda\int_D f(x)\overline{g(x)} \big(1-2q(x,\overline{x})+|q(x)|^2\big)^{\lambda-n}{\rm d}x. \]

If $n=1,2$, then $\BC$ is of rank 1, $\BC^2$ is not simple, and we have local isomorphisms
\begin{gather*}
{\rm SO}_0(2,1)\simeq {\rm SL}(2,\BR)=\operatorname{Sp}(1,\BR),
\\
{\rm SO}_0(2,2)\simeq {\rm SL}(2,\BR)\times {\rm SL}(2,\BR)=\operatorname{Sp}(1,\BR)\times \operatorname{Sp}(1,\BR).
\end{gather*}
For these cases, let $\cH_\lambda(D_{{\rm SO}_0(2,1)})\subset\cO(D_{{\rm SO}_0(2,1)})$ be the Hilbert space with the reproducing kernel
\[
\big(1-2q(x,\overline{y})+q(x)\overline{q(y)}\big)^{-\lambda}=(1-x\overline{y})^{-2\lambda},
\]
and let $\cH_\lambda(D_{{\rm SO}_0(2,2)})\subset\cO(D_{{\rm SO}_0(2,2)})$ be the Hilbert space with the reproducing kernel
\begin{gather*}
\big(1-2q(x,\overline{y})+q(x)\overline{q(y)}\big)^{-\lambda}
\\ \qquad
{}
=\bigl(1-\big(x_1+\sqrt{-1}x_2\big)\big(\overline{y_1+\sqrt{-1}y_2}\big)\bigr)^{-\lambda}
\bigl(1-\big(x_1-\sqrt{-1}x_2\big)\big(\overline{y_1-\sqrt{-1}y_2}\big)\bigr)^{-\lambda},
\end{gather*}
so that we have the isomorphisms
\begin{gather*}
\cH_\lambda(D_{{\rm SO}_0(2,1)})\simeq \cH_{2\lambda}(D_{{\rm SL}(2,\BR)}),
\\
\cH_\lambda(D_{{\rm SO}_0(2,2)})\simeq \cH_\lambda(D_{{\rm SL}(2,\BR)})\hboxtimes\cH_\lambda(D_{{\rm SL}(2,\BR)}).
\end{gather*}

Next we fix notations of irreducible representations of $\operatorname{Spin}(n,\BC)$ for $n\ge 3$.
We~take a basis $\{\epsilon_j\}_{j=1}^{\lfloor n/2\rfloor}\subset\fh^{\BC\vee}$ of the dual space of a Cartan subalgebra
$\fh^\BC\subset\mathfrak{so}(n,\BC)$ such that the positive root system of $\mathfrak{so}(n,\BC)$ is given by
\begin{alignat*}{3}
& \{\epsilon_i\pm\epsilon_j\mid 1\le i<j\le n/2\},\qquad&& n\colon \text{even},&
\\
& \{\epsilon_i\pm\epsilon_j\mid 1\le i<j\le \lfloor n/2\rfloor\}\cup\{\epsilon_j\mid 1\le j\le \lfloor n/2\rfloor\},\qquad && n\colon \text{odd}.&
\end{alignat*}
For $\bm\in\BZ^{\lfloor n/2\rfloor}\cup\left(\BZ+\frac{1}{2}\right)^{\lfloor n/2\rfloor}$ such that
$m_1\ge\cdots\ge m_{n/2-1}\ge |m_{n/2}|$ for even $n$,
$m_1\ge\cdots\ge m_{\lfloor n/2\rfloor}\ge 0$ for odd $n$, let $V_\bm^{[n]}$ be the irreducible representation of $\operatorname{Spin}(n,\BC)$
with the highest weight $m_1\epsilon_1+\cdots+m_{\lfloor n/2\rfloor}\epsilon_{\lfloor n/2\rfloor}$,
and let $V_\bm^{[n]\vee}$ be the irreducible representation of $\operatorname{Spin}(n,\BC)$
with the lowest weight $-m_1\epsilon_1-\cdots-m_{\lfloor n/2\rfloor}\epsilon_{\lfloor n/2\rfloor}$.
Especially, if $\bm\in\BZ^{\lfloor n/2\rfloor}$ then $V_\bm^{[n]}$, $V_\bm^{[n]\vee}$ are reduced to the representations of ${\rm SO}(n,\BC)$.
Using these notations, each $K^\BC$-subrepresentation of $\cP(\BC^n)=\bigoplus_{\bm\in\BZ_{++}^2}\cP_\bm(\BC^n)$ for $n\ge 3$ is given by
\[
\cP_\bm(\BC^n)\simeq \chi^{-m_1-m_2}\boxtimes V_{(m_1-m_2,0,\dots,0)}^{[n]\vee}.
\]
When $n=1$ we write
\[
\cP(\BC)=\bigoplus_{m=0}^\infty \BC x^m
=:\bigoplus_{m=0}^\infty \cP_{\left(\lceil\frac{m}{2}\rceil,\lfloor\frac{m}{2}\rfloor\right)}(\BC),
\]
and write $V^{[1]}_*=V^{[1]\vee}_*=\BC$. When $n=2$ we write
\[
\cP(\BC^2)=\bigoplus_{m_1,m_2=0}^\infty \BC \big(x_1+\sqrt{-1}x_2\big)^{m_1}\big(x_1-\sqrt{-1}x_2\big)^{m_2}
=:\bigoplus_{m_1,m_2=0}^\infty \cP_{(m_1,m_2)}\big(\BC^2\big),
\]
and for $m\in\BZ$, let $V^{[2]}_m=V^{[2]\vee}_{-m}=\BC_m$ be the representation of ${\rm SO}(2,\BC)$ such that
$\cP_\bm\big(\BC^2\big)\simeq\chi^{-m_1-m_2}\boxtimes V^{[2]\vee}_{m_1-m_2}$ holds.

\subsection[The case: $\fp^\pm=\Herm(3,\BO)^\BC$, $M(1,2;\BO)^\BC$]
{The case: $\boldsymbol{\fp^\pm=\Herm(3,\BO)^\BC}$, $\boldsymbol {M(1,2;\BO)^\BC}$}

In this section we consider the exceptional Jordan triple systems. For detail see also \cite[Sec\-ti\-ons~4.3 and 4.4]{N2}.
First we consider $\fp^\pm=\Herm(3,\BO)^\BC$, the complexification of the space of $3\times 3$ Hermitian matrices
over the octonions~$\BO$. Let $(\cdot|\cdot)_{\fp^+}\colon \fp^+\times\fp^-\to\BC$ be the bilinear form
\[
(x|y)_{\fp^+}:=\Re_\BO\tr(xy)=\Re_\BO\tr(yx),
\]
where $\Re_\BO\colon \BO^\BC\to\BC$ is the $\BC$-linear extension of the octonionic real part.
For $x\in\Herm(3,\BO)^\BC$, let $x^\sharp$ be the adjugate matrix, that is,
\[
\begin{pmatrix}\xi_1&x_3&\hat{x}_2\\\hat{x}_3&\xi_2&x_1\\x_2&\hat{x}_1&\xi_3\end{pmatrix}^\sharp
:=\begin{pmatrix}\xi_2\xi_3-x_1\hat{x}_1&\hat{x}_2\hat{x}_1-\xi_3x_3&x_3x_1-\xi_2\hat{x}_2\\
x_1x_2-\xi_3\hat{x}_3&\xi_3\xi_1-x_2\hat{x}_2&\hat{x}_3\hat{x}_2-\xi_1x_1\\
\hat{x}_1\hat{x}_3-\xi_2x_2&x_2x_3-\xi_1\hat{x}_1&\xi_1\xi_2-x_3\hat{x}_3\end{pmatrix}\!, \qquad \xi_i\in\BC,\quad x_i\in\BO^\BC,
\]
where $x_i\mapsto \hat{x}_i$ is the $\BC$-linear extension of the octonionic conjugate, let
\[
\det(x):=\frac{1}{3}\tr\big(xx^\sharp\big)=\frac{1}{3}\tr\big(x^\sharp x\big),
\]
so that $\frac{1}{2}(xx^\sharp+x^\sharp x)=\det(x)I$ holds,
and for $x,y\in\Herm(3,\BO)^\BC$, let $x\times y$ be the \textit{Freudenthal product} given by
\[
x\times y:=(x+y)^\sharp-x^\sharp-y^\sharp,
\]
so that $x\times x=2x^\sharp$ holds.
Now we consider the Jordan triple system structure
\[
Q(x)\colon\ \fp^\mp\longrightarrow \fp^\pm, \qquad Q(x)y:=(x|y)_{\fp^+}x-x^\sharp\times y,
\]
and let $\overline{\cdot}\colon\fp^\pm\to\fp^\mp$ be the complex conjugate with respect to the real form $\Herm(3,\BO)\subset\Herm(3,\BO)^\BC$.
Then we have $(n,r,d,b,p)=(27,3,8,0,18)$. The generic norm $h_{\fp^+}\colon \fp^+\times\fp^-\to\BC$ is given by
\[
h_{\fp^+}(x,y)=1-(x|y)_{\fp^+}+\big(x^\sharp|y^\sharp\big)_{\fp^+}-(\det x)(\det y),
\]
and the bounded symmetric domain $D\subset\fp^+$ is the connected component of $\{x\in\fp^+\mid h(x,\overline{x})\allowbreak>0\}$ which contains the origin.
$\fp^\pm=\Herm(3,\BO)^\BC$ is of tube type, and for $x\in\fp^\pm$, ${}^t\hspace{-1pt}x^\itinv=Q(x)^{-1}x\in\fp^\mp$ is given by
\[
{}^t\hspace{-1pt}x^\itinv=x^{-1}=\frac{1}{\det(x)}x^\sharp.
\]
The Kantor--Koecher--Tits Lie group is given by $G=E_{7(-25)}$, and the maximal compact subgroup is given by $K={\rm U}(1)\times E_6$.

Next we take a primitive tripotent $e'\in\fp^+$, and consider the Peirce decomposition
$\fp^\pm=\fp^\pm(e')_2\oplus\fp^\pm(e')_1\oplus\fp^\pm(e')_0$.
Then since $K$ acts transitively on the set of all primitive tripotents, $\fp^\pm(e')_j$ are isomorphic for all primitive tripotents $e'$,
and we have
\[
\fp^\pm(e')_2\simeq \BC, \qquad \fp^\pm(e')_1\simeq M(1,2;\BO)^\BC, \qquad \fp^\pm(e')_0\simeq \Herm(2,\BO)^\BC\simeq \BC^{10},
\]
where the isomorphism $\Herm(2,\BO)^\BC\simeq \BC^{10}$ is taken such that the quadratic forms
\[
\det \begin{pmatrix}\xi_2&x_1\\\hat{x}_1&\xi_3 \end{pmatrix}=\xi_2\xi_3-x_1\hat{x}_1, \qquad
q(x)=\sum_{j=1}^{10}x^2_j\in\cP\big(\BC^{10}\big)
\]
coincide, and the bilinear forms on $\fp^+(e')_0\times\fp^-(e')_0$,
\[
(x|y)_{\Herm(2,\BO)^\BC}=\Re_\BO\tr(xy), \qquad (x|y)_{\BC^{10}}=2q(x,y)=2\sum_{j=1}^{10}x_jy_j
\]
also coincide. Especially if $e'=E_{11}=\left(\begin{smallmatrix}1&0&0\\0&0&0\\0&0&0\end{smallmatrix}\right)$,
then for $x_{12}\in M(1,2;\BO)^\BC\simeq\fp^+(E_{11})_1$, $y_{11}\in\BC\simeq\fp^-(E_{11})_2$,
$y_{22}=\left(\begin{smallmatrix}\eta_2&y_1\\\hat{y}_1&\eta_3\end{smallmatrix}\right)
\in\Herm(2,\BO)^\BC\simeq\fp^-(E_{11})_0$ we have
\begin{gather*}
Q\left( \begin{pmatrix}0&x_{12}\\{}^t\hspace{-1pt}\hat{x}_{12}&0\end{pmatrix}\right) \begin{pmatrix}y_{11}&0\\0&y_{22}\end{pmatrix}
=\begin{pmatrix} \Re_\BO\big(x_{12}\big(y_{22}{}^t\hspace{-1pt}\hat{x}_{12}\big)\big)&0\\0&y_{11}\big({}^t\hspace{-1pt}\hat{x}_{12}x_{12}\big) \end{pmatrix}\!, \\
{\vphantom{\biggl(}}^t\!\! \begin{pmatrix}y_{11}&0\\0&y_{22}\end{pmatrix}^\itinv =\begin{pmatrix}y_{11}^{-1}&0\\0&y_{22}^{-1}\end{pmatrix}
=\begin{pmatrix} y_{11}^{-1}&0&0\\0&\eta_3/\det(y_{22})&-y_1/\det(y_{22})
\\0&-\hat{y}_1/\det(y_{22})&\eta_2/\det(y_{22})\end{pmatrix}\!.
\end{gather*}
Next we consider the involutions $\sigma_{e'}^\pm$ on $\fp^+$ given by
\begin{gather*}
\sigma_{e'}^+:=\exp\big(\sqrt{-1}\pi D(e',\overline{e'})\big) =I_{\fp^+(e')_2}-I_{\fp^+(e')_1}+I_{\fp^+(e')_0},
\\
\sigma_{e'}^-:=\exp\big(\sqrt{-1}\pi (I_{\fp^+}-D(e',\overline{e'}))\big) =-I_{\fp^+(e')_2}+I_{\fp^+(e')_1}-I_{\fp^+(e')_0},
\end{gather*}
and extend to the involutions on $\fg^\BC=\mathfrak{e}_7^\BC$ such that they act on
$\fk^\BC=\mathfrak{str}(\fp^+)\subset\End_\BC(\fp^+)$ by~$\sigma_{e'}^\pm(l):=\sigma_{e'}^\pm l\sigma_{e'}^\pm $,
and act on $\fp^-$ by transpose. Then we have
\begin{gather*}
\fk^{\sigma_{e'}^\pm}\simeq \mathfrak{u}(1)\oplus\mathfrak{u}(1)\oplus\mathfrak{so}(10), \qquad
\fg^{\sigma_{e'}^+}\simeq \mathfrak{sl}(2,\BR)\oplus\mathfrak{so}(2,10), \qquad
\fg^{\sigma_{e'}^-}\simeq \mathfrak{u}(1)\oplus\mathfrak{e}_{6(-14)}.
\end{gather*}
Next let ${\rm d}\chi_{E_{7(-25)}}$, ${\rm d}\chi_{{\rm SL}(2,\BR)}$, ${\rm d}\chi_{E_{6(-14)}}$, ${\rm d}\chi_{\operatorname{Spin}_0(2,10)}$ be the characters of
$\fk^\BC\simeq \mathfrak{gl}(1,\BC)\oplus\mathfrak{e}_6^\BC$ and $[\fp^+(e')_j,\fp^-(e')_j]\subset\fk^\BC$ respectively
normalized as in (\ref{char}),
and let ${\rm d}\chi_{{\rm U}(1)}$ be a suitable character of the centralizer of $\mathfrak{e}_{6(-14)}\otimes_\BR\BC$ in $\fg^\BC$,
so that the restriction of ${\rm d}\chi_{E_{7(-25)}}$ to $(\fk^{\BC})^{\sigma_{e'}^\pm}$ is given by
\begin{gather*}
{\rm d}\chi_{E_{7(-25)}}\bigr|_{(\fk^{\BC})^{\sigma_{e'}^\pm}}={\rm d}\chi_{{\rm SL}(2,\BR)}+{\rm d}\chi_{\operatorname{Spin}_0(2,10)}={\rm d}\chi_{E_{6(-14)}}+{\rm d}\chi_{{\rm U}(1)}.
\end{gather*}
Then as $((K^\BC)^{\sigma_{e'}^\pm})_0$-modules, $\cP_\bm(\fp^+(e')_j)\subset\cP(\fp^+(e')_j)$ are given by
\begin{gather*}
\cP_m(\BC)\simeq \chi_{{\rm SL}(2,\BR)}^{-2m}=\chi_{E_{6(-14)}}^{-m}\boxtimes\chi_{{\rm U}(1)}^{2m} ,\qquad m\in\BZ_{\ge 0},
\\
\cP_\bm({\rm Herm}(2,\BO)^\BC)\simeq \chi_{\operatorname{Spin}_0(2,10)}^{-|\bm|}\boxtimes V_{(m_1-m_2,0,0,0,0)}^{[10]\vee}
\\ \hphantom{\cP_\bm({\rm Herm}(2,\BO)^\BC)}
{}=\chi_{E_{6(-14)}}^{-|\bm|/2}\boxtimes\chi_{{\rm U}(1)}^{-2|\bm|}\boxtimes V_{(m_1-m_2,0,0,0,0)}^{[10]\vee},\qquad \bm\in\BZ_{++}^2,
\\
\cP_\bm(M(1,2;\BO)^\BC)\simeq \chi_{{\rm SL}(2,\BR)}^{-|\bm|}\boxtimes\chi_{\operatorname{Spin}_0(2,10)}^{-|\bm|/2}
\boxtimes V_{\left(\frac{m_1+m_2}{2},\frac{m_1-m_2}{2},\frac{m_1-m_2}{2},\frac{m_1-m_2}{2}, \frac{m_1-m_2}{2}\right)}^{[10]\vee} \hspace{-50pt}
\\ \hphantom{\cP_\bm(M(1,2;\BO)^\BC)}
{}=\chi_{E_{6(-14)}}^{-\frac{3}{4}|\bm|}
\boxtimes V_{\left(\frac{m_1+m_2}{2},\frac{m_1-m_2}{2},\frac{m_1-m_2}{2},\frac{m_1-m_2}{2}, \frac{m_1-m_2}{2}\right)}^{[10]\vee},\qquad
 \bm\in\BZ_{++}^2.
\end{gather*}

Next, we fix an imaginary unit $\rj\in\BO$ of the Cayley--Dickson extension $\BO=\BH\oplus\BH \rj$,
and as in \cite{Y1, Y2} we consider the map
\begin{gather*}
\tilde{\iota}_\rj\colon\ \BH^3\oplus\Herm(3,\BH)\overset{\sim}{\longrightarrow}\Herm(3,\BO),
\\
\left( (a_1,a_2,a_3),\begin{pmatrix} \xi_1&x_3&\hat{x}_2\\\hat{x}_3&\xi_2&x_1\\x_2&\hat{x}_1&\xi_3 \end{pmatrix}\right)
\mapsto \begin{pmatrix} \xi_1&x_3+a_3\rj&\hat{x}_2-a_2\rj\\\hat{x}_3-a_3\rj&\xi_2&x_1+a_1\rj\\x_2+a_2\rj&\hat{x}_1-a_1\rj&\xi_3 \end{pmatrix}\!.
\end{gather*}
We~fix two isomorphisms
\[
\varphi^\pm\colon\ \BH\overset{\sim}{\longrightarrow}\{a\in M(2,\BC)\mid aJ_2=J_2\overline{a}\}
\]
such that $\varphi^-(a)=\overline{\varphi^+(a)}$ holds,
where $J_2:=\left(\begin{smallmatrix}\hphantom{-}0&1\\-1&0\end{smallmatrix}\right)$, and identify
\begin{gather*}
\BH^3\overset{\sim}{\underset{\varphi^\pm}{\longrightarrow}} \{a\in M(2,6;\BC)\mid aJ_6=J_2\overline{a}\}\subset M(2,6;\BC),
\\
\Herm(3,\BH)\overset{\sim}{\underset{\varphi^\pm}{\longrightarrow}} \{x\in\Herm(6,\BC)\mid xJ_6=J_6\overline{x}\}
\\ \hphantom{\Herm(3,\BH)}
\overset{\sim}{\underset{\cdot J_6}{\longrightarrow}} \{x\in\Skew(6,\BC)\mid xJ_6=J_6\overline{x}\}\subset\Skew(6,\BC),
\end{gather*}
where $J_6:=\left(\begin{smallmatrix}J_2&0&0\\0&J_2&0\\0&0&J_2\end{smallmatrix}\right)$,
and $\cdot J_6$ denotes the right multiplication by $J_6$.
Then by complexifying this, we have the linear isomorphisms
\begin{equation}\label{iotaj}
\iota_\rj^\pm\colon\ M(2,6;\BC)\oplus\Skew(6,\BC)\overset{\sim}{\longrightarrow}\fp^\pm=\Herm(3,\BO)^\BC,
\end{equation}
and the Jordan triple system structure on $M(2,6;\BC)\oplus\Skew(6,\BC)$ is given by
\begin{gather*}
Q((a,x))(b,y)
\\ \qquad
{}=\biggl(\!a{}^t\hspace{-1pt}ba-ax{}^t\hspace{-1pt}y+J_2bx^\#+\frac{1}{2}\tr\big(x{}^t\hspace{-1pt}y\big)a,\,
 x{}^t\hspace{-1pt}yx+\big({}^t\hspace{-1pt}aJ_2a\big)\mathbin{\dot{\times}}{}^t\hspace{-1pt}y
+\tr\big(a{}^t\hspace{-1pt}b\big)x-x{}^t\hspace{-1pt}ab-{}^t\hspace{-1pt}bax\!\biggr),
\end{gather*}
where for $x\in\Skew(6,\BC)$, $x^\#\in\Skew(6,\BC)$ is defined by
\begin{equation}\label{adj6}
(x^\#)_{kl}:=(-1)^{k+l}\Pf\big((x_{ij})_{i,j\in\{1,\dots,6\}\setminus\{k,l\}}\big), \qquad
1\le k<l\le 6,
\end{equation}
so that $xx^\#=x^\#x=\Pf(x)I$ holds, and for $x,y\in\Skew(6,\BC)$, $x\mathbin{\dot{\times}}y\in\Skew(6,\BC)$ is defined by
\[
x\mathbin{\dot{\times}}y:=(x+y)^\#-x^\#-y^\#.
\]
Also, the bilinear form $(\cdot|\cdot)_{\fp^+}\colon\fp^+\times\fp^-\to\BC$ becomes
\[
((a,x)|(b,y))_{\fp^+}=\tr\big(a{}^t\hspace{-1pt}b\big)+\frac{1}{2}\tr\big(x{}^t\hspace{-1pt}y\big).
\]
Let $\sigma_\rj^\pm$ be the involutions on $\fp^+$ given by
\[
\sigma_\rj^+:=I_{M(2,6;\BC)}-I_{\Skew(6,\BC)},\qquad \sigma_\rj^-:=-I_{M(2,6;\BC)}+I_{\Skew(6,\BC)},
\]
and extend to the involutions on $\fg^\BC=\mathfrak{e}_7^\BC$. Then we have
\begin{gather*}
\fk^{\sigma_\rj^\pm}\simeq \mathfrak{s}(\mathfrak{u}(2)\oplus\mathfrak{u}(6))\simeq \mathfrak{su}(2)\oplus\mathfrak{u}(6), \qquad
\fg^{\sigma_\rj^+}\simeq \mathfrak{su}(2,6), \qquad \fg^{\sigma_\rj^-}\simeq \mathfrak{su}(2)\oplus\mathfrak{so}^*(12),
\end{gather*}
where $\big(K^{\sigma_\rj^\pm}\big)_0\simeq {\rm S}({\rm U}(2)\times {\rm U}(6))\simeq {\rm SU}(2)\times {\rm U}(6)$ (up to covering)
acts on $M(2,6;\BC)\allowbreak\oplus\allowbreak\Skew(6,\BC)$ by
\begin{alignat*}{3}
& (k,l)(a,x)=\big(kal^{-1},\det(l)^{-1}lx{}^t\hspace{-1pt}l\big),\qquad&&
(k,l)\in {\rm S}({\rm U}(2)\times {\rm U}(6)),&
\\
& (k,l)(a,x)=\big(\det(l)^{1/2}kal^{-1},lx{}^t\hspace{-1pt}l\big), \qquad &&
(k,l)\in {\rm SU}(2)\times {\rm U}(6).&
\end{alignat*}
The restriction of the character $\chi_{E_{7(-25)}}$ of $K^\BC$ to $((K^\BC)^{\sigma_\rj^\pm})_0$ is given by
\begin{alignat*}{3}
& \chi_{E_{7(-25)}}\bigr|_{((K^\BC)^{\sigma_\rj^+})_0}\simeq V_{\left(\frac{3}{4},\frac{3}{4}\right)}^{(2)}\boxtimes
V_{\left(-\frac{1}{4},-\frac{1}{4},-\frac{1}{4},-\frac{1}{4},-\frac{1}{4},-\frac{1}{4}\right)}^{(6)}\qquad&&
\text{as an }{\rm S}({\rm U}(2)\times {\rm U}(6))\text{-module},&
\\
& \chi_{E_{7(-25)}}\bigr|_{((K^\BC)^{\sigma_\rj^-})_0}\simeq V_{(0,0)}^{(2)}\boxtimes
V_{\left(\frac{1}{2},\frac{1}{2},\frac{1}{2},\frac{1}{2},\frac{1}{2},\frac{1}{2}\right)}^{(6)}\qquad &&
\text{as an }{\rm SU}(2)\times {\rm U}(6)\text{-module},&
\end{alignat*}
$\cP_\bm(M(2,6;\BC))\subset\cP(M(2,6;\BC))$ $(\bm\in\BZ_{++}^2)$ is given by
\begin{gather*}
\cP_\bm(M(2,6;\BC))\simeq V_{(m_1,m_2)}^{(2)\vee}\boxtimes V_{(0,0,0,0,-m_2,-m_1)}^{(6)\vee} \qquad
\text{as an }{\rm S}({\rm U}(2)\times {\rm U}(6))\text{-module},
\\
\cP_\bm(M(2,6;\BC))\simeq V_{\left(\frac{m_1-m_2}{2},\frac{-m_1+m_2}{2}\right)}^{(2)\vee}\boxtimes
V_{\left(\frac{m_1+m_2}{2},\frac{m_1+m_2}{2},\frac{m_1+m_2}{2}, \frac{m_1+m_2}{2},\frac{m_1-m_2}{2},\frac{-m_1+m_2}{2}\right)}^{(6)\vee}
\\ \hphantom{\cP_\bm(M(2,6;\BC))}
{}\simeq V_{(m_1-m_2,0)}^{(2)\vee}\boxtimes
V_{\left(\frac{m_1+m_2}{2},\frac{m_1+m_2}{2},\frac{m_1+m_2}{2},\frac{m_1+m_2}{2}, \frac{m_1-m_2}{2},\frac{-m_1+m_2}{2}\right)}^{(6)\vee}
\\ \hphantom{\cP_\bm(M(2,6;\BC))=}{}
 \hspace*{55mm} \text{as an }{\rm SU}(2)\times {\rm U}(6)\text{-module},
\end{gather*}
and $\cP_\bm({\rm Skew}(6,\BC))\subset\cP({\rm Skew}(6,\BC))$ $(\bm\in\BZ_{++}^3)$ is given by
\begin{gather*}
\cP_\bm({\rm Skew}(6,\BC))\simeq V_{\left(\frac{m_1+m_2+m_3}{2},\frac{m_1+m_2+m_3}{2}\right)}^{(2)\vee}\boxtimes
V_{\substack{\left(\frac{m_1-m_2-m_3}{2},\frac{m_1-m_2-m_3}{2},\frac{-m_1+m_2-m_3}{2},\hspace{13pt}\right.
\\
\left.\hspace{4pt}\frac{-m_1+m_2-m_3}{2},\frac{-m_1-m_2+m_3}{2},\frac{-m_1-m_2+m_3}{2}\right)}}^{(6)\vee}\hspace{-150pt} \\ \hphantom{\cP_\bm({\rm Skew}(6,\BC))}
{}\simeq V_{(0,0)}^{(2)\vee}\boxtimes V_{\substack{(-m_2-m_3,-m_2-m_3,-m_1-m_3, \\ \ -m_1-m_3,-m_1-m_2,-m_1-m_2)}}^{(6)\vee}\qquad
\text{as an }{\rm S}({\rm U}(2)\times {\rm U}(6))\text{-module},
 \\
\cP_\bm({\rm Skew}(6,\BC))\simeq V_{(0,0)}^{(2)\vee}\boxtimes V_{(m_1,m_1,m_2,m_2,m_3,m_3)}^{(6)\vee}\hspace{19mm}
\text{as an }{\rm SU}(2)\times {\rm U}(6)\text{-module}.
\end{gather*}

Next we consider Jordan triple subsystems of $\fp^{\prime\pm}:=M(1,2;\BO)^\BC\simeq \fp^\pm(e')_1$.
Then we have $(n,r,d,b,p)=(16,2,6,4,12)$, and the generic norm on $\fp^{\prime+}$ is given by the restriction of that of
$\fp^+=\Herm(3,\BO)^\BC$. This $\fp^{\prime\pm}$ is not of tube type. We~take a maximal tripotent $e\in\fp^{\prime+}$ and
consider the Peirce decomposition $\fp^{\prime+}=\fp^{\prime+}(e)_2\oplus\fp^{\prime+}(e)_1$. Then we have
\[
\fp^{\prime\pm}(e)_2\simeq\BO^\BC\simeq\BC^8, \qquad
\fp^{\prime\pm}(e)_1\simeq\BO^\BC\simeq\BC^8.
\]
Let $\sigma_e$ be the involution on $\fp^{\prime+}$ given by
\[ \sigma_e=\exp\left(\pi\sqrt{-1}D(e,\overline{e})\right)=I_{\fp^{\prime+}(e)_2}-I_{\fp^{\prime+}(e)_1}, \]
and extend to the involution on $\fg^{\prime\BC}=\mathfrak{e}_6^\BC$. Then we have
\begin{gather*}
(\fk')^{\sigma_e}\simeq \mathfrak{u}(1)\oplus\mathfrak{u}(1)\oplus\mathfrak{so}(8), \qquad
(\fg')^{\sigma_e}\simeq \mathfrak{u}(1)\oplus\mathfrak{so}(2,8).
\end{gather*}
Let ${\rm d}\chi_{E_{6(-14)}}$ and ${\rm d}\chi_{\operatorname{Spin}_0(2,8)}$ be the characters on
$\fk^{\prime\BC}\simeq \mathfrak{gl}(1,\BC)\oplus\mathfrak{so}(10,\BC)$ and
$[\fp^{\prime+}(e)_2,\fp^{\prime-}(e)_2]\allowbreak\simeq\mathfrak{gl}(1,\BC) \oplus\mathfrak{so}(8,\BC)\subset\fk^{\prime\BC}$ respectively
normalized as in (\ref{char}),
and let ${\rm d}\chi_{{\rm U}(1)}$ be a suitable character of the centralizer of $\mathfrak{so}(2,8)\otimes_\BR\BC$ in $\fg^{\prime\BC}$,
so that the restriction of ${\rm d}\chi_{E_{6(-14)}}$ to $(\fk^{\prime\BC})^{\sigma_e}$ is given by
\[
{\rm d}\chi_{E_{6(-14)}}\bigr|_{(\fk^{\prime\BC})^{\sigma_e}}={\rm d}\chi_{\operatorname{Spin}_0(2,8)}+{\rm d}\chi_{{\rm U}(1)}.
\]
Then as $((K^{\prime\BC})^{\sigma_e})_0$-modules, $\cP_\bm(\fp^{\prime+}(e)_j)\subset\cP(\fp^{\prime+}(e)_j)$ $\big(\bm\in\BZ_{++}^2\big)$ are given by
\begin{gather*}
\cP_\bm(\fp^{\prime+}(e)_2)\simeq \chi_{\operatorname{Spin}_0(2,8)}^{-|\bm|}\boxtimes
V_{\left(\frac{m_1+m_2}{2},\frac{m_1-m_2}{2},\frac{m_1-m_2}{2},\frac{m_1-m_2}{2}\right)}^{[8]\vee},
\\
\cP_\bm(\fp^{\prime+}(e)_1)\simeq \chi_{\operatorname{Spin}_0(2,8)}^{-|\bm|/2}\boxtimes\chi_{{\rm U}(1)}^{-\frac{3}{2}|\bm|}\boxtimes
V_{\left(\frac{m_1+m_2}{2},\frac{m_1-m_2}{2},\frac{m_1-m_2}{2},-\frac{m_1-m_2}{2}\right)}^{[8]\vee}.
\end{gather*}

Next we take the primitive tripotent $e'=E_{11}\in\fp^+$, and realize $\fp^{\prime\pm}$ as $\fp^{\prime\pm}:=\fp^\pm(E_{11})_1$.
Then under the identification $\iota_\rj^\pm\colon M(2,6;\BC)\oplus\Skew(6,\BC)\xrightarrow{\sim}\fp^\pm$, we have
\begin{align*}
\fp^{\prime\pm}&\simeq\left\{\begin{pmatrix}0_{2\times 2}&a\end{pmatrix} \mid a\in M(2,4;\BC)\right\}
\oplus\left\{\begin{pmatrix}0_{2\times 2}&{}^t\hspace{-1pt}x\\x&0_{4\times 4}\end{pmatrix}\ \middle|\ x\in M(4,2;\BC)\right\} \\
&\simeq M(2,4;\BC)\oplus M(4,2;\BC).
\end{align*}
The Jordan triple system structure on $M(2,4;\BC)\oplus M(4,2;\BC)$ is given by
\begin{gather*}
Q((a,x))(b,y)
\\ \qquad
{}=\big(a{}^t\hspace{-1pt}ba-J_2b\big(xJ_2{}^t\hspace{-1pt}x\big)^\#+\tr\big(x{}^t\hspace{-1pt}y\big) a-ax{}^t\hspace{-1pt}y,\, x{}^t\hspace{-1pt}yx-\big({}^t\hspace{-1pt}aJ_2a\big)^\# yJ_2+\tr\big(a{}^t\hspace{-1pt}b\big)x-{}^t\hspace{-1pt}bax\big),
\end{gather*}
where for $x\in\Skew(4,\BC)$, $x^\#\in\Skew(4,\BC)$ is defined by
\begin{gather*}
\begin{pmatrix}0&a&b&c\\-a&0&d&e\\-b&-d&0&f\\-c&-e&-f&0\end{pmatrix}^\#
:=\begin{pmatrix}0&-f&e&-d\\f&0&-c&b\\-e&c&0&-a\\d&-b&a&0\end{pmatrix}\!.
\end{gather*}
We~consider the restriction of the involution $\sigma_\rj^+$ on $\fg^\BC=\mathfrak{e}_7^\BC$ to the subalgebra
$\fg^{\prime\BC}=\mathfrak{e}_6^\BC$. Then we have
\begin{gather*}
(\fk')^{\sigma_\rj^+}\simeq \mathfrak{s}(\mathfrak{u}(2)\oplus\mathfrak{u}(4))\oplus\mathfrak{su}(2), \qquad
(\fg')^{\sigma_\rj^+}\simeq \mathfrak{su}(2,4)\oplus\mathfrak{su}(2),
\end{gather*}
where $((K')^{\sigma_\rj^+})_0\simeq {\rm S}({\rm U}(2)\times {\rm U}(4))\times {\rm SU}(2)$ (up to covering) acts on $M(2,4;\BC)\oplus M(4,2;\BC)$ by
\[
(k_1,k_2,k_3)(a,x)=\big(k_1ak_2^{-1},\det(k_2)^{-1}k_2xk_3^{-1}\big).
\]
The restriction of the character $\chi_{E_{6(-14)}}$ of $K^{\prime\BC}$ to $\big((K^{\prime\BC})^{\sigma_\rj^+}\big)_0$ is given by
\[
\chi_{E_{6(-14)}}\bigr|_{\big((K^{\prime\BC})^{\sigma_\rj^+}\big)_0}\simeq
V_{\big(\frac{2}{3},\frac{2}{3}\big)}^{(2)}\boxtimes V_{\big({-}\frac{1}{3},-\frac{1}{3},-\frac{1}{3},-\frac{1}{3}\big)}^{(4)}
\boxtimes V_{(0,0)}^{(2)},
\]
and $\cP_\bm(M(2,4;\BC))$, $\cP_\bm(M(4,2;\BC))$ $\big(\bm\in\BZ_{++}^2\big)$ are given by
\begin{gather*}
\cP_\bm(M(2,4;\BC))\simeq V_{(m_1,m_2)}^{(2)\vee}\boxtimes V_{(0,0,-m_2,-m_1)}^{(4)\vee}\boxtimes V_{(0,0)}^{(2)\vee},
\\
\cP_\bm(M(4,2;\BC))
\\ \qquad
{}\simeq V_{\big(\frac{m_1+m_2}{2},\frac{m_1+m_2}{2}\big)}^{(2)\vee}
\boxtimes V_{\big(\frac{m_1-m_2}{2},\frac{-m_1+m_2}{2},\frac{-m_1-m_2}{2},\frac{-m_1-m_2}{2}\big)}^{(4)\vee}
\boxtimes V_{\big(\frac{m_1-m_2}{2},\frac{-m_1+m_2}{2}\big)}^{(2)\vee}
\\ \qquad
{}\simeq V_{(0,0)}^{(2)\vee}\boxtimes V_{(-m_2,-m_1,-m_1-m_2,-m_1-m_2)}^{(4)\vee}\boxtimes V_{(m_1-m_2,0)}^{(2)\vee}.
\end{gather*}

Next we take the primitive tripotent
$e'=\iota_\rj^+\left(\left(\begin{smallmatrix}0_{1\times 5}&1\\0_{1\times 5}&0\end{smallmatrix}\right)\!,0_{6\times 6}\right)\in\fp^+$,
and realize $\fp^{\prime\pm}$ as $\fp^{\prime\pm}:=\fp^\pm(e')_1$.
Then under the identification $\iota_\rj^\pm\colon M(2,6;\BC)\oplus\Skew(6,\BC)\xrightarrow{\sim}\fp^\pm$, we have
\begin{align*}
\fp^{\prime\pm}&\simeq\left\{\begin{pmatrix}a&0\\0_{1\times 5}&\alpha \end{pmatrix}\ \middle|\ a\in M(1,5;\BC),\; \alpha\in\BC\right\}
\oplus\left\{\begin{pmatrix}x&0_{5\times 1}\\0_{1\times 5}&0\end{pmatrix}\ \middle|\ x\in \Skew(5,\BC)\right\}
\\
&\simeq \BC\oplus\Skew(5,\BC)\oplus M(1,5;\BC).
\end{align*}
The Jordan triple system structure on $\BC\oplus\Skew(5,\BC)\oplus M(1,5;\BC)$ is given by
\begin{gather*}
Q((\alpha,x,a))(\beta,y,b)=\biggl(\alpha^2\beta+\frac{1}{2}\tr\big(x{}^t\hspace{-1pt}y\big)\alpha +\mathbf{Pf}(x){}^t\hspace{-1pt}b,\, x{}^t\hspace{-1pt}yx +\alpha\Proj\bigg(\begin{pmatrix}{}^t\hspace{-1pt}y&-{}^t\hspace{-1pt}a\\a&0\end{pmatrix}^\#\bigg)
\\ \hphantom{Q((\alpha,x,a))(\beta,y,b)=}
{}+\big(\alpha\beta+a{}^t\hspace{-1pt}b\big)x-x{}^t\hspace{-1pt}ab-b{}^t\hspace{-1pt}ax, \, a{}^t\hspace{-1pt}ba+\frac{1}{2}\tr\big(x{}^t\hspace{-1pt}y\big)a-ax{}^t\hspace{-1pt}y+\beta\mathbf{Pf}(x)\biggr),
\end{gather*}
where $\Proj\colon \Skew(6,\BC)\to\Skew(5,\BC)$ is defined by $\left(\begin{smallmatrix}x&-{}^t\hspace{-1pt}a\\a&0\end{smallmatrix}\right)
\mapsto x$, and for $x\in\Skew(5,\BC)$, $\mathbf{Pf}(x)\in M(1,5;\BC)$ is defined by
\begin{equation}\label{boldPf}
(\mathbf{Pf}(x))_k:=(-1)^k\Pf\big((x_{ij})_{i,j\in\{1,\dots,5\}\setminus\{k\}}\big), \qquad
1\le k\le 5.
\end{equation}
We~consider the restriction of the involutions $\sigma_\rj^\pm$ on $\fg^\BC=\mathfrak{e}_7^\BC$ to the subalgebra
$\fg^{\prime\BC}=\mathfrak{e}_6^\BC$. Then we have
\begin{gather*}
(\fk')^{\sigma_\rj^\pm}\simeq \mathfrak{u}(1)\oplus\mathfrak{s}(\mathfrak{u}(1)\oplus\mathfrak{u}(5))
\simeq \mathfrak{u}(1)\oplus\mathfrak{u}(5),
\\
(\fg')^{\sigma_\rj^+}\simeq \mathfrak{sl}(2,\BR)\oplus\mathfrak{su}(1,5), \qquad
(\fg')^{\sigma_\rj^-}\simeq \mathfrak{u}(1)\oplus\mathfrak{so}^*(10),
\end{gather*}
where $((K')^{\sigma_\rj^\pm})_0\simeq {\rm U}(1)\times {\rm S}({\rm U}(1)\times {\rm U}(5))\simeq {\rm U}(1)\times {\rm U}(5)$ (up to covering) acts on
$\BC\oplus\Skew(5,\BC)\allowbreak\oplus M(1,5;\BC)$ by
\begin{gather*}
\big(k,\det(l)^{-1},l\big)(\alpha,x,a)
=\big(k^2\alpha,k\det(l)^{-1}lx{}^t\hspace{-1pt}l,\det(l)^{-1}al^{-1}\big),
\\ \hphantom{\big(k,\det(l)^{-1},l\big)(\alpha,x,a)={}}
\big(k,\det(l)^{-1},l\big)\in {\rm U}(1)\times {\rm S}({\rm U}(1)\times {\rm U}(5)),
\\
(k,l)(\alpha,x,a)=\big(k^{-3}\det(l)^{1/2}\alpha,lx{}^t\hspace{-1pt}l,k^3\det(l)^{1/2}al^{-1}\big), \qquad (k,l)\in {\rm U}(1)\times {\rm U}(5).
\end{gather*}
Let ${\rm d}\chi_{{\rm SL}(2,\BR)}$ be the character on the subalgebra $\mathfrak{gl}(1,\BC)\subset\fk^{\prime\BC}$
corresponding to $\BC\subset\fp^{\prime\pm}$,
and let ${\rm d}\chi_{{\rm U}(1)}$ be a suitable character of the centralizer of $\mathfrak{so}^*(10)\otimes_\BR\BC$ in $\fg^{\prime\BC}$,
so that the restriction of the character $\chi_{E_{6(-14)}}$ of $K^{\prime\BC}$ to $\big((K^{\prime\BC})^{\sigma_\rj^\pm}\big)_0$ is given by
\begin{gather*}
\chi_{E_{6(-14)}}\bigr|_{\big((K^{\prime\BC})^{\sigma_\rj^+}\big)_0}\simeq
\chi_{{\rm SL}(2,\BR)}\boxtimes V_{\frac{5}{6}}^{(1)}\boxtimes V_{\big({-}\frac{1}{6},-\frac{1}{6},-\frac{1}{6},-\frac{1}{6},-\frac{1}{6}\big)}^{(5)}
\\ \hphantom{\chi_{E_{6(-14)}}\bigr|_{((K^{\prime\BC})^{\sigma_\rj^+})_0}\simeq}
\hspace*{43mm} \text{as a }{\rm U}(1)\times {\rm S}({\rm U}(1)\times {\rm U}(5))\text{-module},
\\
\chi_{E_{6(-14)}}\bigr|_{\big((K^{\prime\BC})^{\sigma_\rj^-}\big)_0}\simeq
\chi_{{\rm U}(1)}\boxtimes V_{\left(\frac{1}{2},\frac{1}{2},\frac{1}{2},\frac{1}{2},\frac{1}{2}\right)}^{(5)}
\qquad \text{as a }{\rm U}(1)\times {\rm U}(5)\text{-module},
\end{gather*}
$\cP_m(\BC)$ $(m\in\BZ_{\ge 0})$ is given by
\begin{alignat*}{3}
& \cP_m(\BC)\simeq \chi_{{\rm SL}(2,\BR)}^{-2m}\boxtimes V_{0}^{(1)\vee}\boxtimes V_{(0,0,0,0,0)}^{(5)\vee}\qquad&&
\text{as a }{\rm U}(1)\times {\rm S}({\rm U}(1)\times {\rm U}(5))\text{-module},&
\\
& \cP_m(\BC)\simeq \chi_{{\rm U}(1)}^{3m}\boxtimes V_{\left(\frac{m}{2},\frac{m}{2},\frac{m}{2},\frac{m}{2},\frac{m}{2}\right)}^{(5)\vee}\qquad&&
\text{as a }{\rm U}(1)\times {\rm U}(5)\text{-module},&
\end{alignat*}
$\cP_m(M(1,5;\BC))$ $(m\in\BZ_{\ge 0})$ is given by
\begin{alignat*}{3}
& \cP_m(M(1,5;\BC))\simeq \chi_{{\rm SL}(2,\BR)}^0\boxtimes V_{m}^{(1)\vee}\boxtimes V_{(0,0,0,0,-m)}^{(5)\vee}\qquad&&
\text{as a }{\rm U}(1)\times {\rm S}({\rm U}(1)\times {\rm U}(5))\text{-module},&
\\
& \cP_m(M(1,5;\BC))\simeq \chi_{{\rm U}(1)}^{-3m}\boxtimes V_{\left(\frac{m}{2},\frac{m}{2},\frac{m}{2},\frac{m}{2},-\frac{m}{2}\right)}^{(5)\vee}\qquad&&
\text{as a }{\rm U}(1)\times {\rm U}(5)\text{-module},&
\end{alignat*}
and $\cP_\bm({\rm Skew}(5,\BC))$ $\big(\bm\in\BZ_{++}^2\big)$ is given by
\begin{gather*}
\cP_\bm({\rm Skew}(5,\BC))\simeq \chi_{{\rm SL}(2,\BR)}^{-|\bm|}\boxtimes V_{\frac{m_1+m_2}{2}}^{(1)\vee}
\boxtimes V_{\left(\frac{m_1-m_2}{2},\frac{m_1-m_2}{2},\frac{-m_1+m_2}{2},\frac{-m_1+m_2}{2}, \frac{-m_1-m_2}{2}\right)}^{(5)\vee}
\\ \hphantom{\cP_\bm({\rm Skew}(5,\BC))}
{}\simeq \chi_{{\rm SL}(2,\BR)}^{-|\bm|}\boxtimes V_0^{(1)\vee}
\boxtimes V_{(-m_2,-m_2,-m_1,-m_1,-m_1-m_2)}^{(5)\vee}
\\ \hphantom{\cP_\bm({\rm Skew}(5,\BC)\simeq)}
\hspace*{48mm} \text{as a }{\rm U}(1)\times {\rm S}({\rm U}(1)\times {\rm U}(5))\text{-module},
\\
\cP_\bm({\rm Skew}(5,\BC))\simeq \chi_{{\rm U}(1)}^0\boxtimes V_{(m_1,m_1,m_2,m_2,0)}^{(5)\vee}\qquad
\text{as a }{\rm U}(1)\times {\rm U}(5)\text{-module}.
\end{gather*}

\section{Key identities}\label{section_key}

Let $\fp^+$ be a simple Hermitian positive Jordan triple system.
In this section we assume that $\fp^+$ is of tube type, fix a maximal tripotent $e\in\fp^+$,
and consider the corresponding Jordan algebra structure on $\fp^+$.
Let $\fn^+\subset\fp^+$ be the Euclidean real form, and let $\Omega\subset\fn^+$ be the symmetric cone.
We~set $\rank\fp^+=:r$, $\dim\fp^+=:n=r+\frac{d}{2}r(r-1)$.
For $k\in\BZ_{\ge 0}$ we write $\underline{k}_r:=\smash{(\underbrace{k,\dots,k}_r)}$.
First we want to prove the following.
\begin{Theorem}\label{product}
Let $f_1,f_2\in\cP(\fp^+)$. We~take $k\in\BZ_{\ge 0}$ such that
\[
f_1^{\sharp(k)}(x):=\det_{\fn^+}(x)^kf_1\big(x^\itinv\big)
\]
is a polynomial. Then for $\Re\lambda>\frac{2n}{r}-1$, $z\in\Omega\subset\fn^+\subset\fp^+$, we have
\begin{gather*}
\big\langle f_1(x)f_2(x),{\rm e}^{(x|\overline{z})_{\fp^+}}\big\rangle_{\lambda,x}
\\ \qquad
{}=\frac{1}{(\lambda)_{\underline{k}_r,d}}\det_{\fn^+}(z)^{-\lambda+\frac{n}{r}}f_1^{\sharp(k)} \bigg(\frac{\partial}{\partial z}\bigg) \det_{\fn^+}(z)^{\lambda+k-\frac{n}{r}}
\big\langle f_2(x),{\rm e}^{(x|\overline{z})_{\fp^+}}\big\rangle_{\lambda+k,x}.
\end{gather*}
\end{Theorem}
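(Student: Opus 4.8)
The plan is to recognise the inner product as a single linear transform and to diagonalise it by means of the Laplace transform on the symmetric cone. By Corollary~\ref{FKCor}, if $g=\sum_{\bm}g_\bm$ with $g_\bm\in\cP_\bm(\fp^+)$ the $\cP_\bm$-component, then
\[
T_\lambda[g](z):=\big\langle g,{\rm e}^{(\cdot|\overline{z})_{\fp^+}}\big\rangle_{\lambda}=\sum_{\bm\in\BZ_{++}^r}\frac{1}{(\lambda)_{\bm,d}}\,g_\bm(z)
\]
is a polynomial in $z$, so both sides of the assertion are polynomials on $\Omega$ and it suffices to establish the corresponding identity of operators on $\cP(\fp^+)$. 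As a warm-up, the case $f_1=\det_{\fn^+}^{\,k}$ (so $f_1^{\sharp(k)}=1$) already follows from Corollary~\ref{FKCor} together with the factorisation $(\lambda)_{\bm+\underline{k}_r,d}=(\lambda)_{\underline{k}_r,d}\,(\lambda+k)_{\bm,d}$; the content of the theorem is the passage to a general $f_1$, where the inversion $x\mapsto x^\itinv$ hidden in $f_1^{\sharp(k)}$ must be accounted for.

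The engine is the Laplace transform on $\Omega$: for $\Re\lambda$ large and $z\in\Omega$,
\[
(\cL_\lambda g)(z):=\frac{1}{\Gamma_\Omega(\lambda)}\int_\Omega g(w)\,{\rm e}^{-(w|z)_{\fn^+}}\det_{\fn^+}(w)^{\lambda-\frac{n}{r}}\,{\rm d}w .
\]
I would record three properties. First, the Gindikin formula (Faraut--Kor\'anyi) gives the eigenrelation $\cL_\lambda[\Phi^{\fn^+}_\bm]=(\lambda)_{\bm,d}\det_{\fn^+}(z)^{-\lambda}\Phi^{\fn^+}_\bm(z^\itinv)$ on the spherical polynomials, which by equivariance of $\cL_\lambda$ under the structure group $L$ and irreducibility of $\cP_\bm$ extends to $\cL_\lambda[P]=(\lambda)_{\bm,d}\det_{\fn^+}(z)^{-\lambda}P(z^\itinv)$ for every $P\in\cP_\bm$. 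Second, pulling a factor $\det_{\fn^+}(w)^k$ into the weight yields the shift $\cL_\lambda[\det_{\fn^+}^{\,k}g]=(\lambda)_{\underline{k}_r,d}\,\cL_{\lambda+k}[g]$, since $\Gamma_\Omega(\lambda+k)/\Gamma_\Omega(\lambda)=(\lambda)_{\underline{k}_r,d}$. Third, integrating by parts on $\Omega$ against the weight $\det_{\fn^+}(w)^{\lambda-\frac nr}$ (the derivative of ${\rm e}^{-(w|z)}$ producing the cone variable) converts multiplication by a polynomial $p$ in the transform variable into a constant-coefficient operator conjugated by a power of the determinant:
\[
\cL_\lambda^{-1}\big[p(z)\,\cL_\lambda[g]\big]=\det_{\fn^+}(z)^{-(\lambda-\frac{n}{r})}\,p(\partial)\,\big[\det_{\fn^+}(z)^{\lambda-\frac{n}{r}}g\big].
\]
It is precisely here that $\Re\lambda>\frac{2n}{r}-1$ enters: this is the holomorphic-discrete range, so every weighted integral converges and, after the repeated integrations by parts needed for $\deg f_1^{\sharp(k)}\le kr$, all boundary contributions on $\partial\Omega$ vanish.

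With these in hand the identification and the conclusion are formal. From the eigenrelation, both $T_\lambda$ and $\cL_\lambda^{-1}\circ(\text{mult.\ by }\det_{\fn^+}(z)^{-\lambda})\circ\iota^{*}$, where $\iota^{*}g:=g\circ\iota$ and $\iota(x)=x^\itinv$, send $\Phi^{\fn^+}_\bm\mapsto(\lambda)_{\bm,d}^{-1}\Phi^{\fn^+}_\bm$, whence $T_\lambda=\cL_\lambda^{-1}\circ(\text{mult.\ by }\det_{\fn^+}(z)^{-\lambda})\circ\iota^{*}$. Now $\iota^{*}$ is an algebra map and, by the very definition of $f_1^{\sharp(k)}$, $\iota^{*}f_1=\det_{\fn^+}^{\,-k}\,f_1^{\sharp(k)}$; combining this with $\iota^{*}f_2=\det_{\fn^+}(z)^{\lambda+k}\cL_{\lambda+k}[T_{\lambda+k}f_2]$ (the inverse of the realization at $\lambda+k$) gives
\[
\det_{\fn^+}(z)^{-\lambda}\,\iota^{*}[f_1f_2]=\det_{\fn^+}(z)^{-\lambda-k}\,f_1^{\sharp(k)}(z)\,\iota^{*}f_2=f_1^{\sharp(k)}(z)\,\cL_{\lambda+k}\big[T_{\lambda+k}f_2\big](z).
\]
Applying $\cL_\lambda^{-1}$, rewriting $\cL_{\lambda+k}[T_{\lambda+k}f_2]=(\lambda)_{\underline{k}_r,d}^{-1}\cL_\lambda[\det_{\fn^+}^{\,k}T_{\lambda+k}f_2]$ by the second property, and then the third property with $p=f_1^{\sharp(k)}$ and $g=\det_{\fn^+}^{\,k}T_{\lambda+k}f_2$ collapses everything to
\[
T_\lambda[f_1f_2]=\frac{1}{(\lambda)_{\underline{k}_r,d}}\det_{\fn^+}(z)^{-\lambda+\frac{n}{r}}\,f_1^{\sharp(k)}(\partial)\big[\det_{\fn^+}(z)^{\lambda+k-\frac{n}{r}}T_{\lambda+k}[f_2]\big],
\]
which is the claim.

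The main obstacle I expect is the third property: pinning down the constants and, above all, proving that the boundary terms in the integrations by parts vanish, that is, that $\Re\lambda>\frac{2n}{r}-1$ is exactly the threshold legitimising them. A secondary but genuine point is bookkeeping of normalisations and signs — reconciling the kernel ${\rm e}^{-(w|z)_{\fn^+}}$ used for $\cL_\lambda$ with ${\rm e}^{(x|\overline{z})_{\fp^+}}$ defining $T_\lambda$, and matching the gradient $\partial$ arising from the $\fn^+$-pairing (differing from the $\fp^+$-normalised $\partial/\partial z$ of the statement by $Q(\overline{e})$), so that the operator emerges cleanly as $f_1^{\sharp(k)}(\partial/\partial z)$. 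Everything else is formal manipulation of the three transform properties.
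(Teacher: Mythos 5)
Your proposal follows the same route as the paper: diagonalise the inner product by the Laplace transform on $\Omega$ (your operator identity $T_\lambda=\cL_\lambda^{-1}\circ(\text{mult.\ by }\det_{\fn^+}(z)^{-\lambda})\circ\iota^{*}$ is exactly the paper's Corollary~\ref{key_formula}, obtained by comparing Gindikin's formula with Corollary~\ref{FKCor}), insert $f_1\big(w^\itinv\big)=f_1^{\sharp(k)}(w)\det_{\fn^+}(w)^{-k}$, use the shift $\Gamma_r^d(\lambda+k)/\Gamma_r^d(\lambda)=(\lambda)_{\underline{k}_r,d}$, and convert multiplication by $f_1^{\sharp(k)}$ on the transform side into the differential operator $f_1^{\sharp(k)}(\partial/\partial z)$. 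The outline, the algebra, and the constants are all right.

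The one step whose justification is off is your third property. You propose to prove it by integrating by parts over $\Omega$ against the weight $\det_{\fn^+}(w)^{\lambda-\frac{n}{r}}$, you identify the vanishing of boundary terms on $\partial\Omega$ as the crux, and you assert that $\Re\lambda>\frac{2n}{r}-1$ is exactly the threshold legitimising this. That is not where the bound comes from, and as written your argument would not cover the stated range: each of the up to $kr$ integrations by parts can lower the exponent of $\det_{\fn^+}(w)$ by one, so killing the boundary contributions on $\partial\Omega=\{\det_{\fn^+}=0\}$ requires $\Re\lambda$ large compared with $kr$, not merely $>\frac{2n}{r}-1$; you would then need a further analytic continuation in $\lambda$ (both sides are rational in $\lambda$ for fixed $z\in\Omega$) to descend to the claimed range. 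The paper sidesteps this entirely: it represents $T_\lambda$ by the \emph{inverse} Laplace transform, i.e.\ a contour integral over $a+\sqrt{-1}\fn^+$ (Theorem~\ref{Laplace}(2)), for which $\Re\lambda>\frac{2n}{r}-1$ is simply the convergence threshold, and the passage from multiplication by $f_1^{\sharp(k)}(w)$ to the operator $f_1^{\sharp(k)}(\partial/\partial z)$ is nothing but the identity $f_1^{\sharp(k)}(w)\,{\rm e}^{(z|w)_{\fn^+}}=f_1^{\sharp(k)}(\partial/\partial z)\,{\rm e}^{(z|w)_{\fn^+}}$ followed by differentiation under the integral sign; there is no boundary and no integration by parts. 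Replace your property~(3) by this observation and the rest of your argument goes through verbatim.
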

Here, we normalize $\frac{\partial}{\partial z}$ with respect to the bilinear form
$(\cdot|\cdot)_{\fn^+}=(\cdot|Q(\overline{e})\cdot)_{\fp^+}\colon\fp^+\times\fp^+\to\BC$.
In order to prove the theorem, we recall the \textit{Laplace transform} on the symmetric cone $\Omega\subset\fn^+$.
For a measurable function $F$ on $\Omega$, its Laplace transform is defined by
\[
F(z)\mapsto \hat{F}(w):=\int_\Omega {\rm e}^{-(z|w)_{\fn^+}}F(z)\,{\rm d}z,\qquad w\in\Omega+\sqrt{-1}\fn^+.
\]
If the integral converges, then $\hat{F}(w)$ is a holomorphic function on $\Omega+\sqrt{-1}\fn^+$.
Moreover, for $z,a\in\Omega$,
\[
\frac{1}{(2\pi\sqrt{-1})^n}\int_{a+\sqrt{-1}\fn^+}{\rm e}^{(z|w)_{\fn^+}}\hat{F}(w)\,{\rm d}w=F(z)
\]
holds if the integral converges, and does not depend on the choice of $a\in\Omega$ (see \cite[Section~IX.3]{FK}).
Especially, if $F(z)\in\cP_\bm(\fp^+)\det_{\fn^+}(z)^{\lambda-\frac{n}{r}}$,
then its Laplace transform is known. For $\lambda\in\BC$, $\bm\in\BC^r$ let
\begin{gather}
\Gamma_r^d(\lambda+\bm):=(2\pi)^{dr(r-1)/4}\prod_{j=1}^r\Gamma\bigg(\lambda+m_j-\frac{d}{2}(j-1)\bigg), \notag
\\
\Gamma_r^d(\lambda):=(2\pi)^{dr(r-1)/4}\prod_{j=1}^r\Gamma\bigg(\lambda-\frac{d}{2}(j-1)\bigg). \label{Gamma}
\end{gather}

\begin{Theorem}[Gindikin {\cite[Lemma XI.2.3]{FK}}]\label{Laplace}
Let $\bm\in\BZ_{++}^r$, $f\in\cP_\bm(\fp^+)$.
\begin{enumerate}\itemsep=0pt
\item[$1.$] For $\Re\lambda>\frac{n}{r}-1$, $w\in\Omega+\sqrt{-1}\fn^+$, we have
\[
\int_\Omega {\rm e}^{-(z|w)_{\fn^+}}f(z)\det_{\fn^+}(z)^{\lambda-\frac{n}{r}}{\rm d}z=\Gamma_r^d(\lambda+\bm)f\big(w^\itinv\big)\det_{\fn^+}(w)^{-\lambda}.
\]
\item[$2.$] For $\Re\lambda>\frac{2n}{r}-1$, $z,a\in\Omega$, we have
\[
\frac{\Gamma_r^d(\lambda+\bm)}{(2\pi\sqrt{-1})^n}\int_{a+\sqrt{-1}\fn^+}{\rm e}^{(z|w)_{\fn^+}}f\big(w^\itinv\big)\det_{\fn^+}(w)^{-\lambda}{\rm d}w
=f(z)\det_{\fn^+}(z)^{\lambda-\frac{n}{r}},
\]
and this does not depend on the choice of $a\in\Omega$.
\end{enumerate}
\end{Theorem}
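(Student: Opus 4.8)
The plan is to prove part~1 first and then derive part~2 from it by Laplace inversion. For part~1, write $I_{\lambda,w}(f)$ for the left-hand integral and observe that both sides are holomorphic in $w$ on the tube $\Omega+\sqrt{-1}\fn^+$: the integral converges absolutely there because $\Re(z|w)_{\fn^+}=(z|\Re w)_{\fn^+}>0$ for $z,\Re w\in\Omega$, and it depends holomorphically on $w$ by differentiation under the integral sign, while $f\big(w^\itinv\big)\det_{\fn^+}(w)^{-\lambda}$ extends holomorphically since $\det_{\fn^+}$ is non-vanishing on the tube. Hence, by the identity theorem applied along the totally real submanifold $\Omega$, it suffices to establish the formula for real $w\in\Omega$. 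First I would reduce further to the single point $w=e$: since $L$ acts transitively on $\Omega$, every $w\in\Omega$ has the form $w=\ell e$ with $\ell\in L\subset K^\BC(e)_2$, and the change of variables $z\mapsto\big(\ell^{-1}\big)^\top z$ turns $(z|w)_{\fn^+}$ into $\tr_{\fn^+}(z)$. Using the relations $\det_{\fn^+}(\ell x)=\det_{\fn^+}(\ell e)\det_{\fn^+}(x)$ and $P(\ell x)=\ell P(x)\ell^\top$ together with the invariance of Lebesgue measure under the orthogonal part, both sides transform by the same power of $\det_{\fn^+}(w)$, so equality at $w=e$ propagates to all of $\Omega$.

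At $w=e$ the right-hand side becomes $\Gamma_r^d(\lambda+\bm)f(e)$, and the integral is $I_\lambda(f):=\int_\Omega {\rm e}^{-\tr_{\fn^+}(z)}f(z)\det_{\fn^+}(z)^{\lambda-\frac nr}\,{\rm d}z$. The key observation is that $I_\lambda$ is a $K_L$-invariant linear functional on $\cP_\bm(\fp^+)$: because $K_L$ acts on $\fn^+$ by Jordan algebra automorphisms fixing $e$, it preserves $\tr_{\fn^+}$, $\det_{\fn^+}$, $\Omega$ and Lebesgue measure, so the substitution $z\mapsto kz$ shows that $I_\lambda$ is invariant under the natural $K_L$-action on $\cP_\bm(\fp^+)$. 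Since $\cP_\bm(\fp^+)$ is $K^\BC(e)_2$-irreducible and its space of $K_L$-fixed vectors is one-dimensional (spanned by $\Phi_\bm^{\fn^+}$), the space of $K_L$-invariant functionals is one-dimensional; the evaluation $f\mapsto f(e)$ is another such functional and is non-zero (e.g., $\Delta_\bm(e)=1$), so $I_\lambda(f)=c(\lambda,\bm)f(e)$ for a constant $c$. To pin down $c$ I would test on $f=\Delta_\bm$: then $\Delta_\bm(z)\det_{\fn^+}(z)^{\lambda-\frac nr}$ is the generalized power $\Delta_{\bm+\underline{\lambda}_r}(z)\det_{\fn^+}(z)^{-\frac nr}$, and the Gindikin gamma integral of the conical function gives $I_\lambda(\Delta_\bm)=\Gamma_r^d(\lambda+\bm)$; since $\Delta_\bm(e)=1$ this yields $c=\Gamma_r^d(\lambda+\bm)$, completing part~1. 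The convergence condition $\Re\lambda>\frac nr-1=\frac d2(r-1)$ is exactly what makes every factor $\Gamma\big(\lambda+m_j-\frac d2(j-1)\big)$ well-defined.

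For part~2 I would apply the general inverse Laplace transform on $\Omega$ recalled just before the statement (from \cite[Section~IX.3]{FK}) to the function $F(z)=f(z)\det_{\fn^+}(z)^{\lambda-\frac nr}$. Part~1 identifies its Laplace transform as $\hat F(w)=\Gamma_r^d(\lambda+\bm)f\big(w^\itinv\big)\det_{\fn^+}(w)^{-\lambda}$, and substituting this into the inversion formula $F(z)=(2\pi\sqrt{-1})^{-n}\int_{a+\sqrt{-1}\fn^+}{\rm e}^{(z|w)_{\fn^+}}\hat F(w)\,{\rm d}w$ gives exactly the asserted identity after dividing by $\Gamma_r^d(\lambda+\bm)$; the independence of $a$ is the independence already built into the inversion formula, which follows from Cauchy's theorem since $\hat F$ is holomorphic on the tube. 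The main obstacle is justifying the inversion rigorously: the stronger hypothesis $\Re\lambda>\frac{2n}r-1$ (rather than $\frac nr-1$) is needed precisely to guarantee that $\hat F$ is absolutely integrable along the contour $a+\sqrt{-1}\fn^+$, i.e., that $\big|\det_{\fn^+}\big(a+\sqrt{-1}y\big)^{-\lambda}\big|$ decays fast enough as $|y|\to\infty$ for the inversion integral to converge and for the contour shift to be legitimate; quantifying this decay is the technical heart of part~2.
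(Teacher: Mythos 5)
The paper contains no proof of this theorem for you to be compared against: it is stated with the attribution to Gindikin and the citation \cite[Lemma~XI.2.3]{FK}, and the inversion formula you invoke is likewise only recalled from \cite[Section~IX.3]{FK}, conditionally (``if the integral converges''). So your proposal must stand on its own. Part~1 does: it is essentially the textbook argument, and it is complete up to routine verifications. Two points are worth making explicit. First, the reduction from $w\in\Omega$ to $w=e$ replaces $f$ by $f\circ(\ell^\top)^{-1}\in\cP_\bm(\fp^+)$, so you need the $w=e$ identity for the whole module, which is exactly what your invariant-functional argument supplies --- the order of the two reductions matters. Second, ``both sides transform by the same power of $\det_{\fn^+}(w)$'' should be checked: it uses $(\ell x)^\itinv=(\ell^\top)^{-1}x^\itinv$ and $\Det_{\fn^+}(\ell)=\det_{\fn^+}(\ell e)^{n/r}$ for $\ell\in L$, and the exponent $-\lambda$ arises from the cancellation $\det_{\fn^+}(w)^{-n/r}\det_{\fn^+}(w)^{-(\lambda-n/r)}=\det_{\fn^+}(w)^{-\lambda}$. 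There is no circularity in quoting Gindikin's gamma integral $\int_\Omega {\rm e}^{-\tr_{\fn^+}(z)}\Delta_{\bs}(z)\det_{\fn^+}(z)^{-n/r}{\rm d}z=\Gamma_r^d(\bs)$: that is a more elementary input, proved in \cite[Theorem~VII.1.1]{FK} by induction on the rank.

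Part~2, by contrast, has a genuine gap: you reduce the claim to absolute convergence of the inversion integral and then declare this convergence ``the technical heart'' without proving it, so the hypothesis $\Re\lambda>\frac{2n}{r}-1$ is never actually used anywhere in your argument. The missing estimate is short and you should supply it. Write $w=a+\sqrt{-1}y$ and $u:=P(a)^{-1/2}y\in\fn^+$; then $\det_{\fn^+}(w)=\det_{\fn^+}(a)\det_{\fn^+}\big(e+\sqrt{-1}u\big)$, and if $u=\sum_jt_jc_j$ is a spectral decomposition with respect to a Jordan frame, then $\big|\det_{\fn^+}\big(e+\sqrt{-1}u\big)\big|^2=\prod_{j=1}^r\big(1+t_j^2\big)$; moreover $w^\itinv=P(a)^{-1/2}\big(e+\sqrt{-1}u\big)^\itinv$ ranges in a bounded subset of $\fp^+$, so $\big|f\big(w^\itinv\big)\big|$ is bounded on the contour. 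Passing to polar coordinates on $\fn^+$ (see \cite[Section~VI.2]{FK}), the contour integral is dominated by a constant times
\[
\int_{\BR^r}\prod_{j=1}^r\big(1+t_j^2\big)^{-\Re\lambda/2}\prod_{1\le i<j\le r}|t_i-t_j|^d\,{\rm d}t,
\]
which converges precisely when $\Re\lambda>d(r-1)+1=\frac{2n}{r}-1$; the binding region is where a single $t_j\to\infty$, where the integrand behaves like $t_j^{d(r-1)-\Re\lambda}$. With this $L^1$ bound in hand, classical Fourier inversion applied to $e^{-(\cdot|a)_{\fn^+}}F\in L^1(\fn^+)$ at the interior point $z\in\Omega$ (a point of continuity), together with Cauchy's theorem for the independence of $a$, finishes part~2 exactly along the lines you sketch. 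As submitted, however, part~2 is an outline rather than a proof.
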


Especially, for $f(x)\in\cP_\bm(\fp^+)$ we have
\[
\det_{\fn^+}(z)^{-\lambda+\frac{n}{r}}\frac{\Gamma_r^d(\lambda)}{(2\pi\sqrt{-1})^n}
\int_{a+\sqrt{-1}\fn^+}{\rm e}^{(z|w)_{\fn^+}}f\big(w^\itinv\big)\det_{\fn^+}(w)^{-\lambda}{\rm d}w=\frac{1}{(\lambda)_{\bm,d}}f(z),
\]
and comparing this with Corollary~\ref{FKCor}
\[
\big\langle f,{\rm e}^{(\cdot|\overline{z})_{\fp^+}}\big\rangle_\lambda =\frac{1}{(\lambda)_{\bm,d}}f(z),
\]
we get the following.

\begin{Corollary}\label{key_formula}
For $\Re\lambda>\frac{2n}{r}-1$, $f\in\cP(\fp^+)$, $z,a\in\Omega$ we have
\[
\big\langle f,{\rm e}^{(\cdot|\overline{z})_{\fp^+}}\big\rangle_\lambda
=\det_{\fn^+}(z)^{-\lambda+\frac{n}{r}}\frac{\Gamma_r^d(\lambda)}{(2\pi\sqrt{-1})^n}
\int_{a+\sqrt{-1}\fn^+}{\rm e}^{(z|w)_{\fn^+}}f\big(w^\itinv\big)\det_{\fn^+}(w)^{-\lambda}{\rm d}w.
\]
\end{Corollary}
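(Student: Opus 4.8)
The plan is to reduce the identity to a single homogeneous component $f\in\cP_\bm(\fp^+)$ and then to combine the two results already established just before the corollary: the inverse Laplace transform of Gindikin (Theorem~\ref{Laplace}(2)) and the Fischer-norm evaluation of Corollary~\ref{FKCor}. Since both sides of the asserted formula are $\BC$-linear in $f$, and the Hua--Kostant--Schmid decomposition gives $\cP(\fp^+)=\bigoplus_{\bm\in\BZ_{++}^r}\cP_\bm(\fp^+)$, it suffices to verify the equality on each irreducible summand $\cP_\bm(\fp^+)$ and then sum over $\bm$.

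Fix $f\in\cP_\bm(\fp^+)$. First I would apply Theorem~\ref{Laplace}(2), whose convergence hypothesis $\Re\lambda>\frac{2n}{r}-1$ is exactly the one assumed in the corollary, to evaluate the contour integral:
\[
\frac{1}{(2\pi\sqrt{-1})^n}\int_{a+\sqrt{-1}\fn^+}{\rm e}^{(z|w)_{\fn^+}}f\big(w^\itinv\big)\det_{\fn^+}(w)^{-\lambda}\,{\rm d}w
=\frac{1}{\Gamma_r^d(\lambda+\bm)}\,f(z)\det_{\fn^+}(z)^{\lambda-\frac{n}{r}},
\]
which is moreover independent of $a\in\Omega$. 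Multiplying through by $\det_{\fn^+}(z)^{-\lambda+\frac{n}{r}}\Gamma_r^d(\lambda)$ collapses the power of $\det_{\fn^+}(z)$, so the right-hand side of the corollary equals $\frac{\Gamma_r^d(\lambda)}{\Gamma_r^d(\lambda+\bm)}f(z)$.

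The only genuine computation is the ratio of Gindikin Gamma factors. From the definition (\ref{Gamma}) the common prefactor $(2\pi)^{dr(r-1)/4}$ cancels, and the elementary telescoping $\Gamma(x+m)/\Gamma(x)=(x)_m$ yields
\[
\frac{\Gamma_r^d(\lambda+\bm)}{\Gamma_r^d(\lambda)}
=\prod_{j=1}^r\frac{\Gamma\big(\lambda+m_j-\frac{d}{2}(j-1)\big)}{\Gamma\big(\lambda-\frac{d}{2}(j-1)\big)}
=\prod_{j=1}^r\Big(\lambda-\frac{d}{2}(j-1)\Big)_{m_j}
=(\lambda)_{\bm,d},
\]
which is precisely the Pochhammer symbol of (\ref{Pochhammer}). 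Hence the right-hand side equals $\frac{1}{(\lambda)_{\bm,d}}f(z)$.

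Finally I would compare this with Corollary~\ref{FKCor}, which gives $\big\langle f,{\rm e}^{(\cdot|\overline{z})_{\fp^+}}\big\rangle_\lambda=\frac{1}{(\lambda)_{\bm,d}}f(z)$; here $z\in\Omega\subset\fn^+\subset\fp^+$, so Corollary~\ref{FKCor} applies directly with $y=z$. The two expressions coincide, establishing the equality on $\cP_\bm(\fp^+)$, and linearity finishes the proof. I do not anticipate a substantive obstacle: this is a bookkeeping corollary whose content is the matching of the Gamma-factor ratio with $(\lambda)_{\bm,d}$, and the one point requiring care is merely that the range $\Re\lambda>\frac{2n}{r}-1$ in Theorem~\ref{Laplace}(2) is identical to the hypothesis of the corollary, so no separate analytic continuation or convergence argument is needed.
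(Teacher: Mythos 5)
Your proposal is correct and follows exactly the route the paper takes: the corollary is stated in the text as an immediate consequence of applying Theorem~\ref{Laplace}(2) to each $f\in\cP_\bm(\fp^+)$, identifying $\Gamma_r^d(\lambda+\bm)/\Gamma_r^d(\lambda)=(\lambda)_{\bm,d}$, and comparing with Corollary~\ref{FKCor}, with the general case following by linearity over the Hua--Kostant--Schmid decomposition. Nothing is missing.
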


\begin{proof}[Proof of Theorem \ref{product}]
Let $f_1,f_2\in\cP(\fp^+)$. Then by using Corollary~\ref{key_formula} twice, for $\Re\lambda>\frac{2n}{r}-1$, $z\in\Omega$ we get
\begin{gather*}
\big\langle f_1(x)f_2(x),{\rm e}^{(x|\overline{z})_{\fp^+}}\big\rangle_{\lambda,x}
\\ \qquad
{}=\det_{\fn^+}(z)^{-\lambda+\frac{n}{r}}\frac{\Gamma_r^d(\lambda)}{(2\pi\sqrt{-1})^n}
\int_{a+\sqrt{-1}\fn^+}{\rm e}^{(z|w)_{\fn^+}}f_1\big(w^\itinv\big)f_2\big(w^\itinv\big)\det_{\fn^+}(w)^{-\lambda}{\rm d}w
\\ \qquad
{}=\det_{\fn^+}(z)^{-\lambda+\frac{n}{r}}\frac{\Gamma_r^d(\lambda)}{(2\pi\sqrt{-1})^n}
\int_{a+\sqrt{-1}\fn^+}{\rm e}^{(z|w)_{\fn^+}}f_1^{\sharp(k)}(w)f_2\big(w^\itinv\big)\det_{\fn^+}(w)^{-\lambda-k}{\rm d}w
\\ \qquad
{}=\det_{\fn^+}(z)^{-\lambda+\frac{n}{r}}f_1^{\sharp(k)}\bigg(\frac{\partial}{\partial z}\bigg)\frac{\Gamma_r^d(\lambda)}{(2\pi\sqrt{-1})^n}
\int_{a+\sqrt{-1}\fn^+}{\rm e}^{(z|w)_{\fn^+}}f_2\big(w^\itinv\big)\det_{\fn^+}(w)^{-\lambda-k}{\rm d}w
\\ \qquad
{}=\det_{\fn^+}(z)^{-\lambda+\frac{n}{r}}f_1^{\sharp(k)}\bigg(\frac{\partial}{\partial z}\bigg)
\frac{\Gamma_r^d(\lambda)}{\Gamma_r^d(\lambda+k)}\det_{\fn^+}(z)^{\lambda+k-\frac{n}{r}}
\big\langle f_2(x),{\rm e}^{(x|\overline{z})_{\fp^+}}\big\rangle_{\lambda+k,x}
\\ \qquad
{}=\frac{1}{(\lambda)_{\underline{k}_r,d}}\det_{\fn^+}(z)^{-\lambda+\frac{n}{r}}f_1^{\sharp(k)} \bigg(\frac{\partial}{\partial z}\bigg)
\det_{\fn^+}(z)^{\lambda+k-\frac{n}{r}}\big\langle f_2(x),{\rm e}^{(x|\overline{z})_{\fp^+}}\big\rangle_{\lambda+k,x}. \qedhere
 \tag*{\qed}
\end{gather*}
\renewcommand{\qed}{}
\end{proof}

Next we consider an involution $\sigma$ on $\fp^+$, and let $\fp^+_1:=(\fp^+)^\sigma$, $\fp^+_2:=(\fp^+)^{-\sigma}$.
We~take $\varepsilon_2 \in\{1,2\}$ such that
\begin{equation}\label{epsilon}
(x_2|\overline{y_2})_{\fp^+}=\varepsilon_2(x_2|\overline{y_2})_{\fp^+_2}, \qquad
x_2,y_2\in\fp^+_2
\end{equation}
holds. In this section we assume that $\fp^+_2$ is also of tube type.
We~take a maximal tripotent $e\in\fp^+_2$. Then $e$ also becomes a maximal tripotent of $\fp^+$.
We~consider the corresponding Jordan algebra structure on $\fp^+$,
let $\fn^+\subset\fp^+$, $\fn^+_2\subset\fp^+_2$ be the Euclidean real forms,
and let $\Omega\subset\fn^+$, $\Omega_2\subset\fn^+_2$ be the symmetric cones. Then we have
\[
\det_{\fn^+}(x_2)=\det_{\fn^+_2}(x_2)^{\varepsilon_2}, \qquad
x_2\in\fp^+_2.
\]
In addition, if $\fp^+_2$ is a direct sum of two Jordan triple subsystems,
then we write $\fp^+_2=:\fp^+_{11}\oplus\fp^+_{22}$, $\fp^+_1=:\fp^+_{12}$, $\rank\fp^+_{11}=:r'$, $\rank\fp^+_{22}=:r''$.
In this case always $\varepsilon_2=1$ holds, and $\fp^+_{11}=\fn^{+\BC}_{11}$ and $\fp^+_{22}=\fn^{+\BC}_{22}$ are also of tube type.
Now we want to compute
\[
\big\langle F(x_2),{\rm e}^{(x|\overline{z})_{\fp^+}}\big\rangle_{\lambda,x}, \qquad F(x_2)\in\cP_\bk\big(\fp^+_2\big)
\]
for $\fp^+=\fp^+_1\oplus\fp^+_2$ case, and
\[
\big\langle f_1(x_{11})f_2(x_{22}),{\rm e}^{(x|\overline{z})_{\fp^+}}\big\rangle_{\lambda,x}, \qquad
f_1(x_{11})\in\cP_\bk(\fp^+_{11}),\quad
f_2(x_{22})\in\cP_\bl\big(\fp^+_{22}\big)
\]
for $\fp^+=\fp^+_{11}\oplus\fp^+_{12}\oplus\fp^+_{22}$ case.
However, it seems difficult to compute these for general case.
From now on we assume $F(x_2)=\det_{\fn^+_2}(x_2)^kf(x_2)$, $f(x_2)\in\cP(\fp^+_2)\cap\cP_\bl(\fp^+)$ for the former case,
and $f_1(x_{11})=\det_{\fn^+_{11}}(x_{11})^k\in\cP_{\underline{k}_{r'}}\big(\fp^+_{11}\big)$ for the latter case.

\begin{Theorem}\label{key_identity}\quad
\begin{enumerate}\itemsep=0pt
\item[$1.$] For $\fp^+=\fp^+_1\oplus\fp^+_2$ case, let $\Re\lambda>\frac{2n}{r}-1$, $k\in\BZ_{\ge 0}$, $\bl\in\BZ_{++}^r$,
and $f\in\cP\big(\fp^+_2\big)\cap\cP_\bl(\fp^+)$. Then for $z=z_1+z_2\in\Omega\subset\fn^+\subset\fp^+$ we have
\begin{gather*}
\big\langle \det_{\fn^+_2}(x_2)^kf(x_2),{\rm e}^{(x|\overline{z})_{\fp^+}}\big\rangle_{\lambda,x}
\\ \qquad
{}=\frac{1}{(\lambda)_{\underline{2k/\varepsilon_2}_r+\bl,d}}\det_{\fn^+}(z)^{-\lambda+\frac{n}{r}}
\det_{\fn^+_2}\bigg(\frac{1}{\varepsilon_2}\frac{\partial}{\partial z_2}\bigg)^k
\det_{\fn^+}(z)^{\lambda+\frac{2k}{\varepsilon_2}-\frac{n}{r}}f(z_2).
\end{gather*}
\item[$2.$] For $\fp^+=\fp^+_{11}\oplus\fp^+_{12}\oplus\fp^+_{22}$ case, let $\Re\lambda>\frac{2n}{r}-1$, $k\in\BZ_{\ge 0}$, $\bl\in\BZ_{++}^{r''}$,
and $f\in\cP_\bl\big(\fp^+_{22}\big)$. Then for $z=z_{11}+z_{12}+z_{22}\in\Omega\subset\fn^+\subset\fp^+$ we have
\begin{gather*}
\big\langle \det_{\fn^+_{11}}(x_{11})^kf(x_{22}),{\rm e}^{(x|\overline{z})_{\fp^+}}\big\rangle_{\lambda,x}
\\ \qquad
{}=\frac{1}{(\lambda)_{\underline{k}_r+\bl,d}}\det_{\fn^+}(z)^{-\lambda+\frac{n}{r}}
\det_{\fn^+_{22}}\bigg(\frac{\partial}{\partial z_{22}}\bigg)^k\det_{\fn^+}(z)^{\lambda+k-\frac{n}{r}}f(z_{22}).
\end{gather*}
\end{enumerate}
\end{Theorem}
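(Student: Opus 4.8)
The plan is to reduce both identities to the inverse--Laplace representation of the weighted Bergman pairing supplied by Corollary~\ref{key_formula}, and then to transfer the determinant factor from the integration variable $w$ to the parameter $z$ by recognizing it as a constant--coefficient differential operator acting on the exponential kernel. The only genuinely Jordan--theoretic input is a clean factorization of the determinant of the $\fp^+_2$- (resp. $\fp^+_{11}$-) component of the Jordan inverse $w^\itinv$, which I would extract from Proposition~\ref{prop_Jordansub}. For Part~1 I start from Corollary~\ref{key_formula} applied to the polynomial $F(x)=\det_{\fn^+_2}(x_2)^kf(x_2)$, so that the integrand carries $\det_{\fn^+_2}\big((w^\itinv)_2\big)^k f\big((w^\itinv)_2\big)$, where $(w^\itinv)_2$ denotes the $\fp^+_2$-component of $w^\itinv$.

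Writing $w=w_1+w_2$ and using that $P(w_1)$ and $P(w_2)$ commute with $\sigma$ (hence preserve $\fp^+_1$ and $\fp^+_2$), the explicit inverse formula in Proposition~\ref{prop_Jordansub}(1) shows that its first summand lies in $\fp^+_1$ and its last in $\fp^+_2$, so $(w^\itinv)_2=\big(w_2-P(w_1)w_2^\itinv\big)^\itinv$. Combining this with the factorization $\det_{\fn^+}(w)^2=\det_{\fn^+}\big(w_2-P(w_1)w_2^\itinv\big)\det_{\fn^+}(w_2)$ from the same proposition and with $\det_{\fn^+}(x_2)=\det_{\fn^+_2}(x_2)^{\varepsilon_2}$ for $x_2\in\fp^+_2$, I obtain the key identity $\det_{\fn^+_2}\big((w^\itinv)_2\big)^k=\det_{\fn^+}(w)^{-2k/\varepsilon_2}\det_{\fn^+_2}(w_2)^k$.

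Substituting this absorbs the power $\det_{\fn^+}(w)^{-2k/\varepsilon_2}$ into a shift of the parameter from $\lambda$ to $\mu:=\lambda+2k/\varepsilon_2$, leaving the factor $\det_{\fn^+_2}(w_2)^k$ against the kernel ${\rm e}^{(z|w)_{\fn^+}}$. Since $\fp^+_1\perp\fp^+_2$ for $(\cdot|\cdot)_{\fn^+}$ and $(\cdot|\cdot)_{\fn^+}=\varepsilon_2(\cdot|\cdot)_{\fn^+_2}$ on $\fp^+_2$, the $\tfrac1{\varepsilon_2}$ in the statement is exactly the factor needed so that $\det_{\fn^+_2}\big(\tfrac1{\varepsilon_2}\tfrac{\partial}{\partial z_2}\big)^k{\rm e}^{(z|w)_{\fn^+}}=\det_{\fn^+_2}(w_2)^k{\rm e}^{(z|w)_{\fn^+}}$, which lets me pull this determinant out of the integral as a differential operator in $z_2$. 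The remaining integral is recognized, via Corollary~\ref{key_formula} at parameter $\mu$, as $\det_{\fn^+}(z)^{\mu-n/r}\big\langle f(x_2),{\rm e}^{(\cdot|\overline z)_{\fp^+}}\big\rangle_\mu$, while the $w$-integration produces the Gamma ratio $\Gamma_r^d(\lambda)/\Gamma_r^d(\mu)=1/(\lambda)_{\underline{2k/\varepsilon_2}_r,d}$. Finally Corollary~\ref{FKCor} gives $\big\langle f(x_2),{\rm e}^{(\cdot|\overline z)_{\fp^+}}\big\rangle_\mu=\tfrac1{(\mu)_{\bl,d}}f(z_2)$, and the two Pochhammer factors merge by $(a)_m(a+m)_l=(a)_{m+l}$ into $(\lambda)_{\underline{2k/\varepsilon_2}_r+\bl,d}$, yielding the claimed formula.

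Part~2 runs in complete parallel, using Proposition~\ref{prop_Jordansub}(2) in place of (1): there $(w^\itinv)_{11}=\big(w_{11}-P(w_{12})w_{22}^\itinv\big)^\itinv$ and $\det_{\fn^+}(w)=\det_{\fn^+_{11}}\big(w_{11}-P(w_{12})w_{22}^\itinv\big)\det_{\fn^+_{22}}(w_{22})$ give the analogous identity $\det_{\fn^+_{11}}\big((w^\itinv)_{11}\big)^k=\det_{\fn^+}(w)^{-k}\det_{\fn^+_{22}}(w_{22})^k$; here $\varepsilon_2=1$, so no scaling enters the operator $\det_{\fn^+_{22}}\big(\tfrac{\partial}{\partial z_{22}}\big)^k$ and the shift is $\mu=\lambda+k$. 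I expect the main obstacle to be precisely the extraction of these block determinant factorizations for the diagonal component of the inverse --- the verification that the off-diagonal piece of $w^\itinv$ drops out of the relevant component and that the Schur--complement--type determinant pairs correctly against $\det_{\fn^+}(w)$. Everything else (differentiation under the integral sign in the convergent range $\Re\lambda>\tfrac{2n}{r}-1$, and the embedding $\cP_\bl(\fp^+_{22})\subset\cP_{(\bl,0,\dots,0)}(\fp^+)$ with common $d$ that is needed to invoke Corollary~\ref{FKCor}, cf.\ Corollary~\ref{inner_sub}) is routine.
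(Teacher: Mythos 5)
Your proposal is correct and follows essentially the same route as the paper: the paper first isolates the general Rodrigues-type identity (Theorem~\ref{product}, proved by applying Corollary~\ref{key_formula} twice and pulling $f_1^{\sharp(k)}(w)$ out of the inverse Laplace integral as $f_1^{\sharp(k)}(\partial/\partial z)$), and then specializes via exactly your computation $\det_{\fn^+}(x)^{2k/\varepsilon_2}\det_{\fn^+_2}\big(\Proj_2(x^\itinv)\big)^k=\det_{\fn^+_2}(x_2)^k$ (resp.\ its block analogue) from Proposition~\ref{prop_Jordansub}, followed by Corollary~\ref{FKCor} and the Pochhammer merge. You merely inline the intermediate lemma rather than stating it separately; all the substantive ingredients match.
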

Here we normalize $\frac{\partial}{\partial z_2}$, $\frac{\partial}{\partial z_{22}}$ with respect to the bilinear forms
$(\cdot|\cdot)_{\fn^+_2}\colon\fp^+_2\times\fp^+_2\to\BC$, $(\cdot|\cdot)_{\fn^+_{22}}\colon\fp^+_{22}\times\fp^+_{22}\to\BC$,
and identify $\bl\in\BZ_{++}^{r''}$ and $(l_1,\dots,l_{r''},0,\dots,0)\in\BZ_{++}^r$.
This theorem is regarded as an analogue of the Rodrigues formulas for Jacobi polynomials.
For such formulas for symmetry breaking operators in tensor product case, see also \cite{Cl}.
\begin{proof}
(1) Let
\[
\Proj_2\colon \quad \fp^+=\fp^+_1\oplus\fp^+_2\longrightarrow\fp^+_2, \qquad
(z_1,z_2)\longmapsto z_2
\]
be the orthogonal projection, and for $k\in\BZ_{\ge 0}$, $\bl\in\BZ_{++}^r$, $f\in\cP(\fp^+_2)\cap\cP_\bl(\fp^+)$, let
\[
f_1(x):=\det_{\fn^+_2}(x_2)^k=\det_{\fn^+_2}(\Proj_2(x))^k, \qquad
f_2(x):=f(x_2)=f(\Proj_2(x)).
\]
Then by Proposition \ref{prop_Jordansub}(1), for $x=x_1+x_2\in\fp^+=\fp^+_1\oplus\fp^+_2$ we have
\begin{gather*}\begin{split}
& \Proj_2\big(x^\itinv\big)=\big(x_2-P(x_1)x_2^\itinv\big)^\itinv,
\\
& \det_{\fn^+}(x)^2=\det_{\fn^+}\big(x_2-P(x_1)x_2^\itinv\big)\det_{\fn^+}(x_2),
\end{split}
\end{gather*}
and hence
\begin{align*}
f_1^{\sharp(2k/\varepsilon_2)}(x)&=\det_{\fn^+}(x)^{2k/\varepsilon_2}\det_{\fn^+_2}\big(\Proj_2\big(x^\itinv\big)\big)^k \\
&=\det_{\fn^+}(x_2)^{k/\varepsilon_2}\det_{\fn^+}\big(x_2-P(x_1)x_2^\itinv\big)^{k/\varepsilon_2} \det_{\fn^+_2}\big(x_2-P(x_1)x_2^\itinv\big)^{-k}
\\
&=\det_{\fn^+_2}(x_2)^k
\end{align*}
holds. Also, by Corollary~\ref{FKCor} we have
\[
\big\langle f_2(x),{\rm e}^{(x|\overline{z})_{\fp^+}}\big\rangle_{\lambda+2k/\varepsilon_2,x}
=\big\langle f(x_2),{\rm e}^{(x|\overline{z})_{\fp^+}}\big\rangle_{\lambda+2k/\varepsilon_2,x}
=\frac{1}{\big(\lambda+\frac{2k}{\varepsilon_2}\big)_{\bl,d}}f(z_2).
\]
Therefore by Theorem \ref{product} we get
\begin{gather*}
\big\langle \det_{\fn^+_2}(x_2)^kf(x_2),{\rm e}^{(x|\overline{z})_{\fp^+}}\big\rangle_{\lambda,x}
\\ \qquad
{}=\frac{1}{(\lambda)_{\underline{2k/\varepsilon_2}_r,d}}\det_{\fn^+}(z)^{-\lambda+\frac{n}{r}}
f_1^{\sharp(2k/\varepsilon_2)}\bigg(\frac{\partial}{\partial z}\bigg)\det_{\fn^+}(z)^{\lambda+\frac{2k}{\varepsilon_2}-\frac{n}{r}}
\big\langle f_2(x),{\rm e}^{(x|\overline{z})_{\fp^+}}\big\rangle_{\lambda+2k/\varepsilon_2,x}
\\ \qquad
{}=\frac{1}{(\lambda)_{\underline{2k/\varepsilon_2}_r,d}}\det_{\fn^+}(z)^{-\lambda+\frac{n}{r}}
\det_{\fn^+_2}\bigg(\Proj_2\bigg(\frac{\partial}{\partial z}\bigg)\bigg)^k
\det_{\fn^+}(z)^{\lambda+\frac{2k}{\varepsilon_2} -\frac{n}{r}}\frac{1}{\big(\lambda+\frac{2k}{\varepsilon_2}\big)_{\bl,d}}f(z_2)
\\ \qquad
{}=\frac{1}{(\lambda)_{\underline{2k/\varepsilon_2}_r+\bl,d}}\det_{\fn^+}(z)^{-\lambda+\frac{n}{r}}
\det_{\fn^+_2}\bigg(\frac{1}{\varepsilon_2}\frac{\partial}{\partial z_2}\bigg)^k\det_{\fn^+}(z)^{\lambda+\frac{2k}{\varepsilon_2}-\frac{n}{r}}f(z_2).
\end{gather*}

(2) Let
\[
\Proj_{ij}\colon\quad \fp^+=\fp^+_{11}\oplus\fp^+_{12}\oplus\fp^+_{22}\longrightarrow\fp^+_{ij}, \qquad
(z_{11},z_{12},z_{22})\longmapsto z_{ij}
\]
be the orthogonal projection, and for $k\in\BZ_{\ge 0}$, $\bl\in\BZ_{++}^{r''}$, $f\in\cP_\bl(\fp^+_{22})$, let
\[
f_1(x):=\det_{\fn^+_{11}}(x_{11})^k=\det_{\fn^+_{11}}(\Proj_{11}(x))^k, \qquad f_2(x):=f(x_{22})=f(\Proj_{22}(x)).
\]
Then by Proposition \ref{prop_Jordansub}(2), for $x=x_{11}+x_{12}+x_{22}\in\fp^+_{11}\oplus\fp^+_{12}\oplus\fp^+_{22}$ we have
\begin{gather*}
\Proj_{11}\big(x^\itinv\big)=\big(x_{11}-P(x_{12})x_{22}^\itinv\big)^\itinv,
\\
\det_{\fn^+}(x)=\det_{\fn^+_{11}}\big(x_{11}-P(x_{12})x_{22}^\itinv\big)\det_{\fn^+_{22}}(x_{22}),
\end{gather*}
and hence
\begin{align}
f_1^{\sharp(k)}(x)&=\det_{\fn^+}(x)^k\det_{\fn^+_{11}}\big(\Proj_{11}\big(x^\itinv\big)\big)^k \notag
\\
&=\det_{\fn^+_{22}}(x_{22})^k\det_{\fn^+_{11}}\big(x_{11}-P(x_{12})x_{22}^\itinv\big)^k \det_{\fn^+_{11}}\big(x_{11}-P(x_{12})x_{22}^\itinv\big)^{-k} \notag
\\
&=\det_{\fn^+_{22}}(x_{22})^k \label{det_proj}
\end{align}
holds. Also, since $f_2(x)=f(x_{22})\in\cP_\bl(\fp^+_{22})\subset\cP_\bl(\fp^+)$ holds, by Corollary~\ref{FKCor} we have
\[
\big\langle f_2(x),{\rm e}^{(x|\overline{z})_{\fp^+}}\big\rangle_{\lambda+k,x}
=\big\langle f(x_{22}),{\rm e}^{(x|\overline{z})_{\fp^+}}\big\rangle_{\lambda+k,x} =\frac{1}{(\lambda+k)_{\bl,d}}f(z_{22}).
\]
Therefore by Theorem \ref{product} we get
\begin{gather*}
\big\langle \det_{\fn^+_{11}}(x_{11})^kf(x_{22}),{\rm e}^{(x|\overline{z})_{\fp^+}}\big\rangle_{\lambda,x}
\\
{}=\frac{1}{(\lambda)_{\underline{k}_r,d}}\det_{\fn^+}(z)^{-\lambda+\frac{n}{r}}
f_1^{\sharp(k)}\bigg(\frac{\partial}{\partial z}\bigg)\det_{\fn^+}(z)^{\lambda+k-\frac{n}{r}}
\big\langle f_2(x),{\rm e}^{(x|\overline{z})_{\fp^+}}\big\rangle_{\lambda+k,x}
\\
{}=\frac{1}{(\lambda)_{\underline{k}_r,d}}\det_{\fn^+}(z)^{-\lambda+\frac{n}{r}}
\det_{\fn^+_{22}}\bigg(\Proj_{22}\bigg(\frac{\partial}{\partial z}\bigg)\bigg)^k
\det_{\fn^+}(z)^{\lambda+k-\frac{n}{r}}\frac{1}{(\lambda+k)_{\bl,d}}f(z_{22})
\\
{}=\frac{1}{(\lambda)_{\underline{k}_r+\bl,d}}\det_{\fn^+}(z)^{-\lambda+\frac{n}{r}}
\det_{\fn^+_{22}}\bigg(\frac{\partial}{\partial z_{22}}\bigg)^k \det_{\fn^+}(z)^{\lambda+k-\frac{n}{r}}f(z_{22}). \tag*{\qed}
\end{gather*}
\renewcommand{\qed}{}
\end{proof}

\section[Computation for non-simple $\fp^+_2$]
{Computation for non-simple $\boldsymbol{\fp^+_2}$}\label{section_nonsimple}

In this section we treat $\fp^+=\fp^+_1\oplus\fp^+_2$ such that $\fp^+_2$ is non-simple,
and write $\fp^+_2=:\fp^+_{11}\oplus\fp^+_{22}$, $\fp^+_1=:\fp^+_{12}$, so that
\begin{gather*}
\big(\fp^+,\fp^+_{11},\fp^+_{12},\fp^+_{22}\big)
\\ \qquad
{}= \begin{cases}
\big(\BC^{d+2},\BC,\BC^{d},\BC\big) & (\textit{Case }1), \\
\big({\rm Sym}(r,\BC),\Sym(r',\BC),M(r',r'';\BC),\Sym(r'',\BC)\big) & (\textit{Case }2), \\
\big(M(q,s;\BC),M(q',s';\BC),M(q',s'';\BC)\oplus M(q'',s';\BC),M(q'',s'';\BC)\big) & (\textit{Case }3), \\
\big({\rm Skew}(s,\BC),\Skew(s',\BC),M(s',s'';\BC),\Skew(s'',\BC)\big) & (\textit{Case }4), \\
\big({\rm Herm}(3,\BO)^\BC,\BC,M(1,2;\BO)^\BC,\Herm(2,\BO)^\BC\big) & (\textit{Case }5), \\
\big(M(1,2;\BO)^\BC,\BC,\Skew(5,\BC),M(1,5;\BC)\big) & (\textit{Case }6) \end{cases}
\end{gather*}
with $r=r'+r''$, $q=q'+q''$, $s=s'+s''$.
Then the corresponding symmetric pairs are
\[ (G,G_1)= \begin{cases}
\big({\rm SO}_0(2,d+2),{\rm SO}_0(2,d)\times {\rm SO}(2)\big) & (\textit{Case }1), \\
\big({\rm Sp}(r,\BR),{\rm U}(r',r'')\big) & (\textit{Case }2), \\
\big({\rm SU}(q,s),{\rm S}({\rm U}(q',s'')\times {\rm U}(q'',s'))\big) & (\textit{Case }3), \\
\big({\rm SO}^*(2s),{\rm U}(s',s'')\big) & (\textit{Case }4), \\
\big(E_{7(-25)},{\rm U}(1)\times E_{6(-14)}\big) & (\textit{Case }5), \\
\big(E_{6(-14)},{\rm U}(1)\times {\rm SO}^*(10)\big) & (\textit{Case }6) \end{cases} \]
(up to covering).
Let $\dim\fp^+=:n$, $\dim\fp^+_{11}=:n'$, $\dim\fp^+_{22}=:n''$, $\rank\fp^+=:r$, $\rank\fp^+_{11}=:r'$, $\rank\fp^+_{22}=:r''$,
and let $d$, $d'$, $d''$ be the numbers defined in (\ref{str_const}) for $\fp^+$, $\fp^+_{11}$, $\fp^+_{22}$ respectively.
Then the numbers $(r,r',r'',d)$ are given by
\[ (r,r',r'',d)=\begin{cases}
(2,1,1,d) & (\textit{Case }1), \\
(r,r',r'',1) & (\textit{Case }2), \\
(\min\{q,s\},\min\{q',s'\},\min\{q'',s''\},2) & (\textit{Case }3), \\
(\lfloor s/2\rfloor, \lfloor s'/2\rfloor, \lfloor s''/2\rfloor,4) & (\textit{Case }4), \\
(3,1,2,8) & (\textit{Case }5), \\
(2,1,1,6) & (\textit{Case }6), \end{cases} \]
and we have $d=d'=d''$ if $r',r''\ne 1$. Even when $r'$ or $r''=1$, since $d'$, $d''$ are not determined uniquely and any numbers are allowed,
we may assume $d=d'=d''$.

\subsection{Main theorem}

In this section we assume $\fp^+_{11}$ is of tube type, that is, we assume $q'=s'=r'$ for \textit{Case} 3, and $s'=2r'$ for \textit{Case} 4.
We~fix a maximal tripotent $e'\in\fp^+_{11}$, regard $\fp^+_{11}$ as a Jordan algebra,
and let $\fn^+_{11}$ be the corresponding Euclidean real form of $\fp^+_{11}$.

\begin{Theorem}\label{thm_nonsimple}
Assume $\fp^+_{11}$ is of tube type.
Let $\Re\lambda>p-1$, $k\in\BZ_{\ge 0}$, $\bl\in\BZ_{++}^{r''}$, and $f\in\cP_\bl(\fp^+_{22})$.
For $\bm,\bn\in\BZ_{++}^{r''}$, we define the polynomials $\tilde{f}_{\bm,\bn}\in\cP_\bm(\fp^+_{22})\otimes\cP_\bn(\fp^+_{22})$ by
\begin{equation}\label{fmn}
\sum_{\bm,\bn\in\BZ_{++}^{r''}}\tilde{f}_{\bm,\bn}(x_{22},y_{22})=f(x_{22}+y_{22}).
\end{equation}
We~write $z=z_{11}+z_{12}+z_{22}\in\fp^+=\fp^+_{11}\oplus\fp^+_{12}\oplus\fp^+_{22}$.
Then we have
\begin{gather*}
\big\langle \det_{\fn^+_{11}}(x_{11})^kf(x_{22}),{\rm e}^{(x|\overline{z})_{\fp^+}}\big\rangle_{\lambda,x}\\
=\frac{\big(\lambda-\frac{d}{2}r'\big)_{\underline{k}_{r''},d}}{(\lambda)_{\underline{k}_{r}+\bl,d}} \det_{\fn^+_{11}}(z_{11})^k
\sum_{\bm}\frac{(-k)_{\bm,d}}{\big(\lambda-\frac{d}{2}r'\big)_{\bm,d}}
 \sum_{\bn}
\tilde{f}_{\bm,\bn}\big(Q(z_{12}){}^t\hspace{-1pt}z_{11}^\itinv-z_{22},z_{22}\big)
\\
= \frac{\big(\lambda-\frac{d}{2}r'\big)_{\underline{k}_{r''},d}}{(\lambda)_{\underline{k}_{r}+\bl,d}} \det_{\fn^+_{11}}(z_{11})^k
\sum_{\bn}\frac{\big(\lambda+k-\frac{d}{2}r'\big)_{\bn,d}} {\big(\lambda-\frac{d}{2}r'\big)_{\bn,d}}
 \sum_{\bm}
\tilde{f}_{\bm,\bn}\big(Q(z_{12}){}^t\hspace{-1pt}z_{11}^\itinv,z_{22}-Q(z_{12}){}^t\hspace{-1pt} z_{11}^\itinv\big)
\\
= \frac{(\lambda-\frac{d}{2}r')_{\underline{k}_{r''},d}} {(\lambda)_{\underline{k}_{r}+\bl,d}\big(\lambda-\frac{d}{2}r'\big)_{\bl,d}}
\det_{\fn^+_{11}}(z_{11})^k
 \sum_{\bm,\bn}(-k)_{\bm,d}\bigg(\lambda+k-\frac{d}{2}r'\bigg)_{\bn,d}
\tilde{f}_{\bm,\bn}\big(Q(z_{12}){}^t\hspace{-1pt}z_{11}^\itinv,z_{22}\big).
\end{gather*}
\end{Theorem}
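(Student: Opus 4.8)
The plan is to start from the differential expression provided by Theorem~\ref{key_identity}(2) and to evaluate it by factoring the determinant and then reducing everything to a computation inside the simple Jordan algebra $\fp^+_{22}$. For the main argument I assume in addition that $\fp^+$ (equivalently $\fp^+_{22}$) is of tube type, so that Theorem~\ref{key_identity}(2) applies and $\det_{\fn^+}$, $\det_{\fn^+_{22}}$ are defined; the general case is handled at the end. First I would invoke Theorem~\ref{key_identity}(2) to write
\[
\big\langle \det_{\fn^+_{11}}(x_{11})^kf(x_{22}),{\rm e}^{(x|\overline z)_{\fp^+}}\big\rangle_{\lambda,x}
=\frac{1}{(\lambda)_{\underline{k}_r+\bl,d}}\det_{\fn^+}(z)^{-\lambda+\frac{n}{r}}\det_{\fn^+_{22}}\Big(\tfrac{\partial}{\partial z_{22}}\Big)^k\det_{\fn^+}(z)^{\lambda+k-\frac{n}{r}}f(z_{22}).
\]
Setting $c:=Q(z_{12}){}^t z_{11}^\itinv=P(z_{12})z_{11}^\itinv$ and $w:=z_{22}-c$, Proposition~\ref{prop_Jordansub}(2) gives $\det_{\fn^+}(z)=\det_{\fn^+_{11}}(z_{11})\det_{\fn^+_{22}}(w)$. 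Since $\det_{\fn^+_{22}}(\partial/\partial z_{22})^k$ only differentiates in $z_{22}$ and $c$ is independent of $z_{22}$, the $\det_{\fn^+_{11}}(z_{11})$-powers combine to the constant $\det_{\fn^+_{11}}(z_{11})^k$, $\partial/\partial z_{22}$ becomes $\partial/\partial w$, and $f(z_{22})=f(w+c)$. Using the structure-constant relation $\frac{n}{r}=\frac{n''}{r''}+\frac{d}{2}r'$ (read off from (\ref{str_const})) and writing $\mu:=\lambda-\frac{d}{2}r'$, the problem reduces to evaluating the purely $\fp^+_{22}$ expression
\[
E:=\det_{\fn^+_{22}}(w)^{-\mu+\frac{n''}{r''}}\det_{\fn^+_{22}}\Big(\tfrac{\partial}{\partial w}\Big)^k\Big[\det_{\fn^+_{22}}(w)^{\mu+k-\frac{n''}{r''}}f(w+c)\Big].
\]

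The main computational ingredient is the differentiation identity inside $\fp^+_{22}$: for $g\in\cP_\bm(\fp^+_{22})$,
\[
\det_{\fn^+_{22}}(w)^{-\mu+\frac{n''}{r''}}\det_{\fn^+_{22}}\Big(\tfrac{\partial}{\partial w}\Big)^k\Big[\det_{\fn^+_{22}}(w)^{\mu+k-\frac{n''}{r''}}g(w)\Big]
=\frac{(\mu)_{\underline{k}_{r''},d}\,(\mu+k)_{\bm,d}}{(\mu)_{\bm,d}}\,g(w).
\]
I would prove this by applying Theorem~\ref{product} inside $\fp^+_{22}$ with $f_1=1$ (so $f_1^{\sharp(k)}=\det_{\fn^+_{22}}^k$ and $f_1^{\sharp(k)}(\partial/\partial w)=\det_{\fn^+_{22}}(\partial/\partial w)^k$) and comparing both sides with Corollary~\ref{FKCor}, which evaluates $\langle g,{\rm e}^{(\cdot|\overline w)_{\fp^+_{22}}}\rangle_\nu=\frac{1}{(\nu)_{\bm,d}}g(w)$; the evaluations at $\nu=\mu$ and $\nu=\mu+k$ produce the displayed constant. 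The hypothesis $\Re\lambda>\frac{2n}{r}-1$ forces $\Re\mu>\frac{2n''}{r''}-1$, so Theorem~\ref{product} is legitimate.

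With this identity I obtain the \emph{second} displayed formula of the theorem directly: expand $f(w+c)=\sum_{\bm,\bn}\tilde f_{\bm,\bn}(w,c)$ by (\ref{fmn}), apply the differentiation identity term by term in the $w$-variable (where $\tilde f_{\bm,\bn}(\cdot\,,c)\in\cP_\bm(\fp^+_{22})$), and use the symmetry $\tilde f_{\bm,\bn}(w,c)=\tilde f_{\bn,\bm}(c,w)$ coming from $f(w+c)=f(c+w)$, followed by relabeling $\bm\leftrightarrow\bn$; reinstating $\frac{1}{(\lambda)_{\underline{k}_r+\bl,d}}\det_{\fn^+_{11}}(z_{11})^k$ and $w=z_{22}-c$ gives exactly that expression. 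The first and third expressions are equivalent reformulations: they amount to re-expanding the arguments of $\tilde f_{\bm,\bn}$ among the three affinely related points $(c,w)$, $(c,z_{22})$, $(c-z_{22},z_{22})$, governed by the addition rule (\ref{fmn}) and by the Kummer-type transformations (\ref{Kummer1})--(\ref{Kummer2}) for the hypergeometric coefficients $(-k)_{\bm,d}$, $(\mu+k)_{\bn,d}$, $(\mu)_{\bl,d}$. This partition-indexed Pochhammer/hypergeometric bookkeeping — matching, isotypic component by isotypic component, the three systems of coefficients — is the step I expect to be the main obstacle, as these identities are far more delicate than their rank-one shadows (which collapse to $(\mu)_k(\mu+k)_j=(\mu)_j(\mu+j)_k$).

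Finally, I would remove the auxiliary tube-type assumption on $\fp^+_{22}$ (hence on $\fp^+$) and relax $\Re\lambda>\frac{2n}{r}-1$ to $\Re\lambda>p-1$. Both sides extend meromorphically in $\lambda$, with the left side holomorphic on $\Re\lambda>p-1$ since $\cH_\lambda(D)$ is holomorphic discrete there; agreement on the smaller half-plane therefore propagates by analytic continuation. For non-tube $\fp^+_{22}$ the right-hand side still makes sense, since it uses only $\det_{\fn^+_{11}}$, the intrinsic constant $d$, and the $\tilde f_{\bm,\bn}$ defined by (\ref{fmn}); I would deduce it from the tube-type case by $\widetilde K_1^\BC$-equivariance in $f\in\cP_\bl(\fp^+_{22})$, irreducibility of $\cP_\bl(\fp^+_{22})$, and restriction to a tube-type subsystem $\fp^+_{22}(e'')_2$ via Corollary~\ref{inner_sub}, so that it suffices to check the formula on the lowest-weight vector $\Delta_\bl$, which already lies in that tube-type subsystem.
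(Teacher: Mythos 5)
Your derivation of the \emph{second} displayed formula is correct and is a genuinely shorter route than the paper's for that one formula: after the reduction via Theorem~\ref{key_identity}(2) and Proposition~\ref{prop_Jordansub}(2), your differentiation identity (Theorem~\ref{product} with $f_1=1$ combined with Corollary~\ref{FKCor}, applied inside $\fp^+_{22}$) can indeed be applied term by term to $f(w+c)=\sum_{\bm,\bn}\tilde f_{\bm,\bn}(w,c)$, since each component lies in $\cP_\bm(\fp^+_{22})$ as a function of $w$, and the resulting constants match. This bypasses the Riesz-integral machinery for that formula.

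However, the first and third formulas are not established, and the step you flag as the ``main obstacle'' is a genuine gap rather than a deferred computation. The sums appearing there, $\sum_{\bm,\bn}\tilde f_{\bm,\bn}(c-z_{22},z_{22})$ and $\sum_{\bm,\bn}\tilde f_{\bm,\bn}(c,z_{22})$, are not decompositions of $f(z_{22})=f(w+c)$ at all: by (\ref{fmn}) their totals are $f(c)$ and $f(c+z_{22})$ respectively, so they cannot be produced by applying your differentiation identity to any expansion of the function actually being differentiated. Moreover the Kummer relations (\ref{Kummer1})--(\ref{Kummer2}) you invoke are identities for the symmetric-function series ${}_2F_1^d$, i.e., single sums over $\bm$ of Pochhammer symbols times spherical polynomials $\tilde\Phi^d_\bm$; they say nothing about double sums over $(\bm,\bn)$ of Pochhammers times the components $\tilde f_{\bm,\bn}$ of a \emph{general} $f\in\cP_\bl(\fp^+_{22})$ at affinely related argument pairs. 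Only for $f=\det_{\fn^+_{22}}(x_{22})^l$, where $\tilde f_{\bm,\bn}$ collapses to a single sum, do the three forms reduce to instances of those relations, and the paper obtains even that (Corollary~\ref{cor_scalar_nonsimple}) as a consequence of the theorem, not as an input. The paper's mechanism is different: it rewrites the differential expression as a Riesz integral $R_{-k}$ and proves a generalized beta integral (Proposition~\ref{Euler_int}, resting on Theorem~\ref{Laplace}, an $L$-equivariance argument, and --- for the third formula --- a separate generating-function computation with the kernels $K^{\bl}_{\bm,\bn}$); the three formulas then arise from three different expansions of the integrand $f(z-w)$. Some version of this integral representation, or a new addition-type identity for the $\tilde f_{\bm,\bn}$, is indispensable and absent from your plan. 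Your treatment of the non-tube-type extension and of the analytic continuation in $\lambda$ is fine.
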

Here, since $f(x_{22}+y_{22})$ is a homogeneous polynomial of degree $|\bl|$, $\tilde{f}_{\bm,\bn}$ is non-zero only if $|\bm|+|\bn|=|\bl|$,
and these are finite sums. Moreover, if $r'<r''$, then since the rank of $Q(z_{12}){}^t\hspace{-1pt}z_{11}^\itinv$ is at most $r'$,
$\tilde{f}_{\bm,\bn}\big(Q(z_{12}){}^t\hspace{-1pt}z_{11}^\itinv,*\big)$ is non-zero only if $\bm\in\BZ_{++}^{r'}$.

To prove the theorem, first we assume $\fp^+$, $\fp^+_{22}$ are also of tube type, fix a maximal tripotent
$e=e'+e''\in\fp^+_{11}\oplus\fp^+_{22}\subset\fp^+$, regard $\fp^+$, $\fp^+_{11}$, $\fp^+_{22}$ as Jordan algebras,
and let $\Omega\subset\fn^+\subset\fp^+$, $\Omega_{jj}\subset\fn^+_{jj}\subset\fp^+_{jj}$ be the corresponding Euclidean real forms
and the symmetric cones.
Then by Theorem \ref{key_identity}(2) and Proposition \ref{prop_Jordansub}(2), for $z\in\Omega\subset\fn^+\subset\fp^+$ we have
\begin{gather}
\big\langle \det_{\fn^+_{11}}(x_{11})^kf(x_{22}),{\rm e}^{(x|\overline{z})_{\fp^+}}\big\rangle_{\lambda,x} \notag
\\ \qquad
{}=\frac{1}{(\lambda)_{\underline{k}_r+\bl,d}}\det_{\fn^+}(z)^{-\lambda+\frac{n}{r}}
\det_{\fn^+_{22}}\bigg(\frac{\partial}{\partial z_{22}}\bigg)^k\det_{\fn^+}(z)^{\lambda+k-\frac{n}{r}}f(z_{22}) \notag
\\ \qquad
{}=\frac{1}{(\lambda)_{\underline{k}_r+\bl,d}}
\big(\det_{\fn^+_{11}}(z_{11}) \det_{\fn^+_{22}}(z_{22}-P(z_{12})z_{11}^\itinv)\big)^{-\lambda+\frac{n}{r}} \det_{\fn^+_{22}}\bigg(\frac{\partial}{\partial z_{22}}\bigg)^k\notag
\\ \qquad\hphantom{=}
{} \ \times
\big(\det_{\fn^+_{11}}(z_{11})\det_{\fn^+_{22}} \big(z_{22}-P(z_{12})z_{11}^\itinv\big)\big)^{\lambda+k-\frac{n}{r}}f(z_{22}) \notag
\\ \qquad
{}=\frac{1}{(\lambda)_{\underline{k}_r+\bl,d}}\det_{\fn^+_{11}}(z_{11})^k
\det_{\fn^+_{22}}\big(z_{22}-Q(z_{12}){}^t\hspace{-1pt}z_{11}^\itinv\big)^{-\lambda+\frac{n}{r}} \det_{\fn^+_{22}}\bigg(\frac{\partial}{\partial z_{22}}\bigg)^k\notag
\\ \qquad\hphantom{=}
{} \ \times
\det_{\fn^+_{22}}\big(z_{22}-Q(z_{12}){}^t\hspace{-1pt}z_{11}^\itinv\big)^{\lambda+k-\frac{n}{r}}f(z_{22}). \label{key_nonsimple}
\end{gather}
We~note that since for $z=z_{11}+z_{12}+z_{22}\in\Omega$ we have
\[ \det_{\fn^+}(z)=\det_{\fn^+_{11}}(z_{11})\det_{\fn^+_{22}}\big(z_{22}-Q(z_{12}){}^t\hspace{-1pt}z_{11}^\itinv\big)>0, \]
and since $\Omega$ is connected, $z_{11}\in\Omega_{11}$, $z_{22}-Q(z_{12}){}^t\hspace{-1pt}z_{11}^\itinv\in\Omega_{22}$ hold.

Next we recall the \textit{Riesz integral} from \cite[Theorem VII.2.2]{FK}.
For a closed set $S\subset\fn^+_{22}$ and for $k\in\BZ_{\ge 0}$ let
\[
C^k_S(\fn^+_{22}):=\big\{F\in C^k(\fn^+_{22})\mid \operatorname{supp} F\subset S\big\}.
\]
Let $\Omega_{22}^{cl}\subset\fn^+_{22}$ be the closure of the symmetric cone $\Omega_{22}$.
When $S\cap \Omega^{cl}_{22}$ is compact, for $F\in C^0_S(\fn^+_{22})$ the Riesz integral $R_\lambda$ is defined by
\[
R_\lambda(F):=\frac{1}{\Gamma_{r''}^d(\lambda)}\int_{\Omega_{22}} F(w_{22})\det_{\fn^+_{22}}(w_{22})^{\lambda-\frac{n''}{r''}}{\rm d}w_{22},\qquad \Re\lambda>\frac{n''}{r''}-1,
\]
where $\Gamma_{r''}^d(\lambda)$ is as (\ref{Gamma}). Then if $F\in C^{lr''}_S(\fn^+_{22})$, $k\le l$, we have
\[
R_\lambda(F)=R_{\lambda+k}\bigg(\det_{\fn^+_{22}}\bigg({-}\frac{\partial}{\partial w_{22}}\bigg)^kF\bigg),
\]
and this is analytically continued for $\Re\lambda>\frac{n''}{r''}-l-1$.
Moreover, since $R_0=\delta_0$, for $k\in\BZ_{\ge 0}$ with $k<-\frac{n''}{r''}+l+1$ we have
\[
R_{-k}(F)=\det_{\fn^+_{22}}\bigg({-}\frac{\partial}{\partial w_{22}}\bigg)^kF(w_{22})\biggr|_{w_{22}=0}
=\det_{\fn^+_{22}}\bigg(\frac{\partial}{\partial w_{22}}\bigg)^kF(-w_{22})\biggr|_{w_{22}=0}.
\]
In the following we set $Q(z_{12}){}^t\hspace{-1pt}z_{11}^\itinv=:a_{22}$.
Then for $z_{22},a_{22}\in\fn^+_{22}$ with $z_{22}-a_{22}\in\Omega_{22}$, by putting
\[ F(w_{22}):=\det_{\fn^+_{22}}(z_{22}-a_{22}-w_{22})^{\lambda+k-\frac{d}{2}r'-\frac{n''}{r''}}f(z_{22}-w_{22})
1_{z_{22}-a_{22}-\Omega_{22}}(w_{22})
\]
in the above formula,
\begin{gather*}
\det_{\fn^+_{22}}(z_{22}-a_{22})^{-\lambda+\frac{n}{r}}\det_{\fn^+_{22}}\bigg(\frac{\partial}{\partial z_{22}}\bigg)^k
\det_{\fn^+_{22}}(z_{22}-a_{22})^{\lambda+k-\frac{n}{r}}f(z_{22})
\\
{}=\det_{\fn^+_{22}}(z_{22} - a_{22})^{-\lambda+\frac{n}{r}}\det_{\fn^+_{22}}
\bigg(\frac{\partial}{\partial w_{22}}\bigg)^k
\det_{\fn^+_{22}}(z_{22} + w_{22} - a_{22})^{\lambda + k - \frac{n}{r}}f(z_{22} + w_{22})\biggr|_{w_{22}=0}
\end{gather*}
is given by the analytic continuation of
\begin{gather*}
\frac{\det_{\fn^+_{22}}(z_{22}-a_{22})^{-\lambda+\frac{n}{r}}}{\Gamma_{r''}^d(-k)}
\int_{\Omega_{22}\cap(z_{22}-a_{22}-\Omega_{22})}\det_{\fn^+_{22}}(w_{22})^{-k-\frac{n''}{r''}}
\\ \qquad \hphantom{=}
{} \ \times\det_{\fn^+_{22}}(z_{22}-a_{22}-w_{22})^{\lambda+k-\frac{n}{r}}f(z_{22}-w_{22})\,{\rm d}w_{22} \\ \qquad
{}=\frac{\det_{\fn^+_{22}}(z_{22}-a_{22})^{-\lambda+\frac{d}{2}r'+\frac{n''}{r''}}}{\Gamma_{r''}^d(-k)}
\int_{\Omega_{22}\cap(z_{22}-a_{22}-\Omega_{22})}\det_{\fn^+_{22}}(w_{22})^{-k-\frac{n''}{r''}}
\\ \qquad \hphantom{=}
{} \ \times\det_{\fn^+_{22}}(z_{22}-a_{22}-w_{22})^{\lambda+k-\frac{d}{2}r'-\frac{n''}{r''}} f(z_{22}-w_{22})\,{\rm d}w_{22},
\end{gather*}
which originally converges when $-\Re k>\frac{n''}{r''}-1$, $\Re\left(\lambda+k-\frac{d}{2}r'\right)>\frac{n''}{r''}-1$.
Now for a while we omit the subscript 22, and prove the following.

\begin{Proposition}\label{Euler_int}
Let $\Re\mu>\frac{n}{r}-1$, $\Re\nu>\frac{n}{r}-1$, $\bl\in\BZ_{++}^r$ and $f\in\cP_\bl(\fp^+)$.
We~define $\tilde{f}_{\bm,\bn}\in\cP_\bm(\fp^+)\otimes\cP_\bn(\fp^+)$ as \eqref{fmn}.
Then for $z,a\in\fn^+$ with $z-a\in\Omega$, we have
\begin{gather*}
\det_{\fn^+}(z-a)^{-\mu-\nu+\frac{n}{r}}\int_{(a+\Omega)\cap(z-\Omega)}\det_{\fn^+}(w-a)^{\mu-\frac{n}{r}}
\det_{\fn^+}(z-w)^{\nu-\frac{n}{r}}f(z+a-w)\,{\rm d}w
\\ \qquad
{}=\det_{\fn^+}(z-a)^{-\mu-\nu+\frac{n}{r}}\int_{\Omega\cap(z-a-\Omega)}
\det_{\fn^+}(w)^{\mu-\frac{n}{r}}\det_{\fn^+}(z-a-w)^{\nu-\frac{n}{r}}f(z-w)\,{\rm d}w
\\ \qquad
{}=\frac{\Gamma_r^d(\mu)\Gamma_r^d(\nu)}{\Gamma_r^d(\mu+\nu)}\sum_{\bm}
\frac{(\mu)_{\bm,d}}{(\mu+\nu)_{\bm,d}}\sum_{\bn}\tilde{f}_{\bm,\bn}(a-z,z)
\\ \qquad
{}=\frac{\Gamma_r^d(\mu)\Gamma_r^d(\nu)}{\Gamma_r^d(\mu+\nu)}\sum_{\bn}
\frac{(\nu)_{\bn,d}}{(\mu+\nu)_{\bn,d}}\sum_{\bm} \tilde{f}_{\bm,\bn}(a,z-a)
\\ \qquad
{}=\frac{\Gamma_r^d(\mu)\Gamma_r^d(\nu)}{\Gamma_r^d(\mu+\nu+\bl)}\sum_{\bm,\bn}(\mu)_{\bm,d}(\nu)_{\bn,d} \tilde{f}_{\bm,\bn}(a,z).
\end{gather*}
\end{Proposition}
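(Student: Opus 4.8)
The plan is to reduce the statement to a single Beta-type integral on the symmetric cone carrying one spherical factor, to evaluate that integral by the Laplace transform of Theorem~\ref{Laplace}, and then to read off the right-hand sides by expanding $f$ in several ways through \eqref{fmn}. Throughout I set $x:=z-a\in\Omega$. The equality of the first two integrals is simply the change of variables $w\mapsto w+a$: it carries $(a+\Omega)\cap(z-\Omega)$ onto $\Omega\cap(x-\Omega)$ and turns $\det_{\fn^+}(w-a)$, $\det_{\fn^+}(z-w)$, $f(z+a-w)$ into $\det_{\fn^+}(w)$, $\det_{\fn^+}(x-w)$, $f(z-w)$. So it remains to evaluate
\[
J:=\det_{\fn^+}(x)^{-\mu-\nu+\frac{n}{r}}\int_{\Omega\cap(x-\Omega)}\det_{\fn^+}(w)^{\mu-\frac{n}{r}}\det_{\fn^+}(x-w)^{\nu-\frac{n}{r}}f(z-w)\,dw,\qquad z=a+x.
\]

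The core step is the following Beta identity with one spherical insertion: for $g\in\cP_\bm(\fp^+)$ and $x\in\Omega$,
\[
\int_{\Omega\cap(x-\Omega)}\det_{\fn^+}(w)^{\mu-\frac{n}{r}}g(w)\det_{\fn^+}(x-w)^{\nu-\frac{n}{r}}\,dw=\frac{\Gamma_r^d(\mu+\bm)\Gamma_r^d(\nu)}{\Gamma_r^d(\mu+\nu+\bm)}\det_{\fn^+}(x)^{\mu+\nu-\frac{n}{r}}g(x).
\]
The left-hand side is the Laplace convolution over $\Omega$ of $w\mapsto\det_{\fn^+}(w)^{\mu-\frac{n}{r}}g(w)\mathbf{1}_\Omega$ and $w\mapsto\det_{\fn^+}(w)^{\nu-\frac{n}{r}}\mathbf{1}_\Omega$, whose Laplace transforms are $\Gamma_r^d(\mu+\bm)g(s^\itinv)\det_{\fn^+}(s)^{-\mu}$ and $\Gamma_r^d(\nu)\det_{\fn^+}(s)^{-\nu}$ by Theorem~\ref{Laplace}(1) (both transforms converge precisely for $\Re\mu,\Re\nu>\frac{n}{r}-1$). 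By the convolution theorem for the Laplace transform on $\Omega$ (cf.~\cite{FK}), the convolution has transform $\Gamma_r^d(\mu+\bm)\Gamma_r^d(\nu)g(s^\itinv)\det_{\fn^+}(s)^{-\mu-\nu}$; recognizing this, via Theorem~\ref{Laplace}(1) once more, as the transform of $\tfrac{\Gamma_r^d(\mu+\bm)\Gamma_r^d(\nu)}{\Gamma_r^d(\mu+\nu+\bm)}g(w)\det_{\fn^+}(w)^{\mu+\nu-\frac{n}{r}}$ and inverting gives the identity, where I use $\Gamma_r^d(\mu+\bm)=\Gamma_r^d(\mu)(\mu)_{\bm,d}$ from \eqref{Gamma}, \eqref{Pochhammer}. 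By the cone involution $w\mapsto x-w$ of the domain $\Omega\cap(x-\Omega)$ the same identity holds with the pairs $(\mu,w)$ and $(\nu,x-w)$ interchanged.

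Now I expand $f(z-w)$ by \eqref{fmn}; since $f$ is homogeneous of degree $|\bl|$ this is a \emph{finite} sum over $|\bm|+|\bn|=|\bl|$, so there is no interchange-of-limits issue. Writing $z-w=(-w)+z$ gives $f(z-w)=\sum_{\bm,\bn}\tilde{f}_{\bm,\bn}(-w,z)$ with $\tilde{f}_{\bm,\bn}(-\,\cdot\,,z)\in\cP_\bm(\fp^+)$; applying the core identity on the $w$-side, the factor $\det_{\fn^+}(x)^{\mu+\nu-\frac{n}{r}}$ cancels the prefactor of $J$, leaving the value $\tilde{f}_{\bm,\bn}(a-z,z)$ with coefficient $\tfrac{\Gamma_r^d(\mu)\Gamma_r^d(\nu)}{\Gamma_r^d(\mu+\nu)}\tfrac{(\mu)_{\bm,d}}{(\mu+\nu)_{\bm,d}}$; this is the third displayed form. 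Writing instead $z-w=a+(x-w)$ gives $f(z-w)=\sum_{\bm,\bn}\tilde{f}_{\bm,\bn}(a,x-w)$ with $\tilde{f}_{\bm,\bn}(a,\cdot)\in\cP_\bn(\fp^+)$; applying the interchanged core identity on the $(x-w)$-side and evaluating at $x=z-a$ yields the fourth form, with coefficient $\tfrac{\Gamma_r^d(\mu)\Gamma_r^d(\nu)}{\Gamma_r^d(\mu+\nu)}\tfrac{(\nu)_{\bn,d}}{(\mu+\nu)_{\bn,d}}$.

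The main obstacle will be the last, symmetric form: no additive splitting of $z-w$ can simultaneously place a factor of type $\bm$ on $w$ and one of type $\bn$ on $x-w$, so it does not come from a single expansion. Instead I would deduce it from the third (or fourth) form by a multivariate Pfaff/Vandermonde-type transformation relating $\sum_{\bm}(\mu)_{\bm,d}\,\tilde{f}_{\bm,\bn}(a,z)$ to $\tilde{f}_{\bm,\bn}(a,z-a)$; in rank one this is the elementary rearrangement $a+\tfrac{\nu}{\mu+\nu}(z-a)=\tfrac{\mu a+\nu z}{\mu+\nu}$, and in general it is a purely algebraic identity among the expansion coefficients $\tilde{f}_{\bm,\bn}$ that reduces to the Pochhammer relations for $(\cdot)_{\bm,d}$. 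I expect establishing this combinatorial identity to be the delicate, though essentially bookkeeping, part. Since all three right-hand sides are meromorphic in $(\mu,\nu)$, it suffices to prove the identities for $\Re\mu,\Re\nu$ large and continue to the stated range $\Re\mu,\Re\nu>\frac{n}{r}-1$, where the integrals already converge absolutely.
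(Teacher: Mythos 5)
Your reduction to a single-insertion beta integral is sound, and it carries you through the first four expressions exactly as the paper does: the change of variables $w\mapsto w+a$ gives the equality of the two integrals, and expanding $f(z-w)$ either as $\sum_{\bm,\bn}\tilde{f}_{\bm,\bn}(-w,z)$ or as $\sum_{\bm,\bn}\tilde{f}_{\bm,\bn}(a,z-a-w)$ and applying your core identity (which is the $\bn=\underline{0}_r$ special case of the paper's generalized beta integral, proved there by $L$-equivariance and reduction to lowest weight vectors rather than by the Laplace convolution theorem, but both routes are valid) yields the third and fourth forms with the stated coefficients.

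The genuine gap is the final, symmetric form. You correctly observe that no additive splitting of $z-w$ produces it from a single application of your core identity, but the ``multivariate Pfaff/Vandermonde-type transformation'' you invoke to bridge from the third form is precisely the content of the equality being proved; asserting that it ``reduces to Pochhammer relations'' does not establish it, and in rank $\ge 2$ it is not mere bookkeeping, because the relevant structure is the decomposition $\cP_\bm(\fp^+)\otimes\cP_\bn(\fp^+)=\bigoplus_{\bl}\cP_{\bm,\bn}^{\bl}(\fp^+\oplus\fp^+)$ under the \emph{diagonal} $L$-action, which your argument never touches. The paper closes this by proving a strictly stronger beta integral: for $\tilde{f}(x,y)\in\cP_{\bm,\bn}^{\bl}(\fp^+\oplus\fp^+)$ the insertion $\tilde{f}(w,z-w)$ integrates to $\frac{\Gamma_r^d(\mu+\bm)\Gamma_r^d(\nu+\bn)}{\Gamma_r^d(\mu+\nu+\bl)}\tilde{f}(z,z)$, with \emph{three} independent indices $\bm$, $\bn$, $\bl$. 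It then reduces $f$ to $\tilde{\Phi}_\bl^{\fn^+}$ by $K_L$-equivariance, sums over $\bl$ to produce ${\rm e}^{\tr_{\fn^+}(z-w)}$, normalizes the domain to $\Omega\cap(e-\Omega)$, expands ${\rm e}^{(a|w)_{\fn^+}+(z|e-w)_{\fn^+}}$ into its joint $L$-isotypic components $K_{\bm,\bn}^{\bl}$, applies the two-index beta integral to each, and projects back onto the $\bl$-component. Without this two-index version (or an equivalent substitute for your unproven combinatorial identity), the fifth equality remains open.
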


Then Theorem \ref{thm_nonsimple} for tube type $\fp^+_{22}$ is proved from Proposition \ref{Euler_int}
by dividing each side by~$\Gamma_{r''}^d(\mu)$ and putting $\mu=-k$, $\nu=\lambda+k-\frac{d}{2}r'$, so that
\begin{gather*}
\frac{\Gamma_{r''}^d(\nu)}{\Gamma_{r''}^d(\mu+\nu)}
=\frac{\Gamma_{r''}^d\big(\lambda+k-\frac{d}{2}r'\big)}{\Gamma_{r''}^d\big(\lambda-\frac{d}{2}r'\big)}
=\bigg(\lambda-\frac{d}{2}r'\bigg)_{\underline{k}_{r''},d},
\\
\frac{\Gamma_{r''}^d(\nu)}{\Gamma_{r''}^d(\mu+\nu+\bl)}
=\frac{\Gamma_{r''}^d\big(\lambda+k-\frac{d}{2}r'\big)}{\Gamma_{r''}^d\big(\lambda-\frac{d}{2}r'\big)}
\frac{\Gamma_{r''}^d\big(\lambda-\frac{d}{2}r'\big)}{\Gamma_{r''}^d\big(\lambda-\frac{d}{2}r'+\bl\big)}
=\frac{\big(\lambda-\frac{d}{2}r'\big)_{\underline{k}_{r''},d}}{\big(\lambda-\frac{d}{2}r'\big)_{\bl,d}},
\end{gather*}
and since each side of Theorem \ref{thm_nonsimple} is a polynomial in $z$, the equalities hold not only for $z\in\Omega$
but also for all $z\in\fp^+$ by analytic continuation.

For the proof of Proposition \ref{Euler_int}, we prepare some notations.
For $\bm,\bn\in\BZ_{++}^r$, we decompose $\cP_\bm(\fp^+)\otimes\cP_\bn(\fp^+)\subset\cP(\fp^+\oplus\fp^+)$ as
\[ \cP_\bm(\fp^+)\otimes\cP_\bn(\fp^+)=\bigoplus_{\bl\in\frac{1}{2}\BZ_{++}^r}\cP_{\bm,\bn}^\bl(\fp^+\oplus\fp^+), \]
where $\cP_{\bm,\bn}^\bl(\fp^+\oplus\fp^+)$ is the sum of $L$-submodules in $\cP_\bm(\fp^+)\otimes\cP_\bn(\fp^+)$
with the restricted lowest weight $-l_1\gamma_1-\cdots-l_r\gamma_r$ under the diagonal action of $L$,
where $L$ is the real form of $K^\BC$ given in Section~\ref{section_KKT},
and $\{\gamma_j\}\subset\fa_\fl^\vee$ is as in Section~\ref{section_root}.
Then we have the following formula, which is a generalization of the \textit{beta integral} \cite[Theorem~VII.1.7]{FK}.

\begin{Proposition}
Let $\Re\mu>\frac{n}{r}-1$, $\Re\nu>\frac{n}{r}-1$, $\bm,\bn\in\BZ_{++}^r$, $\bl\in\frac{1}{2}\BZ_{++}^r$.
Then for any $\tilde{f}(x,y)\in \cP_{\bm,\bn}^\bl(\fp^+\oplus\fp^+)$, $z\in\Omega$, we have
\begin{gather*}
\det_{\fn^+}(z)^{-\mu-\nu+\frac{n}{r}}\int_{\Omega\cap(z-\Omega)}\tilde{f}(w,z-w)
\det_{\fn^+}(w)^{\mu-\frac{n}{r}}\det_{\fn^+}(z-w)^{\nu-\frac{n}{r}}{\rm d}w
\\ \qquad
{}=\frac{\Gamma_r^d(\mu+\bm)\Gamma_r^d(\nu+\bn)}{\Gamma_r^d(\mu+\nu+\bl)}\tilde{f}(z,z).
\end{gather*}
\end{Proposition}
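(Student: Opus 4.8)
The plan is to recognize the integral as a convolution over the symmetric cone $\Omega$ and to evaluate it by passing to the Laplace transform, where convolution becomes a product and Gindikin's formula (Theorem~\ref{Laplace}) applies to each factor. Write $J(z)$ for the integral
\[
J(z):=\int_{\Omega\cap(z-\Omega)}\tilde{f}(w,z-w)\det_{\fn^+}(w)^{\mu-\frac{n}{r}}\det_{\fn^+}(z-w)^{\nu-\frac{n}{r}}\,{\rm d}w ,
\]
so the claim is $\det_{\fn^+}(z)^{-\mu-\nu+\frac{n}{r}}J(z)=\frac{\Gamma_r^d(\mu+\bm)\Gamma_r^d(\nu+\bn)}{\Gamma_r^d(\mu+\nu+\bl)}\tilde{f}(z,z)$. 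Since $\tilde{f}\in\cP_\bm(\fp^+)\otimes\cP_\bn(\fp^+)$, I would fix a finite decomposition $\tilde{f}(x,y)=\sum_i p_i(x)q_i(y)$ with $p_i\in\cP_\bm(\fp^+)$, $q_i\in\cP_\bn(\fp^+)$, and set $G_i(w):=p_i(w)\det_{\fn^+}(w)^{\mu-\frac{n}{r}}\mathbf{1}_\Omega(w)$ and $H_i(u):=q_i(u)\det_{\fn^+}(u)^{\nu-\frac{n}{r}}\mathbf{1}_\Omega(u)$. Then $J=\sum_i G_i*H_i$ is a sum of cone convolutions, and the hypotheses $\Re\mu,\Re\nu>\frac{n}{r}-1$ ensure $G_i,H_i$ are locally integrable on $\overline{\Omega}$, so each convolution converges and $\widehat{G_i*H_i}=\hat{G}_i\hat{H}_i$.

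Applying Theorem~\ref{Laplace}(1) to each factor gives $\hat{G}_i(\zeta)=\Gamma_r^d(\mu+\bm)\,p_i(\zeta^\itinv)\det_{\fn^+}(\zeta)^{-\mu}$ and $\hat{H}_i(\zeta)=\Gamma_r^d(\nu+\bn)\,q_i(\zeta^\itinv)\det_{\fn^+}(\zeta)^{-\nu}$ for $\zeta\in\Omega+\sqrt{-1}\fn^+$. Summing the products over $i$, the polynomials recombine into the diagonal value of $\tilde{f}$ at $\zeta^\itinv$:
\[
\hat{J}(\zeta)=\Gamma_r^d(\mu+\bm)\,\Gamma_r^d(\nu+\bn)\,\det_{\fn^+}(\zeta)^{-\mu-\nu}\,\tilde{f}\big(\zeta^\itinv,\zeta^\itinv\big).
\]

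The decisive structural input is that the diagonal restriction $g(x):=\tilde{f}(x,x)$ lies in $\cP_\bl(\fp^+)$. I would deduce this from the $L$-equivariance of the diagonal restriction map $\delta\colon\cP_\bm(\fp^+)\otimes\cP_\bn(\fp^+)\to\cP(\fp^+)$, $\delta(\tilde{f})(x)=\tilde{f}(x,x)$. Indeed $\delta$ intertwines the diagonal $L$-actions, so $\delta\big(\cP_{\bm,\bn}^\bl\big)$ is an $L$-submodule which, being a quotient of $\cP_{\bm,\bn}^\bl$, is a sum of $L$-irreducibles of restricted lowest weight $-l_1\gamma_1-\cdots-l_r\gamma_r$ (or is $0$). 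Because $\fp^+$ is of tube type we have $\fk^\BC(e)_2=\fk^\BC$, so $L$ is a real form of $K^\BC$ and each $\cP_\bk(\fp^+)$ stays irreducible under $L$, carrying the single restricted lowest weight $-k_1\gamma_1-\cdots-k_r\gamma_r$ in $\cP(\fp^+)=\bigoplus_\bk\cP_\bk(\fp^+)$; hence $\delta\big(\cP_{\bm,\bn}^\bl\big)\subseteq\cP_\bl(\fp^+)$, and it vanishes unless $\bl\in\BZ_{++}^r$ (consistent with the statement, since then $\tilde{f}(z,z)=0$ and $\hat{J}\equiv 0$). With $g\in\cP_\bl(\fp^+)$, Theorem~\ref{Laplace}(1) applied with $\lambda=\mu+\nu$ reads
\[
\hat{J}(\zeta)=\frac{\Gamma_r^d(\mu+\bm)\,\Gamma_r^d(\nu+\bn)}{\Gamma_r^d(\mu+\nu+\bl)}\,\widehat{\big[g\,\det_{\fn^+}(\cdot)^{\mu+\nu-\frac{n}{r}}\big]}(\zeta),
\]
and injectivity of the Laplace transform on $\Omega$ yields $J(z)=\frac{\Gamma_r^d(\mu+\bm)\Gamma_r^d(\nu+\bn)}{\Gamma_r^d(\mu+\nu+\bl)}g(z)\det_{\fn^+}(z)^{\mu+\nu-\frac{n}{r}}$, which is the assertion after dividing by $\det_{\fn^+}(z)^{\mu+\nu-\frac{n}{r}}$.

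The hard part is the diagonal restriction lemma $\tilde{f}(x,x)\in\cP_\bl(\fp^+)$: this is exactly what forces a single Gamma factor $\Gamma_r^d(\mu+\nu+\bl)$ in the denominator rather than a $\bl$-dependent spread of such factors, and the tube-type hypothesis enters here precisely through the irreducibility of $\cP_\bk(\fp^+)$ under the real form $L$. The remaining ingredients — local integrability of $G_i,H_i$ near $\partial\Omega$ under $\Re\mu,\Re\nu>\frac{n}{r}-1$, the cone convolution theorem $\widehat{G_i*H_i}=\hat{G}_i\hat{H}_i$, and injectivity of the Laplace transform — are routine consequences of the framework already recorded in Theorem~\ref{Laplace} and the accompanying Riesz-distribution estimates.
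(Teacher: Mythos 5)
Your argument is correct, and it follows a genuinely different route from the paper's, even though both ultimately rest on two applications of Gindikin's Laplace transform formula. The paper proves the identity by pure equivariance: the assignment $\tilde f\mapsto\tilde f^\wedge$ commutes with the diagonal $L$-action, so it suffices to treat a restricted lowest weight vector, for which homogeneity under $A_LN_L^\top$ forces $\tilde f^\wedge(z)=\Delta_\bl(z)\,\tilde f^\wedge(e)$; the one remaining scalar $\tilde f^\wedge(e)$ is then extracted by integrating both sides against ${\rm e}^{-\tr_{\fn^+}(z)}$ over $\Omega$ (in effect, evaluating the Laplace transform at the single point $e$) via the substitution $z=x+y$, $w=x$. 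You instead compute the Laplace transform of the convolution at every $\zeta\in\Omega+\sqrt{-1}\fn^+$, obtaining $\Gamma_r^d(\mu+\bm)\Gamma_r^d(\nu+\bn)\det_{\fn^+}(\zeta)^{-\mu-\nu}\tilde f\big(\zeta^\itinv,\zeta^\itinv\big)$, and must then recognize this as $\Gamma_r^d(\mu+\nu+\bl)^{-1}$ times the transform of $\tilde f(z,z)\det_{\fn^+}(z)^{\mu+\nu-\frac{n}{r}}$. That is exactly where your diagonal restriction lemma $\tilde f(\cdot,\cdot)\in\cP_\bl(\fp^+)$ carries the load, and your proof of it is sound: in the tube-type setting $\fl$ is a real form of $\fk^\BC$, so each $\cP_\bk(\fp^+)$ stays $L$-irreducible and the $\cP_\bk(\fp^+)$ are pairwise non-isomorphic (distinct restricted lowest weights), whence the $L$-equivariant diagonal map sends $\cP_{\bm,\bn}^\bl$ into $\cP_\bl(\fp^+)$ and kills it when $\bl\notin\BZ_{++}^r$. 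The paper needs only the weaker consequence $\tilde f(z,z)=\Delta_\bl(z)\tilde f(e,e)$ for a lowest weight vector, which it gets for free from the same homogeneity identity. What your route buys is that you never single out lowest weight vectors or use the simple transitivity of $A_LN_L^\top$ on $\Omega$; what it costs is the extra appeal to injectivity of the Laplace transform on continuous functions on the cone (standard, and your hypotheses do guarantee $\Re(\mu+\nu)>\frac{n}{r}-1$ so both transforms converge) together with the sharper representation-theoretic input.
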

Especially, if $\bl\notin\BZ_{++}^r$, then $\tilde{f}(z,z)=0\in\cP(\fp^+)$ and the above integral vanishes.
\begin{proof}
For $\tilde{f}(x,y)\in\cP(\fp^+)\otimes\cP(\fp^+)$ we define a function $\tilde{f}^\wedge(z)\in C(\Omega)$ by
\[
\tilde{f}^\wedge(z):=\det_{\fn^+}(z)^{-\mu-\nu+\frac{n}{r}}
\int_{\Omega\cap(z-\Omega)}\tilde{f}(w,z-w)\det_{\fn^+}(w)^{\mu-\frac{n}{r}} \det_{\fn^+}(z-w)^{\nu-\frac{n}{r}}{\rm d}w.
\]
Then for $l\in L$ we have
\begin{align*}
(\tilde{f}(l\cdot,l\cdot))^\wedge(z)
={}&\det_{\fn^+}(z)^{-\mu-\nu+\frac{n}{r}}\int_{\Omega\cap(z-\Omega)}\!\!\!\!\!\!
\tilde{f}(lw,l(z\!-w))\det_{\fn^+}(w)^{\mu\!-\frac{n}{r}}\det_{\fn^+}(z\!-w)^{\nu-\frac{n}{r}}{\rm d}w \\
={}&\det_{\fn^+}(z)^{-\mu-\nu+\frac{n}{r}}\int_{\Omega\cap(lz-\Omega)}
\tilde{f}(w,lz-w)\det_{\fn^+}\big(l^{-1}w\big)^{\mu-\frac{n}{r}}
\\
&\times\det_{\fn^+}\big(z-l^{-1}w\big)^{\nu-\frac{n}{r}}\det_{\fn^+}\big(l^{-1}e\big)^{\frac{n}{r}}{\rm d}w
\\
={}&\det_{\fn^+}(lz)^{-\mu-\nu+\frac{n}{r}}\int_{\Omega\cap(lz-\Omega)}\!\!\!\!\!
\tilde{f}(w,lz-w)\det_{\fn^+}(w)^{\mu-\frac{n}{r}}\det_{\fn^+}(lz-w)^{\nu-\frac{n}{r}}{\rm d}w
\\
={}&\tilde{f}^\wedge(lz).
\end{align*}
Therefore $\tilde{f}(x,y)\mapsto \tilde{f}^\wedge(z)$ is $L$-equivariant.
Now we compute $\tilde{f}^\wedge(z)$ when $\tilde{f}(x,y)\in \cP_{\bm,\bn}^\bl(\fp^+\allowbreak\oplus\fp^+)$ is a lowest weight vector.
Then $\tilde{f}(x,y)$ satisfies $\tilde{f}(lx,ly)=\Delta_\bl(le)\tilde{f}(x,y)$ for $l\in A_LN_L^\top$.
In~this case, when $z=le$ with $l\in A_LN_L^\top$, we have
\begin{align}\label{tildeKz}
\tilde{f}^\wedge(z)=\tilde{f}^\wedge(le)=\big(\tilde{f}(l\cdot,l\cdot)\big)^\wedge(e)=\big(\Delta_\bl(le)\tilde{f}(\cdot,\cdot)\big)^\wedge(e)
=\Delta_\bl(z)\tilde{f}^\wedge(e),
\end{align}
that is,
\[ \tilde{f}^\wedge(e)\Delta_\bl(z)\det_{\fn^+}(z)^{\mu+\nu-\frac{n}{r}}
=\int_{\Omega\cap(z-\Omega)}\tilde{f}(w,z-w)\det_{\fn^+}(w)^{\mu-\frac{n}{r}}\det_{\fn^+}(z-w)^{\nu-\frac{n}{r}}{\rm d}w \]
holds. Then by multiplying ${\rm e}^{-\tr_{\fn^+}(z)}$ and integrating on $\Omega$ with respect to $z$,
the left hand side is computed by using Theorem \ref{Laplace}(1) as
\[ \int_\Omega {\rm e}^{-\tr_{\fn^+}(z)}\tilde{f}^\wedge(e)\Delta_\bl(z)\det_{\fn^+}(z)^{\mu+\nu-\frac{n}{r}}{\rm d}z
=\Gamma_r^d(\mu+\nu+\bl)\tilde{f}^\wedge(e), \]
and the right hand side is computed by using the variable change $z=x+y$, $w=x$ and again by Theorem \ref{Laplace}(1) as
\begin{gather*}
\iint_{\substack{z\in\Omega, w\in\Omega\cap(z-\Omega)}}{\rm e}^{-\tr_{\fn^+}(z)}\tilde{f}(w,z-w)\det_{\fn^+}(w)^{\mu-\frac{n}{r}}
\det_{\fn^+}(z-w)^{\nu-\frac{n}{r}}{\rm d}w{\rm d}z
\\ \qquad
{}=\iint_{\substack{x\in\Omega, y\in\Omega}}{\rm e}^{-\tr_{\fn^+}(x+y)}\tilde{f}(x,y)\det_{\fn^+}(x)^{\mu-\frac{n}{r}}
\det_{\fn^+}(y)^{\nu-\frac{n}{r}}{\rm d}x{\rm d}y
\\ \qquad
{}=\Gamma_r^d(\mu+\bm)\Gamma_r^d(\nu+\bn)\tilde{f}(e,e).
\end{gather*}
Therefore we have
\[ \tilde{f}^\wedge(e)=\frac{\Gamma_r^d(\mu+\bm)\Gamma_r^d(\nu+\bn)}{\Gamma_r^d(\mu+\nu+\bl)}\tilde{f}(e,e), \]
and by (\ref{tildeKz}), we get
\begin{align*}
\tilde{f}^\wedge(z)=\Delta_\bl(z)\tilde{f}^\wedge(e)
=\Delta_\bl(z)\frac{\Gamma_r^d(\mu+\bm)\Gamma_r^d(\nu+\bn)}{\Gamma_r^d(\mu+\nu+\bl)}\tilde{f}(e,e)
=\frac{\Gamma_r^d(\mu+\bm)\Gamma_r^d(\nu+\bn)}{\Gamma_r^d(\mu+\nu+\bl)}\tilde{f}(z,z).
\end{align*}
By the $L$-equivariance, this holds for all $\tilde{f}(x,y)\in \cP_{\bm,\bn}^\bl(\fp^+\oplus\fp^+)$.
\end{proof}

\begin{proof}[Proof of Proposition \ref{Euler_int}]
The 1st equality is clear. Next we prove the 2nd equality. Since as a function of $w$, $\tilde{f}_{\bm,\bn}(-w,z)\in
\cP_\bm(\fp^+)=\cP_{\bm,\underline{0}_r}^\bm(\fp^+\oplus\fp^+)$ holds, we have
\begin{gather*}
\det_{\fn^+}(z-a)^{-\mu-\nu+\frac{n}{r}}\int_{\Omega\cap(z-a-\Omega)}
\det_{\fn^+}(w)^{\mu-\frac{n}{r}}\det_{\fn^+}(z-a-w)^{\nu-\frac{n}{r}}f(z-w)\,{\rm d}w
\\
{}=\det_{\fn^+}(z-a)^{-\mu-\nu+\frac{n}{r}}\sum_{\bm,\bn}\int_{\Omega\cap(z-a-\Omega)}
\det_{\fn^+}(w)^{\mu-\frac{n}{r}}\det_{\fn^+}(z-a-w)^{\nu-\frac{n}{r}}
\tilde{f}_{\bm,\bn}(-w,z)\,{\rm d}w\\
{} =\sum_{\bm,\bn}\frac{\Gamma_r^d(\mu+\bm)\Gamma_r^d(\nu)} {\Gamma_r^d(\mu+\nu+\bm)}\tilde{f}_{\bm,\bn}(-(z-a),z)
=\frac{\Gamma_r^d(\mu)\Gamma_r^d(\nu)}{\Gamma_r^d(\mu+\nu)}\sum_{\bm}\frac{(\mu)_{\bm,d}}{(\mu+\nu)_{\bm,d}}
\sum_{\bn}\tilde{f}_{\bm,\bn}(a-z,z).
\end{gather*}
The 3rd equality is also proved similarly
\begin{gather*}
\det_{\fn^+}(z-a)^{-\mu-\nu+\frac{n}{r}}\int_{\Omega\cap(z-a-\Omega)}
\det_{\fn^+}(w)^{\mu-\frac{n}{r}}\det_{\fn^+}(z-a-w)^{\nu-\frac{n}{r}}f(z-w)\,{\rm d}w
\\
{}=\det_{\fn^+}(z-a)^{-\mu-\nu+\frac{n}{r}}
\int_{\Omega\cap(z-a-\Omega)} \det_{\fn^+}(w)^{\mu-\frac{n}{r}} \det_{\fn^+}(z-a-w)^{\nu-\frac{n}{r}}
 f(a+z-a-w)\,{\rm d}w \\
{}=\det_{\fn^+}(z-a)^{-\mu-\nu+\frac{n}{r}}
\sum_{\bm,\bn}\int_{\Omega\cap(z-a-\Omega)}\hspace{-9mm}\det_{\fn^+}(w)^{\mu-\frac{n}{r}} \det_{\fn^+}(z-a-w)^{\nu-\frac{n}{r}}\tilde{f}_{\bm,\bn}(a,z-a-w)\,{\rm d}w
\\
{}=\sum_{\bm,\bn}\frac{\Gamma_r^d(\mu)\Gamma_r^d(\nu+\bn)}{\Gamma_r^d(\mu+\nu+\bn)}\tilde{f}_{\bm,\bn}(a,z-a)
\\
{}=\frac{\Gamma_r^d(\mu)\Gamma_r^d(\nu)}{\Gamma_r^d(\mu+\nu)}\sum_{\bn}\frac{(\nu)_{\bn,d}}{(\mu+\nu)_{\bn,d}}
\sum_{\bm}\tilde{f}_{\bm,\bn}(a,z-a).
\end{gather*}
Finally we prove the 4th equality. Since the map $\cP_\bl(\fp^+)\ni f\mapsto \tilde{f}_{\bm,\bn}\in
\cP_\bm(\fp^+)\otimes\cP_\bn(\fp^+)$ is $L\simeq \Delta L$-equivariant,
we may assume $f=\tilde{\Phi}_\bl^{\fn^+}\in\cP(\fp^+)^{K_L}$ (see (\ref{Phi^n})).
We~sum up the left hand side with respect to $\bl\in\BZ_{++}^r$. Then by (\ref{expand_exptr}) we have
\begin{gather*}
\sum_{\bl\in\BZ_{++}^r}\det_{\fn^+}(z-a)^{-\mu-\nu+\frac{n}{r}}
\int_{\Omega\cap(z-a-\Omega)}\det_{\fn^+}(w)^{\mu-\frac{n}{r}}\det_{\fn^+}(z-a-w)^{\nu-\frac{n}{r}} \tilde{\Phi}^{\fn^+}_\bl(z-w)\,{\rm d}w
\\ \qquad
{}=\det_{\fn^+}(z-a)^{-\mu-\nu+\frac{n}{r}}
\int_{\Omega\cap(z-a-\Omega)}\det_{\fn^+}(w)^{\mu-\frac{n}{r}}\det_{\fn^+}(z-a-w)^{\nu-\frac{n}{r}}{\rm e}^{\tr_{\fn^+}(z-w)}{\rm d}w
\\ \qquad
{}=\det_{\fn^+}(z-a)^{-\mu-\nu+\frac{n}{r}}\int_{\Omega\cap(e-\Omega)}\det_{\fn^+} \big(P\big((z-a)^{\mathit{1/2}}\big)w\big)^{\mu-\frac{n}{r}}
\\ \qquad\hphantom{=}
{} \ \times \det_{\fn^+}\big(z-a-P\big((z-a)^{\mathit{1/2}}\big)w\big)^{\nu-\frac{n}{r}}
{\rm e}^{\tr_{\fn^+}(z-P((z-a)^{\mathit{1/2}})w)}\det_{\fn^+}(z-a)^{\frac{n}{r}}{\rm d}w
\\ \qquad
{}=\int_{\Omega\cap(e-\Omega)}\det_{\fn^+}(w)^{\mu-\frac{n}{r}}\det_{\fn^+}(e-w)^{\nu-\frac{n}{r}}{\rm e}^{(z|e)_{\fn^+}-(z-a|w)_{\fn^+}}{\rm d}w
\\ \qquad
{}=\int_{\Omega\cap(e-\Omega)}\det_{\fn^+}(w)^{\mu-\frac{n}{r}}\det_{\fn^+}(e-w)^{\nu-\frac{n}{r}}{\rm e}^{(a|w)_{\fn^+}+(z|e-w)_{\fn^+}}{\rm d}w.
\end{gather*}
For $\bm,\bn\in\BZ_{++}^r$, $\bl\in\frac{1}{2}\BZ_{++}^r$,
we define $K_{\bm,\bn}^\bl(a,b;c,d)\in
\cP_{\bm,\bn}^\bl(\fp^+\oplus\fp^+)_{a,b}\otimes\cP_{\bm,\bn}^\bl(\fp^+\oplus\fp^+)_{c,d}$ by
\[
{\rm e}^{(a|c)_{\fn^+}+(b|d)_{\fn^+}}=\sum_{\bm\in\BZ_{++}^r}\sum_{\bn\in\BZ_{++}^r} \sum_{\bl\in\frac{1}{2}\BZ_{++}^r}K_{\bm,\bn}^\bl(a,b;c,d).
\]
Then we have
\begin{gather*}
\int_{\Omega\cap(e-\Omega)}\det_{\fn^+}(w)^{\mu-\frac{n}{r}}\det_{\fn^+}(e-w)^{\nu-\frac{n}{r}}{\rm e}^{(a|w)_{\fn^+}+(z|e-w)_{\fn^+}}{\rm d}w
\\ \qquad
{}=\sum_{\bm\in\BZ_{++}^r}\sum_{\bn\in\BZ_{++}^r}\sum_{\bl\in\frac{1}{2}\BZ_{++}^r}
\int_{\Omega\cap(e-\Omega)}\!\!\!\!\!\det_{\fn^+}(w)^{\mu-\frac{n}{r}} \det_{\fn^+}(e-w)^{\nu-\frac{n}{r}}K_{\bm,\bn}^\bl (a,z;w,e-w)\,{\rm d}w
\\ \qquad
{}=\sum_{\bm\in\BZ_{++}^r}\sum_{\bn\in\BZ_{++}^r}\sum_{\bl\in\BZ_{++}^r}
\frac{\Gamma_r^d(\mu+\bm)\Gamma_r^d(\nu+\bn)}{\Gamma_r^d(\mu+\nu+\bl)}K_{\bm,\bn}^\bl (a,z;e,e)
\\ \qquad
{}=\sum_{\bl\in\BZ_{++}^r}\frac{\Gamma_r^d(\mu)\Gamma_r^d(\nu)}{\Gamma_r^d(\mu+\nu+\bl)} \sum_{\bm\in\BZ_{++}^r}\sum_{\bn\in\BZ_{++}^r}
(\mu)_{\bm,d}(\nu)_{\bn,d} K_{\bm,\bn}^\bl (a,z;e,e).
\end{gather*}
Now since $K_{\bm,\bn}^\bl (a,z;e,e)=(\tilde{\Phi}_\bl^{\fn^+})^\sim_{\bm,\bn}(a,z)$ holds, we get
\begin{gather*}
\sum_{\bl\in\BZ_{++}^r}\det_{\fn^+}(z-a)^{-\mu-\nu+\frac{n}{r}}
\int_{\Omega\cap(z-a-\Omega)}\det_{\fn^+}(w)^{\mu-\frac{n}{r}} \det_{\fn^+}(z-a-w)^{\nu-\frac{n}{r}}\tilde{\Phi}_\bl^{\fn^+}(z-w)\,{\rm d}w
\\ \qquad
{}=\sum_{\bl\in\BZ_{++}^r}\frac{\Gamma_r^d(\mu)\Gamma_r^d(\nu)}{\Gamma_r^d(\mu+\nu+\bl)} \sum_{\bm\in\BZ_{++}^r}\sum_{\bn\in\BZ_{++}^r}
(\mu)_{\bm,d}(\nu)_{\bn,d} \big(\tilde{\Phi}_\bl^{\fn^+}\big)^\sim_{\bm,\bn}(a,z).
\end{gather*}
Then by projecting both sides to $\bigoplus_{\bm,\bn}\cP_{\bm,\bn}^\bl(\fp^+\oplus\fp^+)$, we get the 4th equality
for $f=\tilde{\Phi}_\bl^{\fn^+}$, and by the $L$-equivariace, this holds for all $f\in\cP_\bl(\fp^+)$.
\end{proof}

This completes the proof of Theorem \ref{thm_nonsimple} for tube type $\fp^+_{22}$.
When $\fp^+_{22}$ is of non-tube type, we take maximal tripotents $e'\in\fp^+_{11}$, $e''\in\fp^+_{22}$,
and let $\fp^{+\prime}:=\fp^+(e'+e'')_2$, $\fp^{+\prime}_{22}:=\fp^+_{22}(e'')_2$, $\fp^{+\prime}_{12}:=\fp^{+\prime}\cap\fp^+_{12}$,
so that $\fp^{+\prime}=\fp^+_{11}\oplus\fp^{+\prime}_{12}\oplus\fp^{+\prime}_{22}$ holds.
For $x\in\fp^+$, let $x'\in\fp^{+\prime}$ be the orthogonal projection of $x$ onto $\fp^{+\prime}$.
Now we take a polynomial $f(x'_{22})\in\cP_\bl(\fp^{+\prime}_{22})$, and extend to a polynomial $f(x_{22})\in\cP_\bl(\fp^+_{22})$
on $\fp^+_{22}$. Then we have
\[
\tilde{f}_{\bm,\bn}(x_{22},y_{22})=\tilde{f}_{\bm,\bn}(x_{22}',y_{22}')
\in\cP_\bm(\fp^{+\prime}_{22})\otimes\cP_\bn(\fp^{+\prime}_{22}) \subset\cP_\bm(\fp^+_{22})\otimes\cP_\bn(\fp^+_{22}),
\]
and since $\big(Q(z_{12}){}^t\hspace{-1pt}z_{11}^\itinv\big)'=Q(z_{12}'){}^t\hspace{-1pt}z_{11}^\itinv$ holds,
by Corollary~\ref{inner_sub} we get
\begin{gather*}
\big\langle \det_{\fn^+_{11}}(x_{11})^kf(x_{22}),{\rm e}^{(x|\overline{z})_{\fp^+}}\big\rangle_{\lambda,x,\fp^+}
=\big\langle \det_{\fn^+_{11}}(x_{11})^kf(x_{22}'),{\rm e}^{(x'|\overline{z'})_{\fp^{+\prime}}}\big\rangle_{\lambda,x',\fp^{+\prime}}
\\ \qquad
{}=\frac{\big(\lambda-\frac{d}{2}r'\big)_{\underline{k}_{r''},d}}{(\lambda)_{\underline{k}_{r}+\bl,d} \big(\lambda\!-\!\frac{d}{2}r'\big)_{\bl,d}}
\det_{\fn^+_{11}}(z_{11})^k\sum_{\bm,\bn}(-k)_{\bm,d}\bigg(\lambda\!+\!k\!-\!\frac{d}{2}r'\bigg)_{\bn,d}
\tilde{f}_{\bm,\bn}\big(Q(z_{12}'){}^t\hspace{-1pt}z_{11}^\itinv,z_{22}'\big)
\\ \qquad
{}=\frac{\big(\lambda-\frac{d}{2}r'\big)_{\underline{k}_{r''},d}}{(\lambda)_{\underline{k}_{r}+\bl,d} \big(\lambda\!-\!\frac{d}{2}r'\big)_{\bl,d}}
\det_{\fn^+_{11}}(z_{11})^k\sum_{\bm,\bn}(-k)_{\bm,d}\bigg(\lambda\!+\!k\!-\!\frac{d}{2}r'\bigg)_{\bn,d}
\tilde{f}_{\bm,\bn}\big(Q(z_{12}){}^t\hspace{-1pt}z_{11}^\itinv,z_{22}\big).
\end{gather*}
Now let $\fk^\BC_{22}:=[\fp^+_{22},\fp^-_{22}]\subset\fk^\BC$, and let $K_{22}^\BC\subset K^\BC$ be the corresponding connected subgroup.
Then since both
\[
\cP(\fp^+_{22})\longrightarrow \cP(\fp^+), \qquad f(x_{22})\mapsto
\big\langle \det_{\fn^+_{11}}(x_{11})^kf(x_{22}),{\rm e}^{(x|\overline{z})_{\fp^+}}\big\rangle_{\lambda,x}
\]
and
\[
\cP(\fp^+_{22})\longrightarrow \cP_\bm(\fp^+_{22})\otimes\cP_\bn(\fp^+_{22}), \qquad
f(x_{22})\mapsto \tilde{f}_{\bm,\bn}(x_{22},y_{22})
\]
are $K_{22}^\BC$-equivariant, and since $\cP_\bl(\fp^+_{22})$ is generated by $\cP_\bl(\fp^{+\prime}_{22})$ as a $K^\BC_{22}$-module,
the above formula holds for all $f\in\cP_\bl(\fp^+_{22})$. This completes the proof of Theorem~\ref{thm_nonsimple} for non-tube type~$\fp^+_{22}$.

Especially if both $\fp^+_{11}$, $\fp^+_{22}$ are of tube type (i.e., {\it Cases} 1, 2, 5, {\it Case} 3 with $q'=s'=r'$, $q''=s''=r''$,
and {\it Case} 4 with $s'=2r'$, $s''=2r''$), and if $\bl=\underline{l}_{r''}$, $f(x_{22})=\det_{\fn^+_{22}}(x_{22})^l$,
then since by (\ref{det_h}) and (\ref{expand_h}),
\begin{align*}
\det_{\fn^+_{22}}(x_{22}+y_{22})^l&=\det_{\fn^+_{22}}(y_{22})^l h_{\fp^+_{22}}\big({-}x_{22},{}^t\hspace{-1pt}y_{22}^\itinv\big)^l \\
&=\det_{\fn^+_{22}}(y_{22})^l\sum_{\bm}(-1)^{|\bm|}(-l)_{\bm,d}\tilde{\Phi}_\bm^{\fp^+_{22}}\big(x_{22},{}^t\hspace{-1pt}y_{22}^\itinv\big)
\end{align*}
holds, where $\tilde{\Phi}_\bm^{\fp^+_{22}}$ is as in (\ref{Phi^p}), we have
\[
\tilde{f}_{\bm,\bn}(x_{22},y_{22})=\begin{cases}
\ds \det_{\fn^+_{22}}(y_{22})^l(-1)^{|\bm|}(-l)_{\bm,d}
\tilde{\Phi}_\bm^{\fp^+_{22}}\big(x_{22},{}^t\hspace{-1pt}y_{22}^\itinv\big), & \bn=\underline{l}_{r''}-\bm^\vee,
\\
0, & \bn\ne\underline{l}_{r''}-\bm^\vee, \end{cases}
\]
where $\bm^\vee=(m_{r''},m_{r''-1},\dots,m_1)$. Therefore we get
\begin{gather*}
\big\langle \det_{\fn^+_{11}}(x_{11})^k\det_{\fn^+_{22}}(x_{22})^l,{\rm e}^{(x|\overline{z})_{\fp^+}}\big\rangle_{\lambda,x}
\\ \qquad
{}=\frac{\big(\lambda-\frac{d}{2}r'\big)_{\underline{k}_{r''},d}}
{(\lambda)_{\underline{k}_{r}+\underline{l}_{r''},d} \big(\lambda-\frac{d}{2}r'\big)_{\underline{l}_{r''},d}}
\det_{\fn^+_{11}}(z_{11})^k\det_{\fn^+_{22}}(z_{22})^l
\\ \qquad\hphantom{=}
{}\ \times\sum_{\bm}(-1)^{|\bm|}(-k)_{\bm,d}(-l)_{\bm,d}
\bigg(\lambda+k-\frac{d}{2}r'\bigg)_{\underline{l}_{r''}-\bm^\vee,d}
\tilde{\Phi}_\bm^{\fp^+_{22}}\big(Q(z_{12}){}^t\hspace{-1pt}z_{11}^\itinv,{}^t\hspace{-1pt}z_{22}^\itinv\big) \\ \qquad
{}=\frac{\big(\lambda-\frac{d}{2}r'\big)_{\underline{k}_{r''},d} \big(\lambda+k-\frac{d}{2}r'\big)_{\underline{l}_{r''},d}}
{(\lambda)_{\underline{k}_{r}+\underline{l}_{r''},d} \big(\lambda-\frac{d}{2}r'\big)_{\underline{l}_{r''},d}}
\det_{\fn^+_{11}}(z_{11})^k\det_{\fn^+_{22}}(z_{22})^l
\\ \qquad\hphantom{=}
{}\ \times\sum_{\bm}\frac{(-k)_{\bm,d}(-l)_{\bm,d}}{\left(-\lambda-k-l+\frac{n}{r}\right)_{\bm,d}}
\tilde{\Phi}_\bm^{\fp^+_{22}}\big(Q(z_{12}){}^t\hspace{-1pt}z_{11}^\itinv,{}^t\hspace{-1pt}z_{22}^\itinv\big),
\end{gather*}
and the constant is computed as
\begin{gather}
\frac{\big(\lambda-\frac{d}{2}r'\big)_{\underline{k}_{r''},d} \big(\lambda+k-\frac{d}{2}r'\big)_{\underline{l}_{r''},d}}
{(\lambda)_{\underline{k}_{r}+\underline{l}_{r''},d} \big(\lambda-\frac{d}{2}r'\big)_{\underline{l}_{r''},d}} \notag
\\ \qquad
{}=\frac{\big(\lambda-\frac{d}{2}r'\big)_{\underline{k+l}_{r''},d}}
{(\lambda)_{\underline{k+l}_{r''},d}\big(\lambda-\frac{d}{2}r''\big)_{\underline{k}_{r'},d}
\big(\lambda-\frac{d}{2}r'\big)_{\underline{l}_{r''},d}}
=\frac{\big(\lambda-\frac{d}{2}r''\big)_{\underline{k+l}_{r'},d}}
{(\lambda)_{\underline{k+l}_{r'},d}\big(\lambda-\frac{d}{2}r''\big)_{\underline{k}_{r'},d}
\big(\lambda-\frac{d}{2}r'\big)_{\underline{l}_{r''},d}} \notag
\\ \qquad
{}=\frac{\big(\lambda+l-\frac{d}{2}r'\big)_{\underline{k}_{r''},d}}
{(\lambda)_{\underline{k+l}_{r''},d}\big(\lambda-\frac{d}{2}r''\big)_{\underline{k}_{r'},d}}
=\frac{\big(\lambda+k-\frac{d}{2}r''\big)_{\underline{l}_{r'},d}}
{(\lambda)_{\underline{k+l}_{r'},d}\big(\lambda-\frac{d}{2}r'\big)_{\underline{l}_{r''},d}} \notag
\\ \qquad
{}=\begin{cases} \ds \frac{\big(\lambda+\max\{k,l\}-\frac{d}{2}r'\big)_{\underline{\min\{k,l\}}_{r''},d}}
{(\lambda)_{\underline{k+l}_{r''},d}\big(\lambda-\frac{d}{2}r''\big)_{\underline{k}_{r'-r''},d}
\big(\lambda-\frac{d}{2}r'\big)_{\underline{\min\{k,l\}}_{r''},d}}, &r'\ge r'',
\\
\ds \frac{\big(\lambda+\max\{k,l\}-\frac{d}{2}r''\big)_{\underline{\min\{k,l\}}_{r'},d}}
{(\lambda)_{\underline{k+l}_{r'},d}\big(\lambda-\frac{d}{2}r'\big)_{\underline{l}_{r''-r'},d}
\big(\lambda-\frac{d}{2}r''\big)_{\underline{\min\{k,l\}}_{r'},d}}, &r'\le r''. \end{cases}\label{constant_nonsimple}
\end{gather}
Also, by Proposition \ref{prop_Jordansub}(2) we have
\[ t^rh_{\fp^+_{22}}\big(t^{-1}Q(z_{12}){}^t\hspace{-1pt}z_{11}^\itinv,{}^t\hspace{-1pt}z_{22}^\itinv\big)
=t^rh_{\fp^+_{11}}\big(t^{-1}Q(z_{12}){}^t\hspace{-1pt}z_{22}^\itinv,{}^t\hspace{-1pt}z_{11}^\itinv\big)\in\BC[t], \]
and since $\tilde{\Phi}_\bm^{\fp^+_{22}}\big(Q(z_{12}){}^t\hspace{-1pt}z_{11}^\itinv,{}^t\hspace{-1pt}z_{22}^\itinv\big)$ depends only on the roots of
this polynomial, we have
\[ \tilde{\Phi}_\bm^{\fp^+_{22}}\big(Q(z_{12}){}^t\hspace{-1pt}z_{11}^\itinv,{}^t\hspace{-1pt}z_{22}^\itinv\big)
=\tilde{\Phi}_\bm^{\fp^+_{11}}\big(Q(z_{12}){}^t\hspace{-1pt}z_{22}^\itinv,{}^t\hspace{-1pt}z_{11}^\itinv\big). \]
Therefore we have the following.
\begin{Corollary}\label{cor_scalar_nonsimple}
Assume $\fp^+_{11}$, $\fp^+_{22}$ are of tube type, and let
$\Re\lambda>\frac{2n}{r}-1$, $k,l\in\BZ_{\ge 0}$.
Then for $z=z_{11}+z_{12}+z_{22}\in\fp^+$, we have
\begin{gather*}
\big\langle \det_{\fn^+_{11}}(x_{11})^k\det_{\fn^+_{22}}(x_{22})^l,{\rm e}^{(x|\overline{z})_{\fp^+}}\big\rangle_{\lambda,x}
 \\ \qquad
{}=\begin{cases} \ds \frac{\big(\lambda+\max\{k,l\}-\frac{d}{2}r'\big)_{\underline{\min\{k,l\}}_{r''},d}}
{(\lambda)_{(\underline{k+l}_{r''},\underline{k}_{r'-r''},\underline{\min\{k,l\}}_{r''}),d}}
\det_{\fn^+_{11}}(z_{11})^k\det_{\fn^+_{22}}(z_{22})^l
\\ \qquad
\ds {}\times
{}_2F_1^{\fp^+_{22}}\left(\begin{matrix} -k,-l \\ -\lambda-k-l+\frac{n}{r}\end{matrix};
Q(z_{12}){}^t\hspace{-1pt}z_{11}^\itinv,{}^t\hspace{-1pt}z_{22}^\itinv\right)\!, & r'\ge r'',
\\
\ds \frac{\big(\lambda+\max\{k,l\}-\frac{d}{2}r''\big)_{\underline{\min\{k,l\}}_{r'},d}}
{(\lambda)_{(\underline{k+l}_{r'},\underline{l}_{r''-r'},\underline{\min\{k,l\}}_{r'}),d}}
\det_{\fn^+_{11}}(z_{11})^k\det_{\fn^+_{22}}(z_{22})^l
\\ \qquad
\ds{} \times
{}_2F_1^{\fp^+_{11}}
\left(\begin{matrix} -k,-l \\ -\lambda-k-l+\frac{n}{r}\end{matrix};
Q(z_{12}){}^t\hspace{-1pt}z_{22}^\itinv,{}^t\hspace{-1pt}z_{11}^\itinv\right)\!, & r'\le r''.
\end{cases}
\end{gather*}
\end{Corollary}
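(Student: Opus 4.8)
The plan is to obtain this corollary by specialising Theorem~\ref{thm_nonsimple} to $\bl=\underline{l}_{r''}$, $f(x_{22})=\det_{\fn^+_{22}}(x_{22})^l$, and then recognising the resulting finite sum over $\bm$ as a multivariate ${}_2F_1$. First I would compute the bihomogeneous components $\tilde f_{\bm,\bn}$ of~(\ref{fmn}) for this particular $f$. By the determinant--generic-norm relation~(\ref{det_h}), $\det_{\fn^+_{22}}(x_{22}+y_{22})^l=\det_{\fn^+_{22}}(y_{22})^l\,h_{\fp^+_{22}}\big({-}x_{22},{}^t\hspace{-1pt}y_{22}^\itinv\big)^l$, and expanding the last factor by~(\ref{expand_h}) with parameter $-l$ together with the homogeneity of $\tilde\Phi_\bm^{\fp^+_{22}}$ in its first slot gives
\[
\tilde f_{\bm,\bn}(x_{22},y_{22})=\det_{\fn^+_{22}}(y_{22})^l(-1)^{|\bm|}(-l)_{\bm,d}\,\tilde\Phi_\bm^{\fp^+_{22}}\big(x_{22},{}^t\hspace{-1pt}y_{22}^\itinv\big),
\]
which is non-zero only for $\bn=\underline{l}_{r''}-\bm^\vee$ (the $y_{22}$-degree forces this, since evaluating a weight-$\bm$ polynomial at ${}^t\hspace{-1pt}y_{22}^\itinv$ lands in $\cP_{-\bm^\vee}$).

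Substituting into the third expression of Theorem~\ref{thm_nonsimple}, only the terms with $\bn=\underline{l}_{r''}-\bm^\vee$ survive, and the $z_{22}$-dependence collapses to $\det_{\fn^+_{22}}(z_{22})^l\,\tilde\Phi_\bm^{\fp^+_{22}}\big(Q(z_{12}){}^t\hspace{-1pt}z_{11}^\itinv,{}^t\hspace{-1pt}z_{22}^\itinv\big)$. The crux is then to rewrite the Pochhammer factor $\big(\lambda+k-\frac{d}{2}r'\big)_{\underline{l}_{r''}-\bm^\vee,d}$; applying to $\fp^+_{22}$ the identity used in the proof of Proposition~\ref{prop_hypergeom}(1) yields
\[
\big(\lambda+k-\tfrac{d}{2}r'\big)_{\underline{l}_{r''}-\bm^\vee,d}
=\frac{\big(\lambda+k-\frac{d}{2}r'\big)_{\underline{l}_{r''},d}}{(-1)^{|\bm|}\big({-}\lambda-k-l+\frac{n}{r}\big)_{\bm,d}},
\]
where the arithmetic input making the denominator parameter come out as $-\lambda-k-l+\frac{n}{r}$ is $\frac{d}{2}r'+\frac{n''}{r''}=\frac{n}{r}$ (which holds since $\frac{n}{r}=1+\frac{d}{2}(r-1)$ and $\frac{n''}{r''}=1+\frac{d}{2}(r''-1)$). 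The two factors $(-1)^{|\bm|}$ cancel, and the surviving sum $\sum_{\bm}\frac{(-k)_{\bm,d}(-l)_{\bm,d}}{(-\lambda-k-l+\frac{n}{r})_{\bm,d}}\tilde\Phi_\bm^{\fp^+_{22}}$ is, by the definition~(\ref{2F1^p}), exactly the value of ${}_2F_1^{\fp^+_{22}}\big(\begin{smallmatrix}-k,-l\\-\lambda-k-l+\frac{n}{r}\end{smallmatrix};Q(z_{12}){}^t\hspace{-1pt}z_{11}^\itinv,{}^t\hspace{-1pt}z_{22}^\itinv\big)$.

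It then remains to assemble the scalar prefactor. Combining the constant from Theorem~\ref{thm_nonsimple} with the factor $\big(\lambda+k-\frac{d}{2}r'\big)_{\underline{l}_{r''},d}$ pulled out above produces $\frac{(\lambda-\frac{d}{2}r')_{\underline{k}_{r''},d}\,(\lambda+k-\frac{d}{2}r')_{\underline{l}_{r''},d}}{(\lambda)_{\underline{k}_r+\underline{l}_{r''},d}\,(\lambda-\frac{d}{2}r')_{\underline{l}_{r''},d}}$, which is precisely the quantity whose simplification is recorded in~(\ref{constant_nonsimple}); its reduction to the stated $\max/\min$ form is routine Pochhammer algebra, using $(\alpha)_{k+l}=(\alpha)_k(\alpha+k)_l$ and the splitting of $(\cdot)_{\bm,d}$ over a concatenated partition (as verified by matching $(\lambda)_{(\underline{k+l}_{r''},\underline{k}_{r'-r''},\underline{\min\{k,l\}}_{r''}),d}$ against the product of its three blocks). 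This establishes the $r'\ge r''$ formula for $z\in\Omega$; since both sides are polynomial in $z$ it extends to all $z\in\fp^+$, and the restriction on $\lambda$ is relaxed by meromorphic continuation in $\lambda$.

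Finally, the $r'\le r''$ case follows by interchanging the roles of $\fp^+_{11}$ and $\fp^+_{22}$ (equivalently $k\leftrightarrow l$, $r'\leftrightarrow r''$) in the whole argument, together with the symmetry $\tilde\Phi_\bm^{\fp^+_{22}}\big(Q(z_{12}){}^t\hspace{-1pt}z_{11}^\itinv,{}^t\hspace{-1pt}z_{22}^\itinv\big)=\tilde\Phi_\bm^{\fp^+_{11}}\big(Q(z_{12}){}^t\hspace{-1pt}z_{22}^\itinv,{}^t\hspace{-1pt}z_{11}^\itinv\big)$, which comes from Proposition~\ref{prop_Jordansub}(2) and the fact that $\tilde\Phi_\bm^{\fp^\pm}$ depends only on the roots of $t^rh_{\fp^\pm}(t^{-1}x,y)$. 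I expect the only genuinely delicate step to be this reduction of the Pochhammer factor: correctly tracking the reversal $\bm\mapsto\bm^\vee$, the cancelling signs, and the shift identity that produces the lower ${}_2F_1$ parameter; everything else is formal Pochhammer manipulation or an appeal to the already-established Theorem~\ref{thm_nonsimple}.
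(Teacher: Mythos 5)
Your proposal is correct and follows essentially the same route as the paper: specialise Theorem~\ref{thm_nonsimple} to $f=\det_{\fn^+_{22}}(\cdot)^l$, compute $\tilde f_{\bm,\bn}$ via (\ref{det_h}) and (\ref{expand_h}) so that only $\bn=\underline{l}_{r''}-\bm^\vee$ survives, reduce the Pochhammer factor $\big(\lambda+k-\tfrac{d}{2}r'\big)_{\underline{l}_{r''}-\bm^\vee,d}$ by the reversal identity from the proof of Proposition~\ref{prop_hypergeom}(1) (with $\tfrac{d}{2}r'+\tfrac{n''}{r''}=\tfrac{n}{r}$), assemble the constant via (\ref{constant_nonsimple}), and pass between $\tilde\Phi_\bm^{\fp^+_{22}}$ and $\tilde\Phi_\bm^{\fp^+_{11}}$ using Proposition~\ref{prop_Jordansub}(2). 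All the delicate points you flag (the reversal $\bm\mapsto\bm^\vee$, the cancelling signs, the lower parameter of the ${}_2F_1$) are handled exactly as in the paper.
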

Here ${}_2F_1^{\fp^+_{jj}}$ is as in (\ref{2F1^p}).
By this corollary and by Remark \ref{rem_Heckman-Opdam}, the inner product
$\big\langle\! \det_{\fn^+_{11}}\hspace{-2pt}(x_{11})^k\allowbreak\times\det_{\fn^+_{22}}(x_{22})^l,{\rm e}^{(x|\overline{z})_{\fp^+}}\big\rangle_{\lambda,x}$ is
given by using a Heckman--Opdam's multivariate hypergeometric polynomial of type $BC_{\min\{r',r''\}}$. 
On the other hand, for general $\bl\in\BZ_{++}^{r''}$, $f(x_{22})\in\cP_\bl(\fp^+_{22})$, the inner product
$\big\langle \det_{\fn^+_{11}}(x_{11})^kf(x_{22}),{\rm e}^{(x|\overline{z})_{\fp^+}}\big\rangle_{\lambda,x}$ is not always
given by that, since the generalization of $\underline{l}_{r''}$ to $\bl\in\BZ_{++}^{r''}$ in the above formula
does not fit with (\ref{Heckman-Opdam}).

For later use we prove the following. Suppose $\fp^+=\fn^{+\BC}$ is simple and of tube type.
\begin{Proposition}\label{prop_identity_nonsimple}
Let $\bl\in\BZ_{++}^r$, $j\in\{0,1,\dots,l_r\}$. For $f(x)\in\cP_\bl(\fp^+)$,
we put $g(x):=\det_{\fn^+}(x)^{-j}f(x)\in\cP_{\bl-\underline{j}_r}(\fp^+)$.
\begin{enumerate}\itemsep=0pt
\item[$1.$] Let $\bm,\bn\in\BZ_{++}^r$. Then for $x,y\in\fp^+$ we have
\begin{align*}
\bigg(\frac{n}{r}-j+\bl\bigg)_{\underline{j}_r,d}\tilde{g}_{\bm,\bn}(x,y)
&=\bigg(\frac{n}{r}+\bm\bigg)_{\underline{j}_r,d}\det_{\fn^+}(x)^{-j}\tilde{f}_{\bm+\underline{j}_r,\bn}(x,y) \\
&=\bigg(\frac{n}{r}+\bn\bigg)_{\underline{j}_r,d}\det_{\fn^+}(y)^{-j}\tilde{f}_{\bm,\bn+\underline{j}_r}(x,y).
\end{align*}
\item[$2.$] Let $k\in\BZ_{\ge 0}$, $k\ge j$. Then for $z,a\in\fp^+$ we have
\begin{gather*}
\det_{\fn^+}\bigg(\frac{\partial}{\partial z}\bigg)^k\det_{\fn^+}(z-a)^{k-j}f(z)
\\ \qquad
{}=\bigg(\frac{n}{r}-j+\bl\bigg)_{\underline{j}_r,d}\!\!\!\!
\det_{\fn^+}(z-a)^{-j}\det_{\fn^+}\bigg(\frac{\partial}{\partial z}\bigg)^{k-j}\!\!\!\!\det_{\fn^+}(z-a)^k\det_{\fn^+}(z)^{-j}f(z).
\end{gather*}
\end{enumerate}
\end{Proposition}

\begin{proof}
(1) We~compute $\det_{\fn^+}\big(\frac{\partial}{\partial x}\big)^jf(x+y)$ in two ways. By \cite[Proposition VII.1.6]{FK},
\begin{align*}
\det_{\fn^+}\bigg(\frac{\partial}{\partial x}\bigg)^j\!f(x\!+\!y)
={}&\det_{\fn^+}\bigg(\frac{\partial}{\partial w}\bigg)^jf(w)\biggl|_{w=x+y}
=\bigg(\frac{n}{r}\!-\!j\!+\!\bl\bigg)_{\underline{j}_r,d}\!\!\!\!\det_{\fn^+}(x+y)^{-j}f(x+y)
\\
={}&\bigg(\frac{n}{r}-j+\bl\bigg)_{\underline{j}_r,d}\sum_{\bm,\bn}\tilde{g}_{\bm,\bn}(x,y),
\\
\det_{\fn^+}\bigg(\frac{\partial}{\partial x}\bigg)^j\!f(x\!+\!y)
={}&\det_{\fn^+}\bigg(\frac{\partial}{\partial x}\bigg)^j\sum_{\bm,\bn}\tilde{f}_{\bm,\bn}(x,y)\\
={}& \det_{\fn^+}(x)^{-j}\sum_{\substack{\bm\\ m_r\ge j}}\bigg(\frac{n}{r}-j+\bm\bigg)_{\underline{j}_r,d}
\sum_{\bn}\tilde{f}_{\bm,\bn}(x,y)\\
={}& \det_{\fn^+}(x)^{-j}\sum_{\bm}\bigg(\frac{n}{r}+\bm\bigg)_{\underline{j}_r,d} \sum_{\bn}\tilde{f}_{\bm+\underline{j}_r,\bn}(x,y)
\end{align*}
hold. We~note that $\left(\frac{n}{r}-j+\bm\right)_{\underline{j}_r,d}=0$ holds if $m_r<j$.
Then projecting both onto $\cP_\bm(\fp^+)\otimes\cP_\bn(\fp^+)$, we get the 1st equality. The 2nd equality is also proved similarly.

(2) By dividing the 3rd equality of Proposition \ref{Euler_int} by $\Gamma_r^d(\mu)$ and putting $\mu=-k$,
\begin{gather*}
\det_{\fn^+}(z-a)^{k-\nu+\frac{n}{r}}\det_{\fn^+}\bigg(\frac{\partial}{\partial z}\bigg)^k\det_{\fn^+}(z-a)^{\nu-\frac{n}{r}}f(z)
\\ \qquad
{}=(\nu-k)_{\underline{k}_r,d}\sum_{\bn}\frac{(\nu)_{\bn,d}}{(\nu-k)_{\bn,d}}\sum_{\bm} \tilde{f}_{\bm,\bn}(a,z-a)
=\sum_{\bn}(\nu-k+\bn)_{\underline{k}_r,d}\sum_{\bm} \tilde{f}_{\bm,\bn}(a,z-a)
\end{gather*}
holds, and putting $\nu=k-j+\frac{n}{r}$, we get
\begin{gather*}
\det_{\fn^+}\bigg(\frac{\partial}{\partial z}\bigg)^k\det_{\fn^+}(z-a)^{k-j}f(z)
\\ \qquad
{}=\det_{\fn^+}(z-a)^{-j}\sum_{\substack{\bn\\ n_r\ge j}}\bigg(\frac{n}{r}-j+\bn\bigg)_{\underline{k}_r,d}\sum_{\bm} \tilde{f}_{\bm,\bn}(a,z-a)
\\ \qquad
{}=\det_{\fn^+}(z-a)^{-j}\sum_{\bn}\bigg(\frac{n}{r}+\bn\bigg)_{\underline{k}_r,d}\sum_{\bm} \tilde{f}_{\bm,\bn+\underline{j}_r}(a,z-a)
\\ \qquad
{}=\det_{\fn^+}(z-a)^{-j}\sum_{\bn}\bigg(\frac{n}{r}+j+\bn\bigg)_{\underline{k-j}_r,d}
\bigg(\frac{n}{r}+\bn\bigg)_{\underline{j}_r,d}\sum_{\bm} \tilde{f}_{\bm,\bn+\underline{j}_r}(a,z-a).
\end{gather*}
Similarly, for $g(x):=\det_{\fn^+}(x)^{-j}f(x)$ we have
\begin{gather*}
\bigg(\frac{n}{r}-j+\bl\bigg)_{\underline{j}_r,d}
\det_{\fn^+}(z-a)^{-j}\det_{\fn^+}\bigg(\frac{\partial}{\partial z}\bigg)^{k-j}\det_{\fn^+}(z-a)^k\det_{\fn^+}(z)^{-j}f(z)
\\ \qquad
{}=\bigg(\frac{n}{r}-j+\bl\bigg)_{\underline{j}_r,d}
\sum_{\bn}\bigg(\frac{n}{r}+j+\bn\bigg)_{\underline{k-j}_r,d}\sum_{\bm} \tilde{g}_{\bm,\bn}(a,z-a).
\end{gather*}
Then comparing these and combining with (1), we get the desired formula.
\end{proof}

If $\bl=\underline{l}_r$ for some $l\in\BZ_{\ge 0}$, then Proposition~\ref{prop_identity_nonsimple}(2) also follows from (\ref{Kummer2}).
By Propositions~\ref{prop_identity_nonsimple}(2) and~\ref{prop_Jordansub}(2), easily we get the following.
\begin{Corollary}\label{cor_identity_nonsimple}
Suppose $\fp^+_{11}$, $\fp^+_{22}$ are of tube type. Let $j,k\in\BZ_{\ge 0}$, $\bl\in\BZ_{++}^{r''}$, $j\le \min\{k,l_{r''}\}$
and let $f(x_{22})\in\cP_\bl(\fp^+_{22})$. Then for $z=z_{11}+z_{12}+z_{22}\in\fp^+=\fp^+_{11}\oplus\fp^+_{12}\oplus\fp^+_{22}$ we have
\begin{gather*}
\det_{\fn^+_{22}}\bigg(\frac{\partial}{\partial z_{22}}\bigg)^k\det_{\fn^+}(z)^{k-j}f(z_{22})
\\ \qquad
{}=\bigg(\frac{n''}{r''}-j+\bl\bigg)_{\underline{j}_{r''},d}
\det_{\fn^+}(z)^{-j}\det_{\fn^+_{22}}\bigg(\frac{\partial}{\partial z_{22}}\bigg)^{k-j}\det_{\fn^+}(z)^k\det_{\fn^+_{22}}(z_{22})^{-j}f(z_{22}).
\end{gather*}
\end{Corollary}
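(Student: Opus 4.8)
The plan is to freeze the variables $z_{11}$ and $z_{12}$ and thereby reduce the asserted identity to Proposition~\ref{prop_identity_nonsimple}(2), applied not to $\fp^+$ but to the simple tube type Jordan algebra $\fp^+_{22}$ (of rank $r''$, dimension $n''$, and with the common structure constant $d$). The point is that the differential operator $\det_{\fn^+_{22}}\big(\frac{\partial}{\partial z_{22}}\big)$ sees $z_{11},z_{12}$ only as frozen parameters, and Proposition~\ref{prop_Jordansub}(2) turns $\det_{\fn^+}(z)$ into the $\fp^+_{22}$-determinant of a shifted variable times a factor independent of $z_{22}$.

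Concretely, I would first restrict to the dense open subset of $\fp^+$ on which $z_{11}$ is invertible in $\fp^+_{11}$. There Proposition~\ref{prop_Jordansub}(2) gives, as an identity of polynomials in $z_{22}$,
\[
\det_{\fn^+}(z)=c\,\det_{\fn^+_{22}}(z_{22}-a), \qquad c:=\det_{\fn^+_{11}}(z_{11})\in\BC,\quad a:=P(z_{12})z_{11}^\itinv\in\fp^+_{22},
\]
where both $c$ and $a$ depend only on $z_{11},z_{12}$ and hence are constant with respect to $z_{22}$. Since $\det_{\fn^+_{22}}\big(\frac{\partial}{\partial z_{22}}\big)$ commutes with multiplication by any power of $c$, I can pull every factor $c^{\pm}$ outside the differentiations on both sides. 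The left-hand side then becomes $c^{k-j}\det_{\fn^+_{22}}\big(\frac{\partial}{\partial z_{22}}\big)^k\det_{\fn^+_{22}}(z_{22}-a)^{k-j}f(z_{22})$, while the right-hand side, using $\det_{\fn^+}(z)^{-j}=c^{-j}\det_{\fn^+_{22}}(z_{22}-a)^{-j}$ and $\det_{\fn^+}(z)^{k}=c^{k}\det_{\fn^+_{22}}(z_{22}-a)^{k}$, becomes $c^{-j}c^{k}$ times $\big(\tfrac{n''}{r''}-j+\bl\big)_{\underline{j}_{r''},d}\det_{\fn^+_{22}}(z_{22}-a)^{-j}\det_{\fn^+_{22}}\big(\frac{\partial}{\partial z_{22}}\big)^{k-j}\det_{\fn^+_{22}}(z_{22}-a)^{k}\det_{\fn^+_{22}}(z_{22})^{-j}f(z_{22})$.

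Because $c^{-j}c^{k}=c^{k-j}$, both sides carry the identical scalar $c^{k-j}$, and after cancelling it the remaining equality is exactly the identity of Proposition~\ref{prop_identity_nonsimple}(2) written for $\fp^+_{22}$, with free variable $z_{22}$ and fixed point $a\in\fp^+_{22}$. Its hypotheses are met: $f\in\cP_\bl(\fp^+_{22})$, the condition $j\le l_{r''}$ guarantees $\det_{\fn^+_{22}}(z_{22})^{-j}f(z_{22})\in\cP_{\bl-\underline{j}_{r''}}(\fp^+_{22})$ is a polynomial, and $k\ge j$ holds; both of these come from $j\le\min\{k,l_{r''}\}$. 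Its prefactor is precisely $\big(\frac{n''}{r''}-j+\bl\big)_{\underline{j}_{r''},d}$, matching the claim. This establishes the identity on the open set where $z_{11}$ is invertible, and since the left-hand side is a genuine polynomial in $z$, the identity extends to all $z\in\fp^+$ by continuity.

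I do not expect a genuine obstacle here: the argument is a clean equivariant reduction. The only points needing care are (i) that the factorization of $\det_{\fn^+}(z)$ from Proposition~\ref{prop_Jordansub}(2) is a priori valid only where $z_{11}$ is invertible, so that the final identity must be recovered on all of $\fp^+$ by the density of this open set together with the polynomiality of the left-hand side, and (ii) the bookkeeping that the powers of $c=\det_{\fn^+_{11}}(z_{11})$ cancel exactly, which is the one short computation in the proof.
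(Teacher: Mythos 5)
Your proof is correct and is exactly the argument the paper intends: the paper's own "proof" is the one-line remark that the corollary follows from Propositions~\ref{prop_identity_nonsimple}(2) and~\ref{prop_Jordansub}(2), and your write-up supplies precisely the missing details (the factorization $\det_{\fn^+}(z)=\det_{\fn^+_{11}}(z_{11})\det_{\fn^+_{22}}(z_{22}-P(z_{12})z_{11}^\itinv)$, the cancellation of the powers of $\det_{\fn^+_{11}}(z_{11})$, and the density argument). No gaps.
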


\subsection{Computation of poles}

By Theorem \ref{thm_nonsimple}, we can compute the poles of $\big\langle f_1(x_{11})f_2(x_{22}),{\rm e}^{(x|\overline{z})_{\fp^+}}\big\rangle_{\lambda,x}$
for $f_1(x_{11})\in\cP_{\underline{k}_a}(\fp^+_{11})$, $f_2(x_{22})\in\cP_\bl(\fp^+_{22})$
with $k\in\BZ_{\ge 0}$, $1\le a\le r'$, $\bl\in\BZ_{++}^{r''}$, without the assumption that $\fp^+$,
$\fp^+_{11}$ and~$\fp^+_{22}$ are of tube type.
In the following, for $\bk=(k_1,\dots,k_r)$, $\bl=(l_1,\dots,l_r)\in\BZ^r$,
we write $\min\{\bk,\bl\}:=(\min\{k_1,l_1\},\dots,\min\{k_r,l_r\})$, $\max\{\bk,\bl\}:=(\max\{k_1,l_1\},\dots,\max\{k_r,l_r\})\in\BZ^r$,
and write $\bk\le \bl$ if $k_j\le l_j$ for all $1\le j\le r$.
\begin{Corollary}\label{cor_pole_nonsimple}
For $k\in\BZ_{\ge 0}$, $1\le a\le r'$, $\bl\in\BZ_{++}^{r''}$, let
\begin{gather}
\bl':=\begin{cases} (l_1,\dots,l_{r''},0,\dots,0), &a\ge r''
\\
(l_1,\dots,l_{a}), &a\le r'' \end{cases} \quad \in \BZ_{++}^{a}, \notag
\\
\bl'':=\begin{cases} (0,\dots,0), &a\ge r''
\\
(l_{a+1},\dots,l_{r''},0,\dots,0), &a\le r'' \end{cases}\quad \in \BZ_{++}^{r''},
\label{lprime}
\end{gather}
and let $f_1(x_{11})\in \cP_{\underline{k}_a}(\fp^+_{11})$, $f_2(x_{22})\in \cP_\bl(\fp^+_{22})$.
\begin{enumerate}\itemsep=0pt
\item[$1.$] When $z_{12}=0$, we have
\begin{gather*}
\big\langle f_1(x_{11})f_2(x_{22}),{\rm e}^{(x_{11}+x_{22}|\overline{z_{11}+z_{22}})_{\fp^+}}\big\rangle_{\lambda,x}
\\ \qquad
{}=\frac{\big(\lambda-\frac{d}{2}a\big)_{\underline{k}_{r''}+\bl,d}}
{(\lambda)_{\underline{k}_{a+r''}+\bl,d}\big(\lambda-\frac{d}{2}a\big)_{\bl,d}}f_1(z_{11})f_2(z_{22}) \\ \qquad
{}=\frac{\big(\lambda-\frac{d}{2}a+ \max\{\underline{k}_{r''}+\bl'',\bl\}\big)_{\min\{\bl-\bl'',\underline{k}_{r''}\},d}}
{(\lambda)_{\underline{k}_{a}+\bl',d} \big(\lambda-\frac{d}{2}a\big)_{\min\{\underline{k}_{r''}+\bl'',\bl\},d}}f_1(z_{11})f_2(z_{22}).
\end{gather*}
\item[$2.$] As a function of $\lambda$,
\begin{align*}
(\lambda)_{\underline{k}_{a}+\bl',d}\bigg(\lambda-\frac{d}{2}a\bigg)_{\min\{\underline{k}_{r''}+\bl'',\bl\},d}
\big\langle f_1(x_{11})f_2(x_{22}),{\rm e}^{(x|\overline{z})_{\fp^+}}\big\rangle_{\lambda,x}
\\ \qquad
{}=(\lambda)_{(\underline{k}_{a}+\bl',\min\{\underline{k}_{r''}+\bl'',\bl\}),d}
\big\langle f_1(x_{11})f_2(x_{22}),{\rm e}^{(x|\overline{z})_{\fp^+}}\big\rangle_{\lambda,x}
\end{align*}
is holomorphically continued for all $\lambda\in\BC$.
\item[$3.$] Moreover, when $f_1(x_{11})f_2(x_{22})\ne 0$,
if $l_{a+1}=0$ or $k=0$ or $\bl=\underline{l}_{a'}$ $(l\in\BZ_{\ge 0}$, $1\le a'\le r'')$,
then the above formula gives a non-zero polynomial in $z\in\fp^+$ for all $\lambda\in\BC$.
\end{enumerate}
\end{Corollary}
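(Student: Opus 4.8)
The plan is to prove Corollary~\ref{cor_pole_nonsimple} by reducing everything to the explicit closed formula of Theorem~\ref{thm_nonsimple} and then analyzing the Pochhammer factors that appear. First I would establish part~(1). When $z_{12}=0$, the quasi-inverse term $Q(z_{12}){}^t\hspace{-1pt}z_{11}^\itinv$ in Theorem~\ref{thm_nonsimple} vanishes, so in the sum $\sum_{\bm,\bn}(-k)_{\bm,d}(\lambda+k-\frac{d}{2}r')_{\bn,d}\tilde f_{\bm,\bn}(0,z_{22})$ only the term $\bm=\underline{0}$ survives (since $\tilde f_{\bm,\bn}(0,z_{22})=0$ for $\bm\neq\underline{0}$ by homogeneity). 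For $\bm=\underline 0$ we have $(-k)_{\underline 0,d}=1$ and $\sum_{\bn}\tilde f_{\underline 0,\bn}(0,z_{22})=f_2(z_{22})$, so with $\bn=\bl$ forced by $|\bn|=|\bl|$, the factor $(\lambda+k-\frac{d}{2}r')_{\bl,d}$ comes out. Combining this with the prefactor $\frac{(\lambda-\frac{d}{2}r')_{\underline k_{r''},d}}{(\lambda)_{\underline k_r+\bl,d}(\lambda-\frac{d}{2}r')_{\bl,d}}$ and using the Pochhammer identity $(\lambda-\frac{d}{2}r')_{\underline k_{r''},d}(\lambda+k-\frac{d}{2}r')_{\bl,d}=(\lambda-\frac{d}{2}r')_{\underline k_{r''}+\bl,d}$ gives the first displayed equality. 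Here I should be careful: the index $a$ plays the role of the effective tube rank of the $\fp^+_{11}$-part, so I would need to replace $r'$ by $a$ throughout by the $\widetilde K_1$-equivariance reduction already invoked in the non-tube-type argument (restricting $f_1\in\cP_{\underline k_a}(\fp^+_{11})$ to the sub-Jordan-algebra of rank $a$), which changes $\frac{d}{2}r'$ to $\frac{d}{2}a$ and $\underline k_r$ to $\underline k_{a+r''}$.

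The second equality in part~(1) is a purely combinatorial rearrangement of Pochhammer symbols, analogous to the chain of identities computed in \eqref{constant_nonsimple}. The plan is to split the index $\bl$ into $\bl'$ (its first $a$ entries, padded) and $\bl''$ (its tail), and then regroup the products $(\lambda-\frac{d}{2}a)_{\underline k_{r''}+\bl,d}$ and $(\lambda)_{\underline k_{a+r''}+\bl,d}$ using the telescoping rule $(\alpha)_{\bs,d}=(\alpha)_{\bs',d}(\alpha+s'_j\text{-shifts})_{\bs'',d}$ together with $\min$ and $\max$ of multi-indices. The key observation is that $\max\{\underline k_{r''}+\bl'',\bl\}+\min\{\underline k_{r''}+\bl'',\bl\}=\underline k_{r''}+\bl''+\bl$ entrywise, so the numerator Pochhammer splits cleanly into the two factors displayed. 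I would verify this entry-by-entry on the symmetric-polynomial level, distinguishing the cases $a\ge r''$ and $a\le r''$ exactly as in \eqref{lprime}.

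For part~(2) I would observe that the right-hand side of the second formula in part~(1), multiplied by the denominator $(\lambda)_{\underline k_a+\bl',d}(\lambda-\frac{d}{2}a)_{\min\{\underline k_{r''}+\bl'',\bl\},d}$, is a product of $\Gamma$-quotients whose remaining factor $(\lambda-\frac{d}{2}a+\max\{\dots\})_{\min\{\bl-\bl'',\underline k_{r''}\},d}f_1(z_{11})f_2(z_{22})$ is manifestly entire in $\lambda$ (a finite product of shifted-affine factors times a fixed polynomial in $z$). More generally, for arbitrary $z_{12}$, the full expression of Theorem~\ref{thm_nonsimple} is a finite sum over $\bm,\bn$ of $\lambda$-polynomial coefficients times $z$-polynomials divided by $(\lambda)_{\underline k_r+\bl,d}(\lambda-\frac{d}{2}a)_{\bl,d}$; I would argue that after multiplying by $(\lambda)_{\underline k_a+\bl',d}(\lambda-\frac{d}{2}a)_{\min\{\underline k_{r''}+\bl'',\bl\},d}$ every pole is cancelled. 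This requires checking that the poles of the denominator that are \emph{not} included in the claimed multiplier are in fact spurious, i.e.\ cancelled by factors $(-k)_{\bm,d}$ or $(\lambda+k-\frac{d}{2}a)_{\bn,d}$ in the numerator for the ranges of $\bm,\bn$ that actually contribute (recalling that $\bm\in\BZ_{++}^{a}$ is forced when $a<r''$). The main obstacle will be this cancellation bookkeeping: one must show that for each $\lambda$-root of the ambient denominator lying outside the reduced multiplier, every surviving summand carries a compensating zero, which amounts to a careful case analysis on which entries of $\bm,\bn$ are nonzero.

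Finally, for part~(3) the plan is to exhibit a specific non-vanishing leading coefficient. When $l_{a+1}=0$ we have $\bl=\bl'$ and $\bl''=\underline 0$, so the reduced formula of part~(1) applies directly after setting $z_{12}=0$ is not needed—instead I would evaluate the numerator polynomial at a generic point and use that $f_1(z_{11})f_2(z_{22})\not\equiv 0$ together with the explicit non-zero rational prefactor (which has no further common poles) to conclude the result is a non-zero polynomial for every $\lambda$. In the cases $k=0$ (where $f_1$ is constant and the sum collapses to $\bm=\underline 0$) or $\bl=\underline l_{a'}$ (where the scalar computation leading to Corollary~\ref{cor_scalar_nonsimple} applies and the contributing ${}_2F_1$ is a nonzero hypergeometric polynomial for all $\lambda$ by the convention adopted after \eqref{Kummer2}), the same reasoning gives non-vanishing. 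The remaining subtlety is that the ${}_2F_1^{\fp^+_{jj}}$ appearing must not be identically zero at the singular $\lambda$; I would appeal to the polynomial well-definedness of ${}_2F_1^d$ with upper parameter $-k$ for $\gamma$ outside the excluded set, already fixed in the paragraph preceding Proposition~\ref{prop_hypergeom}, and to the fact that its top-degree term has coefficient $(-k)_{\underline k_{\min},d}(-l)_{\underline k_{\min},d}$ which is nonzero.
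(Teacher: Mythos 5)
Your part~(1) follows the paper's route exactly (specialize the third formula of Theorem~\ref{thm_nonsimple} at $z_{12}=0$ so only $\bm=\underline{0}$, $\bn=\bl$ survives, reduce general $a$ to the rank-$a$ sub-Jordan-algebra via Corollary~\ref{inner_sub} and $K_1$-equivariance, then rearrange Pochhammer symbols), and that part is fine. But parts~(2) and~(3) each have a genuine gap. In part~(2) you only invoke cancellation coming from $(-k)_{\bm,d}$ and from the \emph{upper} bounds $m_j\le l_j$ on the support of $\tilde f_{\bm,\bn}$. That suffices when $a\ge r''$ (the paper's computation~(\ref{r'ger''}) with the first formula of Theorem~\ref{thm_nonsimple}). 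When $a<r''$, however, the reduced multiplier $(\lambda)_{\underline{k}_a+\bl',d}\,(\lambda-\frac d2 a)_{\min\{\underline{k}_{r''}+\bl'',\bl\},d}$ is strictly smaller than the ambient denominator $(\lambda)_{\underline{k}_{a+r''}+\bl,d}$, and clearing the leftover poles requires a \emph{lower} bound on the support: one must switch to the second formula of Theorem~\ref{thm_nonsimple} and show that $\sum_\bm\tilde f_{\bm,\bn}\big(Q(z_{12}){}^t\hspace{-1pt}z_{11}^\itinv,\,z_{22}-Q(z_{12}){}^t\hspace{-1pt}z_{11}^\itinv\big)$ vanishes unless $l_{a+j}\le n_j$ for $1\le j\le r''-a$ (in the paper this is checked case-by-case with the Littlewood--Richardson rule), together with the rank bound $\bm\in\BZ_{++}^{a}$. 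Without this lower bound your "cancellation bookkeeping" cannot be closed, so the case $a<r''$ of~(2) is not proved by your plan.

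In part~(3) the proposed shortcut does not work: evaluating at $z_{12}=0$ (or "at a generic point") gives the quantity in part~(1), whose numerator Pochhammer $(\lambda-\frac d2a+\max\{\underline{k}_{r''}+\bl'',\bl\})_{\min\{\bl-\bl'',\underline{k}_{r''}\},d}$ itself vanishes at exactly the $\lambda$ one cares about, so non-vanishing of the full polynomial in $z$ cannot be read off from that specialization. The actual argument needs two inputs you have not supplied: (i) the non-vanishing of the single term $\tilde f_{\min\{\underline{k}_{r''},\bl\},\max\{\bl-\underline{k}_{r''},\underline{0}_{r''}\}}$, which is Lemma~\ref{lem_nonzero_nonsimple} and is proved by a duality argument (the maps $\alpha^\bl_{\bm,\bn}$ and $\beta^\bl_{\bm,\bn}$ are Fischer-adjoint, $\beta$ is nonzero on lowest weight vectors, hence $\alpha$ is injective on the irreducible $\cP_\bl(\fp^+)$); and (ii) the observation that this term's coefficient in~(\ref{r'ger''}) is $\lambda$-independent and nonzero while the distinct nonzero $\tilde f_{\bm,\bn}$ are linearly independent. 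Your appeal to the top-degree coefficient of ${}_2F_1$ covers only the scalar case $\bl=\underline{l}_{r''}$; for general $\bl$ with $l_{a+1}=0$ the injectivity lemma is indispensable.
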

\begin{proof}
(1) First we assume $\fp^+_{11}$ is of tube type, $a=r'$, $f_1(x_{11})=\det_{\fn^+_{11}}(x_{11})^k$,
and write $f_2(x_{22})=f(x_{22})\in\cP_\bl(\fp^+_{22})$. In the 3rd equality of Theorem \ref{thm_nonsimple},
if we substitute $z_{12}=0$, then only $\bm=\underline{0}_{r'}$, $\bn=\bl$ term remains, and we have
\begin{gather*}
\big\langle \det_{\fn^+_{11}}(x_{11})^kf(x_{22}),{\rm e}^{(x_{11}+x_{22}|\overline{z_{11}+z_{22}})_{\fp^+}}\big\rangle_{\lambda,x}
\\ \qquad
{}=\frac{\big(\lambda-\frac{d}{2}r'\big)_{\underline{k}_{r''},d}}{(\lambda)_{\underline{k}_{r}+\bl,d} \big(\lambda-\frac{d}{2}r'\big)_{\bl,d}}
\det_{\fn^+_{11}}(z_{11})^k(-k)_{\underline{0}_{r''},d}\bigg(\lambda+k-\frac{d}{2}r'\bigg)_{\bl,d}
\tilde{f}_{\underline{0}_{r''},\bl}\big(Q(0){}^t\hspace{-1pt}z_{11}^\itinv,z_{22}\big)
\\ \qquad
{}=\frac{\big(\lambda-\frac{d}{2}r'\big)_{\underline{k}_{r''}+\bl,d}}
{(\lambda)_{\underline{k}_{r}+\bl,d}\big(\lambda-\frac{d}{2}r'\big)_{\bl,d}} \det_{\fn^+_{11}}(z_{11})^kf(z_{22}).
\end{gather*}
Next we consider general $\fp^+_{11}$ and $1\le a\le r'$. We~take a tripotent $e'\in\fp^+_{11}$ of rank $a$,
a maximal tripotent $e''\in\fp^+_{22}$, and let $\fp^{+\prime}:=\fp^+(e'+e'')_2$, $\fp^{+\prime}_{ij}:=\fp^+_{ij}\cap\fp^{+\prime}$.
For $z\in\fp^+$, let $z'\in\fp^{+\prime}$ be the projection of $z$ onto $\fp^{+\prime}$.
Then for $f_1(x_{11})=\det_{\fn^{+\prime}_{11}}(x_{11}')^k\in\cP_{\underline{k}_a}(\fp^+_{11})$,
$f_2(x_{22})=f(x_{22}')\in\cP_\bl\big(\fp^{+\prime}_{22}\big)\subset\cP_\bl\big(\fp^+_{22}\big)$, by Corollary~\ref{inner_sub} we have
\begin{gather*}
\big\langle f_1(x_{11})f_2(x_{22}),{\rm e}^{(x_{11}+x_{22}|\overline{z_{11}+z_{22}})_{\fp^+}}\big\rangle_{\lambda,x,\fp^+}
\\ \qquad
{}=\big\langle \det_{\fn^{+\prime}_{11}}(x_{11}')^kf(x_{22}'),
{\rm e}^{(x_{11}'+x_{22}'|\overline{z_{11}'+z_{22}'})_{\fp^{+\prime}}}\big\rangle_{\lambda,x',\fp^{+\prime}} \\ \qquad
{}=\frac{\big(\lambda-\frac{d}{2}a\big)_{\underline{k}_{r''}+\bl,d}}
{(\lambda)_{\underline{k}_{a+r''}+\bl,d}\big(\lambda-\frac{d}{2}a\big)_{\bl,d}} \det_{\fn^{+\prime}_{11}}(z_{11}')^kf(z_{22}')
=\frac{\big(\lambda-\frac{d}{2}a\big)_{\underline{k}_{r''}+\bl,d}}
{(\lambda)_{\underline{k}_{a+r''}+\bl,d}\big(\lambda-\frac{d}{2}a\big)_{\bl,d}}f_1(z_{11})f_2(z_{22}),
\end{gather*}
and since $f_1(x_{11})f_2(x_{22})\mapsto \big\langle f_1(x_{11})f_2(x_{22}),{\rm e}^{(x|\overline{z})_{\fp^+}}\big\rangle_{\lambda,x}$ is
$K_1$-equivariant, this holds for all $f_1(x_{11})\in\cP_{\underline{k}_a}(\fp^+_{11})$, $f_2(x_{22})\in\cP_\bl\big(\fp^+_{22}\big)$.
Finally, the constant is computed as
\begin{align*}
\frac{\big(\lambda-\frac{d}{2}a\big)_{\underline{k}_{r''}+\bl,d}}
{(\lambda)_{\underline{k}_{a+r''}+\bl,d}\big(\lambda-\frac{d}{2}a\big)_{\bl,d}}
&=\frac{\big(\lambda-\frac{d}{2}a\big)_{\underline{k}_{r''}+\bl,d}}{(\lambda)_{\underline{k}_a+\bl',d}
\big(\lambda-\frac{d}{2}a\big)_{\underline{k}_{r''}+\bl'',d}\big(\lambda-\frac{d}{2}a\big)_{\bl,d}} \\
&=\frac{\big(\lambda-\frac{d}{2}a+\max\{\underline{k}_{r''}+\bl'',\bl\}\big)
_{\underline{k}_{r''}+\bl-\max\{\underline{k}_{r''}+\bl'',\bl\},d}}
{(\lambda)_{\underline{k}_a+\bl',d}\big(\lambda-\frac{d}{2}a\big)_{\min\{\underline{k}_{r''}+\bl'',\bl\},d}} \\
&=\frac{\big(\lambda-\frac{d}{2}a+ \max\{\underline{k}_{r''}+\bl'',\bl\}\big)_{\min\{\bl-\bl'',\underline{k}_{r''}\},d}}
{(\lambda)_{\underline{k}_a+\bl',d} \big(\lambda-\frac{d}{2}a\big)_{\min\{\underline{k}_{r''}+\bl'',\bl\},d}}.
\end{align*}

(2) Again we assume $\fp^+_{11}$ is of tube type, $a=r'$, $f_1(x_{11})=\det_{\fn^+_{11}}(x_{11})^k$,
and write $f_2(x_{22})=f(x_{22})\in\cP_\bl(\fp^+_{22})$.
First let $r'\ge r''$. Then by the 1st equality of Theorem \ref{thm_nonsimple} we have
\begin{gather*}
\big\langle \det_{\fn^+_{11}}(x_{11})^kf(x_{22}),{\rm e}^{(x|\overline{z})_{\fp^+}}\big\rangle_{\lambda,x}
\\ \qquad
{}=\frac{\big(\lambda-\frac{d}{2}r'\big)_{\underline{k}_{r''},d}}{(\lambda)_{\underline{k}_{r}+\bl,d}} \det_{\fn^+_{11}}(z_{11})^k
\sum_{\bm}\frac{(-k)_{\bm,d}}{\big(\lambda-\frac{d}{2}r'\big)_{\bm,d}}
\sum_{\bn}\tilde{f}_{\bm,\bn}\big(Q(z_{12}){}^t\hspace{-1pt}z_{11}^\itinv-z_{22},z_{22}\big)
\\ \qquad
{}=\frac{1}{(\lambda)_{\underline{k}_{r'}+\bl,d}}\det_{\fn^+_{11}}(z_{11})^k
\sum_{\bm}\frac{(-k)_{\bm,d}}{\big(\lambda-\frac{d}{2}r'\big)_{\bm,d}}
\sum_{\bn}\tilde{f}_{\bm,\bn}\big(Q(z_{12}){}^t\hspace{-1pt}z_{11}^\itinv-z_{22},z_{22}\big).
\end{gather*}
Then since $(-k)_{\bm,d}$ is non-zero only if $m_j\le k$, and
$\sum_\bn\tilde{f}_{\bm,\bn}\left(Q(z_{12}){}^t\hspace{-1pt}z_{11}^\itinv-z_{22},z_{22}\right)$ is non-zero only if $m_j\le l_j$, we have
\begin{gather}
(\lambda)_{\underline{k}_{r'}+\bl,d}\bigg(\lambda-\frac{d}{2}r'\bigg)_{\min\{\underline{k}_{r''},\bl\},d}
\big\langle \det_{\fn^+_{11}}(x_{11})^kf(x_{22}),{\rm e}^{(x|\overline{z})_{\fp^+}}\big\rangle_{\lambda,x} \notag
\\ \qquad
{}=\det_{\fn^+_{11}}(z_{11})^k \sum_{\bm\le\min\{\underline{k}_{r''},\bl\}}
(-k)_{\bm,d}\bigg(\lambda-\frac{d}{2}r'+\bm\bigg)_{\min\{\underline{k}_{r''},\bl\}-\bm,d} \notag
\\ \qquad\hphantom{=}
{} \ \times\sum_{\bn}\tilde{f}_{\bm,\bn}\big(Q(z_{12}){}^t\hspace{-1pt}z_{11}^\itinv-z_{22},z_{22}\big). \label{r'ger''}
\end{gather}
Therefore the claim follows. Next let $r'<r''$.
Then since $\tilde{f}_{\bm,\bn}\left(Q(z_{12}){}^t\hspace{-1pt}z_{11}^\itinv-z_{22},z_{22}\right)$ are not
linearly independent in general, we use the 2nd equality of the theorem instead. Then we have
\begin{gather*}
\big\langle \det_{\fn^+_{11}}(x_{11})^kf(x_{22}),{\rm e}^{(x|\overline{z})_{\fp^+}}\big\rangle_{\lambda,x} \\ =\frac{\big(\lambda-\frac{d}{2}r'\big)_{\underline{k}_{r''},d}}{(\lambda)_{\underline{k}_{r}+\bl,d}}\det_{\fn^+_{11}}(z_{11})^k
\sum_{\bn}\frac{\big(\lambda+k-\frac{d}{2}r'\big)_{\bn,d}}{\big(\lambda-\frac{d}{2}r'\big)_{\bn,d}}
\sum_{\bm} \tilde{f}_{\bm,\bn}\big(Q(z_{12}){}^t\hspace{-1pt}z_{11}^\itinv,z_{22}-Q(z_{12}){}^t\hspace{-1pt}z_{11}^\itinv\big) \\
=\frac{\det_{\fn^+_{11}}(z_{11})^k}{(\lambda)_{\underline{k}_{r'}+\bl',d}}
\sum_{\bn}\frac{\big(\lambda-\frac{d}{2}r'\big)_{\underline{k}_{r''}+\bn,d}}
{\big(\lambda-\frac{d}{2}r'\big)_{\underline{k}_{r''}+\bl'',d}\big(\lambda-\frac{d}{2}r'\big)_{\bn,d}}
\sum_{\bm} \tilde{f}_{\bm,\bn}\big(Q(z_{12}){}^t\hspace{-1pt}z_{11}^\itinv,z_{22}-Q(z_{12}){}^t \hspace{-1pt}z_{11}^\itinv\big).
\end{gather*}
Then since the rank of $Q(z_{12}){}^t\hspace{-1pt}z_{11}^\itinv$ is at most $r'$,
$\tilde{f}_{\bm,\bn}\left(Q(z_{12}){}^t\hspace{-1pt}z_{11}^\itinv,z_{22}-Q(z_{12}){}^t\hspace{-1pt}z_{11}^\itinv\right)$
is non-zero only if $m_{r'+1}=\cdots=m_{r''}=0$,
and $\sum_{\bm}\tilde{f}_{\bm,\bn}\left(Q(z_{12}){}^t\hspace{-1pt}z_{11}^\itinv,z_{22}-Q(z_{12}){}^t\hspace{-1pt}z_{11}^\itinv\right)$
is non-zero only if $l_{r'+j}\le n_j\le l_j$ for $1\le j\le r''-r'$ and $0\le n_j\le l_j$ for $r''-r'<j\le r''$,
which is proved case-by-case by using the Littlewood--Richardson rule. Therefore we have
\begin{gather*}
(\lambda)_{\underline{k}_{r'}+ \bl',d}\bigg(\lambda-\frac{d}{2}r'\bigg)_{\min\{\underline{k}_{r''}+\bl'',\bl\},d}
\big\langle \det_{\fn^+_{11}}(x_{11})^kf(x_{22}),{\rm e}^{(x|\overline{z})_{\fp^+}}\big\rangle_{\lambda,x}
\\
{}=\bigg(\lambda-\frac{d}{2}r'\bigg)_{\min\{\underline{k}_{r''}+\bl'',\bl\},d}\sum_{\bl''\le\bn\le\bl}
\frac{\big(\lambda-\frac{d}{2}r'+\max\{\underline{k}_{r''}+\bl'',\bn\}\big)_{\underline{k}_{r''}+\bn-\max\{\underline{k}_{r''}+\bl'',\bn\},d}}
{\big(\lambda-\frac{d}{2}r'\big)_{\min\{\underline{k}_{r''}+\bl'',\bn\},d}}
\\ \hphantom{=}
{} \ \times\det_{\fn^+_{11}}(z_{11})^k\sum_{\bm}\tilde{f}_{\bm,\bn}
\big(Q(z_{12}){}^t\hspace{-1pt}z_{11}^\itinv,z_{22}-Q(z_{12}){}^t\hspace{-1pt}z_{11}^\itinv\big)
\\
{}=\det_{\fn^+_{11}}(z_{11})^k\sum_{\bl''\le\bn\le\bl}
\bigg(\lambda-\frac{d}{2}r'+\min\{\underline{k}_{r''}+\bl'',\bn\}\bigg)
_{\min\{\underline{k}_{r''}+\bl'',\bl\}-\min\{\underline{k}_{r''}+\bl'',\bn\},d}
\\ \hphantom{=}
{}\ \times \bigg( \!\lambda - \frac{d}{2}r' + \max\{\underline{k}_{r''} + \bl'',\bn\}\!\bigg)
{\vphantom{\biggr)}}_{\min\{\bn-\bl'',\underline{k}_{r''}\},d}
\sum_{\bm}\tilde{f}_{\bm,\bn} \big(Q(z_{12}){}^t\hspace{-1pt}z_{11}^\itinv, z_{22} - Q(z_{12}){}^t\hspace{-1pt}z_{11}^\itinv\big),
\end{gather*}
and the claim follows.
For general $\fp^+_{11}$ and $1\le a\le r'$, this is proved as (1) by using Corollary~\ref{inner_sub} and the $K_1$-equivariance.

(3) Again we assume $\fp^+_{11}$ is of tube type and let $a=r'\ge r''$. Then in (\ref{r'ger''}),
by Lemma \ref{lem_nonzero_nonsimple} given later,
$\tilde{f}_{\min\{\underline{k}_{r''},\bl\},\max\{\bl-\underline{k}_{r''},\underline{0}_{r''}\}}\ne 0$
holds, and its coefficient does not depend on $\lambda\in\BC$ and is non-zero.
Now since all non-zero $\tilde{f}_{\bm,\bn}$ are linearly independent, (\ref{r'ger''}) is non-zero for all $\lambda\in\BC$.
For general $1\le a\le r'$ and $\bl\in\BZ_{++}^{r''}$ with $l_{a+1}=0$,
we take maximal tripotents $e'\in\fp^+_{11}$ of rank $a$ and $e''\in\fp^+_{22}$ of rank $\min\{a,r''\}$.
Then this is proved by reducing to $\fp^+(e'+e'')_2$ using Corollary~\ref{inner_sub} and the $K_1$-equivariance.
Next, when $k=0$, by $\cP_\bl(\fp^+_{22})\subset\cP_\bl(\fp^+)$ and Corollary~\ref{FKCor}, or the 3rd equality of Theorem~\ref{thm_nonsimple},
we have
\[
(\lambda)_{\bl,d}\big\langle f(x_{22}),{\rm e}^{(x|\overline{z})_{\fp^+}}\big\rangle_{\lambda,x} =f(z_{22})\ne 0.
\]
Finally, when $\bl=\underline{l}_{a'}$ with $a'>a$, by exchanging the roles of $(\fp^+_{11},k,a)$ and $(\fp^+_{22},l,a')$,
this is reduced to the $l_{a+1}=0$ case.
\end{proof}

To complete the proof of Corollary~\ref{cor_pole_nonsimple}(3), for $\bl\in\BZ_{++}^r$ let
\[
\Proj_\bl\colon\ \cP(\fp^+)\longrightarrow \cP_\bl(\fp^+)
\]
be the orthogonal projection.
\begin{Lemma}\label{lem_nonzero_nonsimple}
Let $\bl,\bm,\bn\in\BZ_{++}^r$. Then the linear map
\begin{align*}
\alpha_{\bm,\bn}^\bl\colon\ \cP_\bl(\fp^+)&\longrightarrow\cP_\bm(\fp^+)\otimes\cP_\bn(\fp^+), \\
f(z)&\longmapsto \tilde{f}_{\bm,\bn}(x,y):=(\Proj_{\bm,x}\otimes\Proj_{\bn,y})f(x+y)
\end{align*}
is injective if and only if the linear map
\[
\beta_{\bm,\bn}^\bl\colon\quad \cP_\bm(\fp^+)\otimes\cP_\bn(\fp^+)\longrightarrow\cP_\bl(\fp^+), \qquad
g(x,y)\longmapsto \Proj_{\bl,z} (g(z,z))
\]
is non-zero. Especially, for $k\in\BZ_{\ge 0}$,
\[
\alpha_{\min\{\underline{k}_r,\bl\},\max\{\bl-\underline{k}_r,\underline{0}_r\}}^\bl\colon\
\cP_\bl(\fp^+)\longrightarrow\cP_{\min\{\underline{k}_r,\bl\}}(\fp^+) \otimes\cP_{\max\{\bl-\underline{k}_r,\underline{0}_r\}}(\fp^+)
\]
is injective.
\end{Lemma}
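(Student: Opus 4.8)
The plan is to prove the equivalence by showing that $\alpha_{\bm,\bn}^\bl$ and $\beta_{\bm,\bn}^\bl$ are mutually adjoint with respect to the Fischer inner products, and then to read off injectivity versus non-vanishing from the irreducibility of $\cP_\bl(\fp^+)$. First I would factor both maps through the two basic operations on polynomials: the \emph{comultiplication} $\Delta\colon f(z)\mapsto f(x+y)$, sending $\cP(\fp^+)$ into $\cP(\fp^+\oplus\fp^+)$, and the \emph{restriction to the diagonal} $m\colon g(x,y)\mapsto g(z,z)$, sending $\cP(\fp^+\oplus\fp^+)$ back to $\cP(\fp^+)$. Then $\alpha_{\bm,\bn}^\bl=(\Proj_\bm\otimes\Proj_\bn)\circ\Delta|_{\cP_\bl(\fp^+)}$ and $\beta_{\bm,\bn}^\bl=\Proj_\bl\circ m|_{\cP_\bm(\fp^+)\otimes\cP_\bn(\fp^+)}$. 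Both $\Delta$ and $m$ are $K^\BC$-equivariant for the diagonal action (since $l^{-1}(x+y)=l^{-1}x+l^{-1}y$), and the projections are projections onto $K^\BC$-subrepresentations, so $\alpha_{\bm,\bn}^\bl$ and $\beta_{\bm,\bn}^\bl$ are $K^\BC$-equivariant.

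The key computation is the adjointness $\langle\Delta f,g\rangle_{F,\fp^+\oplus\fp^+}=\langle f,m(g)\rangle_{F,\fp^+}$. Using the differential formula $\langle F,G\rangle_F=\overline{G(\overline{\partial})}F\big|_{0}$ together with the fact that $\partial_{x_\alpha}$ and $\partial_{y_\alpha}$ act identically on $f(x+y)$, this reduces for $g=g_1\otimes g_2$ to $\overline{g_1}(\overline{\partial_x})\,\overline{g_2}(\overline{\partial_y})f(x+y)\big|_{x=y=0}=\overline{g_1g_2}(\overline{\partial})f\big|_{0}=\langle f,m(g)\rangle_F$, and extends by sesquilinearity. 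Since the decomposition $\cP(\fp^+)=\bigoplus_\bm\cP_\bm(\fp^+)$ is orthogonal for the ($K$-invariant) Fischer inner product, being a decomposition into pairwise inequivalent irreducibles, the projections $\Proj_\bm$, $\Proj_\bn$, $\Proj_\bl$ are self-adjoint; inserting them into the identity above gives $(\alpha_{\bm,\bn}^\bl)^*=\beta_{\bm,\bn}^\bl$. Now elementary linear algebra on finite-dimensional inner product spaces shows $\alpha_{\bm,\bn}^\bl$ is injective if and only if $\operatorname{im}(\alpha_{\bm,\bn}^\bl)^*=\cP_\bl(\fp^+)$, i.e.\ if and only if $\beta_{\bm,\bn}^\bl$ is surjective; and because $\cP_\bl(\fp^+)$ is irreducible and $\beta_{\bm,\bn}^\bl$ is equivariant, its image is either $0$ or all of $\cP_\bl(\fp^+)$, so surjectivity is the same as non-vanishing. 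This establishes the stated equivalence.

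For the final assertion I would simply exhibit one element witnessing $\beta_{\bm,\bn}^\bl\ne0$ when $\bm=\min\{\underline{k}_r,\bl\}$ and $\bn=\max\{\bl-\underline{k}_r,\underline{0}_r\}$. These both lie in $\BZ_{++}^r$ and satisfy $\bm+\bn=\bl$ componentwise (checking the cases $l_j\le k$ and $l_j>k$). Taking $g(x,y):=\Delta_\bm(x)\Delta_\bn(y)\in\cP_\bm(\fp^+)\otimes\cP_\bn(\fp^+)$, the multiplicativity $\Delta_\bm(z)\Delta_\bn(z)=\Delta_{\bm+\bn}(z)=\Delta_\bl(z)$ gives $m(g)(z)=\Delta_\bl(z)$, which already lies in $\cP_\bl(\fp^+)$; hence $\beta_{\bm,\bn}^\bl(g)=\Proj_\bl(\Delta_\bl)=\Delta_\bl\ne0$. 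By the equivalence just proved, $\alpha^\bl_{\min\{\underline{k}_r,\bl\},\,\max\{\bl-\underline{k}_r,\underline{0}_r\}}$ is injective.

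I expect the only genuine obstacle to be bookkeeping in the adjointness step: one must track the conjugate-linearity of $\langle\cdot,\cdot\rangle_F$ and confirm that the polynomial decomposition is orthogonal for the Fischer pairing so that the projections are legitimately self-adjoint (a one-line example in rank one, e.g.\ $\langle(x+y)^l,x^ay^b\rangle_F=\langle z^l,z^{a+b}\rangle_F$, reassures that no spurious scalar enters). Once these points are secured, the remainder is formal, and the explicit $\Delta_\bm\otimes\Delta_\bn$ makes the special case immediate.
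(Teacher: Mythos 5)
Your proposal is correct and follows essentially the same route as the paper: both establish that $\alpha_{\bm,\bn}^\bl$ and $\beta_{\bm,\bn}^\bl$ are mutually adjoint for the Fischer inner products via the differential formula $\langle f,g\rangle_F=\overline{g(\overline{\partial})}f|_0$, deduce the equivalence from the irreducibility of $\cP_\bl(\fp^+)$, and witness non-vanishing in the special case by restricting $\Delta_{\min\{\underline{k}_r,\bl\}}(x)\Delta_{\max\{\bl-\underline{k}_r,\underline{0}_r\}}(y)$ to the diagonal to obtain $\Delta_\bl(z)\ne 0$. The extra bookkeeping you flag (orthogonality of the Peter--Weyl decomposition under the Fischer pairing, hence self-adjointness of the projections) is implicit in the paper's first displayed equality and is handled correctly in your write-up.
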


\begin{proof}
For $f\in\cP_\bl(\fp^+)$, $g_1\in\cP_\bm(\fp^+)$, $g_2\in\cP_\bn(\fp^+)$, we have
\begin{gather*}
\big\langle (\alpha_{\bm,\bn}^\bl f)(x,y),g_1(x)g_2(y)\big\rangle_{F,\fp^+\oplus\fp^+}
\\ \qquad
{}
=\langle f(x+y),g_1(x)g_2(y)\rangle_{F,\fp^+\oplus\fp^+}
=\overline{g_1\bigg(\overline{\frac{\partial}{\partial x}}\bigg)}\overline{g_2\bigg(\overline{\frac{\partial}{\partial y}}\bigg)}
f(x+y)\biggr|_{x=y=0}
\\ \qquad
{}=\overline{g_1\bigg(\overline{\frac{\partial}{\partial z}}\bigg)}\overline{g_2\bigg(\overline{\frac{\partial}{\partial z}}\bigg)}
f(z)\biggr|_{z=0} \!\!\!\!
=\big\langle f(z),g_1(z)g_2(z)\big\rangle_{F,\fp^+}
=\big\langle f(z),(\beta_{\bm,\bn}^\bl(g_1\otimes g_2))(z)\big\rangle_{F,\fp^+},
\end{gather*}
and hence $\alpha_{\bm,\bn}^\bl$ and $\beta_{\bm,\bn}^\bl$ are mutually adjoint with respect to the Fischer norms.
Therefore $\alpha_{\bm,\bn}^\bl$ is injective if and only if $\beta_{\bm,\bn}^\bl$ is surjective,
and since $\cP_\bl(\fp^+)$ is irreducible, this holds if and only if $\beta_{\bm,\bn}^\bl$ is non-zero.
Hence the first claim follows. Especially if $\bm=\min\{\underline{k}_r,\bl\}$, $\bn=\max\{\bl-\underline{k}_r,\underline{0}_r\}$,
then for the lowest weight vectors,
\begin{align*}
\big(\beta_{\min\{\underline{k}_r,\bl\},\max\{\bl-\underline{k}_r,\underline{0}_r\}}^\bl
\big(\Delta_{\min\{\underline{k}_r,\bl\}}(x) \otimes\Delta_{\max\{\bl-\underline{k}_r,\underline{0}_r\}}(y)\big)\big)(z)=\Delta_\bl(z)\ne 0
\end{align*}
holds, and the second claim follows.
\end{proof}

Especially, by (\ref{inner_Fubini}) and Corollary~\ref{cor_pole_nonsimple}(1),
for $f_1(x_{11})\in \cP_{\underline{k}_a}(\fp^+_{11})$, $f_2(x_{22})\in \cP_\bl(\fp^+_{22})$ we have
\begin{gather}
\Vert f_1(x_{11})f_2(x_{22})\Vert_{\lambda,x,\fp^+}^2 =\big\langle\big\langle f_1(x_{11})f_2(x_{22}),{\rm e}^{(x_{11}+x_{22}|\overline{z_{11}+z_{22}})_{\fp^+}}\big\rangle_{\lambda,x,\fp^+},
f_1(z_{11})f_2(z_{22})\big\rangle_{F,z,\fp^+} \notag
\\ \qquad
{}=\frac{\big(\lambda-\frac{d}{2}a\big)_{\underline{k}_{r''}+\bl,d}}
{(\lambda)_{\underline{k}_{a+r''}+\bl,d}\big(\lambda-\frac{d}{2}a\big)_{\bl,d}}
\Vert f_1(z_{11})\Vert_{F,z_{11},\fp^+_{11}}^2\Vert f_2(z_{22})\Vert_{F,z_{22},\fp^+_{22}}^2, \label{norm_nonsimple}
\end{gather}
and moreover, if $\fp^+_{11}$, $\fp^+_{22}$ are of tube type, $a=r'$, $\bl=\underline{l}_{r''}$ and $f_1(x_{11})=\det_{\fn^+_{11}}(x_{11})^k$,
$f_2(x_{22})=\det_{\fn^+_{22}}(x_{22})^l$, then since
\[
\big\Vert \det_{\fn^+_{11}}(z_{11})^k\big\Vert^2_{F,z_{11},\fp^+_{11}} =\bigg(\frac{n'}{r'}\bigg)_{\underline{k}_{r'},d}, \qquad
\big\Vert \det_{\fn^+_{22}}(z_{22})^l\big\Vert^2_{F,z_{22},\fp^+_{22}} =\bigg(\frac{n''}{r''}\bigg)_{\underline{l}_{r''},d}
\]
hold (see \cite[Proposition XI.4.1(ii)]{FK}), combining with (\ref{constant_nonsimple}) we get
\begin{align*}
\big\Vert \det_{\fn^+_{11}}(x_{11})^k\det_{\fn^+_{22}}(x_{22})^l\big\Vert^2_{\lambda,x,\fp^+}
&=\frac{\big(\frac{n'}{r'}\big)_{\underline{k}_{r'},d}\big(\frac{n''}{r''}\big)_{\underline{l}_{r''},d}
\big(\lambda+l-\frac{d}{2}r'\big)_{\underline{k}_{r''},d}}
{(\lambda)_{\underline{k+l}_{r''},d}\big(\lambda-\frac{d}{2}r''\big)_{\underline{k}_{r'},d}}
\\
&=\frac{\big(\frac{n'}{r'}\big)_{\underline{k}_{r'},d}\big(\frac{n''}{r''}\big)_{\underline{l}_{r''},d}
\big(\lambda+k-\frac{d}{2}r''\big)_{\underline{l}_{r'},d}}
{(\lambda)_{\underline{k+l}_{r'},d}\big(\lambda-\frac{d}{2}r'\big)_{\underline{l}_{r''},d}}.
\end{align*}

\subsection{Individual cases}

In this section we observe $\big\langle \det_{\fn^+_{11}}(x_{11})^k\det_{\fn^+_{22}}(x_{22})^l,
{\rm e}^{(x|\overline{z})_{\fp^+}}\big\rangle_{\lambda,x}$
(Corollary~\ref{cor_scalar_nonsimple}) under the explicit realization of $\fp^+$.
For {\it Case} 1, if we realize $\fp^+_{11}\simeq\BC$, $\fp^+_{12}\simeq\BC^d$, $\fp^+_{22}\simeq\BC$ as
\begin{gather*}
\fp^+_{11}:=\big\{(0,\dots,0,z,-\sqrt{-1}z) \mid z\in\BC\big\}\subset\fp^+=\BC^{d+2},
\\
\fp^+_{12}:=\big\{(z_1,\dots,z_d,0,0)\mid z_j\in\BC\big\}\subset\fp^+=\BC^{d+2},
\\
\fp^+_{22}:=\big\{(0,\dots,0,z,\sqrt{-1}z) \mid z\in\BC\big\}\subset\fp^+=\BC^{d+2},
\end{gather*}
and take the maximal tripotent $e=(0,\dots,0,1,0)\in\fp^+_{11}\oplus\fp^+_{22}\subset\fp^+$,
then for $z=(z_1,\dots,z_d,\allowbreak z_{d+1},z_{d+2})=(z',z_{d+1},z_{d+2})\in\fp^+=\BC^{d+2}$ we have
\begin{gather*}
\det_{\fn^+_{11}}(z_{11})=z_{d+1}+\sqrt{-1}z_{d+2}, \\ \det_{\fn^+_{22}}(z_{22})=z_{d+1}-\sqrt{-1}z_{d+2},
\\
\tilde{\Phi}_m^{\fp^+_{22}}\big(Q(z_{12}){}^t\hspace{-1pt}z_{11}^\itinv,{}^t\hspace{-1pt}z_{22}^\itinv\big)
=\frac{1}{m!}\big(\big(Q(z_{12}){}^t\hspace{-1pt}z_{11}^\itinv|{}^t\hspace{-1pt}z_{22}^\itinv\big)_{\fp^+_{22}}\big)^m
=\frac{1}{m!}\bigg({-}\frac{q(z')}{z_{d+1}^2+z_{d+2}^2}\bigg)^m,
\end{gather*}
and therefore we get
\begin{gather*}
\big\langle \big(x_{d+1}+\sqrt{-1}x_{d+2}\big)^k\big(x_{d+1}-\sqrt{-1}x_{d+2}\big)^l,{\rm e}^{2q(x,\overline{z})}\big\rangle_{\lambda,x}
\\ \qquad
{}=\frac{\big(\lambda+\max\{k,l\}-\frac{d}{2}\big)_{\min\{k,l\}}} {(\lambda)_{k+l}\big(\lambda-\frac{d}{2}\big)_{\min\{k,l\}}}
\big(z_{d+1}+\sqrt{-1}z_{d+2}\big)^k\big(z_{d+1}-\sqrt{-1}z_{d+2}\big)^l
\\ \qquad\hphantom{=}
{} \ \times{}_2F_1\left(\begin{matrix} -k,-l \\ -\lambda-k-l+\frac{d}{2}+1\end{matrix};-\frac{q(z')}{z_{d+1}^2+z_{d+2}^2}\right)\!.
\end{gather*}

For {\it Case} 3, for $z=\left(\begin{smallmatrix}z_{11}&z_{12}\\z_{21}&z_{22}\end{smallmatrix}\right)\in M(r,\BC)=M(r',\BC)\oplus M(r',r'';\BC)\oplus M(r'',r';\BC)
\oplus M(r'',\BC)$, since
\begin{gather*}
Q(z_{12}){}^t\hspace{-1pt}z_{11}^\itinv=\begin{pmatrix}0&z_{12}\\z_{21}&0\end{pmatrix}
\begin{pmatrix}z_{11}^{-1}&0\\0&0\end{pmatrix}\begin{pmatrix}0&z_{12}\\z_{21}&0\end{pmatrix}
=\begin{pmatrix}0&0\\0&z_{21}z_{11}^{-1}z_{12}\end{pmatrix}\!, \\
\tilde{\Phi}_\bm^{\fp^+_{22}}\big(Q(z_{12}){}^t\hspace{-1pt}z_{11}^\itinv,{}^t\hspace{-1pt}z_{22}^\itinv\big)
=\tilde{\Phi}_\bm^{(2)}\big(z_{21}z_{11}^{-1}z_{12}z_{22}^{-1}\big) =\tilde{\Phi}_\bm^{(2)}\big(z_{11}^{-1}z_{12}z_{22}^{-1}z_{21}\big),
\end{gather*}
where $\tilde{\Phi}_\bm^{(2)}$ is as in (\ref{Phi^(d)}), we have
\begin{gather*}
\big\langle \det(x_{11})^k\det(x_{22})^l,{\rm e}^{\tr(xz^*)}\big\rangle_{\lambda,x}
\\ \qquad
{}=\frac{(\lambda+l-r')_{\underline{k}_{r''},2}}
{(\lambda)_{\underline{k+l}_{r''},2}(\lambda-r'')_{\underline{k}_{r'},2}}\det(z_{11})^k\det(z_{22})^l
{}_2F_1^{(2)}\left(\begin{matrix} -k,-l
\\
-\lambda\!-\!k\!-\!l\!+\!r'\!+\!r''\end{matrix};z_{11}^{-1}z_{12}z_{22}^{-1}z_{21}\right)\!,
\end{gather*}
where ${}_2F_1^{(2)}$ is as in (\ref{2F1^(d)}).
This result is extended to non-tube type case. Let $q=q'+q''$, $s=s'+s''$, and assume $q'\le s'$, $q''\le s''$.
For $z=\left(\begin{smallmatrix}z_{11}&z_{12}\\z_{21}&z_{22}\end{smallmatrix}\right)\in M(q,s;\BC)=M(q',s';\BC)\oplus M(q',s'';\BC)\oplus M(q'',s';\BC)
\oplus M(q'',s'';\BC)$,
we write $z_{11}=(z_{11}',z_{11}'')\in M(q',\BC)\oplus M(q',s'-q';\BC)$,
$z_{12}=(z_{12}',z_{12}'')\in M(q',q'';\BC)\oplus M(q',s''-q'';\BC)$, $z_{21}=(z_{21}',z_{21}'')\in M(q'',q';\BC)\oplus M(q'',s'-q';\BC)$,
$z_{22}=(z_{22}',z_{22}'')\in M(q'',\BC)\oplus M(q'',s''-q'';\BC)$,
and $z'=\left(\begin{smallmatrix} z_{11}'&z_{12}'\\z_{21}'&z_{22}'\end{smallmatrix}\right)\in M(q'+q'',\BC)$.
Then for $w_{11}=(I_{q'},0)\in M(q',s';\BC)$, $w_{22}=(I_{q''},0)\in M(q'',s'';\BC)$, by Corollary~\ref{inner_sub} we have
\begin{gather*}
\big\langle \det\big(x_{11}{}^t\hspace{-1pt}w_{11}\big)^k\det\big(x_{22}{}^t\hspace{-1pt}w_{22}\big)^l,{\rm e}^{\tr(xz^*)}\big\rangle_{\lambda,x,M(q,s;\BC)}
\\ \qquad
{}=\big\langle \det(x_{11}')^k\det(x_{22}')^l,{\rm e}^{\tr(x'(z')^*)}\big\rangle_{\lambda,x',M(q'+q'',\BC)}
\\ \qquad
{}=\frac{(\lambda+l-q')_{\underline{k}_{q''},2}}
{(\lambda)_{\underline{k+l}_{q''},2}(\lambda-q'')_{\underline{k}_{q'},2}}
\det(z_{11}')^k\det(z_{22}')^l
\\ \qquad \hphantom{=}
{} \ \times {}_2F_1^{(2)}\left(\begin{matrix} -k,-l \\ -\lambda-k-l+q'+q''\end{matrix};
(z_{11}')^{-1}z_{12}'(z_{22}')^{-1}z_{21}'\right)
\\ \qquad
{}=\frac{(\lambda+l-q')_{\underline{k}_{q''},2}}
{(\lambda)_{\underline{k+l}_{q''},2}(\lambda-q'')_{\underline{k}_{q'},2}}
\det\big(z_{11}{}^t\hspace{-1pt}w_{11}\big)^k\det\big(z_{22}{}^t\hspace{-1pt}w_{22}\big)^l
\\ \qquad\hphantom{=}
{} \ \times {}_2F_1^{(2)}\left(\begin{matrix} -k,-l \\ -\lambda-k-l+q'+q''\end{matrix};
\big(z_{11}{}^t\hspace{-1pt}w_{11}\big)^{-1}z_{12}{}^t\hspace{-1pt}w_{22}\big(z_{22}{}^t\hspace{-1pt} w_{22}\big)^{-1}z_{21}{}^t\hspace{-1pt}w_{11}\right)\!.
\end{gather*}
By the ${\rm GL}(s',\BC)\times {\rm GL}(s'',\BC)$-equivariance,
this holds for all $w_{11}\in M(q',s';\BC)$ and $w_{22}\in M(q'',s'';\BC)$.
When $q'\le s'$, $q''\ge s''$, a similar formula is also proved by reducing to the inner product on $M(q'+s'',\BC)$.

For {\it Case} 6, since $\fp^+_{22}=M(1,5;\BC)$ is not of tube type, the determinant polynomial on $\fp^+_{22}$ is not defined.
Instead we consider the polynomial $f(x_{22})=\big(x_{22}{}^t\hspace{-1pt}w_{22}\big)^l\in\cP_l(M(1,5;\BC))$ with $w_{22}\in M(1,5;\BC)$.
Then we have
\[ \tilde{f}_{m,l-m}(x_{22},y_{22})=\frac{(-1)^m(-l)_m}{m!}\big(x_{22}{}^t\hspace{-1pt}w_{22}\big)^m\big(y_{22}{}^t\hspace{-1pt}w_{22}\big)^{l-m}, \]
and for $z_{11}\in\BC$, $z_{12}\in\Skew(5,\BC)$, we have $Q(z_{12}){}^t\hspace{-1pt}z_{11}^\itinv=\frac{1}{z_{11}}\mathbf{Pf}(z_{12})$,
where $\mathbf{Pf}(z_{12})$ is defined as in (\ref{boldPf}). Therefore we get
\begin{gather*}
\big\langle x_{11}^k\big(x_{22}{}^t\hspace{-1pt}w_{22}\big)^l,{\rm e}^{(x|\overline{z})_{\fp^+}}\big\rangle_{\lambda,x}
\\ =\frac{(\lambda-3)_k\,z_{11}^k}{(\lambda)_{(k+l,k),6}(\lambda-3)_l}
\sum_{m=0}^l(-k)_m(\lambda+k-3)_{l-m}\frac{(-1)^m(-l)_m}{m!}
\bigg(\frac{\mathbf{Pf}(z_{12}){}^t\hspace{-1pt}w_{22}}{z_{11}}\bigg)^m
\big(z_{22}{}^t\hspace{-1pt}w_{22}\big)^{l-m}
\\
=\frac{(\lambda+k-3)_l}{(\lambda)_{k+l}(\lambda-3)_l}
z_{11}^k\big(z_{22}{}^t\hspace{-1pt}w_{22}\big)^l\sum_{m=0}^l\frac{(-k)_m(-l)_m}{(-\lambda-k-l+4)_mm!}
\bigg(\frac{\mathbf{Pf}(z_{12}){}^t\hspace{-1pt}w_{22}}{z_{11}(z_{22}{}^t\hspace{-1pt}w_{22})}\bigg)^m\!.
\end{gather*}
We~summarize the results for $f(x_{22})=\det_{\fn^+_{22}}(x_{22})^l$.
Here ${}_2F_1^{(d)}$ and ${}_2F_1^{[d]}$ are as in (\ref{2F1^(d)}) and~(\ref{2F1^[d]}).

\begin{Theorem}\label{explicit_nonsimple}\quad
\begin{enumerate}\itemsep=0pt
\item[$1.$] Let $\big(\fp^+,\fp^+_{11},\fp^+_{12},\fp^+_{22}\big)=\big(\BC^{d+2},\BC,\BC^d,\BC\big)$. Then for $k,l\in\BZ_{\ge 0}$ and for
$z=(z',z_{d+1},z_{d+2})\allowbreak\in\fp^+$, we have
\begin{gather*}
\big\langle \big(x_{d+1}+\sqrt{-1}x_{d+2}\big)^k\big(x_{d+1}-\sqrt{-1}x_{d+2}\big)^l,{\rm e}^{2q(x,\overline{z})}\big\rangle_{\lambda,x}
\\ \qquad
{}=\frac{\big(\lambda+\max\{k,l\}-\frac{d}{2}\big)_{\min\{k,l\}}} {(\lambda)_{k+l}\big(\lambda-\frac{d}{2}\big)_{\min\{k,l\}}}
\big(z_{d+1}+\sqrt{-1}z_{d+2}\big)^k\big(z_{d+1}-\sqrt{-1}z_{d+2}\big)^l
\\ \qquad\hphantom{=}
{}\ \times {}_2F_1\left(\begin{matrix} -k,-l \\ -\lambda-k-l+\frac{d}{2}+1\end{matrix};-\frac{q(z')}{z_{d+1}^2+z_{d+2}^2}\right)\!.
\end{gather*}

\item[$2.$] Let $\big(\fp^+,\fp^+_{11},\fp^+_{12},\fp^+_{22}\big)=({\rm Sym}(r,\BC), \Sym(r',\BC), M(r',r'';\BC),
\Sym(r'',\BC))$ with $r=r'+r''$. Then for $k,l\in\BZ_{\ge 0}$ and for $z=\left(\begin{smallmatrix}z_{11}&z_{12}\\{}^t\hspace{-1pt}z_{12}&z_{22}\end{smallmatrix}\right)\in\fp^+$, we have
\begin{gather*}
\big\langle \det(x_{11})^k\det(x_{22})^l,{\rm e}^{\tr(xz^*)}\big\rangle_{\lambda,x}
\\ \qquad
{}=\begin{cases} \ds \frac{\big(\lambda+\max\{k,l\}-\frac{1}{2}r'\big)_{\underline{\min\{k,l\}}_{r''},1}}
{(\lambda)_{\underline{k+l}_{r''},1}\big(\lambda-\frac{1}{2}r''\big)_{\underline{k}_{r'-r''},1}
\big(\lambda-\frac{1}{2}r'\big)_{\underline{\min\{k,l\}}_{r''},1}} & (r'\ge r'' )
\\
\ds \frac{\big(\lambda+\max\{k,l\}-\frac{1}{2}r''\big)_{\underline{\min\{k,l\}}_{r'},1}}
{(\lambda)_{\underline{k+l}_{r'},1}\big(\lambda-\frac{1}{2}r'\big)_{\underline{l}_{r''-r'},1}
\big(\lambda-\frac{1}{2}r''\big)_{\underline{\min\{k,l\}}_{r'},1}} &(r'\le r'')
\end{cases}
\\ \qquad\hphantom{=}
{} \ \times\det(z_{11})^k\det(z_{22})^l
{}_2F_1^{(1)}\left(\begin{matrix} -k,-l \\ -\lambda-k-l+\frac{r+1}{2}\end{matrix};
z_{11}^{-1}z_{12}z_{22}^{-1}{}^t\hspace{-1pt}z_{12}\right)\!.
\end{gather*}

\item[$3.$] Let $\big(\fp^+,\fp^\pm_{11},\fp^+_{12},\fp^\pm_{22}\big)=(M(q,s;\BC), M(q',s';\BC), M(q',s'';\BC)\oplus M(q'',s';\BC), M(q'',s'';\BC))$
with $q=q'+q''$, $s=s'+s''$, $q'\le s'$. Then for $k,l\in\BZ_{\ge 0}$ and
for $z=\left(\begin{smallmatrix}z_{11}&z_{12}\\z_{21}&z_{22}\end{smallmatrix}\right)\in\fp^+$, $w_{11}\in\fp^-_{11}$, $w_{22}\in\fp^-_{22}$,
if $q''\le s''$, then we have
\begin{gather*}
\big\langle \det\big(x_{11}{}^t\hspace{-1pt}w_{11}\big)^k\det\big(x_{22}{}^t\hspace{-1pt}w_{22}\big)^l,{\rm e}^{\tr(xz^*)}\big\rangle_{\lambda,x}
\\ \qquad
{}=\begin{cases}
\ds \frac{\big(\lambda+\max\{k,l\}-q'\big)_{\underline{\min\{k,l\}}_{q''},2}}
{(\lambda)_{\underline{k+l}_{q''},2}(\lambda-q'')_{\underline{k}_{q'-q''},2}
(\lambda-q')_{\underline{\min\{k,l\}}_{q''},2}} &(q'\ge q'')
\\
\ds \frac{(\lambda+\max\{k,l\}-q'')_{\underline{\min\{k,l\}}_{q'},2}}
{(\lambda)_{\underline{k+l}_{q'},2}(\lambda-q')_{\underline{l}_{q''-q'},2}
(\lambda-q'')_{\underline{\min\{k,l\}}_{q'},2}} &(q'\le q'')
\end{cases}
\\ \qquad\hphantom{=}
{} \ \times \det\big(z_{11}{}^t\hspace{-1pt}w_{11}\big)^k\det\big(z_{22}{}^t\hspace{-1pt}w_{22}\big)^l
\\ \qquad\hphantom{=}
{} \ \times {}_2F_1^{(2)}\left(\begin{matrix} -k,-l \\ -\lambda-k-l+q'+q''\end{matrix};
\big(z_{11}{}^t\hspace{-1pt}w_{11}\big)^{-1}z_{12}{}^t\hspace{-1pt }w_{22}\big(z_{22}{}^t\hspace{-1pt}w_{22}\big)^{-1}z_{21}{}^t\hspace{-1pt}w_{11}\right)\!.
\end{gather*}
Similarly, if $q''\ge s''$, then we have
\begin{gather*}
\big\langle \det\big(x_{11}{}^t\hspace{-1pt}w_{11}\big)^k\det\big({}^t\hspace{-1pt}w_{22}x_{22}\big)^l,{\rm e}^{\tr(xz^*)}\big\rangle_{\lambda,x}
\\ \qquad
{}=\begin{cases}
\ds \frac{(\lambda+\max\{k,l\}-q')_{\underline{\min\{k,l\}}_{s''},2}}
{(\lambda)_{\underline{k+l}_{s''},2}(\lambda-s'')_{\underline{k}_{q'-s''},2}
(\lambda-q')_{\underline{\min\{k,l\}}_{s''},2}} & (q'\ge s'')
\\
\ds \frac{(\lambda+\max\{k,l\}-s'')_{\underline{\min\{k,l\}}_{q'},2}}
{(\lambda)_{\underline{k+l}_{q'},2}\left(\lambda-q'\right)_{\underline{l}_{s''-q'},2}
(\lambda-s'')_{\underline{\min\{k,l\}}_{q'},2}} & (q'\le s'')
\end{cases}
\\ \qquad\hphantom{=}
{} \ \times \det\big(z_{11}{}^t\hspace{-1pt}w_{11}\big)^k\det\big({}^t\hspace{-1pt}w_{22}z_{22}\big)^l
\\ \qquad\hphantom{=}
{} \ \times {}_2F_1^{(2)}\left(\begin{matrix} -k,-l \\ -\lambda-k-l+q'+s''\end{matrix};
\big(z_{11}{}^t\hspace{-1pt}w_{11}\big)^{-1}z_{12}\big({}^t\hspace{-1pt}w_{22}z_{22}\big)^{-1}{}^t\hspace{-1pt} w_{22}z_{21}{}^t\hspace{-1pt}w_{11}\right)\!.
\end{gather*}
\item[$4.$] Let $\big(\fp^+,\fp^+_{11},\fp^+_{12},\fp^+_{22}\big)=({\rm Skew}(2r,\BC), \Skew(2r',\BC), M(2r',2r'';\BC), \Skew(2r'',\BC))$
with $r=r'+r''$. Then for $k,l\in\BZ_{\ge 0}$ and
for $z=\left(\begin{smallmatrix}z_{11}&z_{12}\\-{}^t\hspace{-1pt}z_{12}&z_{22}\end{smallmatrix}\right)\in\fp^+$, we have
\begin{gather*}
\big\langle \Pf(x_{11})^k\Pf(x_{22})^l,{\rm e}^{\frac{1}{2}\tr(xz^*)}\big\rangle_{\lambda,x}
\\ \qquad
{}=\begin{cases}
\ds \frac{(\lambda+\max\{k,l\}-2r')_{\underline{\min\{k,l\}}_{r''},4}}
{(\lambda)_{\underline{k+l}_{r''},4}(\lambda-2r'')_{\underline{k}_{r'-r''},4}
(\lambda-2r')_{\underline{\min\{k,l\}}_{r''},4}} & (r'\ge r'')
\\
\ds \frac{(\lambda+\max\{k,l\}-2r'')_{\underline{\min\{k,l\}}_{r'},4}}
{(\lambda)_{\underline{k+l}_{r'},4}(\lambda-2r')_{\underline{l}_{r''-r'},4}
(\lambda-2r'')_{\underline{\min\{k,l\}}_{r'},4}} & (r'\le r'')
\end{cases}
\\ \qquad\hphantom{=}
{} \ \times\Pf(z_{11})^k\Pf(z_{22})^l{}_2F_1^{[4]}\left(\begin{matrix} -k,-l \\ -\lambda-k-l+2r-1\end{matrix};
-z_{11}^{-1}z_{12}z_{22}^{-1}{}^t\hspace{-1pt}z_{12}\right)\!.
\end{gather*}

\item[$5.$]
Let $\big(\fp^+,\fp^+_{11},\fp^+_{12},\fp^+_{22}\big)=\big({\rm Herm}(3,\BO)^\BC, \BC, M(1,2;\BO)^\BC,\Herm(2,\BO)^\BC\big)$. Then for $k,l\in\BZ_{\ge 0}$ and for $z=\left(\begin{smallmatrix}z_{11}&z_{12}\\{}^t\hspace{-1pt}\hat{z}_{12}&z_{22}\end{smallmatrix}\right) \in\fp^+\rule[0pt]{0pt}{11pt}$, we have
\begin{align*}
\big\langle x_{11}^k\det(x_{22})^l,{\rm e}^{(x|\overline{z})_{\fp^+}}\big\rangle_{\lambda,x}
={}&\frac{(\lambda+\max\{k,l\}-8)_{\min\{k,l\}}}{(\lambda)_{k+l}(\lambda-4)_{l}(\lambda-8)_{\min\{k,l\}}} z_{11}^k\det(z_{22})^l
\\
&{} \times {}_2F_1\left(\begin{matrix} -k,-l \\ -\lambda-k-l+9\end{matrix};
\frac{\Re_\BO\left(z_{12}(z_{22}^{-1}{}^t\hspace{-1pt}\hat{z}_{12})\right)}{z_{11}}\right)\!.
\end{align*}

\item[$6.$] Let $\big(\fp^+,\fp^+_{11},\fp^+_{12},\fp^\pm_{22}\big)=\big(M(1,2;\BO)^\BC,\BC,\Skew(5,\BC),M(1,5;\BC)\big)$. Then for $k,l\in\BZ_{\ge 0}$
and for $z=(z_{11},z_{12},z_{22})\in\fp^+$, $w_{22}\in\fp^-_{22}$, we have
\begin{align*}
\big\langle x_{11}^k\big(x_{22}{}^t\hspace{-1pt}w_{22}\big)^l,{\rm e}^{(x|\overline{z})_{\fp^+}}\big\rangle_{\lambda,x}
={}&\frac{(\lambda+\max\{k,l\}-3)_{\min\{k,l\}}}{(\lambda)_{k+l}(\lambda-3)_{\min\{k,l\}}}
z_{11}^k\big(z_{22}{}^t\hspace{-1pt}w_{22}\big)^l
\\
&{} \times {}_2F_1\left(\begin{matrix} -k,-l \\ -\lambda-k-l+4\end{matrix};
\frac{\mathbf{Pf}(z_{12}){}^t\hspace{-1pt}w_{22}}{z_{11}(z_{22}{}^t\hspace{-1pt}w_{22})}\right)\!,
\end{align*}
where $\mathbf{Pf}(z_{12})$ is defined as in \eqref{boldPf}.
\end{enumerate}
\end{Theorem}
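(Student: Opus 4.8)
The plan is to obtain each of the six formulas by specializing Corollary~\ref{cor_scalar_nonsimple} (or, when a block fails to be of tube type, Theorem~\ref{thm_nonsimple}) to the explicit realizations fixed in Section~\ref{section_prelim2}. In every case the skeleton of the answer---the Pochhammer constant and the hypergeometric symbol in the two variables $k,l$---is already furnished by those general results; what remains is to translate the abstract ingredients $\det_{\fn^+_{jj}}$, $Q(z_{12}){}^t\hspace{-1pt}z_{11}^\itinv$, ${}^t\hspace{-1pt}z_{jj}^\itinv$ and ${}_2F_1^{\fp^+_{jj}}$ into the matrix or scalar language of the given model, and to verify that the prefactor collapses to the stated shape.

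First I would dispose of the cases in which both $\fp^+_{11}$ and $\fp^+_{22}$ are of tube type, namely Cases~1, 2, 4, 5 and the square sub-case $q'=s'$, $q''=s''$ of Case~3. For these, Corollary~\ref{cor_scalar_nonsimple} applies verbatim once the numerical data $(n,r,d)$ from the table at the start of Section~\ref{section_nonsimple} are inserted; in particular $\tfrac{n}{r}$ equals $\tfrac{d}{2}+1$, $\tfrac{r+1}{2}$, $2r-1$, $9$ in Cases~1, 2, 4, 5 respectively, which pins down the lower parameter $-\lambda-k-l+\tfrac{n}{r}$ of the ${}_2F_1$. The Pochhammer prefactor is then exactly the quantity \eqref{constant_nonsimple}, and the multivariate symbol ${}_2F_1^{\fp^+_{jj}}$ is rewritten as ${}_2F_1^{(d)}$ or ${}_2F_1^{[d]}$ of a single matrix argument by means of the realization identities $\tilde{\Phi}_\bm^{\fp^+}(x,y)=\tilde{\Phi}_\bm^{(d)}(x{}^t\hspace{-1pt}y)$ and $\tilde{\Phi}_\bm^{\fp^+}(x,y)=\tilde{\Phi}_\bm^{[4]}(x{}^t\hspace{-1pt}y)$ of Section~\ref{section_prelim2}, together with the cyclic invariance $\tilde{\Phi}_\bm^{(d)}(x{}^t\hspace{-1pt}y)=\tilde{\Phi}_\bm^{(d)}({}^t\hspace{-1pt}yx)$. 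The only genuinely model-dependent step is the Jordan-theoretic argument $Q(z_{12}){}^t\hspace{-1pt}z_{11}^\itinv$: for $\Sym$ and $\Skew$ I would use $Q(x)y=x{}^t\hspace{-1pt}yx$ and ${}^t\hspace{-1pt}x^\itinv={}^t\hspace{-1pt}x^{-1}$ on the block decomposition $z=\left(\begin{smallmatrix}z_{11}&z_{12}\\ *&z_{22}\end{smallmatrix}\right)$ to produce the entry $z_{11}^{-1}z_{12}z_{22}^{-1}{}^t\hspace{-1pt}z_{12}$ (with a sign in the $\Skew$ case), exactly as in the model computation for $M(r,\BC)$ carried out in the text preceding the theorem.

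For the remaining cases $\fp^+_{22}$ is not of tube type, so Corollary~\ref{cor_scalar_nonsimple} is unavailable and I would argue directly. In the general Case~3 I would invoke Corollary~\ref{inner_sub} to project onto the tube-type subalgebra $\fp^+(e'+e'')_2\simeq M(q'+q'',\BC)$ (or $M(q'+s'',\BC)$ when $q''\ge s''$), reducing the statement to the square-matrix formula already established, and then restore the auxiliary covectors $w_{11},w_{22}$ by the ${\rm GL}(s',\BC)\times{\rm GL}(s'',\BC)$-equivariance. In Case~6 both blocks have rank one, so the hypergeometric is the classical scalar ${}_2F_1$; here I would apply the third displayed identity of Theorem~\ref{thm_nonsimple} to $f(x_{22})=(x_{22}{}^t\hspace{-1pt}w_{22})^l$, whose components $\tilde{f}_{m,l-m}$ are the elementary binomial expressions recorded in the text, and evaluate $Q(z_{12}){}^t\hspace{-1pt}z_{11}^\itinv=\tfrac{1}{z_{11}}\mathbf{Pf}(z_{12})$ from the realization $\BC\oplus\Skew(5,\BC)\oplus M(1,5;\BC)$ of $M(1,2;\BO)^\BC$.

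The main obstacle is the explicit evaluation of $Q(z_{12}){}^t\hspace{-1pt}z_{11}^\itinv$, and of $\det(x_{22})$, in the two exceptional Cases~5 and~6. These require the octonionic Jordan triple data---the Peirce decomposition of $\Herm(3,\BO)^\BC$ (resp. $M(1,2;\BO)^\BC$) with respect to the primitive tripotent, and the formula for $Q$ on $M(2,6;\BC)\oplus\Skew(6,\BC)$---and one must handle the octonionic conjugation and real part with care to identify the scalar $\bigl(Q(z_{12}){}^t\hspace{-1pt}z_{22}^\itinv\mid{}^t\hspace{-1pt}z_{11}^\itinv\bigr)_{\fp^+_{11}}$ with $\tfrac{1}{z_{11}}\Re_\BO\bigl(z_{12}(z_{22}^{-1}{}^t\hspace{-1pt}\hat{z}_{12})\bigr)$ in Case~5 and with the Pfaffian vector $\mathbf{Pf}(z_{12})$ in Case~6. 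Once these identifications are secured, each formula follows for $\Re\lambda>\tfrac{2n}{r}-1$; since both sides are rational in $\lambda$ (the terminating ${}_2F_1$ being a polynomial), the identities then persist for all $\lambda$ by analytic continuation.
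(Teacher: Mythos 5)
Your proposal matches the paper's own proof: the paper likewise obtains all six formulas by specializing Corollary~\ref{cor_scalar_nonsimple} to the explicit realizations of Section~\ref{section_prelim2} (computing $Q(z_{12}){}^t\hspace{-1pt}z_{11}^\itinv$ and identifying $\tilde{\Phi}_\bm^{\fp^+_{jj}}$ with $\tilde{\Phi}_\bm^{(d)}$ or $\tilde{\Phi}_\bm^{[d]}$ of a matrix argument), handles the non-square Case~3 via Corollary~\ref{inner_sub} and ${\rm GL}(s',\BC)\times{\rm GL}(s'',\BC)$-equivariance, and treats Case~6 by applying the third equality of Theorem~\ref{thm_nonsimple} to $f(x_{22})=\big(x_{22}{}^t\hspace{-1pt}w_{22}\big)^l$ with its explicit binomial components. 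The approach and all key ingredients coincide, so no further comment is needed.
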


\section{Computation for simple $\fp^+_2$}\label{section_simple}

In this section we treat $\big(\fp^+,\fp^+_1,\fp^+_2\big)$ with $\fp^+=\fp^+_1\oplus\fp^+_2$ such that $\fp^+_2$ is simple.
Let $\dim\fp^+=:n$, $\dim\fp^+_2=:n_2$, $\rank\fp^+=:r$, $\rank\fp^+_2=:r_2$, let $d$, $d_2$ be the numbers defined in (\ref{str_const})
for $\fp^+$, $\fp^+_2$ respectively, and let $\varepsilon_2\in\{1,2\}$ be as in (\ref{epsilon}).

\subsection{Preliminaries for main theorem}

First we consider the cases such that $\fp^+_2(e)_2=\fp^+(e)_2$ holds for some (or equivalently any) maximal tripotent $e\in\fp^+_2$, i.e.,
$\big(\fp^+,\fp^+_1,\fp^+_2\big)$ is one of
\[
\big(\fp^+,\fp^+_1,\fp^+_2\big)= \begin{cases}
\big(M(q,s;\BC),M(q,s';\BC),M(q,s'';\BC)\big), \qquad s'+s''=s,
\\
\big({\rm Skew}(s,\BC),\Skew(s-1,\BC),M(1,s-1;\BC)\big),
\\
\big({\rm Skew}(s,\BC),M(1,s-1;\BC),\Skew(s-1,\BC)\big),
\\
\big(\BC^{2s},\big(\big(1,\sqrt{-1}\big)\BC\big)^s,\big(\big(1,-\sqrt{-1}\big)\BC\big)^s\big)\simeq\big(\BC^{2s},M(1,s;\BC),M(1,s;\BC)\big),
\\
\big(M(1,2;\BO)^\BC,\BO^\BC,\BO^\BC\big)\simeq\big(M(1,2;\BO)^\BC,\BC^8,\BC^8\big), \end{cases}
\]
so that the corresponding symmetric pairs are
\[
(G,G_1)= \begin{cases}
\big({\rm SU}(q,s),{\rm S}({\rm U}(q,s')\times {\rm U}(s''))\big),
\\
\big({\rm SO}^*(2s),{\rm SO}(2)\times {\rm SO}^*(2(s-1))\big),
\\
\big({\rm SO}^*(2s),{\rm U}(1,s-1)\big),
\\
\big({\rm SO}_0(2,2s),{\rm U}(1,s)\big),
\\
\big(E_{6(-14)},{\rm U}(1)\times \operatorname{Spin}_0(2,8)\big). \end{cases}
\]
Then for $\bk\in\BZ_{++}^{r_2}$, since $\cP_\bk(\fp^+_2)$ and $\cP_\bk(\fp^+)$ are generated by
$\cP_\bk\big(\fp^+_2(e)_2\big)=\cP_\bk\big(\fp^+(e)_2\big)$ as $K^\BC_1$- and $K^\BC$-modules respectively,
$\cP_\bk\big(\fp^+_2\big)\subset\cP_\bk\big(\fp^+\big)$ holds, and therefore by Corollary~\ref{FKCor}, for $f(x_2)\in\cP_\bk(\fp^+_2)$ we have
\begin{equation}\label{easycase}
\big\langle f(x_2),{\rm e}^{(x|\overline{z})_{\fp^+}}\big\rangle_{\lambda,x} =\frac{1}{(\lambda)_{\bk,d}}f(z_2).
\end{equation}
Thus in the following we assume $\fp^+_2(e)_2\subsetneq \fp^+(e)_2$ holds for some (or equivalently any) maximal tripotent $e\in\fp^+_2$.
That is, we treat the cases
\begin{gather*}
\big(\fp^+,\fp^+_1,\fp^+_2\big)= \begin{cases}
\big(\BC^n,\BC^{n'},\BC^{n''}\big) & (\textit{Case }1), \\
\big({\rm Sym}(r,\BC),\Sym(r',\BC)\oplus\Sym(r'',\BC),M(r',r'';\BC)\big) & (\textit{Case }2), \\
\big({\rm Skew}(s,\BC),\Skew(s',\BC)\oplus\Skew(s'',\BC),M(s',s'';\BC)\big) & (\textit{Case }3), \\
\big(M(r,\BC),\Skew(r,\BC),\Sym(r,\BC)\big) & (\textit{Case }4), \\
\big(M(r,\BC),\Sym(r,\BC),\Skew(r,\BC)\big) & (\textit{Case }5), \\
\big({\rm Herm}(3,\BO)^\BC,M(2,6;\BC),\Skew(6,\BC)\big) & (\textit{Case }6), \\
\big({\rm Herm}(3,\BO)^\BC,\Skew(6,\BC),M(2,6;\BC)\big) & (\textit{Case }7), \\
\big({\rm Herm}(3,\BO)^\BC,\BC\oplus\Herm(2,\BO)^\BC,M(1,2;\BO)^\BC\big) & (\textit{Case }8), \\
\big(M(1,2;\BO)^\BC,M(2,4;\BC),M(4,2;\BC)\big) & (\textit{Case }9), \\
\big(M(1,2;\BO)^\BC,\BC\oplus M(1,5;\BC),\Skew(5,\BC)\big) & (\textit{Case }10) \end{cases}
\end{gather*}
($n=n'+n''$, $n\ge 3$, $n''\ne 2$ for {\it Case} 1, $r=r'+r''$ for {\it Case} 2, and $s=s'+s''$, $s',s''\ge 2$ for {\it Case} 3).
Then the corresponding symmetric pairs are
\[ (G,G_1)= \begin{cases}
\big({\rm SO}_0(2,n),{\rm SO}_0(2,n')\times {\rm SO}(n'')\big) & (\textit{Case }1), \\
\big({\rm Sp}(r,\BR),\operatorname{Sp}(r',\BR)\times \operatorname{Sp}(r'',\BR)\big) & (\textit{Case }2), \\
\big({\rm SO}^*(2s),{\rm SO}^*(2s')\times {\rm SO}^*(2s'')\big) & (\textit{Case }3), \\
\big({\rm SU}(r,r),{\rm SO}^*(2r)) & (\textit{Case }4\big), \\
\big({\rm SU}(r,r),\operatorname{Sp}(r,\BR)\big) & (\textit{Case }5), \\
\big(E_{7(-25)},{\rm SU}(2,6)\big) & (\textit{Case }6), \\
\big(E_{7(-25)},{\rm SU}(2)\times {\rm SO}^*(12)\big) & (\textit{Case }7), \\
\big(E_{7(-25)},{\rm SL}(2,\BR)\times \operatorname{Spin}_0(2,10)\big) & (\textit{Case }8), \\
\big(E_{6(-14)},{\rm SU}(2,4)\times {\rm SU}(2)\big) & (\textit{Case }9), \\
\big(E_{6(-14)},{\rm SL}(2,\BR)\times {\rm SU}(1,5)\big) & (\textit{Case }10) \end{cases} \]
(up to covering), and the numbers $(r,r_2,d,d_2,\varepsilon_2)$ are given by
\[ (r,r_2,d,d_2,\varepsilon_2)= \begin{cases}
(2,2,n-2,n''-2,1) & (\textit{Case }1,\; n''\ge 3), \\
(2,1,n-2,-,2) & (\textit{Case }1,\; n''=1), \\
(r,\min\{r',r''\},1,2,2) & (\textit{Case }2), \\
(\lfloor s/2\rfloor, \min\{s',s''\}, 4,2,1) & (\textit{Case }3), \\
(r,r,2,1,1) & (\textit{Case }4), \\
(r,\lfloor r/2\rfloor, 2,4,2) & (\textit{Case }5), \\
(3,3,8,4,1) & (\textit{Case }6), \\
(3,2,8,2,1) & (\textit{Case }7), \\
(3,2,8,6,1) & (\textit{Case }8), \\
(2,2,6,2,1) & (\textit{Case }9), \\
(2,2,6,4,1) & (\textit{Case }10). \end{cases} \]
We~note that if $r_2=1$, then $d_2$ is not determined uniquely, and any number is allowed.
We~also write
\[ (\fp^+_{11},\fp^+_{12},\fp^+_{22})= \begin{cases}
\big({\rm Sym}(r',\BC),M(r',r'';\BC),\Sym(r'',\BC)\big) & (\textit{Case }2), \\
\big({\rm Skew}(s',\BC),M(s',s'';\BC),\Skew(s'',\BC)\big) & (\textit{Case }3), \\
\big(\BC,M(1,2;\BO)^\BC,\Herm(2,\BO)^\BC\big) & (\textit{Case }8), \\
\big(\BC,\Skew(5,\BC),M(1,5;\BC)\big) & (\textit{Case }10). \end{cases} \]
First we prove the following.

\begin{Proposition}\label{prop_simple_prelim}\quad
\begin{enumerate}\itemsep=3pt
\item[$1.$] Let $\varepsilon_2=1$. Then for $l\in\BZ_{\ge 0}$, $\cP_{(l,0,\dots,0)}\big(\fp^\pm_2\big)\subset\cP_{(l,0,\dots,0)}(\fp^\pm)$ holds.
\item[$2.$] Let $\varepsilon_2=2$. Then for $0\le l\le r_2$, $\cP_{\underline{1}_l}\big(\fp^\pm_2\big)
=\cP_{(\smash{\overbrace{\scriptstyle{1,\dots,1}}^l},0,\dots,0)}\big(\fp^\pm_2\big) \subset\cP_{\underline{1}_l}\big(\fp^\pm\big)$ holds.
\end{enumerate}
\end{Proposition}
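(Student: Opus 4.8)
The plan is to prove the two parts by \emph{different} mechanisms: only the first admits the ``common lowest weight vector'' argument already used for Peirce subalgebras, whereas the second forces a switch to a vanishing criterion on low-rank loci. In both cases I first reduce to a single generating vector. Write $\Proj_2=\frac12(I-\sigma)\colon\fp^\pm\to\fp^\pm_2$ for the orthogonal projection. By the Hua--Kostant--Schmid decomposition each $\cP_\bk(\fp^\pm_2)$ is $\widetilde{K}^\BC_1$-irreducible and is generated (under $\fk^\BC_1\subset\fk^\BC$) by its lowest weight vector, which is the restriction to $\fp^\pm_2$ of the determinant-type polynomial $\Delta_\bk^{(2)}$ of the Jordan triple system $\fp^\pm_2$. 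Since each $\cP_\bm(\fp^\pm)$ is $K^\BC$-stable, it suffices in each part to show that this generator lies in $\cP_\bk(\fp^\pm)$; the whole module then follows by applying $\fk^\BC_1$. I treat $\fp^+$, the case $\fp^-$ following by applying $\overline{\cdot}$.

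For Statement~1 ($\varepsilon_2=1$) I would take $\bk=(l,0,\dots,0)$, so the generator is $\big(\Delta_1^{(2)}\big)^l$. The key point is that $\varepsilon_2=1$ forces a primitive tripotent $e_1$ of $\fp^+_2$ to satisfy $(e_1|\overline{e_1})_{\fp^+}=\varepsilon_2(e_1|\overline{e_1})_{\fp^+_2}=1$, so $e_1$ is also primitive in $\fp^+$. Then the rank-one minor of $\fp^+$ is $\Delta_1^{\fn^+}(x)=(x|\overline{e_1})_{\fp^+}$, and since $\overline{e_1}\in\fp^-_2$ is orthogonal to $\fp^+_1$ (the pairing being $\sigma$-invariant) one gets $\Delta_1^{\fn^+}(x)=(\Proj_2 x|\overline{e_1})_{\fp^+}=(\Proj_2 x|\overline{e_1})_{\fp^+_2}=\Delta_1^{(2)}(\Proj_2 x)$. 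Hence the generator equals $\big(\Delta_1^{\fn^+}\big)^l=\Delta_{(l,0,\dots,0)}$, which is exactly the lowest weight vector of $\cP_{(l,0,\dots,0)}(\fp^+)$, yielding the inclusion.

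For Statement~2 ($\varepsilon_2=2$) the naive weight-matching breaks down: $\Delta_l^{(2)}$ is not a lowest weight vector of $\cP_{\underline{1}_l}(\fp^+)$, and its weight in fact occurs in several degree-$l$ components. Instead I would characterise $\cP_{\underline{1}_l}(\fp^+)$ by vanishing. Set $S_j:=\{x\in\fp^+\mid\rank x\le j\}$, a $K^\BC$-stable subset. The lowest weight vector $\Delta_\bm=\Delta_1^{m_1-m_2}\cdots\Delta_r^{m_r}$ carries a factor $\Delta_i$ with $i>j$ exactly when $m_{j+1}>0$, and $\Delta_i|_{S_j}=0$ for $i>j$; thus $\cP_\bm(\fp^+)|_{S_j}=0$ iff $m_{j+1}>0$. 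Since restriction to $S_{l-1}$ is $K^\BC$-equivariant and $\cP(\fp^+)=\bigoplus_\bm\cP_\bm(\fp^+)$ is multiplicity free, its kernel is a sum of full $\cP_\bm$, so among $|\bm|=l$ the unique component vanishing on $S_{l-1}$ is $\bm=\underline{1}_l$. Hence the degree-$l$ polynomials vanishing on $S_{l-1}$ are precisely $\cP_{\underline{1}_l}(\fp^+)$. As ``vanishing on $S_{l-1}$'' is a $K^\BC_1$-stable condition, it remains only to show the generator satisfies $\Delta_l^{(2)}(\Proj_2 x)=0$ whenever $\rank_{\fp^+}x\le l-1$.

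The crux, and the step I expect to be the main obstacle, is this last rank estimate, which I would reduce to the clean assertion $\rank_{\fp^+_2}(\Proj_2 c)\le 1$ for a primitive tripotent $c\in\fp^+$. Granting it, writing $x=\sum_{i=1}^{l-1}c_i$ as a sum of primitive tripotents and using subadditivity of rank gives $\rank_{\fp^+_2}(\Proj_2 x)\le l-1<l$, so the $l$-minor $\Delta_l^{(2)}$ vanishes. For the rank-one claim I would use that $\varepsilon_2=2$ makes every primitive tripotent of $\fp^+_2$ a sum of two primitive tripotents of $\fp^+$ (from $(e|\overline{e})_{\fp^+}=\varepsilon_2=2$), so $\rank_{\fp^+}(x_2)=2\rank_{\fp^+_2}(x_2)$ for $x_2\in\fp^+_2$; combined with $\Proj_2 c=\frac12(c-\sigma c)$, a difference of two $\fp^+$-primitive tripotents (as $\sigma$ preserves primitivity), one gets $\rank_{\fp^+}(\Proj_2 c)\le 2$ and therefore $\rank_{\fp^+_2}(\Proj_2 c)\le 1$.
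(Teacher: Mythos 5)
Your proof is correct, but it reaches the conclusion by a genuinely different route from the paper, most strikingly in part~2. The paper proves part~1 by reducing to a rank-two tripotent, identifying $\fp^+(e)_2$ and $\fp^+_2(e)_2$ with spin factors $\BC^{d+2}\supset\BC^{d_2+2}$, and using that $\cP_{(l,0)}$ is the space of harmonic polynomials there; for part~2 it computes, case by case, the spaces $\Hom_{K_1^\BC}\big(\cP_{\underline{1}_l}(\fp^+_2),\cP_\bm(\fp^+)\big)$ via $\Hom_{{\rm U}(r)}\big(V_{2\bm}^{(r)\vee},V_{\underline{1}_l}^{(r)\vee}\otimes V_{\underline{1}_l}^{(r)\vee}\big)$ and the explicit ${\rm GL}$ tensor-product decompositions. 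Your argument replaces both steps by the single observation that a primitive tripotent $e_1$ of $\fp^+_2$ satisfies $(e_1|\overline{e_1})_{\fp^+}=\varepsilon_2$, hence has $\fp^+$-rank $\varepsilon_2$: for $\varepsilon_2=1$ this makes the lowest weight vectors literally coincide, and for $\varepsilon_2=2$ it feeds your vanishing characterization of $\cP_{\underline{1}_l}(\fp^+)$ on the determinantal locus $S_{l-1}$. What your approach buys is uniformity (no case analysis and no Littlewood--Richardson computation); what the paper's approach buys is that every input is an explicit, checkable branching identity, with no geometric facts about rank required.

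Two points in your part~2 deserve to be made honest. First, an element of $S_{l-1}$ is not a sum of $l-1$ primitive tripotents but a linear combination $x=\sum_{i\le l-1}\lambda_ic_i$ of a Jordan frame (spectral/singular value decomposition); the scalars are harmless for your rank count, but the statement as written is not what you use. Second, you invoke subadditivity of the Jordan rank twice (in $\fp^+$ to get $\rank_{\fp^+}(\Proj_2 c)\le 2$, and in $\fp^+_2$ to sum the $l-1$ contributions) without proof or reference. This is not a standard axiom of a Hermitian positive Jordan triple system, so you should either cite it or note that in every case with $\varepsilon_2=2$ occurring here ($\fp^+=\BC^n$, $\Sym(r,\BC)$ or $M(r,\BC)$ with $\fp^+_2=\BC$, $M(r',r'';\BC)$ or $\Skew(r,\BC)$) the Jordan rank is, up to the factor $\varepsilon_2$, the ordinary matrix rank, for which subadditivity is elementary. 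With those two repairs the argument is complete.
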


\begin{proof}
(1) For {\it Case} 1, since $\cP_{(l,0)}(\BC^{n''})$, $\cP_{(l,0)}(\BC^n)$ coincide with the spaces of homogeneous harmonic polynomials of degree $l$
of $n'', n$ variables respectively, $\cP_{(l,0)}(\BC^{n''})\subset\cP_{(l,0)}(\BC^n)$ holds.
For other cases, we take a tripotent $e\in\fp^+_2$ of rank 2, and set $\fp^+(e)_2=:\fp^{+\prime}$, $\fp^+_2(e)_2=:\fp^{+\prime}_2$.
Then these are of rank 2 and we have $\fp^{+\prime}\simeq \BC^{d+2}$, $\fp^{+\prime}_2\simeq \BC^{d_2+2}$, and by {\it Case} 1,
$\cP_{(l,0)}(\fp^{+\prime}_2)\subset\cP_{(l,0)}(\fp^{+\prime})$ holds.
Since $\cP_{(l,0,\dots,0)}(\fp^+)$ and $\cP_{(l,0,\dots,0)}\big(\fp^+_2\big)$ are generated by $\cP_{(l,0)}(\fp^{+\prime})$
and $\cP_{(l,0)}\big(\fp^{+\prime}_2\big)$ as $K^\BC$- and $K_1^\BC$-modules respectively, we have
$\cP_{(l,0,\dots,0)}(\fp^+_2)\subset\cP_{(l,0,\dots,0)}(\fp^+)$.

(2) For {\it Case} 1 with $n''=1$, this is proved as in $n''\ge 3$ cases. For {\it Case} 2, for $0\le l\le \min\{r',r''\}$, $\bm\in\BZ_{++}^r$,
we consider the space of abstract homomorphisms
\begin{gather*}
\Hom_{{\rm U}(r')\times {\rm U}(r'')}\big(\cP_{\underline{1}_l}(M(r',r'';\BC)),\cP_\bm({\rm Sym}(r,\BC))\big)
\\ \qquad
{}\simeq \Hom_{{\rm U}(r')\times {\rm U}(r'')}\big(V_{\underline{1}_l}^{(r')\vee}\boxtimes V_{\underline{1}_l}^{(r'')\vee},
V_{2\bm}^{(r)\vee}\bigr|_{{\rm U}(r')\times {\rm U}(r'')}\big)
\\ \qquad
{}\simeq \Hom_{{\rm U}(r)}\big(V_{2\bm}^{(r)\vee}, V_{\underline{1}_l}^{(r)\vee}\otimes V_{\underline{1}_l}^{(r)\vee}\big),
\end{gather*}
where the last isomorphism follows from \cite[Theorem 9.2.3]{GW}. Then since the target space is decomposed as
\[
V_{\underline{1}_l}^{(r)\vee}\otimes V_{\underline{1}_l}^{(r)\vee}
\simeq \bigoplus_{j=0}^l V_{(\underline{2}_j,\underline{1}_{2l-2j})}^{(r)\vee},
\]
the above space is non-zero only when $\bm=\underline{1}_l$. Finally, for {\it Case} 5, for $0\le l\le \lfloor r/2\rfloor$, $\bm\in\BZ_{++}^r$,
\[
\Hom_{{\rm U}(r)}\left(\cP_{\underline{1}_l}({\rm Skew}(r,\BC)), \cP_\bm(M(r,\BC))\right)
\simeq \Hom_{{\rm U}(r)}\big(V_{\underline{1}_{2l}}^{(r)\vee}, V_{\bm}^{(r)\vee}\otimes V_{\bm}^{(r)\vee}\big)\ne\{0\}
\]
implies $2|\bm|=|\underline{1}_{2l}|=2l$ and $m_j\le (\underline{1}_{2l})_j$ for $1\le j\le r$,
that is, we have $\bm=\underline{1}_l$.
\end{proof}

From now on we assume that $\fp^+$ and $\fp^+_2$ are of tube type, that is, we consider {\it Case} 1, {\it Case}~2 with $r'=r''$, {\it Case} 3 with $s'=s''$,
{\it Case} 4, {\it Case} 5 with $r=2s$, and {\it Case} 6.
We~fix a~maximal tripotent $e\in\fp^+_2\subset\fp^+$, regard $\fp^+$, $\fp^+_2$ as Jordan algebras,
and let $\Omega=\Omega^+\subset\fn^+\subset\fp^+$, $\Omega_2=\Omega_2^+\subset\fn^+_2\subset\fp^+_2$ be the corresponding Euclidean real forms
and the symmetric cones. We~also write $\Omega^-:=Q(\overline{e})\Omega\subset\fn^-\subset\fp^-$,
$\Omega^-_2:=Q(\overline{e})\Omega_2\subset\fn^-_2\subset\fp^-_2$.
Then by Theorem \ref{key_identity}(1),
when $\varepsilon_2=1$, for $f(x_2)\in\cP_{(l,0,\dots,0)}(\fp^+_2)$, $z=z_1+z_2\in\Omega$ we have
\begin{gather*}
\big\langle \det_{\fn^+_2}(x_2)^kf(x_2),{\rm e}^{(x|\overline{z})_{\fp^+}}\big\rangle_{\lambda,x}
\\ \qquad
{}=\frac{1}{(\lambda)_{\underline{2k}_r+(l,0,\dots,0),d}}\det_{\fn^+}(z)^{-\lambda+\frac{n}{r}}
\det_{\fn^+_2}\bigg(\frac{\partial}{\partial z_2}\bigg)^k\det_{\fn^+}(z)^{\lambda+2k-\frac{n}{r}}f(z_2) \\ \qquad
{}=\frac{1}{(\lambda)_{(2k+l,\underline{2k}_{r-1}),d}}\det_{\fn^+}(z)^{-\lambda+\frac{n}{r}}
\det_{\fn^+_2}\bigg(\frac{\partial}{\partial z_2}\bigg)^k\det_{\fn^+}(z)^{\lambda+2k-\frac{n}{r}}f(z_2),
\end{gather*}
and when $\varepsilon_2=2$, for $f(x_2)\in\cP_{\underline{1}_l}\big(\fp^+_2\big)$, $z=z_1+z_2\in\Omega$ we have
\begin{gather*}
\big\langle \det_{\fn^+_2}(x_2)^kf(x_2),{\rm e}^{(x|\overline{z})_{\fp^+}}\big\rangle_{\lambda,x}
\\ \qquad
{}=\frac{1}{(\lambda)_{\underline{k}_r+\underline{1}_l,d}}\det_{\fn^+}(z)^{-\lambda+\frac{n}{r}}
\det_{\fn^+_2}\bigg(\frac{1}{2}\frac{\partial}{\partial z_2}\bigg)^k\det_{\fn^+}(z)^{\lambda+k-\frac{n}{r}}f(z_2)
\\ \qquad
{}=\frac{1}{(\lambda)_{(\underline{k+1}_l,\underline{k}_{r-l}),d}}\det_{\fn^+}(z)^{-\lambda+\frac{n}{r}}
\det_{\fn^+_2}\bigg(\frac{1}{2}\frac{\partial}{\partial z_2}\bigg)^k\det_{\fn^+}(z)^{\lambda+k-\frac{n}{r}}f(z_2).
\end{gather*}
Also, by Proposition \ref{prop_Jordansub}(1), for $\mu\in\BC$ we have
\[ \det_{\fn^+}(z)^{-\mu}=\det_{\fn^+}(z_2)^{-\mu}h_{\fp^+}\big(z_1,Q(z_2)^{-1}z_1\big)^{-\mu/2}
=\det_{\fn^+_2}(z_2)^{-\varepsilon_2\mu}h_{\fp^+}\big(z_1,Q\big({}^t\hspace{-1pt}z_2^\itinv\big)z_1\big)^{-\mu/2}. \]
Next we consider the expansion of this.

\begin{Proposition}
For $(z_1,w_2)\in\fp^\pm_1\times\fp^\mp_2$, let $t_1,\dots,t_{\lfloor r/2\rfloor}$ be the roots of the polynomial
$t^{\lfloor r/2\rfloor}h_{\fp^\pm}\big(t^{-1}z_1,Q(w_2)z_1\big)^{1/2}\in\BC[t]$,
and for $\bm\in\BZ_{++}^{\lfloor r/2\rfloor}$,
define $\tilde{\Phi}_\bm^{\fp^\pm_1,\fp^\mp_2}(z_1,w_2)\in\cP\big(\fp^\pm_1\big)\otimes\cP\big(\fp^\mp_2\big)$ by
\begin{equation*}
\tilde{\Phi}_\bm^{\fp^\pm_1,\fp^\mp_2}(z_1,w_2):=\tilde{\Phi}_\bm^d(t_1,\dots,t_{\lfloor r/2\rfloor}),
\end{equation*}
where $\tilde{\Phi}_\bm^d(t_1,\dots,t_{\lfloor r/2\rfloor})$ is as in \eqref{Phi^d}. Then for $|t_j|<1$,
\[
h_{\fp^\pm}(z_1,Q(w_2)z_1)^{-\mu/2}
=\sum_{\bm\in\BZ_{++}^{\lfloor r/2\rfloor}}(\mu)_{\bm,d}\tilde{\Phi}_\bm^{\fp^\pm_1,\fp^\mp_2}(z_1,w_2) \]
holds. Moreover,
\begin{enumerate}\itemsep=0pt
\item[$1.$] If $\varepsilon_2=1$, then $\tilde{\Phi}_\bm^{\fp^\pm_1,\fp^\mp_2}(z_1,w_2)\in\cP\big(\fp^\pm_1\big)\otimes\cP_{\bm^2}\big(\fp^\mp_2\big)$ holds.
\item[$2.$] If $\varepsilon_2=2$, then $\tilde{\Phi}_\bm^{\fp^\pm_1,\fp^\mp_2}(z_1,w_2)\in\cP\big(\fp^\pm_1\big)\otimes\cP_{2\bm}\big(\fp^\mp_2\big)$ holds.
\end{enumerate}
\end{Proposition}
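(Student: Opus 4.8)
The plan is to reduce the stated expansion to the one-variable binomial series (\ref{expand_binom}) by way of the norm factorization in Proposition~\ref{prop_Jordansub}(1), and then to read off the $\widetilde{K}_1^\BC$-type of each coefficient from an invariance argument. First I would pin down the shape of the polynomial whose roots are the $t_j$. Fix an invertible $w_2\in\fp^\mp_2$ and set $x_2:={}^t\hspace{-1pt}w_2^\itinv\in\fp^\pm_2$, so that $Q(w_2)=Q(x_2)^{-1}$. Two facts combine. Since $B(\alpha x,\beta y)=I-\alpha\beta D(x,y)+\alpha^2\beta^2Q(x)Q(y)$ depends only on $\alpha\beta$, the generic norm is balanced, $h_{\fp^\pm}(\alpha x,y)=h_{\fp^\pm}(x,\alpha y)$, so $h_{\fp^\pm}(t^{-1}z_1,Q(w_2)z_1)=h_{\fp^\pm}(z_1,t^{-1}Q(w_2)z_1)$. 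Applying the third identity of Proposition~\ref{prop_Jordansub}(1) with $x_1=\alpha z_1$ gives $h_{\fp^\pm}(z_1,\alpha^2Q(w_2)z_1)^{1/2}=\det_{\fn^+}(\alpha z_1+x_2)/\det_{\fn^+}(x_2)$, and taking $\alpha^2=t^{-1}$ yields
\[
h_{\fp^\pm}\big(t^{-1}z_1,Q(w_2)z_1\big)^{1/2}=\frac{\det_{\fn^+}\big(t^{-1/2}z_1+x_2\big)}{\det_{\fn^+}(x_2)}.
\]
Because $-\sigma$ fixes $x_2$, negates $z_1$, and preserves $\det_{\fn^+}$ (exactly as used in the proof of Proposition~\ref{prop_Jordansub}(1)), the numerator is even in $z_1$, hence even in $t^{-1/2}$, so the right-hand side is a polynomial in $t^{-1}$ of degree $\lfloor r/2\rfloor$ with value $1$ at $t=\infty$. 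Thus $t^{\lfloor r/2\rfloor}h_{\fp^\pm}(t^{-1}z_1,Q(w_2)z_1)^{1/2}$ is monic of degree $\lfloor r/2\rfloor$ with roots $t_1,\dots,t_{\lfloor r/2\rfloor}$, whence $h_{\fp^\pm}(t^{-1}z_1,Q(w_2)z_1)^{1/2}=\prod_j(1-t_jt^{-1})$ and, at $t=1$, $h_{\fp^\pm}(z_1,Q(w_2)z_1)^{1/2}=\prod_j(1-t_j)$. The expansion then follows immediately from (\ref{expand_binom}) and the definition (\ref{Phi^d}) of $\tilde{\Phi}^d_\bm$, the identity extending from invertible $w_2$ to all $w_2$ by polynomiality.

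For the two ``Moreover'' statements I would first show that the root multiset is invariant under $(z_1,w_2)\mapsto(lz_1,{}^t\hspace{-1pt}l^{-1}w_2)$ for $l\in\widetilde{K}_1^\BC$. Indeed $Q({}^t\hspace{-1pt}l^{-1}w_2)(lz_1)={}^t\hspace{-1pt}l^{-1}Q(w_2)z_1$ and $h_{\fp^\pm}(lx,y)=h_{\fp^\pm}(x,{}^t\hspace{-1pt}ly)$, so $h_{\fp^\pm}(t^{-1}lz_1,Q({}^t\hspace{-1pt}l^{-1}w_2)(lz_1))=h_{\fp^\pm}(t^{-1}z_1,Q(w_2)z_1)$ for every $t$; hence each $\tilde{\Phi}_\bm^{\fp^\pm_1,\fp^\mp_2}$ is invariant under the corresponding diagonal action, i.e.\ lies in $\big(\cP(\fp^\pm_1)\otimes\cP(\fp^\mp_2)\big)^{\widetilde{K}_1^\BC}$. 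Since $D(\fp^+_2,\fp^-_2)\subset(\fk^\BC)^\sigma=\fk^\BC_1$, the structure group of $\fp^+_2$ acts through $\widetilde{K}_1^\BC$, so the decomposition $\cP(\fp^\mp_2)=\bigoplus_\bk\cP_\bk(\fp^\mp_2)$ is $\widetilde{K}_1^\BC$-stable; together with the bi-homogeneity of $\tilde{\Phi}_\bm^{\fp^\pm_1,\fp^\mp_2}$ (degree $2|\bm|$ in each argument, read off from the $t_j$), it remains only to identify which partitions $\bk$ occur.

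This identification of $\bk$ is the main obstacle, and it is where the dichotomy between the two claims resides. I would determine it by computing the top symmetric function $\prod_jt_j$ — equivalently the $z_1$-homogeneous part of highest even degree ($r$ or $r-1$) of $\det_{\fn^+}(z_1+x_2)/\det_{\fn^+}(x_2)$ — and extracting its $\widetilde{K}_1^\BC$-type in $w_2$ via the relation $\det_{\fn^+}(x_2)=\det_{\fn^+_2}(x_2)^{\varepsilon_2}$: the extra square present when $\varepsilon_2=2$ is exactly what converts the ``doubled-row'' partition $\bm^2$ into the ``doubled-value'' partition $2\bm$. To reach arbitrary $\bm$ from this single-row computation I would restrict to a rank-two subsystem, where Proposition~\ref{prop_simple_prelim} and the explicit $\BC^{d+2}$ evaluation (in which $t_1=-q(z_1)q(w_2)$, so $q(w_2)^m\in\cP_{(m,m)}(\fp^\mp_2)$ when $\varepsilon_2=1$ and $q(w_2)^m\in\cP_{2m}(\fp^\mp_2)$ when $\varepsilon_2=2$) give the result directly, and then propagate it by $\widetilde{K}_1^\BC$-equivariance together with the stability of $\tilde{\Phi}^d_\bm$ under adjoining a zero variable. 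The careful bookkeeping of these partitions across the individual tube-type cases is the only genuinely laborious part; the analytic content is entirely contained in the first paragraph.
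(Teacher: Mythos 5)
Your first paragraph is sound and takes a genuinely different route from the paper. The paper verifies the expansion $h_{\fp^\pm}(z_1,Q(w_2)z_1)^{-\mu/2}=\sum_\bm(\mu)_{\bm,d}\tilde{\Phi}_\bm^{\fp^\pm_1,\fp^\mp_2}(z_1,w_2)$ by explicit computation in each of the six tube-type cases ($q$, block determinants, Pfaffians), whereas you derive it uniformly from Proposition~\ref{prop_Jordansub}(1), the evenness of $\det_{\fn^+}(\alpha z_1+x_2)$ in $\alpha$ coming from the automorphism $-\sigma$, and (\ref{expand_binom}); this has the added merit of proving uniformly that $t^{\lfloor r/2\rfloor}h_{\fp^\pm}\big(t^{-1}z_1,Q(w_2)z_1\big)^{1/2}$ really is a monic polynomial of degree $\lfloor r/2\rfloor$, which the paper only checks case by case. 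Two points should be made explicit: the appeal to (\ref{expand_binom}) in $\lfloor r/2\rfloor$ variables with the parameter $d$ of the ambient $\fp^+$ is licensed by Remark~\ref{rem_Heckman-Opdam}, and the passage from invertible to arbitrary $w_2$ needs the observation that the coefficients of the monic polynomial, hence the $\tilde{\Phi}_\bm^{\fp^\pm_1,\fp^\mp_2}$, are polynomials on a dense set and therefore everywhere.

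The genuine gap is in the ``Moreover'' part. Your $\widetilde{K}_1^\BC$-invariance observation together with bi-homogeneity only places the $w_2$-dependence of $\tilde{\Phi}_\bm^{\fp^\pm_1,\fp^\mp_2}$ in $\bigoplus_{|\bk|=2|\bm|}\cP_\bk\big(\fp^\mp_2\big)$; singling out the one $\bk$ that occurs is the substance of the claim, and the mechanism you propose cannot do it. Restricting to a rank-two subsystem leaves a single root $t_1$, and by the very stability property you invoke, $\tilde{\Phi}^d_\bm(t_1,0,\dots,0)=0$ whenever $\bm$ has two or more nonzero parts; so the rank-two evaluation carries no information about exactly the components you need, and there is nothing for ``$\widetilde{K}_1^\BC$-equivariance'' to propagate (Proposition~\ref{prop_simple_prelim} likewise only treats single-row and fundamental partitions). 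Similarly, computing $\prod_jt_j$ identifies the type only for $\bm=\underline{1}_{\lfloor r/2\rfloor}$. What the paper actually does here is identify $\tilde{\Phi}_\bm^{\fp^+_1,\fp^-_2}(z_1,w_2)$ in each case with a concrete object --- e.g.\ $\tilde{\Phi}^{(1)}_\bm\big(z_{11}w_{12}z_{22}{}^t\hspace{-1pt}w_{12}\big)=\tilde{\Phi}_\bm^{\Sym(r',\BC)}\big(z_{11},w_{12}z_{22}{}^t\hspace{-1pt}w_{12}\big)$ in Case~2 --- whose type in $z_1=(z_{11},z_{22})$ is read off from (\ref{Phi^p}) as $\cP_\bm\boxtimes\cP_\bm$, after which ${\rm GL}\times{\rm GL}$-invariance forces the $w_{12}$-type to be the dual $V_{2\bm}^{(r')}\boxtimes V_{2\bm}^{(r')}\simeq\cP_{2\bm}(M(r',\BC))$, with Cases~4 and~5 quoted from \cite[Propositions~7.6 and~8.3]{Z}. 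Some such determination of the full restricted weight --- all $\lfloor r/2\rfloor$ exponents, not just the top one --- is unavoidable, and your sketch does not supply it.
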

Here we write $\bm^2=(m_1,m_1,m_2,m_2,\dots,m_{\lfloor r/2\rfloor},m_{\lfloor r/2\rfloor}(,0))$.
We~note that since for $(z_1,w_2)\allowbreak=(z_1,{}^t\hspace{-1pt}z_2^\itinv)$ with $z=z_1+z_2\in\Omega$ we have
\begin{gather*}
\det_{\fn^+}(z)=\det_{\fn^+_2}(z_2)^{\varepsilon_2}h_{\fp^+}\big(z_1,Q\big({}^t\hspace{-1pt}z_2^\itinv\big)z_1\big)^{1/2}
=\det_{\fn^+_2}(z_2)^{\varepsilon_2}\prod_{j=1}^{\lfloor r/2\rfloor}(1-t_j)>0, \\
h_{\fp^+}\big(z_1,Q\big({}^t\hspace{-1pt}z_2^\itinv\big)z_1\big)=h_{\fn^+}\big(z_1,P\big(z_2^\itinv\big)z_1\big)
=h_{\fn^+}\big(P\big(z_2^{-\mathit{1/2}}\big)z_1,P\big(z_2^{-\mathit{1/2}}\big)z_1\big),
\end{gather*}
and since $\Omega$ is connected, we have $0\le t_j<1$, and the above sum converges for $(z_1,w_2)=\big(z_1,{}^t\hspace{-1pt}z_2^\itinv\big)$.
\begin{proof}
First, for {\it Case} 1 with $n''\ge 3$, we have $r/2=1$, and for $z_1\in\fp^+_1=\BC^{n'}$, $w_2\in\fp^-_2=\BC^{n''}$, since
\[ th_{\fp^+}\big(t^{-1}z_1,Q(w_2)z_1\big)^{1/2} =t\bigg(1+\frac{2}{t}q(z_1)q(w_2)+\frac{1}{t^2}q(z_1)^2q(w_2)^2\bigg)^{1/2}
=t+q(z_1)q(w_2),
\]
we have $t=-q(z_1)q(w_2)$, and
\begin{gather*}
\tilde{\Phi}_m^{\fp^+_1,\fp^-_2}(z_1,w_2)=\frac{1}{m!}t^m=\frac{1}{m!}(-q(z_1)q(w_2))^m
\in\cP_{(m,m)}(\BC^{n'})_{z_1}\otimes\cP_{(m,m)}(\BC^{n''})_{w_2},
\\
h_{\fp^+}(z_1,Q(w_2)z_1)^{-\mu/2}=(1+q(z_1)q(w_2))^{-\mu} =\sum_{m=0}^\infty\frac{(\mu)_m}{m!}(-q(z_1)q(w_2))^m
\end{gather*}
hold. For $n''=1$ case this is also proved similarly.
Next we consider {\it Cases} 2, 3. For {\it Case} 2 we have $r=2r'$, $\varepsilon_2=2$, and for {\it Case} 3 we have $r=s'$, $\varepsilon_2=1$.
Then for $z_1=\left(\begin{smallmatrix} z_{11}&0\\ 0&z_{22}\end{smallmatrix}\right)\in\fp^+_1$,
$w_2=\left(\begin{smallmatrix} 0&w_{12}\\ \pm{}^t\hspace{-1pt}w_{12}&0\end{smallmatrix}\right)\in\fp^-_2$ with
\[
(z_{11},w_{12},z_{22})\in
\begin{cases}
\Sym(r',\BC)\oplus M(r',\BC)\oplus\Sym(r',\BC) & (\textit{Case }2),
\\
\Skew(s',\BC)\oplus M(s',\BC)\oplus\Skew(s',\BC) & (\textit{Case }3),
\end{cases}
\]
we have
\begin{gather*}
t^{\lfloor r/2\rfloor}h_{\fp^+}\big(t^{-1}z_1,Q(w_2)z_1\big)^{1/2}
\\ \qquad
{}=t^{\lfloor r/2\rfloor}\det\left(I-t^{-1}\begin{pmatrix}z_{11}& 0\\ 0& z_{22}\end{pmatrix}
\begin{pmatrix}0& \pm w_{12}\\ {}^t\hspace{-1pt}w_{12}& 0\end{pmatrix}\begin{pmatrix}z_{11}& 0\\ 0& z_{22}\end{pmatrix}
\begin{pmatrix}0& \pm w_{12}\\ {}^t\hspace{-1pt}w_{12}& 0\end{pmatrix}\right)^{\varepsilon_2/4}
\\ \qquad
{}=t^{\lfloor r/2\rfloor}\det\left(I-t^{-1}\begin{pmatrix}\pm z_{11}w_{12}z_{22}{}^t\hspace{-1pt}w_{12}& 0
\\
0& \pm z_{22}{}^t\hspace{-1pt}w_{12}z_{11}w_{12}\end{pmatrix}\right)^{\varepsilon_2/4}\\
\qquad
=t^{\lfloor r/2\rfloor}\det\left(I\mp t^{-1}z_{11}w_{12}z_{22}{}^t\hspace{-1pt}w_{12}\right)^{\varepsilon_2/2}
\\ \qquad
{}=\begin{cases}
\det\left(tI-z_{11}w_{12}z_{22}{}^t\hspace{-1pt}w_{12}\right) & (\textit{Case }2),
\\
\det\left(tI+z_{11}w_{12}z_{22}{}^t\hspace{-1pt}w_{12}\right)^{1/2} & (\textit{Case }3,\ s'\colon\text{even}),
\\
\left(t^{-1}\det\left(tI+z_{11}w_{12}z_{22}{}^t\hspace{-1pt}w_{12}\right)\right)^{1/2}& (\textit{Case }3,\ s'\colon\text{odd}). \end{cases}
\end{gather*}
Therefore we have
\begin{equation}\label{Phi_mid_Case23}
\tilde{\Phi}_\bm^{\fp^+_1,\fp^-_2}(z_1,w_2)=\begin{cases}
\tilde{\Phi}_\bm^{(1)}\left(z_{11}w_{12}z_{22}{}^t\hspace{-1pt}w_{12}\right)
=\tilde{\Phi}_\bm^{\Sym(r',\BC)}\left(z_{11},w_{12}z_{22}{}^t\hspace{-1pt}w_{12}\right) & (\textit{Case }2), \\
\tilde{\Phi}_\bm^{[4]}\left(-z_{11}w_{12}z_{22}{}^t\hspace{-1pt}w_{12}\right)
=\tilde{\Phi}_\bm^{\Skew(s',\BC)}\left(z_{11},w_{12}z_{22}{}^t\hspace{-1pt}w_{12}\right) & (\textit{Case }3), \end{cases}
\end{equation}
where $\tilde{\Phi}_\bm^{(1)}$, $\tilde{\Phi}_\bm^{[4]}$ and $\tilde{\Phi}_\bm^{\fp^+_{11}}$ (with $\fp^+_{11}=\Sym(r',\BC)$, $\Skew(s',\BC)$)
are as in (\ref{Phi^(d)}), (\ref{Phi^[d]}) and (\ref{Phi^p}) respectively, and
\begin{gather*}
h_{\fp^+}(z_1,Q(w_2)z_1)^{-\mu/2}=\det\big(I-z_{11}w_{12}z_{22}{}^t\hspace{-1pt}w_{12}\big)^{-\mu}
\\ \hphantom{h_{\fp^+}(z_1,Q(w_2)z_1)^{-\mu/2}}
{}=\sum_{\bm\in\BZ_{++}^{r'}}(\mu)_{\bm,1}\tilde{\Phi}_\bm^{(1)} \big(z_{11}w_{12}z_{22}{}^t\hspace{-1pt}w_{12}\big)
\hspace{13mm} (\textit{Case }2),
\\
h_{\fp^+}(z_1,Q(w_2)z_1)^{-\mu/2}=\det\big(I+z_{11}w_{12}z_{22}{}^t\hspace{-1pt}w_{12}\big)^{-\mu/2}
\\ \hphantom{h_{\fp^+}(z_1,Q(w_2)z_1)^{-\mu/2}}
{}=\sum_{\bm\in\BZ_{++}^{\lfloor s'/2\rfloor}}(\mu)_{\bm,4}\tilde{\Phi}_\bm^{[4]}\big({-}z_{11}w_{12}z_{22}{}^t\hspace{-1pt}w_{12}\big)
\qquad (\textit{Case }3).
\end{gather*}
Since as polynomials of $z_{11}$, $z_{22}$,
\begin{gather*}\begin{split}
&\tilde{\Phi}_\bm^{(1)}\big(z_{11}w_{12}z_{22}{}^t\hspace{-1pt}w_{12}\big)
=\tilde{\Phi}_\bm^{\Sym(r',\BC)}\big(z_{11},w_{12}z_{22}{}^t\hspace{-1pt}w_{12}\big)
\\ &\qquad
{}\in\cP_\bm({\rm Sym}(r',\BC))_{z_{11}}\boxtimes\cP_\bm({\rm Sym}(r',\BC))_{z_{22}}\simeq V_{2\bm}^{(r')\vee}\boxtimes V_{2\bm}^{(r')\vee}\hspace{10mm}
(\textit{Case }2), \\
&\tilde{\Phi}_\bm^{[4]}\big({-}z_{11}w_{12}z_{22}{}^t\hspace{-1pt}w_{12}\big)
=\tilde{\Phi}_\bm^{\Skew(s',\BC)}\big(z_{11},w_{12}z_{22}{}^t\hspace{-1pt}w_{12}\big)
\\ &\qquad
{}\in\cP_\bm({\rm Skew}(s',\BC))_{z_{11}}\boxtimes\cP_\bm({\rm Skew}(s',\BC))_{z_{22}}\simeq V_{\bm^2}^{(s')\vee}\boxtimes V_{\bm^2}^{(s')\vee}\qquad
(\textit{Case }3)
\end{split}
\end{gather*}
hold, by the ${\rm GL}(r',\BC)\times {\rm GL}(r',\BC)$- resp. ${\rm GL}(s',\BC)\times {\rm GL}(s',\BC)$-invariance, as polynomials of~$w_{12}$,
\begin{alignat*}{3}
& \tilde{\Phi}_\bm^{(1)}\big(z_{11}w_{12}z_{22}{}^t\hspace{-1pt}w_{12}\big)
\in \cP_{2\bm}(M(r',\BC))_{w_{12}}\simeq V_{2\bm}^{(r')}\boxtimes V_{2\bm}^{(r')}\qquad&&
(\textit{Case }2),&
\\
& \tilde{\Phi}_\bm^{[4]}\big({-}z_{11}w_{12}z_{22}{}^t\hspace{-1pt}w_{12}\big)
\in \cP_{\bm^2}(M(s',\BC))_{w_{12}}\simeq V_{\bm^2}^{(s')}\boxtimes V_{\bm^2}^{(s')}\qquad&&
(\textit{Case }3)&
\end{alignat*}
hold. Next we consider {\it Cases} 4, 5. For {\it Case} 5 we have $r=2s$. Then for
\[
z_1\in\fp^+_1= \begin{cases} \Skew(r,\BC) & (\textit{Case }4), \\ \Sym(2s,\BC) & (\textit{Case }5), \end{cases} \qquad
w_2\in\fp^-_2= \begin{cases} \Sym(r,\BC) & (\textit{Case }4), \\ \Skew(2s,\BC) & (\textit{Case }5), \end{cases}
\]
we have
\begin{gather*}
t^{\lfloor r/2\rfloor}h_{\fp^+}\big(t^{-1}z_1,Q(w_2)z_1\big)^{1/2}
=t^{\lfloor r/2\rfloor}\det\big(I-t^{-1}z_1w_2z_1w_2\big)^{1/2},
\\
\tilde{\Phi}_\bm^{\fp^+_1,\fp^-_2}(z_1,w_2)=\tilde{\Phi}_\bm^{[2]}(z_1w_2z_1w_2),
\\
h_{\fp^+}(z_1,Q(w_2)z_1)^{-\mu/2}=\det(I-z_1w_2z_1w_2)^{-\mu/2}
=\sum_{\bm\in\BZ_{++}^{\lfloor r/2\rfloor}}(\mu)_{\bm,2}\tilde{\Phi}_\bm^{[2]}(z_1w_2z_1w_2),
\end{gather*}
where $\tilde{\Phi}_\bm^{[2]}$ is as in (\ref{Phi^[d]}).
Then for $w_2=I_r$ (Case 4) or $w_2=J_{2s}:=\left(\begin{smallmatrix} 0&I_s\\ -I_s&0\end{smallmatrix}\right)$ ({\it Case} 5),
by \cite[Propositions 7.6 and 8.3]{Z} we have
\begin{alignat*}{3}
&\tilde{\Phi}_\bm^{[2]}(z_1^2)\in(\cP_{2\bm}({\rm Skew}(r,\BC)))^{{\rm SO}(r,\BC)}\simeq \big(V_{2\bm^2}^{(r)\vee}\big)^{{\rm SO}(r,\BC)} \qquad &&  (\textit{Case }4), & \\
&\tilde{\Phi}_\bm^{[2]}(z_1J_{2s}z_1J_{2s})\in(\cP_{\bm^2}({\rm Sym}(2s,\BC)))^{\operatorname{Sp}(s,\BC)}\simeq \big(V_{2\bm^2}^{(2s)\vee}\big)^{\operatorname{Sp}(s,\BC)}
\qquad &&  (\textit{Case }5),&
\end{alignat*}
and by the ${\rm GL}(r,\BC)$-invariance we get
\begin{align*}
\tilde{\Phi}_\bm^{[2]}(z_1w_2z_1w_2)&\in \begin{cases}
\big(\cP_{2\bm}({\rm Skew}(r,\BC))_{z_1}\otimes \cP_{\bm^2}({\rm Sym}(r,\BC))_{w_2}\big)^{{\rm GL}(r,\BC)} & (\textit{Case }4) ,
\\
\big(\cP_{\bm^2}({\rm Sym}(2s,\BC))_{z_1}\otimes \cP_{2\bm}({\rm Skew}(2s,\BC))_{w_2}\big)^{{\rm GL}(2s,\BC)} & (\textit{Case }5) \end{cases}
\\
&\simeq \big(V_{2\bm^2}^{(r)\vee}\otimes V_{2\bm^2}^{(r)}\big)^{{\rm GL}(r,\BC)}.
\end{align*}
Finally, for {\it Case} 6, we have $r=3$, and for $z_1\in\fp^+_1=M(2,6;\BC)$, $w_2\in\fp^-_2=\Skew(6,\BC)$, since
\begin{align*}
th_{\fp^+}\big(t^{-1}z_1,Q(w_2)z_1\big)^{1/2}&=th_{\fp^+_1}(t^{-1}z_1,Q(w_2)z_1)^{1/2}
=t\det\big(I-t^{-1}z_1{}^t\hspace{-1pt}\big(J_2z_1w_2^\#\big)\big)^{1/2}
\\
&=t\det\big(I-t^{-1}z_1w_2^\#{}^t\hspace{-1pt}z_1J_2\big)^{1/2}=
\Pf\big(tJ_2+z_1w_2^\#{}^t\hspace{-1pt}z_1\big)
\\
&=t+\Pf\big(z_1w_2^\#{}^t\hspace{-1pt}z_1\big),
\end{align*}
where $w_2^\#$ is defined as in (\ref{adj6}), we have $t=-\Pf\big(z_1w_2^\#{}^t\hspace{-1pt}z_1\big)$, and
\begin{gather}
\tilde{\Phi}_m^{\fp^+_1,\fp^-_2}(z_1,w_2)=\frac{1}{m!}t^m
=\frac{1}{m!}\big({-}\Pf\big(z_1w_2^\#{}^t\hspace{-1pt}z_1\big)\big)^m,\label{Phi_mid_except}
\\
h_{\fp^+}(z_1,Q(w_2)z_1)^{-\mu/2}=\big(1+\Pf\big(z_1w_2^\#{}^t\hspace{-1pt}z_1\big)\big)^{-\mu}
=\sum_{m=0}^\infty \frac{(\mu)_m}{m!}\big({-}\Pf\big(z_1w_2^\#{}^t\hspace{-1pt}z_1\big)\big)^m.\notag
\end{gather}
Since $\frac{1}{m!}\big({-}\Pf\big(z_1w_2^\#{}^t\hspace{-1pt}z_1\big)\big)^m$ is rewritten as
\begin{align*}
\frac{1}{m!}\big({-}\Pf\big(z_1w_2^\#{}^t\hspace{-1pt}z_1\big)\big)^m
=\tilde{\Phi}_m^{[4]}\big(z_1w_2^\#{}^t\hspace{-1pt}z_1J\big)
=\tilde{\Phi}_{(m,0,0)}^{[4]}\big(w_2^\#{}^t\hspace{-1pt}z_1Jz_1\big),
\end{align*}
by the ${\rm S}({\rm GL}(2,\BC)\times {\rm GL}(6,\BC))$-invariance we conclude that
\begin{gather*}
\frac{1}{m!}\big({-}\Pf\big(z_1w_2^\#{}^t\hspace{-1pt}z_1\big)\big)^m
\\ \qquad
{}\in\big(\cP_{(m,m)}(M(2,6;\BC))_{z_1}\otimes\cP_{(m,m,0)}({\rm Skew}(6,\BC))_{w_2}\big)^{{\rm S}({\rm GL}(2,\BC)\times {\rm GL}(6,\BC))}
\\ \qquad
{}\simeq \big(\big(V_{(m,m)}^{(2)\vee}\boxtimes V_{(0,0,0,0,-m,-m)}^{(6)\vee}\big)\otimes
\big(V_{(m,m)}^{(2)}\boxtimes V_{(0,0,0,0,-m,-m)}^{(6)}\big)\big)^{{\rm S}({\rm GL}(2,\BC)\times {\rm GL}(6,\BC))}
\end{gather*}
holds.
\end{proof}

Using $\tilde{\Phi}_\bm^{\fp^\pm_1,\fp^\mp_2}(z_1,w_2)$,
for $\alpha,\beta,\gamma\in\BC$ with $\gamma\notin\bigcup_{j=1}^{\lfloor r/2\rfloor}\big(\frac{d}{2}(j-1)-\BZ_{\ge 0}\big)$ we define
\begin{gather}
{}_0F_1^{\fp^\pm_1,\fp^\mp_2}(\gamma;z_1,w_2):=\sum_{\bm\in\BZ_{++}^{\lfloor r/2\rfloor}}\frac{1}{(\gamma)_{\bm,d}}
\tilde{\Phi}_\bm^{\fp^\pm_1,\fp^\mp_2}(z_1,w_2), \label{0F1_mid}
\\
{}_2F_1^{\fp^\pm_1,\fp^\mp_2}\left( \begin{matrix}\alpha,\beta\\\gamma \end{matrix};z_1,w_2\right)
:=\sum_{\bm\in\BZ_{++}^{\lfloor r/2\rfloor}}\frac{(\alpha)_{\bm,d}(\beta)_{\bm,d}}{(\gamma)_{\bm,d}}
\tilde{\Phi}_\bm^{\fp^\pm_1,\fp^\mp_2}(z_1,w_2). \label{2F1_mid}
\end{gather}

Next we consider the expansion of $\tilde{\Phi}_\bm^{\fp^\pm_1,\fp^\mp_2}(z_1,w_2)f\big({}^t\hspace{-1pt}w_2^\itinv\big)$.
When $\varepsilon_2=1$, we have $r=r_2$. For $f(x_2)\in\cP_{(l,0,\dots,0)}\big(\fp^\pm_2\big)$ and for $\bm\in\BZ_{++}^{\lfloor r/2\rfloor}$,
$\bl\in(\BZ_{\ge 0})^{\lceil r/2\rceil}$ with $|\bl|=l$,
we define $F_{\bm,\bl}^1[f](z_1,w_2)\in\cP\big(\fp^\pm_1\big)_{z_1}\otimes\cP\big(\fp^\mp_2\big)_{w_2}$ by
\begin{gather*}
\tilde{\Phi}_\bm^{\fp^\pm_1,\fp^\mp_2}(z_1,w_2)f\big({}^t\hspace{-1pt}w_2^\itinv\big)
=\sum_{\substack{\bl\in(\BZ_{\ge 0})^{\lceil r/2\rceil} \\ |\bl|=l}}F_{\bm,\bl}^1[f](z_1,w_2),
\\
F_{\bm,\bl}^1[f](z_1,w_2)
\\ \qquad
{}\in \begin{cases}
\cP\big(\fp^\pm_1\big)_{z_1}\otimes
\cP_{\substack{(m_1,m_1-l_1,m_2,m_2-l_2,\dots, m_{r/2},m_{r/2}-l_{r/2})}}\big(\fp_2^\mp\big)_{w_2}, & r\colon\text{even},
\\
\cP\big(\fp^\pm_1\big)_{z_1}\otimes
\cP_{\substack{(m_1,m_1-l_1,m_2,m_2-l_2,\dots, m_{\lfloor r/2\rfloor},m_{\lfloor r/2\rfloor}-l_{\lfloor r/2\rfloor},
-l_{\lceil r/2\rceil})}}\big(\fp_2^\mp\big)_{w_2}, & r\colon\text{odd}.
\end{cases}
\end{gather*}
Then $F_{\bm,\bl}^1[f](z_1,w_2)\ne 0$ holds only if $0\le l_j\le m_j-m_{j+1}$ ($1\le j< \lfloor r/2\rfloor$) and
$0\le l_{\lfloor r/2\rfloor}\le m_{\lfloor r/2\rfloor}$ (only when $r$ is odd).
Similarly, when $\varepsilon_2=2$, we have $r=2r_2$. For $f(x_2)\in\cP_{\underline{1}_l}\big(\fp^\pm_2\big)$ and for $\bm\in\BZ_{++}^{r_2}$,
$\bl\in\{0,1\}^{r_2}$ with $|\bl|=l$, we define $F_{\bm,\bl}^2[f](z_1,w_2)\in\cP\big(\fp^\pm_1\big)_{z_1}\otimes\cP\big(\fp^\mp_2\big)_{w_2}$ by
\begin{gather*}
\tilde{\Phi}_\bm^{\fp^\pm_1,\fp^\mp_2}(z_1,w_2)f\big({}^t\hspace{-1pt}w_2^\itinv\big)
=\sum_{\substack{\bl\in\{0,1\}^{r_2} \\ |\bl|=l}}F_{\bm,\bl}^2[f](z_1,w_2),
\\
F_{\bm,\bl}^2[f](z_1,w_2)\in\cP\big(\fp^\pm_1\big)_{z_1}\otimes\cP_{2\bm-\bl}\big(\fp_2^\mp\big)_{w_2}.
\end{gather*}
Then $F_{\bm,\bl}^2[f](z_1,w_2)\ne 0$ holds only if $2\bm-\bl\in\BZ_+^{r_2}$, or equivalently $\bm-\bl\in\BZ_+^{r_2}$.
Then for $\mu\in\BC$ we have
\begin{align}
\det_{\fn^+}(z)^{-\mu}f(z_2)&=\det_{\fn^+_2}(z_2)^{-\varepsilon_2\mu}h_{\fp^+}\big(z_1,Q(z_2)^{-1}z_1\big)^{-\mu/2}f(z_2) \notag\\
&=\det_{\fn^+_2}(z_2)^{-\varepsilon_2\mu}\sum_{\bm\in\BZ_{++}^{\lfloor r/2\rfloor}}
(\mu)_{\bm,d}\tilde{\Phi}_\bm^{\fp^+_1,\fp^-_2}\big(z_1,{}^t\hspace{-1pt}z_2^\itinv\big)f(z_2) \notag\\
&=\det_{\fn^+_2}(z_2)^{-\varepsilon_2\mu}\sum_{\bm\in\BZ_{++}^{\lfloor r/2\rfloor}}\sum_\bl
(\mu)_{\bm,d}F_{\bm,\bl}^{\varepsilon_2}[f]\big(z_1,{}^t\hspace{-1pt}z_2^\itinv\big). \label{binom_simple}
\end{align}

\subsection{Main theorem}

We~continue to assume $\fp^+$, $\fp^+_2$ are of tube type.
When $\varepsilon_2=1$, for $f(x_2)\in\cP_{(l,0,\dots,0)}\big(\fp^\pm_2\big)$, $\mu,\nu\in\BC$
and for $z=z_1+z_2\in\Omega^\pm\subset\fn^\pm\subset\fp^\pm$, let
\begin{align}
F_1^l\left(\begin{matrix}\nu\\\mu\end{matrix};f;z\right):\hspace{-3pt}&=\det_{\fn^+_2}(z_2)^{-\nu}
\sum_{\bm\in\BZ_{++}^{\lfloor r/2\rfloor}}\sum_{\substack{\bl\in(\BZ_{\ge 0})^{\lceil r/2\rceil}\\ |\bl|=l}}
F_{\bm,\bl}^1[f]\big(z_1,{}^t\hspace{-1pt}z_2^\itinv\big) \notag\\
&\eqspace{}\times \begin{cases}
\ds \frac{(\nu)_{\bm,d}\big(\nu-\frac{d_2}{2}-(\underline{0}_{r/2-1},l)\big)_{\bm-\bl+(\underline{0}_{r/2-1},l),d}}
{\big(\mu-\frac{d_2}{2}-(\underline{0}_{r/2-1},l)\big)_{\bm-\bl+(\underline{0}_{r/2-1},l),d}}, & r\colon\text{even},
\\[1ex]
\ds \frac{(\nu)_{\bm,d}\big(\nu-\frac{d_2}{2}\big)_{\bm-\bl',d}\big(\nu-\frac{d_2}{2}(r-1)-l\big)_{l-l_{\lceil r/2\rceil}}}
{\big(\mu-\frac{d_2}{2}\big)_{\bm-\bl',d}\big(\mu-\frac{d_2}{2}(r-1)-l\big)_{l-l_{\lceil r/2\rceil}}}, & r\colon\text{odd},
\end{cases} \label{Fmunu1}
\end{align}
where for odd $r$, for $\bl=(l_1,\dots,l_{\lfloor r/2\rfloor},l_{\lceil r/2\rceil})$ let $\bl':=(l_1,\dots,l_{\lfloor r/2\rfloor})$.
Similarly, when $\varepsilon_2=2$, for $f(x_2)\in\cP_{\underline{1}_l}\big(\fp^\pm_2\big)$, $\mu,\nu\in\BC$
and for $z=z_1+z_2\in\Omega^\pm\subset\fn^\pm\subset\fp^\pm$, let
\begin{gather}
F_2^l\left(\begin{matrix}\nu\\\mu\end{matrix};f;z\right):=\det_{\fn^+_2}(z_2)^{-2\nu} \notag
\\ \qquad
{}\times\sum_{\bm\in\BZ_{++}^{r_2}}\sum_{\substack{\bl\in\{0,1\}^{r_2}\\ |\bl|=l}}\frac{(\nu)_{\bm,d}
\big(\nu+\bigl(\underline{\frac{1}{2}}_{r_2-l},\underline{-\frac{1}{2}}_l\bigr) \big)_{\bm-\bl+(\underline{0}_{r_2-l},\underline{1}_l),d}}
{\big(\mu+\bigl(\underline{\frac{1}{2}}_{r_2-l},\underline{-\frac{1}{2}}_l\bigr) \big)_{\bm-\bl+(\underline{0}_{r_2-l},\underline{1}_l),d}}
F_{\bm,\bl}^2[f]\big(z_1,{}^t\hspace{-1pt}z_2^\itinv\big). \label{Fmunu2}
\end{gather}
$F_{\varepsilon_2}^l\left(\begin{smallmatrix}\nu\\\mu\end{smallmatrix};f;z\right)$ becomes a polynomial on $\fp^\pm$
if $\nu\in-\frac{1}{\varepsilon_2}\BZ_{\ge 0}$.
Now we want to prove the following.
\begin{Theorem}\label{thm_simple}
Assume $\fp^+$, $\fp^+_2$ are of tube type.
\begin{enumerate}\itemsep=0pt
\item[$1.$] Suppose $\varepsilon_2=1$. Let $k,l\in\BZ_{\ge 0}$, $f(x_2)\in\cP_{(l,0,\dots,0)}\big(\fp^+_2\big)$.
Then for $\Re\lambda>\frac{2n}{r}-1$ and for $z=z_1+z_2\in\Omega\subset\fn^+\subset\fp^+$, we have
\begin{gather*}
\big\langle \det_{\fn^+_2}(x_2)^kf(x_2),{\rm e}^{(x|\overline{z})_{\fp^+}}\big\rangle_{\lambda,x}
\\ \qquad
{}=\frac{1}{(\lambda)_{(2k+l,\underline{2k}_{r-1}),d}}\det_{\fn^+}(z)^{-\lambda+\frac{n}{r}}
\det_{\fn^+_2}\bigg(\frac{\partial}{\partial z_2}\bigg)^k\det_{\fn^+}(z)^{\lambda+2k-\frac{n}{r}}f(z_2)
\\ \qquad
{}=C_1^{r_2}(\lambda,k,l)\det_{\fn^+}(z)^{-\lambda+\frac{n}{r}}
F_1^l\left(\begin{matrix} -\lambda-k+\frac{n}{r} \\ -\lambda-2k+\frac{n}{r} \end{matrix};f;z\right)
\\ \qquad
{}=C_1^{r_2}(\lambda,k,l)F_1^l\left(\begin{matrix} -k \\ -\lambda-2k+\frac{n}{r} \end{matrix};f;z\right)\!,
\end{gather*}
where
\[
C_1^{r_2}(\lambda,k,l)=
\begin{cases} \ds \frac{\big(\lambda+k-\frac{d}{4}r_2+\frac{d_2}{2}+(l,\underline{0}_{r_2/2-1})\big)_{\underline{k}_{r_2/2},d}}
{(\lambda)_{(2k+l,\underline{2k}_{r_2/2-1},\underline{k}_{r_2/2}),d}}, & r_2\colon\text{even},
\\[2ex]
\ds \frac{\big(\lambda-\frac{d}{2}\big\lfloor\frac{r_2}{2}\big\rfloor+\max\{2k,k+l\}\big)_{\min\{k,l\}}
\big(\lambda+k-\frac{d}{2}\big\lceil\frac{r_2}{2}\big\rceil+\frac{d_2}{2}\big)_{\underline{k}_{\lfloor r_2/2\rfloor},d}}
{(\lambda)_{(2k+l,\underline{2k}_{\lfloor r_2/2\rfloor-1},\min\{2k,k+l\},\underline{k}_{\lfloor r_2/2\rfloor}),d}}, \hspace{-52pt}& \\
& r_2\colon\text{odd}. \end{cases}
\]

\item[$2.$] Suppose $\varepsilon_2=2$. Let $k\in\BZ_{\ge 0}$, $0\le l<r_2$, $f(x_2)\in\cP_{\underline{1}_l}\big(\fp^+_2\big)$.
Then for $\Re\lambda>\frac{2n}{r}-1$ and for $z=z_1+z_2\in\Omega\subset\fn^+\subset\fp^+$, we have
\begin{gather*}
\big\langle \det_{\fn^+_2}(x_2)^kf(x_2),{\rm e}^{(x|\overline{z})_{\fp^+}}\big\rangle_{\lambda,x}
\\ \qquad
{}=\frac{1}{(\lambda)_{(\underline{k+1}_l,\underline{k}_{r-l}),d}}\det_{\fn^+}(z)^{-\lambda+\frac{n}{r}}
\det_{\fn^+_2}\bigg(\frac{1}{2}\frac{\partial}{\partial z_2}\bigg)^k\det_{\fn^+}(z)^{\lambda+k-\frac{n}{r}}f(z_2)
\\ \qquad
{}=C_2^{r_2}(\lambda,k,l)\det_{\fn^+}(z)^{-\lambda+\frac{n}{r}}
F_2^l\left(\begin{matrix} -\lambda-\frac{k}{2}+\frac{n}{r} \\ -\lambda-k+\frac{n}{r} \end{matrix};f;z\right)
\\ \qquad
{}=C_2^{r_2}(\lambda,k,l)F_2^l\left(\begin{matrix} -\frac{k}{2} \\ -\lambda-k+\frac{n}{r} \end{matrix};f;z\right)\!,
\end{gather*}
where
\[
C_2^{r_2}(\lambda,k,l)=\frac{\big(\lambda-\frac{d}{2}r_2 +\big(\underline{\big\lfloor\frac{k}{2}\big\rfloor+\frac{1}{2}}_l,
\underline{\big\lceil\frac{k}{2}\big\rceil-\frac{1}{2}}_{r_2-l}\big)
\big)_{(\underline{\lceil k/2\rceil}_l,\underline{\lfloor k/2\rfloor}_{r_2-l}),d}}
{(\lambda)_{(\underline{k+1}_l,\underline{k}_{r_2-l}, \underline{\lceil k/2\rceil}_l,\underline{\lfloor k/2\rfloor}_{r_2-l}),d}}.
\]
\end{enumerate}
\end{Theorem}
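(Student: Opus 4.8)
The plan is to reduce the entire statement to the determinant differential operator of Theorem~\ref{key_identity}(1) and then to evaluate that operator by combining the binomial-type expansion (\ref{binom_simple}) with the Faraut--Kor\'anyi differentiation formula \cite[Proposition VII.1.6]{FK} (the same identity exploited in Proposition~\ref{prop_identity_nonsimple}). First I would record that the opening equality is nothing but Theorem~\ref{key_identity}(1): Proposition~\ref{prop_simple_prelim}(1) gives $\cP_{(l,0,\dots,0)}(\fp^+_2)\subset\cP_{(l,0,\dots,0)}(\fp^+)$ when $\varepsilon_2=1$, and Proposition~\ref{prop_simple_prelim}(2) gives $\cP_{\underline{1}_l}(\fp^+_2)\subset\cP_{\underline{1}_l}(\fp^+)$ when $\varepsilon_2=2$, so that $f$, regarded as an element of $\cP(\fp^+)$, lies in $\cP_\bl(\fp^+)$ with $\bl=(l,0,\dots,0)$, resp.\ $\bl=\underline{1}_l$. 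Feeding this $\bl$ and $\frac{2k}{\varepsilon_2}\in\{2k,k\}$ into Theorem~\ref{key_identity}(1) turns $(\lambda)_{\underline{2k/\varepsilon_2}_r+\bl,d}$ into $(\lambda)_{(2k+l,\underline{2k}_{r-1}),d}$, resp.\ $(\lambda)_{(\underline{k+1}_l,\underline{k}_{r-l}),d}$, which is precisely the first displayed line. Everything then rests on evaluating $\det_{\fn^+}(z)^{-\lambda+n/r}\det_{\fn^+_2}\big(\frac{1}{\varepsilon_2}\frac{\partial}{\partial z_2}\big)^k\det_{\fn^+}(z)^{\lambda+\frac{2k}{\varepsilon_2}-n/r}f(z_2)$ for $z\in\Omega$, where all the series below converge because $0\le t_j<1$ on $\Omega$.

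For the second equality I would expand the inner factor $\det_{\fn^+}(z)^{\lambda+\frac{2k}{\varepsilon_2}-n/r}f(z_2)$ by (\ref{binom_simple}) with $\mu=-\lambda-\frac{2k}{\varepsilon_2}+n/r$, writing it as $\det_{\fn^+_2}(z_2)^{-\varepsilon_2\mu}\sum_{\bm,\bl}(\mu)_{\bm,d}\,F^{\varepsilon_2}_{\bm,\bl}[f]\big(z_1,{}^t z_2^\itinv\big)$, in which $z_1$ is a spectator for $\det_{\fn^+_2}(\partial/\partial z_2)$. By construction $F^{\varepsilon_2}_{\bm,\bl}[f](z_1,\cdot)\in\cP_\bn(\fp^-_2)$ for an explicit partition $\bn=\bn(\bm,\bl)$, hence $F^{\varepsilon_2}_{\bm,\bl}[f]\big(z_1,{}^t z_2^\itinv\big)\in\cP_{-\bn^\vee}(\fp^+_2)$ and $\det_{\fn^+_2}(z_2)^{-\varepsilon_2\mu}F^{\varepsilon_2}_{\bm,\bl}[f]\big(z_1,{}^t z_2^\itinv\big)\in\cP_{\underline{-\varepsilon_2\mu}_{r_2}-\bn^\vee}(\fp^+_2)$, a genuine polynomial for $\Re\lambda$ large. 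Now \cite[Proposition VII.1.6]{FK}, applied on $\fp^+_2$ (so with $n_2,r_2,d_2$ in place of $n,r,d$), evaluates $\det_{\fn^+_2}(\partial/\partial z_2)^k$ on each such term as multiplication by the $d_2$-Pochhammer $\big(\frac{n_2}{r_2}-k+\underline{-\varepsilon_2\mu}_{r_2}-\bn^\vee\big)_{\underline{k}_{r_2},d_2}$ together with lowering the $\det_{\fn^+_2}$-power by $k$, the factor $(\frac1{\varepsilon_2})^{kr_2}$ being absorbed into the overall constant. Collecting $(\mu)_{\bm,d}$ against these factors over $(\bm,\bl)$, pulling the $(\bm,\bl)$-independent part out as $C^{r_2}_{\varepsilon_2}(\lambda,k,l)$, and recognizing the remaining coefficient ratios as those in the definitions (\ref{Fmunu1})--(\ref{Fmunu2}) of $F^l_{\varepsilon_2}$ yields the second equality, with surviving determinant power $\det_{\fn^+_2}(z_2)^{-\varepsilon_2\nu}$, $\nu=-\lambda-\frac{k}{\varepsilon_2}+n/r$, matching its upper parameter. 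The hard part is exactly this matching: the weights $(\mu)_{\bm,d}$ carry the parameter $d$ of $\fp^+$ while the differentiation produces $d_2$-Pochhammers of $\fp^+_2$, and reconciling them into the mixed $d$/$d_2$ shape of (\ref{Fmunu1})--(\ref{Fmunu2}) and the closed form of $C^{r_2}_{\varepsilon_2}$ forces a careful, case-dependent Pochhammer computation, further split according to the parity of $r$ (the unpaired index of $\tilde{\Phi}^{\fp^+_1,\fp^-_2}_\bm$ when $r$ is odd producing the separate $l_{\lceil r/2\rceil}$-factors).

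Finally, for the third equality I would absorb the prefactor $\det_{\fn^+}(z)^{-\lambda+n/r}$. Writing it through Proposition~\ref{prop_Jordansub}(1) as $\det_{\fn^+_2}(z_2)^{\varepsilon_2(-\lambda+n/r)}h_{\fp^+}(z_1,Q(z_2)^{-1}z_1)^{(-\lambda+n/r)/2}$, the $\det_{\fn^+_2}$-power accounts exactly for the change of the $\det_{\fn^+_2}(z_2)^{-\varepsilon_2\nu}$ prefactor as the upper parameter is shifted from $\nu=-\lambda-\frac{k}{\varepsilon_2}+n/r$ to $-\frac{k}{\varepsilon_2}$, while the leftover factor $h_{\fp^+}(z_1,Q(z_2)^{-1}z_1)^{(-\lambda+n/r)/2}$ is absorbed by the multivariate Euler--Pfaff transformation (\ref{Kummer1})--(\ref{Kummer2}): since $\tilde{\Phi}^{\fp^+_1,\fp^-_2}_\bm$ is built from $\tilde{\Phi}^d_\bm(t_1,\dots,t_{\lfloor r/2\rfloor})$ in the roots $t_j$, those transformations apply verbatim in the $t_j$-variables, block by block in the $\bl$-decomposition. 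As the resulting upper parameter $-\frac{k}{\varepsilon_2}\in-\frac{1}{\varepsilon_2}\BZ_{\ge0}$, the right-hand side $F^l_{\varepsilon_2}\!\left(\begin{smallmatrix}-k/\varepsilon_2\\ \mu\end{smallmatrix};f;z\right)$ is a genuine polynomial in $z$; hence the identity, first obtained for $z\in\Omega$ and $\Re\lambda$ large, extends to all $z\in\fp^+$ and all $\lambda\in\BC$ by analytic continuation, which completes both parts.
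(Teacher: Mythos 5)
Your treatment of the first two equalities is sound. The first is indeed just Theorem~\ref{key_identity}(1) combined with Proposition~\ref{prop_simple_prelim}, exactly as in the paper. For the second, your plan — expand via (\ref{binom_simple}) and hit each term $\det_{\fn^+_2}(z_2)^{-\varepsilon_2\mu}F^{\varepsilon_2}_{\bm,\bl}[f]\big(z_1,{}^t\hspace{-1pt}z_2^\itinv\big)$ with $\det_{\fn^+_2}(\partial/\partial z_2)^k$ using the Faraut--Kor\'anyi formula — is the same computation the paper performs, only repackaged: the paper routes it through the Laplace and inverse Laplace transforms (Proposition~\ref{prop_simple}(1), via Theorem~\ref{Laplace}), and the $\Gamma^{d_2}_{r_2}$-ratios appearing there are precisely your $d_2$-Pochhammer factors. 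What you leave unexecuted (the reconciliation of the $d$-weights $(\mu)_{\bm,d}$ with the $d_2$-Pochhammers into the shape of (\ref{Fmunu1})--(\ref{Fmunu2}) and the closed form of $C^{r_2}_{\varepsilon_2}$) is a long but routine verification that the paper does carry out in full; I would not call its omission a gap, only a debt.

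The third equality is where your argument breaks. You claim the prefactor $h_{\fp^+}\big(z_1,Q(z_2)^{-1}z_1\big)^{(-\lambda+n/r)/2}=\prod_j(1-t_j)^{-\lambda+n/r}$ is absorbed by (\ref{Kummer1})--(\ref{Kummer2}) ``verbatim in the $t_j$-variables, block by block in the $\bl$-decomposition.'' This only works when $F^l_{\varepsilon_2}$ is genuinely a ${}_2F_1^d$-series in the $t_j$, i.e.\ when $l=0$ (where $F^{\varepsilon_2}_{\bm,\underline{0}}[1]=\tilde{\Phi}^{\fp^+_1,\fp^-_2}_\bm$) or when $r=2$ (where $F^1_{m,l}[f]=\tilde{\Phi}_m\cdot f\big({}^t\hspace{-1pt}w_2^\itinv\big)$ is a clean product) — and the paper says exactly this in the remark after (\ref{identity_simple}): ``If $l=0$ or $r=2$, then this also follows from (\ref{Kummer2}).'' For general $l>0$ and $r>2$ the coefficients in (\ref{Fmunu1})--(\ref{Fmunu2}) are ratios of Pochhammer symbols indexed simultaneously by $\bm$ and by the shifted multi-index $\bm-\bl+(\underline{0}_{r/2-1},l)$, so the series is not of the form $\sum_\bm\frac{(\alpha)_{\bm,d}(\beta)_{\bm,d}}{(\gamma)_{\bm,d}}\tilde{\Phi}^d_\bm(t)$ to which the Euler transformation applies; moreover the individual blocks $F^{\varepsilon_2}_{\bm,\bl}[f]$, being projections of $\tilde{\Phi}_\bm\cdot f\big({}^t\hspace{-1pt}w_2^\itinv\big)$ onto distinct $K$-types, are not functions of the eigenvalues $t_j$ alone times something $\bm$-independent. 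The paper closes this gap by an entirely different mechanism: Proposition~\ref{prop_simple}(2),(3) shows $z\mapsto F^l_{\varepsilon_2}\big(\begin{smallmatrix}-k/\varepsilon_2\\ \mu\end{smallmatrix};f;z\big)$ is a polynomial, $K_1$-equivariant, and annihilated by $\Proj_1\big(\cB^{\fn^+}_\lambda\big)$; Theorem~\ref{thm_Fmethod} makes the relevant solution space one-dimensional for $\lambda>\frac{2n}{r}-1$, so this polynomial must be proportional to $\big\langle\det_{\fn^+_2}(x_2)^kf(x_2),{\rm e}^{(x|\overline{z})_{\fp^+}}\big\rangle_{\lambda,x}$, and the constant is fixed by setting $z_1=0$. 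The identity you wanted to prove directly, namely $\det_{\fn^+}(z)^{-\lambda+\frac{n}{r}}F^l_{\varepsilon_2}\big(\begin{smallmatrix}\nu\\ \mu\end{smallmatrix}\big)=F^l_{\varepsilon_2}\big(\begin{smallmatrix}-k/\varepsilon_2\\ \mu\end{smallmatrix}\big)$, is obtained in the paper only \emph{a posteriori} as (\ref{identity_simple}), a consequence of the theorem. You need to either import that uniqueness argument or find a genuinely new direct proof of the transformation for $l>0$; as written, the step fails.
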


The 1st equalities are nothing but Theorem \ref{key_identity}(1). Hence by putting $-\lambda-\frac{2k}{\varepsilon_2}+\frac{n}{r}=:\mu$,
 it suffices to show
\begin{gather}
\det_{\fn^+}(z)^{\mu+\frac{2k}{\varepsilon_2}}
\det_{\fn^+_2}\bigg(\frac{\partial}{\partial z_2}\bigg)^k\det_{\fn^+}(z)^{-\mu}f(z_2) \notag
\\ \qquad
{}=\begin{cases} \ds (-1)^{kr_2}\left(\mu-(\underline{0}_{r_2-1},l)\right)_{\underline{k}_{r_2},d_2}
\det_{\fn^+}(z)^{\mu+2k}F_1^l\left(\begin{matrix} \mu+k \\ \mu \end{matrix};f;z\right), & \varepsilon_2=1,
\\
\ds (-1)^{kr_2}\left(2\mu-(\underline{0}_{r_2-l},\underline{1}_l)\right)_{\underline{k}_{r_2},d_2}
\det_{\fn^+}(z)^{\mu+k}F_2^l\left(\begin{matrix} \mu+\frac{k}{2} \\ \mu \end{matrix};f;z\right), & \varepsilon_2=2, \end{cases} \notag
\\ \qquad
{}=\begin{cases} \ds (-1)^{kr_2}\left(\mu-(\underline{0}_{r_2-1},l)\right)_{\underline{k}_{r_2},d_2}
F_1^l\left(\begin{matrix} -k \\ \mu \end{matrix};f;z\right), & \varepsilon_2=1,
\\
\ds (-1)^{kr_2}\left(2\mu-(\underline{0}_{r_2-l},\underline{1}_l)\right)_{\underline{k}_{r_2},d_2}
F_2^l\left(\begin{matrix} -\frac{k}{2} \\ \mu \end{matrix};f;z\right), & \varepsilon_2=2.
\end{cases} \label{thm'_simple}
\end{gather}
Indeed, when $\varepsilon_2=1$ and $r_2$ is even, since $r=r_2$ and $d=2d_2$ or $r=2$ hold, we have
\begin{gather*}
(-1)^{kr_2}\big(\mu-(\underline{0}_{r_2-1},l)\big)_{\underline{k}_{r_2},d_2}
=(-1)^{kr}(\mu)_{\underline{k}_{r/2},d} \bigg(\mu-\frac{d_2}{2}-(\underline{0}_{r/2-1},l)\bigg)_{\underline{k}_{r/2},d}
\\
{}=\bigg( {-}\mu - k + \frac{d}{2}\bigg(\frac{r}{2} - 1\bigg) + 1 \bigg)_{\underline{k}_{r/2},d}
\bigg( {-}\mu-k+\frac{d}{2}\bigg(\frac{r}{2}-1\bigg)+1+\frac{d_2}{2} +(l,\underline{0}_{r/2-1}) \bigg)_{\underline{k}_{r/2},d}
\\
{}=\bigg(\lambda+k-\frac{d}{2}\frac{r}{2}\bigg)_{\underline{k}_{r/2},d}
\bigg(\lambda+k-\frac{d}{2}\frac{r}{2}+\frac{d_2}{2} +(l,\underline{0}_{r/2-1})\bigg)_{\underline{k}_{r/2},d},
\end{gather*}
and hence
\begin{align*}
\frac{(-1)^{kr_2}(\mu-(\underline{0}_{r_2-1},l))_{\underline{k}_{r_2},d_2}} {(\lambda)_{(2k+l,\underline{2k}_{r-1}),d}}
&=\frac{\big(\lambda+k-\frac{d}{2}\frac{r}{2}\big)_{\underline{k}_{r/2},d}
\big(\lambda+k-\frac{d}{2}\frac{r}{2}+\frac{d_2}{2}+(l,\underline{0}_{r/2-1})\big)_{\underline{k}_{r/2},d}}
{(\lambda)_{(2k+l,\underline{2k}_{r/2-1}),d}
\big(\lambda-\frac{d}{2}\frac{r}{2}\big)_{\underline{2k}_{r/2},d}}
\\
&=\frac{\big(\lambda+k-\frac{d}{2}\frac{r}{2}
+\frac{d_2}{2}+(l,\underline{0}_{r/2-1})\big)_{\underline{k}_{r/2},d}}
{(\lambda)_{(2k+l,\underline{2k}_{r/2-1}),d}
\big(\lambda-\frac{d}{2}\frac{r}{2}\big)_{\underline{k}_{r/2},d}}
\\
&=\frac{\big(\lambda+k-\frac{d}{4}r_2+\frac{d_2}{2} +(l,\underline{0}_{r_2/2-1})\big)_{\underline{k}_{r_2/2},d}}
{(\lambda)_{(2k+l,\underline{2k}_{r_2/2-1},\underline{k}_{r_2/2}),d}}
\end{align*}
holds. Similarly, when $\varepsilon_2=1$ and $r_2$ is odd, we have
\begin{gather*}
(-1)^{kr_2}\big(\mu-(\underline{0}_{r_2-1},l)\big)_{\underline{k}_{r_2},d_2}
={}(-1)^{kr}\big(\mu-(\underline{0}_{\lfloor r/2\rfloor},l)\big)_{\underline{k}_{\lceil r/2\rceil},d}
\bigg(\mu-\frac{d_2}{2}\bigg)_{\underline{k}_{\lfloor r/2\rfloor},d}
\\ \qquad
{}=\bigg({-}\mu-k+\frac{d}{2}\bigg(\bigg\lceil\frac{r}{2}\bigg\rceil-1\bigg)+1
+(l,\underline{0}_{\lfloor r/2\rfloor})\bigg)_{\underline{k}_{\lceil r/2\rceil},d}
\\ \qquad\hphantom{=}
\ \times\bigg({-}\mu-k+\frac{d}{2}\bigg(\bigg\lfloor\frac{r}{2}\bigg\rfloor-1\bigg)+1+\frac{d_2}{2}
\bigg)_{\underline{k}_{\lfloor r/2\rfloor},d}
\\ \qquad
{}=\bigg(\lambda+k-\frac{d}{2}\bigg\lfloor\frac{r}{2}\bigg\rfloor+(l,\underline{0}_{\lfloor r/2\rfloor})\bigg)_{\underline{k}_{\lceil r/2\rceil},d}
\bigg(\lambda+k-\frac{d}{2}\bigg\lceil\frac{r}{2}\bigg\rceil +\frac{d_2}{2}\bigg)_{\underline{k}_{\lfloor r/2\rfloor},d},
\end{gather*}
and hence
\begin{gather*}
\frac{(-1)^{kr_2}\big(\mu-(\underline{0}_{r_2-1},l)\big)_{\underline{k}_{r_2},d_2}} {(\lambda)_{(2k+l,\underline{2k}_{r-1}),d}}
\\ \qquad
{}=\frac{\big(\lambda+k-\frac{d}{2}\big\lfloor\frac{r}{2}\big\rfloor+(l,\underline{0}_{\lfloor r/2\rfloor})\big)_{\underline{k}_{\lceil r/2\rceil},d}
\big(\lambda+k-\frac{d}{2}\big\lceil\frac{r}{2}\big\rceil+\frac{d_2}{2}\big)_{\underline{k}_{\lfloor r/2\rfloor},d}}
{(\lambda)_{(2k+l,\underline{2k}_{\lfloor r/2\rfloor-1}),d}
\big(\lambda-\frac{d}{2}\big\lfloor\frac{r}{2}\big\rfloor\big)_{\underline{2k}_{\lceil r/2\rceil},d}}
\\ \qquad
{}=\frac{\big(\lambda-\frac{d}{2}\big\lfloor\frac{r}{2}\big\rfloor+\max\{2k,k+l\}\big)_{\min\{k,l\}}
\big(\lambda+k-\frac{d}{2}\big\lceil\frac{r}{2}\big\rceil+\frac{d_2}{2}\big)_{\underline{k}_{\lfloor r/2\rfloor},d}}
{(\lambda)_{(2k+l,\underline{2k}_{\lfloor r/2\rfloor-1}),d}
\big(\lambda-\frac{d}{2}\big\lfloor\frac{r}{2}\big\rfloor\big)_{(\min\{2k,k+l\},\underline{k}_{\lfloor r/2\rfloor}),d}}
\\ \qquad
{}=\frac{\big(\lambda-\frac{d}{2}\big\lfloor\frac{r_2}{2}\big\rfloor+\max\{2k,k+l\}\big)_{\min\{k,l\}}
\big(\lambda+k-\frac{d}{2}\big\lceil\frac{r_2}{2}\big\rceil+\frac{d_2}{2}\big)_{\underline{k}_{\lfloor r_2/2\rfloor},d}}
{(\lambda)_{(2k+l,\underline{2k}_{\lfloor r_2/2\rfloor-1},\min\{2k,k+l\},\underline{k}_{\lfloor r_2/2\rfloor}),d}}
\end{gather*}
holds. Similarly, when $\varepsilon_2=2$, since $r=2r_2$ and $d_2=2d$ or $r_2=1$ hold, we have
\begin{align*}
\frac{(-1)^{kr_2}}{2^{kr_2}}\big(2\mu-(\underline{0}_{r_2-l}, \underline{1}_l)\big)_{\underline{k}_{r_2},d_2}
={}&\frac{1}{2^{kr_2}}\big({-}2\mu-k+d(r_2-1)+1+(\underline{1}_l, \underline{0}_{r_2-l})\big)_{\underline{k}_{r_2},2d}
\\
={}&\frac{1}{2^{kr_2}}\big(2\lambda+k-dr_2-1 +(\underline{1}_l,\underline{0}_{r_2-l})\big)_{\underline{k}_{r_2},2d}
\\
={}&\bigg(\lambda-\frac{d}{2}r_2+\bigg(\underline{\bigg\lceil\frac{k}{2}\bigg\rceil}_l,
\underline{\left\lfloor\frac{k}{2}\right\rfloor}_{r_2-l}\bigg)
\bigg)_{(\underline{\lfloor k/2\rfloor}_l,\underline{\lceil k/2\rceil}_{r_2-l}),d}
\\
{}&\times \! \bigg(\!\lambda\!-\!\frac{d}{2}r_2\!+\!\bigg(\underline{\bigg\lfloor\frac{k}{2}\bigg\rfloor\!+\!\frac{1}{2}}_l,
\underline{\left\lceil\frac{k}{2}\right\rceil-\frac{1}{2}}_{r_2-l}\bigg)\!
\bigg)_{(\underline{\lceil k/2\rceil}_l,\underline{\lfloor k/2\rfloor}_{r_2-l}),d}\!,
\end{align*}
and hence we get
\begin{gather*}
\frac{(-1)^{kr_2}\big(2\mu-(\underline{0}_{r_2-l},\underline{1}_l)\big)_{\underline{k}_{r_2},d_2}}
{2^{kr_2}(\lambda)_{(\underline{k+1}_l,\underline{k}_{r-l}),d}}
\\ \qquad
{}=\frac{1}{(\lambda)_{(\underline{k+1}_l,\underline{k}_{r_2-l}),d} \big(\lambda-\frac{d}{2}r_2\big)_{\underline{k}_{r_2},d}}
\bigg(\lambda-\frac{d}{2}r_2+\bigg(\underline{\bigg\lceil\frac{k}{2}\bigg\rceil}_l,
\underline{\bigg\lfloor\frac{k}{2}\bigg\rfloor}_{r_2-l}\bigg)
\bigg)_{(\underline{\lfloor k/2\rfloor}_l,\underline{\lceil k/2\rceil}_{r_2-l}),d}
\\ \qquad\hphantom{=}
{} \ \times \bigg(\lambda-\frac{d}{2}r_2+\bigg(\underline{\bigg\lfloor\frac{k}{2}\bigg\rfloor+\frac{1}{2}}_l,
\underline{\bigg\lceil\frac{k}{2}\bigg\rceil-\frac{1}{2}}_{r_2-l}\bigg)
\bigg)_{(\underline{\lceil k/2\rceil}_l,\underline{\lfloor k/2\rfloor}_{r_2-l}),d}
\\ \qquad
{}=\frac{\big(\lambda-\frac{d}{2}r_2+\big(\underline{\big\lfloor\frac{k}{2}\big\rfloor+\frac{1}{2}}_l,
\underline{\big\lceil\frac{k}{2}\big\rceil-\frac{1}{2}}_{r_2-l}\big)
\big)_{(\underline{\lceil k/2\rceil}_l,\underline{\lfloor k/2\rfloor}_{r_2-l}),d}}
{(\lambda)_{(\underline{k+1}_l,\underline{k}_{r_2-l}),d}
\big(\lambda-\frac{d}{2}r_2\big)_{(\underline{\lceil k/2\rceil}_l,\underline{\lfloor k/2\rfloor}_{r_2-l}),d}}
\\ \qquad
{}=\frac{\big(\lambda-\frac{d}{2}r_2+\big(\underline{\big\lfloor\frac{k}{2}\big\rfloor+\frac{1}{2}}_l,
\underline{\big\lceil\frac{k}{2}\big\rceil-\frac{1}{2}}_{r_2-l}\big)
\big)_{(\underline{\lceil k/2\rceil}_l,\underline{\lfloor k/2\rfloor}_{r_2-l}),d}}
{(\lambda)_{(\underline{k+1}_l,\underline{k}_{r_2-l},\underline{\lceil k/2\rceil}_l,\underline{\lfloor k/2\rfloor}_{r_2-l}),d}}.
\end{gather*}
To prove (\ref{thm'_simple}), let $\cB_\lambda^{\fn^+}\colon C^2(\Omega)\to C(\Omega)\otimes\fp^+$ be the Bessel operator given by
\[ \cB_\lambda^{\fn^+}f(z):=\sum_{\alpha,\beta}\frac{1}{2}P(e_\alpha,e_\beta)z\frac{\partial^2f}{\partial z_\alpha\partial z_\beta}(z)
+\lambda\sum_{\alpha}e_\alpha\frac{\partial f}{\partial z_\alpha}(z),
\]
where $\{e_\alpha\}\subset\fn^+\subset\fp^+$ is a basis of $\fn^+$, and $\{z_\alpha\}$ is the coordinate of $\fp^+$ for the dual basis
$\{e_\alpha^\vee\}\subset\fn^+\subset\fp^+$ of $\{e_\alpha\}$ with respect to $(\cdot|\cdot)_{\fn^+}$.
This $\cB_\lambda^{\fn^+}$ and $\cB_\tau^{\fp^+}(w)$ ($w\in\fp^+$) in (\ref{Bessel_op}) are related as
$(w|\cB_\lambda^{\fn^+}f)_{\fn^+}=\cB_{\chi^{-\lambda}}^{\fp^+}(w)f$.
Also, let $\Proj_1\colon \fp^+\to\fp^+_1$ be the orthogonal projection,
let $L\subset K^\BC$ be the real form given in Section~\ref{section_KKT},
and let $\Gamma_{r_2}^{d_2}$ be as (\ref{Gamma}).

\begin{Proposition}\label{prop_simple}
Let $f(z_2)$ be as before, and let $\mu,\nu\in\BC$.
\begin{enumerate}\itemsep=0pt
\item[$1.$] For $\Re\mu>\frac{1}{\varepsilon_2}\big(\frac{2n_2}{r_2}-1\big)$, $\Re\nu>\frac{1}{\varepsilon_2}\big(\frac{n_2}{r_2}-1\big)$
and for $z=z_1+z_2\in\Omega$, $a_2\in\Omega_2$ with $z_1+a_2\in\Omega$, we have
\begin{align*}
F_{\varepsilon_2}^l\left(\begin{matrix}\nu\\ \mu\end{matrix};f;z\right)={}&\begin{cases}
\ds \frac{\Gamma_{r_2}^{d_2}(\mu-(\underline{0}_{r_2-1},l))}{\Gamma_{r_2}^{d_2}(\nu-(\underline{0}_{r_2-1},l))}
\int_{\Omega_2}{\rm e}^{-(y_2|z_2)_{\fn^+_2}}\det_{\fn^+_2}(y_2)^{\nu-\mu} & (\varepsilon_2=1)
\\[2ex]
\ds \frac{\Gamma_{r_2}^{d_2}(2\mu-(\underline{0}_{r_2-l},\underline{1}_l))}
{\Gamma_{r_2}^{d_2}(2\nu-(\underline{0}_{r_2-l},\underline{1}_l))}
\int_{\Omega_2}{\rm e}^{-(y_2|z_2)_{\fn^+_2}}\det_{\fn^+_2}(y_2)^{2(\nu-\mu)}& (\varepsilon_2=2) \end{cases} \\
&{}\times\bigg(\frac{1}{(2\pi\sqrt{-1})^{n_2}}
\int_{a_2+\sqrt{-1}\fn^+_2}\!\!\!\!{\rm e}^{(x_2|y_2)_{\fn^+_2}}\det_{\fn^+}(z_1+x_2)^{-\mu}f(x_2)\,{\rm d}x_2\bigg){\rm d}y_2.
\end{align*}
\item[$2.$] For $g\in K_1^\BC\cap L$ and for $z=z_1+z_2\in\Omega$, we have
\[ F_{\varepsilon_2}^l\left(\begin{matrix}\nu\\\mu\end{matrix};f(g\cdot);z\right)
=\det_{\fn^+_2}(ge)^{\varepsilon_2\nu}F_{\varepsilon_2}^l\left(\begin{matrix}\nu\\\mu\end{matrix};f;gz\right)\!. \]
\item[$3.$] $F_{\varepsilon_2}^l\left(\begin{smallmatrix}\nu\\\mu\end{smallmatrix};f;z\right)$ satisfies the differential equation
\[ \Proj_1\left(\cB_{-\mu+2\nu+\frac{n}{r}}^{\fn^+}F_{\varepsilon_2}^l \left(\begin{matrix}\nu\\\mu\end{matrix};f;z\right)\right)=0. \]
\end{enumerate}
\end{Proposition}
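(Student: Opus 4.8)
The plan is to establish the three parts in the order they are stated, using part~1 as the engine for part~3. For part~1, I would read the two nested integrals as an inverse Laplace transform in $x_2$ followed by a Laplace transform in $y_2$ on the symmetric cone $\Omega_2\subset\fn^+_2$, and evaluate each factor termwise by Gindikin's formulas (Theorem~\ref{Laplace}). Substituting the binomial expansion~\eqref{binom_simple} for $\det_{\fn^+}(z_1+x_2)^{-\mu}f(x_2)$ writes each summand as $\det_{\fn^+_2}(x_2)^{-\varepsilon_2\mu}(\mu)_{\bm,d}F_{\bm,\bl}^{\varepsilon_2}[f](z_1,{}^t x_2^\itinv)$; since the second argument of $F_{\bm,\bl}^{\varepsilon_2}[f]$ ranges over a fixed homogeneous piece $\cP_\bn(\fp^-_2)$ (with $\bn$ the $w_2$-degree recorded before~\eqref{binom_simple}), the relation $\{g({}^t x_2^\itinv)\mid g\in\cP_\bn(\fp^-_2)\}=\cP_{-\bn^\vee}(\fp^+_2)$ makes each summand, up to a power of $\det_{\fn^+_2}(x_2)$, a polynomial in one irreducible $\cP_{\bn'}(\fp^+_2)$. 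Theorem~\ref{Laplace}(2) then evaluates the inner integral and Theorem~\ref{Laplace}(1) the outer one. The remaining step is bookkeeping: using $\Gamma_{r_2}^{d_2}(\alpha+\bn)/\Gamma_{r_2}^{d_2}(\alpha)=(\alpha)_{\bn,d_2}$ together with the explicit $\bn$, one checks that the product of Gamma factors collapses to exactly the Pochhammer ratio in~\eqref{Fmunu1}/\eqref{Fmunu2}. I would first prove the identity in the convergence range $\Re\mu>\frac1{\varepsilon_2}\big(\frac{2n_2}{r_2}-1\big)$, $\Re\nu>\frac1{\varepsilon_2}\big(\frac{n_2}{r_2}-1\big)$ and then extend it to all $(\mu,\nu)$ by meromorphic continuation.

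For part~2 I would argue directly from the series definition. As the scalar coefficients in~\eqref{Fmunu1}/\eqref{Fmunu2} depend only on $(\bm,\bl,\mu,\nu)$ and not on $f$, it suffices to prove the covariance of the building blocks, $F_{\bm,\bl}^{\varepsilon_2}[f(g\cdot)](z_1,w_2)=F_{\bm,\bl}^{\varepsilon_2}[f](gz_1,{}^t g^{-1}w_2)$ for $g\in K_1^\BC\cap L$. This follows from the defining relation $\tilde\Phi_\bm^{\fp^+_1,\fp^-_2}(z_1,w_2)f({}^t w_2^\itinv)=\sum_\bl F_{\bm,\bl}^{\varepsilon_2}[f](z_1,w_2)$ once one checks $\tilde\Phi_\bm^{\fp^+_1,\fp^-_2}(gz_1,{}^t g^{-1}w_2)=\tilde\Phi_\bm^{\fp^+_1,\fp^-_2}(z_1,w_2)$, which uses $h_{\fp^+}(gx,y)=h_{\fp^+}(x,{}^t g y)$ and ${}^t g\,Q({}^t g^{-1}w_2)\,g=Q(w_2)$, and $f\big({}^t({}^t g^{-1}w_2)^\itinv\big)=(f(g\cdot))({}^t w_2^\itinv)$, which uses ${}^t(gx)^\itinv={}^t g^{-1}\,{}^t x^\itinv$. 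Because $g\in K_1^\BC$ preserves each homogeneous piece $\cP_\bn(\fp^-_2)$, the two sides may be matched separately in each $\bl$. Combining this with $(gz)_j=gz_j$ and $\det_{\fn^+_2}(gz_2)=\det_{\fn^+_2}(ge)\det_{\fn^+_2}(z_2)$ yields the stated transformation law, the factor $\det_{\fn^+_2}(ge)^{\varepsilon_2\nu}$ coming exactly from the prefactor $\det_{\fn^+_2}(z_2)^{-\varepsilon_2\nu}$.

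For part~3 I would first turn the vector identity into scalar ones: by $(w\,|\,\cB_\lambda^{\fn^+}F)_{\fn^+}=\cB_{\chi^{-\lambda}}^{\fp^+}(w)F$ and $\fp^+_1\perp\fp^+_2$, the assertion $\Proj_1\big(\cB_{-\mu+2\nu+n/r}^{\fn^+}F_{\varepsilon_2}^l\big)=0$ is equivalent to $\cB_{\chi^{-\lambda}}^{\fp^+}(w_1)F_{\varepsilon_2}^l=0$ for all $w_1\in\fp^+_1$, where $\lambda=-\mu+2\nu+\frac{n}{r}$. To verify this I would insert the integral representation of part~1 and make the whole $z$-dependence exponential: expanding $\det_{\fn^+}(z_1+x_2)^{-\mu}$ as a Laplace transform over the large cone $\Omega$ and carrying out the $x_2$-contour integral against $e^{(x_2|y_2)_{\fn^+_2}}$ (again by Theorem~\ref{Laplace}) localizes the transform variable and realizes $F_{\varepsilon_2}^l=\int_\Omega e^{-(\xi|z)_{\fn^+}}\Psi(\xi)\,d\xi$ for an explicit kernel $\Psi$ made from powers of $\det_{\fn^+}(\xi)$ and $\det_{\fn^+_2}(\Proj_2\xi)$ and from $f$. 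On such exponentials one has the elementary identity
\[
\cB_\lambda^{\fn^+}e^{-(\xi|z)_{\fn^+}}=\big(P(\xi)z-\lambda\xi\big)\,e^{-(\xi|z)_{\fn^+}},
\]
so that, using the self-adjointness $(P(\xi)z\,|\,w_1)_{\fn^+}=(z\,|\,P(\xi)w_1)_{\fn^+}$, the quantity $\cB_{\chi^{-\lambda}}^{\fp^+}(w_1)F_{\varepsilon_2}^l$ becomes $\int_\Omega\big((z|P(\xi)w_1)_{\fn^+}-\lambda(\xi|w_1)_{\fn^+}\big)e^{-(\xi|z)_{\fn^+}}\Psi(\xi)\,d\xi$. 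Rewriting the $z$-factor as a $\xi$-derivative of $e^{-(\xi|z)_{\fn^+}}$ and integrating by parts over $\Omega$ reduces the vanishing to a homogeneity identity for $\Psi$ that holds precisely when $\lambda=-\mu+2\nu+\frac{n}{r}$. As before this is done for $(\mu,\nu)$ in a suitable range and extended by the rational dependence of the coefficients of $F_{\varepsilon_2}^l$ on $(\mu,\nu)$.

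The main obstacle is the computation in part~3. The projection onto $\fp^+_1$ is indispensable — $\cB_\lambda^{\fn^+}F_{\varepsilon_2}^l$ itself is nonzero — so the second-order part of the Bessel operator, which couples the $\fp^+_1$- and $\fp^+_2$-directions through the coefficient $P(e_\alpha,e_\beta)z$, has to be handled with care; the delicate points are the distributional localization of the $x_2$-integral and arranging the integration by parts so that exactly the constant $-\mu+2\nu+\frac{n}{r}$ is produced. By comparison, part~1 is a termwise use of Gindikin's formulas with Pochhammer bookkeeping, and part~2 is a formal covariance argument.
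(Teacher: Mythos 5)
Your part~1 is the paper's own argument (termwise application of Gindikin's formulas, Theorem~\ref{Laplace}, to the expansion~(\ref{binom_simple}), followed by Pochhammer bookkeeping), and your part~2 is a correct variant: the paper changes variables $x_2\mapsto g^{-1}x_2$, $y_2\mapsto g^\top y_2$ inside the integral representation of part~1, while you prove covariance of the building blocks $F^{\varepsilon_2}_{\bm,\bl}[f]$ directly from the series definition; both work.

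Part~3 has a genuine gap. You assert that, after writing $F^l_{\varepsilon_2}$ as $\int_\Omega{\rm e}^{-(\xi|z)_{\fn^+}}\Psi(\xi)\,{\rm d}\xi$ and integrating by parts, the vanishing ``reduces to a homogeneity identity for $\Psi$ that holds precisely when $\lambda=-\mu+2\nu+\frac{n}{r}$''. It cannot: the conclusion is specific to $f$ lying in the single $K_1^\BC$-types $\cP_{(l,0,\dots,0)}\big(\fp^+_2\big)$ resp.\ $\cP_{\underline{1}_l}\big(\fp^+_2\big)$, and an argument exploiting only the homogeneity of the determinant factors would apply verbatim to arbitrary polynomials $f$ on $\fp^+_2$, for which the statement is false. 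To see what is actually needed, recall from \cite[Proof of Proposition XV.2.4]{FK} that $\cB^{\fn^+}_{\mu+\frac{n}{r}}\det_{\fn^+}(z)^{-\mu}f(z_2)=\det_{\fn^+}(z)^{-\mu}\cB^{\fn^+}_{-\mu+\frac{n}{r}}f(z_2)$, and that, since $\nabla f(z_2)\in\fp^+_2$, the projection
\[
\Proj_1\big(\cB^{\fn^+}_{\lambda}f(z_2)\big)
=\frac{1}{2\varepsilon_2^2}\sum_{\alpha,\beta}P(e_{2,\alpha},e_{2,\beta})z_1\,
\frac{\partial^2 f}{\partial z_{2,\alpha}\partial z_{2,\beta}}(z_2)
\]
is a $\lambda$-independent, irreducibly second-order condition on $f$. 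For $f=q(x_2)\in\cP_{(1,1)}\big(\BC^{n''}\big)$ this expression is a nonzero multiple of $z_1$, so the condition is not a formal consequence of the shape of the integral; for the admissible $f$ it holds only because of Proposition~\ref{prop_simple_prelim} (which places $f$ inside a single $\cP_\bm(\fp^+)$), Corollary~\ref{FKCor} and the Bessel identity~(\ref{diff_eq_Bessel}). This is the input your plan never identifies, and the cross terms $\Proj_1(P(\xi_1,\xi_2)\,\cdot\,)$ acting on the $f$-dependent part of your kernel $\Psi$ (which, after the distributional $x_2$-integration, carries $f$ and its derivatives) will not cancel without it. The paper's route avoids the contour gymnastics altogether: it commutes the three pieces of $\Proj_1\big(\cB^{\fn^+}_{-\mu+2\nu+\frac{n}{r}}\big)$ through the two integrals of part~1 one at a time (the mixed term producing the shift $2(\mu-\nu)\sum_\alpha e_{1,\alpha}\partial/\partial z_{1,\alpha}$ via \cite[Proposition III.4.2(ii)]{FK}), lands on $\Proj_1\big(\cB^{\fn^+}_{\mu+\frac{n}{r}}\big)\det_{\fn^+}(z_1+x_2)^{-\mu}f(x_2)$ under the integral sign, and then invokes exactly the vanishing displayed above. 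If you retain your Laplace-transform picture, you must still isolate and prove that same statement about $f$.
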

\begin{proof}
(1) By (\ref{binom_simple}) we have
\begin{gather*}
\int_{\Omega_2}{\rm e}^{-(y_2|z_2)_{\fn^+_2}}\det_{\fn^+_2}(y_2)^{\varepsilon_2(\nu-\mu)}
\\ \qquad\hphantom{=}
{}\ \times\bigg(\frac{1}{(2\pi\sqrt{-1})^{n_2}}\int_{a_2+\sqrt{-1}\fn^+_2}{\rm e}^{(x_2|y_2)_{\fn^+_2}} \det_{\fn^+}(z_1+x_2)^{-\mu}f(x_2)\,{\rm d}x_2\bigg){\rm d}y_2
\\ \qquad
{}=\sum_{\bm\in\BZ_{++}^{\lfloor r/2\rfloor}}\sum_\bl
(\mu)_{\bm,d}\int_{\Omega_2}{\rm e}^{-(y_2|z_2)_{\fn^+_2}}\det_{\fn^+_2}(y_2)^{\varepsilon_2(\nu-\mu)}
\\ \qquad\hphantom{=}
{} \ \times\bigg(\frac{1}{(2\pi\sqrt{-1})^{n_2}}\int_{a_2+\sqrt{-1}\fn^+_2}{\rm e}^{(x_2|y_2)_{\fn^+_2}}\det_{\fn^+_2}(x_2)^{-\varepsilon_2\mu}
F_{\bm,\bl}^{\varepsilon_2}[f]\big(z_1,{}^t\hspace{-1pt}x_2^\itinv\big)\,{\rm d}x_2\bigg){\rm d}y_2.
\end{gather*}
When $\varepsilon_2=1$, since $r=r_2$ and $d=2d_2$ or $r=2$ hold, by Theorem \ref{Laplace} we have
\begin{gather*}
\int_{\Omega_2}{\rm e}^{-(y_2|z_2)_{\fn^+_2}}\det_{\fn^+_2}(y_2)^{\nu-\mu}
\\ \qquad\hphantom{=}
{} \ \times\bigg(\frac{1}{(2\pi\sqrt{-1})^{n_2}}\int_{a_2+\sqrt{-1}\fn^+_2}{\rm e}^{(x_2|y_2)_{\fn^+_2}}\det_{\fn^+_2}(x_2)^{-\mu}
F_{\bm,\bl}^1[f]\big(z_1,{}^t\hspace{-1pt}x_2^\itinv\big)\,{\rm d}x_2\bigg){\rm d}y_2
\\ \qquad
{}=\frac{1}{\Gamma_{r_2}^{d_2}(\mu+(m_1,m_1-l_1,m_2,m_2-l_2,\dots,m_{\lfloor r/2\rfloor},m_{\lfloor r/2\rfloor}-l_{\lfloor r/2\rfloor}(,-l_{\lceil r/2\rceil})))}
\\ \qquad\hphantom{=}
{} \ \times\int_{\Omega_2}{\rm e}^{-(y_2|z_2)_{\fn^+_2}}\det_{\fn^+_2}(y_2)^{\nu-\frac{n_2}{r_2}}
F_{\bm,\bl}^1[f](z_1,Q(\overline{e})y_2)\,{\rm d}y_2
\\ \qquad
{}=\frac{\Gamma_{r_2}^{d_2}(\nu+(m_1,m_1-l_1,m_2,m_2-l_2,\dots,m_{\lfloor r/2\rfloor},m_{\lfloor r/2\rfloor}-l_{\lfloor r/2\rfloor}(,-l_{\lceil r/2\rceil})))}
{\Gamma_{r_2}^{d_2}(\mu+(m_1,m_1-l_1,m_2,m_2-l_2,\dots,m_{\lfloor r/2\rfloor},m_{\lfloor r/2\rfloor}-l_{\lfloor r/2\rfloor}(,-l_{\lceil r/2\rceil})))}
\\ \qquad\hphantom{=}
{} \ \times\det_{\fn^+_2}(y_2)^{-\nu}F_{\bm,\bl}^1[f]\big(z_1,{}^t\hspace{-1pt}z_2^\itinv\big)
\\ \qquad
{}=\frac{\Gamma_{r_2}^{d_2}(\nu-(\underline{0}_{r_2-1},l))}{\Gamma_{r_2}^{d_2}(\mu-(\underline{0}_{r_2-1},l))}
\det_{\fn^+_2}(y_2)^{-\nu}F_{\bm,\bl}^1[f]\big(z_1,{}^t\hspace{-1pt}z_2^\itinv\big)
\\ \qquad\hphantom{=}
{} \ \times\frac{(\nu-(\underline{0}_{r_2-1},l))_{(m_1,m_1-l_1,m_2,m_2-l_2,\dots,m_{\lfloor r/2\rfloor},m_{\lfloor r/2\rfloor}-l_{\lfloor r/2\rfloor}
(,-l_{\lceil r/2\rceil}))+(\underline{0}_{r-1},l),d_2}}
{(\mu-(\underline{0}_{r_2-1},l))_{(m_1,m_1-l_1,m_2,m_2-l_2,\dots,m_{\lfloor r/2\rfloor},m_{\lfloor r/2\rfloor}-l_{\lfloor r/2\rfloor}
(,-l_{\lceil r/2\rceil}))+(\underline{0}_{r-1},l),d_2}}
\\ \qquad
{}=\frac{\Gamma_{r_2}^{d_2}(\nu-(\underline{0}_{r_2-1},l))}{\Gamma_{r_2}^{d_2}(\mu-(\underline{0}_{r_2-1},l))}
\det_{\fn^+_2}(y_2)^{-\nu}F_{\bm,\bl}^1[f]\big(z_1,{}^t\hspace{-1pt}z_2^\itinv\big)
\\ \qquad\hphantom{=}
{} \ \times \begin{cases}
\ds \frac{(\nu)_{\bm,d}\big(\nu-\frac{d_2}{2}-(\underline{0}_{r/2-1},l)\big)_{\bm-\bl +(\underline{0}_{r/2-1},l),d}}{(\mu)_{\bm,d}\big(\mu-\frac{d_2}{2} -(\underline{0}_{r/2-1},l)\big)_{\bm-\bl+(\underline{0}_{r/2-1},l),d}}, & r\colon\text{even},
\\
\ds \frac{(\nu)_{\bm,d}\big(\nu-\frac{d_2}{2}\big)_{\bm-\bl',d} \big(\nu-\frac{d_2}{2}(r-1)-l\big)_{l-l_{\lceil r/2\rceil}}}
{(\mu)_{\bm,d}\big(\mu-\frac{d_2}{2}\big)_{\bm-\bl',d}
\big(\mu-\frac{d_2}{2}(r-1)-l\big)_{l-l_{\lceil r/2\rceil}}}, &r\colon\text{odd}.
\end{cases}
\end{gather*}
Similarly, when $\varepsilon_2=2$, since $d_2=2d$ or $r_2=1$ holds and $\bl\in\{0,1\}^{r_2}$, again by Theorem \ref{Laplace} we have
\begin{gather*}
\int_{\Omega_2}{\rm e}^{-(y_2|z_2)_{\fn^+_2}}\det_{\fn^+_2}(y_2)^{2(\nu-\mu)}
\\ \qquad \hphantom{=}
{} \ \times\bigg(\frac{1}{(2\pi\sqrt{-1})^{n_2}}\int_{a_2+\sqrt{-1}\fn^+_2}{\rm e}^{(x_2|y_2)_{\fn^+_2}}\det_{\fn^+_2}(x_2)^{-2\mu}
F_{\bm,\bl}^2[f]\big(z_1,{}^t\hspace{-1pt}x_2^\itinv\big)\,{\rm d}x_2\bigg){\rm d}y_2
\\ \qquad
{}=\frac{\Gamma_{r_2}^{d_2}(2\nu+2\bm-\bl)}{\Gamma_{r_2}^{d_2}(2\mu+2\bm-\bl)}
\det_{\fn^+_2}(y_2)^{-2\nu}F_{\bm,\bl}^2[f]\big(z_1,{}^t\hspace{-1pt}z_2^\itinv\big)
\\ \qquad
{}=\frac{\Gamma_{r_2}^{d_2}(2\nu\!-\!(\underline{0}_{r_2-l},\underline{1}_l))} {\Gamma_{r_2}^{d_2}(2\mu\!-\!(\underline{0}_{r_2-l},\underline{1}_l))}
\frac{(2\nu\!-\!(\underline{0}_{r_2-l}, \underline{1}_l))_{2\bm-\bl+(\underline{0}_{r_2-l},\underline{1}_l),d_2}}
{(2\mu\!-\!(\underline{0}_{r_2-l},\underline{1}_l))_{2\bm-\bl+(\underline{0}_{r_2-l}, \underline{1}_l),d_2}}
\det_{\fn^+_2}(y_2)^{-2\nu}F_{\bm,\bl}^2[f]\big(z_1,{}^t\hspace{-1pt}z_2^\itinv\big)
\\ \qquad
{}=\frac{\Gamma_{r_2}^{d_2}(2\nu-(\underline{0}_{r_2-l},\underline{1}_l))} {\Gamma_{r_2}^{d_2}(2\mu-(\underline{0}_{r_2-l},\underline{1}_l))}\frac{(\nu)_{\bm,d}
\big(\nu+\bigl(\underline{\frac{1}{2}}_{r_2-l},\underline{-\frac{1}{2}}_l\bigr)\big)_{\bm-\bl +(\underline{0}_{r_2-l},\underline{1}_l),d}}{(\mu)_{\bm,d}
\big(\mu+\bigl(\underline{\frac{1}{2}}_{r_2-l},\underline{-\frac{1}{2}}_l\bigr)\big)_{\bm-\bl +(\underline{0}_{r_2-l},\underline{1}_l),d}}
\\ \qquad \hphantom{=}
{} \ \times
\det_{\fn^+_2}(y_2)^{-2\nu}F_{\bm,\bl}^2[f]\big(z_1,{}^t\hspace{-1pt}z_2^\itinv\big).
\end{gather*}
Hence we get the desired formulas.

(2) When $\Re\mu>\frac{1}{\varepsilon_2}\big(\frac{2n_2}{r_2}-1\big)$, $\Re\nu>\frac{1}{\varepsilon_2}\big(\frac{n_2}{r_2}-1\big)$,
in the integral expression (1), by changing the variables $x_2$ and $y_2$ to $g^{-1}x_2$ and $g^\top y_2$ respectively,
we can easily get the desired formula. By analytic continuation this holds for all $\mu$, $\nu$.

(3) Again it suffices to prove when $\Re\mu>\frac{1}{\varepsilon_2}\big(\frac{2n_2}{r_2}-1\big)$,
$\Re\nu>\frac{1}{\varepsilon_2}\big(\frac{n_2}{r_2}-1\big)$.
Let $\{e_{1,\alpha}\}\subset\fp^+_1\cap\fn^+\subset\fp^+_1$ and $\{e_{2,\alpha}\}\subset\fn^+_2\subset\fp^+_2$ be
orthonormal bases with respect to $(\cdot|\cdot)_{\fn^+}$ and $(\cdot|\cdot)_{\fn^+_2}$ respectively, so that
$\big\{e_{1,\alpha},\frac{1}{\varepsilon_2}e_{2,\alpha}\big\},\{e_{1,\alpha},e_{2,\alpha}\}\subset\fn^+\subset\fp^+$
are mutually dual with respect to $(\cdot|\cdot)_{\fn^+}$.
Let $\{z_{1,\alpha},z_{2,\alpha}\}$ be the coordinates for $\{e_{1,\alpha},e_{2,\alpha}\}$.
Then the projection of the Bessel operator is written as
\begin{gather*}
\Proj_1(\cB_\lambda^{\fn^+})
=\frac{1}{2}\sum_{\alpha,\beta}P(e_{1,\alpha},e_{1,\beta})z_1\frac{\partial^2}{\partial z_{1,\alpha}\partial z_{1,\beta}}
+\frac{1}{\varepsilon_2}\sum_{\alpha,\beta}P(e_{1,\alpha},e_{2,\beta})z_2\frac{\partial^2}{\partial z_{1,\alpha}\partial z_{2,\beta}}
\\ \hphantom{\Proj_1(\cB_\lambda^{\fn^+})=}
{}+\frac{1}{2\varepsilon_2^2}\sum_{\alpha,\beta}P(e_{2,\alpha},e_{2,\beta})z_1
\frac{\partial^2}{\partial z_{2,\alpha}\partial z_{2,\beta}}
+\lambda\sum_\alpha e_{1,\alpha}\frac{\partial}{\partial z_{1,\alpha}}.
\end{gather*}
Let $\det_{\fn^+}(z)^{-\mu}f(z_2)=:g(z)$. When we operate the 3rd term to the integral expression (1), we have
\begin{gather*}
\frac{1}{2\varepsilon_2^2}\sum_{\alpha,\beta}P(e_{2,\alpha},e_{2,\beta})z_1\frac{\partial^2}{\partial z_{2,\alpha}\partial z_{2,\beta}}
\\ \qquad\hphantom{=}
{} \ \times\int_{\Omega_2}{\rm e}^{-(y_2|z_2)_{\fn^+_2}}\det_{\fn^+_2}(y_2)^{\varepsilon_2(\nu-\mu)}
\bigg(\int_{a_2+\sqrt{-1}\fn^+_2}{\rm e}^{(x_2|y_2)_{\fn^+_2}}g(z_1,x_2)\,{\rm d}x_2\bigg){\rm d}y_2
\\ \qquad
{}=\frac{1}{\varepsilon_2^2}\int_{\Omega_2}(P(y_2)z_1){\rm e}^{-(y_2|z_2)_{\fn^+_2}}\det_{\fn^+_2}(y_2)^{\varepsilon_2(\nu-\mu)}
\bigg(\int_{a_2+\sqrt{-1}\fn^+_2}{\rm e}^{(x_2|y_2)_{\fn^+_2}}g(z_1,x_2)\,{\rm d}x_2\bigg){\rm d}y_2
\\ \qquad
{}=\int_{\Omega_2}{\rm e}^{-(y_2|z_2)_{\fn^+_2}}\det_{\fn^+_2}(y_2)^{\varepsilon_2(\nu-\mu)}
\\ \qquad\hphantom{=}
{} \ \times\biggl(\int_{a_2+\sqrt{-1}\fn^+_2}\biggl(\frac{1}{2\varepsilon_2^2}\sum_{\alpha,\beta}P(e_{2,\alpha},e_{2,\beta})z_1
\frac{\partial^2}{\partial x_{2,\alpha}\partial x_{2,\beta}}{\rm e}^{(x_2|y_2)_{\fn^+_2}}\biggr)\,g(z_1,x_2)\,{\rm d}x_2\biggr){\rm d}y_2
\\ \qquad
{}=\int_{\Omega_2}{\rm e}^{-(y_2|z_2)_{\fn^+_2}}\det_{\fn^+_2}(y_2)^{\varepsilon_2(\nu-\mu)}
\\ \qquad\hphantom{=}
{} \ \times\biggl(\int_{a_2+\sqrt{-1}\fn^+_2}{\rm e}^{(x_2|y_2)_{\fn^+_2}}\,
\biggl(\frac{1}{2\varepsilon_2^2}\sum_{\alpha,\beta}P(e_{2,\alpha},e_{2,\beta})z_1
\frac{\partial^2}{\partial x_{2,\alpha}\partial x_{2,\beta}}g(z_1,x_2)\biggr){\rm d}x_2\biggr){\rm d}y_2.
\end{gather*}
Similarly, the 2nd term is computed as
\begin{gather*}
\frac{1}{\varepsilon_2}\sum_{\alpha,\beta}P(e_{1,\alpha},e_{2,\beta})z_2\frac{\partial^2}{\partial z_{1,\alpha}\partial z_{2,\beta}}
\\ \qquad\hphantom{=}
{} \ \times\int_{\Omega_2}{\rm e}^{-(y_2|z_2)_{\fn^+_2}}\det_{\fn^+_2}(y_2)^{\varepsilon_2(\nu-\mu)}
\bigg(\int_{a_2+\sqrt{-1}\fn^+_2}{\rm e}^{(x_2|y_2)_{\fn^+_2}}g(z_1,x_2)\,{\rm d}x_2\bigg){\rm d}y_2
\\ \qquad
{}=-\frac{1}{\varepsilon_2}\sum_{\alpha}
\int_{\Omega_2}(P(e_{1,\alpha},y_2)z_2){\rm e}^{-(y_2|z_2)_{\fn^+_2}}\det_{\fn^+_2}(y_2)^{\varepsilon_2(\nu-\mu)}
\\ \qquad\hphantom{=}
{} \ \times\bigg(\int_{a_2+\sqrt{-1}\fn^+_2}{\rm e}^{(x_2|y_2)_{\fn^+_2}}\frac{\partial g}{\partial z_{1,\alpha}}(z_1,x_2)\,{\rm d}x_2\bigg){\rm d}y_2
\\ \qquad
{}=\frac{1}{\varepsilon_2}\sum_{\alpha,\beta}
\int_{\Omega_2}\bigg(P(e_{1,\alpha},y_2)e_{2,\beta}\frac{\partial}{\partial y_{2,\beta}}{\rm e}^{-(y_2|z_2)_{\fn^+_2}}\bigg)\det_{\fn^+_2}(y_2)^{\varepsilon_2(\nu-\mu)}
\\ \qquad\hphantom{=}
{} \ \times\bigg(\int_{a_2+\sqrt{-1}\fn^+_2}{\rm e}^{(x_2|y_2)_{\fn^+_2}}\frac{\partial g}{\partial z_{1,\alpha}}(z_1,x_2)\,{\rm d}x_2\bigg){\rm d}y_2
\\ \qquad
{}=-\frac{1}{\varepsilon_2}\sum_{\alpha,\beta}
\int_{\Omega_2}{\rm e}^{-(y_2|z_2)_{\fn^+_2}}\,\biggl(\frac{\partial}{\partial y_{2,\beta}}P(e_{1,\alpha},y_2)e_{2,\beta}
\det_{\fn^+_2}(y_2)^{\varepsilon_2(\nu-\mu)}
\\ \qquad\hphantom{=}
{} \ \times\bigg(\int_{a_2+\sqrt{-1}\fn^+_2}{\rm e}^{(x_2|y_2)_{\fn^+_2}}\frac{\partial g}{\partial z_{1,\alpha}}(z_1,x_2)\,{\rm d}x_2\bigg)\biggr){\rm d}y_2
\\ \qquad
{}=-\frac{1}{\varepsilon_2}\sum_{\alpha}\int_{\Omega_2}{\rm e}^{-(y_2|z_2)_{\fn^+_2}}\det_{\fn^+_2}(y_2)^{\varepsilon_2(\nu-\mu)}\,\biggl(\int_{a_2+\sqrt{-1}\fn^+_2}
\biggl(\sum_\beta P(e_{1,\alpha},e_{2,\beta})e_{2,\beta}
\\ \qquad\hphantom{=}
\ {}+P(e_{1,\alpha},y_2)x_2+\varepsilon_2(\nu-\mu)P(e_{1,\alpha},y_2)y_2^\itinv\biggr)\,{\rm e}^{(x_2|y_2)_{\fn^+_2}}
\frac{\partial g}{\partial z_{1,\alpha}}(z_1,x_2)\,{\rm d}x_2\biggr){\rm d}y_2
\\ \qquad
{}=-\frac{1}{\varepsilon_2}\sum_{\alpha}
\int_{\Omega_2}{\rm e}^{-(y_2|z_2)_{\fn^+_2}}\det_{\fn^+_2}(y_2)^{\varepsilon_2(\nu-\mu)}\,\biggl(\int_{a_2+\sqrt{-1}\fn^+_2} \biggl(\sum_\beta \frac{\partial}{\partial x_{2,\beta}}P(e_{1,\alpha},e_{2,\beta})
\\ \qquad\hphantom{=}
{} \ \times x_2 {\rm e}^{(x_2|y_2)_{\fn^+_2}}
\!+\!2\varepsilon_2(\nu\!-\!\mu)e_{1,\alpha}{\rm e}^{(x_2|y_2)_{\fn^+_2}}\!\biggr)
\frac{\partial g}{\partial z_{1,\alpha}}(z_1,x_2){\rm d}x_2\!\biggr){\rm d}y_2
\\ \qquad
{}=\int_{\Omega_2}{\rm e}^{-(y_2|z_2)_{\fn^+_2}}\det_{\fn^+_2}(y_2)^{\varepsilon_2(\nu-\mu)}\,
\biggl(\int_{a_2+\sqrt{-1}\fn^+_2}{\rm e}^{(x_2|y_2)_{\fn^+_2}}
\\ \qquad\hphantom{=}
{} \ \times\biggl(\frac{1}{\varepsilon_2}\sum_{\alpha,\beta}
P(e_{1,\alpha},e_{2,\beta})x_2\frac{\partial^2}{\partial z_{1,\alpha}\partial x_{2,\beta}}
+2(\mu-\nu)\sum_{\alpha}e_{1,\alpha}\frac{\partial}{\partial z_{1,\alpha}}\biggr)\,g(z_1,x_2)\,{\rm d}x_2\biggr){\rm d}y_2,
\end{gather*}
where we have used \cite[Proposition III.4.2(ii)]{FK} at the 4th equality. Therefore we have
\begin{gather*}
\Proj_1\big(\cB_{-\mu+2\nu+\frac{n}{r},z}^{\fn^+}\big)
\int_{\Omega_2}{\rm e}^{-(y_2|z_2)_{\fn^+_2}}\det_{\fn^+_2}(y_2)^{\varepsilon_2(\nu-\mu)}
\bigg(\!\int_{a_2+\sqrt{-1}\fn^+_2}{\rm e}^{(x_2|y_2)_{\fn^+_2}}g(z_1,x_2)\,{\rm d}x_2\!\bigg){\rm d}y_2
\\
{}=\int_{\Omega_2}\!\!{\rm e}^{-(y_2|z_2)_{\fn^+_2}}\det_{\fn^+_2}(y_2)^{\varepsilon_2(\nu-\mu)}
\bigg(\!\int_{a_2+\sqrt{-1}\fn^+_2} \!\!{\rm e}^{(x_2|y_2)_{\fn^+_2}}\Proj_1(\cB_{\mu+\frac{n}{r},z_1+x_2}^{\fn^+})g(z_1,x_2)\,{\rm d}x_2\!\bigg){\rm d}y_2.
\end{gather*}
Now by \cite[Proof of Proposition XV.2.4]{FK} we have
\[
\cB_{\mu+\frac{n}{r}}^{\fn^+}g(z)=\cB_{\mu+\frac{n}{r}}^{\fn^+}\det_{\fn^+}(z)^{-\mu}f(z_2)
=\det_{\fn^+}(z)^{-\mu}\cB_{-\mu+\frac{n}{r}}^{\fn^+}f(z_2),
\]
and by Corollary~\ref{FKCor}, (\ref{diff_eq_Bessel}) and Proposition \ref{prop_simple_prelim}, for $\Re\lambda>\frac{2n}{r}-1$ we have
\begin{align*}
\Proj_1(\cB_\lambda)f(z_2)
&=\Proj_1(\cB_\lambda)\times
\begin{cases} (\lambda)_l \bigl\langle f(x_2),{\rm e}^{(x|\overline{z})_{\fp^+}}\bigr\rangle_{\lambda,x}
 &(\varepsilon_2=1)
 \\
(\lambda)_{\underline{1}_l,d} \bigl\langle f(x_2),{\rm e}^{(x|\overline{z})_{\fp^+}}\bigr\rangle_{\lambda,x} & (\varepsilon_2=2) \end{cases} \\ &=0,
\end{align*}
and by analytic continuation this holds for all $\lambda\in\BC$. Hence
$\Proj_1\bigl(\cB_{-\mu+2\nu+\frac{n}{r}}^{\fn^+}F_{\varepsilon_2}^l \big(\begin{smallmatrix}\nu\\\mu\end{smallmatrix};f;z\big)\bigr)=0$ holds.
\end{proof}
\begin{proof}[Proof of Theorem \ref{thm_simple}]
First we prove the 1st equality of (\ref{thm'_simple}). For $\Re\mu>\frac{1}{\varepsilon_2}\big(\frac{2n_2}{r_2}-1\big)$,
by the Laplace and the inverse Laplace transforms and Proposition \ref{prop_simple}(1) we have
\begin{gather*}
\det_{\fn^+_2}\bigg(\frac{\partial}{\partial z_2}\bigg)^k\det_{\fn^+}(z)^{-\mu}f(z_2)
=\det_{\fn^+_2}\bigg(\frac{\partial}{\partial z_2}\bigg)^k
\int_{\Omega_2}{\rm e}^{-(y_2|z_2)_{\fn^+_2}}
\\ \qquad\hphantom{=}
{} \ \times\bigg(\frac{1}{(2\pi\sqrt{-1})^{n_2}}\int_{a_2+\sqrt{-1}\fn^+_2}{\rm e}^{(x_2|z_2)_{\fn^+}}
\det_{\fn^+}(z_1+x_2)^{-\mu}f(x_2)\,{\rm d}x_2\bigg){\rm d}y_2
\\ \qquad
{}=\int_{\Omega_2}{\rm e}^{-(y_2|z_2)_{\fn^+_2}}\det_{\fn^+_2}(-y_2)^k
\\ \qquad\hphantom{=}
{} \ \times\bigg(\frac{1}{(2\pi\sqrt{-1})^{n_2}}\int_{a_2+\sqrt{-1}\fn^+_2}{\rm e}^{(x_2|z_2)_{\fn^+}}
\det_{\fn^+}(z_1+x_2)^{-\mu}f(x_2)\,{\rm d}x_2\bigg){\rm d}y_2
\\ \qquad
{}=\begin{cases} \ds (-1)^{kr_2}\frac{\Gamma_{r_2}^{d_2}(\mu+k-(\underline{0}_{r_2-1},l))} {\Gamma_{r_2}^{d_2}(\mu-(\underline{0}_{r_2-1},l))}
F_1^l\left(\begin{matrix} \mu+k \\ \mu \end{matrix};f;z\right)\!, & \varepsilon_2=1, \\
\ds (-1)^{kr_2}\frac{\Gamma_{r_2}^{d_2}(2\mu+k-(\underline{0}_{r_2-l},\underline{1}_l))}
{\Gamma_{r_2}^{d_2}(2\mu-(\underline{0}_{r_2-l},\underline{1}_l))}
F_2^l\left(\begin{matrix} \mu+\frac{k}{2} \\ \mu \end{matrix};f;z\right)\!, &\varepsilon_2=2, \end{cases}
\\ \qquad{}
=\begin{cases} \ds (-1)^{kr_2}\left(\mu-(\underline{0}_{r_2-1},l)\right)_{\underline{k}_{r_2},d_2}
F_1^l\left(\begin{matrix} \mu+k \\ \mu \end{matrix};f;z\right)\!, & \varepsilon_2=1,
\\
\ds (-1)^{kr_2}\left(2\mu-(\underline{0}_{r_2-l},\underline{1}_l)\right)_{\underline{k}_{r_2},d_2}
F_2^l\left(\begin{matrix} \mu+\frac{k}{2} \\ \mu \end{matrix};f;z\right)\!, & \varepsilon_2=2. \end{cases}
\end{gather*}
Hence the 1st equality of (\ref{thm'_simple}) and the 2nd equalities of Theorem \ref{thm_simple} are proved.
Next we consider the 3rd equalities of Theorem \ref{thm_simple}. By Proposition \ref{prop_simple}(2), (3), we have
\begin{gather*}
F_{\varepsilon_2}^l\left(\begin{matrix} -\frac{k}{\varepsilon_2} \\[3pt] -\lambda-\frac{2k}{\varepsilon_2}+\frac{n}{r} \end{matrix};
f(g\cdot);z\right)
=\det_{\fn^+_2}(ge)^{-k}F_{\varepsilon_2}^l\left(\begin{matrix} -\frac{k}{\varepsilon_2} \\[3pt]
-\lambda-\frac{2k}{\varepsilon_2}+\frac{n}{r} \end{matrix};f;gz\right)\!, \qquad g\in K_1^\BC,
\\
\Proj_1\left(\cB_\lambda^{\fn^+}F_{\varepsilon_2}^l
\left(\begin{matrix} -\frac{k}{\varepsilon_2} \\[3pt] -\lambda-\frac{2k}{\varepsilon_2}+\frac{n}{r} \end{matrix};f;gz\right)\right)=0,
\end{gather*}
and $F_{\varepsilon_2}^l\left(\begin{smallmatrix} -\frac{k}{\varepsilon_2} \\ -\lambda-\frac{2k}{\varepsilon_2}+\frac{n}{r} \end{smallmatrix};
f;z\right)$ is a polynomial on $\fp^+$. Hence the linear map
\[ \det_{\fn^+_2}(x_2)^kf(x_2)\longmapsto
F_{\varepsilon_2}^l\left(\begin{matrix} -\frac{k}{\varepsilon_2} \\[3pt] -\lambda-\frac{2k}{\varepsilon_2}+\frac{n}{r} \end{matrix};f;z\right) \]
sits in
\[
\begin{cases}
\Hom_{K_1}\big(\cP_{(k+l,\underline{k}_{r_2-1})}(\fp^+_2),\cP(\fp^+) \cap\mathrm{Sol}(\Proj_1(\cB_\lambda^{\fn^+}))\big),
&\varepsilon_2=1,
\\
\Hom_{K_1}\big(\cP_{(\underline{k+1}_l,\underline{k}_{r_2-l})}(\fp^+_2),\cP(\fp^+) \cap\mathrm{Sol}(\Proj_1(\cB_\lambda^{\fn^+}))\big),
&\varepsilon_2=2.
\end{cases}
\]
By Theorem \ref{thm_Fmethod} these spaces are 1-dimensional if $\lambda>\frac{2n}{r}-1$, and hence by (\ref{diff_eq_Bessel}),
this is proportional to $\bigl\langle \det_{\fn^+_2}(x_2)^kf(x_2),{\rm e}^{(x|\overline{z})_{\fp^+}}\bigr\rangle_{\lambda,x}$.
By comparing the values at $z_1=0$, we get the desired formulas.
\end{proof}

(\ref{thm'_simple}) holds for all $\mu\in\BC$. Especially if we put $\lambda=\frac{n}{r}-j$, or $\mu=j-\frac{2k}{\varepsilon_2}$, then we have
\begin{equation}\label{identity_simple}
F_{\varepsilon_2}^l\left(\begin{matrix} -\frac{k}{\varepsilon_2} \\[3pt] j-\frac{2k}{\varepsilon_2} \end{matrix};f;z\right)
=\det_{\fn^+}(z)^j F_{\varepsilon_2}^l\left(\begin{matrix} j-\frac{k}{\varepsilon_2} \\[3pt] j-\frac{2k}{\varepsilon_2} \end{matrix};f;z\right)\!,
\end{equation}
and if $j\in\BZ_{\ge 0}$, $j\le \lfloor k/\varepsilon_2\rfloor$, then
$F_{\varepsilon_2}^l\left(\begin{smallmatrix} -\frac{k}{\varepsilon_2} \\ j-\frac{2k}{\varepsilon_2} \end{smallmatrix};f;z\right)$,
$F_{\varepsilon_2}^l\left(\begin{smallmatrix} j-\frac{k}{\varepsilon_2} \\ j-\frac{2k}{\varepsilon_2} \end{smallmatrix};f;z\right)$
and $\det_{\fn^+}(z)^j$ are polynomials. If $l=0$ or $r=2$, then this also follows from (\ref{Kummer2}) and the argument below.

Next we consider special cases. If $l=0$ and $f(x_2)=1$, then we have
\[
F_{\bm,\underline{0}_{\lceil r/2\rceil}}^{\varepsilon_2}[1](z_1,w_2)=\tilde{\Phi}_\bm^{\fp^+_1,\fp^-_2}(z_1,w_2).
\]
Similarly, if $\varepsilon_2=1$ and $r=2$ with general $l$, then we have
\[
F_{m,l}^1[f](z_1,w_2)=\tilde{\Phi}_m^{\fp^+_1,\fp^-_2}(z_1,w_2)f\big({}^t\hspace{-1pt}w_2^\itinv\big)
=\frac{1}{m!}\left(\frac{1}{2}(z_1|Q(w_2)z_1)_{\fp^+}\right)^mf\big({}^t\hspace{-1pt}w_2^\itinv\big).
\]
Therefore, the results become simpler for these cases by using the notations in (\ref{0F1_mid}), (\ref{2F1_mid}).
\begin{Corollary}\label{cor_scalar_simple}
Assume $\fp^+$, $\fp^+_2$ are of tube type.
\begin{enumerate}\itemsep=0pt
\item[$1.$] Suppose $\varepsilon_2=1$. Then for $k\in\BZ_{\ge 0}$, $\Re\lambda>\frac{2n}{r}-1$ and for $z=z_1+z_2\in\fp^+$, we have
\begin{align*}
\big\langle \det_{\fn^+_2}(x_2)^k,{\rm e}^{(x|\overline{z})_{\fp^+}}\big\rangle_{\lambda,x}
={}&\frac{\big(\lambda+k-\frac{d}{2}\big\lceil\frac{r_2}{2}\big\rceil +\frac{d_2}{2}\big)_{\underline{k}_{\lfloor r_2/2\rfloor},d}}
{(\lambda)_{(\underline{2k}_{\lfloor r_2/2\rfloor},\underline{k}_{\lceil r_2/2\rceil}),d}}
\det_{\fn^+_2}(z_2)^k
\\
&\times{}_2F_1^{\fp^+_1,\fp^-_2}\left( \begin{matrix}-k,-k-\frac{d_2}{2} \\
-\lambda-2k+\frac{n}{r}-\frac{d_2}{2}\end{matrix};z_1,{}^t\hspace{-1pt}z_2^\itinv\right)\!.
\end{align*}
\item[$2.$] Suppose $\varepsilon_2=1$, $r=2$. Then for $k,l\in\BZ_{\ge 0}$, $f(x_2)\in\cP_{(l,0)}(\fp^+_2)$, $\Re\lambda>n-1$ and for
$z=z_1+z_2\in\fp^+$, we have
\begin{align*}
\big\langle \det_{\fn^+_2}(x_2)^kf(x_2),{\rm e}^{(x|\overline{z})_{\fp^+}}\big\rangle_{\lambda,x}
={}&\frac{\big(\lambda+k+l-\frac{d-d_2}{2}\big)_k}{(\lambda)_{(2k+l,k),d}}\det_{\fn^+_2}(z_2)^kf(z_2) \\
&{}\times\,{}_2F_1\left(\begin{matrix}-k,-k-l-\frac{d_2}{2} \\
-\lambda-2k-l+\frac{n}{2}-\frac{d_2}{2}\end{matrix};\frac{\big(z_1|Q(z_2)^{-1}z_1\big)_{\fp^+}}{2}\right)\!.
\end{align*}
\item[$3.$] Suppose $\varepsilon_2=2$. Then for $k\in\BZ_{\ge 0}$, $\Re\lambda>\frac{2n}{r}-1$ and for $z=z_1+z_2\in\fp^+$, we have
\begin{align*}
\big\langle \det_{\fn^+_2}(x_2)^k,{\rm e}^{(x|\overline{z})_{\fp^+}}\big\rangle_{\lambda,x}
={}&\frac{\big(\lambda+\big\lceil \frac{k}{2}\big\rceil-\frac{d}{2}r_2-\frac{1}{2}\big)_{\underline{\lfloor k/2\rfloor}_{r_2},d}}
{(\lambda)_{(\underline{k}_{r_2},\underline{\lfloor k/2\rfloor}_{r_2}),d}}
\det_{\fn^+_2}(z_2)^k
\\
&\times{}_2F_1^{\fp^+_1,\fp^-_2}\left(\begin{matrix} -\frac{k}{2},-\frac{k-1}{2} \\
-\lambda-k+\frac{n}{r}+\frac{1}{2} \end{matrix};z_1,{}^t\hspace{-1pt}z_2^\itinv\right)\!.
\end{align*}
\end{enumerate}
\end{Corollary}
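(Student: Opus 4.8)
The plan is to read off all three identities directly from the third equality of Theorem~\ref{thm_simple}, feeding in the explicit evaluations of $F^{\varepsilon_2}_{\bm,\bl}[f]$ recorded immediately before the corollary and then recognising the resulting series as the hypergeometric functions of \eqref{2F1_mid} or as an ordinary ${}_2F_1$. In every case the inner product equals $C_{\varepsilon_2}^{r_2}(\lambda,k,l)\,F_{\varepsilon_2}^{l}\big(\begin{smallmatrix} -k/\varepsilon_2\\ -\lambda-2k/\varepsilon_2+n/r\end{smallmatrix};f;z\big)$, so the entire task reduces to expanding $F_{\varepsilon_2}^{l}$ from its definition \eqref{Fmunu1}, \eqref{Fmunu2} and simplifying the scalar prefactor $C_{\varepsilon_2}^{r_2}(\lambda,k,l)$ in the specializations $l=0$ (parts~1 and~3) or $r=2$ (part~2). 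No new analysis is needed beyond Theorem~\ref{thm_simple}.

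For parts~1 and~3 I would set $l=0$ and $f(x_2)=1$. Then the inner summation over $\bl$ in \eqref{Fmunu1}/\eqref{Fmunu2} collapses to the single term $\bl=\underline{0}$, and the displayed identity gives $F^{\varepsilon_2}_{\bm,\underline{0}}[1]=\tilde{\Phi}_\bm^{\fp^+_1,\fp^-_2}$. Pulling out $\det_{\fn^+_2}(z_2)^{k}$ (which is $\det_{\fn^+_2}(z_2)^{-\nu}$ for $\varepsilon_2=1$ and $\det_{\fn^+_2}(z_2)^{-2\nu}$ for $\varepsilon_2=2$), and substituting $\nu=-k/\varepsilon_2$, $\mu=-\lambda-2k/\varepsilon_2+\frac nr$ into the remaining Pochhammer ratio, the sum over $\bm$ acquires exactly the coefficient $(\alpha)_{\bm,d}(\beta)_{\bm,d}/(\gamma)_{\bm,d}$, with $(\alpha,\beta,\gamma)=\big(-k,-k-\tfrac{d_2}{2},-\lambda-2k+\tfrac nr-\tfrac{d_2}{2}\big)$ when $\varepsilon_2=1$ and $(\alpha,\beta,\gamma)=\big(-\tfrac k2,-\tfrac{k-1}{2},-\lambda-k+\tfrac nr+\tfrac12\big)$ when $\varepsilon_2=2$; by \eqref{2F1_mid} this is precisely the asserted ${}_2F_1^{\fp^+_1,\fp^-_2}$. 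It then remains to simplify $C_{\varepsilon_2}^{r_2}(\lambda,k,0)$: for odd $r_2$ the factor $(\,\cdot\,)_{\min\{k,l\}}$ in part~1 becomes $(\,\cdot\,)_{0}=1$, the floor/ceiling blocks in the two parities merge into the single displayed expression, and for $\varepsilon_2=2$ the $l$-indexed blocks simply disappear, giving the stated constant.

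For part~2 I use $r=2$, so $\lfloor r/2\rfloor=1$ and $\bm=(m)$ is a single nonnegative integer. The recorded identity reads $F^1_{m,l}[f](z_1,w_2)=\frac1{m!}\big(\tfrac12(z_1\,|\,Q(w_2)z_1)_{\fp^+}\big)^m f({}^t\hspace{-1pt}w_2^\itinv)$; evaluating at $w_2={}^t\hspace{-1pt}z_2^\itinv$ and using $Q({}^t\hspace{-1pt}z_2^\itinv)=Q(z_2)^{-1}$ together with ${}^t\hspace{-1pt}({}^t\hspace{-1pt}z_2^\itinv)^\itinv=z_2$ converts this into $\frac1{m!}\big(\tfrac12(z_1\,|\,Q(z_2)^{-1}z_1)_{\fp^+}\big)^m f(z_2)$. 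The single-index Pochhammer weight in \eqref{Fmunu1} is the ordinary ratio $(\nu)_m(\nu-\tfrac{d_2}{2}-l)_m/(\mu-\tfrac{d_2}{2}-l)_m$, so after setting $\nu=-k$, $\mu=-\lambda-2k+\tfrac n2$ the sum over $m$ is an ordinary ${}_2F_1$ in the scalar variable $\tfrac12(z_1\,|\,Q(z_2)^{-1}z_1)_{\fp^+}$ with parameters $-k$, $-k-l-\tfrac{d_2}{2}$, $-\lambda-2k-l+\tfrac n2-\tfrac{d_2}{2}$, matching the statement; finally $C_1^2(\lambda,k,l)$ simplifies via $-\tfrac d2+\tfrac{d_2}{2}=-\tfrac{d-d_2}{2}$ to the displayed constant. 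The only real obstacle I anticipate is the bookkeeping of the Pochhammer symbols: aligning the multivariate factors $(\nu)_{\bm,d}$ and $(\nu-\tfrac{d_2}{2}-\cdots)_{\cdots,d}$ of \eqref{Fmunu1}, \eqref{Fmunu2} with the three-parameter normalisation of \eqref{2F1_mid}, and checking that the even/odd $r_2$ branches of $C_{\varepsilon_2}^{r_2}(\lambda,k,l)$ collapse to one uniform prefactor at $l=0$. This is routine but error-prone index manipulation, with no conceptual difficulty once the $l=0$ and $r=2$ reductions are in place.
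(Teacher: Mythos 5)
Your proposal is correct and is essentially identical to the paper's own derivation: the paper also obtains Corollary~\ref{cor_scalar_simple} by specializing the third equality of Theorem~\ref{thm_simple} to $l=0$ (resp.\ $r=2$), substituting the recorded evaluations $F^{\varepsilon_2}_{\bm,\underline{0}}[1]=\tilde{\Phi}_\bm^{\fp^+_1,\fp^-_2}$ and $F^1_{m,l}[f]=\frac{1}{m!}\bigl(\tfrac12(z_1|Q(w_2)z_1)_{\fp^+}\bigr)^m f\bigl({}^t\hspace{-1pt}w_2^\itinv\bigr)$ into \eqref{Fmunu1}--\eqref{Fmunu2}, and simplifying $C_{\varepsilon_2}^{r_2}(\lambda,k,l)$. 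Your parameter identifications $(\alpha,\beta,\gamma)$ and the collapse of the even/odd-$r_2$ branches of the prefactor all check out.
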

Therefore by Remark \ref{rem_Heckman-Opdam},
the inner product $\big\langle \det_{\fn^+_2}(x_2)^k,{\rm e}^{(x|\overline{z})_{\fp^+}}\big\rangle_{\lambda,x}$ is
given by using a~Heck\-man--Opdam's multivariate hypergeometric polynomial of type $BC_{\lfloor r/2\rfloor}$.

\subsection{Computation of poles}

By Theorem \ref{thm_simple}, we can compute the poles of $\big\langle F(x_2),{\rm e}^{(x|\overline{z})_{\fp^+}}\big\rangle_{\lambda,x}$
for $F(x_2)\in\cP_{(k+l,\underline{k}_{a-1})}\big(\fp^+_2\big)$ or $F(x_2)\in\cP_{(\underline{k+1}_l,\underline{k}_{a-l})}\big(\fp^+_2\big)$
with $1\le a\le r_2$, without the assumption that $\fp^+$ and $\fp^+_2$ are of tube type.

\begin{Corollary}\label{cor_pole_simple}\quad
\begin{enumerate}\itemsep=0pt
\item[$1.$] Suppose $\varepsilon_2=1$. Let $k,l\in\BZ_{\ge 0}$, $2\le a\le r_2$, $F(x_2)\in\cP_{(k+l,\underline{k}_{a-1})}\big(\fp^+_2\big)$.
Then as functions of~$\lambda$,
\begin{gather*}
(\lambda)_{(2k+l,\underline{2k}_{a/2-1},\underline{k}_{a/2}),d}
\big\langle F(x_2),{\rm e}^{(x|\overline{z})_{\fp^+}}\big\rangle_{\lambda,x} \hspace{30.5mm} \text{if}\quad a\colon \text{even},
\\
(\lambda)_{(2k+l,\underline{2k}_{\lfloor a/2\rfloor-1},\min\{2k,k+l\},\underline{k}_{\lfloor a/2\rfloor}),d}
\big\langle F(x_2),{\rm e}^{(x|\overline{z})_{\fp^+}}\big\rangle_{\lambda,x}\qquad
\text{if}\quad a\colon\text{odd}
\end{gather*}
are holomorphically continued for all $\lambda\in\BC$. Moreover, when $F(x_2)\ne 0$, if $a$ is even or $k=0$ or $l=0$,
then these give non-zero polynomials in $z\in\fp^+$ for all $\lambda\in\BC$.
In addition, for $z_1=0$ we have
\begin{gather*}
\big\langle F(x_2),{\rm e}^{(x_2|\overline{z_2})_{\fp^+}}\big\rangle_{\lambda,x}
\\ \qquad
{}=\begin{cases}
\ds \frac{\big(\lambda+k-\frac{d}{4}a+\frac{d_2}{2}+(l,\underline{0}_{a/2-1})\big)
_{\underline{k}_{a/2},d}}{(\lambda)_{(2k+l,\underline{2k}_{a/2-1},\underline{k}_{a/2}),d}}F(z_2), & a\colon\text{even},
\\
\ds \frac{\big(\lambda-\frac{d}{2}\big\lfloor\frac{a}{2}\big\rfloor+\max\{2k,k+l\}\big)_{\min\{k,l\}}
\big(\lambda+k-\frac{d}{2}\big\lceil\frac{a}{2}\big\rceil+\frac{d_2}{2}\big)_{\underline{k}_{\lfloor a/2\rfloor},d}}
{(\lambda)_{(2k+l,\underline{2k}_{\lfloor a/2\rfloor-1},\min\{2k,k+l\},\underline{k}_{\lfloor a/2\rfloor}),d}}F(z_2), \hspace{-40pt}& \\
& a\colon\text{odd}. \end{cases}
\end{gather*}

\item[$2.$] Suppose $\varepsilon_2=2$. Let $k\in\BZ_{\ge 0}$, $0\le l< a\le r_2$, $F(x_2)\in\cP_{(\underline{k+1}_l,\underline{k}_{a-l})}\big(\fp^+_2\big)$.
Then as a~func\-tion of $\lambda$,
\[
(\lambda)_{(\underline{k+1}_l,\underline{k}_{a-l},\underline{\lceil k/2\rceil}_l,\underline{\lfloor k/2\rfloor}_{a-l}),d}
\big\langle F(x_2),{\rm e}^{(x|\overline{z})_{\fp^+}}\big\rangle_{\lambda,x}
\]
is holomorphically continued for all $\lambda\in\BC$, and when $F(x_2)\ne 0$,
this gives a non-zero polynomial in $z\in\fp^+$ for all $\lambda\in\BC$.
In addition, for $z_1=0$ we have
\begin{gather*}
\big\langle F(x_2),{\rm e}^{(x_2|\overline{z_2})_{\fp^+}}\big\rangle_{\lambda,x}
=\frac{\big(\lambda-\frac{d}{2}a+\big(\underline{\big\lfloor\frac{k}{2}\big\rfloor+\frac{1}{2}}_l,
\underline{\left\lceil\frac{k}{2}\right\rceil-\frac{1}{2}}_{a-l}\big)\big)
_{(\underline{\lceil k/2\rceil}_l,\underline{\lfloor k/2\rfloor}_{a-l}),d}}
{(\lambda)_{(\underline{k+1}_l,\underline{k}_{a-l},\underline{\lceil k/2\rceil}_l,\underline{\lfloor k/2\rfloor}_{a-l}),d}}F(z_2).
\end{gather*}
\end{enumerate}
\end{Corollary}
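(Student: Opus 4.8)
The plan is to reduce the statement to the tube type situation already settled by Theorem~\ref{thm_simple}, in exact parallel with the proof of Corollary~\ref{cor_pole_nonsimple}. Given $F(x_2)\in\cP_{(k+l,\underline{k}_{a-1})}(\fp^+_2)$ (for $\varepsilon_2=1$) or $F(x_2)\in\cP_{(\underline{k+1}_l,\underline{k}_{a-l})}(\fp^+_2)$ (for $\varepsilon_2=2$) with $1\le a\le r_2$, I would fix a tripotent $e\in\fp^+_2$ of rank $a$ and set $\fp^{+\prime}:=\fp^+(e)_2$, $\fp^{+\prime}_2:=\fp^+_2(e)_2$, $\fp^{+\prime}_1:=\fp^+_1\cap\fp^{+\prime}$. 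Since $e\in\fp^+_2=(\fp^+)^{-\sigma}$, the relation $\sigma\{e,\overline{e},z\}=\{-e,-\overline{e},\sigma z\}=\{e,\overline{e},\sigma z\}$ shows that $\sigma$ commutes with $D(e,\overline{e})$ and hence preserves the Peirce spaces, so $\fp^{+\prime}=\fp^{+\prime}_1\oplus\fp^{+\prime}_2$ is again a symmetric decomposition; moreover $\fp^{+\prime}$ and $\fp^{+\prime}_2$ are automatically of tube type, of ranks $r'=\rank\fp^{+\prime}$ and $a$ respectively. The decisive gain is that on the rank-$a$ subsystem $\det_{\fn^{+\prime}_2}$ has weight $\underline{k}_a$, so $F|_{\fp^{+\prime}_2}$ factors as $\det_{\fn^{+\prime}_2}(x_2)^k f'(x_2)$ with $f'\in\cP_{(l,0,\dots,0)}(\fp^{+\prime}_2)$ (resp. $f'\in\cP_{\underline{1}_l}(\fp^{+\prime}_2)$), which is precisely the input required by Theorem~\ref{thm_simple} with $r_2$ replaced by $a$. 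By Corollary~\ref{inner_sub} the inner product over $\fp^+$ equals the corresponding one over $\fp^{+\prime}$, and since $F\mapsto\langle F,{\rm e}^{(x|\overline{z})_{\fp^+}}\rangle_{\lambda,x}$ is $K_1^\BC$-equivariant while $\cP_{(k+l,\underline{k}_{a-1})}(\fp^+_2)$ (resp. $\cP_{(\underline{k+1}_l,\underline{k}_{a-l})}(\fp^+_2)$) is generated over $K_1^\BC$ by its rank-$a$ piece (the lowest weight vector lives there), it suffices to prove all three assertions on $\fp^{+\prime}$.

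On $\fp^{+\prime}$ I would invoke Theorem~\ref{thm_simple}, which writes the inner product as $C_{\varepsilon_2}^{a}(\lambda,k,l)\,F_{\varepsilon_2}^l(\cdots)$. The essential point is that $C_{\varepsilon_2}^{a}$ involves only $d$, $d_2$, $a$, $k$, $l$ (the factor $\frac{n}{r}$ having cancelled in the simplification following (\ref{thm'_simple})), so the poles are intrinsic and match the data listed in the corollary. To locate the poles in $\lambda$ I would use the integral representation of Proposition~\ref{prop_simple}(1): with $\nu=-k/\varepsilon_2$ the ratio of Gamma factors $\Gamma_{r_2}^{d_2}(\mu-\cdots)/\Gamma_{r_2}^{d_2}(\nu-\cdots)$ carries the entire $\lambda$-dependence of $F_{\varepsilon_2}^l$, while the remaining (inverse) Laplace integral of a polynomial is holomorphic in $\lambda$; thus the only poles arise from the numerator Gamma factor together with the denominator of $C_{\varepsilon_2}^{a}$. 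A direct computation then shows that the denominator of $C_{\varepsilon_2}^{a}$ is exactly the stated product $(\lambda)_{(\dots)}$ and that the numerator Pochhammer of $C_{\varepsilon_2}^{a}$ absorbs the Gamma poles of $F_{\varepsilon_2}^l$, so multiplying by $(\lambda)_{(\dots)}$ produces an entire function, in fact a polynomial in $z$; the even/odd parity of $a$ affects only this bookkeeping, not the method. Finally, setting $z_1=0$ annihilates every $\bm\neq\underline{0}$ term of $F_{\varepsilon_2}^l$ and leaves $C_{\varepsilon_2}^{a}(\lambda,k,l)\,F(z_2)$, which is the asserted value.

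For non-vanishing I would dispose of the easy sub-cases first: when $k=0$ we have $F\in\cP(\fp^+_2)\subset\cP(\fp^+)$ by Proposition~\ref{prop_simple_prelim}, whence Corollary~\ref{FKCor} gives $(\lambda)_{\bl,d}\langle F,{\rm e}^{(x|\overline{z})_{\fp^+}}\rangle=F(z_2)\neq0$. In general the cleared expression is a polynomial in $z$, and I would isolate its extremal monomial (highest degree in $z_1$, i.e.\ the $\bm=\underline{k}$ term of $F_{\varepsilon_2}^l$). Its total coefficient is the product of the numerator Pochhammer of $C_{\varepsilon_2}^{a}$ with the reciprocal Pochhammer in the $\bm=\underline{k}$ coefficient, and via the reflection identity $(-x)_m=(-1)^m(x-m+1)_m$ I expect these to combine into a non-zero constant independent of $\lambda$; it then remains to check that the accompanying basis polynomial is non-zero, which for $l=0$ is immediate from $\tilde{\Phi}_{\underline{k}}^{\fp^+_1,\fp^-_2}\neq0$ and in general needs the injectivity of $f\mapsto F_{\bm,\bl}^{\varepsilon_2}[f]$ at the extremal index---an analogue of Lemma~\ref{lem_nonzero_nonsimple}, provable by the same Fischer-norm adjointness between a "splitting" map and a "diagonal restriction" map. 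The hypotheses ``$a$ even, or $k=0$, or $l=0$'' (and the unconditional case $\varepsilon_2=2$) are exactly those for which this extremal coefficient survives.

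The main obstacle I anticipate is the Pochhammer/parity bookkeeping in the second step: verifying that the precise product $(\lambda)_{(\dots)}$ printed in the corollary cancels exactly the union of the poles of $C_{\varepsilon_2}^{a}$ and of $F_{\varepsilon_2}^l$, with no spurious surviving factors, separately for even and odd $a$ and for the two values of $\varepsilon_2$. Closely tied to this, and the second delicate point, is establishing the extremal-term injectivity behind the non-vanishing claim---pinning down which weight $\bm$ genuinely contributes and showing its coefficient is a $\lambda$-independent non-zero constant; this is where the representation-theoretic input analogous to Lemma~\ref{lem_nonzero_nonsimple} must be supplied.
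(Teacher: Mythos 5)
Your proposal follows essentially the same route as the paper: reduce to the tube-type rank-$a$ subsystem $\fp^{+\prime}=\fp^+(e)_2$ via Corollary~\ref{inner_sub} and $K_1^\BC$-equivariance, apply Theorem~\ref{thm_simple} there, observe that after multiplying by the stated Pochhammer product each surviving term of $F^l_{\varepsilon_2}$ has a coefficient polynomial in $\lambda$ (because $(-k)_{\bm,d}$ kills all $\bm$ outside a bounded set), read off the $z_1=0$ value from the $\bm=\underline{0}$ term, and get non-vanishing from the linear independence of the $F^{\varepsilon_2}_{\bm,\bl}[f]$ together with the survival of the extremal coefficient.

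The one point where your plan would not go through as written is the non-vanishing in case~(2) for odd $k$ (and $l>0$): you propose to prove the needed injectivity ``by the same Fischer-norm adjointness'' as in Lemma~\ref{lem_nonzero_nonsimple}, but that adjointness argument concerns the splitting map $f(z)\mapsto\tilde f_{\bm,\bn}(x,y)$ of $f(x+y)$ and does not transfer to the decomposition $\tilde\Phi_\bm^{\fp^+_1,\fp^-_2}(z_1,w_2)f\big({}^t\hspace{-1pt}w_2^\itinv\big)=\sum_\bl F^2_{\bm,\bl}[f](z_1,w_2)$ by the $w_2$-weight. The paper instead proves a dedicated statement (Lemma~\ref{lem_nonzero_simple}): one multiplies by $\det_{\fn^-_2}(w_2)$, applies $\det_{\fn^-_2}\big(\frac{\partial}{\partial w_2}\big)$ to isolate the extremal component via \cite[Proposition~VII.1.6]{FK}, and verifies non-vanishing directly for $f=\det_{\fn^{+\prime}_2}$ on a split pair of orthogonal tripotents before invoking $K_1^\BC$-equivariance. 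You correctly flagged this as the delicate point, but the mechanism you suggest for it is the wrong one; the rest of the argument is sound and matches the paper.
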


\begin{proof}
(1) Let $\varepsilon_2=1$. First we assume $\fp^+$, $\fp^+_2$ are of tube type, $a=r_2$ and let $F(x_2)=\det_{\fn^+_2}(x_2)^kf(x_2)$.
We~put $\mu:=-\lambda-2k+\frac{n}{r}$. Then by Theorem \ref{thm_simple}(1), we have
\begin{gather*}
(\lambda)_{(2k+l,\underline{2k}_{a/2-1},\underline{k}_{a/2}),d}
\big\langle \det_{\fn^+_2}(x_2)^kf(x_2),{\rm e}^{(x|\overline{z})_{\fp^+}}\big\rangle_{\lambda,x}
\\ \qquad
{}=(-1)^{kr/2}\bigg(\mu-\frac{d_2}{2}-(\underline{0}_{r/2-1},l)\bigg)_{\underline{k}_{r/2},d}
F_1^l\left(\begin{matrix}-k\\\mu\end{matrix};f;z\right)
\\ \qquad
{}=(-1)^{kr/2}\bigg(\mu-\frac{d_2}{2}-(\underline{0}_{r/2-1},l)\bigg)_{\underline{k}_{r/2},d} \det_{\fn^+_2}(z_2)^k
\\ \qquad\hphantom{=}
{}\ \times \sum_{\bm\in\BZ_{++}^{r/2}}\sum_{\substack{\bl\in(\BZ_{\ge 0})^{r/2}\\ |\bl|=l}}
\frac{(-k)_{\bm,d}\big({-}k-\frac{d_2}{2} -(\underline{0}_{r/2-1},l)\big)_{\bm-\bl+(\underline{0}_{r/2-1},l),d}}
{\big(\mu-\frac{d_2}{2}-(\underline{0}_{r/2-1},l)\big)_{\bm-\bl+(\underline{0}_{r/2-1},l),d}} F_{\bm,\bl}^1[f]\big(z_1,{}^t\hspace{-1pt}z_2^\itinv\big)
\end{gather*}
for even $r=r_2$, and
\begin{gather*}
(\lambda)_{(2k+l,\underline{2k}_{\lfloor a/2\rfloor-1},\min\{2k,k+l\},\underline{k}_{\lfloor a/2\rfloor}),d}
\big\langle \det_{\fn^+_2}(x_2)^kf(x_2),{\rm e}^{(x|\overline{z})_{\fp^+}}\big\rangle_{\lambda,x}
\\ \qquad
{}=(-1)^{k\lfloor r/2\rfloor+\min\{k,l\}}\bigg(\mu-\frac{d_2}{2}\bigg)_{\underline{k}_{\lfloor r/2\rfloor},d}\bigg(\mu-\frac{d_2}{2}(r-1)-l\bigg)_{\min\{k,l\}}F_1^l \left(\begin{matrix}-k\\\mu\end{matrix};f;z\right)
\\ \qquad
{}=(-1)^{k\lfloor r/2\rfloor+\min\{k,l\}}\bigg(\mu-\frac{d_2}{2}\bigg)_{\underline{k}_{\lfloor r/2\rfloor},d}
\bigg(\mu-\frac{d_2}{2}(r-1)-l\bigg)_{\min\{k,l\}}\det_{\fn^+_2}(z_2)^k
\\ \qquad\hphantom{=}
{} \ \times\sum_{\bm\in\BZ_{++}^{\lfloor r/2\rfloor}}\sum_{\substack{\bl\in(\BZ_{\ge 0})^{\lceil r/2\rceil}\\ |\bl|=l}}
\frac{(-k)_{\bm,d}\big({-}k-\frac{d_2}{2}\big)_{\bm-\bl',d}\big({-}k-\frac{d_2}{2}(r-1)-l\big)_{l-l_{\lceil r/2\rceil}}}
{\big(\mu-\frac{d_2}{2}\big)_{\bm-\bl',d}\big(\mu-\frac{d_2}{2}(r-1)-l\big)_{l-l_{\lceil r/2\rceil}}}
\\ \qquad\hphantom{=}
{} \ \times F_{\bm,\bl}^1[f]\big(z_1,{}^t\hspace{-1pt}z_2^\itinv\big)
\end{gather*}
for odd $r=r_2$. Then $(-k)_{\bm,d}F_{\bm,\bl}^1[f](z_1,{}^t\hspace{-1pt}z_2^\itinv)$ is non-zero only if
\[
\begin{cases}
\bm\le \underline{k}_{r/2}, \quad \bm-\bl+(\underline{0}_{r/2-1},l)\le \underline{k}_{r/2}, & r\colon\text{even},
\\
\bm\le \underline{k}_{\lfloor r/2\rfloor}, \quad \bm-\bl'\le \underline{k}_{\lfloor r/2\rfloor}, \quad
l-l_{\lceil r/2\rceil}=|\bl'|\le \min\{k,l\}, & r\colon\text{odd},
\end{cases}
\]
where $\bk\le\bl$ means $k_j\le l_j$ for all $j$, and $\bl=(\bl',l_{\lceil r/2\rceil})$ for odd $r$.
Hence the above formulas are holomorphic for all $\mu\in\BC$.
Moreover, all non-zero $F_{\bm,\bl}^1[f]$ are linearly independent, and if $r$ is even or $k=0$ or $l=0$, then the coefficient of
\[ F_{\underline{k}_{\lfloor r/2\rfloor},(\underline{0}_{\lceil r/2\rceil-1},l)}^1[f]\big(z_1,{}^t\hspace{-1pt}z_2^\itinv\big)
=\tilde{\Phi}_{\underline{k}_{\lfloor r/2\rfloor}}^{\fp^+_1,\fp^-_2}\big(z_1,{}^t\hspace{-1pt}z_2^\itinv\big)f(z_2) \]
is non-zero for all $\mu\in\BC$.
Hence the above formulas are non-zero for all $\mu\in\BC$ when $f(x_2)\ne 0$ and $r$ is even or $k=0$ or $l=0$.
The values at $z_1=0$ are clear.

Next we consider general $\fp^+$, $\fp^+_2$ and general $2\le a\le r_2$.
We~take a tripotent $e'\in\fp^+_2$ of rank $a$, and let $\fp^{+\prime}:=\fp^+(e')_2$, $\fp^{+\prime}_2:=\fp^+_2\cap\fp^{+\prime}$.
For $z\in\fp^+$, let $z'\in\fp^{+\prime}$ be the projection of $z$ onto $\fp^{+\prime}$.
We~take $f(x_2')\in\cP_{(l,0,\dots,0)}(\fp^{+\prime}_2)$,
and let $F(x_2)=\det_{\fn^{+\prime}_2}(x_2')^kf(x_2')\in\cP_{(k+l,\underline{k}_{a-1})}(\fp^{+\prime}_2)
\subset\cP_{(k+l,\underline{k}_{a-1})}(\fp^+_2)$.
Then by Corollary~\ref{inner_sub}, as a function of $\lambda$,
\begin{gather*}
(\lambda)_{(2k+l,\underline{2k}_{\lfloor a/2\rfloor-1},(\min\{2k,k+l\},)\underline{k}_{\lfloor a/2\rfloor}),d}
\big\langle F(x_2),{\rm e}^{(x|\overline{z})_{\fp^+}}\big\rangle_{\lambda,x,\fp^+}
\\ \qquad
{}=(\lambda)_{(2k+l,\underline{2k}_{\lfloor a/2\rfloor-1},(\min\{2k,k+l\},)\underline{k}_{\lfloor a/2\rfloor}),d}
\big\langle \det_{\fn^{+\prime}_2}(x_2')^kf(x_2'),{\rm e}^{(x'|\overline{z'})_{\fp^{+\prime}}}\big\rangle_{\lambda,x',\fp^{+\prime}}
\end{gather*}
is entirely holomorphic on $\BC$, and by the $K_1$-equivariance, this holds for all $F(x_2)\!\in\!\cP_{(\!k{+}l,\underline{k}_{a{-}1}\!)}\!\big(\fp^+_2\!\big)$.
The proof of the results for $z_1=0$ is similar.

(2) Next let $\varepsilon_2=1$. Again we assume $\fp^+$, $\fp^+_2$ are of tube type, $a=r_2$ and let $F(x_2)=\det_{\fn^+_2}(x_2)^kf(x_2)$.
We~put $\mu:=-\lambda-k+\frac{n}{r}$. Then by Theorem \ref{thm_simple}(2), we have
\begin{gather*}
(\lambda)_{(\underline{k+1}_l,\underline{k}_{a-l},\underline{\lceil k/2\rceil}_l,\underline{\lfloor k/2\rfloor}_{a-l}),d}
\big\langle F(x_2),{\rm e}^{(x|\overline{z})_{\fp^+}}\big\rangle_{\lambda,x}
\\ \qquad
{}=(-1)^{\lceil k/2\rceil l+\lfloor k/2\rfloor (r_2-l)}\bigg(\mu+\bigg(\underline{\frac{1}{2}}_{r_2-l},\underline{-\frac{1}{2}}_l\bigg)
\bigg)_{(\underline{\lfloor k/2\rfloor}_{r_2-l},\underline{\lceil k/2\rceil}_l),d}
F_2^l\left(\begin{matrix}-k/2\\\mu\end{matrix};f;z\right)
\\ \qquad
{}=(-1)^{\lceil k/2\rceil l+\lfloor k/2\rfloor (r_2-l)}\left(\mu+\left(\underline{\frac{1}{2}}_{r_2-l},\underline{-\frac{1}{2}}_l\right)
\right)_{(\underline{\lfloor k/2\rfloor}_{r_2-l},\underline{\lceil k/2\rceil}_l),d}\det_{\fn^+_2}(z_2)^k \\
\qquad\hphantom{=}
{} \ \times\sum_{\bm\in\BZ_{++}^{r_2}}\sum_{\substack{\bl\in\{0,1\}^{r_2}\\ |\bl|=l}}\frac{\big({-}\frac{k}{2}\big)_{\bm,d}
\big({-}\frac{k}{2}+\bigl(\underline{\frac{1}{2}}_{r_2-l},\underline{-\frac{1}{2}}_l\bigr)
\big)_{\bm-\bl+(\underline{0}_{r_2-l},\underline{1}_l),d}}
{\big(\mu+\bigl(\underline{\frac{1}{2}}_{r_2-l}, \underline{-\frac{1}{2}}_l\bigr)\big)_{\bm-\bl+(\underline{0}_{r_2-l},\underline{1}_l),d}}
F_{\bm,\bl}^2[f]\big(z_1,{}^t\hspace{-1pt}z_2^\itinv\big).
\end{gather*}
Then $\big({-}\frac{k}{2}\big)_{\bm,d}\big({-}\frac{k}{2}+\bigl(\underline{\frac{1}{2}}_{r_2-l}, \underline{-\frac{1}{2}}_l\bigr)\big)_{\bm-\bl+(\underline{0}_{r_2-l},\underline{1}_l),d} F_{\bm,\bl}^2[f]\big(z_1,{}^t\hspace{-1pt}z_2^\itinv\big)$ is non-zero only if
\[ \bm\le\left(\underline{\left\lceil\frac{k}{2}\right\rceil}_l,\underline{\left\lfloor\frac{k}{2}\right\rfloor}_{r_2-l}\right)\!, \qquad
\bm-\bl+(\underline{0}_{r_2-l},\underline{1}_l)
\le\left(\underline{\left\lfloor\frac{k}{2}\right\rfloor}_{r_2-l},\underline{\left\lceil\frac{k}{2}\right\rceil}_l\right)\!. \]
Hence the above formula is holomorphic for all $\mu\in\BC$.
Moreover, all non-zero $F_{\bm,\bl}^2[f]$ are linearly independent, and the coefficient of
\[
\begin{cases} F_{\underline{k/2}_{r_2},(\underline{0}_{r_2-l},\underline{1}_l)}^2[f]\big(z_1,{}^t\hspace{-1pt}z_2^\itinv\big), &k\colon\text{even},
\\
F^2_{(\underline{\lceil k/2\rceil}_l,\underline{\lfloor k/2\rfloor}_{r_2-l}),(\underline{1}_l,\underline{0}_{r_2-l})}[f]\big(z_1,{}^t\hspace{-1pt}z_2^\itinv\big), & k\colon\text{odd} \end{cases}
\]
is non-zero for all $\mu\in\BC$. If $f(x_2)\ne 0$, when $k$ is even,
\[
F_{\underline{k/2}_{r_2},(\underline{0}_{r_2-l},\underline{1}_l)}^2[f](z_1,w_2) =\tilde{\Phi}_{\underline{k/2}_{r_2}}^{\fp^+_1,\fp^-_2}(z_1,w_2)f\big({}^t\hspace{-1pt}w_2^\itinv\big)\ne 0
\]
holds, and when $k$ is odd,
\[
F^2_{(\underline{\lceil k/2\rceil}_l,\underline{\lfloor k/2\rfloor}_{r_2-l}),(\underline{1}_l,\underline{0}_{r_2-l})}[f](z_1,w_2)\ne 0
\]
also holds by Lemma \ref{lem_nonzero_simple} given later.
Hence the above formula is non-zero for all $\mu\in\BC$. The values at $z_1=0$ are clear.
For general $\fp^+$, $\fp^+_2$ and general $a\le r_2$, this is also proved as the case~(1).
\end{proof}

To complete the proof of Corollary~\ref{cor_pole_simple}(2), we prove the following.
\begin{Lemma}\label{lem_nonzero_simple}
Let $\varepsilon_2=2$ and let $f(x_2)\in\cP_{\underline{1}_l}\big(\fp^+_2\big)$, $f(x_2)\ne 0$.
Then for $k\in\BZ_{\ge 0}$, $\bm\in\BZ_{++}^l$ with $m_l\ge 1$,
$F_{(\bm,\underline{0}_{r_2-l})+\underline{k}_{r_2},(\underline{1}_l,\underline{0}_{r_2-l})}^2[f](z_1,w_2)$ gives a non-zero polynomial.
\end{Lemma}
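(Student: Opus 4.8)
The plan is to follow the pattern of Lemma~\ref{lem_nonzero_nonsimple}: use equivariance and irreducibility to reduce the non-vanishing of the isotypic projection to a single distinguished vector, and then certify that the surviving component does not cancel. Write $\tilde{\bm}:=(\bm,\underline{0}_{r_2-l})+\underline{k}_{r_2}$ and $\bl:=(\underline{1}_l,\underline{0}_{r_2-l})$, so that by definition $F^2_{\tilde{\bm},\bl}[f](z_1,w_2)$ is the $\cP_{2\tilde{\bm}-\bl}(\fp^-_2)$-isotypic component, in the variable $w_2$, of $\tilde{\Phi}^{\fp^+_1,\fp^-_2}_{\tilde{\bm}}(z_1,w_2)\,f\big({}^t\hspace{-1pt}w_2^\itinv\big)$. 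The first point is that $2\tilde{\bm}-\bl=(2m_1+2k-1,\dots,2m_l+2k-1,\underline{2k}_{r_2-l})$ lies in $\BZ_+^{r_2}$ exactly when $m_l\ge 1$; this is the dominance condition (equivalently $\tilde{\bm}-\bl\in\BZ_+^{r_2}$) that makes the target module nonzero, and the content of the lemma is that under this hypothesis the projection genuinely hits it.

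Next I would check that $f\mapsto F^2_{\tilde{\bm},\bl}[f]$ is a $K_1^\BC$-module map from $\cP_{\underline{1}_l}(\fp^+_2)$ into $\cP(\fp^+_1)_{z_1}\otimes\cP_{2\tilde{\bm}-\bl}(\fp^-_2)_{w_2}$. This follows from the identity $\tilde{\Phi}^{\fp^+_1,\fp^-_2}_{\tilde{\bm}}(gz_1,{}^t\hspace{-1pt}g^{-1}w_2)=\tilde{\Phi}^{\fp^+_1,\fp^-_2}_{\tilde{\bm}}(z_1,w_2)$ for $g\in K_1^\BC$ (a consequence of the covariance of $h_{\fp^+}$ and $Q$ under the structure group), together with ${}^t\hspace{-1pt}({}^t\hspace{-1pt}g\,w_2)^\itinv=g^{-1}{}^t\hspace{-1pt}w_2^\itinv$ and the equivariance of the isotypic projection in $w_2$. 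Since $\cP_{\underline{1}_l}(\fp^+_2)$ is irreducible by the Hua--Kostant--Schmid theorem, this map is either zero or injective, so it suffices to exhibit one $f\ne 0$ with $F^2_{\tilde{\bm},\bl}[f]\ne 0$. I would take $f$ to be the lowest weight vector of $\cP_{\underline{1}_l}(\fp^+_2)$, i.e.\ the principal $l\times l$ minor $\Delta_l(x_2)$ relative to a fixed Jordan frame $\{e_1,\dots,e_{r_2}\}$ of $\fp^+_2$; then $f\big({}^t\hspace{-1pt}w_2^\itinv\big)$ is the corresponding extremal vector of $\cP_{-\underline{1}_l^\vee}(\fp^-_2)$, carrying a single factor $\det_{\fn^-_2}(w_2)^{-1}$.

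With this choice I would detect the $\cP_{2\tilde{\bm}-\bl}(\fp^-_2)$-component by the Fischer pairing in $w_2$ against its extremal weight vector and track weights. The vector $f\big({}^t\hspace{-1pt}w_2^\itinv\big)$ is extremal in its module and $\tilde{\Phi}^{\fp^+_1,\fp^-_2}_{\tilde{\bm}}(z_1,w_2)$ carries, as a function of $w_2$, the module $\cP_{2\tilde{\bm}}(\fp^-_2)$ whose top coefficient is explicitly nonzero (it is built from the Jack-type symmetric polynomial $\tilde{\Phi}^d_{\tilde{\bm}}$, whose leading monomial has nonzero coefficient). I would argue that $2\tilde{\bm}-\bl$ is the dominance-maximal index that can survive once the complementary-minor factor of $f\big({}^t\hspace{-1pt}w_2^\itinv\big)$ is fixed, so that the coefficient of the corresponding extremal weight vector equals, up to a nonzero constant depending only on $d$ and $r_2$, the top coefficient of $\tilde{\Phi}^d_{\tilde{\bm}}$, and hence is nonzero; the hypothesis $m_l\ge 1$ is precisely what keeps $2\tilde{\bm}-\bl$ dominant, so that this extremal monomial corresponds to $\cP_{2\tilde{\bm}-\bl}(\fp^-_2)$ itself rather than to a strictly smaller module.

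The main obstacle is this last step: multiplying $\tilde{\Phi}^{\fp^+_1,\fp^-_2}_{\tilde{\bm}}$ by the ratio of minors coming from $f\big({}^t\hspace{-1pt}w_2^\itinv\big)$ decomposes into many $K_1^\BC$-types according to a Pieri/Littlewood--Richardson-type rule, and I must rule out cancellation in the specific type $2\tilde{\bm}-\bl$. The extremal-weight argument is meant to bypass computing all structure constants by isolating a single maximal monomial, but making ``maximal'' precise (in the correct dominance order on $\fp^-_2$, with the $z_1$-dependence of the roots $t_j$ entering $\tilde{\Phi}^d_{\tilde{\bm}}$ carried along as a parameter) is the delicate part, and is where a case check against the explicit realizations of Section~\ref{section_prelim2} may ultimately be needed, in parallel with the role played by the lowest weight evaluation at the end of Lemma~\ref{lem_nonzero_nonsimple}.
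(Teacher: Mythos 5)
Your setup is sound: the dominance criterion $2\tilde{\bm}-\bl\in\BZ_+^{r_2}\Leftrightarrow m_l\ge 1$, the $K_1^\BC$-equivariance of $f\mapsto F^2_{\tilde{\bm},\bl}[f]$, and the reduction by irreducibility of $\cP_{\underline{1}_l}(\fp^+_2)$ to a single test vector (your $\Delta_l$ is essentially the paper's $\det_{\fn^{+\prime}_2}(x_2')$) all match the paper. But the core non-vanishing step is both unproven, as you concede in your last paragraph, and aimed at the wrong target. The type $2\tilde{\bm}-(\underline{1}_l,\underline{0}_{r_2-l})$ is obtained by subtracting $1$ from the \emph{largest} entries of $2\tilde{\bm}$, so it is dominance-\emph{minimal}, not maximal, among the candidates $2\tilde{\bm}-\bl$: for $k\ge 1$ the type $2\tilde{\bm}-(\underline{0}_{r_2-l},\underline{1}_l)$ is admissible and strictly larger in the dominance order. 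A leading-monomial or extremal-weight pairing would therefore detect that larger component rather than the one the lemma asserts to be non-zero, so the proposed mechanism for ruling out cancellation cannot be repaired as stated.

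The paper closes the gap with two tools you do not use. First, since $\fp^+_1$ is automatically of tube type when $\varepsilon_2=2$, one has $\tilde{\Phi}_{\bm+\underline{k}_{r_2}}^{\fp^+_1,\fp^-_2}=c_{k,\bm}\,\tilde{\Phi}_{\underline{k}_{r_2}}^{\fp^+_1,\fp^-_2}\tilde{\Phi}_{\bm}^{\fp^+_1,\fp^-_2}$ with $c_{k,\bm}\ne 0$, which reduces everything to $k=0$; after that reduction the last $r_2-l$ entries of $2\tilde{\bm}$ vanish and the target type becomes essentially the only one reachable. Second, instead of a weight argument, the paper multiplies by $\det_{\fn^-_2}(w_2)$ to make all components polynomial and then applies $\det_{\fn^-_2}\bigl(\frac{\partial}{\partial w_2}\bigr)$: by \cite[Proposition~VII.1.6]{FK} this annihilates every component except the one indexed by $\bl=(\underline{1}_l,\underline{0}_{r_2-l})$, and returns it multiplied by the constant $\bigl(\frac{n_2}{r_2}+(2\bm-\underline{1}_l,\underline{0}_{r_2-l})\bigr)_{\underline{1}_{r_2},d_2}$, which is non-zero precisely because $m_l\ge 1$. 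Non-vanishing is then checked by restricting $z_1$ to the Peirce piece $\fp^{+\prime}_1$ attached to a rank-$l$ tripotent, where the computation factors into a rank-$l$ and a rank-$(r_2-l)$ block and gives an explicitly non-zero answer. You would need to supply some substitute for these two steps; the extremal-weight route you sketch does not provide one.
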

\begin{proof}
When $\varepsilon_2=2$, $\fp^+_1$ automatically is of tube type, and for $k\in\BZ_{\ge 0}$, $\bm\in\BZ_{++}^{r_2}$, we have
\begin{gather*}
\tilde{\Phi}_{\bm+\underline{k}_{r_2}}^{\fp^+_1,\fp^-_2}(z_1,w_2)
=c_{k,\bm}\tilde{\Phi}_{\underline{k}_{r_2}}^{\fp^+_1,\fp^-_2}(z_1,w_2)\tilde{\Phi}_\bm^{\fp^+_1,\fp^-_2}(z_1,w_2), \\
F_{\bm+\underline{k}_{r_2},\bl}^2[f](z_1,w_2)
=c_{k,\bm}\tilde{\Phi}_{\underline{k}_{r_2}}^{\fp^+_1,\fp^-_2}(z_1,w_2)F_{\bm,\bl}^2[f](z_1,w_2)
\end{gather*}
for some $c_{k,\bm}\ne 0$. Hence we may assume $k=0$. By the definition of $F_{\bm,\bl}^2[f]$, we have
\begin{gather*}
\tilde{\Phi}_\bm^{\fp^+_1,\fp^-_2}(z_1,w_2)f\big({}^t\hspace{-1pt}w_2^\itinv\big)\det_{\fn^-_2}(w_2)
=\sum_{\substack{\bl\in\{0,1\}^{r_2} \\ |\bl|=r_2-l}}F_{\bm,\underline{1}_{r_2}-\bl}^2[f](z_1,w_2)\det_{\fn^-_2}(w_2),
\\
F_{\bm,\underline{1}_{r_2}-\bl}^2[f](z_1,w_2)\det_{\fn^-_2}(w_2) \in\cP(\fp^+_1)_{z_1}\otimes\cP_{2\bm+\bl}(\fp_2^-)_{w_2}.
\end{gather*}
Now let $\bm\in\BZ_{++}^l$ with $m_l\ge 1$. If we operate $\det_{\fn^-_2}\big(\frac{\partial}{\partial w_2}\big)$, then since only $\bl=(\underline{0}_l,\underline{1}_{r_2-l})$ term remains, by \cite[Proposition~VII.1.6]{FK} we have
\begin{gather*}
\det_{\fn^-_2}\bigg(\frac{\partial}{\partial w_2}\bigg)
\tilde{\Phi}_{(\bm,\underline{0}_{r_2-l})}^{\fp^+_1,\fp^-_2}(z_1,w_2)f\big({}^t\hspace{-1pt}w_2^\itinv\big)\det_{\fn^-_2}(w_2) \\ \qquad
{}=\bigg(\frac{n_2}{r_2}+(2\bm-\underline{1}_l,\underline{0}_{r_2-l})\bigg)_{\underline{1}_{r_2},d_2}
F_{(\bm,\underline{0}_{r_2-l}),(\underline{1}_l,\underline{0}_{r_2-l})}^2[f](z_1,w_2).
\end{gather*}
Now we take mutually orthogonal tripotents $e',e''\in\fn^+_2\subset\fp^+_2$ of rank $l$ and $r_2-l$ respectively such that $e'+e''=e$ holds.
Let $\fp^\pm_2(e')_2=\fp^\pm_2(e'')_0=:\fp^{\pm\prime}_2$, $\fp^\pm_2(e')_0=\fp^\pm_2(e'')_2=:\fp^{\pm\prime\prime}_2$,
and for $w_2\in\fp^\pm_2$, let $w_2'\in\fp^{\pm\prime}_2$, $w_2''\in\fp^{\pm\prime\prime}$ be the orthogonal projections.
Also let $\fp^\pm_1\cap(\fp^\pm(e')_2)=:\fp^{\pm\prime}_1$. First we consider the case $f(x_2)=\det_{\fn^{+\prime}_2}(x_2')$.
Then if $z_1=z_1'\in\fp^{+\prime}_1$, by (\ref{det_proj}) we have
\begin{gather*}
\det_{\fn^-_2}\bigg(\frac{\partial}{\partial w_2}\bigg) \tilde{\Phi}_{(\bm,\underline{0}_{r_2-l})}^{\fp^+_1,\fp^-_2}(z_1',w_2)
\det_{\fn^{+\prime}_2}\big(\big({}^t\hspace{-1pt}w_2^\itinv\big)'\big)\det_{\fn^-_2}(w_2)
\\ \qquad
{}=\det_{\fn^{-\prime}_2}\bigg(\frac{\partial}{\partial w_2'}\bigg)\det_{\fn^{-\prime\prime}_2}\bigg(\frac{\partial}{\partial w_2''}\bigg)
\tilde{\Phi}_\bm^{\fp^{+\prime}_1,\fp^{-\prime}_2}(z_1',w_2')\det_{\fn^{-\prime\prime}_2}(w_2'')
\\ \qquad
{}=\bigg(\frac{n_2'}{l}-1+\bm\bigg)_{\underline{1}_l,d_2} \bigg(\frac{n_2''}{r_2-l}\bigg)_{\underline{1}_{r_2-l},d_2}
\tilde{\Phi}_\bm^{\fp^{+\prime}_1,\fp^{-\prime}_2}(z_1',w_2')\det_{\fn^{-\prime}_2}(w_2')^{-1}\ne 0,
\end{gather*}
where $n_2':=\dim\fp^{+\prime}_2$, $n_2'':=\dim\fp^{+\prime\prime}_2$, and hence $F_{(\bm,\underline{0}_{r_2-l}),(\underline{1}_l,\underline{0}_{r_2-l})}^2[\det_{\fn^{+\prime}_2}]\ne 0$ holds. For general $f(x_2)\ne 0$, $F_{(\bm,\underline{0}_{r_2-l}),(\underline{1}_l,\underline{0}_{r_2-l})}^2[f]\ne 0$
follows from the $K_1^\BC$-equivariance of $F_{\bm,\bl}^2$.
\end{proof}

Especially, by (\ref{inner_Fubini}), for $F(x_2)\!\in\!\cP_{(k+l,\underline{k}_{a-1})}\big(\fp^+_2\big)$ (when $\varepsilon_2=1$) or
for $F(x_2)\!\in\!\cP_{(\underline{k+1}_l,\underline{k}_{a-l})}\big(\fp^+_2\big)$ (when $\varepsilon_2=2$) we have
\begin{align*}
\Vert F(x_2)\Vert_{\lambda,x,\fp^+}^2&=\big\langle\big\langle F(x_2),{\rm e}^{(x_2|\overline{z_2})_{\fp^+}}\big\rangle_{\lambda,x,\fp^+},F(z_2)\big\rangle_{F,z,\fp^+}
\\
&=\bigg\langle\big\langle F(x_2),{\rm e}^{(x_2|\overline{z_2})_{\fp^+}}\big\rangle_{\lambda,x,\fp^+},
F\bigg(\frac{z_2}{\varepsilon_2}\bigg)\bigg\rangle_{F,z_2,\fp^+_2}
\\[2ex]
&=\begin{cases}
\ds \frac{\big(\lambda+k\!-\frac{d}{4}a\!+\frac{d_2}{2}+(l,\underline{0}_{a/2-1})\big)
_{\underline{k}_{a/2},d}}{(\lambda)_{(2k+l,\underline{2k}_{a/2-1},\underline{k}_{a/2}),d}}\Vert F(z_2)\Vert_{F,z_2,\fp^+_2}^2,
\quad\varepsilon_2=1,\ a\colon\text{even},
\\
\ds \frac{\big(\lambda-\frac{d}{2}\big\lfloor\frac{a}{2}\big\rfloor+\max\{2k,k+l\}\big)_{\min\{k,l\}}
\big(\lambda\!+\!k\!-\!\frac{d}{2}\big\lceil\frac{a}{2}\big\rceil\!+\!\frac{d_2}{2}\big)_{\underline{k}_{\lfloor a/2\rfloor},d}}
{(\lambda)_{(2k+l,\underline{2k}_{\lfloor a/2\rfloor-1},\min\{2k,k+l\},\underline{k}_{\lfloor a/2\rfloor}),d}}
 \\ \hspace{56mm}
{}\times \Vert F(z_2)\Vert_{F,z_2,\fp^+_2}^2,\quad \varepsilon_2=1,\ a\colon\text{odd},
 \\
\ds \frac{\big(\lambda-\frac{d}{2}a+\big(\underline{\big\lfloor\frac{k}{2}\big\rfloor+\frac{1}{2}}_l,
\underline{\left\lceil\frac{k}{2}\right\rceil-\frac{1}{2}}_{a-l}\big)\big)
_{(\underline{\lceil k/2\rceil}_l,\underline{\lfloor k/2\rfloor}_{a-l}),d}}
{(\lambda)_{(\underline{k+1}_l,\underline{k}_{a-l},\underline{\lceil k/2\rceil}_l,\underline{\lfloor k/2\rfloor}_{a-l}),d}}
\bigg\Vert F\bigg(\frac{z_2}{\sqrt{2}}\bigg)\bigg\Vert_{F,z_2,\fp^+_2}^2,
\\ \hspace{95mm}
 \varepsilon_2=2.
\end{cases}\!\!\!
\end{align*}
Moreover, if $\fp^+_2$ is of tube type, $a=r_2$, $l=0$ and $F(x_2)=\det_{\fn^+_2}(x_2)^k$,
then since
\[
\big\Vert \det_{\fn^+_2}(z_2)^k\big\Vert^2_{F,z_2,\fp^+_2} =\bigg(\frac{n_2}{r_2}\bigg)_{\underline{k}_{r_2},d_2}
\]
holds (see \cite[Proposition XI.4.1(ii)]{FK}), we get
\[
\big\Vert \det_{\fn^+_2}(x_2)^k\big\Vert_{\lambda,x,\fp^+}^2=
\begin{cases}
\ds \frac{\big(\frac{n_2}{r_2}\big)_{\underline{k}_{r_2},d_2}
\big(\lambda+k-\frac{d}{2}\big\lceil\frac{r_2}{2}\big\rceil+\frac{d_2}{2}\big)_{\underline{k}_{\lfloor r_2/2\rfloor},d}}
{(\lambda)_{\underline{2k}_{\lfloor r_2/2\rfloor},d}
\big(\lambda-\frac{d}{2}\big\lfloor\frac{r_2}{2}\big\rfloor\big)_{\underline{k}_{\lceil r_2/2\rceil},d}}, &\varepsilon_2=1,
\\[3ex]
\ds \frac{\big(\frac{n_2}{r_2}\big)_{\underline{k}_{r_2},d_2}
\big(\lambda+\big\lceil\frac{k}{2}\big\rceil-\frac{d}{2}r_2-\frac{1}{2}\big)_{\underline{\lfloor k/2\rfloor}_{r_2},d}}
{2^{kr_2}(\lambda)_{\underline{k}_{r_2},d}\big(\lambda-\frac{d}{2}r_2\big)_{\underline{\lfloor k/2\rfloor}_{r_2},d}}, &\varepsilon_2=2.
\end{cases}
\]

\subsection{Individual cases}

In this section we observe the computation of inner products (Corollary~\ref{cor_scalar_simple}) under the explicit realization of $\fp^+$.
For {\it Case} 1, $\fp^+=\BC^n=\BC^{n'}\oplus\BC^{n''}$ with $n''\ge 3$, we write $k+l=:k_1$, $k=:k_2$,
$q(x_2)^kf(x_2)=:F(x_2)\in\cP_{(k_1,k_2)}(\BC^{n''})$. Then for $z=(z_1,z_2)\in\BC^{n'}\oplus\BC^{n''}$,
since $\frac{1}{2}(z_1|Q(z_2)^{-1}z_1)_{\fp^+}=-\frac{q(z_1)}{q(z_2)}$ holds,
by Corollary~\ref{cor_scalar_simple}(2) we have
\begin{gather}
\big\langle F(x_2),{\rm e}^{2q(x,\overline{z})}\big\rangle_{\lambda,x}
=\frac{\big(\lambda+k_1-\frac{n'}{2}\big)_{k_2}}{(\lambda)_{k_1+k_2}\big(\lambda\!-\!\frac{n-2}{2}\big)_{k_2}}
{}_2F_1\left(\begin{matrix} -k_2,-k_1-\frac{n''}{2}+1\\-\lambda\!-\!k_1\!-\!k_2\!+\!\frac{n'}{2}\!+\!1\end{matrix};
-\frac{q(z_1)}{q(z_2)}\right)F(z_2). \label{rank2}
\end{gather}
Similarly, when $n''=1$, for $F(x_2)=x_2^k\in\cP_k(\BC)$, by Corollary~\ref{cor_scalar_simple}(3) we have
\begin{align*}
\big\langle x_2^k,{\rm e}^{2q(x,\overline{z})}\big\rangle_{\lambda,x}
&=\frac{\big(\lambda+\big\lceil\frac{k}{2}\big\rceil-\frac{n-1}{2}\big)_{\lfloor k/2\rfloor}}
{(\lambda)_k\big(\lambda-\frac{n-2}{2}\big)_{\lfloor k/2\rfloor}}
{}_2F_1\left(\begin{matrix} -\frac{k}{2},-\frac{k-1}{2}\\-\lambda-k+\frac{n+1}{2}\end{matrix};
-\frac{q(z_1)}{q(z_2)}\right)z_2^k
\\
&=\frac{\big(\lambda+\big\lceil\frac{k}{2}\big\rceil-\frac{n'}{2}\big)_{\lfloor k/2\rfloor}}
{(\lambda)_k\big(\lambda-\frac{n-2}{2}\big)_{\lfloor k/2\rfloor}}
{}_2F_1\left(\begin{matrix} -\left\lceil\frac{k}{2}\right\rceil+\frac{1}{2},-\left\lfloor\frac{k}{2}\right\rfloor\\
-\lambda-k+\frac{n'}{2}+1\end{matrix};-\frac{q(z_1)}{q(z_2)}\right)z_2^k.
\end{align*}
Then by putting $\cP_k(\BC)=:\cP_{(\lceil k/2\rceil,\lfloor k/2\rfloor)}\big(\BC^1\big)$, this result is included in (\ref{rank2}).
Similarly, when $n''=2$, by Theorem \ref{explicit_nonsimple}(1), for $(k_1,k_2)\in(\BZ_{\ge 0})^2$, $F(x_2)\in\cP_{(k_1,k_2)}\big(\BC^2\big)$ we have
\[
\big\langle F(x_2),{\rm e}^{2q(x,\overline{z})}\big\rangle_{\lambda,x}
=\frac{\big(\lambda+k_1-\frac{n-2}{2}\big)_{k_2}} {(\lambda)_{k_1+k_2}\big(\lambda-\frac{n-2}{2}\big)_{k_2}}
{}_2F_1\left(\begin{matrix} -k_1,-k_2\\-\lambda-k_1-k_2+\frac{n'}{2}+1\end{matrix};-\frac{q(z_1)}{q(z_2)}\right)F(z_2),
\]
and this is also included in (\ref{rank2}).

Next, for {\it Case} 2, $\fp^+=\Sym(r,\BC)=\Sym(r',\BC)\oplus M(r',r'';\BC)\oplus \Sym(r'',\BC)$,
we write $z=\left(\begin{smallmatrix}z_{11}&z_{12}\\{}^t\hspace{-1pt}z_{12}&z_{22}\end{smallmatrix}\right)$.
When $r'=r''$, we regard $\fp^+=\Sym(2r',\BC)$ as a Jordan algebra by using the maximal tripotent
$e=\left(\begin{smallmatrix}0&I_{r'}\\I_{r'}&0\end{smallmatrix}\right)$. Then since
$\tilde{\Phi}_\bm^{\fp^+_1,\fp^-_2}\big(z_1,{}^t\hspace{-1pt}z_2^\itinv\big) =\tilde{\Phi}_\bm^{(1)}\big(z_{11}{}^t\hspace{-1pt}z_{12}^{-1}z_{22}z_{12}^{-1}\big)$
holds (see~(\ref{Phi_mid_Case23})), by Corollary~\ref{cor_scalar_simple}(3) we have
\begin{gather*}
\big\langle \det(x_{12})^k,{\rm e}^{\tr(xz^*)}\big\rangle_{\lambda,x}
\\ \qquad
{}=\frac{\big(\lambda+\big\lceil\frac{k}{2}\big\rceil-\frac{r'+1}{2}\big)_{\underline{\lfloor k/2\rfloor}_{r'},1}}
{(\lambda)_{\underline{k}_{r'},1}\big(\lambda-\frac{r'}{2}\big)_{\underline{\lfloor k/2\rfloor}_{r'},1}}
\det(z_{12})^k {}_2F_1^{(1)}\left( \begin{matrix}-\frac{k}{2},-\frac{k-1}{2}\\ -\lambda-k+r'+1 \end{matrix};
z_{11}{}^t\hspace{-1pt}z_{12}^{-1}z_{22}z_{12}^{-1}\right)\!,
\end{gather*}
where ${}_2F_1^{(1)}$ is as in (\ref{2F1^(d)}). This result is generalized to $r'\ne r''$ case. We~ assume $r'\le r''$,
and we write $z_{12}=(z_{12}',z_{12}'')\in M(r',\BC)\oplus M(r',r''-r';\BC)$,
$z_{22}=\left(\begin{smallmatrix} z_{22}'&z_{22}''\\{}^t\hspace{-1pt}z_{22}''&z_{22}''' \end{smallmatrix}\right)
\in\Sym(r',\BC)\oplus M(r',r''-r';\BC)\oplus\Sym(r''-r',\BC)$,
and $z'=\left(\begin{smallmatrix}z_{11}&z_{12}'\\{}^t\hspace{-1pt}z_{12}'&z_{22}'\end{smallmatrix}\right)$.
Then for $w_{12}=(I_{r'},0)\in M(r',r'';\BC)$, by Corollary~\ref{inner_sub} we have
\begin{gather*}
\big\langle \det(x_{12}{}^t\hspace{-1pt}w_{12})^k,{\rm e}^{\tr(xz^*)}\big\rangle_{\lambda,x,\Sym(r,\BC)}
=\big\langle \det(x_{12}')^k,{\rm e}^{\tr(x'(z')^*)}\big\rangle_{\lambda,x',\Sym(2r',\BC)}
\\ \qquad
{}=\frac{\big(\lambda+\big\lceil\frac{k}{2}\big\rceil-\frac{r'+1}{2}\big)_{\underline{\lfloor k/2\rfloor}_{r',1}}}
{(\lambda)_{\underline{k}_{r'},1}\big(\lambda-\frac{r'}{2}\big)_{\underline{\lfloor k/2\rfloor}_{r'},1}}
\det(z_{12}')^k{}_2F_1^{(1)}\left( \begin{matrix}-\frac{k}{2},-\frac{k-1}{2}\\ -\lambda-k+r'+1 \end{matrix};
z_{11}{}^t\hspace{-1pt}(z_{12}')^{-1}z_{22}'(z_{12}')^{-1}\right)
\\ \qquad
{}=\frac{\big(\lambda+\big\lceil\frac{k}{2}\big\rceil-\frac{r'+1}{2}\big)_{\underline{\lfloor k/2\rfloor}_{r'},1}}
{(\lambda)_{\underline{k}_{r'},1} \big(\lambda-\frac{r'}{2}\big)_{\underline{\lfloor k/2\rfloor}_{r'},1}}
\det\big(z_{12}{}^t\hspace{-1pt}w_{12}\big)^k
\\ \qquad\hphantom{=}
{} \ \times{}_2F_1^{(1)}\left(\begin{matrix}-\frac{k}{2},-\frac{k-1}{2}\\ -\lambda-k+r'+1 \end{matrix};
z_{11}\big(w_{12}{}^t\hspace{-1pt}z_{12}\big)^{-1}w_{12}z_{22} {}^t\hspace{-1pt}w_{12}\big(z_{12}{}^t\hspace{-1pt}w_{12}\big)^{-1}\right)\!.
\end{gather*}
Then by the ${\rm GL}(r',\BC)\times {\rm GL}(r'',\BC)$-equivariance, this holds for all $w_{12}\in M(r',r'';\BC)$.
Similarly, the result for {\it Case} 3 is generalized to $s'\ne s''$ case.

Next, for {\it Case} 6, for $z_1\in\fp^+_1=M(2,6;\BC)$, $z_2\in\fp^+_2=\Skew(6,\BC)$, by (\ref{Phi_mid_except}) we have
\[
\tilde{\Phi}_m^{\fp^+_1,\fp^-_2}\big(z_1,{}^t\hspace{-1pt}z_2^\itinv\big)
=\frac{1}{m!}\big({-}\Pf\big(z_1\big({}^t\hspace{-1pt}z_2^{-1}\big)^\#{}^t\hspace{-1pt}z_1\big)\big)^m
=\frac{1}{m!}\bigg(\frac{\Pf\big(z_1z_2{}^t\hspace{-1pt}z_1\big)}{\Pf(z_2)}\bigg)^m\!,
\]
and hence
\begin{gather*}
\big\langle \Pf(x_2)^k,{\rm e}^{(x|\overline{z})_{\fp^+}}\big\rangle_{\lambda,x}
=\frac{(\lambda+k-6)_k}{(\lambda)_{(2k,k,k),8}}\Pf(z_2)^k
{}_2F_1\left(\begin{matrix} -k,-k-2 \\ -\lambda-2k+7 \end{matrix};\frac{\Pf\big(z_1z_2{}^t\hspace{-1pt}z_1\big)}{\Pf(z_2)}\right)
\end{gather*}
holds. We~summarize the results for {\it Cases} 1--6. Here ${}_2F_1^{(d)}$ and ${}_2F_1^{[d]}$ are as in (\ref{2F1^(d)}) and (\ref{2F1^[d]}).

\begin{Theorem}\label{explicit_simple}\quad
\begin{enumerate}\itemsep=0pt
\item[$1.$] Let $\big(\fp^+,\fp^+_1,\fp^+_2\big)=(\BC^n,\BC^{n'},\BC^{n''})$ with $n=n'+n''$. Then for
\[
\begin{cases} (k_1,k_2)\in\BZ_{++}^2, &n''\ge 3,
\\
(k_1,k_2)\in(\BZ_{\ge 0})^2, & n''=2,
\\
(k_1,k_2)=\big(\big\lceil\frac{k}{2}\big\rceil,\big\lfloor\frac{k}{2}\big\rfloor\big), \quad k\in\BZ_{\ge 0}, &n''=1, \end{cases}
\]
for $F(x_2)\in\cP_{(k_1,k_2)}(\fp^+_2)$ and for $z=z_1+z_2\in\fp^+$, we have
\begin{gather*}
\big\langle F(x_2),{\rm e}^{2q(x,\overline{z})}\big\rangle_{\lambda,x} =\frac{\big(\lambda+k_1-\frac{n'}{2}\big)_{k_2}} {(\lambda)_{k_1+k_2}\big(\lambda-\frac{n-2}{2}\big)_{k_2}}
{}_2F_1\left(\begin{matrix} -k_2,-k_1-\frac{n''}{2}+1
\\-\lambda\!-\!k_1\!-\!k_2\!+\!\frac{n'}{2}\!+\!1\end{matrix};
-\frac{q(z_1)}{q(z_2)}\right)F(z_2).
\end{gather*}
\item[$2.$] Let $\big(\fp^+,\fp^+_{11},\fp^\pm_{12},\fp^+_{22}\big)=({\rm Sym}(r,\BC),\Sym(r',\BC),M(r',r'';\BC),\Sym(r'',\BC))$ with $r=r'+r''$, $r'\le r''$.
Then for $k\in\BZ_{\ge 0}$ and for $z=\left(\begin{smallmatrix}z_{11}&z_{12}\\{}^t\hspace{-1pt}z_{12}&z_{22}\end{smallmatrix}\right)\in\fp^+$,
$w_{12}\in\fp^-_{12}$, we have
\begin{gather*}
\big\langle \det\big(x_{12}{}^t\hspace{-1pt}w_{12}\big)^k,{\rm e}^{\tr(xz^*)}\big\rangle_{\lambda,x}
=\frac{\big(\lambda+\big\lceil\frac{k}{2}\big\rceil-\frac{r'+1}{2}\big)_{\underline{\lfloor k/2\rfloor}_{r'},1}}
{(\lambda)_{\underline{k}_{r'},1}\big(\lambda-\frac{r'}{2}\big)_{\underline{\lfloor k/2\rfloor}_{r'},1}}
\det\big(z_{12}{}^t\hspace{-1pt}w_{12}\big)^k
\\ \qquad
{}\times{}_2F_1^{(1)}\left(\begin{matrix}-\frac{k}{2},-\frac{k-1}{2}\\ -\lambda-k+r'+1 \end{matrix};
z_{11}\big(w_{12}{}^t\hspace{-1pt}z_{12}\big)^{-1}w_{12}z_{22}{}^t\hspace{-1pt}w_{12}\big(z_{12}{}^t\hspace{-1pt}w_{12}\big)^{-1}\right)\!.
\end{gather*}
\item[$3.$] Let $\big(\fp^+,\fp^+_{11},\fp^\pm_{12},\fp^+_{22}\big)=({\rm Skew}(s,\BC),\Skew(s',\BC),M(s',s'';\BC),\Skew(s'',\BC))$ with $s=s'+s''$, $s'\le s''$.
Then for $k\in\BZ_{\ge 0}$ and for $z=\left(\begin{smallmatrix}z_{11}&z_{12}\\-{}^t\hspace{-1pt}z_{12}&z_{22}\end{smallmatrix}\right)\in\fp^+$, $w_{12}\in\fp^-_{12}$,
we have
\begin{gather*}
\big\langle \det\big(x_{12}{}^t\hspace{-1pt}w_{12}\big)^k,{\rm e}^{\frac{1}{2}\tr(xz^*)}\big\rangle_{\lambda,x}
=\frac{\big(\lambda+k-2\big\lceil\frac{s'}{2}\big\rceil+1\big)_{\underline{k}_{\lfloor s'/2\rfloor},4}}
{(\lambda)_{\underline{2k}_{\lfloor s'/2\rfloor},4}
\big(\lambda-2\big\lfloor \frac{s'}{2}\big\rfloor\big)_{\underline{k}_{\lceil s'/2\rceil},4}}\det\big(z_{12}{}^t\hspace{-1pt}w_{12}\big)^k
\\ \qquad
{}\times{}_2F_1^{[4]}\left( \begin{matrix} -k,-k-1\\ -\lambda-2k+2s'-2 \end{matrix};
-z_{11}\big(w_{12}{}^t\hspace{-1pt}z_{12}\big)^{-1}w_{12}z_{22}{}^t \hspace{-1pt}w_{12}\big(z_{12}{}^t\hspace{-1pt}w_{12}\big)^{-1}\right)\!.
\end{gather*}
\item[$4.$] Let $\big(\fp^+,\fp^+_1,\fp^+_2\big)=(M(r,\BC),\Skew(r,\BC),\Sym(r,\BC)\big)$. Then for $k\in\BZ_{\ge 0}$ and for $z=z_1+z_2\in\fp^+$, we have
\begin{align*}
\big\langle \det(x_2)^k,{\rm e}^{\tr(xz^*)}\big\rangle_{\lambda,x}
={}&\frac{\big(\lambda+k-\big\lceil\frac{r}{2}\big\rceil+\frac{1}{2}\big)_{\underline{k}_{\lfloor r/2\rfloor},2}}
{(\lambda)_{\underline{2k}_{\lfloor r/2\rfloor},2}
\big(\lambda-\big\lfloor\frac{r}{2}\big\rfloor\big)_{\underline{k}_{\lceil r/2\rceil},2}}
\\
&\times\det(z_2)^k
{}_2F_1^{[2]}\left( \begin{matrix} -k,-k-\frac{1}{2} \\ -\lambda-2k+r-\frac{1}{2} \end{matrix};z_1z_2^{-1}z_1z_2^{-1}\right)\!.
\end{align*}
\item[$5.$] Let $\big(\fp^+,\fp^+_1,\fp^+_2\big)=(M(2s,\BC),\Sym(2s,\BC),\Skew(2s,\BC))$. Then for $k\in\BZ_{\ge 0}$ and for $z=z_1+z_2\in\fp^+$, we have
\begin{align*}
\big\langle \Pf(x_2)^k,{\rm e}^{\tr(xz^*)}\big\rangle_{\lambda,x}
={}&\frac{\big(\lambda+\big\lceil\frac{k}{2}\big\rceil-s-\frac{1}{2}\big)_{\underline{\lfloor k/2\rfloor}_s,2}}
{(\lambda)_{\underline{k}_{s},2}(\lambda-s)_{\underline{\lfloor k/2\rfloor}_{s},2}}
\\
&\times\Pf(z_2)^k{}_2F_1^{[2]}\left(
\begin{matrix}-\frac{k}{2},-\frac{k-1}{2}\\ -\lambda-k+2s+\frac{1}{2} \end{matrix};z_1z_2^{-1}z_1z_2^{-1}\right)\!.
\end{align*}
\item[$6.$] Let $\big(\fp^+,\fp^+_1,\fp^+_2\big)=\big({\rm Herm}(3,\BO)^\BC,M(2,6;\BC),\Skew(6,\BC)\big)$. Then for $k\in\BZ_{\ge 0}$ and for $z=z_1+z_2\in\fp^+$, we have
\begin{align*}
\big\langle \Pf(x_2)^k,{\rm e}^{(x|\overline{z})_{\fp^+}}\big\rangle_{\lambda,x}
=\frac{(\lambda+k-6)_k}{(\lambda)_{2k}(\lambda\!-\!4)_k(\lambda\!-\!8)_k}\!\Pf(z_2)^k
{}_2F_1\!\left( \begin{matrix} -k,-k-2 \\ -\lambda\!-\!2k\!+\!7 \end{matrix};\frac{\Pf\big(z_1z_2{}^t\hspace{-1pt}z_1\big)}{\Pf(z_2)}\right)\!.
\end{align*}
\end{enumerate}
\end{Theorem}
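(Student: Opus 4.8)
The plan is to obtain Theorem~\ref{explicit_simple} as a term-by-term specialization of Corollary~\ref{cor_scalar_simple}, into which I substitute the explicit realizations of $\tilde\Phi_\bm^{\fp^+_1,\fp^-_2}$ recorded in (\ref{Phi_mid_Case23}) and (\ref{Phi_mid_except}) together with the numerical data $(r,r_2,d,d_2,\varepsilon_2)$ tabulated at the start of Section~\ref{section_simple}. First I would sort the six cases. Case~1 with $n''\ge 3$ has $\varepsilon_2=1$ and $r=2$, so with general $F(x_2)\in\cP_{(k_1,k_2)}(\fp^+_2)$ (i.e.\ general $l=k_1-k_2$) it is read off from Corollary~\ref{cor_scalar_simple}(2); Cases~3, 4, 6 have $\varepsilon_2=1$ and $f=\det_{\fn^+_2}(x_2)^k$ ($l=0$), read from part~(1); Cases~2 and~5 have $\varepsilon_2=2$ and $l=0$, read from part~(3).

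For Cases~1 and~6 the decisive simplification is $\lfloor r/2\rfloor=1$, so the multivariate ${}_2F_1^{\fp^+_1,\fp^-_2}$ of Corollary~\ref{cor_scalar_simple} collapses to a classical Gauss ${}_2F_1$. Here I would insert the scalar evaluations $\tfrac12(z_1|Q(z_2)^{-1}z_1)_{\fp^+}=-q(z_1)/q(z_2)$ for $\fp^+=\BC^n$ and $\tilde\Phi_m^{\fp^+_1,\fp^-_2}(z_1,{}^t z_2^\itinv)=\tfrac1{m!}(\Pf(z_1 z_2\,{}^t z_1)/\Pf(z_2))^m$ for $\fp^+=\Herm(3,\BO)^\BC$, and then expand the tabulated Pochhammer products, e.g.\ $(\lambda)_{(2k,k,k),8}=(\lambda)_{2k}(\lambda-4)_k(\lambda-8)_k$ in Case~6, so that the constants match. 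For Cases~2--5 I would instead keep the matrix argument: substituting ${}^t z_2^\itinv=z_2^{-1}$ in the symmetric models (and the appropriate skew inverse) into (\ref{Phi_mid_Case23}) turns $\tilde\Phi_\bm^{\fp^+_1,\fp^-_2}(z_1,{}^t z_2^\itinv)$ into $\tilde\Phi_\bm^{(1)}$, $\tilde\Phi_\bm^{[4]}$ or $\tilde\Phi_\bm^{[2]}$ of a concrete product such as $z_{11}\,{}^{t}z_{12}^{-1}z_{22}z_{12}^{-1}$ or $z_1 z_2^{-1}z_1 z_2^{-1}$, which assembles the ${}_2F_1^{(1)}$, ${}_2F_1^{[4]}$, ${}_2F_1^{[2]}$ of parts~2--5.

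Because Corollary~\ref{cor_scalar_simple} is stated only when $\fp^+$ and $\fp^+_2$ are of tube type, the remaining task is to remove this hypothesis where the theorem claims more, namely the subcases $n''=1,2$ of Case~1 and the unequal-rank situations $r'\le r''$ (Case~2) and $s'\le s''$ (Case~3). For these I would follow the reduction already used in the surrounding text: choose a maximal (or rank-$a$) tripotent $e\in\fp^+_2$, project $z$ onto $\fp^+(e)_2$, use Corollary~\ref{inner_sub} to replace the inner product on $\fp^+$ by the one on the tube-type subsystem $\fp^{+\prime}=\fp^+(e)_2$, and then spread the resulting identity over all $w_{12}\in\fp^-_{12}$ by the ${\rm GL}\times{\rm GL}$- (resp.\ $K_1^\BC$-) equivariance of $f\mapsto\langle f,{\rm e}^{(\cdot|\overline z)_{\fp^+}}\rangle_\lambda$. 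For $n''=1$ ($\varepsilon_2=2$, via part~(3)) and $n''=2$ (via Theorem~\ref{explicit_nonsimple}(1)) I would additionally verify, using the reindexing $\cP_k(\BC)=\cP_{(\lceil k/2\rceil,\lfloor k/2\rfloor)}(\BC)$ and the symmetry of ${}_2F_1$ in its two upper parameters, that the separately obtained formulas coincide with the uniform expression of part~1.

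The step I expect to be most delicate is this last bookkeeping: matching the explicit Pochhammer constants of Corollary~\ref{cor_scalar_simple} against the case-specific products after specializing $d$, $d_2$, $r_2$, and confirming that the upper ${}_2F_1$-parameters coming from $-k-\tfrac{d_2}2$ (resp.\ $-\tfrac{k}2,-\tfrac{k-1}2$) reduce to the half-integers displayed in each model. This is Pochhammer arithmetic rather than a conceptual difficulty, but it is where sign and index errors enter most easily, so I would cross-check every case against the already-computed instances (\ref{rank2}), Case~2 with $r'=r''$, and Case~6.
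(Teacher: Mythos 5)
Your proposal is correct and follows essentially the same route as the paper: Section~\ref{section_simple} derives these formulas exactly by specializing Corollary~\ref{cor_scalar_simple} via (\ref{Phi_mid_Case23}) and (\ref{Phi_mid_except}), absorbing the $n''=1,2$ subcases through the reindexing $\cP_k(\BC)=\cP_{(\lceil k/2\rceil,\lfloor k/2\rfloor)}(\BC)$ and Theorem~\ref{explicit_nonsimple}(1), and extending to $r'\ne r''$, $s'\ne s''$ by Corollary~\ref{inner_sub} plus $K_1$-equivariance. Your case sorting by $\varepsilon_2$ and the Pochhammer bookkeeping (e.g.\ $(\lambda)_{(2k,k,k),8}=(\lambda)_{2k}(\lambda-4)_k(\lambda-8)_k$ in Case~6) match the paper's computation.
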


\section{Poles of inner products and submodules}\label{section_submod}

In this section we apply the results on the poles of $\big\langle F(x_2),{\rm e}^{(x|\overline{z})_{\fp^+}}\big\rangle_\lambda$ to determining
which $\big(\fg,\widetilde{K}\big)$-submodule of $({\rm d}\tau_\lambda,\cO_\lambda(D)_{\widetilde{K}})$ contains
the $\big(\fg_1,\widetilde{K}_1\big)$-submodule generated by $F(x_2)$.
For $\lambda>p-1$, $f=\sum_\bm f_\bm$, $g=\sum_\bm g_\bm\in\cH_\lambda(D)_{\widetilde{K}}=\cP(\fp^+)=\bigoplus_{\bm\in\BZ_{++}^r}\cP_\bm(\fp^+)$,
by \cite{FK0}, \cite[Part~III, Corollary V.3.9]{FKKLR} the inner product is given by
\begin{gather*}
\langle f,g\rangle_\lambda=\sum_{\bm\in\BZ_{++}^r}\frac{1}{(\lambda)_{\bm,d}}\langle f_\bm,g_\bm\rangle_F
=\sum_{\bm\in\BZ_{++}^r}\frac{1}{\prod_{j=1}^r\left(\lambda-\frac{d}{2}(j-1)\right)_{m_j}}\langle f_\bm,g_\bm\rangle_F.
\end{gather*}
This is meromorphically continued for all $\lambda\in\BC$, and is positive definite on $\cP(\fp^+)$ for $\lambda>\frac{d}{2}(r-1)$.
If $\lambda$ is a pole, then $\cP(\fp^+)$ becomes reducible
as a $\big(\fg,\widetilde{K}\big)$-module, and for $i=1,2,\dots,r$, $\lambda\in\frac{d}{2}(i-1)-\BZ_{\ge 0}$,
\[ M_i(\lambda)=M_i^\fg(\lambda):=\bigoplus_{\substack{\bm\in\BZ_{++}^{r}\\ m_i\le\frac{d}{2}(i-1)-\lambda}}\cP_\bm(\fp^+)\subset
\cP(\fp^+)=\cO_\lambda(D)_{\widetilde{K}} \]
is a $\big(\fg,\widetilde{K}\big)$-submodule of $\cO_\lambda(D)_{\widetilde{K}}$.
Especially, the submodule $M_i\big(\frac{d}{2}(i-1)\big)$ $(1\le i\le r)$ and the quotient $\cO_\lambda(D)_{\widetilde{K}}/M_r(\lambda)$
are infinitesimally unitary (see \cite{FK0}).
For $f(x)\in\cP(\fp^+)$, $\bm\in\BZ_{++}^r$, if
$(\lambda)_{\bm,d}\langle f,g\rangle_\lambda$ is holomorphic for all $\lambda\in\BC$ for any $g\in\cP(\fp^+)$,
or equivalently by (\ref{inner_Fubini}), if
\begin{equation}\label{poles}
(\lambda)_{\bm,d}\big\langle f(x),{\rm e}^{(x|\overline{z})_{\fp^+}}\big\rangle_{\lambda,x}
\end{equation}
is holomorphic for all $\lambda\in\BC$, then for $1\le i\le r$,
\[
f(x)\in M_i(\lambda) \qquad \text{holds if} \quad \lambda\in \frac{d}{2}(i-1)-m_i-\BZ_{\ge 0},
\]
and moreover, for any Lie subalgebra $\fg_1\subset\fg$,
\[
{\rm d}\tau_\lambda(\cU(\fg_1))f(x)\subset M_i(\lambda) \qquad \text{holds if} \quad \lambda\in \frac{d}{2}(i-1)-m_i-\BZ_{\ge 0}.
\]
In addition, if (\ref{poles}) gives a non-zero polynomial in $z\in\fp^+$ for all $\lambda\in\BC$, then the converse also holds.

For example, let $\fp^+=\BC^n$ with $n\ge 3$. Then we have $d=n-2$, and as an $\big(\mathfrak{so}(2,n),\widetilde{\rm SO}(2)\times {\rm SO}(n)\big)$-module,
$\cO_\lambda(D)_{\widetilde{K}}$ is reducible if and only if $\lambda\in -\BZ_{\ge 0}\cup \big(\frac{n-2}{2}-\BZ_{\ge 0}\big)$.
For these $\lambda$ we have the submodules
\begin{alignat*}{3}
& \cO_\lambda(D)_{\widetilde{K}}\supset M_2(\lambda)\supset M_1(\lambda)\supset\{0\}, \qquad&& n\colon\text{even},\quad\lambda\in\BZ,\quad\lambda\le 0,&
\\
& \cO_\lambda(D)_{\widetilde{K}}\supset M_2(\lambda)\supset\{0\},\qquad &&
n\colon\text{even},\quad\lambda\in\BZ,\quad1\le\lambda\le{\textstyle \frac{n-2}{2}},&
\\
& \cO_\lambda(D)_{\widetilde{K}}\supset M_1(\lambda)\supset\{0\}, \qquad && n\colon\text{odd},\quad\lambda\in\BZ,\quad\lambda\le 0,&
\\
& \cO_\lambda(D)_{\widetilde{K}}\supset M_2(\lambda)\supset\{0\},\qquad&&
n\colon\text{odd},\quad\lambda\in\BZ+{\textstyle \frac{1}{2}},\quad\lambda\le{\textstyle \frac{n-2}{2}}.&
\end{alignat*}
We~consider $\BC^n=\BC^{n'}\oplus\BC^{n''}$. If $n''\ge 3$, then for $(k_1,k_2)\in\BZ_{++}^2$, $F(x_2)\in\cP_{(k_1,k_2)}\big(\BC^{n''}\big)$,
$F(x_2)\ne 0$, by Corollary~\ref{cor_pole_simple}(1) or Theorem \ref{explicit_simple}(1),
\[
(\lambda)_{(k_1+k_2,k_2),n-2}\big\langle F(x_2),{\rm e}^{2q(x,\overline{z})}\big\rangle_{\lambda,x} \]
is non-zero holomorphic for all $\lambda\in\BC$. If $n''=2$, then for $(k_1,k_2)\in(\BZ_{\ge 0})^2$, $F(x_2)\in\cP_{(k_1,k_2)}\big(\BC^2\big)$,
$F(x_2)\ne 0$, by Corollary~\ref{cor_pole_nonsimple} or Theorem \ref{explicit_nonsimple}(1),
\[
(\lambda)_{(k_1+k_2,\min\{k_1,k_2\}),n-2}\big\langle F(x_2),{\rm e}^{2q(x,\overline{z})}\big\rangle_{\lambda,x}
\]
is non-zero holomorphic for all $\lambda\in\BC$. If $n''=1$, then for $k\in\BZ_{\ge 0}$, $F(x_2)\in\cP_k(\BC)$,
$F(x_2)\ne 0$, by Corollary~\ref{cor_pole_simple}(2) or Theorem \ref{explicit_simple}(1),
\[
(\lambda)_{(k,\lfloor k/2\rfloor),n-2}\big\langle F(x_2),{\rm e}^{2q(x,\overline{z})}\big\rangle_{\lambda,x}
\]
is non-zero holomorphic for all $\lambda\in\BC$. Similarly, we consider
\[
\BC^{2s} = \big(\big(1,\sqrt{-1}\big)\BC\big)^s\oplus\big(\big(1,-\sqrt{-1}\big)\BC\big)^s
\simeq M(1,s;\BC)\oplus M(1,s;\BC).
\]
Then for $k\in\BZ_{\ge 0}$, $F(x_2)\in\cP_k(M(1,s;\BC))$, $F(x_2)\ne 0$, by~(\ref{easycase}),
\[
(\lambda)_k\big\langle F(x_2),{\rm e}^{2q(x,\overline{z})}\big\rangle_{\lambda,x}
\]
is non-zero holomorphic for all $\lambda\in\BC$. Therefore the following holds.

\begin{Theorem}\qquad\samepage
\begin{enumerate}\itemsep=0pt
\item[$1.$] Let $n''\ge 3$, $(k_1,k_2)\in\BZ_{++}^2$. Then for $i=1,2$,
\[
{\rm d}\tau_\lambda(\cU(\mathfrak{so}(2,n')\oplus\mathfrak{so}(n'')))\cP_{(k_1,k_2)}\big(\BC^{n''}\big)\subset M_i^{\mathfrak{so}(2,n)}(\lambda)
\]
holds if and only if
\[
\lambda\in \begin{cases} -(k_1+k_2)-\BZ_{\ge 0}, & i=1, \\ \frac{n-2}{2}-k_2-\BZ_{\ge 0}, & i=2. \end{cases}
\]
\item[$2.$] Let $n''=2$, $(k_1,k_2)\in(\BZ_{\ge 0})^2$. Then for $i=1,2$,
\[
{\rm d}\tau_\lambda(\cU(\mathfrak{so}(2,n')\oplus\mathfrak{so}(2)))\cP_{(k_1,k_2)}\big(\BC^2\big)\subset M_i^{\mathfrak{so}(2,n)}(\lambda)
\]
holds if and only if
\[
\lambda\in \begin{cases}
-(k_1+k_2)-\BZ_{\ge 0}, &i=1, \\ \frac{n-2}{2}-\min\{k_1,k_2\}-\BZ_{\ge 0}, & i=2.
\end{cases}
\]
\item[$3.$] Let $n''=1$, $k\in\BZ_{\ge 0}$. Then for $i=1,2$,
\[
{\rm d}\tau_\lambda(\cU(\mathfrak{so}(2,n')))\cP_k(\BC)\subset M_i^{\mathfrak{so}(2,n)}(\lambda)
\]
holds if and only if
\[
\lambda\in \begin{cases} -k-\BZ_{\ge 0}, & i=1, \\ \frac{n-2}{2}-\left\lfloor\frac{k}{2}\right\rfloor-\BZ_{\ge 0}, & i=2.
\end{cases}
\]
\item[$4.$] Suppose $n=2s$ and let $k\in\BZ_{\ge 0}$. Then for $i=1,2$,
\[
{\rm d}\tau_\lambda(\cU(\mathfrak{u}(1,s)))\cP_k(M(1,s;\BC))\subset M_i^{\mathfrak{so}(2,n)}(\lambda)
\]
holds if and only if
\[
\lambda\in \begin{cases} -k-\BZ_{\ge 0}, & i=1, \\ \frac{n-2}{2}-\BZ_{\ge 0}, & i=2. \end{cases}
\]
\end{enumerate}
\end{Theorem}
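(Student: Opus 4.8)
The plan is to read this off as a direct application of the general submodule criterion established in the paragraphs just before the statement, together with the pole computations recorded there. Throughout, $\fp^+=\BC^n$ has rank $r=2$ and structure constant $d=n-2$, so the only proper submodules available are $M_1(\lambda)$, defined for $\lambda\in-\BZ_{\ge 0}$, and $M_2(\lambda)$, defined for $\lambda\in\frac{n-2}{2}-\BZ_{\ge 0}$. The principle I would invoke is this: for $F\in\cP(\fp^+)$ and $\bm=(m_1,m_2)\in\BZ_{++}^2$, if
\[
(\lambda)_{\bm,d}\big\langle F(x_2),{\rm e}^{(x|\overline{z})_{\fp^+}}\big\rangle_{\lambda,x}
\]
is holomorphic on all of $\BC$, then ${\rm d}\tau_\lambda(\cU(\fg_1))F(x)\subset M_i(\lambda)$ whenever $\lambda\in\frac{d}{2}(i-1)-m_i-\BZ_{\ge 0}$, and if moreover this expression is a non-zero polynomial in $z$ for every $\lambda\in\BC$, then the converse also holds. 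Since $1\in\cU(\fg_1)$ forces $F\in{\rm d}\tau_\lambda(\cU(\fg_1))F$, the inclusion ${\rm d}\tau_\lambda(\cU(\fg_1))F\subset M_i(\lambda)$ is for these purposes equivalent to $F\in M_i(\lambda)$, so the nonvanishing will upgrade each ``if'' to an ``if and only if''.

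The heart of the argument is then to identify, in each of the four cases, the minimal $\bm$ whose Pochhammer factor clears all poles; these are exactly the factors recorded immediately before the theorem. For part~1 ($n''\ge 3$, so $\varepsilon_2=1$) I would quote Corollary~\ref{cor_pole_simple}(1) (or Theorem~\ref{explicit_simple}(1)): writing $(k_1,k_2)=(k+l,k)$, the clearing factor is $(\lambda)_{(k_1+k_2,k_2),d}$, giving $\bm=(k_1+k_2,k_2)$. For part~2 ($n''=2$), $\fp^+_2=\BC^2$ is non-simple, so Corollary~\ref{cor_pole_nonsimple} (Case~1) applies and yields $\bm=(k_1+k_2,\min\{k_1,k_2\})$. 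For part~3 ($n''=1$, so $\varepsilon_2=2$), Corollary~\ref{cor_pole_simple}(2) gives $\bm=(k,\lfloor k/2\rfloor)$. For part~4 ($n=2s$), the triple $\big(\BC^{2s},M(1,s;\BC),M(1,s;\BC)\big)$ is one of the cases with $\fp^+_2(e)_2=\fp^+(e)_2$, so formula~(\ref{easycase}) applies with $\fp^+_2$ of rank~$1$; embedding $\cP_k(M(1,s;\BC))$ into $\cP_{(k,0)}(\BC^{2s})$ identifies the clearing factor as $(\lambda)_k=(\lambda)_{(k,0),d}$, so $\bm=(k,0)$. In each case the quoted expression is a non-zero polynomial for all $\lambda$, so both directions of the criterion are available.

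Finally I would substitute $m_1,m_2$ into the ranges $\frac{d}{2}(i-1)-m_i-\BZ_{\ge 0}$, which for $i=1$ reads $-m_1-\BZ_{\ge 0}$ and for $i=2$ reads $\frac{n-2}{2}-m_2-\BZ_{\ge 0}$. Reading off $(m_1,m_2)$ from the previous paragraph reproduces the four displayed systems of ranges verbatim, with $\fg_1$ taken as the relevant symmetric subalgebra in each case, namely $\mathfrak{so}(2,n')\oplus\mathfrak{so}(n'')$, $\mathfrak{so}(2,n')\oplus\mathfrak{so}(2)$, $\mathfrak{so}(2,n')$, and $\mathfrak{u}(1,s)$ respectively. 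I expect no analytic obstacle here, since the pole structure was already settled by the earlier corollaries; the only real care is bookkeeping, in particular matching each parametrization $(k_1,k_2)$ and each value of $\varepsilon_2$ to the correct corollary, and handling part~4, where $\fp^+_2$ has rank strictly smaller than $r$, so that $F$ embeds with a trailing zero and one must invoke (\ref{easycase}) rather than the generic pole corollaries in order to obtain $m_2=0$.
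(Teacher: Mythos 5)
Your proposal is correct and follows essentially the same route as the paper: the theorem is obtained by combining the general submodule criterion stated just before it with the pole computations of Corollary~\ref{cor_pole_simple}(1),(2), Corollary~\ref{cor_pole_nonsimple}, and formula~(\ref{easycase}), with exactly the clearing exponents $\bm=(k_1+k_2,k_2)$, $(k_1+k_2,\min\{k_1,k_2\})$, $(k,\lfloor k/2\rfloor)$, and $(k,0)$ that you identify. Your additional remark that the inclusion of $\cU(\fg_1)$-orbits is equivalent to membership of $F$ itself (via $1\in\cU(\fg_1)$ and $M_i(\lambda)$ being a $\fg$-submodule) is a harmless clarification of what the paper leaves implicit.
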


Next we consider $\fp^+=\Sym(r,\BC)$. Then we have $d=1$, and for $r\ge 2$, as an $\big(\mathfrak{sp}(r,\BR), \widetilde{{\rm U}}(r)\big)$-module,
$\cO_\lambda(D)_{\widetilde{K}}$ is reducible if and only if $\lambda\in\frac{1}{2}\BZ$, $\lambda\le \frac{1}{2}(r-1)$.
For these $\lambda$ we have the submodules
\begin{alignat*}{3}
& \cO_\lambda(D)_{\widetilde{K}}\supset M_{2\left\lceil\frac{r}{2}\right\rceil-1}(\lambda)
\supset M_{2\left\lceil\frac{r}{2}\right\rceil-3}(\lambda)
\supset\cdots\supset M_{\max\{2\lambda,0\}+1}(\lambda)\supset\{0\},\qquad&& \lambda\in\BZ,&
\\
&\cO_\lambda(D)_{\widetilde{K}}\supset M_{2\left\lfloor\frac{r}{2}\right\rfloor}(\lambda)
\supset M_{2\left\lfloor\frac{r}{2}\right\rfloor-2}(\lambda)
\supset\cdots\supset M_{\max\{2\lambda,1\}+1}(\lambda)\supset\{0\},\qquad && \lambda\in\BZ+{\textstyle \frac{1}{2}}.&
\end{alignat*}
We~consider
$\Sym(r,\BC)=\Sym(r',\BC)\oplus M(r',r'';\BC)\oplus\Sym(r'',\BC)$.
Then for $1\le a\le r'$, $k\in\BZ_{\ge 0}$, $\bl\in\BZ_{++}^{r''}$ and for
$F(x_{11},x_{22})\in\cP_{\underline{k}_a}({\rm Sym}(r',\BC))\boxtimes\cP_{\bl}({\rm Sym}(r'',\BC))$, $F(x_{11},x_{22})\ne 0$,
by Corollary~\ref{cor_pole_nonsimple},
\[
(\lambda)_{(\underline{k}_{a}+\bl',\min\{\underline{k}_{r''}+\bl'',\bl\}),1}
\big\langle F(x_{11},x_{22}),{\rm e}^{\tr(xz^*)}\big\rangle_{\lambda,x}
\]
is holomorphic for all $\lambda\in\BC$, where $\bl'\in\BZ_{++}^a$, $\bl''\in\BZ_{++}^{r''}$ are defined as in (\ref{lprime}),
and this is non-zero if $l_{a+1}=0$ or $k=0$ or $\bl=\underline{l}_{a'}$ ($l\in\BZ_{\ge 0}$, $1\le a'\le r''$).
Similarly, if $r'\le r''$, then for $0\le l< a\le r'$, $k\in\BZ_{\ge 0}$ and for
$F(x_{12})\in\cP_{(\underline{k+1}_l,\underline{k}_{a-l})}(M(r',r'';\BC))$, $F(x_{12})\ne 0$, by Corollary~\ref{cor_pole_simple}(2),
\[
(\lambda)_{(\underline{k+1}_l,\underline{k}_{a-l},\underline{\lceil k/2\rceil}_l,\underline{\lfloor k/2\rfloor}_{a-l}),1}
\big\langle F(x_{12}),{\rm e}^{\tr(xz^*)}\big\rangle_{\lambda,x}
\]
is non-zero holomorphic for all $\lambda\in\BC$. Therefore the following holds.

\begin{Theorem}\quad
\begin{enumerate}\itemsep=0pt
\item[$1.$] Let $1\le a\le r'$, $k\in\BZ_{\ge 0}$, $\bl\in\BZ_{++}^{r''}$. Then for $1\le i\le r$,
\[
{\rm d}\tau_\lambda(\cU(\mathfrak{u}(r',r'')))\cP_{\underline{k}_a}({\rm Sym}(r',\BC))\boxtimes\cP_{\bl}({\rm Sym}(r'',\BC))
\subset M_i^{\mathfrak{sp}(r,\BR)}(\lambda)
\]
holds if $a\ge r''$ and
\[
\lambda\in \begin{cases}
\frac{1}{2}(i-1)-(k+l_i)-\BZ_{\ge 0}, & 1\le i\le r'',
\\
\frac{1}{2}(i-1)-k-\BZ_{\ge 0}, & r''+1\le i\le a,
\\
\frac{1}{2}(i-1)-\min\{k,l_{i-a}\}-\BZ_{\ge 0}, & a+1\le i\le a+r'',
\\
\frac{1}{2}(i-1)-\BZ_{\ge 0}, & a+r''+1\le i\le r, \end{cases}
\]
or if $a<r''$ and
\[
\lambda\in \begin{cases}
\frac{1}{2}(i-1)-(k+l_i)-\BZ_{\ge 0}, &1\le i\le a,
\\
\frac{1}{2}(i-1)-\min\{k+l_i,l_{i-a}\}-\BZ_{\ge 0}, & a+1\le i\le r'',
\\
\frac{1}{2}(i-1)-\min\{k,l_{i-a}\}-\BZ_{\ge 0}, & r''+1\le i\le a+r'',
\\
\frac{1}{2}(i-1)-\BZ_{\ge 0}, &a+r''+1\le i\le r. \end{cases}
\]
If $l_{a+1}=0$ or $k=0$ or $\bl=\underline{l}_{a'}$ $(l\in\BZ_{\ge 0}$, $1\le a'\le r'')$, then the converse also holds.
\item[$2.$] Suppose $r'\le r''$ and let $0\le l< a\le r'$, $k\in\BZ_{\ge 0}$. Then for $1\le i\le r$,
\[
{\rm d}\tau_\lambda(\cU(\mathfrak{sp}(r',\BR)\oplus\mathfrak{sp}(r'',\BR))) \cP_{(\underline{k+1}_l,\underline{k}_{a-l})}(M(r',r'';\BC))
\subset M_i^{\mathfrak{sp}(r,\BR)}(\lambda)
\]
holds if and only if
\[
\lambda\in \begin{cases}
\frac{1}{2}(i-1)-(k+1)-\BZ_{\ge 0}, &1\le i\le l,
\\
\frac{1}{2}(i-1)-k-\BZ_{\ge 0}, &l+1\le i\le a,
\\
\frac{1}{2}(i-1)-\left\lceil \frac{k}{2}\right\rceil -\BZ_{\ge 0}, &a+1\le i\le a+l, \\
\frac{1}{2}(i-1)-\left\lfloor \frac{k}{2}\right\rfloor -\BZ_{\ge 0}, &a+l+1\le i\le 2a, \\
\frac{1}{2}(i-1)-\BZ_{\ge 0}, &2a+1\le i\le r. \end{cases}
\]
\end{enumerate}
\end{Theorem}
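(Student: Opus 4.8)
The plan is to derive both statements from the general criterion recorded at the start of this section: if $(\lambda)_{\bm,d}\langle F(x),{\rm e}^{(x|\overline{z})_{\fp^+}}\rangle_{\lambda,x}$ is entire in $\lambda$ for some $\bm\in\BZ_{++}^r$, then ${\rm d}\tau_\lambda(\cU(\fg_1))F\subset M_i^\fg(\lambda)$ whenever $\lambda\in\frac{d}{2}(i-1)-m_i-\BZ_{\ge 0}$ (see the discussion around (\ref{poles})), with the converse valid as soon as that same expression is a non-zero polynomial in $z$ for every $\lambda$. Here $\fp^+=\Sym(r,\BC)$, so $d=1$ and $\frac{d}{2}(i-1)=\frac{1}{2}(i-1)$, which matches the shifts in the statement. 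Since the relevant inner products are already controlled, for arbitrary members of the given $\widetilde{K}_1$-types, by Corollaries \ref{cor_pole_nonsimple} and \ref{cor_pole_simple}, the whole content of the proof is to read off the multi-index $\bm$ that clears all poles and to match its $r$ coordinates against the piecewise list of exponents in the theorem.

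For Part 1 I would invoke Corollary \ref{cor_pole_nonsimple}(2), (3) applied to $F$ of $\widetilde{K}_1$-type $\cP_{\underline{k}_a}(\fp^+_{11})\boxtimes\cP_\bl(\fp^+_{22})$, which shows that $(\lambda)_{\bm,1}\langle F,{\rm e}^{(\cdot|\overline{z})_{\fp^+}}\rangle_\lambda$ is entire for $\bm=(\underline{k}_a+\bl',\min\{\underline{k}_{r''}+\bl'',\bl\})$, with $\bl',\bl''$ as in (\ref{lprime}). The key step is to expand this $\bm$ into its $r$ coordinates, treating the two regimes $a\ge r''$ and $a<r''$ separately since the definition of $\bl',\bl''$ branches there. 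A direct expansion gives, for $a\ge r''$, the coordinates $k+l_i$ on $1\le i\le r''$, then $k$ on $r''<i\le a$, then $\min\{k,l_{i-a}\}$ on $a<i\le a+r''$, and $0$ afterwards; for $a<r''$ it gives $k+l_i$, $\min\{k+l_i,l_{i-a}\}$, $\min\{k,l_{i-a}\}$ and $0$ on the successive ranges $1\le i\le a$, $a<i\le r''$, $r''<i\le a+r''$, $a+r''<i\le r$. One must also confirm that each sequence is weakly decreasing, so that $\bm\in\BZ_{++}^r$ and the criterion applies. These coordinates are exactly the $m_i$ tabulated in Part 1, so the criterion with $\fg_1=\mathfrak{u}(r',r'')$ gives the containment for every $i$; the converse follows from the non-vanishing clause of Corollary \ref{cor_pole_nonsimple}(3), available precisely when $l_{a+1}=0$, $k=0$, or $\bl=\underline{l}_{a'}$, which is the hypothesis under which the theorem asserts it.

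For Part 2 the argument is parallel but cleaner, because $\fp^+_2=M(r',r'';\BC)$ is simple with $\varepsilon_2=2$ and the relevant pole statement is unconditional. Corollary \ref{cor_pole_simple}(2) shows that $(\lambda)_{\bm,1}\langle F,{\rm e}^{(\cdot|\overline{z})_{\fp^+}}\rangle_\lambda$ is entire and, for $F\ne 0$, a non-zero polynomial in $z$ for every $\lambda$, with $\bm=(\underline{k+1}_l,\underline{k}_{a-l},\underline{\lceil k/2\rceil}_l,\underline{\lfloor k/2\rfloor}_{a-l})$. Reading off its coordinates gives $k+1$ on $1\le i\le l$, $k$ on $l<i\le a$, $\lceil k/2\rceil$ on $a<i\le a+l$, $\lfloor k/2\rfloor$ on $a+l<i\le 2a$, and $0$ on $2a<i\le r$ (using $2a\le r'+r''=r$). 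This is the list in Part 2, so the criterion with $\fg_1=\mathfrak{sp}(r',\BR)\oplus\mathfrak{sp}(r'',\BR)$, together with the unconditional non-vanishing, yields the stated ``if and only if'' for each $i$.

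The only genuine work, and the main place an error could enter, is the bookkeeping in the second paragraph: unwinding the compressed notation $(\underline{k}_a+\bl',\min\{\underline{k}_{r''}+\bl'',\bl\})$ into honest $r$-tuples in both regimes $a\ge r''$ and $a<r''$, checking monotonicity, and confirming that the index ranges align with those printed in the theorem. All the analysis—the meromorphic continuation, the identification of the poles with the submodules $M_i^{\mathfrak{sp}(r,\BR)}(\lambda)$, and the non-vanishing—is already supplied by the cited corollaries and by the general criterion, so no new estimate or representation-theoretic input is required.
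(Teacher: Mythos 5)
Your proposal is correct and follows essentially the same route as the paper: the paper likewise derives both parts by applying Corollary \ref{cor_pole_nonsimple} (parts (2),(3)) and Corollary \ref{cor_pole_simple}(2) to produce the multi-index $\bm$ clearing all poles, and then invokes the general criterion around (\ref{poles}) with $d=1$; the only work is the coordinate bookkeeping you describe, and your expansions of $(\underline{k}_a+\bl',\min\{\underline{k}_{r''}+\bl'',\bl\})$ in both regimes and of $(\underline{k+1}_l,\underline{k}_{a-l},\underline{\lceil k/2\rceil}_l,\underline{\lfloor k/2\rfloor}_{a-l})$ match the tabulated exponents.
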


Next we consider $\fp^+=M(q,s;\BC)$. Then we have $d=2$, and as an $(\mathfrak{su}(q,s),{\rm S}({\rm U}(q)\times {\rm U}(s))^\sim)$-module,
$\cO_\lambda(D)_{\widetilde{K}}$ is reducible if and only if $\lambda\in\BZ$, $\lambda\le \min\{q,s\}-1$.
For these $\lambda$ we have the submodules
\[ \cO_\lambda(D)_{\widetilde{K}}\supset M_{\min\{q,s\}}(\lambda)\supset M_{\min\{q,s\}-1}(\lambda)
\supset\cdots\supset M_{\max\{\lambda,0\}+1}(\lambda)\supset\{0\}. \]
We~consider $M(q,s;\BC)=M(q',s';\BC)\oplus M(q',s'';\BC)\oplus M(q'',s';\BC)\oplus M(q'',s'';\BC)$ with $q''s''\ne 0$.
If $q's'\ne 0$, then for $1\le a\le \min\{q',s'\}$, $k\in\BZ_{\ge 0}$, $\bl\in\BZ_{++}^{\min\{q'',s''\}}$ and for
$F(x_{11},x_{22})\in\cP_{\underline{k}_a}(M(q',s';\BC))\boxtimes\cP_{\bl}(M(q'',s'';\BC))$, $F(x_{11},x_{22})\ne 0$,
by Corollary~\ref{cor_pole_nonsimple},
\[
(\lambda)_{(\underline{k}_{a}+\bl',\min\{\underline{k}_{\min\{q'',s''\}}+\bl'',\bl\}),2}
\big\langle F(x_{11},x_{22}),{\rm e}^{\tr(xz^*)}\big\rangle_{\lambda,x}
\]
is holomorphic for all $\lambda\in\BC$, where $\bl'\in\BZ_{++}^a$, $\bl''\in\BZ_{++}^{\min\{q'',s''\}}$ are defined as in (\ref{lprime}),
and this is non-zero if $l_{a+1}=0$ or $k=0$ or $\bl=\underline{l}_{a'}$ ($l\in\BZ_{\ge 0}$, $1\le a'\le \min\{q'',s''\}$).
If $q's'=0$, then the poles are determined by (\ref{easycase}), and this case is contained in the above formula with $a=0$.
Similarly, when $q=s=r$, we consider $M(r,\BC)=\Skew(r,\BC)\oplus\Sym(r,\BC)$.
Then for $k,l\in\BZ_{\ge 0}$, $2\le a\le r$ and for $F(x_2)\in\cP_{(k+l,\underline{k}_{a-1})}({\rm Sym}(r,\BC))$, $F(x_2)\ne 0$,
by Corollary~\ref{cor_pole_simple}(1),
\begin{alignat*}{3}
& (\lambda)_{(2k+l,\underline{2k}_{a/2-1},\underline{k}_{a/2}),2}
\big\langle F(x_2),{\rm e}^{\tr(xz^*)}\big\rangle_{\lambda,x} \qquad && \text{if}\quad a\colon\text{even},&
\\
& (\lambda)_{(2k+l,\underline{2k}_{\lfloor a/2\rfloor-1},\min\{2k,k+l\},\underline{k}_{\lfloor a/2\rfloor}),2}
\big\langle F(x_2),{\rm e}^{\tr(xz^*)}\big\rangle_{\lambda,x} \qquad && \text{if}\quad a\colon\text{odd}&
\end{alignat*}
are holomorphic for all $\lambda\in\BC$, and these are non-zero if $a$ is even or $k=0$ or $l=0$.
Similarly, for $0\le l< a\le \lfloor r/2\rfloor$, $k\in\BZ_{\ge 0}$ and for
$F(x_2)\in\cP_{(\underline{k+1}_l,\underline{k}_{a-l})}({\rm Skew}(r,\BC))$, $F(x_2)\ne 0$, by Corollary~\ref{cor_pole_simple}(2),
\[
(\lambda)_{(\underline{k+1}_l,\underline{k}_{a-l},\underline{\lceil k/2\rceil}_l,\underline{\lfloor k/2\rfloor}_{a-l}),2}
\big\langle F(x_2),{\rm e}^{\tr(xz^*)}\big\rangle_{\lambda,x}
\]
is non-zero holomorphic for all $\lambda\in\BC$. Therefore the following holds.

\begin{Theorem}\quad
\begin{enumerate}\itemsep=0pt
\item[$1.$] Let $0\le a\le \min\{q',s'\}$, $k\in\BZ_{\ge 0}$, $\bl\in\BZ_{++}^{\min\{q'',s''\}}$. Then for $1\le i\le \min\{q,s\}$,
\[
{\rm d}\tau_\lambda(\cU(\mathfrak{s}(\mathfrak{u}(q',s'')\oplus\mathfrak{u}(q'',s'))))
\cP_{\underline{k}_a}(M(q',s';\BC))\boxtimes\cP_{\bl}(M(q'',s'';\BC))\subset M_i^{\mathfrak{su}(q,s)}(\lambda)
\]
holds if $a\ge \min\{q'',s''\}$ and
\[
\lambda\in \begin{cases}
i-1-(k+l_i)-\BZ_{\ge 0}, & 1\le i\le \min\{q'',s''\},
\\
i-1-k-\BZ_{\ge 0}, &\min\{q'',s''\}+1\le i\le a,
\\
i-1-\min\{k,l_{i-a}\}-\BZ_{\ge 0}, &a+1\le i\le a+\min\{q'',s''\},
\\
i-1-\BZ_{\ge 0}, & a+\min\{q'',s''\}+1\le i\le \min\{q,s\}, \end{cases}
\]
or if $a<\min\{q'',s''\}$ and
\[
\lambda\in \begin{cases}
i-1-(k+l_i)-\BZ_{\ge 0}, & 1\le i\le a, \\
i-1-\min\{k+l_i,l_{i-a}\}-\BZ_{\ge 0}, & a+1\le i\le \min\{q'',s''\}, \\
i-1-\min\{k,l_{i-a}\}-\BZ_{\ge 0}, & \min\{q'',s''\}+1\le i\le a+\min\{q'',s''\}, \\
i-1-\BZ_{\ge 0}, & a+\min\{q'',s''\}+1\le i\le \min\{q,s\}. \end{cases}
\]
If $l_{a+1}=0$ or $k=0$ or $\bl=\underline{l}_{a'}$ $(l\in\BZ_{\ge 0}$, $1\le a'\le \min\{q'',s''\})$,
then the converse also holds.
\item[$2.$] Suppose $q=s=r$ and let $k,l\in\BZ_{\ge 0}$, $2\le a\le r$. Then for $1\le i\le r$,
\[
{\rm d}\tau_\lambda(\cU(\mathfrak{so}^*(2r)))\cP_{(k+l,\underline{k}_{a-1})}({\rm Sym}(r,\BC))
\subset M_i^{\mathfrak{su}(r,r)}(\lambda)
\]
holds if
\[
\lambda\in \begin{cases}
i-1-(2k+l)-\BZ_{\ge 0}, &i=1, \\
i-1-2k-\BZ_{\ge 0}, &2\le i\le \lfloor a/2\rfloor, \\
i-1-\min\{2k,k+l\}-\BZ_{\ge 0}, &a\colon\text{odd},\ i=\lceil a/2\rceil, \\
i-1-k-\BZ_{\ge 0}, &\lceil a/2\rceil+1\le i\le a, \\
i-1-\BZ_{\ge 0}, &a+1\le i\le r. \end{cases}
\]
If $a$ is even or $k=0$ or $l=0$, then the converse also holds.
\item[$3.$] Again suppose $q=s=r$ and let $0\le l< a\le \lfloor r/2\rfloor$, $k\in\BZ_{\ge 0}$. Then for $1\le i\le r$,
\[
{\rm d}\tau_\lambda(\cU(\mathfrak{sp}(r,\BR)))\cP_{(\underline{k+1}_l,\underline{k}_{a-l})}({\rm Skew}(r,\BC))
\subset M_i^{\mathfrak{su}(r,r)}(\lambda)
\]
holds if and only if
\[
\lambda\in \begin{cases}
i-1-(k+1)-\BZ_{\ge 0}, & 1\le i\le l, \\
i-1-k-\BZ_{\ge 0}, & l+1\le i\le a, \\
i-1-\left\lceil \frac{k}{2}\right\rceil -\BZ_{\ge 0}, & a+1\le i\le a+l, \\
i-1-\left\lfloor \frac{k}{2}\right\rfloor -\BZ_{\ge 0}, & a+l+1\le i\le 2a, \\
i-1-\BZ_{\ge 0}, &2a+1\le i\le r. \end{cases}
\]
\end{enumerate}
\end{Theorem}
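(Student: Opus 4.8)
The plan is to derive all three statements from the general membership criterion established at the opening of Section~\ref{section_submod}, feeding it with the precise pole data already computed in Sections~\ref{section_nonsimple} and~\ref{section_simple}. Recall that for $\fp^+=M(q,s;\BC)$ one has $d=2$, so $\frac{d}{2}(i-1)=i-1$ and $M_i(\lambda)=M_i^{\mathfrak{su}(q,s)}(\lambda)$ is defined for $\lambda\in(i-1)-\BZ_{\ge 0}$. The criterion states that if, for a partition $\bm\in\BZ_{++}^r$, the product $(\lambda)_{\bm,d}\big\langle F(x_2),{\rm e}^{(x|\overline{z})_{\fp^+}}\big\rangle_{\lambda,x}$ is entire in $\lambda$, then ${\rm d}\tau_\lambda(\cU(\fg_1))F(x_2)\subset M_i(\lambda)$ whenever $\lambda\in(i-1)-m_i-\BZ_{\ge 0}$, and that the converse holds as soon as this product is moreover a nonzero polynomial in $z$ for every $\lambda\in\BC$. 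Thus for each part it suffices to identify the ``clearing'' partition $\bm$ supplied by the relevant corollary, read off its $i$-th entry $m_i$ as a function of $i$, and check the non-vanishing wherever a converse is asserted.

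For Part~1, the embedding is Case~3 of Section~\ref{section_nonsimple}, with $\fp^+_2=M(q',s';\BC)\oplus M(q'',s'';\BC)$ and $\fg_1=\mathfrak{s}(\mathfrak{u}(q',s'')\oplus\mathfrak{u}(q'',s'))$; here $r=\min\{q,s\}$, $r'=\min\{q',s'\}$, $r''=\min\{q'',s''\}$. I would invoke Corollary~\ref{cor_pole_nonsimple}(2), which shows that $(\lambda)_{(\underline{k}_a+\bl',\min\{\underline{k}_{r''}+\bl'',\bl\}),d}\big\langle F,{\rm e}^{(x|\overline{z})_{\fp^+}}\big\rangle_{\lambda,x}$ is entire, with $\bl',\bl''$ as in~(\ref{lprime}); the degenerate case $a=0$ reduces to $\cP_\bl(\fp^+_{22})\subset\cP_\bl(\fp^+)$ and Corollary~\ref{FKCor}. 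Setting $\bm=(\underline{k}_a+\bl',\min\{\underline{k}_{r''}+\bl'',\bl\})$ and expanding its entries componentwise---after first checking $\bm\in\BZ_{++}^r$---reproduces exactly the two case-splits ($a\ge r''$ and $a<r''$): $m_i$ equals $k+l_i$, $k$, $\min\{k,l_{i-a}\}$, $\min\{k+l_i,l_{i-a}\}$, or $0$, according to the position of $i$. The converse under the hypotheses $l_{a+1}=0$, $k=0$, or $\bl=\underline{l}_{a'}$ is precisely the non-vanishing clause of Corollary~\ref{cor_pole_nonsimple}(3), which rests on Lemma~\ref{lem_nonzero_nonsimple}.

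For Parts~2 and~3 the embeddings are Cases~4 and~5 of Section~\ref{section_simple} with $q=s=r$, namely $\fp^+_2=\Sym(r,\BC)$, $\fg_1=\mathfrak{so}^*(2r)$ (so $\varepsilon_2=1$) and $\fp^+_2=\Skew(r,\BC)$, $\fg_1=\mathfrak{sp}(r,\BR)$ (so $\varepsilon_2=2$), respectively. For Part~2 I would apply Corollary~\ref{cor_pole_simple}(1): the clearing partition is $(2k+l,\underline{2k}_{a/2-1},\underline{k}_{a/2})$ for even $a$ and $(2k+l,\underline{2k}_{\lfloor a/2\rfloor-1},\min\{2k,k+l\},\underline{k}_{\lfloor a/2\rfloor})$ for odd $a$, whose $i$-th entry yields precisely $2k+l$, $2k$, $\min\{2k,k+l\}$ (only at $i=\lceil a/2\rceil$ when $a$ is odd), $k$, or $0$, matching the theorem, with the conditional converse coming from the non-vanishing part of the same corollary. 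For Part~3 I would apply Corollary~\ref{cor_pole_simple}(2), whose clearing partition $(\underline{k+1}_l,\underline{k}_{a-l},\underline{\lceil k/2\rceil}_l,\underline{\lfloor k/2\rfloor}_{a-l})$ decodes directly into the five ranges with entries $k+1$, $k$, $\lceil k/2\rceil$, $\lfloor k/2\rfloor$, $0$; since that corollary supplies non-vanishing unconditionally (via Lemma~\ref{lem_nonzero_simple}), Part~3 becomes an ``if and only if'' with no side conditions.

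The routine work is the componentwise decoding just described together with the verification, in each case, that the clearing partition lies in $\BZ_{++}^r$. The genuine point, and where I expect the real content to sit, is the \emph{minimality} of the clearing Pochhammer factor: the ranges are sharp---and the converse valid---only because the corollaries produce not merely \emph{some} entire multiple of the inner product but the minimal one, equivalently that the coefficient of the leading $K_1^\BC$-type component of $\big\langle F,{\rm e}^{(x|\overline{z})_{\fp^+}}\big\rangle_{\lambda,x}$ stays nonzero for all $\lambda\in\BC$. This is exactly what Lemmas~\ref{lem_nonzero_nonsimple} and~\ref{lem_nonzero_simple} guarantee, and it is what explains why the converse in Parts~1 and~2 needs the extra hypotheses (where that leading coefficient can otherwise degenerate) while Part~3 needs none.
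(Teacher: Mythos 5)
Your proposal is correct and follows essentially the same route as the paper: the theorem is obtained by feeding the clearing partitions of Corollaries~\ref{cor_pole_nonsimple} and~\ref{cor_pole_simple} (with the non-vanishing clauses resting on Lemmas~\ref{lem_nonzero_nonsimple} and~\ref{lem_nonzero_simple}) into the general membership criterion stated at the start of Section~\ref{section_submod}, and then reading off the $i$-th entry of $\bm$ in each range, exactly as in the paragraph preceding the theorem. Your identification of where the real content lies --- the sharpness of the clearing Pochhammer factor, hence the need for the extra hypotheses in the converses of Parts~1 and~2 --- matches the paper's logic.
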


Next we consider $\fp^+=\Skew(s,\BC)$. Then we have $d=4$, and as an $\big(\mathfrak{so}^*(2s),\widetilde{{\rm U}}(s)\big)$-module,
$\cO_\lambda(D)_{\widetilde{K}}$ is reducible if and only if $\lambda\in\BZ$, $\lambda\le 2(\lfloor s/2\rfloor-1)$.
For these $\lambda$ we have the submodules
\[
\cO_\lambda(D)_{\widetilde{K}}\supset M_{\left\lfloor\frac{s}{2}\right\rfloor}(\lambda)
\supset M_{\left\lfloor\frac{s}{2}\right\rfloor-1}(\lambda)
\supset\cdots\supset M_{\max\left\{\left\lceil\frac{\lambda}{2}\right\rceil,0\right\}+1}(\lambda)\supset\{0\}.
\]
We~consider $\Skew(s,\BC)=\Skew(s',\BC)\oplus M(s',s'';\BC)\oplus\Skew(s'',\BC)$ with $s''\ge 2$.
If $s'\ge 2$, then for $1\le a\le \lfloor s'/2\rfloor$, $k\in\BZ_{\ge 0}$, $\bl\in\BZ_{++}^{\lfloor s''/2\rfloor}$ and for
$F(x_{11},x_{22})\in\cP_{\underline{k}_a}({\rm Skew}(s',\BC))\boxtimes\cP_{\bl}({\rm Skew}(s'',\BC))$, $F(x_{11},x_{22})\ne 0$,
by Corollary~\ref{cor_pole_nonsimple},
\[
(\lambda)_{(\underline{k}_{a}+\bl',\min\{\underline{k}_{\lfloor s''/2\rfloor}+\bl'',\bl\}),4}
\big\langle F(x_{11},x_{22}),{\rm e}^{\frac{1}{2}\tr(xz^*)}\big\rangle_{\lambda,x}
\]
is holomorphic for all $\lambda\in\BC$, where $\bl'\in\BZ_{++}^a$, $\bl''\in\BZ_{++}^{\lfloor s''/2\rfloor}$ are defined as in (\ref{lprime}),
and this is non-zero if $l_{a+1}=0$ or $k=0$ or $\bl=\underline{l}_{a'}$ ($l\in\BZ_{\ge 0}$, $1\le a'\le \lfloor s''/2\rfloor$).
Similarly, if $2\le s'\le s''$, then for $k,l\in\BZ_{\ge 0}$, $2\le a\le s'$ and for
$F(x_{12})\in\cP_{(k+l,\underline{k}_{a-1})}(M(s',s'';\BC))$, $F(x_{12})\ne 0$, by Corollary~\ref{cor_pole_simple}(1),
\begin{alignat*}{3}
& (\lambda)_{(\underline{2k}_{a/2},\underline{k}_{a/2})+(l,0,\dots,0),4}
\big\langle F(x_{12}),{\rm e}^{\frac{1}{2}\tr(xz^*)}\big\rangle_{\lambda,x},\qquad&&
\text{if }a\colon\text{even},&
\\
& (\lambda)_{(\underline{2k}_{\lfloor a/2\rfloor},\min\{2k,k+l\},\underline{k}_{\lfloor a/2\rfloor})+(l,0,\dots,0),4}
\big\langle F(x_{12}),{\rm e}^{\frac{1}{2}\tr(xz^*)}\big\rangle_{\lambda,x}, \qquad&&
\text{if }a\colon\text{odd}&
\end{alignat*}
are holomorphic for all $\lambda\in\BC$, and these are non-zero if $a$ is even or $k=0$ or $l=0$.
If $s'=1$, then the poles are determined by (\ref{easycase}),
and these cases are contained in the above formulas with $a=0$ for the former case, and with $a=1$, $k=0$ for the latter case.
Therefore the following holds.
\begin{Theorem}\quad
\begin{enumerate}\itemsep=0pt
\item[$1.$] Let $0\le a\le \lfloor s'/2\rfloor$, $k\in\BZ_{\ge 0}$, $\bl\in\BZ_{++}^{\lfloor s''/2\rfloor}$. Then for $1\le i\le \lfloor s/2\rfloor$,
\[
{\rm d}\tau_\lambda(\cU(\mathfrak{u}(s',s'')))\cP_{\underline{k}_a} ({\rm Skew}(s',\BC))\boxtimes\cP_{\bl}({\rm Skew}(s'',\BC))
\subset M_i^{\mathfrak{so}^*(2s)}(\lambda)
\]
holds if $a\ge \lfloor s''/2\rfloor$ and
\[
\lambda\in \begin{cases}
2(i-1)-(k+l_i)-\BZ_{\ge 0}, & 1\le i\le \lfloor s''/2\rfloor, \\
2(i-1)-k-\BZ_{\ge 0}, & \lfloor s''/2\rfloor+1\le i\le a, \\
2(i-1)-\min\{k,l_{i-a}\}-\BZ_{\ge 0}, & a+1\le i\le a+\lfloor s''/2\rfloor, \\
2(i-1)-\BZ_{\ge 0}, & a+\lfloor s''/2\rfloor+1\le i\le \lfloor s/2\rfloor, \end{cases}
\]
or if $a<\lfloor s''/2\rfloor$ and
\[
\lambda\in \begin{cases}
2(i-1)-(k+l_i)-\BZ_{\ge 0}, & 1\le i\le a, \\
2(i-1)-\min\{k+l_i,l_{i-a}\}-\BZ_{\ge 0}, & a+1\le i\le \lfloor s''/2\rfloor, \\
2(i-1)-\min\{k,l_{i-a}\}-\BZ_{\ge 0}, & \lfloor s''/2\rfloor+1\le i\le a+\lfloor s''/2\rfloor, \\
2(i-1)-\BZ_{\ge 0}, & a+\lfloor s''/2\rfloor+1\le i\le \lfloor s/2\rfloor. \end{cases}
\]
If $l_{a+1}=0$ or $k=0$ or $\bl=\underline{l}_{a'}$ $(l\in\BZ_{\ge 0}$, $1\le a'\le \lfloor s''/2\rfloor)$,
then the converse also holds.
\item[$2.$] Suppose $s'\le s''$ and let $k,l\in\BZ_{\ge 0}$, $1\le a\le s'$. If $a=1$ we put $k=0$. Then for $1\le i\le \lfloor s/2\rfloor$,
\[
{\rm d}\tau_\lambda(\cU(\mathfrak{so}^*(2s')\oplus\mathfrak{so}^*(2s'')))\cP_{(k+l,\underline{k}_{a-1})} (M(s',s'';\BC))
\subset M_i^{\mathfrak{so}^*(2s)}(\lambda)
\]
holds if
\[
\lambda\in \begin{cases}
2(i-1)-(2k+l)-\BZ_{\ge 0}, & i=1, \\
2(i-1)-2k-\BZ_{\ge 0}, & 2\le i\le \lfloor a/2\rfloor, \\
2(i-1)-\min\{2k,k+l\}-\BZ_{\ge 0}, & a\ge 3\colon\text{odd},\ i=\lceil a/2\rceil, \\
2(i-1)-k-\BZ_{\ge 0}, & \lceil a/2\rceil+1\le i\le a, \\
2(i-1)-\BZ_{\ge 0}, & a+1\le i\le \lfloor s/2\rfloor. \end{cases}
\]
If $a$ is even or $k=0$ or $l=0$, then the converse also holds.
\end{enumerate}
\end{Theorem}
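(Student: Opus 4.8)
The plan is to apply verbatim the dictionary between the poles of $\big\langle F(x_2),{\rm e}^{(x|\overline{z})_{\fp^+}}\big\rangle_\lambda$ and the submodules $M_i^{\fg}(\lambda)$ set up at the beginning of this section, now specialized to $\fp^+=\Skew(s,\BC)$, where $d=4$ and $r=\lfloor s/2\rfloor$. The criterion is: if, for a vector $\bs\in\BZ_{++}^r$, the product \eqref{poles} with $\bm=\bs$ extends holomorphically to all $\lambda\in\BC$, then for each $1\le i\le r$ one has $F\in M_i^{\fg}(\lambda)$ — and hence ${\rm d}\tau_\lambda(\cU(\fg_1))F\subset M_i^{\fg}(\lambda)$ for every subalgebra $\fg_1\subset\fg$, since $M_i^{\fg}(\lambda)$ is a $\big(\fg,\widetilde K\big)$-submodule — as soon as $\lambda\in\frac{d}{2}(i-1)-s_i-\BZ_{\ge 0}=2(i-1)-s_i-\BZ_{\ge 0}$; and if in addition the product is a non-zero polynomial in $z$ for all $\lambda$, the converse holds. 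Thus the whole proof reduces to reading off, entry by entry, the components $s_i$ of the Pochhammer vectors already isolated in Corollaries \ref{cor_pole_nonsimple} and \ref{cor_pole_simple}, and to identifying $\fg_1$ as the subalgebra attached to the relevant symmetric pair ($\mathfrak{u}(s',s'')$ in Part 1, $\mathfrak{so}^*(2s')\oplus\mathfrak{so}^*(2s'')$ in Part 2). By $K_1$-equivariance the conclusion is uniform over the whole $K_1$-isotypic component, so it suffices to test a single $F$.

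For Part 1 I would invoke Corollary \ref{cor_pole_nonsimple}, specialized just before the statement: for $F(x_{11},x_{22})\in\cP_{\underline{k}_a}(\Skew(s',\BC))\boxtimes\cP_{\bl}(\Skew(s'',\BC))$ the product $(\lambda)_{(\underline{k}_a+\bl',\min\{\underline{k}_{\lfloor s''/2\rfloor}+\bl'',\bl\}),4}\big\langle F,{\rm e}^{\frac12\tr(xz^*)}\big\rangle_\lambda$ is entire, with $\bl',\bl''$ as in \eqref{lprime} (taking $r''=\lfloor s''/2\rfloor$). Writing out this vector $\bs$ in the two regimes $a\ge\lfloor s''/2\rfloor$ and $a<\lfloor s''/2\rfloor$ gives precisely the branch values of $s_i$: the first block $\underline{k}_a+\bl'$ contributes $k+l_i$ or $k$, the second block $\min\{\underline{k}_{\lfloor s''/2\rfloor}+\bl'',\bl\}$ contributes the minima $\min\{k,l_{i-a}\}$ or $\min\{k+l_i,l_{i-a}\}$, and the tail contributes $0$. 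Substituting into $\lambda\in 2(i-1)-s_i-\BZ_{\ge 0}$ yields exactly the two case lists. The converse is supplied by the non-vanishing clause of Corollary \ref{cor_pole_nonsimple}, valid under $l_{a+1}=0$ or $k=0$ or $\bl=\underline{l}_{a'}$; and the degenerate $s'=1$ case, where $\lfloor s'/2\rfloor=0$ forces $a=0$, is covered by the scalar formula \eqref{easycase}.

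Part 2 is identical in spirit, now using Corollary \ref{cor_pole_simple}(1) for the pair $\big(\Skew(s,\BC),\Skew(s',\BC)\oplus\Skew(s'',\BC),M(s',s'';\BC)\big)$, which has $\varepsilon_2=1$, $r_2=\min\{s',s''\}=s'$ and $d_2=2$. There the normalizing vector is $\underline{2k}_{a/2}+(l,\underline{0}_{r-1})$ for even $a$ and $(\underline{2k}_{\lfloor a/2\rfloor},\min\{2k,k+l\},\underline{k}_{\lfloor a/2\rfloor})+(l,\underline{0}_{r-1})$ for odd $a$; reading its $i$-th entry reproduces the five-line condition, the middle value $\min\{2k,k+l\}$ occurring only at the odd position $i=\lceil a/2\rceil$. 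The converse follows from the non-vanishing statement, which holds when $a$ is even, $k=0$, or $l=0$; the boundary instances $s'=1$ and $a=1$ (where one sets $k=0$) reduce to \eqref{easycase} via Proposition \ref{prop_simple_prelim}(1).

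The only genuinely non-mechanical point — where I would spend the most care — is the exact alignment of the two regimes in Part 1 (respectively the even/odd split in Part 2) with the piecewise definition of $M_i^{\fg}(\lambda)$: one must check that the index ranges of the blocks of $\bs$ match the index ranges in the conclusion, and that no $s_i$ is accidentally negative, which cannot occur here since every entry is a sum or minimum of non-negative integers. Given the corollaries there is no new analytic input, and $\fg_1$-stability is automatic from the $\fg$-stability of $M_i^{\fg}(\lambda)$; the argument is therefore a bookkeeping verification.
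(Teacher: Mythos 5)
Your proposal is correct and follows essentially the same route as the paper: the theorem is obtained by specializing the general pole--submodule dictionary of Section~\ref{section_submod} to $\fp^+=\Skew(s,\BC)$ ($d=4$, $r=\lfloor s/2\rfloor$), reading off the entries of the Pochhammer vectors supplied by Corollaries~\ref{cor_pole_nonsimple} and~\ref{cor_pole_simple}(1), using the non-vanishing clauses for the converse, and absorbing the degenerate cases $s'=1$ (resp.\ $a=1$, $k=0$) via~(\ref{easycase}). The only blemish is a transcription slip in Part~2: for even $a$ the normalizing vector is $(\underline{2k}_{a/2},\underline{k}_{a/2})+(l,0,\dots,0)$, not $\underline{2k}_{a/2}+(l,\underline{0}_{r-1})$, but your subsequent reading of the five-line condition shows you intended the correct vector.
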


Next we consider $\fp^+=\Herm(3,\BO)^\BC$. Then $d=8$, and as an $\big(\mathfrak{e}_{7(-25)},\widetilde{{\rm U}}(1)\times E_6\big)$-module,
$\cO_\lambda(D)_{\widetilde{K}}$ is reducible if and only if $\lambda\in\BZ$, $\lambda\le 8$.
For these $\lambda$ we have the submodules
\begin{alignat*}{3}
& \cO_\lambda(D)_{\widetilde{K}}\supset M_3(\lambda)\supset M_2(\lambda)\supset M_1(\lambda)\supset\{0\},\qquad && \lambda\in\BZ,\quad \lambda\le 0,&
\\
& \cO_\lambda(D)_{\widetilde{K}}\supset M_3(\lambda)\supset M_2(\lambda)\supset\{0\}, \qquad && \lambda\in\BZ,\quad 1\le\lambda\le 4,&
\\
& \cO_\lambda(D)_{\widetilde{K}}\supset M_3(\lambda)\supset\{0\}, \qquad &&
\lambda\in\BZ,\quad 5\le\lambda\le 8.&
\end{alignat*}
We~consider
\begin{gather*}
\Herm(3,\BO)^\BC=\BC\oplus M(1,2;\BO)^\BC\oplus \Herm(2,\BO)^\BC
=M(2,6;\BC)\oplus\Skew(6,\BC).
\end{gather*}
Then for $k\in\BZ_{\ge 0}$, $\bl\in\BZ_{++}^2$ and for $F(x_{11},x_{22})\in\cP_k(\BC)\boxtimes\cP_{(l_1,l_2)}\big({\rm Herm}(2,\BO)^\BC\big)$,
$F(x_{11},x_{22})\ne 0$, by Corollary~\ref{cor_pole_nonsimple},
\[
(\lambda)_{(k+l_1,\min\{k+l_2,l_1\},l_2),8}
\big\langle F(x_{11},x_{22}),{\rm e}^{(x|\overline{z})_{\fp^+}}\big\rangle_{\lambda,x}
\]
is holomorphic for all $\lambda\in\BC$, and this is non-zero if $k=0$ or $\bl=\underline{l}_{a'}$ ($l\in\BZ_{\ge 0}$, $a'=1,2$).
Similarly, for $k,l\in\BZ_{\ge 0}$ and for $F(x_2)\in\cP_{(k+l,k,k)}({\rm Skew}(6,\BC))$, $F(x_2)\ne 0$,
by Corollary~\ref{cor_pole_simple}(1),
\[
(\lambda)_{(2k+l,\min\{2k,k+l\},k),8}\big\langle F(x_2),{\rm e}^{(x|\overline{z})_{\fp^+}}\big\rangle_{\lambda,x}
\]
is holomorphic for all $\lambda\in\BC$, and this is non-zero if $k=0$ or $l=0$.
Similarly, for $\fp^+_2=\Skew(6,\BC)$, $M(2,6;\BC)$ or $M(1,2;\BO)^\BC$,
for $(k_1,k_2)\in\BZ_{++}^2$ and for $F(x_2)\in\cP_{(k_1,k_2)}(\fp_2^+)$, $F(x_2)\ne 0$, again by Corollary~\ref{cor_pole_simple}(1),
\[
(\lambda)_{(k_1+k_2,k_2),8}\big\langle F(x_2),{\rm e}^{(x|\overline{z})_{\fp^+}}\big\rangle_{\lambda,x}
\]
is non-zero holomorphic for all $\lambda\in\BC$. Therefore the following holds.

\begin{Theorem}\quad
\begin{enumerate}\itemsep=0pt
\item[$1.$] Let $k\in\BZ_{\ge 0}$, $\bl\in\BZ_{++}^2$. Then for $i=1,2,3$,
\[
{\rm d}\tau_\lambda(\cU(\mathfrak{u}(1)\oplus\mathfrak{e}_{6(-14)}))\cP_k(\BC) \boxtimes\cP_{(l_1,l_2)}\big({\rm Herm}(2,\BO)^\BC\big)
\subset M_i^{\mathfrak{e}_{7(-25)}}(\lambda)
\]
holds if
\[
\lambda\in \begin{cases} -(k+l_1)-\BZ_{\ge 0}, &\quad i=1, \\
4-\min\{k+l_2,l_1\}-\BZ_{\ge 0}, &\quad i=2, \\
8-l_2-\BZ_{\ge 0}, &\quad i=3. \end{cases}
\]
If $k=0$ or $\bl=(l,0)$, $(l,l)$ $(l\in\BZ_{\ge 0})$, then the converse also holds.
\item[$2.$] Let $k,l\in\BZ_{\ge 0}$. Then for $i=1,2,3$,
\[
{\rm d}\tau_\lambda(\cU(\mathfrak{su}(2,6)))\cP_{(k+l,k,k)}({\rm Skew}(6,\BC)) \subset M_i^{\mathfrak{e}_{7(-25)}}(\lambda)
\]
holds if
\[
\lambda\in
\begin{cases} -(2k+l)-\BZ_{\ge 0}, & i=1, \\
4-\min\{2k,k+l\}-\BZ_{\ge 0}, & i=2, \\
8-k-\BZ_{\ge 0}, & i=3. \end{cases}
\]
If $k=0$ or $l=0$, then the converse also holds.
\item[$3.$] Let $(k_1,k_2)\in\BZ_{++}^2$. Then for $i=1,2,3$,
\[
\begin{cases} {\rm d}\tau_\lambda(\cU(\mathfrak{su}(2,6)))\cP_{(k_1,k_2,0)}({\rm Skew}(6,\BC)) \\
{\rm d}\tau_\lambda(\cU(\mathfrak{su}(2)\oplus\mathfrak{so}^*(12)))\cP_{(k_1,k_2)}(M(2,6;\BC)) \\
{\rm d}\tau_\lambda(\cU(\mathfrak{sl}(2,\BR)\oplus\mathfrak{so}(2,10)))\cP_{(k_1,k_2)}\big(M(1,2;\BO)^\BC\big) \end{cases}
\subset M_i^{\mathfrak{e}_{7(-25)}}(\lambda)
\]
holds if and only if
\[
\lambda\in \begin{cases} -(k_1+k_2)-\BZ_{\ge 0}, &i=1, \\ 4-k_2-\BZ_{\ge 0}, &i=2, \\ 8-\BZ_{\ge 0}, & i=3. \end{cases}
\]
\end{enumerate}
\end{Theorem}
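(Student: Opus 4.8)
The plan is to treat all three parts as direct applications of the general principle recorded around~\eqref{poles}: once one knows, for a given $\bm=(m_1,m_2,m_3)\in\BZ_{++}^3$, that $(\lambda)_{\bm,d}\big\langle F(x_2),{\rm e}^{(x|\overline{z})_{\fp^+}}\big\rangle_{\lambda,x}$ is holomorphic in $\lambda\in\BC$, the principle yields ${\rm d}\tau_\lambda(\cU(\fg_1))F(x_2)\subset M_i^{\mathfrak{e}_{7(-25)}}(\lambda)$ whenever $\lambda\in\frac{d}{2}(i-1)-m_i-\BZ_{\ge 0}$, and if moreover that polynomial is non-zero for all $\lambda$, the converse holds as well. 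For $\fp^+=\Herm(3,\BO)^\BC$ one has $d=8$ and $r=3$, so $\frac{d}{2}(i-1)=4(i-1)$ for $i=1,2,3$, which produces exactly the shifts $0$, $4$, $8$ appearing in the three cases of each statement, and the relevant submodule chain is the one displayed just above the theorem. Thus the only thing to do in each part is to supply the correct $\bm$ together with the corresponding non-vanishing.

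For Part 1, $\fp^+_2=\BC\oplus\Herm(2,\BO)^\BC$ is non-simple (this is \emph{Case}~5 of Section~\ref{section_nonsimple}, with $\fp^+_{11}=\BC$ of rank $r'=1$ and $\fp^+_{22}=\Herm(2,\BO)^\BC$ of rank $r''=2$), so I would invoke Corollary~\ref{cor_pole_nonsimple} with $a=r'=1$, $\bl=(l_1,l_2)$ and $f_1=x_{11}^k$, $f_2\in\cP_{(l_1,l_2)}(\fp^+_{22})$. Part~(2) of that corollary provides the holomorphic multiplier $(\lambda)_{(\underline{k}_a+\bl',\min\{\underline{k}_{r''}+\bl'',\bl\}),d}$; writing out $\bl'$, $\bl''$ and the componentwise minima assembles the rank-$3$ index whose entries feed the three conditions, while part~(3) supplies the non-vanishing precisely when $l_2=0$, $k=0$, or $\bl$ has the form $(l,0)$ or $(l,l)$, which is the stated range for the converse. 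As a cross-check one may read off the same poles from the closed formula of Theorem~\ref{explicit_nonsimple}(5) in the tube-type subcase $f(x_{22})=\det_{\fn^+_{22}}(x_{22})^{l}$.

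For Parts 2 and 3, $\fp^+_2$ is simple, so I would use Corollary~\ref{cor_pole_simple}(1) (the $\varepsilon_2=1$ case). In Part 2, $\fp^+_2=\Skew(6,\BC)$ has rank $r_2=3$ and $F\in\cP_{(k+l,k,k)}(\fp^+_2)$, so I take $a=r_2=3$; since $a$ is odd, the corollary gives the multiplier $(\lambda)_{(2k+l,\min\{2k,k+l\},k),d}$, and the non-vanishing holds exactly when $k=0$ or $l=0$, matching the converse clause. In Part 3 the three realizations $\Skew(6,\BC)$ (with index $(k_1,k_2,0)$, effective rank $a=2$), $M(2,6;\BC)$ and $M(1,2;\BO)^\BC$ (both rank $r_2=2$, so $a=2$) all feed Corollary~\ref{cor_pole_simple}(1) with the \emph{even} value $a=2$ after the substitution $k=k_2$, $l=k_1-k_2$; each produces the single common index $\bm=(k_1+k_2,k_2,0)$, whence the three containments share the same conditions, and because $a$ is even the corollary guarantees non-vanishing unconditionally, yielding the "if and only if". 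For the non-tube-type realization $M(1,2;\BO)^\BC$ no extra work is needed, since Corollary~\ref{cor_pole_simple} is already stated without the tube-type hypothesis, its non-tube case being reduced by the projection-and-$K_1$-equivariance device used in its proof.

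The computations are purely organizational, and the main obstacle is the bookkeeping in assembling $\bm$: one must fold the partition data and the minima of Corollaries~\ref{cor_pole_nonsimple} and~\ref{cor_pole_simple} into a genuine rank-$3$ dominant weight, track the padding by zeros (so that, for instance, $m_3=0$ produces the unconstrained $i=3$ condition $\lambda\in 8-\BZ_{\ge 0}$), and keep straight the odd-versus-even dichotomy of Corollary~\ref{cor_pole_simple}(1) that separates Part 2 (odd $a$, hence the middle entry $\min\{2k,k+l\}$ and a restricted converse) from Part 3 (even $a$, hence an unconditional converse). A final consistency check is that the three distinct subalgebras $\fg_1$ in Part 3 genuinely yield one and the same $\bm$, which reflects the fact that the poles of $\big\langle F,{\rm e}^{(x|\overline{z})_{\fp^+}}\big\rangle_\lambda$ depend only on $\fp^+$ and the $K^\BC$-type of $F$, not on the ambient symmetric pair.
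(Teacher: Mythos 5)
Your proposal follows the paper's own proof essentially verbatim: the paper derives all three parts by feeding Corollary~\ref{cor_pole_nonsimple} (Part~1, with $a=r'=1$, $r''=2$) and Corollary~\ref{cor_pole_simple}(1) (Part~2 with odd $a=r_2=3$; Part~3 with even $a=2$ and the substitution $k=k_2$, $l=k_1-k_2$ in all three realizations) into the pole-to-submodule principle around~(\ref{poles}), with the shifts $0,4,8$ coming from $\tfrac{d}{2}(i-1)=4(i-1)$, exactly as you describe. One concrete caution on Part~1: if you actually carry out the bookkeeping you defer to, with $a=1$, $r''=2$ you get $\bl'=(l_1)$, $\bl''=(l_2,0)$ and $\min\{\underline{k}_{r''}+\bl'',\bl\}=(\min\{k+l_2,l_1\},\min\{k,l_2\})$, so the assembled rank-$3$ index is $(k+l_1,\min\{k+l_2,l_1\},\min\{k,l_2\})$, whose third entry is $\min\{k,l_2\}$ rather than the $l_2$ appearing in the stated $i=3$ condition (the paper itself records the multiplier index as $(k+l_1,\min\{k+l_2,l_1\},l_2)$ at this point); since $l_2\ge\min\{k,l_2\}$ the ``if'' direction survives, but for the converse (e.g.\ $k=0$, $l_2>0$, where $F\in\cP_{(l_1,l_2,0)}(\fp^+)\subset M_3(\lambda)$ for all $\lambda\le 8$) your computation will not literally reproduce the displayed condition, so you should flag this discrepancy rather than assert that the minima ``feed the three conditions'' as stated.
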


Finally we consider $\fp^+=M(1,2;\BO)^\BC$. Then we have $d=6$, and as an $\big(\mathfrak{e}_{6(-14)},\widetilde{{\rm U}}(1)\times \operatorname{Spin}(10)\big)$-module,
$\cO_\lambda(D)_{\widetilde{K}}$ is reducible if and only if $\lambda\in\BZ$, $\lambda\le 3$.
For these $\lambda$ we have the submodules
\begin{alignat*}{3}
& \cO_\lambda(D)_{\widetilde{K}}\supset M_2(\lambda)\supset M_1(\lambda)\supset\{0\},\qquad && \lambda\in\BZ,\quad\lambda\le 0,&
\\
& \cO_\lambda(D)_{\widetilde{K}}\supset M_2(\lambda)\supset\{0\},\qquad &&
\lambda\in\BZ,\quad 1\le\lambda\le 3.&
\end{alignat*}
We~consider
\begin{align*}
M(1,2;\BO)^\BC&=\BO^\BC\oplus\BO^\BC\simeq \BC^8\oplus\BC^8 =\BC\oplus \Skew(5,\BC)\oplus M(1,5;\BC) \\
&=M(2,4;\BC)\oplus M(4,2;\BC).
\end{align*}
Then for $(k_1,k_2)\in\BZ_{++}^2$ and for $F(x_2)\in\cP_{(k_1,k_2)}\big(\BC^8\big)$, $F(x_2)\ne 0$, by~(\ref{easycase}),
\[
(\lambda)_{(k_1,k_2),6}\big\langle F(x_2),{\rm e}^{(x|\overline{z})_{\fp^+}}\big\rangle_{\lambda,x}
\]
is non-zero holomorphic for all $\lambda\in\BC$. Similarly, for $k,l\in\BZ_{\ge 0}$ and for $F(x_{11},x_{22})\in\cP_k(\BC)\boxtimes
\cP_l(M(1,5;\BC))$, $F(x_{11},x_{22})\ne 0$, by Corollary~\ref{cor_pole_nonsimple} or Theorem \ref{explicit_nonsimple}(6),
\[
(\lambda)_{(k+l,\min\{k,l\}),6}
\big\langle F(x_{11},x_{22}),{\rm e}^{(x|\overline{z})_{\fp^+}}\big\rangle_{\lambda,x}
\]
is non-zero holomorphic for all $\lambda\in\BC$. Similarly, for $\fp^+_2=M(4,2;\BC)$ or $\Skew(5,\BC)$, for $(k_1,k_2)\in\BZ_{++}^2$ and for
$F(x_2)\in\cP_{(k_1,k_2)}\big(\fp_2^+\big)$, $F(x_2)\ne 0$, by Corollary~\ref{cor_pole_simple}(1)
\[
(\lambda)_{(k_1+k_2,k_2),6}\big\langle F(x_2),{\rm e}^{(x|\overline{z})_{\fp^+}}\big\rangle_{\lambda,x}
\]
is non-zero holomorphic for all $\lambda\in\BC$. Therefore we have the following.
\begin{Theorem}\quad
\begin{enumerate}\itemsep=0pt
\item[$1.$] Let $(k_1,k_2)\in\BZ_{++}^2$. Then for $i=1,2$,
\[
{\rm d}\tau_\lambda(\cU(\mathfrak{u}(1)\oplus\mathfrak{so}(2,8)))\cP_{(k_1,k_2)}\big(\BC^8\big) \subset M_i^{\mathfrak{e}_{6(-14)}}(\lambda)
\]
holds if and only if
\[
\lambda\in \begin{cases} -k_1-\BZ_{\ge 0}, & i=1, \\ 3-k_2-\BZ_{\ge 0}, & i=2. \end{cases}
\]
\item[$2.$] Let $k,l\in\BZ_{\ge 0}$. Then for $i=1,2$,
\[
{\rm d}\tau_\lambda(\cU(\mathfrak{u}(1)\oplus\mathfrak{so}^*(10)))\cP_k(\BC)\boxtimes\cP_l(M(1,5;\BC))
\subset M_i^{\mathfrak{e}_{6(-14)}}(\lambda)
\]
holds if and only if
\[
\lambda\in \begin{cases} -(k+l)-\BZ_{\ge 0}, & i=1, \\ 3-\min\{k,l\}-\BZ_{\ge 0}, & i=2. \end{cases}
\]
\item[$3.$] Let $(k_1,k_2)\in\BZ_{++}^2$. Then for $i=1,2$,
\[
\begin{cases} {\rm d}\tau_\lambda(\cU(\mathfrak{su}(2,4)\oplus\mathfrak{su}(2)))\cP_{(k_1,k_2)}(M(4,2;\BC)) \\
{\rm d}\tau_\lambda(\cU(\mathfrak{sl}(2,\BR)\oplus\mathfrak{su}(1,5)))\cP_{(k_1,k_2)}({\rm Skew}(5,\BC)) \end{cases}
\subset M_i^{\mathfrak{e}_{6(-14)}}(\lambda)
\]
holds if and only if
\[
\lambda\in \begin{cases} -(k_1+k_2)-\BZ_{\ge 0}, &i=1, \\ 3-k_2-\BZ_{\ge 0}, &i=2. \end{cases}
\]
\end{enumerate}
\end{Theorem}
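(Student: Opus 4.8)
The plan is to read off the submodule memberships from the pole structure of $\langle F(x_2), {\rm e}^{(\cdot|\overline{z})_{\fp^+}}\rangle_\lambda$, using the general criterion recorded at the beginning of this section together with the pole computations already performed for $\fp^+ = M(1,2;\BO)^\BC$ in its three relevant decompositions. First I would record the module structure: since $\fp^+$ is simple of rank $r = 2$ with $d = 6$, the formula $\langle f, g\rangle_\lambda = \sum_{\bm} (\lambda)_{\bm,6}^{-1}\langle f_\bm, g_\bm\rangle_F$ shows that $\cO_\lambda(D)_{\widetilde{K}} = \cP(\fp^+)$ becomes reducible exactly for $\lambda \in \BZ$ with $\lambda \le 3$, and then carries the $(\fg, \widetilde{K})$-submodules $M_1(\lambda) = \bigoplus_{m_1 \le -\lambda}\cP_\bm(\fp^+)$ (for $\lambda \le 0$) and $M_2(\lambda) = \bigoplus_{m_2 \le 3-\lambda}\cP_\bm(\fp^+)$ (for $\lambda \le 3$). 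The criterion then says: if $F \in \cP_\bm(\fp^+)$ and $(\lambda)_{\bm,6}\langle F, {\rm e}^{(\cdot|\overline{z})_{\fp^+}}\rangle_\lambda$ is entire in $\lambda$, then ${\rm d}\tau_\lambda(\cU(\fg_1))F \subset M_i(\lambda)$ whenever $\lambda \in 3(i-1) - m_i - \BZ_{\ge 0}$, and this becomes an equivalence once the pole-cleared inner product is in addition a non-zero polynomial in $z$ for every $\lambda$.

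The three parts then differ only in which decomposition and which earlier pole computation I invoke. For part 1 I would take $\fp^+ = \BO^\BC \oplus \BO^\BC$ with $\fp^+_2 = \BC^8$; here $\fp^+_2(e)_2 = \fp^+(e)_2$, so \eqref{easycase} gives $\langle F(x_2), {\rm e}^{(x|\overline{z})_{\fp^+}}\rangle_\lambda = (\lambda)_{(k_1,k_2),6}^{-1} F(z_2)$, whence the pole-cleared inner product is $F(z_2) \ne 0$ and the criterion applies with $\bm = (k_1,k_2)$. For part 2 I would take the non-simple decomposition (Case 6) $\fp^+ = \BC \oplus \Skew(5,\BC) \oplus M(1,5;\BC)$, with $\fp^+_2 = \BC \oplus M(1,5;\BC)$; Corollary \ref{cor_pole_nonsimple}, equivalently the explicit formula of Theorem \ref{explicit_nonsimple}(6), shows $(\lambda)_{(k+l,\min\{k,l\}),6}\langle F, {\rm e}^{(\cdot|\overline{z})_{\fp^+}}\rangle_\lambda$ is entire and non-vanishing, so $\bm = (k+l,\min\{k,l\})$. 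For part 3 I would use the two simple decompositions $\fp^+ = M(2,4;\BC) \oplus M(4,2;\BC)$ (Case 9, with $\fp^+_2 = M(4,2;\BC)$) and $\fp^+ = \BC \oplus \Skew(5,\BC) \oplus M(1,5;\BC)$ (Case 10, with $\fp^+_2 = \Skew(5,\BC)$); both have $r_2 = 2$ and $\varepsilon_2 = 1$, and writing $F \in \cP_{(k_1,k_2)}(\fp^+_2)$ in the form $\cP_{(k+l,\underline{k}_{a-1})}(\fp^+_2)$ with $a = r_2 = 2$, Corollary \ref{cor_pole_simple}(1) gives $(\lambda)_{(2k+l,k),6}\langle F, {\rm e}^{(\cdot|\overline{z})_{\fp^+}}\rangle_\lambda = (\lambda)_{(k_1+k_2,k_2),6}\langle F, {\rm e}^{(\cdot|\overline{z})_{\fp^+}}\rangle_\lambda$, entire and non-vanishing. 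In every case, inserting the displayed $\bm$ into $\lambda \in 3(i-1) - m_i - \BZ_{\ge 0}$ reproduces exactly the listed ranges for $i = 1, 2$.

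The two places needing care are the bookkeeping that matches each pole factor to the partition $\bm$ labeling the isotypic piece of $\cP(\fp^+)$ in which $F$ sits --- concretely, checking that the Pochhammer products of Corollaries \ref{cor_pole_nonsimple}, \ref{cor_pole_simple} and of \eqref{easycase} equal $(\lambda)_{\bm,6} = (\lambda)_{m_1}(\lambda-3)_{m_2}$ for the claimed $\bm$ --- and the non-vanishing required for the converse. The latter is automatic in part 1, holds in part 2 because $M(1,5;\BC)$ has rank one so that the hypothesis $l_{a+1} = 0$ of Corollary \ref{cor_pole_nonsimple}(3) is satisfied, and holds in part 3 because $a = r_2 = 2$ is even, which is precisely the non-vanishing hypothesis of Corollary \ref{cor_pole_simple}(1). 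I expect the genuine analytic content to be entirely absorbed by those earlier corollaries; the remaining work, and the main point to get right, is the structural matching --- identifying the four listed subalgebras $\mathfrak{u}(1)\oplus\mathfrak{so}(2,8)$, $\mathfrak{u}(1)\oplus\mathfrak{so}^*(10)$, $\mathfrak{su}(2,4)\oplus\mathfrak{su}(2)$ and $\mathfrak{sl}(2,\BR)\oplus\mathfrak{su}(1,5)$ with the respective decompositions so that the criterion applies verbatim with $\fg_1$ equal to each of them.
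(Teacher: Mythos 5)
Your proposal is correct and follows essentially the same route as the paper: the paper's proof of this theorem consists precisely of quoting the general pole-to-submodule criterion stated at the start of Section~\ref{section_submod}, together with (\ref{easycase}) for $\cP_{(k_1,k_2)}(\BC^8)$, Corollary~\ref{cor_pole_nonsimple} (equivalently Theorem~\ref{explicit_nonsimple}(6)) for $\cP_k(\BC)\boxtimes\cP_l(M(1,5;\BC))$, and Corollary~\ref{cor_pole_simple}(1) for the two simple cases, yielding the pole-clearing indices $(k_1,k_2)$, $(k+l,\min\{k,l\})$ and $(k_1+k_2,k_2)$ exactly as you identify them. The subalgebra identifications you flag as the remaining work are already established in the explicit-realization subsection for $M(1,2;\BO)^\BC$, so nothing is missing.
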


\section{Intertwining operators}\label{section_SBO}

In this section we apply the results on the inner products to the construction of intertwining operators between holomorphic discrete series
representations.

Let $G$ be a connected Hermitian Lie group, with the complexification $G^\BC$.
We~fix a Cartan involution on $G$, and let $K\subset G$, $K^\BC\subset G^\BC$ be the corresponding
maximal compact subgroup and its complexification.
Let $G_1\subset G$ be a connected subgroup, with the complexification $G_1^\BC\subset G^\BC$,
such that $(G,G_1)$ is a symmetric pair of holomorphic type.
Without loss of generality we may assume that $G_1$ is stable under the Cartan involution of $G$,
and let $K_1\subset G_1$, $K^\BC_1\subset G^\BC_1$ be the corresponding maximal compact subgroup and its complexification.
Their corresponding Lie algebras are denoted by the corresponding lowercase frakturs,
and let $\fg^\BC=\fp^+\oplus\fk^\BC\oplus\fp^-$, $\fg^\BC_1=\fp^+_1\oplus\fk^\BC_1\oplus\fp^-_1$ be the gradings with respect to
the adjoint action of $\fz\big(\fk^\BC\big)$. Then $\fp^\pm$, $\fp^\pm_1$ have Jordan triple system structures.
Let $\fp^\pm_2=\big(\fp^\pm_1\big)^\bot\subset\fp^\pm$, and let $D\subset\fp^+$, $D_1\subset\fp^+_1$ be the bounded symmetric domains,
so that $G/K\simeq D$, $G_1/K_1\simeq D_1$ hold.

For $\lambda>\frac{d}{2}(r-1)$, we consider the unitary representation $\cH_\lambda(D)\subset\cO(D)$
of the universal covering group $\widetilde{G}$ with the inner product $\langle\cdot,\cdot\rangle_\lambda$.
Then for the $\widetilde{K}$-finite part we have $\cH_\lambda(D)_{\widetilde{K}}=\cP(\fp^+)$, and this is holomorphic discrete if $\lambda>p-1$.
Similarly, for a unitary representation $(\rho,W)=\big(\chi_1^{-\varepsilon_1\lambda}\otimes\rho_0,W\big)
=\big(\big(\chi^{-\lambda}|_{\widetilde{K}_1}\big)\otimes\rho_0,W\big)$ of~$\widetilde{K}_1$, we consider the unitary representation
$\cH_{\varepsilon_1\lambda}(D_1,W)\subset\cO(D_1,W)$ of $\widetilde{G}_1$ if it exists.
Then $\cH_{\varepsilon_1\lambda}(D_1,W)$ appears in the decomposition of $\cH_\lambda(D)|_{\widetilde{G}_1}$
if and only if $(\rho_0,W)$ appears in the decomposition of $\cP(\fp^+_2)$, and then there exists a symmetry breaking operator
\[
\cF^\downarrow_{\lambda,\rho_0}\colon\ \cH_\lambda(D)|_{\widetilde{G}_1}\longrightarrow \cH_{\varepsilon_1\lambda}(D_1,W), \]
which is unique up to scalar multiple for $\lambda>\frac{d}{2}(r-1)$ by the multiplicity-freeness of $\cH_\lambda(D)|_{\widetilde{G}_1}$.
By Theorem \ref{thm_intertwining}(1), this operator is given by substituting the differential operator to the polynomial
$F^\downarrow_{\lambda,\rho_0}(z)=F^\downarrow_{\lambda,\rho_0}(z_1,z_2)\in\cP(\fp^-,W)$,
\[
F^\downarrow_{\lambda,\rho_0}(z):=\big\langle {\rm e}^{(x|z)_{\fp^+}},\rK(\overline{x_2})^*\big\rangle_{\lambda,\fp^+,x}
=\big\langle \rK(x_2),{\rm e}^{(x|\overline{z})_{\fp^-}},\big\rangle_{\lambda,\fp^-,x},
\]
where $\rK(x_2)\in\cP\bigl(\fp^-_2,\Hom_\BC\big(\chi^{-\lambda},W\otimes\chi_1^{-\varepsilon_1\lambda}\big) \bigr)^{\widetilde{K}_1^\BC}\simeq (\cP(\fp^-_2)\otimes W)^{K_1^\BC}$.
Therefore, the construction of symmetry breaking operators are reduced to Theorems~\ref{thm_nonsimple} and~\ref{thm_simple}.

\subsection[Case $\fp^+_2$ is non-simple]{Case $\boldsymbol{\fp^+_2}$ is non-simple}

First we consider the cases such that $\fp^\pm_2=\fp^\pm_{11}\oplus\fp^\pm_{22}$ is non-simple, and write $\fp^\pm_1=\fp^\pm_{12}$.
We~assume~$\fp^\pm_{11}$ is of tube type. That is, we consider
\begin{gather}
\big(\fp^\pm,\fp^\pm_{11},\fp^\pm_{12},\fp^\pm_{22}\big) \notag \\ \quad
{}= \begin{cases}
\big(\BC^{d+2},\BC,\BC^{d},\BC\big) & (\textit{Case }1), \\
\big({\rm Sym}(r,\BC),\Sym(r',\BC),M(r',r'';\BC),\Sym(r'',\BC)\big) & (\textit{Case }2), \\
\big(M(q,s;\BC),M(r',r';\BC),M(r',s'';\BC)\oplus M(q'',r';\BC),M(q'',s'';\BC)\big) & (\textit{Case }3), \\
\big({\rm Skew}(s,\BC),\Skew(2r',\BC),M(2r',s'';\BC),\Skew(s'',\BC)\big) & (\textit{Case }4), \\
\big({\rm Herm}(3,\BO)^\BC,\BC,M(1,2;\BO)^\BC,\Herm(2,\BO)^\BC\big) & (\textit{Case }5), \\
\big(M(1,2;\BO)^\BC,\BC,\Skew(5,\BC),M(1,5;\BC)\big) & (\textit{Case }6) \end{cases}\!\!\!
\label{list_nonsimple}
\end{gather}
with $r=r'+r''$ for {\it Case} 2, $q=r'+q''$, $s=r'+s''$ for {\it Case} 3, and $s=2r'+s''$ for {\it Case} 4.
Then the corresponding symmetric pairs are
\[
(G,G_1)= \begin{cases}
({\rm SO}_0(2,d+2),{\rm SO}_0(2,d)\times {\rm SO}(2)) & (\textit{Case }1), \\
(\operatorname{Sp}(r,\BR),{\rm U}(r',r'')) & (\textit{Case }2), \\
({\rm U}(q,s),{\rm U}(r',s'')\times {\rm U}(q'',r')) & (\textit{Case }3), \\
({\rm SO}^*(2s),{\rm U}(2r',s'')) & (\textit{Case }4), \\
(E_{7(-25)},{\rm U}(1)\times E_{6(-14)}) & (\textit{Case }5), \\
(E_{6(-14)},{\rm U}(1)\times {\rm SO}^*(10)) & (\textit{Case }6) \end{cases}
\]
(up to covering). Here, for {\it Case} 3 we consider ${\rm U}(q,s)$ instead of ${\rm SU}(q,s)$.
Let $\dim\fp^+=:n$, $\dim\fp^+_{11}=:n'$, $\dim\fp^+_{22}=:n''$, $\rank\fp^+=:r$, $\rank\fp^+_{11}=:r'$, $\rank\fp^+_{22}=:r''$,
let $d$ be the number defined in (\ref{str_const}), and take $\varepsilon_{12}\in\{1,2\}$ such that
\[
(x_{12}|\overline{y_{12}})_{\fp^+}=\varepsilon_{12}(x_{12}|\overline{y_{12}})_{\fp^+_{12}}, \qquad x_{12},y_{12}\in\fp^+_{12},
\]
so that we have
\[
(r,r',r'',d,\varepsilon_{12}) =\begin{cases}
(2,1,1,d,1) & (\textit{Case }1,\; d\ge 2), \\
(2,1,1,1,2) & (\textit{Case }1,\; d=1), \\
(r,r',r'',1,2) & (\textit{Case }2), \\
(\min\{q,s\},r',\min\{q'',s''\},2,1) & (\textit{Case }3), \\
(\lfloor s/2\rfloor, r', \lfloor s''/2\rfloor,4,1) & (\textit{Case }4), \\
(3,1,2,8,1) & (\textit{Case }5), \\
(2,1,1,6,1) & (\textit{Case }6). \end{cases}
\]
For $j=1,2$, let $\fk_{jj}:=\big[\fp^+_{jj},\fp^-_{jj}\big]\cap\fk$, let $K_{jj}$ be the corresponding connected subgroup of $G_1$,
and let $\Sigma_{jj}\subset\fp^+_{jj}$ be the Bergman--Shilov boundary of the bounded symmetric domain,
with the normalized $K_{jj}$-invariant measure, so that $\Sigma_{jj}\simeq K_{jj}/K_{L_{jj}}M_{jj}$ holds
for some $K_{L_{jj}}M_{jj}\subset K_{jj}$.

As an example, we consider {\it Case} 2. Then according to the decomposition of $\cP\big(\fp^+_{11}\big)\boxtimes\cP\big(\fp^+_{22}\big)$
under $K_1={\rm U}(r')\times {\rm U}(r'')$,
\[
\cP\big(\fp^+_{11}\big)\boxtimes\cP\big(\fp^+_{22}\big)
=\bigoplus_{\bk\in\BZ_{++}^{r'}}\bigoplus_{\bl\in\BZ_{++}^{r''}}\cP_\bk\big(\fp^+_{11}\big)\boxtimes\cP_\bl\big(\fp^+_{22}\big)
\simeq \bigoplus_{\bk\in\BZ_{++}^{r'}}\bigoplus_{\bl\in\BZ_{++}^{r''}} V_{2\bk}^{(r')\vee}\boxtimes V_{2\bl}^{(r'')},
\]
$\cH_\lambda(D_{\operatorname{Sp}(r,\BR)})$ for $\lambda>\frac{r-1}{2}$ is decomposed under $\widetilde{G}_1$ as
\[
\cH_\lambda(D_{\operatorname{Sp}(r,\BR)})|_{\widetilde{G}_1} \simeq\hsum_{\bk\in\BZ_{++}^{r'}}\hsum_{\bl\in\BZ_{++}^{r''}}
\cH_{\lambda+\lambda}\big(D_{{\rm U}(r',r'')},V_{2\bk}^{(r')\vee}\boxtimes V_{2\bl}^{(r'')}\big),
\]
where $\cH_{\lambda+\lambda}(D_{{\rm U}(r',r'')},V'\boxtimes V'')$ is defined by using the character (\ref{char_U(q,s)})
of $\widetilde{{\rm U}}(r')\times\widetilde{{\rm U}}(r'')\subset \widetilde{{\rm U}}(r',r'')$.
In the following let $\bk=\underline{k}_{r'}$, and consider the symmetry breaking operator
\begin{align*}
\cF_{\lambda,k,\bl}^\downarrow\colon\ \cH_\lambda(D_{\operatorname{Sp}(r,\BR)})\longrightarrow{}&
\cH_{\lambda+\lambda}\big(D_{{\rm U}(r',r'')},V_{\underline{2k}_{r'}}^{(r')\vee}\boxtimes V_{2\bl}^{(r'')}\big)
\\
\simeq {}&\cH_{(\lambda+2k)+\lambda}\big(D_{{\rm U}(r',r'')},\BC\boxtimes V_{2\bl}^{(r'')}\big).
\end{align*}
We~take a vector-valued polynomial
$\rK_\bl(x_{22})\in\bigl(\cP_\bl(\fp^-_{22})\otimes V_{2\bl}^{(r'')}\bigr)^{K_{22}}$, normalized such that
\[
\big\Vert (v,\rK_\bl(\overline{x_{22}}))_{V_{2\bl}^{(r'')}}\big\Vert_{L^2(\Sigma_{22}),x_{22}}^2= |v|_{V_{2\bl}^{(r'')}}^2,
\qquad v\in V_{2\bl}^{(r'')},
\]
or equivalently,
\[
\big|\big\langle f(x_{22}), \overline{\rK_\bl(\overline{x_{22}})}\big\rangle_{L^2(\Sigma_{22}),x_{22}}\big|_{V_{2\bl}^{(r'')}}^2
=\Vert f(x_{22})\Vert_{L^2(\Sigma_{22}),x_{22}}^2, \qquad f(x_{22})\in \cP_\bl\big(\fp^+_{22}\big).
\]
Using this, we define the polynomials
$\tilde{\rK}_{\bl,\bm,\bn}^{\fp^-_{22}}(x_{22},y_{22})\in\cP_\bm(\fp^-_{22})\otimes\cP_\bn(\fp^-_{22})\otimes V_{2\bl}^{(r'')}$ by
\[ \sum_{\bm,\bn\in\BZ_{++}^{r''}}\tilde{\rK}_{\bl,\bm,\bn}^{\fp^-_{22}}(x_{22},y_{22})=\rK_\bl(x_{22}+y_{22}). \]
Then the polynomial $F^\downarrow_{\lambda,\rho_0}(z)=F^\downarrow_{\lambda,k,\bl}(z)\in\cP\big(\fp^-,V_{2\bl}^{(r'')}\big)$ is computed by using Theorem \ref{thm_nonsimple}. We~renormalize this as
\begin{align*}
F^\downarrow_{\lambda,k,\bl}(z):\!={}&\frac{1}{\big(\frac{n'}{r'}\big)_{\underline{k}_{r'},d} \big(\frac{n''}{r''}\big)_{\bl,d}
\big(\lambda+k-\frac{d}{2}r'\big)_{\bl,d}} \\
&\times\det_{\fn^-_{11}}(z_{11})^k\sum_{\bm,\bn\in\BZ_{++}^{r''}}(-k)_{\bm,d} \bigg(\lambda+k-\frac{d}{2}r'\bigg)_{\bn,d}
\tilde{\rK}_{\bl,\bm,\bn}^{\fp^-_{22}}\big(Q(z_{12}){}^t\hspace{-1pt}z_{11}^\itinv,z_{22}\big)
\\
={}&\frac{\det_{\fn^-_{11}}(z_{11})^k}
{\big(\frac{n'}{r'}\big)_{\underline{k}_{r'},d}\big(\frac{n''}{r''}\big)_{\bl,d}}
\sum_{\bm,\bn\in\BZ_{++}^{r''}}\frac{(-k)_{\bm,d}}{\big(\lambda+k-\frac{d}{2}r'+\bn\big)_{\bl-\bn,d}}
\tilde{\rK}_{\bl,\bm,\bn}^{\fp^-_{22}}\big(Q(z_{12}){}^t\hspace{-1pt}z_{11}^\itinv,z_{22}\big),
\end{align*}
with $d=1$. Then the linear map
$\cF^\downarrow_{\lambda,k,\bl}=F^\downarrow_{\lambda,k,\bl}\big(\frac{\partial}{\partial x}\big)\bigr|_{x_{11}=0,x_{22}=0}$
becomes a symmetry breaking operator, and for $f(x_{22})\in\cP_\bl(\fp^+_{22})$, by \cite[Part III, Corollary V.3.5]{FKKLR} we have
\begin{gather*}
\big(\cF^\downarrow_{\lambda,k,\bl}\big(\det_{\fn^+}(x_{11})^kf(x_{22})\big)\big)(x_{12})
\\ \qquad
{}=\frac{1}{\big(\frac{n'}{r'}\big)_{\underline{k}_{r'},d}\big(\frac{n''}{r''}\big)_{\bl,d}}
\det_{\fn^-_{11}}\bigg(\frac{\partial}{\partial x_{11}}\bigg)^k\rK_\bl\bigg(\frac{\partial}{\partial x_{22}}\bigg)
\det_{\fn^+_{11}}(x_{11})^kf(x_{22})\bigg |_{x_{11}=0,x_{22}=0}
\\ \qquad
{}=\frac{1}{\big(\frac{n'}{r'}\big)_{\underline{k}_{r'},d}\big(\frac{n''}{r''}\big)_{\bl,d}}
\big\langle\det_{\fn^+_{11}}(x_{11})^kf(x_{22}),\det_{\fn^+_{11}}(x_{11})^k \overline{\rK_\bl(\overline{x_{22}})}
\big\rangle_{F,\fp^+_{11}\oplus\fp^+_{22},x_{11},x_{22}}
\\ \qquad
{}=\big\langle f(x_{22}),\overline{\rK_\bl(\overline{x_{22}})}\big\rangle_{L^2(\Sigma_{22}),x_{22}}.
\end{gather*}
Hence by the normalization of $\rK_\bl(x_{22})$ we have
\begin{gather*}
\big\Vert\cF^\downarrow_{\lambda,k,\bl}\big(\det_{\fn^+}(x_{11})^kf(x_{22})\big)(x_{12})
\big\Vert_{\cH_{(\lambda+2k)+\lambda}(D_{{\rm U}(r',r'')},\BC\boxtimes V_{2\bl}^{(r'')})}^2
\\ \qquad
{}=\big|\big\langle f(x_{22}),\overline{\rK_\bl(\overline{x_{22}})}\big\rangle_{L^2(\Sigma_{22}),x_{22}} \big|_{V_{2\bl}^{(r'')}}^2
=\big\Vert f(x_{22})\big\Vert_{L^2(\Sigma_{22}),x_{22}}^2.
\end{gather*}
On the other hand, by (\ref{norm_nonsimple}) and again by \cite[Part III, Corollary V.3.5]{FKKLR} we have
\[
\big\Vert \det_{\fn^+}(x_{11})^kf(x_{22})\big\Vert_{\lambda,\fp^+,x}^2
=\frac{\big(\frac{n'}{r'}\big)_{\underline{k}_{r'},d}\big(\frac{n''}{r''}\big)_{\bl,d}
\big(\lambda-\frac{d}{2}r'\big)_{\underline{k}_{r''}+\bl,d}}
{(\lambda)_{\underline{k}_r+\bl,d}\big(\lambda-\frac{d}{2}r'\big)_{\bl,d}}\big\Vert f(x_{22})\big\Vert_{L^2(\Sigma_{22}),x_{22}}^2.
\]
Now since $\cF^\downarrow_{\lambda,k,\bl}$ is an isometry up to constant on the subspace of $\cH_\lambda(D_{\operatorname{Sp}(r,\BR)})$
isomorphic to $\cH_{(\lambda+2k)+\lambda}\big(D_{{\rm U}(r',r'')},\BC\boxtimes V_{2\bl}^{(r'')}\big)$,
and identically zero on its orthogonal complement, we have
\[
\big\Vert \cF^\downarrow_{\lambda,k,\bl}\big\Vert_{\op}^{-2}
=\frac{\big(\frac{n'}{r'}\big)_{\underline{k}_{r'},d}\big(\frac{n''}{r''}\big)_{\bl,d}
\big(\lambda-\frac{d}{2}r'\big)_{\underline{k}_{r''}+\bl,d}}
{(\lambda)_{\underline{k}_r+\bl,d}\big(\lambda-\frac{d}{2}r'\big)_{\bl,d}}.
\]
Similar results also hold for the other cases. Now we summarize the results so far.
\begin{Theorem}\label{SBO_nonsimple1}
Let $\big(\fp^\pm,\fp^\pm_{11},\fp^\pm_{12},\fp^\pm_{22}\big)$ be as in $(\ref{list_nonsimple})$.
For $\bl\in\BZ_{++}^{r''}$, let $V_\bl$ be the representation of~$K_{22}$ isomorphic to $\cP_\bl(\fp^+_{22})$, and
we take a vector-valued polynomial $\rK_\bl(x_{22})\in\bigl(\cP_\bl(\fp^-_{22})\otimes V_\bl\bigr)^{K_{22}}$ such that
\[
\big\Vert (v,\rK_\bl(\overline{x_{22}}))_{V_\bl}\big\Vert_{L^2(\Sigma_{22}),x_{22}}^2
=|v|_{V_{\bl}}^2, \qquad v\in V_{\bl}.
\]
We~define the polynomials
$\tilde{\rK}_{\bl,\bm,\bn}^{\fp^-_{22}}(x_{22},y_{22})\in\cP_\bm(\fp^-_{22})\otimes\cP_\bn(\fp^-_{22})\otimes V_\bl$ by
\[ \sum_{\bm,\bn\in\BZ_{++}^{r''}}\tilde{\rK}_{\bl,\bm,\bn}^{\fp^-_{22}}(x_{22},y_{22})=\rK_\bl(x_{22}+y_{22}). \]
Using these, for $\lambda>\frac{d}{2}(r-1)$, $k\in\BZ_{\ge 0}$, we define
$F^\downarrow_{\lambda,k,\bl}(z)=F^\downarrow_{\lambda,k,\bl}(z_{11},z_{12},z_{22})\in\cP(\fp^-,V_\bl)$ by
\[ F^\downarrow_{\lambda,k,\bl}(z):=\frac{\det_{\fn^-_{11}}(z_{11})^k}
{\left(\frac{n'}{r'}\right)_{\underline{k}_{r'},d}\left(\frac{n''}{r''}\right)_{\bl,d}}
\sum_{\bm,\bn\in\BZ_{++}^{r''}}\frac{(-k)_{\bm,d}}{\left(\lambda+k-\frac{d}{2}r'+\bn\right)_{\bl-\bn,d}}
\tilde{\rK}_{\bl,\bm,\bn}^{\fp^-_{22}}\big(Q(z_{12}){}^t\hspace{-1pt}z_{11}^\itinv,z_{22}\big). \]
Then the linear map
\begin{gather*}
\cF^\downarrow_{\lambda,k,\bl}\colon \left\{ \hspace{-4pt} \begin{array}{lll}
\cH_\lambda(D_{{\rm SO}_0(2,d+2)})\hspace{-7pt}&\longrightarrow\cH_{\lambda+k+l}(D_{{\rm SO}_0(2,d)})\boxtimes \BC_{-k+l} &(\textit{Case }1),
\\
\cH_\lambda(D_{\operatorname{Sp}(r,\BR)})&\longrightarrow\cH_{(\lambda+2k)+\lambda}\big(D_{{\rm U}(r',r'')},\BC\boxtimes V_{2\bl}^{(r'')}\big) &(\textit{Case }2),
\\
\cH_{\lambda_1+\lambda_2}(D_{{\rm U}(q,s)})\hspace{-7pt} &\longrightarrow \cH_{(\lambda_1+k)+\lambda_2}\big(D_{{\rm U}(r',s'')},\BC\boxtimes V_\bl^{(s'')}\big) &
\\
&\qquad{}\hboxtimes \cH_{\lambda_1+(\lambda_2+k)}\big(D_{{\rm U}(q'',r')},V_\bl^{(q'')\vee}\boxtimes \BC\big) &(\textit{Case }3),
\\
\cH_\lambda(D_{{\rm SO}^*(2s)})&\longrightarrow
\cH_{\left(\frac{\lambda}{2}+k\right)+\frac{\lambda}{2}}\big(D_{{\rm U}(2r',s'')},\BC\boxtimes V_{\bl^2}^{(s'')}\big) &(\textit{Case }4), \\
\cH_\lambda(D_{E_{7(-25)}})&\longrightarrow \cH_{\lambda+k+\frac{|\bl|}{2}}\big(D_{E_{6(-14)}},V_{(l_1-l_2,0,0,0,0)}^{[10]\vee}\big)
\boxtimes \chi_{{\rm U}(1)}^{-\lambda+2k-2|\bl|}\! &(\textit{Case }5), \\
\cH_\lambda(D_{E_{6(-14)}})&\longrightarrow
\cH_{\lambda+k}\big(D_{{\rm SO}^*(10)},V_{\left(\frac{l}{2},\frac{l}{2},\frac{l}{2},\frac{l}{2},-\frac{l}{2}\right)}^{(5)\vee}\big)
\boxtimes \chi_{{\rm U}(1)}^{-\lambda+3k-3l} &(\textit{Case }6), \end{array} \right.
\\
(\cF^\downarrow_{\lambda,k,\bl}f)(x_{12})=F^\downarrow_{\lambda,k,\bl}
\bigg(\frac{\partial}{\partial x}\bigg)f(x)\bigg|_{x_{11}=0,x_{22}=0}
\end{gather*}
intertwines the $\widetilde{G}_1$-action, where we put $\lambda_1+\lambda_2=\lambda$ for {\it Case} $3$, and its operator norm is given~by
\[
\Vert \cF^\downarrow_{\lambda,k,\bl}\Vert^{-2}_{\op}
=\frac{\left(\frac{n'}{r'}\right)_{\underline{k}_{r'},d}\left(\frac{n''}{r''}\right)_{\bl,d}
\left(\lambda-\frac{d}{2}r'\right)_{\underline{k}_{r''}+\bl,d}}
{(\lambda)_{\underline{k}_r+\bl,d}\left(\lambda-\frac{d}{2}r'\right)_{\bl,d}}.
\]
\end{Theorem}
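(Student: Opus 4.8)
The plan is to recognize $\cF^\downarrow_{\lambda,k,\bl}$ as one of the symmetry breaking operators produced by Theorem~\ref{thm_intertwining}(1), so that the intertwining property is automatic, and then to pin down the operator norm by using that, by multiplicity-freeness, a nonzero such operator between multiplicity-one constituents is a scalar multiple of an isometry on the relevant isotypic summand and zero on its orthogonal complement. The whole argument is the one already sketched in the excerpt for {\it Case} 2, carried out uniformly.

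First I would take the kernel $\rK(x_2):=c\,\det_{\fn^-_{11}}(x_{11})^k\,\rK_\bl(x_{22})\in\bigl(\cP(\fp^-_2)\otimes V_\bl\bigr)^{K_1^\BC}$ for a suitable constant $c$, using that $\det_{\fn^-_{11}}(x_{11})^k$ spans the one-dimensional $K_{11}$-module $\cP_{\underline{k}_{r'}}(\fp^-_{11})$ while $\rK_\bl$ is the chosen $K_{22}$-covariant vector; the $K_1^\BC$-covariance required in Theorem~\ref{thm_intertwining} is then immediate from the factorization $\fp^-_2=\fp^-_{11}\oplus\fp^-_{22}$. By that theorem the substitution operator attached to $F^\downarrow_{\tau\rho}(z)=\langle {\rm e}^{(x|z)_{\fp^+}},\rK(\overline{x_2})^*\rangle_{\lambda,\fp^+,x}$ intertwines $\widetilde G_1$, so it remains to identify $F^\downarrow_{\tau\rho}$ with the explicit $F^\downarrow_{\lambda,k,\bl}$ of the statement. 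This is exactly Theorem~\ref{thm_nonsimple} applied with $f$ replaced by the $V_\bl$-valued polynomial $\rK_\bl$: its third equality yields $\det_{\fn^+_{11}}(z_{11})^k\sum_{\bm,\bn}(-k)_{\bm,d}\bigl(\lambda+k-\frac d2 r'\bigr)_{\bn,d}\tilde\rK^{\fp^-_{22}}_{\bl,\bm,\bn}\bigl(Q(z_{12}){}^t\hspace{-1pt}z_{11}^\itinv,z_{22}\bigr)$, and via $\bigl(\lambda+k-\frac d2 r'\bigr)_{\bn,d}=\bigl(\lambda+k-\frac d2 r'\bigr)_{\bl,d}/\bigl(\lambda+k-\frac d2 r'+\bn\bigr)_{\bl-\bn,d}$ I would absorb the global Pochhammer and Gamma prefactors into $c$ to recover the displayed formula. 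Keeping the antilinear and dual conventions consistent (the passage $\fp^+\leftrightarrow\fp^-$, the $\,{}^*$ on $\rK$, and $\tilde f\mapsto\tilde\rK$) together with this Pochhammer reindexing is the fiddly computational core.

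For the operator norm I would test on a single generator $\det_{\fn^+_{11}}(x_{11})^k f(x_{22})$ with $f\in\cP_\bl(\fp^+_{22})$. Applying $\cF^\downarrow_{\lambda,k,\bl}$ and converting the resulting pairing into a Fischer inner product via \cite[Part~III, Corollary~V.3.5]{FKKLR}, the factors $\det_{\fn^+_{11}}(x_{11})^k$ cancel against $\det_{\fn^-_{11}}(\partial/\partial x_{11})^k$ and one is left with $\langle f(x_{22}),\overline{\rK_\bl(\overline{x_{22}})}\rangle_{L^2(\Sigma_{22}),x_{22}}$, whose target-norm square equals $\|f\|^2_{L^2(\Sigma_{22})}$ by the normalization of $\rK_\bl$. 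On the source side, (\ref{norm_nonsimple}) with $a=r'$, together with $\|\det_{\fn^+_{11}}(z_{11})^k\|^2_F=(n'/r')_{\underline{k}_{r'},d}$ and the relation $\|f\|^2_F=(n''/r'')_{\bl,d}\|f\|^2_{L^2(\Sigma_{22})}$ (again \cite[Part~III, Corollary~V.3.5]{FKKLR}), gives $\|\det_{\fn^+_{11}}(x_{11})^k f\|^2_{\lambda,\fp^+}$ as the stated product times $\|f\|^2_{L^2(\Sigma_{22})}$. Since $\cF^\downarrow_{\lambda,k,\bl}$ is a scalar multiple of an isometry on the isotypic summand, the ratio of these two norms is $\|\cF^\downarrow_{\lambda,k,\bl}\|^{-2}_{\op}$, which is exactly the claimed expression.

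Finally I would carry out the case-by-case bookkeeping: for each of the six pairs in (\ref{list_nonsimple}) I would read off from Section~\ref{section_prelim2} the $\widetilde K_1$-type of $\det_{\fn^+_{11}}(x_{11})^k\otimes\cP_\bl(\fp^+_{22})$ and hence the precise target Hilbert space (the labels $\BC_{-k+l}$, $V^{(r'')}_{2\bl}$, $V^{(s'')}_{\bl^2}$, $V^{[10]\vee}_{(l_1-l_2,0,0,0,0)}$, the $\chi_{{\rm U}(1)}$-twists, and so on), confirming that the scalar-type character shift is $\varepsilon_1\lambda$ and that the extra $\det^k$ contributes the stated weight. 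The uniform formulas for $F^\downarrow_{\lambda,k,\bl}$ and for the norm hold across all cases; the only genuinely case-dependent step is this identification of target modules. Thus the main obstacle is notational rather than analytic: keeping the dualization conventions and the numerous half-integer weight shifts consistent so that the single formula specializes correctly, especially in the exceptional {\it Cases} 5 and 6, where $\fp^+_{22}$ and the ${\rm U}(1)$-twists must be matched against the $E_6$ and $E_7$ branching data.
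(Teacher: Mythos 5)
Your proposal follows essentially the same route as the paper: the intertwining property comes from Theorem~\ref{thm_intertwining}(1) with the kernel $\det_{\fn^-_{11}}(x_{11})^k\rK_\bl(x_{22})$, the explicit polynomial is read off from the third equality of Theorem~\ref{thm_nonsimple} after the Pochhammer reindexing $\bigl(\lambda+k-\tfrac d2 r'\bigr)_{\bn,d}=\bigl(\lambda+k-\tfrac d2 r'\bigr)_{\bl,d}/\bigl(\lambda+k-\tfrac d2 r'+\bn\bigr)_{\bl-\bn,d}$, and the operator norm is obtained by testing on $\det_{\fn^+_{11}}(x_{11})^kf(x_{22})$, converting to the Fischer and $L^2(\Sigma_{22})$ norms via \cite[Part~III, Corollary~V.3.5]{FKKLR}, comparing with (\ref{norm_nonsimple}), and invoking multiplicity-freeness. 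This is exactly the paper's argument (carried out there for \textit{Case}~2 and asserted for the remaining cases), so the proposal is correct.
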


For holographic operators $\cF^\uparrow_{\lambda,k,\bl}$, see \cite[Theorems 5.7(2), 5.10 and 5.19(1)]{N2}.
Next we assume~$\fp^\pm_{22}$ is also of tube type, that is, we consider {\it Case} 1, {\it Case} 2, {\it Case} 3 with $q''=s''=r''$, {\it Case} 4 with $s''=2r''$, and {\it Case} 5.
Then by Theorem \ref{thm_nonsimple} and (\ref{key_nonsimple}) we have
\begin{align*}
F_{\lambda,k,\bl}^\downarrow(z)
={}&\frac{1}{\big(\frac{n'}{r'}\big)_{\underline{k}_{r'},d}\big(\frac{n''}{r''}\big)_{\bl,d}}
\frac{\big(\lambda-\frac{d}{2}r'\big)_{\bl,d}}
{\big(\lambda-\frac{d}{2}r'\big)_{\underline{k}_{r''},d}\big(\lambda+k-\frac{d}{2}r'\big)_{\bl,d}}
\\
&\times\det_{\fn^-}(z)^{-\lambda+\frac{n}{r}}
\det_{\fn^-_{22}}\bigg(\frac{\partial}{\partial z_{22}}\bigg)^k\det_{\fn^-}(z)^{\lambda+k-\frac{n}{r}}\rK_\bl(z_{22})
\\
={}&\frac{\det_{\fn^-}(z)^{-\lambda+\frac{n}{r}}}{\big(\frac{n'}{r'}\big)_{\underline{k}_{r'},d} \big(\frac{n''}{r''}\big)_{\bl,d}
\big(\lambda-\frac{d}{2}r'+\bl\big)_{\underline{k}_{r''},d}}
\det_{\fn^-_{22}}\bigg(\frac{\partial}{\partial z_{22}}\bigg)^k\det_{\fn^-}(z)^{\lambda+k-\frac{n}{r}}\rK_\bl(z_{22}).
\end{align*}
Moreover, if $\bl=\underline{l}_{r''}$, then as in Corollary~\ref{cor_scalar_nonsimple}, the intertwining operators become easier.
Here ${}_0F_1^{\fp^+_{11}}$ and ${}_2F_1^{\fp^+_{11}}$ are as in (\ref{0F1^p}) and (\ref{2F1^p}).

\begin{Theorem}\label{SBO_nonsimple2}
Assume $\fp^\pm$, $\fp^\pm_{11}$, $\fp^\pm_{22}$ are of tube type.
For $\lambda>\frac{d}{2}(r-1)$, $k,l\in\BZ_{\ge 0}$,
we define $F^\uparrow_{\lambda,k,l}(x_{11},x_{22};z_{12})\in\cO\big(\fp^+_{11}\times\fp^+_{22}\times\fp^-_{12}\big)$ by
\[ F^\uparrow_{\lambda,k,l}(x_{11},x_{22};z_{12}):=\det_{\fn^+_{11}}(x_{11})^k\det_{\fn^+_{22}}(x_{22})^l
{}_0F_1^{\fp^+_{11}}\bigg(\lambda+k+l-\frac{d-2}{2};x_{11},Q(z_{12})x_{22}\bigg),
\]
and $F^\downarrow_{\lambda,k,\bl}(z)=F^\downarrow_{\lambda,k,\bl}(z_{11},z_{12},z_{22})\in\cP(\fp^-)$ by
\[
F^\downarrow_{\lambda,k,l}(z):=\frac{\det_{\fn^-_{11}}(z_{11})^k\det_{\fn^-_{22}}(z_{22})^l}
{\left(\frac{n'}{r'}\right)_{\underline{k}_{r'},d}\left(\frac{n''}{r''}\right)_{\underline{l}_{r''},d}}
{}_2F_1^{\fp^+_{11}}\left(\begin{matrix} -k,-l\\-\lambda-k-l+\frac{n}{r}\end{matrix};
{}^t\hspace{-1pt}z_{11}^\itinv,Q(z_{12}){}^t\hspace{-1pt}z_{22}^\itinv\right)\!.
\]
Then the linear map
\begin{gather*}
\cF^\uparrow_{\lambda,k,l}\colon \left\{ \hspace{-4pt} \begin{array}{lll}
\bigl(\cH_{\lambda+k+l}(D_{{\rm SO}_0(2,d)})\boxtimes \BC_{-k+l}\bigr)_{\widetilde{K}_1}
&\longrightarrow\cH_\lambda(D_{{\rm SO}_0(2,d+2)})_{\widetilde{K}} &(\textit{Case }1), \\
\cH_{(\lambda+2k)+(\lambda+2l)}(D_{{\rm U}(r',r'')})_{\widetilde{K}_1} &\longrightarrow \cH_\lambda(D_{\operatorname{Sp}(r,\BR)})_{\widetilde{K}} & (\textit{Case }2), \\
\bigl(\cH_{(\lambda_1+k)+(\lambda_2+l)}(D_{{\rm U}(r',r'')}) &&\\
\hspace{17pt}{}\hboxtimes \cH_{(\lambda_1+l)+(\lambda_2+k)}(D_{{\rm U}(r'',r')})\bigr)_{\widetilde{K}_1} \hspace{-7pt}
&\longrightarrow \cH_{\lambda_1+\lambda_2}(D_{{\rm U}(r,r)})_{\widetilde{K}} & (\textit{Case }3), \\
\cH_{\left(\frac{\lambda}{2}+k\right)+\left(\frac{\lambda}{2}+l\right)}(D_{{\rm U}(2r',2r'')})_{\widetilde{K}_1}
&\longrightarrow \cH_\lambda(D_{{\rm SO}^*(4r)})_{\widetilde{K}} & (\textit{Case }4), \\
\bigl(\cH_{\lambda+k+l}(D_{E_{6(-14)}})\boxtimes \chi_{{\rm U}(1)}^{-\lambda+2k-4l}\bigr)_{\widetilde{K}_1} \hspace{-7pt}
&\longrightarrow \cH_\lambda(D_{E_{7(-25)}})_{\widetilde{K}} & (\textit{Case }5), \end{array} \right. \\
(\cF^\uparrow_{\lambda,k,l}f)(x)
=F^\uparrow_{\lambda,k,l}\left(x_{11},x_{22};\frac{1}{\varepsilon_{12}}\frac{\partial}{\partial x_{12}}\right)f(x_{12}),
\end{gather*}
intertwines the $\big(\fg_1,\widetilde{K}_1\big)$-action, and the linear map
\begin{gather*}
\cF^\downarrow_{\lambda,k,l}\colon \left\{ \hspace{-4pt} \begin{array}{lll}
\cH_\lambda(D_{{\rm SO}_0(2,d+2)})\hspace{-7pt}&\longrightarrow\cH_{\lambda+k+l}(D_{{\rm SO}_0(2,d)})\boxtimes \BC_{-k+l} &(\textit{Case }1), \\
\cH_\lambda(D_{\operatorname{Sp}(r,\BR)})&\longrightarrow\cH_{(\lambda+2k)+(\lambda+2l)}(D_{{\rm U}(r',r'')}) &(\textit{Case }2), \\
\cH_{\lambda_1+\lambda_2}(D_{{\rm U}(r,r)})\hspace{-7pt} &\longrightarrow \cH_{(\lambda_1+k)+(\lambda_2+l)}(D_{{\rm U}(r',r'')}) &\\
&\hspace{35pt}{}\hboxtimes \cH_{(\lambda_1+l)+(\lambda_2+k)}(D_{{\rm U}(r'',r')}) &(\textit{Case }3), \\
\cH_\lambda(D_{{\rm SO}^*(4r)})&\longrightarrow
\cH_{\left(\frac{\lambda}{2}+k\right)+\left(\frac{\lambda}{2}+l\right)}(D_{{\rm U}(2r',2r'')}) &(\textit{Case }4), \\
\cH_\lambda(D_{E_{7(-25)}})&\longrightarrow \cH_{\lambda+k+l}(D_{E_{6(-14)}})
\boxtimes \chi_{{\rm U}(1)}^{-\lambda+2k-4l} &(\textit{Case }5), \end{array} \right. \\
\big(\cF^\downarrow_{\lambda,k,l}f\big)(x_{12})
=F^\downarrow_{\lambda,k,l}\bigg(\frac{\partial}{\partial x}\bigg)f(x)\bigg|_{x_{11}=0,x_{22}=0}
\end{gather*}
intertwines the $\widetilde{G}_1$-action, where we put $\lambda_1+\lambda_2=\lambda$ for {\it Case} $3$. Their operator norms are given~by
\begin{align*}
\big\Vert \cF^\uparrow_{\lambda,k,l}\big\Vert^2_{\op}=\big\Vert \cF^\downarrow_{\lambda,k,l}\big\Vert^{-2}_{\op}
&=\frac{\big(\frac{n'}{r'}\big)_{\underline{k}_{r'},d}\big(\frac{n''}{r''}\big)_{\underline{l}_{r''},d}
\big(\lambda+l-\frac{1}{2}r'\big)_{\underline{k}_{r''},d}}
{(\lambda)_{\underline{k+l}_{r''},d}\big(\lambda-\frac{1}{2}r''\big)_{\underline{k}_{r'},d}} \\
&=\frac{\big(\frac{n'}{r'}\big)_{\underline{k}_{r'},d}\big(\frac{n''}{r''}\big)_{\underline{l}_{r''},d}
\big(\lambda+k-\frac{1}{2}r''\big)_{\underline{l}_{r'},d}}
{(\lambda)_{\underline{k+l}_{r'},d}\big(\lambda-\frac{1}{2}r'\big)_{\underline{l}_{r''},d}}.
\end{align*}
\end{Theorem}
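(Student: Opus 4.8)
The plan is to realize both operators as instances of the general intertwining construction in Theorem~\ref{thm_intertwining}, and then to identify their explicit symbols with the inner products computed in Section~\ref{section_nonsimple}. First I would treat the symmetry breaking operator $\cF^\downarrow_{\lambda,k,l}$. Since $\lambda>\frac{d}{2}(r-1)$ we have $\cH_\lambda(D)_{\widetilde{K}}=\cP(\fp^+)$, so the hypothesis of Theorem~\ref{thm_intertwining}(1) holds. I would take the scalar kernel $\rK(x_2)=c\,\det_{\fn^-_{11}}(x_{11})^k\det_{\fn^-_{22}}(x_{22})^l\in\big(\cP(\fp^-_2)\big)^{K_1}$, which spans the one-dimensional $K_1$-relative invariant generating the component isomorphic to $\cP_{\underline{k}_{r'}}(\fp^+_{11})\boxtimes\cP_{\underline{l}_{r''}}(\fp^+_{22})$; by Theorem~\ref{thm_intertwining}(1) the associated differential operator then intertwines the $\widetilde{G}_1$-action, with symbol $F^\downarrow_{\lambda,k,l}(z)=\big\langle\rK(x_2),{\rm e}^{(x|\overline{z})_{\fp^-}}\big\rangle_{\lambda,\fp^-,x}$. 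Because this symbol is automatically a polynomial, $\cF^\downarrow_{\lambda,k,l}$ is a finite-order differential operator, as required.

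The second step is to evaluate this inner product explicitly. Applying Corollary~\ref{cor_scalar_nonsimple} with $\fp^+$ replaced by $\fp^-$ produces the $\det_{\fn^-_{11}}(z_{11})^k\det_{\fn^-_{22}}(z_{22})^l$ prefactor together with a Gauss hypergeometric polynomial of type ${}_2F_1^{\fp^+_{11}}$. Here I would have to match the ordering of the arguments with the stated pair $\big({}^t\hspace{-1pt}z_{11}^\itinv,Q(z_{12}){}^t\hspace{-1pt}z_{22}^\itinv\big)$; this is exactly the interchange furnished by the identity $\tilde{\Phi}_\bm^{\fp^+_{22}}\big(Q(z_{12}){}^t\hspace{-1pt}z_{11}^\itinv,{}^t\hspace{-1pt}z_{22}^\itinv\big)=\tilde{\Phi}_\bm^{\fp^+_{11}}\big(Q(z_{12}){}^t\hspace{-1pt}z_{22}^\itinv,{}^t\hspace{-1pt}z_{11}^\itinv\big)$ derived from Proposition~\ref{prop_Jordansub}(2), together with the symmetry ${}_2F_1^{\fp^+}(\cdot;x,y)={}_2F_1^{\fp^-}(\cdot;y,x)$. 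Choosing $c=\big[\big(\tfrac{n'}{r'}\big)_{\underline{k}_{r'},d}\big(\tfrac{n''}{r''}\big)_{\underline{l}_{r''},d}\big]^{-1}$ then reproduces $F^\downarrow_{\lambda,k,l}$ verbatim. The holographic operator $\cF^\uparrow_{\lambda,k,l}$ is handled by Theorem~\ref{thm_intertwining}(2), whose hypothesis again holds since $\cH_\lambda(D)_{\widetilde{K}}=\cP(\fp^+)$: evaluating the kernel $\big\langle{\rm e}^{(y_1|w_1)_{\fp^+_1}},\rK\big((\overline{x_2})^{Q(y_1)\overline{x_2}}\big)\tau(B(y_1,\overline{x_2}))\big\rangle$ for the same scalar $\rK$ and expanding the Bergman operator via the tube-type Jordan-algebra identities of Section~\ref{section_prelim1} yields the confluent series ${}_0F_1^{\fp^+_{11}}$. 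Alternatively one obtains $\cF^\uparrow_{\lambda,k,l}$ as the normalized adjoint of $\cF^\downarrow_{\lambda,k,l}$, matching \cite[Theorems 5.7(2), 5.10 and 5.19(1)]{N2}.

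For the operator norms I would invoke multiplicity-freeness: each target is an irreducible holomorphic discrete series occurring exactly once in $\cH_\lambda(D)|_{\widetilde{G}_1}$, so by Schur's lemma $\cF^\downarrow_{\lambda,k,l}$ is a scalar multiple of a partial isometry (an isometry on the relevant isotypic subspace, identically zero on its orthogonal complement), and likewise for $\cF^\uparrow_{\lambda,k,l}$. The scalar is then read off from a single vector: I would compute the squared source norm of $\det_{\fn^+_{11}}(x_{11})^k\det_{\fn^+_{22}}(x_{22})^l$ from the explicit formula at the end of Section~\ref{section_nonsimple} (the specialization of (\ref{norm_nonsimple}) to $\bl=\underline{l}_{r''}$) and its squared image norm from the $L^2(\Sigma_{22})$-normalization of $\rK$, exactly as in the proof of Theorem~\ref{SBO_nonsimple1}; the ratio gives $\|\cF^\downarrow_{\lambda,k,l}\|^{-2}_{\op}$. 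Normalizing so that $\cF^\downarrow_{\lambda,k,l}\circ\cF^\uparrow_{\lambda,k,l}=\mathrm{id}$ on the target then forces $\|\cF^\uparrow_{\lambda,k,l}\|^2_{\op}=\|\cF^\downarrow_{\lambda,k,l}\|^{-2}_{\op}$, and the two displayed forms of the norm coincide through the chain of Pochhammer identities in (\ref{constant_nonsimple}).

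The main obstacle I expect is bookkeeping rather than conceptual: pinning down the normalization constant $c$ and the $\fp^+\leftrightarrow\fp^-$ conjugations so that all three assertions (the two explicit symbols and the operator norm) emerge with precisely the stated constants, and in particular verifying the exact adjoint/inverse normalization that produces the reciprocal relation between $\|\cF^\uparrow_{\lambda,k,l}\|_{\op}$ and $\|\cF^\downarrow_{\lambda,k,l}\|_{\op}$. The remaining steps are direct applications of Theorems~\ref{thm_intertwining} and~\ref{thm_nonsimple} and of the norm identities already assembled in Section~\ref{section_nonsimple}.
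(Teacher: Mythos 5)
Your route is the paper's own: the symbol of $\cF^\downarrow_{\lambda,k,l}$ comes from Theorem~\ref{thm_intertwining}(1) applied to the relative invariant $\rK(x_2)=\det_{\fn^-_{11}}(x_{11})^k\det_{\fn^-_{22}}(x_{22})^l$, evaluated via Theorem~\ref{thm_nonsimple} and Corollary~\ref{cor_scalar_nonsimple} with the argument swap supplied by Proposition~\ref{prop_Jordansub}(2); the holographic half is deferred to \cite{N2}; and the operator norm is extracted from the single vector $\det_{\fn^+_{11}}(x_{11})^k\det_{\fn^+_{22}}(x_{22})^l$ using multiplicity-freeness, (\ref{norm_nonsimple}) and (\ref{constant_nonsimple}). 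This is exactly how the paper argues (the statement is the specialization $\bl=\underline{l}_{r''}$ of the derivation given for Theorem~\ref{SBO_nonsimple1}).

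One step as written is off and would propagate into the norm. The stated $F^\downarrow_{\lambda,k,l}$ is \emph{not} $c\,\big\langle \rK(x_2),{\rm e}^{(x|\overline{z})_{\fp^-}}\big\rangle_{\lambda,x}$ for a $\lambda$-independent constant $c$: Corollary~\ref{cor_scalar_nonsimple} carries the prefactor $\big(\lambda+\max\{k,l\}-\frac{d}{2}r'\big)_{\underline{\min\{k,l\}}_{r''},d}\big/(\lambda)_{(\underline{k+l}_{r''},\underline{k}_{r'-r''},\underline{\min\{k,l\}}_{r''}),d}$, which by (\ref{constant_nonsimple}) equals $\big(\lambda+l-\frac{d}{2}r'\big)_{\underline{k}_{r''},d}\big/(\lambda)_{\underline{k}_{r}+\underline{l}_{r''},d}$ and is absent from the theorem's symbol. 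The correct renormalization is therefore $\lambda$-dependent; it is precisely the one fixed in the derivation of Theorem~\ref{SBO_nonsimple1}, chosen so that $\cF^\downarrow_{\lambda,k,l}$ sends $\det_{\fn^+_{11}}(x_{11})^k f(x_{22})$ to $\big\langle f,\overline{\rK(\overline{x_{22}})}\big\rangle_{L^2(\Sigma_{22})}$. With the $\lambda$-independent $c$ you name, the intertwining property is unaffected (for fixed $\lambda$ the symbol is only rescaled), but your single-vector norm computation would return the stated value of $\Vert\cF^\downarrow_{\lambda,k,l}\Vert_{\op}^{-2}$ multiplied by the square of that extra Pochhammer ratio. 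Since you already flagged the normalization as the expected bookkeeping, this is a repair rather than a change of method; once it is made, the rest of your outline (including the reciprocal relation with $\Vert\cF^\uparrow_{\lambda,k,l}\Vert_{\op}^{2}$, which the paper itself delegates to \cite{N2}) goes through as in the paper.
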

Here we normalize $\frac{\partial}{\partial x}$ and $\frac{\partial}{\partial x_{12}}$ with respect to the bilinear forms
$(\cdot|\cdot)_{\fp^+}\colon\fp^+\times\fp^-\to\BC$ and $(\cdot|\cdot)_{\fp^+_{12}}\colon\fp^+_{12}\times\fp^-_{12}\to\BC$ respectively,
so that $\frac{\partial}{\partial x}\bigr|_{\fp^+_{12}}=\frac{1}{\varepsilon_{12}}\frac{\partial}{\partial x_{12}}$ holds.

For {\it Case} 3, the above result is generalized to the case such that $\fp^+_{11}=M(q',s';\BC)$, $\fp^+_{22}=M(q'',s'';\BC)$ are of non-tube type,
as in Theorem \ref{explicit_nonsimple}(3).
When $q'\le s'$, let $\cP_{\underline{k}_{q'}}(M(q',s';\BC))_R$ be the representation of ${\rm GL}(s',\BC)$ given by
\begin{equation}\label{PMR}
g.f(x_{11}):=f(x_{11}g), \qquad g\in {\rm GL}(s',\BC),\quad f\in\cP_{\underline{k}_{q'}}(M(q',s';\BC)),
\end{equation}
so that $\cP_{\underline{k}_{q'}}(M(q',s';\BC))_R\simeq V_{\underline{k}_{q'}}^{(s')}$ holds.
Similarly, when $q'\ge s'$, let $\cP_{\underline{k}_{s'}}(M(q',s';\BC))_L$ be the representation of ${\rm GL}(q',\BC)$ given by
\[ g.f(x_{11}):=f\big(g^{-1}x_{11}\big), \qquad g\in {\rm GL}(q',\BC),\quad f\in\cP_{\underline{k}_{s'}}(M(q',s';\BC)), \]
so that $\cP_{\underline{k}_{s'}}(M(q',s';\BC))_L\simeq V_{\underline{k}_{s'}}^{(q')\vee}$ holds.
We~consider the Fischer inner product $\langle\cdot,\cdot\rangle_F$ on $\cP_{\underline{k}_{q'}}(M(q',s';\BC))_R$ and
$\cP_{\underline{k}_{s'}}(M(q',s';\BC))_L$. Especially if $q'=s'$, then the identification $\BC_{-k}\to\cP_{\underline{k}_{q'}}(M(q',\BC))$,
$1\mapsto\det(x_{11})^k$ has the operator norm $\sqrt{(q')_{\underline{k}_{q'},2}}$.
Then the intertwining operators are given as follows. Here ${}_0F_1^{(2)}$ and ${}_2F_1^{(2)}$ are as in (\ref{0F1^(d)}) and (\ref{2F1^(d)}).
\begin{Theorem}\label{SBO_nonsimple3}
We~consider $\big(\fp^\pm,\fp^\pm_{11},\fp^\pm_{12},\fp^\pm_{21},\fp^\pm_{22}\big)=(M(q,s;\BC),M(q',s';\BC),M(q',s'';\BC),\linebreak
M(q'',s';\BC),M(q'',s'';\BC))$ with $q=q'+q''$, $s=s'+s''$. We~assume $q'\le s'$.
For $\lambda=\lambda_1+\lambda_2>\min\{q,s\}-1$, $k,l\in\BZ_{\ge 0}$, we define
$F^\uparrow_{\lambda,k,l}(x_{11},x_{22};z_{12},z_{21},y_{11},y_{22})
\in\cO\big(\fp^+_{11}\times\fp^+_{22}\times\fp^-_{12}\times\fp^-_{21}\times\fp^-_{11}\times\fp^-_{22}\big)$ by
\begin{gather*}
F^\uparrow_{\lambda,k,l}(x_{11},x_{22};z_{12},z_{21},y_{11},y_{22})
\\ \qquad
{}:=\begin{cases}
\ds \frac{\det\big(x_{11}{}^t\hspace{-1pt}y_{11}\big)^k\det\big(x_{22}{}^t\hspace{-1pt}y_{22}\big)^l}
{(q')_{\underline{k}_{q'},2}(q'')_{\underline{l}_{q''},2}}
{}_0F_1^{(2)}\big(\lambda+k+l;x_{11}{}^t\hspace{-1pt}z_{21}x_{22}{}^t\hspace{-1pt}z_{12}\big), & q''\le s'',
\\[3ex]
\ds \frac{\det\big(x_{11}{}^t\hspace{-1pt}y_{11}\big)^k\det\big({}^t\hspace{-1pt}y_{22}x_{22}\big)^l}
{(q')_{\underline{k}_{q'},2}(s'')_{\underline{l}_{s''},2}}
{}_0F_1^{(2)}\big(\lambda+k+l;x_{11}{}^t\hspace{-1pt}z_{21}x_{22}{}^t\hspace{-1pt}z_{12}\big), & q''\ge s'',
\end{cases}
\end{gather*}
and $F^\downarrow_{\lambda,k,l}(z;w_{11},w_{22})
=F^\downarrow_{\lambda,k,l}\big(\!\left(\begin{smallmatrix}z_{11}&z_{12}\\z_{21}&z_{22} \end{smallmatrix}\right);w_{11},w_{22}\big)
\in\cP\big(\fp^-\times\fp^+_{11}\times\fp^+_{22}\big)$ by
\begin{gather*}
F^\downarrow_{\lambda,k,l}(z;w_{11},w_{22})
\\ \qquad
{}:=\begin{cases}
\ds \frac{\det\big(z_{11}{}^t\hspace{-1pt}w_{11}\big)^k\det\big(z_{22}{}^t\hspace{-1pt}w_{22}\big)^l}
{(q')_{\underline{k}_{q'},2}(q'')_{\underline{l}_{q''},2}}
\vspace{1mm}\\ \
\ds \times\, {}_2F_1^{(2)}\left(\begin{matrix} -k,-l \\ -\lambda\!-\!k\!-\!l\!+\!q'\!+\!q''\end{matrix};
\big(z_{11}{}^t\hspace{-1pt}w_{11}\big)^{-1}z_{12}{}^t \hspace{-1pt}w_{22}\big(z_{22}{}^t\hspace{-1pt}w_{22}\big)^{-1}z_{21}{}^t\hspace{-1pt}w_{11}\right)\!,
& q''\le s'',
\vspace{1mm}\\
\ds \frac{\det\big(z_{11}{}^t\hspace{-1pt}w_{11}\big)^k\det\big({}^t\hspace{-1pt}w_{22}z_{22}\big)^l}
{(q')_{\underline{k}_{q'},2}(s'')_{\underline{l}_{s''},2}}
\vspace{1mm}\\ \
\ds \times\, {}_2F_1^{(2)}\left(\begin{matrix} -k,-l \\ -\lambda\!-\!k\!-\!l\!+\!q'\!+\!s''\end{matrix};
\big(z_{11}{}^t\hspace{-1pt}w_{11}\big)^{-1}z_{12}\big({}^t\hspace{-1pt}w_{22}z_{22}\big)^{-1}{}^t \hspace{-1pt}w_{22}z_{21}{}^t\hspace{-1pt}w_{11}\right)\!,
& q''\ge s''. \end{cases}
\end{gather*}
Then the linear map
\begin{gather*}
\cF^\uparrow_{\lambda,k,l}\colon \
\begin{cases}
\bigl(\cH_{(\lambda_1+k)+\lambda_2}(D_{{\rm U}(q',s'')},\BC\boxtimes\cP_{\underline{l}_{q''}}(\fp^+_{22})_R)
\\ \qquad
{}\hboxtimes\cH_{(\lambda_1+l)+\lambda_2}(D_{{\rm U}(q'',s')},\BC \boxtimes\cP_{\underline{k}_{q'}}(\fp^+_{11})_R)\bigr)_{\widetilde{K}_1},\hspace{10mm} q''\le s'',
\\[2ex]
\bigl(\cH_{(\lambda_1+k)+(\lambda_2+l)}(D_{{\rm U}(q',s'')})
\hboxtimes
\cH_{\lambda_1+\lambda_2}(D_{{\rm U}(q'',s')},\cP_{\underline{l}_{s''}}(\fp^+_{22})_L\boxtimes\cP_{\underline{k}_{q'}}(\fp^+_{11})_R)
\bigr)_{\widetilde{K}_1},
\\ \hspace{92mm}
q''\ge s''
\end{cases}
\\ \qquad
{}\longrightarrow\cH_{\lambda_1+\lambda_2}(D_{{\rm U}(q,s)})_{\widetilde{K}},
\\
\big(\cF^\uparrow_{\lambda,k,l}f\big)(x)=F^\uparrow_{\lambda,k,l}\bigg(x_{11},x_{22};
\frac{\partial}{\partial x_{12}},\frac{\partial}{\partial x_{21}},
\frac{\partial}{\partial w_{11}},\frac{\partial}{\partial w_{22}}\bigg)f(x_{12},x_{21},w_{11},w_{22})
\bigg|_{w_{11}=0,\,w_{22}=0}
\end{gather*}
intertwines the $\big(\mathfrak{u}(q',s'')\oplus\mathfrak{u}(q'',s'),
\widetilde{{\rm U}}(q')\times\widetilde{{\rm U}}(s'')\times\widetilde{{\rm U}}(q'')\times\widetilde{{\rm U}}(s')\big)$-action,
and the linear map
\begin{gather*}
\cF^\downarrow_{\lambda,k,l}\colon \ \cH_{\lambda_1+\lambda_2}(D_{{\rm U}(q,s)})\!\longrightarrow \!
\begin{cases}
\cH_{(\lambda_1+k)+\lambda_2}\big(D_{{\rm U}(q',s'')},\BC\boxtimes\cP_{\underline{l}_{q''}}(\fp^+_{22})_R\big)
\\ \quad
{}\hboxtimes\cH_{(\lambda_1+l)+\lambda_2}\big(D_{{\rm U}(q'',s')},\BC\boxtimes\cP_{\underline{k}_{q'}}(\fp^+_{11})_R\big), \hspace*{12mm} q''\le s'',
\\
\cH_{(\lambda_1+k)+(\lambda_2+l)}\big(D_{{\rm U}(q',s'')}\big)
\\ \quad
{}\hboxtimes
\cH_{\lambda_1+\lambda_2}\big(D_{{\rm U}(q'',s')},\cP_{\underline{l}_{s''}}(\fp^+_{22})_L \boxtimes\cP_{\underline{k}_{q'}}(\fp^+_{11})_R\big), \ \
q''\ge s'',
 \end{cases}\!\!\!\!\!\!
\\[1ex]
\big(\cF^\downarrow_{\lambda,k,l}f\big)(x_{12},x_{21},w_{11},w_{22})
=F^\downarrow_{\lambda,k,l}\bigg(\frac{\partial}{\partial x};w_{11},w_{22}\bigg)f(x)\bigg|_{x_{11}=0,\,x_{22}=0}
\end{gather*}
intertwines the $\widetilde{{\rm U}}(q',s'')\times\widetilde{{\rm U}}(q'',s')$-action. Their operator norms are given by
\begin{gather*}
\big\Vert \cF^\uparrow_{\lambda,k,l}\big\Vert^2_{\op}=\big\Vert \cF^\downarrow_{\lambda,k,l}\big\Vert^{-2}_{\op}
=\begin{cases}
\ds \frac{(\lambda+l-q')_{\underline{k}_{q''},2}}
{(\lambda)_{\underline{k+l}_{q''},2}(\lambda-q'')_{\underline{k}_{q'},2}}
=\frac{(\lambda+k-q'')_{\underline{l}_{q'},2}}
{(\lambda)_{\underline{k+l}_{q'},2}(\lambda-q')_{\underline{l}_{q''},2}}, & q''\le s'',
\\[2ex]
\ds \frac{(\lambda+l-q')_{\underline{k}_{s''},2}}
{(\lambda)_{\underline{k+l}_{s''},2}(\lambda-s'')_{\underline{k}_{q'},2}}
=\frac{(\lambda+k-s'')_{\underline{l}_{q'},2}}
{(\lambda)_{\underline{k+l}_{q'},2}(\lambda-q')_{\underline{l}_{s''},2}}, & q''\ge s''.
\end{cases}
\end{gather*}
\end{Theorem}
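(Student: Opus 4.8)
The plan is to realize $\cF^\downarrow_{\lambda,k,l}$ as an instance of the general construction of Theorem \ref{thm_intertwining}(1) and to read off its symbol directly from the inner product already evaluated in Theorem \ref{explicit_nonsimple}(3). For scalar type $(\tau,V)=(\chi^{-\lambda_1,-\lambda_2},\BC)$ the relevant kernel $\rK(x_2)\in(\cP(\fp^-_2)\otimes W)^{K_1^\BC}$ is the determinant kernel $\det(x_{11}{}^tw_{11})^k\det(x_{22}{}^tw_{22})^l$ (resp.\ $\det({}^tw_{22}x_{22})^l$ when $q''\ge s''$), whose $K_1$-equivariance under the left/right ${\rm GL}$-actions of (\ref{PMR}) is immediate from the way the determinant polynomials transform. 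First I would substitute this kernel into the defining formula $F^\downarrow_{\lambda,\rho_0}(z)=\langle {\rm e}^{(x|z)_{\fp^+}},\rK(\overline{x_2})^*\rangle_{\lambda,\fp^+,x}$; by Theorem \ref{explicit_nonsimple}(3) this inner product is exactly the ${}_2F_1^{(2)}$ expression that appears in $F^\downarrow_{\lambda,k,l}$, so the $\widetilde{G}_1$-intertwining property of $\cF^\downarrow_{\lambda,k,l}$ follows at once from Theorem \ref{thm_intertwining}(1).

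The genuinely delicate point is that $\fp^+_{11}=M(q',s';\BC)$ and $\fp^+_{22}=M(q'',s'';\BC)$ are of non-tube type, so the determinant polynomial $\det_{\fn^+}$ and the Laplace/Riesz machinery underlying Theorems \ref{thm_nonsimple} and \ref{explicit_nonsimple} are not directly available. I would treat this exactly as in the proof of Theorem \ref{explicit_nonsimple}(3): reduce to the square, tube-type system $M(q'+q'',\BC)$ (resp.\ $M(q'+s'',\BC)$) by restricting to the Jordan subsystem cut out by a maximal tripotent of the appropriate rank, apply Corollary \ref{inner_sub}, and then restore the full generality of the boundary vectors $w_{11},w_{22}$ by the ${\rm GL}(s',\BC)\times {\rm GL}(s'',\BC)$-equivariance. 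The dichotomy $q''\le s''$ versus $q''\ge s''$ is dictated by which factor becomes tube-type after reduction, and it is precisely this split that produces the two displayed forms $\det(x_{22}{}^tw_{22})^l$ and $\det({}^tw_{22}x_{22})^l$ together with the corresponding right/left modules $\cP_{\underline{l}_{q''}}(\fp^+_{22})_R$ and $\cP_{\underline{l}_{s''}}(\fp^+_{22})_L$.

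For the operator norm I would again reduce to the square case and invoke the norm identity (\ref{norm_nonsimple}) together with its explicit evaluation (\ref{constant_nonsimple}), specialized to $d=2$, $r'=q'$, $r''=\min\{q'',s''\}$ and $\bl=\underline{l}_{r''}$: after dividing out the Fischer norms $\Vert\det(z_{11}{}^tw_{11})^k\Vert_F^2$ and $\Vert\det(z_{22}{}^tw_{22})^l\Vert_F^2$, this yields exactly the reciprocal Pochhammer ratio claimed for $\Vert\cF^\downarrow_{\lambda,k,l}\Vert^{-2}_{\op}$, the two displayed expressions being the two equivalent forms already recorded in (\ref{constant_nonsimple}). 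The holographic operator $\cF^\uparrow_{\lambda,k,l}$ and its ${}_0F_1^{(2)}$ symbol I would obtain from Theorem \ref{thm_intertwining}(2), following the tube-type computation of Theorem \ref{SBO_nonsimple2} and \cite{N2}; the equality $\Vert\cF^\uparrow_{\lambda,k,l}\Vert^2_{\op}=\Vert\cF^\downarrow_{\lambda,k,l}\Vert^{-2}_{\op}$ then reflects that, in the multiplicity-free decomposition of $\cH_{\lambda_1+\lambda_2}(D_{{\rm U}(q,s)})|_{\widetilde{G}_1}$, the two chosen normalizations make $\cF^\uparrow_{\lambda,k,l}$ the inverse, up to the isotypic projection, of the isometry-up-to-constant $\cF^\downarrow_{\lambda,k,l}$ on the single component they connect.

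The main obstacle I expect is bookkeeping rather than conceptual: correctly matching the non-square reduction with the left/right ${\rm GL}$-module normalizations so that the representation-valued target spaces and the Fischer normalizations of $\cP_{\underline{k}_{q'}}(\fp^+_{11})_R$, $\cP_{\underline{l}_{q''}}(\fp^+_{22})_R$, $\cP_{\underline{l}_{s''}}(\fp^+_{22})_L$ come out consistent in both sub-cases, and verifying that the resulting operator-norm ratio is genuinely independent of the auxiliary boundary vectors $w_{11},w_{22}$.
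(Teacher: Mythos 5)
Your proposal is correct and follows essentially the same route as the paper: the symbol of $\cF^\downarrow_{\lambda,k,l}$ is read off from Theorem~\ref{explicit_nonsimple}(3) (itself obtained by reduction to the tube-type square case via Corollary~\ref{inner_sub} and restored by ${\rm GL}(s',\BC)\times{\rm GL}(s'',\BC)$-equivariance), the intertwining property comes from Theorem~\ref{thm_intertwining}, and the operator norm from the isometry-up-to-constant argument using~(\ref{norm_nonsimple}) and~(\ref{constant_nonsimple}) after dividing by the Fischer norms $(q')_{\underline{k}_{q'},2}$, $(q'')_{\underline{l}_{q''},2}$ (resp.\ $(s'')_{\underline{l}_{s''},2}$). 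No gaps.
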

Except for the results on operator norms, Theorems \ref{SBO_nonsimple1}, \ref{SBO_nonsimple2} and \ref{SBO_nonsimple3} hold
without the assumption $\lambda>\frac{d}{2}(r-1)$, if $\lambda$ is not a pole and if we replace $\cH$ with $\cO$.

We~return to the case $\fp^+$, $\fp^+_{11}$, $\fp^+_{22}$ are of tube type. If $\lambda=\frac{n}{r}-a$ with $a=1,2,\dots,\min\{k,l_{r''}\}$,
then by Corollary~\ref{cor_identity_nonsimple} we have
\begin{align*}
F_{\frac{n}{r}-a,k,\bl}^\downarrow(z)
&=\frac{\det_{\fn^-}(z)^{a}}{\big(\frac{n'}{r'}\big)_{\underline{k}_{r'},d}\big(\frac{n''}{r''}\big)_{\bl,d}
\big(\frac{n''}{r''}-a+\bl\big)_{\underline{k}_{r''},d}}
\det_{\fn^-_{22}}\left(\frac{\partial}{\partial z_{22}}\right)^k\det_{\fn^-}(z)^{k-a}\rK_\bl(z_{22})
\\
&=\frac{1}{\big(\frac{n'}{r'}\big)_{\underline{k}_{r'},d}\big(\frac{n''}{r''}\big)_{\bl,d}
\big(\frac{n''}{r''}\!+\!\bl\big)_{\underline{k-a}_{r''},d}}
\det_{\fn^-_{22}}\bigg(\frac{\partial}{\partial z_{22}}\bigg)^{k-a}\!\!\!\!\!\!\!\det_{\fn^-}(z)^k \det_{\fn^-_{22}}(z_{22})^{-a}\rK_\bl(z_{22})
\\
&=\frac{\big(\frac{n'}{r'}\big)_{\underline{k-a}_{r'},d}\big(\frac{n''}{r''}\big)_{\bl-\underline{a}_{r''},d}}
{\big(\frac{n'}{r'}\big)_{\underline{k}_{r'},d}\big(\frac{n''}{r''}\big)_{\bl,d}}
\det_{\fn^-}(z)^aF_{\frac{n}{r}+a,k-a,\bl-\underline{a}_{r''}}^\downarrow(z)
\\
&=\frac{(-1)^{ar}}{(-k)_{\underline{a}_{r'},d}(-\bl^\vee)_{\underline{a}_{r''},d}}
\det_{\fn^-}(z)^aF_{\frac{n}{r}+a,k-a,\bl-\underline{a}_{r''}}^\downarrow(z),
\end{align*}
where $\bl^\vee=(l_{r''},\dots,l_1)$.
Next we additionally assume $r'\le r''$ and $\bl=\underline{l}_{r''}$.
Then at $\lambda=\frac{d}{2}r''-k-l+a$ with $a=1,2,\dots,\min\{k,l\}$, by Proposition \ref{prop_hypergeom}(2) we have
\begin{gather*}
\lim_{\lambda\to\frac{d}{2}r''-k-l+a}
\bigg({-}\lambda-k-l+\frac{n}{r}\bigg)_{\underline{a}_{r'},d}F^\downarrow_{\lambda,k,l}(z)
\\ \qquad
{}=\lim_{\lambda\to\frac{d}{2}r''-k-l+a}
\frac{\big({-}\lambda-k-l+\frac{n}{r}\big)_{\underline{a}_{r'},d}}
{\big(\frac{n'}{r'}\big)_{\underline{k}_{r'},d}\big(\frac{n''}{r''}\big)_{\underline{l}_{r''},d}}
\det_{\fn^-_{11}}(z_{11})^k\det_{\fn^-_{22}}(z_{22})^l
\\ \qquad\hphantom{=}
{} \ \times{}_2F_1^{\fp^+_{11}}\left(\begin{matrix} -k,-l\\-\lambda-k-l+\frac{n}{r}\end{matrix};
{}^t\hspace{-1pt}z_{11}^\itinv,Q(z_{12}){}^t\hspace{-1pt}z_{22}^\itinv\right)
\\ \qquad
{}=\frac{(-k)_{\underline{a}_{r'},d}(-l)_{\underline{a}_{r'},d}}
{\big(\frac{n'}{r'}\big)_{\underline{k}_{r'},d}\big(\frac{n''}{r''}\big)_{\underline{l}_{r''},d}
\big(\frac{n'}{r'}\big)_{\underline{a}_{r'},d}}\det_{\fn^-_{11}}(z_{11})^k\det_{\fn^-_{22}}(z_{22})^l
\\ \qquad\hphantom{=}
{} \ \times\det_{\fn^+_{11}}({}^t\hspace{-1pt}z_{11}^\itinv)^a\det_{\fn^-_{11}}\left(Q(z_{12}){}^t\hspace{-1pt}z_{22}^\itinv\right)^a
{}_2F_1^{\fp^+_{11}}\left(\begin{matrix} -k+a,-l+a\\\frac{n'}{r'}+a\end{matrix};
{}^t\hspace{-1pt}z_{11}^\itinv,Q(z_{12}){}^t\hspace{-1pt}z_{22}^\itinv\right)
\\ \qquad
{}=\frac{\big(\frac{n'}{r'}+k-a\big)_{\underline{a}_{r'},d} \big(\frac{n'}{r'}+l-a\big)_{\underline{a}_{r'},d}}
{\big(\frac{n'}{r'}\big)_{\underline{k}_{r'},d}\big(\frac{n''}{r''}\big)_{\underline{l}_{r''},d}
\big(\frac{n'}{r'}\big)_{\underline{a}_{r'},d}}\det_{\fn^-_{11}}(z_{11})^{k-a}\det_{\fn^-_{22}}(z_{22})^l \\ \qquad\hphantom{=}
{} \ \times\det_{\fn^-_{11}}\big(Q(z_{12}){}^t\hspace{-1pt}z_{22}^\itinv\big)^a
{}_2F_1^{\fp^+_{11}}\left(\begin{matrix} -k+a,-l+a\\\frac{n'}{r'}+a\end{matrix};
{}^t\hspace{-1pt}z_{11}^\itinv,Q(z_{12}){}^t\hspace{-1pt}z_{22}^\itinv\right)\!.
\end{gather*}
Especially, when $\fp^+_{12}$ is also of tube type, let $\dim\fp^+_{12}=:n_{12}$, $\rank\fp^+_{12}=:r_{12}$.
Then $r'=r''$ and $\frac{d}{2}r''=\frac{n_{12}}{\varepsilon_{12}r_{12}}$ hold.
Let $\det_{\fn^-_{12}}(z_{12})$ be the polynomial on $\fp^-_{12}$ satisfying
\[
\det_{\fn^-_{11}}(Q(z_{12})x_{22})
=\det_{\fn^-_{12}}(z_{12})^{\varepsilon_{12}}\det_{\fn^+_{22}}(x_{22}), \qquad
z_{12}\in\fp^-_{12},\quad x_{22}\in\fp^+_{22}.
\]
Then we have
\begin{gather*}
\lim_{\lambda\to\frac{n_{12}}{\varepsilon_{12}r_{12}}-k-l+a}
\bigg({-}\lambda-k-l+\frac{n}{r}\bigg)_{\underline{a}_{r'},d}F^\downarrow_{\lambda,k,l}(z)
\\ \qquad
{}=\frac{\det_{\fn^-_{11}}(z_{11})^{k-a}\det_{\fn^-_{22}}(z_{22})^{l-a} \det_{\fn^-_{12}}(z_{12})^{\varepsilon_{12}a}}
{\big(\frac{n'}{r'}\big)_{\underline{k-a}_{r'},d}\big(\frac{n''}{r''}\big)_{\underline{l-a}_{r''},d}
\big(\frac{n'}{r'}\big)_{\underline{a}_{r'},d}}
{}_2F_1^{\fp^+_{11}}\left(\begin{matrix} -k\!+\!a,-l\!+\!a\\\frac{n'}{r'}+a\end{matrix};
{}^t\hspace{-1pt}z_{11}^\itinv,Q(z_{12}){}^t\hspace{-1pt}z_{22}^\itinv\!\right)
\\ \qquad
{}=\frac{1}{\big(\frac{n'}{r'}\big)_{\underline{a}_{r'},d}}
\det_{\fn^-_{12}}(z_{12})^{\varepsilon_{12}a}F^\downarrow_{\frac{n_{12}}{\varepsilon_{12}r_{12}}-k-l+a,k-a,l-a}(z).
\end{gather*}
Therefore the following holds.

\begin{Theorem}\label{thm_func_id_nonsimple}\quad
\begin{enumerate}\itemsep=0pt
\item[$1.$] Assume $\fp^+$, $\fp^+_{11}$, $\fp^+_{22}$ are of tube type. If $\lambda=\frac{n}{r}-a$, $a=1,2,\dots,\allowbreak\min\{k,l_{r''}\}$,
then we have
\[
 \cF^\downarrow_{\frac{n}{r}-a,k,\bl}
=\frac{(-1)^{ar}}{(-k)_{\underline{a}_{r'},d}(-\bl^\vee)_{\underline{a}_{r''},d}}
\cF^\downarrow_{\frac{n}{r}+a,k-a,\bl-\underline{a}_{r''}}\circ \det_{\fn^-}\bigg(\frac{\partial}{\partial x}\bigg)^a.
\]
\item[$2.$] Assume $\fp^+_{12}$ is also of tube type, and let $\bl=\underline{l}_{r''}$.
At $\lambda=\frac{n_{12}}{\varepsilon_{12}r_{12}}-k-l+a$, $a=1,2,\allowbreak\dots,\min\{k,l\}$, we have
\begin{gather*}
\lim_{\lambda\to\frac{n_{12}}{\varepsilon_{12}r_{12}}-k-l+a}
\bigg({-}\lambda-k-l+\frac{n}{r}\bigg)_{\underline{a}_{r'},d}\cF^\downarrow_{\lambda,k,l}
\\ \qquad
{}=\frac{1}{\big(\frac{n'}{r'}\big)_{\underline{a}_{r'},d}}
\det_{\fn^-_{12}}\bigg(\frac{1}{\varepsilon_{12}}\frac{\partial}{\partial x_{12}}\bigg)^{\varepsilon_{12}a}
\circ\cF^\downarrow_{\frac{n_{12}}{\varepsilon_{12}r_{12}}-k-l+a,k-a,l-a}.
\end{gather*}
\end{enumerate}
\end{Theorem}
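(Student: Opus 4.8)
The plan is to reduce both identities to the corresponding identities for the symbol polynomials $F^\downarrow_{\lambda,k,\bl}(z)$, which are precisely the two displayed computations carried out immediately before the statement. Recall that $\cF^\downarrow_{\lambda,k,\bl}$ is by definition the operator obtained by substituting $z=\frac{\partial}{\partial x}$ into $F^\downarrow_{\lambda,k,\bl}(z)\in\cP(\fp^-)$ and then restricting to $x_{11}=x_{22}=0$. Since every $F^\downarrow_{\lambda,k,\bl}$ is a polynomial, these are finite-order differential operators whose coefficients are independent of $x$; in particular, for any factorization of symbols $F(z)=g(z)h(z)$ into polynomials, the substitution $z\mapsto\frac{\partial}{\partial x}$ turns the product into a composition of commuting constant-coefficient operators, $F\big(\frac{\partial}{\partial x}\big)=g\big(\frac{\partial}{\partial x}\big)\circ h\big(\frac{\partial}{\partial x}\big)=h\big(\frac{\partial}{\partial x}\big)\circ g\big(\frac{\partial}{\partial x}\big)$.

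For Part 1 I would take the symbol identity $F^\downarrow_{\frac{n}{r}-a,k,\bl}(z)=\frac{(-1)^{ar}}{(-k)_{\underline{a}_{r'},d}(-\bl^\vee)_{\underline{a}_{r''},d}}\det_{\fn^-}(z)^a\,F^\downarrow_{\frac{n}{r}+a,k-a,\bl-\underline{a}_{r''}}(z)$ established just above, and apply the preceding remark with $h=F^\downarrow_{\frac{n}{r}+a,k-a,\bl-\underline{a}_{r''}}$ and $g=$ the scalar times $\det_{\fn^-}^a$. Substituting $z=\frac{\partial}{\partial x}$, applying to $f$ and restricting to $x_{11}=x_{22}=0$, the inner factor $\det_{\fn^-}\big(\frac{\partial}{\partial x}\big)^a$ produces a holomorphic function on $\fp^+$ to which the outer factor is exactly $\cF^\downarrow_{\frac{n}{r}+a,k-a,\bl-\underline{a}_{r''}}$; note that since $\det_{\fn^-}$ differentiates in all directions of $\fp^+$ it does \emph{not} commute with the evaluation, so it must stay inner, which is why the composition appears in this order. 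I must also check that the right-hand operator is defined and the scalar finite and nonzero: the hypothesis $a\le\min\{k,l_{r''}\}$ gives $k-a\ge 0$ and $\bl-\underline{a}_{r''}\in\BZ^{r''}_{++}$, and guarantees that each factor of $(-k)_{\underline{a}_{r'},d}$ and $(-\bl^\vee)_{\underline{a}_{r''},d}$ is nonzero.

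For Part 2 I would argue in the same way, starting now from the meromorphic continuation in $\lambda$ and the limit identity $\lim_{\lambda\to\frac{n_{12}}{\varepsilon_{12}r_{12}}-k-l+a}\big({-}\lambda-k-l+\frac{n}{r}\big)_{\underline{a}_{r'},d}F^\downarrow_{\lambda,k,l}(z)=\frac{1}{(\frac{n'}{r'})_{\underline{a}_{r'},d}}\det_{\fn^-_{12}}(z_{12})^{\varepsilon_{12}a}F^\downarrow_{\frac{n_{12}}{\varepsilon_{12}r_{12}}-k-l+a,k-a,l-a}(z)$ derived above (the vanishing prefactor is what forces the multiplication by $\big({-}\lambda-k-l+\frac{n}{r}\big)_{\underline{a}_{r'},d}$ and the use of the confluence Proposition~\ref{prop_hypergeom}(2)). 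The only new point relative to Part 1 is that $\det_{\fn^-_{12}}(z_{12})^{\varepsilon_{12}a}$ involves only the $\fp^-_{12}$ variable, so under $z\mapsto\frac{\partial}{\partial x}$ it becomes $\det_{\fn^-_{12}}\big(\frac{1}{\varepsilon_{12}}\frac{\partial}{\partial x_{12}}\big)^{\varepsilon_{12}a}$ (using $\frac{\partial}{\partial x}\big|_{\fp^+_{12}}=\frac{1}{\varepsilon_{12}}\frac{\partial}{\partial x_{12}}$) and differentiates only in directions transverse to the coordinates $x_{11},x_{22}$ set to zero. Hence this factor commutes with the restriction $x_{11}=x_{22}=0$ and survives as an honest outer differential operator on the target $\cO(D_1)=\cO(D_{\fp^+_{12}})$, giving the stated composition $\det_{\fn^-_{12}}\big(\frac{1}{\varepsilon_{12}}\frac{\partial}{\partial x_{12}}\big)^{\varepsilon_{12}a}\circ\cF^\downarrow_{\frac{n_{12}}{\varepsilon_{12}r_{12}}-k-l+a,k-a,l-a}$.

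The main obstacle is bookkeeping rather than genuine difficulty: the analytic content (the two symbol identities, resting on Corollary~\ref{cor_identity_nonsimple}, Proposition~\ref{prop_Jordansub}(2) and Proposition~\ref{prop_hypergeom}(2)) is already in place, so the work is to pass cleanly from the polynomial identities to operator identities. The one step that genuinely needs care is the commutation in Part 2 of the $x_{12}$-directional operator $\det_{\fn^-_{12}}\big(\frac{1}{\varepsilon_{12}}\frac{\partial}{\partial x_{12}}\big)^{\varepsilon_{12}a}$ with the evaluation $x_{11}=x_{22}=0$, allowing it to be pulled outside the symmetry breaking operator; this is exactly where the extra hypotheses on $\fp^+_{12}$ (making $\det_{\fn^-_{12}}$ available) and $r'=r''$ enter, and it explains why the two parts have the factors composed in opposite orders.
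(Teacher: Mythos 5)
Your proposal is correct and follows the paper's own route: the paper derives exactly the two symbol identities for $F^\downarrow_{\lambda,k,\bl}(z)$ in the displays preceding the theorem (via Corollary~\ref{cor_identity_nonsimple} resp.\ Proposition~\ref{prop_hypergeom}(2)) and then simply states ``Therefore the following holds.'' Your careful justification of the passage from polynomial identities to operator identities --- the commutation of constant-coefficient operators, the necessity of keeping $\det_{\fn^-}\big(\frac{\partial}{\partial x}\big)^a$ inner in Part~1, and the commutation of the purely $x_{12}$-directional factor with the evaluation $x_{11}=x_{22}=0$ in Part~2 --- correctly fills in the step the paper leaves implicit.
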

This theorem is regarded as an analogue of the functional identities \cite[Corollaries 12.7(2) and~12.8(3)]{KS1}.
Here, for individual cases, when we take a maximal tripotent $e\in\fp^+_{11}\oplus\fp^+_{22}$ suitably,
$\det_{\fn^-}(z)$ and $\det_{\fn^-_{12}}(z_{12})$ (for $r'=r''$ case) are given by
\[
\det_{\fn^-}(z)=q(z), \qquad \det_{\fn^-_{12}}(z_{12})^{\varepsilon_{12}}=-q(z_{12})
\]
for {\it Case} 1,
\[
\det_{\fn^-}\begin{pmatrix}z_{11}&z_{12}\\{}^t\hspace{-1pt}z_{12}&z_{22}\end{pmatrix}
=\det\begin{pmatrix}z_{11}&z_{12}\\{}^t\hspace{-1pt}z_{12}&z_{22}\end{pmatrix}\!, \qquad
\det_{\fn^-_{12}}\begin{pmatrix}0&z_{12}\\{}^t\hspace{-1pt}z_{12}&0\end{pmatrix}=\det(z_{12})
\]
for {\it Case} 2,
\[
\det_{\fn^-}\begin{pmatrix}z_{11}&z_{12}\\z_{21}&z_{22}\end{pmatrix}
=\det\begin{pmatrix}z_{11}&z_{12}\\z_{21}&z_{22}\end{pmatrix}\!, \qquad
\det_{\fn^-_{12}}\begin{pmatrix}0&z_{12}\\z_{21}&0\end{pmatrix}=\det(z_{12})\det(z_{21})
\]
for {\it Case} 3,
\[
\det_{\fn^-}\begin{pmatrix}z_{11}&z_{12}\\-{}^t\hspace{-1pt}z_{12}&z_{22}\end{pmatrix}
=\Pf\begin{pmatrix}z_{11}&z_{12}\\-{}^t\hspace{-1pt}z_{12}&z_{22}\end{pmatrix}\!, \qquad
\det_{\fn^-_{12}}\begin{pmatrix}0&z_{12}\\-{}^t\hspace{-1pt}z_{12}&0\end{pmatrix}=\det(z_{12})
\]
for {\it Case} 4, and
\[
\det_{\fn^-}\begin{pmatrix}z_{11}&z_{12}\\{}^t\hspace{-1pt}\hat{z}_{12}&z_{22}\end{pmatrix}
=\det\begin{pmatrix}z_{11}&z_{12}\\{}^t\hspace{-1pt}\hat{z}_{12}&z_{22}\end{pmatrix}
\]
for {\it Case} 5. By \cite[Theorem 6.4]{A}, if $\fp^+$ is of tube type, then for $a\in\BZ_{>0}$,
\begin{equation}\label{G-intertwine}
\det_{\fn^-}\bigg(\frac{\partial}{\partial x}\bigg)^a\colon\ \cO_{\frac{n}{r}-a}(D)\longrightarrow\cO_{\frac{n}{r}+a}(D)
\end{equation}
intertwines the $\widetilde{G}$-action (see also \cite[Propositions~1.2 and 2.2]{J0}, \cite[Lemma 7.1, equations~(8.6)--(8.8)]{Sh}), and similar for $\fp^+_{12}$. Especially, the both sides of Theorem \ref{thm_func_id_nonsimple}(1), (2) intertwine the $\widetilde{G}_1$-action.

\subsection[Case $\fp^+_2$ is simple]{Case $\boldsymbol{\fp^+_2}$ is simple}

Next we consider the cases such that $\fp^\pm_2$ is simple and $\fp^\pm$, $\fp^\pm_2$ are of tube type.
That is, we consider
\begin{equation}\label{list_simple}
(\fp^\pm,\fp^\pm_1,\fp^\pm_2)= \begin{cases}
(\BC^n,\BC^{n'},\BC^{n''}), \quad n=n'+n'',\quad n''\ge 3 & (\textit{Case }1), \\
(\BC^n,\BC^{n-1},\BC), \quad n\ge 3 & (\textit{Case }1'), \\
({\rm Sym}(2r',\BC),\Sym(r',\BC)\oplus\Sym(r',\BC),M(r',\BC)) & (\textit{Case }2), \\
({\rm Skew}(2s',\BC),\Skew(s',\BC)\oplus\Skew(s',\BC),M(s',\BC)) & (\textit{Case }3), \\
(M(r,\BC),\Skew(r,\BC),\Sym(r,\BC)) & (\textit{Case }4), \\
(M(2s,\BC),\Sym(2s,\BC),\Skew(2s,\BC)) & (\textit{Case }5), \\
\big({\rm Herm}(3,\BO)^\BC,M(2,6;\BC),\Skew(6,\BC)\big) & (\textit{Case }6). \end{cases}
\end{equation}
Then the corresponding symmetric pairs are
\[ (G,G_1)= \begin{cases}
({\rm SO}_0(2,n),{\rm SO}_0(2,n')\times {\rm SO}(n'')) & (\textit{Case }1), \\
({\rm SO}_0(2,n),{\rm SO}_0(2,n-1)) & (\textit{Case }1'), \\
(\operatorname{Sp}(2r',\BR),\operatorname{Sp}(r',\BR)\times \operatorname{Sp}(r',\BR)) & (\textit{Case }2), \\
({\rm SO}^*(4s'),{\rm SO}^*(2s')\times {\rm SO}^*(2s')) & (\textit{Case }3), \\
({\rm SU}(r,r),{\rm SO}^*(2r)) & (\textit{Case }4), \\
({\rm SU}(2s,2s),\operatorname{Sp}(2s,\BR)) & (\textit{Case }5), \\
(E_{7(-25)},{\rm SU}(2,6)) & (\textit{Case }6) \end{cases} \]
(up to covering). Let $\dim\fp^+=:n$, $\dim\fp^+_2=:n_2$, $\rank\fp^+=:r$, $\rank\fp^+_2=:r_2$,
let $d$, $d_1$, $d_2$ be the numbers defined in (\ref{str_const}) for $\fp^+$, $\fp^+_1$, $\fp^+_2$ respectively, and take $\varepsilon_1,\varepsilon_2\in\{1,2\}$ such that
\[
(x_j|\overline{y_j})_{\fp^+}=\varepsilon_j(x_j|\overline{y_j})_{\fp^+_j}, \qquad
x_j,y_j\in\fp^+_j,\qquad j=1,2.
\]
Also let $\delta_1$, $\delta_2$ be the numbers defined by
\[
\delta_j:=\begin{cases} 0 & \big(\fp^+_j\colon\text{not simple}\big), \\ d_j & \big(\fp^+_j\colon\text{simple},\; \varepsilon_j=1\big), \\
-1 & \big(\fp^+_j\colon\text{simple},\; \varepsilon_j=2\big), \end{cases} \qquad j=1,2.
\]
Then we have
\[
(r,r_2,d,d_2,\varepsilon_1,\varepsilon_2,\delta_1,\delta_2)= \begin{cases}
(2,2,n-2,n''-2,1,1,n'-2,n''-2) & (\textit{Case }1,\; n'\ge 2), \\
(2,2,n-2,n''-2,2,1,-1,n''-2) & (\textit{Case }1,\; n'=1), \\
(2,1,n-2,-,1,2,n-3,-1) & (\textit{Case }1'), \\
(2r',r',1,2,1,2,0,-1) & (\textit{Case }2), \\
(s',s',4,2,1,1,0,2) & (\textit{Case }3), \\
(r,r,2,1,2,1,-1,1) & (\textit{Case }4), \\
(2s,s,2,4,1,2,1,-1) & (\textit{Case }5), \\
(3,3,8,4,1,1,2,4) & (\textit{Case }6). \end{cases}
\]
We~note that if $r_2=1$, then $d_2$ is not determined uniquely, and any number is allowed.
Let $\Sigma_2\subset\fp^+_2$ be the Bergman--Shilov boundary of the bounded symmetric domain, with the normalized $K_1$-invariant measure,
so that $\Sigma_2\simeq K_1/K_{L_2}$ holds for some $K_{L_2}\subset K_1$. For these cases,
for $(\rho_0,W)\simeq \cP_{(k+l,\underline{k}_{r_2-1})}\big(\fp^+_2\big)=\cP_{(l,0,\dots,0)}(\fp^+_2)\det_{\fn^+_2}(x_2)^k$ ($\varepsilon_2=1$) or
for $(\rho_0,W)\simeq \cP_{(\underline{k+1}_l,\underline{k}_{r_2-l})}\big(\fp^+_2\big)=\cP_{\underline{1}_l}\big(\fp^+_2\big)\det_{\fn^+_2}(x_2)^k$
($\varepsilon_2=2$),
$F^\downarrow_{\lambda,\rho_0}(z)=F^\downarrow_{\lambda,k,l}(z)$ is computable by using Theorem~\ref{thm_simple}.

\begin{Theorem}\label{SBO_simple1}
Let $\big(\fp^\pm,\fp^\pm_1,\fp^\pm_2\big)$ be as in $(\ref{list_simple})$.
For $l\in\BZ_{\ge 0}$ $(\varepsilon_2=1)$ or $l\in\{0,1,\dots,r_2-1\}$ $(\varepsilon_2=2)$,
let $V_l$ be the representation of $K_1$ isomorphic to $\cP_{(l,0,\dots,0)}\big(\fp^+_2\big)$ $(\varepsilon_2=1)$
or $\cP_{\underline{1}_l}(\fp^+_2)$ $(\varepsilon_2=2)$, and
we take a vector-valued polynomial $\rK_l(x_2)\in\bigl(\cP(\fp^-_2)\otimes V_l\bigr)^{K_1}$ such that
\[
\big\Vert (v,\rK_l(\overline{x_2}))_{V_l}\big\Vert_{L^2(\Sigma_2),x_2}^2=|v|_{V_l}^2, \qquad v\in V_l.
\]
For $\lambda>\frac{d}{2}(r-1)$, $k\in\BZ_{\ge 0}$,
we define $F^\downarrow_{\lambda,k,l}(z)=F^\downarrow_{\lambda,k,l}(z_1,z_2)\in\cP(\fp^-,V_l)$ by
\[
F^\downarrow_{\lambda,k,l}(z):= \begin{cases}
\ds \frac{1}{\big(\frac{n_2}{r_2}\big)_{(k+l,\underline{k}_{r_2-1}),d_2}}
F_1^l\left(\begin{matrix} -k \\ -\lambda-2k+\frac{n}{r} \end{matrix};\rK_l;z\right)\!, & \varepsilon_2=1, \\[18pt]
\ds \frac{2^{kr_2+l}}{\big(\frac{n_2}{r_2}\big)_{(\underline{k+1}_l,\underline{k}_{r_2-l}),d_2}}
F_2^l\left(\begin{matrix} -\frac{k}{2} \\ -\lambda-k+\frac{n}{r} \end{matrix};\rK_l;z\right)\!, & \varepsilon_2=2, \end{cases}
\]
by using $F_{\varepsilon_2}^l\left(\begin{smallmatrix}\nu\\\mu\end{smallmatrix};f;z\right)$ in \eqref{Fmunu1}, \eqref{Fmunu2}.
Then the linear map
\begin{gather*}
\cF^\downarrow_{\lambda,k,l}\colon \left\{ \hspace{-4pt} \begin{array}{lll}
\cH_\lambda(D_{{\rm SO}_0(2,n)})\hspace{-7pt} &\longrightarrow \cH_{\lambda+2k+l}\big(D_{{\rm SO}_0(2,n')}\big)\boxtimes V_{(l,0,\dots,0)}^{[n'']\vee}
& (\textit{Case }1), \\
\cH_\lambda(D_{{\rm SO}_0(2,n)})\hspace{-7pt} &\longrightarrow \cH_{\lambda+k}\big(D_{{\rm SO}_0(2,n-1)}\big) & (\textit{Case }1'), \\
\cH_\lambda(D_{\operatorname{Sp}(2r',\BR)})\hspace{-7pt} &\longrightarrow \cH_{\lambda+k}\big(D_{\operatorname{Sp}(r',\BR)},V_{\underline{1}_l}^{(r')\vee}\big) \\
&\hspace{29pt}{}\hboxtimes\cH_{\lambda+k}\big(D_{\operatorname{Sp}(r',\BR)},V_{\underline{1}_l}^{(r')\vee}\big) & (\textit{Case }2), \\
\cH_\lambda(D_{{\rm SO}^*(4s')})\hspace{-7pt} &\longrightarrow \cH_{\lambda+2k}\big(D_{{\rm SO}^*(2s')},V_{(l,0,\dots,0)}^{(s')\vee}\big) \\
&\hspace{29pt}{}\hboxtimes\cH_{\lambda+2k}\big(D_{{\rm SO}^*(2s')},V_{(l,0,\dots,0)}^{(s')\vee}\big) & (\textit{Case }3), \\
\cH_\lambda(D_{{\rm SU}(r,r)})\hspace{-7pt} &\longrightarrow \cH_{2\lambda+4k}\big(D_{{\rm SO}^*(2r)},V_{(2l,0,\dots,0)}^{(r)\vee}\big) & (\textit{Case }4), \\
\cH_\lambda(D_{{\rm SU}(2s,2s)})\hspace{-7pt} &\longrightarrow \cH_{\lambda+k}\big(D_{\operatorname{Sp}(2s,\BR)},V_{\underline{1}_{2l}}^{(2s)\vee}\big) & (\textit{Case }5), \\
\cH_\lambda(D_{E_{7(-25)}})\hspace{-7pt} &\longrightarrow \cH_{\lambda+2k}\big(D_{{\rm SU}(2,6)},\BC\boxtimes V_{(l,l,l,l,0,0)}^{(6)}\big)
& (\textit{Case }6), \end{array} \right. \\
\big(\cF^\downarrow_{\lambda,k,l}f\big)(x_1)
=\left.F^\downarrow_{\lambda,k,l}\left(\frac{\partial}{\partial x}\right)f(x)\right|_{x_2=0}
\end{gather*}
intertwines the $\widetilde{G}_1$-action, and its operator norm is given by
\begin{gather*}
\Vert \cF^\downarrow_{\lambda,k,l}\Vert^{-2}_{\op}
\\ \qquad
{}=\begin{cases}
\ds \frac{\big(\frac{n_2}{r_2}\big)_{(k+l,\underline{k}_{r_2-1}),d_2}
\big(\lambda+k-\frac{d}{4}r_2+\frac{d_2}{2}+(l,\underline{0}_{r_2/2-1})\big)_{\underline{k}_{r_2/2},d}}
{(\lambda)_{(2k+l,\underline{2k}_{r_2/2-1},\underline{k}_{r_2/2}),d}},
& \hspace{-45pt}\varepsilon_2=1,\;r_2\colon\text{even},
\\
\ds \frac{\big(\frac{n_2}{r_2}\big)_{(k+l,\underline{k}_{r_2-1}),d_2}
\big(\lambda\!-\!\frac{d}{2}\big\lfloor\frac{r_2}{2}\big\rfloor\!+\!\max\{2k,k+l\}\big)_{\min\{k,l\}}
\big(\lambda\!+\!k\!-\!\frac{d}{2}\big\lceil\frac{r_2}{2} \big\rceil\!+\!\frac{d_2}{2}\big)_{\underline{k}_{\lfloor r_2/2\rfloor},d}}
{(\lambda)_{(2k+l,\underline{2k}_{\lfloor r_2/2\rfloor-1},\min\{2k,k+l\},\underline{k}_{\lfloor r_2/2\rfloor}),d}},
 \hspace{-55pt}& \\ & \hspace{-40pt}\varepsilon_2=1,\;r_2\colon\text{odd},
 \\
\ds \frac{\big(\frac{n_2}{r_2}\big)_{(\underline{k+1}_l,\underline{k}_{r_2-l}),d_2}
\big(\lambda\!-\!\frac{d}{2}r_2\!+\!\big(\underline{\big\lfloor\frac{k}{2}\big\rfloor\!+\!\frac{1}{2}}_l,
\underline{\big\lceil\frac{k}{2}\big\rceil\!-\!\frac{1}{2}}_{r_2-l}\big)\big)
_{(\underline{\lceil k/2\rceil}_l,\underline{\lfloor k/2\rfloor}_{r_2-l}),d}}
{2^{kr_2+l}(\lambda)_{(\underline{k+1}_l,\underline{k}_{r_2-l},\underline{\lceil k/2\rceil}_l,\underline{\lfloor k/2\rfloor}_{r_2-l}),d}},
&\varepsilon_2=2. \end{cases}
\end{gather*}
\end{Theorem}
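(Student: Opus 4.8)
The plan is to realize $\cF^\downarrow_{\lambda,k,l}$ as a special case of the general intertwining construction of Theorem~\ref{thm_intertwining}(1), and then to extract the explicit polynomial symbol from Theorem~\ref{thm_simple} and the operator norm from the norm identity following Corollary~\ref{cor_pole_simple}. First I would take $(\tau,V)=\big(\chi^{-\lambda},\BC\big)$ and let the target $\widetilde K_1$-type be $(\rho_0,W)\simeq\cP_{(k+l,\underline{k}_{r_2-1})}\big(\fp^+_2\big)|_{K_1}$ when $\varepsilon_2=1$ and $(\rho_0,W)\simeq\cP_{(\underline{k+1}_l,\underline{k}_{r_2-l})}\big(\fp^+_2\big)|_{K_1}$ when $\varepsilon_2=2$. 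The kernel to feed into Theorem~\ref{thm_intertwining}(1) is $\rK(x_2):=\det_{\fn^-_2}(x_2)^k\rK_l(x_2)$, which lies in $\big(\cP(\fp^-_2)\otimes W\big)^{K_1^\BC}$ because $\rK_l$ is $K_1$-covariant by construction and $\det_{\fn^-_2}$ transforms by a character, while $\chi|_{\widetilde K_1}=\chi_1^{\varepsilon_1}$ matches the character parts of $\tau$ and $\rho$. Theorem~\ref{thm_intertwining}(1) then immediately yields that the differential operator with symbol $F^\downarrow_{\lambda,\rho_0}(z)=\big\langle\rK(x_2),{\rm e}^{(x|\overline{z})_{\fp^-}}\big\rangle_{\lambda,\fp^-,x}$ intertwines the $\widetilde G_1$-action, so the intertwining property is free and the only real content is the symbol and the norm.

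Second, I would evaluate the symbol. Since $\rK(x_2)$ is $\det_{\fn^-_2}(x_2)^k$ times a polynomial in $\cP_{(l,0,\dots,0)}\big(\fp^-_2\big)$ (resp.\ $\cP_{\underline{1}_l}\big(\fp^-_2\big)$), the symbol is precisely the inner product computed in Theorem~\ref{thm_simple} with $f=\rK_l$. That theorem produces $C_{\varepsilon_2}^{r_2}(\lambda,k,l)\,F^l_{\varepsilon_2}\big(\begin{smallmatrix}-k/\varepsilon_2\\-\lambda-2k/\varepsilon_2+n/r\end{smallmatrix};\rK_l;z\big)$, and after absorbing the scalar $C_{\varepsilon_2}^{r_2}(\lambda,k,l)$ together with the Fischer normalization $\big(\frac{n_2}{r_2}\big)_{(k+l,\underline{k}_{r_2-1}),d_2}$ (resp.\ $\big(\frac{n_2}{r_2}\big)_{(\underline{k+1}_l,\underline{k}_{r_2-l}),d_2}$) one recovers the stated $F^\downarrow_{\lambda,k,l}(z)$. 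For $\varepsilon_2=2$ one must in addition carry the factor $2^{kr_2+l}$, which originates from the rescaling $z_2\mapsto z_2/\sqrt2$ appearing in the Fischer-norm identity displayed just after Corollary~\ref{cor_pole_simple}; keeping this factor consistent between the symbol and the norm is the first bookkeeping point.

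Third, for the operator norm I would compute the image of a generator $\det_{\fn^+_2}(x_2)^kf(x_2)$ with $f\in\cP_{(l,0,\dots,0)}\big(\fp^+_2\big)$ (resp.\ $\cP_{\underline{1}_l}$). Applying $\cF^\downarrow_{\lambda,k,l}=F^\downarrow_{\lambda,k,l}\big(\frac{\partial}{\partial x}\big)\big|_{x_2=0}$ turns the differentiation into a pairing against $\rK$, exactly as in the non-simple computation preceding Theorem~\ref{SBO_nonsimple1}; by \cite[Part~III, Corollary~V.3.5]{FKKLR} this collapses to the Shilov-boundary pairing $\big\langle f,\overline{\rK_l(\overline{x_2})}\big\rangle_{L^2(\Sigma_2),x_2}$, whose squared target norm equals $\Vert f\Vert^2_{L^2(\Sigma_2)}$ by the chosen normalization of $\rK_l$. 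On the source side $\big\Vert\det_{\fn^+_2}(x_2)^kf(x_2)\big\Vert^2_{\lambda,\fp^+}$ is exactly the quantity in the norm formula following Corollary~\ref{cor_pole_simple}. Since $\cF^\downarrow_{\lambda,k,l}$ is a nonzero multiple of an isometry on the isotypic component isomorphic to the target and vanishes on its orthogonal complement (by the multiplicity-freeness \cite[Theorem~A]{Kmf1}), $\Vert\cF^\downarrow_{\lambda,k,l}\Vert^{-2}_{\op}$ is the ratio of these two squared norms, giving the asserted value.

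Finally, the case-by-case identification of the codomain — the explicit $\widetilde G_1$-modules $V^{[n'']\vee}_{(l,0,\dots,0)}$, $V^{(r')\vee}_{\underline{1}_l}$, $V^{(6)}_{(l,l,l,l,0,0)}$, etc., and the $\lambda$-shifts $\lambda+2k+l$, $\lambda+k$, $2\lambda+4k$, $\lambda+2k$ — is read off from the $K_1$-decompositions of $\cP_{(k+l,\underline{k}_{r_2-1})}\big(\fp^+_2\big)$ and $\cP_{(\underline{k+1}_l,\underline{k}_{r_2-l})}\big(\fp^+_2\big)$ using the explicit realizations and highest-weight data assembled in Section~\ref{section_prelim2}, the $\lambda$-shift being just the restriction of the character $\det_{\fn^+_2}^{\,k}$ (and of $V_l$) to $\widetilde K_1$. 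I expect the main obstacle to be neither the intertwining nor the symbol formula, both immediate, but the precise matching of normalization constants in the operator-norm step — in particular reconciling the $2^{kr_2+l}$ factor and the half-integral Pochhammer shifts in the $\varepsilon_2=2$ cases with the Fischer-norm identity — together with the lengthy but routine exceptional-case representation-theoretic bookkeeping for Cases~4, 5 and 6.
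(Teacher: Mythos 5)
Your proposal is correct and follows essentially the same route as the paper: the intertwining property is imported from Theorem~\ref{thm_intertwining}(1) applied to the kernel $\det_{\fn^-_2}(x_2)^k\rK_l(x_2)$, the symbol is extracted from Theorem~\ref{thm_simple} after the same renormalization by the Fischer constant (including the $2^{kr_2+l}$ factor for $\varepsilon_2=2$), and the operator norm is obtained by evaluating on a generator, invoking multiplicity-freeness, and comparing with the norm identity following Corollary~\ref{cor_pole_simple} — exactly the pattern the paper carries out explicitly before Theorem~\ref{SBO_nonsimple1}. No gaps.
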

For holographic operators $\cF^\uparrow_{\lambda,k,l}$, see \cite[Theorems 5.7, 5.12, 5.17 and~5.19]{N2}.
Especially if~$l=0$, then as in Corollary~\ref{cor_scalar_simple}, the intertwining operators become easier.
Here ${}_0F_1^{\fp^-_1,\fp^+_2}$ and~${}_2F_1^{\fp^-_1,\fp^+_2}$ are as in (\ref{0F1_mid}) and (\ref{2F1_mid}).

\begin{Theorem}\label{SBO_simple2}
Let $\big(\fp^\pm,\fp^\pm_1,\fp^\pm_2\big)$ be as in \eqref{list_simple}. For $\lambda>\frac{d}{2}(r-1)$, $k\in\BZ_{\ge 0}$,
we define $F^\uparrow_{\lambda,k,0}(x_2;z_1)\in\cO\big(\fp^+_2\times\fp^-_1\big)$ by
\[
F^\uparrow_{\lambda,k,0}(x_2;z_1):=\det_{\fn^+_2}(x_2)^k
{}_0F_1^{\fp^-_1,\fp^+_2}\bigg(\lambda+\frac{2k}{\varepsilon_2}-\frac{\delta_1}{2};z_1,x_2\bigg),
\]
and $F^\downarrow_{\lambda,k,0}(z)=F^\downarrow_{\lambda,k,0}(z_1,z_2)\in\cP(\fp^-)$ by
\[ F^\downarrow_{\lambda,k,0}(z):= \frac{\det_{\fn^-_2}(z_2)^k}{\big(\frac{n_2}{r_2}\big)_{\underline{k}_{r_2},d_2}} \times
\begin{cases}
\ds {}_2F_1^{\fp^-_1,\fp^+_2}\left(\begin{matrix} -k,-k-\frac{d_2}{2}\\-\lambda-2k+\frac{n}{r}-\frac{d_2}{2}\end{matrix};
z_1,{}^t\hspace{-1pt}z_2^\itinv\right)\!, & \varepsilon_2=1,
\\
\ds 2^{kr_2}{}_2F_1^{\fp^-_1,\fp^+_2}\left(\begin{matrix} -\frac{k}{2},-\frac{k-1}{2}\\-\lambda-k+\frac{n}{r}+\frac{1}{2}\end{matrix};
z_1,{}^t\hspace{-1pt}z_2^\itinv\right)\!, & \varepsilon_2=2. \end{cases}
\]
Then the linear map
\begin{gather*}
\cF^\uparrow_{\lambda,k,0}\colon\
\left\{ \hspace{-4pt} \begin{array}{lll}
\bigl(\cH_{\lambda+2k}\big(D_{{\rm SO}_0(2,n')}\big)\boxtimes \BC\bigr)_{\widetilde{K}_1}
&\longrightarrow \cH_\lambda\big(D_{{\rm SO}_0(2,n)}\big)_{\widetilde{K}} & (\text{Case }1), \\
\cH_{\lambda+k}\big(D_{{\rm SO}_0(2,n-1)}\big)_{\widetilde{K}_1} &\longrightarrow \cH_\lambda\big(D_{{\rm SO}_0(2,n)}\big)_{\widetilde{K}} & (\text{Case }1'), \\
\bigl(\cH_{\lambda+k}\big(D_{{\rm Sp}(r',\BR)}\big)\hboxtimes\cH_{\lambda+k}\big(D_{{\rm Sp}(r',\BR)}\big)\bigr)_{\widetilde{K}_1}\hspace{-7pt}
&\longrightarrow \cH_\lambda\big(D_{{\rm Sp}(2r',\BR)}\big)_{\widetilde{K}} & (\text{Case }2), \\
\bigl(\cH_{\lambda+2k}\big(D_{{\rm SO}^*(2s')}\big)\hboxtimes\cH_{\lambda+2k}\big(D_{{\rm SO}^*(2s')}\big)\bigr)_{\widetilde{K}_1}\hspace{-7pt}
&\longrightarrow \cH_\lambda\big(D_{{\rm SO}^*(4s')}\big)_{\widetilde{K}} & (\text{Case }3), \\
\cH_{2\lambda+4k}\big(D_{{\rm SO}^*(2r)}\big)_{\widetilde{K}_1} &\longrightarrow \cH_\lambda\big(D_{{\rm SU}(r,r)}\big)_{\widetilde{K}} & (\text{Case }4), \\
\cH_{\lambda+k}\big(D_{{\rm Sp}(2s,\BR)}\big)_{\widetilde{K}_1} &\longrightarrow \cH_\lambda\big(D_{{\rm SU}(2s,2s)}\big)_{\widetilde{K}} & (\text{Case }5), \\
\cH_{\lambda+2k}\big(D_{{\rm SU}(2,6)}\big)_{\widetilde{K}_1} &\longrightarrow \cH_\lambda\big(D_{E_{7(-25)}}\big)_{\widetilde{K}} & (\text{Case }6),
\end{array} \right.
\\
\big(\cF^\uparrow_{\lambda,k,0}f\big)(x)
=F^\uparrow_{\lambda,k,0}\bigg(x_2;\frac{1}{\varepsilon_1}\frac{\partial}{\partial x_1}\bigg)f(x_1)
\end{gather*}
intertwines the $\big(\fg_1,\widetilde{K}_1\big)$-action, and the linear map
\begin{gather*}
\cF^\downarrow_{\lambda,k,0}\colon\
\left\{ \hspace{-4pt} \begin{array}{lll}
\cH_\lambda\big(D_{{\rm SO}_0(2,n)}\big)\hspace{-7pt} &\longrightarrow \cH_{\lambda+2k}\big(D_{{\rm SO}_0(2,n')}\big)\boxtimes \BC & (\text{Case }1), \\
\cH_\lambda\big(D_{{\rm SO}_0(2,n)}\big)\hspace{-7pt} &\longrightarrow \cH_{\lambda+k}\big(D_{{\rm SO}_0(2,n-1)}\big) & (\text{Case }1'), \\
\cH_\lambda\big(D_{{\rm Sp}(2r',\BR)}\big)\hspace{-7pt} &\longrightarrow \cH_{\lambda+k}\big(D_{{\rm Sp}(r',\BR)}\big)\hboxtimes\cH_{\lambda+k}\big(D_{{\rm Sp}(r',\BR)}\big)
& (\text{Case }2), \\
\cH_\lambda\big(D_{{\rm SO}^*(4s')}\big)\hspace{-7pt} &\longrightarrow \cH_{\lambda+2k}\big(D_{{\rm SO}^*(2s')}\big)\hboxtimes\cH_{\lambda+2k}\big(D_{{\rm SO}^*(2s')}\big)
& (\text{Case }3), \\
\cH_\lambda\big(D_{{\rm SU}(r,r)}\big)\hspace{-7pt} &\longrightarrow \cH_{2\lambda+4k}\big(D_{{\rm SO}^*(2r)}\big) & (\text{Case }4), \\
\cH_\lambda\big(D_{{\rm SU}(2s,2s)}\big)\hspace{-7pt} &\longrightarrow \cH_{\lambda+k}\big(D_{{\rm Sp}(2s,\BR)}\big) & (\text{Case }5), \\
\cH_\lambda\big(D_{E_{7(-25)}}\big)\hspace{-7pt} &\longrightarrow \cH_{\lambda+2k}\big(D_{{\rm SU}(2,6)}\big) & (\text{Case }6), \end{array} \right.
\\
\big(\cF^\downarrow_{\lambda,k,0}f\big)(x_1)
=\left.F^\downarrow_{\lambda,k,0}\left(\frac{\partial}{\partial x}\right)f(x)\right|_{x_2=0}
\end{gather*}
intertwines the $\widetilde{G}_1$-action. Their operator norms are given by
\[
\big\Vert \cF^\uparrow_{\lambda,k,0}\big\Vert^2_{\op}=\big\Vert \cF^\downarrow_{\lambda,k,0}\big\Vert^{-2}_{\op}
=\begin{cases}
\ds \frac{\big(\frac{n_2}{r_2}\big)_{\underline{k}_{r_2},d_2}
\big(\lambda+k-\frac{d}{2}\big\lceil\frac{r_2}{2}\big\rceil+\frac{d_2}{2}\big)_{\underline{k}_{\lfloor r_2/2\rfloor},d}}
{(\lambda)_{\underline{2k}_{\lfloor r_2/2\rfloor},d}
\big(\lambda-\frac{d}{2}\big\lfloor\frac{r_2}{2}\big\rfloor\big)_{\underline{k}_{\lceil r_2/2\rceil},d}}, & \varepsilon_2=1,
\\
\ds \frac{\big(\frac{n_2}{r_2}\big)_{\underline{k}_{r_2},d_2}
\big(\lambda+\big\lceil\frac{k}{2}\big\rceil-\frac{d}{2}r_2-\frac{1}{2}\big)_{\underline{\lfloor k/2\rfloor}_{r_2},d}}
{2^{kr_2}(\lambda)_{\underline{k}_{r_2},d}\big(\lambda-\frac{d}{2}r_2\big)_{\underline{\lfloor k/2\rfloor}_{r_2},d}}, & \varepsilon_2=2.
\end{cases}
\]
\end{Theorem}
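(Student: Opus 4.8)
The plan is to obtain both $\cF^\downarrow_{\lambda,k,0}$ and $\cF^\uparrow_{\lambda,k,0}$ from the general construction of Theorem~\ref{thm_intertwining}, specialized to scalar type $V=\BC$, $(\tau,V)=(\chi^{-\lambda},\BC)$, and to the one-dimensional target $K_1$-type produced by $l=0$. When $l=0$ the module $V_0$ is trivial, so the covariance relation $\rK({}^t\hspace{-1pt}k x_2)=\rho(k)^{-1}\rK(x_2)\tau(k)$ of Theorem~\ref{thm_intertwining} forces the kernel $\rK_0(x_2)\in(\cP(\fp^-_2)\otimes V_0)^{K_1}$ to be a scalar multiple of $\det_{\fn^-_2}(x_2)^k$. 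I would fix the constant by the normalization $\Vert\det_{\fn^-_2}(\overline{x_2})^k\Vert^2_{L^2(\Sigma_2)}=1$, which by \cite[Proposition XI.4.1(ii)]{FK} contributes the factor $\big(\frac{n_2}{r_2}\big)_{\underline{k}_{r_2},d_2}$ and, when $\varepsilon_2=2$, the dyadic factor $2^{kr_2}$ coming from $\det_{\fn^+}(x_2)=\det_{\fn^+_2}(x_2)^{\varepsilon_2}$.

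For the symmetry breaking operator, Theorem~\ref{thm_intertwining}(1) gives its symbol as $F^\downarrow_{\lambda,k,0}(z)=\big\langle\rK_0(x_2),{\rm e}^{(x|\overline{z})_{\fp^-}}\big\rangle_{\hat{\overline{\tau}},\fp^-,x}$, which is precisely the inner product evaluated in Corollary~\ref{cor_scalar_simple} (the $l=0$ case of Theorem~\ref{thm_simple}) read in the conjugate triple system $\fp^-$. This produces the stated ${}_2F_1^{\fp^-_1,\fp^+_2}$ expression directly, with the Gauss parameters and the shift depending on $\varepsilon_2$ exactly as in the two branches (1) and (3) of Corollary~\ref{cor_scalar_simple}. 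That $\cF^\downarrow_{\lambda,k,0}$ intertwines the $\widetilde{G}_1$-action is then immediate from Theorem~\ref{thm_intertwining}(1), and uniqueness up to scalar for $\lambda>\frac{d}{2}(r-1)$ follows from the multiplicity-freeness of $\cH_\lambda(D)|_{\widetilde{G}_1}$.

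For the holographic operator I would substitute the same $\rK_0=\det_{\fn^-_2}(\cdot)^k$ into the formula $F^\uparrow_{\tau\rho}(x_2;w_1)=\big\langle{\rm e}^{(y_1|w_1)_{\fp^+_1}},\rK_0\big((\overline{x_2})^{Q(y_1)\overline{x_2}}\big)\tau(B(y_1,\overline{x_2}))\big\rangle_{\hat{\rho},\fp^+_1,y_1}$ of Theorem~\ref{thm_intertwining}(2). The task is to evaluate $\det_{\fn^-_2}$ of the quasi-inverse together with the scalar factor $\chi^{-\lambda}(B(y_1,\overline{x_2}))$, a power of the generic norm; using the Peirce and Jordan identities together with the generating-function expansion \eqref{binom_simple} in terms of the mixed polynomials $\tilde{\Phi}_\bm^{\fp^-_1,\fp^+_2}$, this collapses to the ${}_0F_1^{\fp^-_1,\fp^+_2}$ series with the claimed shift $\lambda+\frac{2k}{\varepsilon_2}-\frac{\delta_1}{2}$. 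Convergence and the extension to the $\widetilde{K}_1$-finite part are handled exactly as in \cite[Section 3]{N2}. Finally, the operator norms follow because, by multiplicity-freeness, $\cF^\downarrow_{\lambda,k,0}$ is a constant multiple of an isometry on the isotypic piece carrying $\det_{\fn^+_2}(x_2)^k$ and vanishes on its orthogonal complement; hence $\Vert\cF^\downarrow_{\lambda,k,0}\Vert^{-2}_{\op}$ is the ratio of the $\cH_\lambda(D)$-norm of $\det_{\fn^+_2}(x_2)^k$ to the unit target norm of its image. Invoking \eqref{inner_Fubini} and the explicit value of $\Vert\det_{\fn^+_2}(x_2)^k\Vert^2_{\lambda,x,\fp^+}$ computed at the end of Section~\ref{section_simple} yields the stated Pochhammer ratio, and $\Vert\cF^\uparrow_{\lambda,k,0}\Vert^2=\Vert\cF^\downarrow_{\lambda,k,0}\Vert^{-2}$ then follows from the adjointness of $\cF^\uparrow$ and $\cF^\downarrow$ forced by uniqueness.

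I expect the holographic side to be the main obstacle. Unlike the symmetry breaking symbol, which is handed to us essentially verbatim by Corollary~\ref{cor_scalar_simple}, the $\cF^\uparrow$ kernel requires genuinely evaluating the quasi-inverse term $\rK_0\big((\overline{x_2})^{Q(y_1)\overline{x_2}}\big)$, reorganizing the resulting double expansion into a single ${}_0F_1^{\fp^-_1,\fp^+_2}$, and keeping the $\varepsilon_2$-dependent shifts and normalizations consistent simultaneously across all seven cases; the bookkeeping of half-integer Pochhammer symbols in the $\varepsilon_2=2$ branch is where I anticipate the computation to be most delicate.
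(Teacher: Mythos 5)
Your proposal follows essentially the same route as the paper: the symbol of $\cF^\downarrow_{\lambda,k,0}$ is read off from Corollary~\ref{cor_scalar_simple} via Theorem~\ref{thm_intertwining}(1) with $\rK_0$ a normalized multiple of $\det_{\fn^-_2}(\cdot)^k$, and the operator norm is obtained from the isometry-up-to-constant argument together with the value of $\Vert\det_{\fn^+_2}(x_2)^k\Vert^2_{\lambda}$ computed at the end of Section~\ref{section_simple}; for the holographic kernel the paper simply cites \cite{N2} rather than re-evaluating the quasi-inverse as you propose, but that is exactly the computation carried out there. The only point worth tightening is the last step: $\Vert\cF^\uparrow\Vert^2_{\op}=\Vert\cF^\downarrow\Vert^{-2}_{\op}$ is not a consequence of "adjointness" (an exact adjoint would have the \emph{same} norm) but of the normalization $\cF^\downarrow_{\lambda,k,0}\circ\cF^\uparrow_{\lambda,k,0}=\mathrm{id}$, which one checks on the lowest $\widetilde{K}_1$-type since $\cF^\uparrow_{\lambda,k,0}1=\det_{\fn^+_2}(x_2)^k$ and $\cF^\downarrow_{\lambda,k,0}\det_{\fn^+_2}(x_2)^k=1$, and then Schur's lemma gives $\Vert\cF^\uparrow\Vert_{\op}=\Vert\cF^\downarrow\Vert_{\op}^{-1}$ on the relevant summand.
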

Here we normalize $\frac{\partial}{\partial x}$ and $\frac{\partial}{\partial x_1}$ with respect to the bilinear forms
$(\cdot|\cdot)_{\fp^+}\colon\fp^+\times\fp^-\to\BC$ and $(\cdot|\cdot)_{\fp^+_1}\colon\fp^+_1\times\fp^-_1\to\BC$ respectively,
so that $\frac{\partial}{\partial x}\bigr|_{\fp^+_1}=\frac{1}{\varepsilon_1}\frac{\partial}{\partial x_1}$ holds.

For {\it Cases} 2, 3, the above results are generalized to the cases $\fp^+_2=M(r',r'';\BC)$, $M(s',s'';\BC)$ are of non-tube type,
as in Theorem \ref{explicit_simple}(2), (3). Here ${}_0F_1^{(1)}$, ${}_0F_1^{[4]}$, ${}_2F_1^{(1)}$ and ${}_2F_1^{[4]}$ are as in
(\ref{0F1^(d)})--(\ref{2F1^[d]}), and $\cP_{\underline{k}_{r'}}(M(r',r'';\BC))_R$ is the representation of ${\rm GL}(r'',\BC)$ given in (\ref{PMR})
with the Fischer inner product.
\begin{Theorem}\label{SBO_simple3}
Let
\[ \big(\fp^\pm,\fp^\pm_{11},\fp^\pm_{12},\fp^\pm_{22}\big)= \begin{cases}
({\rm Sym}(r,\BC),\Sym(r',\BC),M(r',r'';\BC),\Sym(r'',\BC)) & (\textit{Case }2), \\
({\rm Skew}(s,\BC),\Skew(s',\BC),M(s',s'';\BC),\Skew(s'',\BC)) & (\textit{Case }3), \end{cases} \]
with $r=r'+r''$, $r'\le r''$ or $s=s'+s''$, $s'\le s''$ respectively.
For $\lambda>\frac{r-1}{2}$ or $\lambda>2\big(\big\lfloor\frac{s}{2}\big\rfloor-1\big)$ respectively and for $k\in\BZ_{\ge 0}$, we define
$F^\uparrow_{\lambda,k,0}(x_{12};z_{11},z_{22},y_{12})\in\cP\big(\fp^+_{12}\times\fp^-_{11} \times\fp^-_{22}\times\fp^-_{12}\big)$ by
\[
F^\uparrow_{\lambda,k,0}(x_{12};z_{11},z_{22},y_{12}):=
\begin{cases}
\ds \frac{\det(x_{12}{}^t\hspace{-1pt}y_{12})^k}{(r')_{\underline{k}_{r'},2}}
{}_0F_1^{(1)}\big(\lambda+k;z_{11}x_{12}z_{22}{}^t\hspace{-1pt}x_{12}\big) & (\textit{Case }2),
\\[2ex]
\ds \frac{\det(x_{12}{}^t\hspace{-1pt}y_{12})^k}{(s')_{\underline{k}_{s'},2}}
{}_0F_1^{[4]}\big(\lambda+2k;-z_{11}x_{12}z_{22}{}^t\hspace{-1pt}x_{12}\big) & (\textit{Case }3), \end{cases}
\]
and $F^\downarrow_{\lambda,k,0}(z;w_{12})=F^\downarrow_{\lambda,k,0}
\left(\left(\begin{smallmatrix}z_{11}&z_{12}\\ \pm{}^t\hspace{-1pt}z_{12}&z_{22}\end{smallmatrix}\right);w_{12}\right)\in\cP\big(\fp^-\times\fp^+_{12}\big)$ by
\begin{gather*}
F^\downarrow_{\lambda,k,0}(z;w_{12})
\\ \qquad
:=\begin{cases} \ds \frac{2^{kr'}\det(z_{12}{}^t\hspace{-1pt}w_{12})^k}{(r')_{\underline{k}_{r'},2}}
{}_2F_1^{(1)}\!\left( \begin{matrix}-\frac{k}{2},-\frac{k-1}{2}\\ -\lambda\!-\!k\!+\!r'\!+\!1 \end{matrix};
z_{11}(w_{12}{}^t\hspace{-1pt}z_{12})^{-1}w_{12}z_{22}{}^t\hspace{-1pt}w_{12}(z_{12}{}^t \hspace{-1pt}w_{12})^{-1}\right)
\\
\hspace{330pt}(\textit{Case }2),
\\
\ds \frac{\det(z_{12}{}^t\hspace{-1pt}w_{12})^k}{(s')_{\underline{k}_{s'},2}}
{}_2F_1^{[4]}\!\left( \begin{matrix} -k,-k-1\\ -\lambda\!-\!2k\!+\!2s'\!-\!2 \end{matrix};
-z_{11}(w_{12}{}^t\hspace{-1pt}z_{12})^{-1}w_{12}z_{22}{}^t\hspace{-1pt}w_{12}(z_{12}{}^t \hspace{-1pt}w_{12})^{-1}\right)
\\
\hspace{330pt}(\textit{Case }3). \end{cases}\!\!\!
\end{gather*}
Then the linear maps
\begin{gather*}
\cF^\uparrow_{\lambda,k,0}\colon \begin{cases}
\bigl(\cH_{\lambda+k}(D_{\operatorname{Sp}(r',\BR)})\hboxtimes\cH_\lambda(D_{\operatorname{Sp}(r'',\BR)},\cP_{\underline{k}_{r'}}(M(r',r'';\BC))_R)\bigr)_{\widetilde{K}_1}
\longrightarrow \cH_\lambda(D_{\operatorname{Sp}(r,\BR)})_{\widetilde{K}} \\
\hspace{337pt} (\textit{Case }2),\hspace{-12pt} \\
\bigl(\cH_{\lambda+2k}(D_{{\rm SO}^*(2s')})\hboxtimes\cH_\lambda(D_{{\rm SO}^*(2s'')},\cP_{\underline{k}_{s'}}(M(s',s'';\BC))_R)\bigr)_{\widetilde{K}_1}
\longrightarrow \cH_\lambda(D_{{\rm SO}^*(2s)})_{\widetilde{K}}\hspace{-10pt} \\
\hspace{337pt} (\textit{Case }3),\hspace{-12pt} \end{cases} \\
\big(\cF^\uparrow_{\lambda,k,0}f\big)(x)=F^\uparrow_{\lambda,k,0}\bigg(x_{12};\frac{\partial}{\partial x_{11}},\frac{\partial}{\partial x_{22}},
\frac{\partial}{\partial w_{12}}\bigg)f(x_{11},x_{22},w_{12})\bigg|_{w_{12}=0}
\end{gather*}
intertwine the $(\mathfrak{sp}(r',\BR)\oplus \mathfrak{sp}(r'',\BR),\widetilde{{\rm U}}(r')\times\widetilde{{\rm U}}(r''))$- or
$(\mathfrak{so}^*(2s')\oplus\mathfrak{so}^*(2s''),\widetilde{{\rm U}}(s')\times\widetilde{{\rm U}}(s''))$-action respectively,
and the linear maps
\begin{gather*}
\cF^\downarrow_{\lambda,k,0}\colon\ \begin{cases}
\cH_\lambda(D_{\operatorname{Sp}(r,\BR)})
\longrightarrow \cH_{\lambda+k}(D_{\operatorname{Sp}(r',\BR)})\hboxtimes
\cH_\lambda(D_{\operatorname{Sp}(r'',\BR)},\cP_{\underline{k}_{r'}}(M(r',r'';\BC))_R) \\
\hspace{310pt}(\textit{Case }2),
\\
\cH_\lambda(D_{{\rm SO}^*(2s)})
\longrightarrow \cH_{\lambda+2k}(D_{{\rm SO}^*(2s')})\hboxtimes\cH_\lambda(D_{{\rm SO}^*(2s'')},\cP_{\underline{k}_{s'}}(M(s',s'';\BC))_R) \\
\hspace{310pt}(\textit{Case }3), \end{cases}
\\
\big(\cF^\downarrow_{\lambda,k,0}f\big)(x_{11},x_{22},w_{12})
=F^\downarrow_{\lambda,k,0}\bigg(\frac{\partial}{\partial x};w_{12}\bigg)f(x)
\bigg|_{x_{12}=0}
\end{gather*}
intertwine the $\widetilde{\operatorname{Sp}}(r',\BR)\times \widetilde{\operatorname{Sp}}(r'',\BR)$- or
$\widetilde{{\rm SO}^*}(2s')\times\widetilde{{\rm SO}^*}(2s'')$-action respectively. Their operator norms are given by
\[
\big\Vert \cF^\uparrow_{\lambda,k,0}\big\Vert^2_{\op}=\big\Vert \cF^\downarrow_{\lambda,k,0}\big\Vert^{-2}_{\op}= \begin{cases}
\ds \frac{\big(\lambda+\big\lceil\frac{k}{2}\big\rceil-\frac{r'+1}{2}\big)_{\underline{\lfloor k/2\rfloor}_{r'},1}}
{2^{kr'}(\lambda)_{\underline{k}_{r'},1}\big(\lambda-\frac{r'}{2}\big)_{\underline{\lfloor k/2\rfloor}_{r'},1}} & (\textit{Case }2),
\\[2ex]
\ds \frac{\big(\lambda+k-2\big\lceil\frac{s'}{2}\big\rceil+1\big)_{\underline{k}_{\lfloor s'/2\rfloor},4}}
{(\lambda)_{\underline{2k}_{\lfloor s'/2\rfloor},4}
\big(\lambda-2\big\lfloor \frac{s'}{2}\big\rfloor\big)_{\underline{k}_{\lceil s'/2\rceil},4}} & (\textit{Case }3).
\end{cases}
\]
\end{Theorem}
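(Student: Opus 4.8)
The plan is to realize both operators as special cases of the general intertwining construction of Theorem~\ref{thm_intertwining}, so that the intertwining property is automatic, and to make the kernels and the resulting differential operators explicit by reducing the underlying weighted Bergman inner product to the tube-type situation already settled in Theorem~\ref{SBO_simple2}. Concretely, in both cases $l=0$, so the relevant $\widetilde{K}_1^\BC$-type occurring in $\cP(\fp^+_{12})$ is the one isomorphic to $\cP_{\underline{k}_{r'}}(M(r',r'';\BC))$ (resp.\ its $\Skew$ analogue), whose image under the symmetry breaking operator carries $\cP_{\underline{k}_{r'}}(\fp^+_{12})_R$. I would take as kernel the $K_1$-invariant polynomial represented by $\det(x_{12}\,{}^t\hspace{-1pt}w_{12})^k$, with $w_{12}$ carrying the target representation. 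Feeding this kernel into Theorem~\ref{thm_intertwining}(1) shows at once that $\cF^\downarrow_{\lambda,k,0}$ intertwines the $\widetilde{G}_1$-action, and feeding it into Theorem~\ref{thm_intertwining}(2) does the same for $\cF^\uparrow_{\lambda,k,0}$. What then remains is to compute the two kernel polynomials $F^\downarrow_{\lambda,k,0}$, $F^\uparrow_{\lambda,k,0}$ explicitly, and to determine the operator norm.

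First I would compute $F^\downarrow_{\lambda,k,0}(z)=\big\langle \rK(x_{12}),{\rm e}^{(x|\overline{z})_{\fp^-}}\big\rangle_{\lambda,\fp^-,x}$. The key device is Corollary~\ref{inner_sub}: specializing $w_{12}=(I_{r'},0)$ and fixing a tripotent of rank $r'$ in $\fp^+_{12}$, the inner product on $\Sym(r,\BC)$ (resp.\ $\Skew(s,\BC)$) collapses onto the tube-type subsystem $\fp^+(e)_2\simeq \Sym(2r',\BC)$ (resp.\ $\Skew(2s',\BC)$), in which $\fp^+_{12}$ becomes the square matrix algebra $M(r',\BC)$ (resp.\ $M(s',\BC)$). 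On that subsystem the answer is exactly the $l=0$ formula of Theorem~\ref{SBO_simple2}, namely the ${}_2F_1^{(1)}$ (resp.\ ${}_2F_1^{[4]}$) expression already derived in Theorem~\ref{explicit_simple}(2),(3). The $\big({\rm GL}(r',\BC)\times {\rm GL}(r'',\BC)\big)$- (resp.\ $\big({\rm GL}(s',\BC)\times {\rm GL}(s'',\BC)\big)$-)equivariance of $f\mapsto\langle f,{\rm e}^{(\cdot|\overline{z})}\rangle_\lambda$ then promotes the special-$w_{12}$ identity to the stated formula for arbitrary $w_{12}\in\fp^-_{12}$, with the $w_{12}$-twisted matrix argument displayed in the statement.

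The holographic kernel $F^\uparrow_{\lambda,k,0}$ I would obtain in parallel from the second formula of Theorem~\ref{thm_intertwining}(2), whose generating invariant is again $\det(x_{12}\,{}^t\hspace{-1pt}w_{12})^k$ but now paired against ${\rm e}^{(y_1|w_1)_{\fp^+_1}}$; the Laplace-transform mechanism behind Theorem~\ref{thm_simple}, made precise in Proposition~\ref{prop_simple}(1), converts the ${}_2F_1$-type sum into the ${}_0F_1^{(1)}$ (resp.\ ${}_0F_1^{[4]}$) kernel displayed. Alternatively, $\cF^\uparrow_{\lambda,k,0}$ is determined up to scalar as the Hilbert-space adjoint of $\cF^\downarrow_{\lambda,k,0}$ by multiplicity-freeness, which is consistent with the asserted relation $\Vert\cF^\uparrow_{\lambda,k,0}\Vert^2_{\op}=\Vert\cF^\downarrow_{\lambda,k,0}\Vert^{-2}_{\op}$; as before, reduction to the square tube-type subsystem plus ${\rm GL}\times{\rm GL}$-equivariance yields the $w_{12}$-dependent form.

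The hard part will be the operator norms. Here I would combine the $L^2(\Sigma_{12})$-normalization of $\rK$ on the Bergman--Shilov boundary, which makes $\cF^\downarrow$ an isometry up to scalar on the relevant isotypic summand and identically zero on its orthogonal complement (exactly as in the derivation following Theorem~\ref{SBO_nonsimple1}), with the explicit squared norm $\Vert F(x_{12})\Vert^2_{\lambda,x,\fp^+}$ from Corollary~\ref{cor_pole_simple} specialized to $l=0$. The subtlety is that $\fp^+_{12}=M(r',r'';\BC)$ is non-tube, so $\Vert\det(x_{12}\,{}^t\hspace{-1pt}w_{12})^k\Vert^2_{\lambda,\fp^+}$ must be routed through Corollary~\ref{inner_sub} back to the tube-type $M(r',\BC)$ (resp.\ $M(s',\BC)$) and the Fischer norm $\Vert\det(z_{12})^k\Vert_F^2=(r')_{\underline{k}_{r'},2}$, after which the ceiling/floor and even/odd Pochhammer bookkeeping of the $\varepsilon_2=2$ (resp.\ $\varepsilon_2=1$) norm formula has to be matched term by term; this is where the $2^{kr'}$ factors and the $\lfloor k/2\rfloor,\lceil k/2\rceil$ indices appear and where the $(r')_{\underline{k}_{r'},2}$ factors cancel against the identification $\BC_{-k}\to\cP_{\underline{k}_{r'}}(M(r',\BC))$ of operator norm $\sqrt{(r')_{\underline{k}_{r'},2}}$. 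Once this matching is carried out the stated norm formulas follow, and, dropping the norm claim, the restriction $\lambda>\frac{d}{2}(r-1)$ can be removed by meromorphic continuation, replacing $\cH$ by $\cO$ wherever $\lambda$ is not a pole.
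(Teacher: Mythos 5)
Your proposal is correct and follows essentially the same route as the paper: the intertwining property comes from Theorem~\ref{thm_intertwining}, the explicit non-tube kernels are obtained exactly as in Section~\ref{section_simple} by specializing $w_{12}=(I_{r'},0)$, reducing via Corollary~\ref{inner_sub} to the tube-type subsystem $\Sym(2r',\BC)$ (resp.\ $\Skew(2s',\BC)$) where Theorem~\ref{SBO_simple2} applies, and then invoking ${\rm GL}(r',\BC)\times{\rm GL}(r'',\BC)$-equivariance, and the operator norms follow from the isometry-up-to-constant argument together with the Fischer norm $(r')_{\underline{k}_{r'},2}$ of the lowest $K_1$-type. The paper leaves most of this implicit, but your reconstruction matches its intended argument in all essentials.
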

Except for the results on operator norms, Theorems \ref{SBO_simple1}, \ref{SBO_simple2} and \ref{SBO_simple3} hold
without the assumption $\lambda>\frac{d}{2}(r-1)$,
if $\lambda$ is not a pole and if we replace $\cH$ with $\cO$.
\begin{Remark}\label{rem_previous}
For {\it Case} $1'$, since $F^\downarrow_{\lambda,k,0}(z)$ is written as
\begin{align*}
F^\downarrow_{\lambda,k,0}(z)&=\frac{2^k}{k!}
(z_2)^k{}_2F_1\left( \begin{matrix}-\frac{k}{2},-\frac{k-1}{2}\\
-\lambda-k+\frac{n}{2}+\frac{1}{2} \end{matrix};-\frac{q(z_1)}{(z_2)^2}\right)
\\
&=\frac{2^k}{k!}(z_2)^k\sum_{m=0}^{\lfloor k/2\rfloor}\frac{(-k)_{2m}}{\left(-\lambda-k+\frac{n-1}{2}+1\right)_mm!}
\left(-\frac{q(z_1)}{(2z_2)^2}\right)^m
\\
&=\frac{(-q(z_1))^{k/2}}{\left(\lambda-\frac{n-1}{2}\right)_k}
\sum_{m=0}^{\lfloor k/2\rfloor}\frac{(-1)^m\left(\lambda-\frac{n-1}{2}\right)_{k-m}}
{(k-2m)!m!}\bigg(\frac{2z_2}{\sqrt{-q(z_1)}}\bigg)^{k-2m}
\\
&=\frac{(-q(z_1))^{k/2}}{\left(\lambda-\frac{n-1}{2}\right)_k}
C_k^{\lambda-\frac{n-1}{2}}\bigg(\frac{2z_2}{\sqrt{-q(z_1)}}\bigg)
\end{align*}
by using the Gegenbauer polynomial $C_k^\alpha(t)$, the symmetry breaking operator $\cF^\downarrow_{\lambda,k,0}$ coincides with
the holomorphic version of Juhl's operator (see \cite{Ju}, \cite[Theorem 6.3]{KP2}) up to constant multiple.
Similarly, for {\it Case} $2$ with $r'=1$, since $F^\downarrow_{\lambda,k,0}(z;w_{12})$ is again written by using the Gegenbauer polynomial
$C^{\lambda-1}_k(t)$, the symmetry breaking operator $\cF^\downarrow_{\lambda,k,0}$ coincides with the result by Kobayashi--Pevzner
\cite[Theorem 7.1]{KP2} up to constant multiple. Moreover, for {\it Case} $2$ with $r'=r''$, combining with Proposition \ref{prop_hypergeom}(1),
$F^\downarrow_{\lambda,k,0}(z)$ coincides with the result by Ibukiyama--Kuzumaki--Ochiai \cite[Theorem~7.5]{IKO} up to constant multiple,
and $\cF^\downarrow_{\lambda,k,0}$ coincides with the differential operator in \cite{IKO} which is related to the Siegel modular forms.
In addition, similar functions for {\it Cases} $1$ and $1'$ also appear in the restriction of complementary series representations to some subgroups
as inclusion maps from discrete summands, since these maps are given by the Fourier transforms of the symmetry breaking operators
(Kobayashi--Speh \cite[Chapter 15]{KS1}, M\"ollers--Oshima \cite{MO}).
\end{Remark}

We~return to the case $\fp^+$, $\fp^+_2$ are of tube type.
When $\varepsilon_2=1$, if $\lambda=\frac{n}{r}-a$ with $a=1,2,\dots,k$,
then by (\ref{identity_simple}) we have
\begin{align*}
\bigg(\frac{n_2}{r_2}\bigg)_{(k+l,\underline{k}_{r_2-1}),d_2}F^\downarrow_{\frac{n}{r}-a,k,l}(z)
&=F_1^l\left(\begin{matrix} -k \\ a-2k \end{matrix};\rK_l;z\right)
=\det_{\fn^-}(z)^aF_1^l\left(\begin{matrix} a-k \\ a-2k \end{matrix};\rK_l;z\right) \\
&=\bigg(\frac{n_2}{r_2}\bigg)_{(k-a+l,\underline{k-a}_{r_2-1}),d_2} \det_{\fn^-}(z)^aF^\downarrow_{\frac{n}{r}+a,k-a,l}(z),
\end{align*}
and when $\varepsilon_2=2$, if $\lambda=\frac{n}{r}-a$ with $a=1,2,\dots,\left\lfloor\frac{k}{2}\right\rfloor$, then we have
\begin{gather*}
\frac{1}{2^{kr_2+l}}\bigg(\frac{n_2}{r_2}\bigg)_{(\underline{k+1}_l,\underline{k}_{r_2-l}),d_2} F^\downarrow_{\frac{n}{r}-a,k,l}(z)
\\
\qquad{} =F_2^l\left(\begin{matrix} -\frac{k}{2} \\ a-k \end{matrix};\rK_l;z\right)
=\det_{\fn^-}(z)^aF_2^l\left(\begin{matrix} a-\frac{k}{2} \\ a-k \end{matrix};\rK_l;z\right)
\\
\qquad =\frac{1}{2^{(k-2a)r_2+l}}\bigg(\frac{n_2}{r_2}\bigg)_{(\underline{k-2a+1}_l,\underline{k-2a}_{r_2-l}),d_2}
\det_{\fn^-}(z)^aF^\downarrow_{\frac{n}{r}+a,k-2a,l}(z).
\end{gather*}
Therefore we have
\[
F^\downarrow_{\frac{n}{r}-a,k,l}(z)
=\begin{cases}
\ds \frac{(-1)^{ar_2}}{(-k-(\underline{0}_{r_2-1},l))_{\underline{a}_{r_2},d_2}}
\det_{\fn^-}(z)^aF^\downarrow_{\frac{n}{r}+a,k-a,l}(z), & \varepsilon_2=1,
\\
\ds \frac{2^{2ar_2}}{(-k-(\underline{0}_{r_2-l},\underline{1}_l))_{\underline{2a}_{r_2},d_2}}
\det_{\fn^-}(z)^aF^\downarrow_{\frac{n}{r}+a,k-2a,l}(z), & \varepsilon_2=2.
\end{cases}
\]
Next we assume $\fp^+_1$ is also of tube type, so that $r$ is even. Let $\dim\fp^+_1=:n_1$, $\rank\fp^+_1=:r_1$,
and let $\det_{\fn^-_1}(z_1)$ be the polynomial on $\fp^-_1$ satisfying
\[
t^{r/2}h_{\fp^+}\big(Q(x_2)z_1,-t^{-1}z_1\big)^{1/2}\big|_{t=0}
=\det_{\fn^-_1}(z_1)^{\varepsilon_1}\det_{\fn^+_2}(x_2)^{\varepsilon_2}, \qquad x_2\in\fp^+_2,\quad
z_1\in\fp^-_1.
\]
Suppose $l=0$, and for $a=1,2,\dots,\lfloor k/\varepsilon_2\rfloor$ we consider
\[
\lim_{\lambda\to \frac{n_1}{\varepsilon_1r_1}-\frac{2k}{\varepsilon_2}+a}
\bigg({-}\lambda-\frac{2k}{\varepsilon_2}+\frac{n}{r}-\frac{\delta_2}{2}\bigg)_{\underline{a}_{r/2},d} F^\downarrow_{\lambda,k,0}(z).
\]
By case-by-case consideration, we have
\[
\frac{n}{r}-\frac{\delta_2}{2}-\bigg(\frac{d}{2}\bigg(\frac{r}{2}-1\bigg)+1\bigg)
=\frac{d}{4}r-\frac{\delta_2}{2}=\frac{n_1}{\varepsilon_1r_1}.
\]
Therefore by Proposition~\ref{prop_hypergeom}(2) we have
\begin{gather*}
\lim_{\lambda\to \frac{n_1}{\varepsilon_1r_1}-\frac{2k}{\varepsilon_2}+a}
\bigg({-}\lambda-\frac{2k}{\varepsilon_2}+\frac{n}{r}-\frac{\delta_2}{2}\bigg)_{\underline{a}_{r/2},d} F^\downarrow_{\lambda,k,0}(z)
\\ \qquad
{}=\lim_{\lambda\to \frac{n_1}{\varepsilon_1r_1}-\frac{2k}{\varepsilon_2}+a}
\frac{\varepsilon_2^{kr_2} \big({-}\lambda-\frac{2k}{\varepsilon_2}+\frac{n}{r}-\frac{\delta_2}{2}\big)_{\underline{a}_{r/2},d}}
{\big(\frac{n_2}{r_2}\big)_{\underline{k}_{r_2},d_2}}\det_{\fn^-_2}(z_2)^k
\\ \qquad\hphantom{=}
{} \ \times {}_2F_1^{\fp^-_1,\fp^+_2}\left(\begin{matrix} -\frac{k}{\varepsilon_2},-\frac{k}{\varepsilon_2}-\frac{\delta_2}{2}\\
-\lambda-\frac{2k}{\varepsilon_2}+\frac{n}{r}-\frac{\delta_2}{2}\end{matrix};
z_1,{}^t\hspace{-1pt}z_2^\itinv\right)
\\ \qquad
{}=\frac{\varepsilon_2^{kr_2}\big({-}\frac{k}{\varepsilon_2}\big)_{\underline{a}_{r/2},d}
\big({-}\frac{k}{\varepsilon_2}-\frac{\delta_2}{2}\big)_{\underline{a}_{r/2},d}}
{\big(\frac{n_2}{r_2}\big)_{\underline{k}_{r_2},d_2} \big(\frac{d}{2}\big(\frac{r}{2}-1\big)+1\big)_{\underline{a}_{r/2},d}}
\det_{\fn^-_2}(z_2)^k
\det_{\fn^-_1}(z_1)^{\varepsilon_1a}\det_{\fn^-_2}(z_2)^{-\varepsilon_2a}
\\ \qquad\hphantom{=}
{} \ \times{}_2F_1^{\fp^-_1,\fp^+_2}\left(\begin{matrix} -\frac{k}{\varepsilon_2}+a,-\frac{k}{\varepsilon_2}+a-\frac{\delta_2}{2}\\[2pt]
\frac{d}{2}\big(\frac{r}{2}-1\big)+1+a \end{matrix};z_1,{}^t\hspace{-1pt}z_2^\itinv\right)
\\ \qquad
{}=\frac{\varepsilon_2^{(k-2a)r_2}(-k)_{\underline{\varepsilon_2a}_{r_2},d_2}}
{\big(\frac{n_2}{r_2}\big)_{\underline{k}_{r_2},d_2} \big(\frac{d}{2}\big(\frac{r}{2}-1\big)+1\big)_{\underline{a}_{r/2},d}}
\det_{\fn^-_1}(z_1)^{\varepsilon_1a}\det_{\fn^-_2}(z_2)^{k-\varepsilon_2a}
\\ \qquad\hphantom{=}
{} \ \times{}_2F_1^{\fp^-_1,\fp^+_2}\left(\begin{matrix} -\frac{k}{\varepsilon_2}+a,-\frac{k}{\varepsilon_2}+a-\frac{\delta_2}{2}\\
a-\frac{n_1}{\varepsilon_1r_1}+\frac{n}{r}-\frac{\delta_2}{2} \end{matrix};z_1,{}^t\hspace{-1pt}z_2^\itinv\right)
\\ \qquad
{}=\frac{(-k)_{\underline{\varepsilon_2a}_{r_2},d_2} \big(\frac{n_2}{r_2}\big)_{\underline{k-\varepsilon_2a}_{r_2},d_2}}
{\big(\frac{n_2}{r_2}\big)_{\underline{k}_{r_2},d_2} \big(\frac{d}{2}\big(\frac{r}{2}-1\big)+1\big)_{\underline{a}_{r/2},d}}
\det_{\fn^-_1}(z_1)^{\varepsilon_1a}F^\downarrow_{\frac{n_1}{\varepsilon_1r_1} -\frac{2k}{\varepsilon_2}+a,k-\varepsilon_2a,0}(z)
\\ \qquad
{}=\frac{1}{\left(\frac{d}{2}\left(\frac{r}{2}-1\right)+1\right)_{\underline{a}_{r/2},d}}
\det_{\fn^-_1}(z_1)^{\varepsilon_1a}F^\downarrow_{\frac{n_1}{\varepsilon_1r_1}-\frac{2k}{\varepsilon_2}+a,k-\varepsilon_2a,0}(z).
\end{gather*}
Similarly, we consider the case $\varepsilon_2=1$ and $r=2$, with general $l$.
Then $\frac{n}{2}-\frac{d_2}{2}-1=\frac{n_1}{2}$ holds, and for $a=1,2,\dots,k$ we have
\begin{gather*}
\lim_{\lambda\to \frac{n_1}{2}-2k-l+a}\bigg({-}\lambda-2k-l+\frac{n}{2}-\frac{d_2}{2}\bigg)_aF^\downarrow_{\lambda,k,l}(z) \\ \qquad
{}=\lim_{\lambda\to \frac{n_1}{2}-2k-l+a}\frac{\big({-}\lambda-2k-l+\frac{n}{2}-\frac{d_2}{2}\big)_a}
{\big(\frac{n_2}{2}\big)_{(k+l,k),d_2}}\det_{\fn^-_2}(z_2)^k
\\ \qquad\hphantom{=}
{} \ \times{}_2F_1\!\left(\begin{matrix}-k,-k-l-\frac{d_2}{2}
\\
-\lambda-2k-l+\frac{n}{2}-\frac{d_2}{2}\end{matrix};\frac{\big(z_1|Q(z_2)^{-1}z_1\big)_{\fp^-}}{2}\right)\rK_l(z_2) \\ \qquad
{}=\frac{(-k)_a\big({-}k-l-\frac{d_2}{2}\big)_a}
{\big(\frac{n_2}{2}\big)_{k+l}(1)_ka!}\det_{\fn^-_2}(z_2)^k
\\ \qquad\hphantom{=}
{} \ \times\det_{\fn^-_1}(z_1)^{\varepsilon_1a}\det_{\fn^-_2}(z_2)^{-a}{}_2F_1\!\left(\! \begin{matrix}-k+a,-k-l+a-\frac{d_2}{2} \\
a+1\end{matrix};\frac{\big(z_1|Q(z_2)^{-1}z_1\big)_{\fp^-}}{2}\!\right)\!\rK_l(z_2)
\\ \qquad
{}=\frac{1}{\left(\frac{n_2}{2}\right)_{k+l-a}(1)_{k-a}a!}\det_{\fn^-_1}(z_1)^{\varepsilon_1a}
\\ \qquad\hphantom{=}
{} \ \times\det_{\fn^-_2}(z_2)^{k-a}{}_2F_1\!\left( \begin{matrix}-k+a,-k-l+a-\frac{d_2}{2}
\\
a-\frac{n_1}{2}+\frac{n}{2}-\frac{d_2}{2}\end{matrix};\frac{\big(z_1|Q(z_2)^{-1}z_1\big)_{\fp^-}}{2}\right)\rK_l(z_2) \\ \qquad
{}=\frac{1}{a!}\det_{\fn^-_1}(z_1)^{\varepsilon_1a}F^\downarrow_{\frac{n_1}{2}-2k-l+a,k-a,l}(z).
\end{gather*}

Therefore the following holds.
\begin{Theorem}\label{thm_func_id_simple}\quad
\begin{enumerate}\itemsep=0pt
\item[$1.$] Assume $\fp^+$, $\fp^+_2$ are of tube type. If $\lambda=\frac{n}{r}-a$ with
$a=1,2,\dots,\lfloor k/\varepsilon_2\rfloor$, then we have
\[
 \cF^\downarrow_{\frac{n}{r}-a,k,l}=\begin{cases}
\ds \frac{(-1)^{ar_2}}{(-k-(\underline{0}_{r_2-1},l))_{\underline{a}_{r_2},d_2}}
\cF^\downarrow_{\frac{n}{r}+a,k-a,l}\circ\det_{\fn^-}\bigg(\frac{\partial}{\partial x}\bigg)^a, & \varepsilon_2=1,
\\[2ex]
\ds \frac{2^{2ar_2}}{(-k-(\underline{0}_{r_2-l},\underline{1}_l))_{\underline{2a}_{r_2},d_2}}
\cF^\downarrow_{\frac{n}{r}+a,k-2a,l}\circ\det_{\fn^-}\bigg(\frac{\partial}{\partial x}\bigg)^a, & \varepsilon_2=2. \end{cases}
\]
\item[$2.$] Assume $\fp^+_1$ is also of tube type, and let $l=0$. Then at $\lambda=\frac{n_1}{\varepsilon_1r_1}-\frac{2k}{\varepsilon_2}+a$
with $a=1,2,\dots,\lfloor k/\varepsilon_2\rfloor$, we have
\begin{gather*}
\lim_{\lambda\to \frac{n_1}{\varepsilon_1r_1}-\frac{2k}{\varepsilon_2}+a}
\left(-\lambda-\frac{2k}{\varepsilon_2}+\frac{n}{r}-\frac{\delta_2}{2}\right)_{\underline{a}_{r/2},d} \cF^\downarrow_{\lambda,k,0}
\\ \qquad
{}=\frac{1}{\big(\frac{d}{2}\big(\frac{r}{2}-1\big)+1\big)_{\underline{a}_{r/2},d}}
\det_{\fn^-_1}\bigg(\frac{1}{\varepsilon_1}\frac{\partial}{\partial x_1}\bigg)^{\varepsilon_1a}
\circ\cF^\downarrow_{\frac{n_1}{\varepsilon_1r_1}-\frac{2k}{\varepsilon_2}+a,k-\varepsilon_2a,0}.
\end{gather*}
\item[$3.$] Again assume $\fp^+_1$ is also of tube type, and $\varepsilon_2=1$, $r=2$. Then at $\lambda=\frac{n_1}{2}-2k-l+a$
with $a=1,2,\dots,k$, we have
\begin{gather*}
\lim_{\lambda\to \frac{n_1}{2}-2k-l+a}\bigg({-}\lambda-2k-l+\frac{n}{2}-\frac{d_2}{2}\bigg)_a\cF^\downarrow_{\lambda,k,l} \\ \qquad
{}=\frac{1}{a!}\det_{\fn^-_1}\bigg(\frac{1}{\varepsilon_1}\frac{\partial}{\partial x_1}\bigg)^{\varepsilon_1a}
\circ\cF^\downarrow_{\frac{n_1}{2}-2k-l+a,k-a,l}.
\end{gather*}
\end{enumerate}
\end{Theorem}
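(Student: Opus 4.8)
The plan is to obtain the three operator identities of Theorem~\ref{thm_func_id_simple} as immediate consequences of the symbol identities for $F^\downarrow_{\lambda,k,l}(z)$ worked out in the displays just before the statement. The only mechanism needed is the passage from symbols to operators: by Theorems~\ref{SBO_simple1} and~\ref{SBO_simple2} one has $(\cF^\downarrow_{\lambda,k,l}f)(x_1)=F^\downarrow_{\lambda,k,l}\bigl(\frac{\partial}{\partial x}\bigr)f(x)\big|_{x_2=0}$, and the substitution $P(z)\mapsto P\bigl(\frac{\partial}{\partial x}\bigr)$ is a homomorphism from $\cP(\fp^-)$ into the commutative algebra of constant-coefficient holomorphic differential operators on $\cO(\fp^+)$. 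Hence a product of symbols becomes a composite of commuting operators, and transporting the known symbol identities through this homomorphism and through the restriction $\,\cdot\,\big|_{x_2=0}$ is all that is required; in particular the constants have already been identified in the preceding computation (using \eqref{identity_simple}, the Pochhammer reflection, and $\tfrac{n_2}{r_2}=1+\tfrac{d_2}{2}(r_2-1)$), so no further constant bookkeeping is needed.

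For Part~1 I would start from the established symbol identity at $\lambda=\frac{n}{r}-a$, namely $F^\downarrow_{\frac{n}{r}-a,k,l}(z)=c\,\det_{\fn^-}(z)^a\,F^\downarrow_{\frac{n}{r}+a,k-\varepsilon_2 a,l}(z)$ with $c$ the stated constant, and apply the homomorphism to get $F^\downarrow_{\frac{n}{r}-a,k,l}\bigl(\frac{\partial}{\partial x}\bigr)=c\,\det_{\fn^-}\bigl(\frac{\partial}{\partial x}\bigr)^a F^\downarrow_{\frac{n}{r}+a,k-\varepsilon_2 a,l}\bigl(\frac{\partial}{\partial x}\bigr)$. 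Because these operators commute, applying them to $f$ and restricting to $x_2=0$ exhibits the right-hand side as $c\,\cF^\downarrow_{\frac{n}{r}+a,k-\varepsilon_2 a,l}$ evaluated on $\det_{\fn^-}\bigl(\frac{\partial}{\partial x}\bigr)^a f$, which is the claimed factorization $\cF^\downarrow_{\frac{n}{r}-a,k,l}=c\,\cF^\downarrow_{\frac{n}{r}+a,k-\varepsilon_2 a,l}\circ\det_{\fn^-}\bigl(\frac{\partial}{\partial x}\bigr)^a$. Here the determinant operator sits innermost and is simply absorbed into the argument of the second operator, so no interchange with the restriction is needed.

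For Parts~2 and~3 the same passage applies, with two extra points. First, the symbol identities are stated as limits in $\lambda$; since the substitution and the restriction are $\lambda$-independent and every quantity is rational in $\lambda$, multiplying by the Pochhammer prefactor and passing to the limit commutes with forming the operator, so the operator limit is the operator attached to the limiting symbol already computed. Second, the limiting symbol now carries the factor $\det_{\fn^-_1}(z_1)^{\varepsilon_1 a}$, depending only on the $\fp^-_1$-component $z_1$; under $z\mapsto\frac{\partial}{\partial x}$ this becomes $\det_{\fn^-_1}\bigl(\frac{1}{\varepsilon_1}\frac{\partial}{\partial x_1}\bigr)^{\varepsilon_1 a}$, using the normalization $\frac{\partial}{\partial x}\big|_{\fp^+_1}=\frac{1}{\varepsilon_1}\frac{\partial}{\partial x_1}$ recorded after Theorem~\ref{SBO_simple2}. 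As this operator differentiates only in the $x_1$-directions, it commutes with the restriction $x_2=0$ and may be pulled outside it, yielding the composition $\det_{\fn^-_1}\bigl(\frac{1}{\varepsilon_1}\frac{\partial}{\partial x_1}\bigr)^{\varepsilon_1 a}\circ\cF^\downarrow_{\frac{n_1}{\varepsilon_1 r_1}-\frac{2k}{\varepsilon_2}+a,\,k-\varepsilon_2 a,\,0}$ of Part~2, and the analogous expression for Part~3.

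Since the symbol-level computations are already carried out above the statement, there is no genuine analytic difficulty; the one point that demands care is the placement of the determinant operators relative to the restriction $\,\cdot\,\big|_{x_2=0}$. In Part~1 the factor $\det_{\fn^-}\bigl(\frac{\partial}{\partial x}\bigr)^a$ is innermost and is absorbed into the argument of $\cF^\downarrow_{\frac{n}{r}+a,k-\varepsilon_2 a,l}$, whereas in Parts~2 and~3 the factor $\det_{\fn^-_1}\bigl(\frac{1}{\varepsilon_1}\frac{\partial}{\partial x_1}\bigr)^{\varepsilon_1 a}$ must be extracted past the restriction, which is legitimate precisely because it acts only on the $x_1$-variables. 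Keeping these two arrangements straight, together with the constant identifications already made, completes the argument.
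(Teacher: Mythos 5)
Your proposal is correct and follows the paper's own route: the paper establishes exactly these symbol identities in the displays preceding the statement and then asserts the operator identities as an immediate consequence, leaving implicit the routine passage from symbols to operators that you spell out (the homomorphism $P(z)\mapsto P\bigl(\frac{\partial}{\partial x}\bigr)$, the normalization $\frac{\partial}{\partial x}\bigr|_{\fp^+_1}=\frac{1}{\varepsilon_1}\frac{\partial}{\partial x_1}$, and the commutation of the $x_1$-direction operator with the restriction to $x_2=0$). Your care about where the determinant operators sit relative to $\,\cdot\,|_{x_2=0}$ in Part~1 versus Parts~2 and~3 is exactly the right bookkeeping and matches the structure of the stated compositions.
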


This theorem is again regarded as an analogue of the functional identities \cite[Corolla\-ries~12.7(2) and~12.8(3)]{KS1}.
Here, for individual cases, when we take a maximal tripotent $e\in\fp^+_2$ suitably, $\det_{\fn^-}(z)$ and $\det_{\fn_1^-}(z_1)$ are given by
\[
\det_{\fn^-}(z)=q(z), \qquad \det_{\fn^-_1}(z_1)^{\varepsilon_1}=-q(z_1)
\]
for {\it Cases} 1 and $1'$,
\begin{gather*}
\det_{\fn^-}\begin{pmatrix}z_{11}&z_{12}\\{}^t\hspace{-1pt}z_{12}&z_{22}\end{pmatrix}
=\det\left(\begin{pmatrix}z_{11}&z_{12}\\{}^t\hspace{-1pt}z_{12}&z_{22}\end{pmatrix} \begin{pmatrix}0&I\\I&0\end{pmatrix}\right)
=(-1)^{r'}\det\begin{pmatrix}z_{11}&z_{12}\\{}^t\hspace{-1pt}z_{12}&z_{22}\end{pmatrix}\!,
\\
\det_{\fn^-_1}\begin{pmatrix}z_{11}&0\\0&z_{22}\end{pmatrix}=\det(z_{11})\det(z_{22})
\end{gather*}
for {\it Case} 2,
\[
\det_{\fn^-}\begin{pmatrix}z_{11}&z_{12}\\-{}^t\hspace{-1pt}z_{12}&z_{22}\end{pmatrix}
=\Pf\begin{pmatrix}z_{11}&z_{12}\\-{}^t\hspace{-1pt}z_{12}&z_{22}\end{pmatrix}\!, \qquad
\det_{\fn^-_1}\begin{pmatrix}z_{11}&0\\0&z_{22}\end{pmatrix}=\Pf(z_{11})\Pf(z_{22})
\]
for {\it Case} 3 (with $s'=2r'$ for $\det_{\fn^-_1}(z_1)$),
\[
\det_{\fn^-}(z_1+z_2)=\det(z_1+z_2), \qquad \det_{\fn^-_1}(z_1)=\sqrt{-1}^s\Pf(z_1)
\]
for {\it Case} 4 (with $r=2s$ for $\det_{\fn^-_1}(z_1)$),
\[
\det_{\fn^-}(z_1+z_2)=\det(z_1+z_2), \qquad \det_{\fn^-_1}(z_1)=(-1)^s\det(z_1)
\]
for {\it Case} 5, and
\[
\det_{\fn^-}((z_1,z_2))=\det((z_1,z_2))
\]
under the identification (\ref{iotaj}) for {\it Case} 6. Again by \cite[Theorem 6.4]{A}, (\ref{G-intertwine}) intertwines the $\widetilde{G}$-action if $\fp^+$ is of tube type,
and hence the both sides of Theorem \ref{thm_func_id_simple}(1)--(3) intertwine the $\widetilde{G}_1$-action.

\subsection*{Acknowledgments}

The author would like to thank Professor T.~Kobayashi for a lot of helpful advice on this research,
and also thank Professor H.~Ochiai for a lot of helpful comments on this paper.
This work was supported by Grant-in-Aid for JSPS Fellows Grant Number JP20J00114.

\addcontentsline{toc}{section}{References}
\LastPageEnding

\end{document}